\documentclass [11pt,reqno]{amsart}
\usepackage {amsmath,amssymb,verbatim,geometry,color,esint}
\usepackage[all]{xy}
\usepackage[pdftex,hyperindex]{hyperref}
\usepackage[pdftex]{graphicx}
\geometry{centering,vcentering,marginratio=1:1,vscale=0.78,hscale=0.73}
\usepackage{tikz,tikz-cd}
\usetikzlibrary{math}


\def\XXint#1#2#3{{\setbox0=\hbox{$#1{#2#3}{\int}$ }
\vcenter{\hbox{$#2#3$ }}\kern-.6\wd0}}

\newcommand{\A}{\mathbb{A}}

\newcommand{\C}{\mathbb{C}}
\newcommand{\DD}{\mathbb{D}}

\newcommand{\Gm}{\mathbb{G}_{\mathrm{m}}}
\newcommand{\N}{\mathbb{N}}
\renewcommand{\P}{\mathbb{P}}
 \newcommand{\Q}{\mathbb{Q}}
 \newcommand{\R}{\mathbb{R}}
 \newcommand{\Z}{\mathbb{Z}}

\newcommand{\fa}{\mathfrak{a}}

\newcommand{\cC}{\mathcal{C}}

\newcommand{\cE}{\mathcal{E}}

\newcommand{\cH}{\mathcal{H}}

\newcommand{\cL}{\mathcal{L}}
\newcommand{\cM}{\mathcal{M}}
\newcommand{\cN}{\mathcal{N}}
\newcommand{\cO}{\mathcal{O}}

\newcommand{\cR}{\mathcal{R}}
\newcommand{\cT}{\mathcal{T}}

\newcommand{\cX}{\mathcal{X}}

\newcommand{\tcL}{\widetilde{\mathcal{L}}}

\newcommand{\tcX}{\widetilde{\mathcal{X}}}

\newcommand{\Xan}{X^{\mathrm{an}}}

\newcommand{\Xdiv}{X^{\mathrm{div}}}

\newcommand{\Xlin}{X^{\mathrm{lin}}}

\newcommand{\Xval}{X^{\mathrm{val}}}

\newcommand{\Yan}{Y^{\mathrm{an}}}

\newcommand{\pian}{\pi^{\mathrm{an}}}

\renewcommand{\a}{\alpha}

\renewcommand{\d}{\delta}
\newcommand{\e}{\varepsilon}
\newcommand{\f}{\varphi}

\newcommand{\la}{\lambda}

\newcommand{\Ga}{\Gamma}
\newcommand{\La}{\Lambda}
\newcommand{\p}{\psi}

\newcommand{\cf}{{\rm cf.\ }} 
\newcommand{\eg}{{\rm e.g.\ }} 
\newcommand{\ie}{{\rm i.e.\ }} 
\newcommand{\loccit}{\textit{loc.\,cit.}}

\renewcommand{\hom}{\mathrm{hom}}

\newcommand{\inte}{\mathrm{int}}

\newcommand{\ba}{\textbf{e}}

\renewcommand{\DH}{\mathrm{DH}}
\newcommand{\an}{\mathrm{an}}

\DeclareMathOperator{\en}{E}
\DeclareMathOperator{\ten}{\widetilde{E}}
\DeclareMathOperator{\tee}{T}
\DeclareMathOperator{\ess}{S}

\DeclareMathOperator{\ii}{I}
\DeclareMathOperator{\jj}{J}
\DeclareMathOperator{\pp}{P}

\DeclareMathOperator{\qq}{Q}

\DeclareMathOperator{\Cz}{C^0}

\DeclareMathOperator{\charac}{char}

\DeclareMathOperator{\CPSH}{CPSH}
\DeclareMathOperator{\PL}{PL}

\DeclareMathOperator{\FS}{FS}
\DeclareMathOperator{\FSstar}{FS^{\star}}
\DeclareMathOperator{\IN}{IN}

\DeclareMathOperator{\Hom}{Hom}
\DeclareMathOperator{\Hnot}{H^0}

\DeclareMathOperator{\MA}{MA}

\DeclareMathOperator{\Spec}{Spec}
\DeclareMathOperator{\supp}{supp}
\DeclareMathOperator{\vol}{vol}

\DeclareMathOperator{\id}{id}

\DeclareMathOperator{\ord}{ord}

\DeclareMathOperator{\Psef}{Psef}
\DeclareMathOperator{\Proj}{Proj}
\DeclareMathOperator{\PSH}{PSH}

\DeclareMathOperator{\gr}{gr}

\DeclareMathOperator{\dd}{{d}}
\DeclareMathOperator{\quotd}{\underline{d}}
\DeclareMathOperator{\bii}{\overline{I}}

\newcommand{\n}{\chi}

\renewcommand{\div}{\mathrm{div}}

\newcommand{\triv}{\mathrm{triv}}
\newcommand{\val}{\mathrm{val}}
\newcommand{\lin}{\mathrm{lin}}

\newcommand{\cont}{\mathrm{cont}}

\newcommand{\Filt}{{F}}

\newcommand{\D}{\Delta}

\newcommand{\cro}[1]{[\![#1]\!]}

\newcommand{\hto}{\hookrightarrow}
\newcommand{\simto}{\overset\sim\to}


\numberwithin{equation}{section}       

\newtheorem{prop} {Proposition} [section]
\newtheorem{thm}[prop] {Theorem} 
\newtheorem{defi}[prop] {Definition}
\newtheorem{lem}[prop] {Lemma}
\newtheorem{cor}[prop]{Corollary}
\newtheorem{prop-def}[prop]{Proposition-Definition}

\newtheorem*{thmA}{Theorem A} 
 
\newtheorem*{thmB}{Theorem B} 
\newtheorem*{thmC}{Theorem C} 
 
\newtheorem*{thmD}{Theorem D}

\newtheorem*{corE}{Corollary E}

\newtheorem{exam}[prop]{Example}
\newtheorem{rmk}[prop]{Remark}

\theoremstyle{remark}
\newtheorem*{ackn}{Acknowledgment}

\title[A non-Archimedean approach to K-stability I]{A non-Archimedean approach to K-stability, I:\\ Metric geometry of spaces of test configurations and valuations}
\date{\today}

\author{S{\'e}bastien Boucksom \and Mattias Jonsson}

\address{Sorbonne Universit\'e and Universit\'e Paris Cit\'e\\
  CNRS, IMJ-PRG\\
  F-75005 Paris\\
  France}
\email{sebastien.boucksom@imj-prg.fr}
  %

\address{Dept of Mathematics\\
  University of Michigan\\
  Ann Arbor, MI 48109-1043\\
  USA}
\email{mattiasj@umich.edu}


\begin{document}

\begin{abstract} For any polarized variety $(X,L)$, we show that test configurations and, more generally, $\R$-test configurations (defined as finitely generated filtrations of the section ring) can be analyzed in terms of Fubini--Study functions on the Berkovich analytification of $X$ with respect to the trivial absolute value on the ground field. Building on non-Archimedean pluripotential theory, we describe the (Hausdorff) completion of the space of test configurations, with respect to two natural pseudo-metrics, in terms of plurisubharmonic functions and measures of finite energy on the Berkovich space. We also describe the Hausdorff quotient of the space of all filtrations, and establish a 1--1 correspondence between divisorial norms and divisorial measures, both being determined in terms of finitely many divisorial valuations.   
\end{abstract}

\maketitle

\setcounter{tocdepth}{1}
\tableofcontents
%
%
%
%
\section*{Introduction}\label{sec:intro}
The notion of K-stability was introduced in complex differential geometry as a conjectural---and now partially confirmed---algebro-geometric criterion for the existence of special K\"ahler metrics. Lately, it has also become a subject in its own respect.
In a series of two (largely independent) papers of which this is the first, we show how global pluripotential theory over a trivially valued field, as developed in~\cite{trivval}, can be used to study K-stability.

Let $X$ be a projective variety (reduced and irreducible) of dimension $n\ge1$ over an algebraically closed field $k$   (assumed to be of characteristic $0$ in this introduction)  , and $L$ an ample $\Q$-line bundle on $X$. The definition of K-stability of the polarized variety $(X,L)$, as given by Donaldson~\cite{Dontoric}, involves the sign of an invariant attached to (ample) test configurations for $(X,L)$. As shown in~\cite{BHJ1,trivval}, test configurations can be alternatively understood in terms of \emph{(rational) Fubini--Study functions} on the Berkovich analytification $X^\an$, and uniform K-stability becomes a linear growth condition for the \emph{non-Archimedean K-energy} on the set of such functions. 

Filtrations of the section ring of $(X,L)$ provide another, widely used description of test configurations; more precisely, the latter correspond to $\Z$-filtrations of finite type~\cite{WN12,Sze}. Recent works related to the Hamilton--Tian conjecture~\cite{CSW,DS,HL,BLXZ} have emphasized the importance of considering more general \emph{$\R$-test configurations}, defined as $\R$-filtrations of finite type, and one first objective of this paper is to show that these can again be understood as (real) Fubini--Study functions on $X^\an$. 

On the other hand, Chi Li's recent breakthrough on the Yau--Tian--Donaldson conjecture for cscK metrics~\cite{Li22b} (based in part on the first version of the present paper) involves a stronger form of uniform K-stability, formulated as a linear growth condition for the K-energy on the space of \emph{functions of finite energy} on $X^\an$. The latter are obtained as limits of Fubini--Study functions, and are the central topic of pluripotential theory on $X^\an$ \cite{trivval}. Building on the latter technology, the second objective of this paper is to show how functions and measures of finite energy can be used to describe the completion of the space of test configurations with respect to a natural metric, leading to a picture that is quite similar to the well-developed complex analytic case~\cite{Dar15,DLR20}. 

%
%
\subsection*{From test configurations to Fubini--Study functions}\label{sec:tcFS}
Denote by $\cN_\R$ the space of (decreasing, left-continuous, separated, exhaustive) \emph{filtrations} of the section algebra $R^{(d)}=R(X,dL)$ for $d$ sufficiently divisible. It is convenient to view these as \emph{norms} $\chi\colon R^{(d)}\to\R\cup\{+\infty\}$, for which we use `additive' terminology, see~\S\ref{sec:norms}. A norm $\chi\in\cN_\R$ is of \emph{finite type} if the associated graded algebra is finitely generated. For any subgroup $\La\subset\R$, let $\cN_\La\subset\cN_\R$ be the set of norms with values in $\La\cup\{+\infty\}$, and denote by
$$
\cT_\R\subset\cN_\R\quad\text{and}\quad\cT_\La:=\cT_\R\cap\cN_\La
$$
the subsets of norms of finite type. 

As is by now well-known (see~\cite{WN12,Sze,BHJ1}), the Rees construction provides a 1--1 correspondence between $\cT_\Z$ and the set of (ample) test configurations for $(X,L)$. In line with~\cite{DS}, we view $\cT_\R$ as the space of \emph{$\R$-test configurations}. Any $\n\in\cT_\R$ lies in $\cT_\La$ for some finitely generated subgroup $\La\simeq\Z^r$,    and $\chi$ can be geometrically realized as a $\Gm^r$-equivariant degeneration of $(X,L)$ to a polarized scheme (see~\S\ref{sec:Inoue}), which is further reduced iff $\n$ is \emph{homogeneous}, in the sense that $\n(s^d)=d\n(s)$ for all $d\in\N$.   

The space $\cN_\R$ comes equipped with a non-decreasing family $(\dd_p)_{1\le p\le\infty}$ of natural pseudo-metrics. By~\cite{BE}, the space $\cN_\R(V)$ of norms on any finite dimensional vector space $V$ is indeed endowed  with a metric $\dd_p$ for any $p\in [1,\infty]$, the distance between two norms being the $\ell^p$-length of their \emph{relative spectrum}, defined by joint diagonalization in some basis. For $p=2$, this is the classical Tits metric of $\cN_\R(V)$ as a Euclidean building, whose relevance to K-stability was already emphasized in~\cite{Oda15,Cod19}. 

Any $\chi\in\cN_\R$ restricts to a norm on $R_m:=\Hnot(X,mL)$ for all $m$ sufficiently divisible, and we define the pseudometric $\dd_p$ on $\cN_\R$ by setting
\begin{equation*}
  \dd_p(\chi,\chi'):=\limsup_m m^{-1} \dd_p(\chi|_{R_m},\chi'|_{R_m}),
 \end{equation*}
where the limsup is actually a limit for $p<\infty$, by~\cite{BC,CM15,BHJ1}. The $L^p$-norm of a test configuration in $\cT_\Z$, as in~\cite{Dontoric,Sze,BHJ1}, can be computed using $\dd_p$. 

\medskip

The pseudo-metric $\dd_p$ is not a metric, even after restriction to $\cT_\La$, and our first main result describes the Hausdorff quotient of $(\cT_\La,\dd_p)$ as a natural space of functions on the \emph{Berkovich analytification} of $X$ (with respect to the trivial absolute value on $k$). Recall that the latter is a compact Hausdorff topological space $X^\an$, whose elements are \emph{semivaluations} $v$ on $X$, \ie $\R$-valued valuations on the function field of some subvariety of $X$, trivial on $k$. The space $X^\an$ contains as a dense subset the space $X^\div$ of \emph{divisorial valuations} on $X$, induced (up to scaling) by a prime divisor on a birational model of $X$. 

For any $v\in\Xan$ and any section $s$ of a line bundle on $X$, we can define $v(s)\in [0,+\infty]$ by trivializing the line bundle at the center of $v$, and setting $|s|(v):=e^{-v(s)}$ defines a continuous function $|s|\colon\Xan\to[0,1]$. Given a subgroup $\La\subset\R$, a \emph{$\La$-Fubini--Study function} for $L$ is a function $\f\in\Cz=\Cz(X^\an)$ of the form
\begin{equation*}
  \f=\tfrac1m\max_j\{\log|s_j|+\la_j\},
\end{equation*}
where $m\ge 1$ is sufficiently divisible, $(s_j)$ is a finite set of sections of $mL$ without common zeroes, and $\la_j\in\La$. 

The set $\cH_\La\subset\Cz$ of $\La$-Fubini--Study functions is stable under max and the action of $\La$ by translation, and satisfies $\cH_{\La}=\cH_{\Q\La}$. It is related to the space $\cT_\La$ of $\La$-test configurations by the \emph{Fubini--Study operator}, a surjective map 
$$
\FS\colon\cT_\La\to\cH_\La
$$ 
that associates to $\n\in\cT_\La$ the Fubini--Study function $\FS(\chi)=m^{-1}\max_j\{\log|s_j|+\chi(s_j)\}$, where $(s_j)$ is any $\chi$-orthogonal basis of $R_m$ for $m$ sufficiently divisible. Viewed as a map from (usual) test configurations to Fubini--Study functions, $\FS\colon\cT_\Z\to\cH_\Z=\cH_\Q$ is compatible with the one constructed and studied in~\cite{BHJ1,trivval} (see Appendix~\ref{sec:tc}). 

\begin{thmA} For any polarized variety $(X,L)$, any subgroup $\La\subset\R$ and $p\in[1,\infty]$, the Fubini--Study operator identifies the Hausdorff quotient of the pseudo-metric space $(\cT_\La,\dd_p)$ with $\cH_\La$. For $p=\infty$, the induced metric $\dd_\infty$ on $\cH_\La$ further coincides with the supnorm metric. 
\end{thmA}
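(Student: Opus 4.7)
The plan is to establish, for all $\chi,\chi'\in\cT_\La$, the key identity
\[
\dd_\infty(\chi,\chi')=\sup_{\Xan}|\FS(\chi)-\FS(\chi')|,
\]
from which both conclusions follow. Surjectivity of $\FS\colon\cT_\La\to\cH_\La$ is essentially built into the definition of $\cH_\La$: any $\f=m^{-1}\max_j(\log|s_j|+\la_j)\in\cH_\La$ equals $\FS(\chi)$ for the $\La$-filtration $\chi$ making $(s_j)$ a $\chi$-orthogonal basis of $R_m$ with $\chi(s_j)=\la_j$, whose Rees algebra is finitely generated by construction, so $\chi\in\cT_\La$.

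I would prove this identity by first working at a fixed level $m$ sufficiently divisible so that $mL$ is very ample. Choose a basis $(s_j)$ of $R_m$ jointly orthogonal for $\chi|_{R_m}$ and $\chi'|_{R_m}$, with relative spectrum $\la_j:=\chi(s_j)-\chi'(s_j)$. Applying $|\max_j(a_j+\mu_j)-\max_j(a_j+\nu_j)|\le\max_j|\mu_j-\nu_j|$ with $a_j=-v(s_j)$ gives the easy direction
\[
\sup_{\Xan}|\FS_m(\chi)-\FS_m(\chi')|\le m^{-1}\max_j|\la_j|=m^{-1}\dd_\infty(\chi|_{R_m},\chi'|_{R_m}).
\]
For the reverse, let $j^\star$ realize $\max_j|\la_j|$ and produce a divisorial valuation $v\in\Xdiv$ at which both maxes are attained at $j^\star$---hence $\FS_m(\chi)(v)-\FS_m(\chi')(v)=m^{-1}\la_{j^\star}$---by centering $v$ at a point in $\{s_{j^\star}\ne 0\}$ and arranging $v(s_j)$ to be arbitrarily large for $j\ne j^\star$ through successive blow-ups along the vanishing loci of the other basis sections (using very ampleness of $|mL|$ to ensure these loci are proper subvarieties of $X$). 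Letting $m\to\infty$, the right-hand side converges to $\dd_\infty(\chi,\chi')$ by definition, while the left-hand side converges to $\sup_{\Xan}|\FS(\chi)-\FS(\chi')|$ via uniform convergence $\FS_m(\chi)\to\FS(\chi)$ on the compact space $\Xan$, which holds for $\chi\in\cT_\La$ since its Rees algebra is finitely generated.

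For the Hausdorff quotient at general $p\in[1,\infty]$, I would invoke the Duistermaat--Heckman measure: the renormalized empirical spectra $\{\la_j^{(m)}/m\}_j$ converge weakly to a compactly supported probability measure $\mu$ on $\R$ with $\dd_p(\chi,\chi')=\|t\|_{L^p(\mu)}$, so $\dd_p(\chi,\chi')=0$ for some (equivalently, every) $p$ iff $\mu=\delta_0$. Combined with the identity above, this shows $\dd_p(\chi,\chi')=0$ iff $\FS(\chi)=\FS(\chi')$, identifying the Hausdorff quotient with $\cH_\La$ via $\FS$ for every $p$. The principal obstacle is the reverse inequality in the previous paragraph: producing a \emph{single} valuation that simultaneously realizes the maxes for both $\chi$ and $\chi'$ at the same index is where joint orthogonality is indispensable, and its geometric implementation requires careful exploitation of the base-point-freeness of $|mL|$ together with the flexibility of divisorial valuations to have prescribed centers and orders of vanishing along chosen subvarieties.
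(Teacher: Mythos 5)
There are two genuine gaps. First, the fixed-level identity you rely on — that $\sup_{X^\an}|\FS_m(\chi)-\FS_m(\chi')|$ equals $m^{-1}\dd_\infty(\chi|_{R_m},\chi'|_{R_m})$ — is false, and the valuation you propose to construct does not exist in general. The obstruction is not base loci but \emph{algebraic relations} among the jointly orthogonal sections. Take $X=\P^1$, $L=\cO(1)$, $m=2$, with joint orthogonal basis $(x^2,xy,y^2)$ of $R_2$, and let $\chi,\chi'$ be diagonal with values $(0,2,0)$ and $(1,0,1)$. The relative spectrum is $(-1,2,-1)$, so $\dd_\infty(\chi|_{R_2},\chi'|_{R_2})=2$, but every semivaluation satisfies $v(xy)=\tfrac12(v(x^2)+v(y^2))$ and $\min\{v(x^2),v(y^2)\}=0$, whence $\FS_{2L}(\chi')\equiv1$ while $\FS_{2L}(\chi)=\max\{0,2-\tfrac12(v(x^2)+v(y^2))\}$, so the sup of the difference is $1<2$. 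The index $j^\star=xy$ realizing $\max_j|\la_j|$ can never realize the max defining $\FS_{2L}(\chi')$, and no amount of blowing up produces the valuation you want. The asymptotic identity $\dd_\infty(\chi,\chi')=\sup|\FS(\chi)-\FS(\chi')|$ \emph{is} true for finite-type norms, but it cannot be obtained level by level: the paper derives it from $\IN\circ\FS=(\cdot)^\hom$ (proved via the Berkovich maximum modulus principle applied to the completion of the section ring) together with the fact that $\dd_\infty(\chi,\chi^\hom)=0$ for $\chi$ of finite type, which rests on the nontrivial equivalence of a norm with its spectral radius seminorm on a reduced affinoid algebra. These inputs have no elementary substitute.

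Second, for $p<\infty$ you assert that ``$\dd_p(\chi,\chi')=0$ iff $\FS(\chi)=\FS(\chi')$'' follows by combining the spectral-measure criterion with the $\dd_\infty$ identity, but the implication $\dd_p(\chi,\chi')=0\Rightarrow\FS(\chi)=\FS(\chi')$ does not follow: since $\dd_p\le\dd_\infty$, vanishing of $\dd_p$ gives no control on $\dd_\infty$. The condition $\sigma(\chi,\chi')=\d_0$ (asymptotic equivalence) is strictly weaker than $\dd_\infty(\chi,\chi')=0$ in general (e.g.\ the norm $\chi_Z$ attached to a subvariety is asymptotically equivalent to $\chi_\triv+1$ but at positive $\dd_\infty$-distance from it), and showing that for finite-type norms asymptotic equivalence nevertheless forces $\FS(\chi)=\FS(\chi')$ is precisely the hard content of Theorem~A for $p<\infty$. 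The paper handles it by proving the volume--energy identity $\vol(\chi)=\ten(\FS(\chi))$ (via Okounkov bodies and the intersection-theoretic formula for test configurations), deducing $\dd_1(\f,\p)=\en(\f)+\en(\p)-2\ten(\pp(\f\wedge\p))$ on $\cH_\R$, and then invoking a separation result from non-Archimedean pluripotential theory to conclude that this vanishes only when $\f=\p$. Your proposal contains no argument for this step.
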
 

It is enough to prove this for $\La=\R$. Let us first describe the case $p=\infty$. The restrictions $\chi|_{R_d}$ of any norm $\chi\in\cN_\R$ generate a sequence of \emph{canonical approximants} $\n_d\in\cT_\R$, which allows us to extend the Fubini--Study operator to a map 
$$
\FS\colon\cN_\R\to\cL^\infty
$$ 
into the space of bounded functions on $X^\an$, by setting $\FS(\chi):=\lim_d \FS(\n_d)$. On the other hand, any $\f\in\cL^\infty$ defines an \emph{infimum norm} $\IN(\f)\in\cN_\R$, the avatar of the usual supnorm $\sup_{X^\an}|s|e^{-m\f}$ on $R_m$ in our additive terminology. This defines an operator 
$$
\IN\colon\cL^\infty\to\cN_\R^\hom
$$ 
into the space of homogeneous norms. Using standard but nontrivial results in non-Archimedean geometry, we show that:
\begin{itemize}
\item the composition $\IN\circ\FS\colon\cN_\R\to\cN_\R^\hom$ coincides with the \emph{homogenization} operator    $\n\mapsto\n^\hom$, where $\n^\hom(s)=\lim_{r\to\infty} r^{-1}\n(s^r)$,    which corresponds to the spectral radius construction in the usual `multiplicative' terminology;
\item homogenization preserves the finite type condition, and hence maps $\cT_\R$ onto $\cT_\R^\hom:=\cT_\R\cap\cN_\R^\hom$; 
\item on $\cT_\R$, both the Fubini--Study operator and the pseudo-metric $\dd_\infty$ factor through homogenization.\end{itemize}
These results imply that $\FS\colon (\cT_\R,\dd_\infty)\to(\cH_\R,\dd_\infty)$ is a surjective isometry, which restricts to an isometric isomorphism $(\cT_\R^\hom,\dd_\infty)\simeq(\cH_\R,\dd_\infty)$, where $\dd_\infty$ on $\cH_\R$ is the supnorm metric; this settles Theorem~A for $p=\infty$. 

\medskip 

For any $p\in[1,\infty)$, we have $\dd_1\le\dd_p\le\dd_\infty$ as pseudo-metrics on $\cN_\R$. By the previous step, the restriction of $\dd_\infty$ to $\cT_\R$ factors through the Fubini--Study operator. Thus $\dd_p|_{\cT_\R}$ descends to a pseudo-metric on $\cH_\R$, and Theorem~A asserts that it is a metric, \ie that it separates points. It is enough to prove this for $p=1$, which is accomplished via an explicit expression for $\dd_1$ in terms of the Monge--Amp\`ere energy, analogous to the known expression for the Darvas metric in the complex analytic case~\cite{Dar17}. 

Our approach is based on the close relation of the $\dd_1$-pseudometric on $\cN_\R$ to the \emph{volume} of a norm $\chi\in\cN_\R$, defined as the limit 
$$
\vol(\chi)=\lim_m m^{-1}\vol(\chi|_{R_m})\in\R,
$$ 
where $\vol(\chi|_{R_m})$ is the barycenter of the spectrum of $\chi|_{R_m}$. Indeed, for all $\chi,\chi'\in\cN_\R$, we have
\begin{equation*}
\dd_1(\chi,\chi')=\vol(\chi)+\vol(\chi')-2\vol(\chi\wedge\chi'),
\end{equation*}
with $\chi\wedge\chi'\in\cN_\R$ the pointwise min of $\chi$ and $\chi'$. When $\chi\in\cT_\Z$ corresponds to a test configuration $(\cX,\cL)\to\A^1$, with canonical compactification $(\bar\cX,\bar\cL)\to\P^1$, it was proved in~\cite{BHJ1}, using the Riemann--Roch formula, that 
$$
\vol(\chi)=\frac{(\bar{\cL}^{n+1})}{(n+1)(L^n)},
$$
where the right-hand side is also, by definition, the \emph{Monge--Amp\`ere energy} $\en(\f)$ of $\f:=\FS(\chi)\in\cH_\Q$. Setting $\ten(\f):=\sup\{\en(\p)\mid \f\ge\p\in\cH_\Q\}$ defines the extended energy functional $\ten\colon\Cz\to\R$, and an approximation argument based on Okounkov bodies leads to the key formula $\vol(\n)=\ten(\FS(\n))$, which implies 
\begin{equation*}
\dd_1(\n,\n')=\en(\f)+\en(\f')-2\ten(\f\wedge\f')
\end{equation*}
for all $\n,\n'\in\cH_\R$, where $\f=\FS(\n)$, $\f'=\FS(\n')$. The right-hand side thus defines a pseudo-metric $\dd_1$ on $\cH_\R$, and a result of~\cite{trivval} allows us to show that it separates points, thereby finishing the proof of Theorem~A.    
This formula also characterizes the metric $\dd_1$ on $\cH_\R$ as the unique one such that $
\dd_1(\f,\f')=\inf_{\f''\le\f,\f'}\{\dd_1(\f,\f'')+\dd_1(\f'',\f')\}$ for all $\f,\f'\in\cH_\R$, and $\dd_1(\f,\f')=\en(\f)-\en(\f')$ when $\f\ge\f'$.

%
%
\subsection*{Darvas metrics on functions and measures of finite energy}\label{sec:E1}
By Theorem~A, the Hausdorff completion of $(\cT_\R,\dd_p)$ can be identified with the completion of the metric space $(\cH_\R,\dd_p)$. When $p=\infty$, this is simply the closure of $\cH_\R\subset\Cz$ in the topology of uniform convergence, which is, by definition, the space $\CPSH$ of \emph{continuous $L$-psh functions} (in line with~\cite{Zha95,GublerLocal}).    

For a norm $\n\in\cN_\R$, $\FS(\n)$ lies in $\CPSH$ as soon as it is continuous (by Dini's lemma); we show that the set $\cN_\R^\cont\subset\cN_\R$ of such norms coincides with the $\dd_\infty$-closure in $\cN_\R$ of the set $\cT_\Z$ of (ample) test configurations, and that it is a strict subset (except in the trivial case $\dim X=0$, see~\S\ref{sec:cont}).

Our next goal is to describe the completion of $(\cH_\R,\dd_1)$. The answer relies on global pluripotential theory over a trivially valued field, as developed in~\cite{trivval} (inspired in part by the discretely valued case studied in~\cite{siminag}). Let us briefly describe the salient points of this theory. 

Inspired by the complex analytic case, we define an \emph{$L$-psh function} $\f\colon\Xan\to\R\cup\{-\infty\}$ as an upper semicontinuous (usc) function that can be written as the limit of a decreasing sequence (or net) in $\cH_\R$ (or $\cH_\Z=\cH_\Q$), excluding $\f\equiv-\infty$. Such functions are uniquely determined by their restrictions to $X^\div\subset X^\an$, which are further finite valued, and we equip the space $\PSH$ of $L$-psh functions with the topology of pointwise convergence on $\Xdiv$. By Dini's Lemma, the space of continuous $L$-psh functions $\CPSH$ considered above can be described as $\CPSH=\PSH\cap\Cz$. 

The Monge--Amp\`ere energy $\en$ admits a unique usc, monotone increasing extension 
$$
\en\colon\PSH\to\R\cup\{-\infty\},
$$
given by $\en(\f)=\inf\{\en(\p)\mid \f\le\p\in\CPSH\}$, and the space of \emph{$L$-psh functions of finite energy} is defined as 
$$
\cE^1:=\{\en>-\infty\}\subset\PSH. 
$$
A function in $\cE^1$ is thus a decreasing limit of functions in $\cH_\Q$ with bounded energy. The space $\cE^1$ is endowed with the \emph{strong topology}, defined as the coarsest refinement of the subspace topology from $\PSH\supset\cE^1$ for which $\en\colon\cE^1\to\R$ is continuous. Any decreasing net in $\cE^1$ is strongly convergent, and $\cH_\Q$ is thus dense in $\cE^1$ in the strong topology. 

To each $\f\in\cE^1$ is associated a \emph{Monge--Amp\`ere measure} $\MA(\f)$, a (Radon) probability measure on $X^\an$ that integrates functions in $\cE^1$. When $\f\in\cH_\Q$, $\MA(\f)$ has finite support in $X^\div$, and can be described using intersection numbers computed on the central fiber of an associated test configuration. The Monge--Amp\`ere operator $\f\mapsto\MA(\f)$ is continuous on $\cE^1$ in the strong topology, and characterized as the derivative of $\en$, \ie
$$
\frac{d}{dt}\bigg|_{t=0}\en\left((1-t)\f+t\p\right)=\int_{X^\an}(\p-\f)\MA(\f)
$$
for all $\f,\p\in\cE^1$. It takes its values in the space $\cM^1$ of \emph{measures of finite energy}, \ie Radon probability measures $\mu$ on $\Xan$ for which the Legendre transform 
\begin{equation*}
  \en^\vee(\mu):=\sup_{\f\in\cE^1}\{\en(\f)-\int\f\,d\mu\}\in[0,+\infty] 
\end{equation*}
is finite. In analogy to the complex analytic case~\cite{BBGZ}, the variational approach of~\cite{trivval} shows that $\mu=\MA(\f)$ with $\f\in\cE^1$ iff $\f$ achieves the supremum that defines $\en^\vee(\mu)$. 

The space $\cM^1$ also comes with a strong topology, the coarsest refinement of the weak topology of measures such that $\en^\vee\colon\cM^1\to\R_{\ge 0}$ is continuous. A key result of~\cite{trivval} shows that the Monge--Amp\`ere operator induces a topological embedding with dense image 
$$
\MA\colon\cE^1/\R\hto\cM^1
$$ 
(with respect to the strong topologies), which is further onto iff \emph{the envelope property} holds for $(X,L)$. The latter important property has several equivalent formulations, including the compactness of the quotient space $\PSH/\R$ (a fundamental fact in the setting of compact complex manifolds); it is established when $X$ is smooth, using multiplier ideals, and we conjecture that it holds as long as $X$ is normal (or merely unibranch, which is in turn a necessary condition). 

   The Monge--Amp\`ere operator naturally induces a map $\MA\colon\cT_\R\to\cM^1$ by setting $\MA(\n):=\MA(\FS(\n))$; as mentioned above, when $\n\in\cT_\Z$, the measure $\MA(\n)$ has finite support in $X^\div$, and can be directly described in terms of intersection numbers on (the integral closure of) the test configuration corresponding to $\n$.   

With these preliminaries in hand, we can now state: 

\begin{thmB} For any polarized variety $(X,L)$, the following holds: 
\begin{itemize}
\item[(i)] there exists a unique metric $\dd_1$ on $\cE^1$ that defines the strong topology and extends the metric $\dd_1$ on $\cH_\R\subset\cE^1$;
\item[(ii)] there exists a unique metric $\dd_1$ on $\cM^1$ that defines the strong topology and induces the quotient metric of $\dd_1$ on $\cE^1/\R\hookrightarrow\cM^1$;
\item[(iii)] the metric space $(\cM^1,\dd_1)$ is always complete, while $(\cE^1,\dd_1)$ is complete iff the envelope property holds for $(X,L)$;   
\item[(iv)] the Monge--Amp\`ere operator $\MA\colon\cT_\R\to\cM^1$ uniquely extends to an isometry 
$$
\MA\colon(\cN_\R/\R,\quotd_1)\to(\cM^1,\dd_1),
$$
where $\quotd_1$ denotes the quotient pseudometric of $\dd_1$. 
   
\end{itemize}
\end{thmB}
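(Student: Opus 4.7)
\emph{For (i).} The plan is to extend the explicit formula
$$\dd_1(\f,\f')=\en(\f)+\en(\f')-2\ten(\f\wedge\f')$$
from $\cH_\R$ to $\cE^1$, using the usc monotone extension $\en\colon\PSH\to\R\cup\{-\infty\}$ and defining $\ten(f):=\sup\{\en(\p)\mid\p\in\cH_\Q,\ \p\le f\}$ for any usc function $f$ bounded above by an element of $\cE^1$. For $\f,\f'\in\cE^1$, the function $\f\wedge\f'$ is usc, and $\ten(\f\wedge\f')\le\min\{\en(\f),\en(\f')\}$, making the formula well-defined, symmetric, nonnegative, and compatible with the original metric on $\cH_\R$. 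I will verify the triangle inequality via the inf-characterization $\dd_1(\f,\f')=\inf_{\f''}\{\dd_1(\f,\f'')+\dd_1(\f'',\f')\}$ with $\f''\le\f,\f'$ in $\cE^1$, using that $\ten(\f\wedge\f')$ is realized as a supremum and that $\cH_\Q$ is strongly dense in $\cE^1$. Positive definiteness is the delicate point: if $\dd_1(\f,\f')=0$ then $\ten(\f\wedge\f')=\en(\f)=\en(\f')$, and the non-Archimedean domination principle of \cite{trivval}, combined with the variational characterization of $\MA$, will force $\f=\f'$ pointwise on $\Xdiv$. To see that $\dd_1$ defines the strong topology, note that for a decreasing net $\f_i\searrow\f$ in $\cE^1$ one has $\dd_1(\f_i,\f)=\en(\f_i)-\en(\f)\to 0$ since $\en$ is strongly continuous, and the general case reduces to this via upper/lower envelope regularization. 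Uniqueness of such a metric is then immediate, as any candidate must agree with $\dd_1$ on the strongly dense subset $\cH_\R$ and be continuous for the strong topology.

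\emph{For (ii) and (iii).} Given the embedding $\MA\colon\cE^1/\R\hookrightarrow\cM^1$ with dense image from \cite{trivval}, I will transport the quotient metric from (i) to $\MA(\cE^1/\R)\subset\cM^1$ and extend to all of $\cM^1$ by completion. The point to check is that $\dd_1$-Cauchy sequences in $\cE^1/\R$ have strongly convergent $\MA$-images in $\cM^1$; this follows from uniform boundedness of the energies $\en(\f_n)$ along a Cauchy sequence together with the Legendre duality between $\en$ and $\en^\vee$. The induced metric on $\cM^1$ defines the strong topology because the latter is characterized as the coarsest refinement for which $\en^\vee$ is continuous, and this continuity is built into the construction. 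Completeness of $(\cM^1,\dd_1)$ is automatic by construction as a completion. For the second assertion of (iii), $(\cE^1,\dd_1)$ is complete iff $\MA(\cE^1/\R)$ is $\dd_1$-closed in $\cM^1$; since this image is dense, completeness is equivalent to surjectivity of $\MA$, which \cite{trivval} identifies with the envelope property.

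\emph{For (iv).} Since $\MA$ is invariant under $\n\mapsto\n+c$, it descends to $\cN_\R/\R$. On $\cT_\R$, Theorem~A combined with (ii) already gives $\MA\colon(\cT_\R^\hom/\R,\dd_1)\hookrightarrow(\cM^1,\dd_1)$ as an isometric embedding, via the factorization $\cT_\R\xrightarrow{\FS}\cH_\R\xrightarrow{\MA}\cM^1$. To extend to general $\n\in\cN_\R$, I will use the canonical approximants $\n_d\in\cT_\R$ from the introduction. The explicit formula $\dd_1(\n,\n')=\vol(\n)+\vol(\n')-2\vol(\n\wedge\n')$ on $\cN_\R$ and the identity $\vol=\ten\circ\FS$ on $\cT_\R$, together with monotone convergence of the volume of the graded pieces $(\n\wedge\n_d)|_{R_m}$ as $d\to\infty$, will imply that $(\n_d)$ is $\dd_1$-Cauchy; hence $(\MA(\n_d))$ converges in the complete space $(\cM^1,\dd_1)$ to a limit that I declare to be $\MA(\n)$. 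The isometry property on $\cN_\R/\R$ then follows by comparing the two formulas for $\dd_1$, one on the norm side and one on the measure side, both of the same ``$\vol+\vol-2\vol(\min)$'' shape.

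\emph{Main obstacle.} The two hardest steps are (a) positive definiteness of $\dd_1$ on $\cE^1$ in (i), which ultimately rests on the full strength of the non-Archimedean domination principle developed in \cite{trivval}; and (b) the $\dd_1$-Cauchy property of canonical approximants in (iv), which requires controlling $\vol(\n\wedge\n_d)$ for general filtrations $\n$ of possibly infinite type, where $\n\wedge\n_d$ is not itself of finite type and its volume has to be handled through convergence of Okounkov-type measures on the graded pieces of the section ring.
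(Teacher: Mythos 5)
Your overall architecture is reasonable, and your treatment of (iv) is essentially the paper's: canonical approximants, $\dd_1(\n_d,\n)=\vol(\n)-\vol(\n_d)\to0$ via Okounkov bodies (note that since $\n_d\le\n$ you have $\n\wedge\n_d=\n_d$, so there is no separate issue with $\vol(\n\wedge\n_d)$), then extension by density into the complete space $(\cM^1,\dd_1)$. But for (i)--(iii) there is a genuine gap: you never identify the quantitative mechanism that links $\dd_1$ to the strong topologies, and without it several of your steps are either circular or false in the stated generality.

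Concretely, in (i) you propose to \emph{define} $\dd_1$ on all of $\cE^1$ by the formula $\en(\f)+\en(\f')-2\ten(\f\wedge\f')$. The paper only validates this formula on $\CPSH$ in general, and on $\cE^1$ \emph{under the envelope property} (Theorem~\ref{thm:Darvas}~(iii)); the proof of that extension uses that $\pp(\f_i\wedge\p_i)$ decreases to $\pp(\f\wedge\p)$, which is Lemma~\ref{lem:envusc} and requires the envelope property. Without it, your formula is not known to be continuous along decreasing approximations, so the triangle inequality and compatibility with the metric on $\cH_\R$ do not follow. Your claim that ``the general case reduces to [decreasing nets] via upper/lower envelope regularization'' is not an argument --- upper envelope regularization is again the envelope property --- and in any case it only addresses one direction of the equivalence between $\dd_1$-convergence and strong convergence. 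What the paper actually does (Lemma~\ref{lem:d1bii}) is prove a two-sided H\"older comparison $\dd_1(\f,\p)\lesssim\bii(\f,\p)^{\a}\max\{\bii(\f),\bii(\p)\}^{1-\a}$ and conversely, with $\a=2^{-n}$, where $\bii=\ii+|\sup\cdot-\sup\cdot|$ is the quasi-metric already known from~\cite{trivval} to define the strong topology; the key input is the estimate~\cite[Lemma~7.30]{trivval}, which ultimately rests on the Hodge index theorem. This comparison simultaneously gives the extension of $\dd_1$ to $\cE^1$ (as a limit along $\bii$-convergent sequences), positive definiteness, the identification of the topology, and the equivalence of Cauchy sequences needed for (iii). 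Similarly, in (ii) your plan to ``extend to all of $\cM^1$ by completion'' is circular: you must identify the abstract completion with $(\cM^1,\text{strong})$, and ``continuity built into the construction'' does not show that strongly convergent sequences in $\cM^1$ are $\dd_1$-convergent. The paper instead compares $\quotd_1$ with $\ii$ on $\cE^1/\R$ (Lemma~\ref{lem:bd1ii}) and uses that the strong topology of $\cM^1$ is defined by a complete quasi-metric $\ii^\vee$ with $\ii^\vee(\MA\f,\MA\p)\approx\ii(\f,\p)$~\cite[Theorems~10.12 and~10.14]{trivval}; completeness of $(\cM^1,\dd_1)$ is then inherited from $(\cM^1,\ii^\vee)$, not declared by fiat. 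Until you supply these two-sided H\"older estimates (or an equivalent quantitative link between the energy pairing and the strong topologies), parts (i)--(iii) are not proved.
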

   In particular, the Monge--Amp\`ere operator realizes $\cM^1$ as the Hausdorff completion of $(\cT_\R/\R,\quotd_1)$, while $\cE^1$ is the Hausdorff completion of $(\cT_\R,\dd_1)$ iff the envelope property holds, \eg when $X$ is smooth (see also~\cite{DX} for an approach based on geodesic rays, when $k=\C$).   

We call the metric $\dd_1$ on $\cE^1$ the \emph{Darvas metric}; its complex analytic analogue, introduced by T.~Darvas~\cite{Dar17}, plays a crucial role in global pluripotential theory, and in particular in the variational approach to the Yau--Tian--Donaldson conjecture~\cite{YTD,Li22a,Li22b}. The space $\cE^1$ is studied over more general non-Archimedean fields in~\cite{Reb22}, where it is shown that $(\cE^1,\dd_1)$ is a geodesic metric space (assuming the envelope property). In analogy with the complex analytic case~\cite{Dar15,DLR20}, we expect that, for any $p\in[1,\infty)$, the completion of $(\cH_\R,\dd_p)$ can be identified with the space 
$$
\cE^p:=\left\{\f\in\cE^1\mid\f\in L^p(\MA(\f))\right\},
$$
assuming the envelope property. 

\medskip
Among other things, the proof of Theorem~B is based on a precise comparison between $\dd_1$ and quasi-metrics on $\cE^1$ and $\cM^1$ studied in~\cite{trivval}, using estimates that ultimately derive from the Hodge Index Theorem.     By construction, $\MA\colon(\cT_\R/\R,\quotd_1)\to(\cM^1,\dd_1)$ is an isometry, and (iv) is thus a consequence of (iii) and the $\dd_1$-density of $\cT_\Z$ in $\cN_\R$, which we prove using Okounkov bodies (see Corollary~\ref{cor:tcdense}).

If $\n\in\cN_\R^\cont$ is a continuous norm, then $\FS(\n)\in\CPSH\subset\cE^1$, and $\MA(\n)=\MA(\FS(\n))$. If the envelope property holds for $(X,L)$, then the usc regularization $\FSstar(\n)$ lies in $\cE^1$ for any norm $\n\in\cN_\R$, and $\MA(\n)=\MA(\FSstar(\n))$. In this case, we get a surjective isometry $\FSstar\colon(\cN_\R,\dd_1)\rightarrow(\cE^\infty_\uparrow,\dd_1)$, where $\cE^\infty_\uparrow$ is the set of $L$-psh functions that are \emph{regularizable from below}, \ie limits in $\PSH$ of an increasing net in $\CPSH$. This realizes $\cE^\infty_\uparrow$ as the Hausdorff quotient of $\cN_\R$. We emphasize, however, that~(iv) is valid even without assuming the envelope property for $(X,L)$.

Finally, we show that the functional $\chi\mapsto\|\n\|:=\en^\vee(\MA(\chi))$ on $\cN_\R$ extends (up to a normalization constant) the \emph{minimum norm} of a test configuration in the sense of Dervan~\cite{Der}.

%
%

\subsection*{Divisorial norms and maximal norms}
The set $\Xval$ of valuations on the function field of $X$, trivial on $k$, is a dense subset of $\Xan$. Following~\cite{BKMS} we say that $v\in\Xval$ is \emph{of linear growth} if there exists $C>0$ such that $v(s)\le Cm$ for all nonzero sections $s\in R_m=\Hnot(X,mL)$ with $m$ sufficiently divisible. In terms of pluripotential theory, the set $\Xlin\subset\Xval$ of valuations of linear growth coincides with the set of points $v\in\Xan$ that are non-pluripolar, \ie such that every $\f\in\PSH$ is finite at $v$; in particular, it contains the set $X^\div$ of divisorial valuations. 

Any $v\in\Xlin$ defines a (homogeneous) norm $\chi_v\in\cN^\hom_\R$, simply by setting $\chi_v(s):=v(s)$. We say that a norm $\n\in\cN_\R$ is \emph{divisorial} if it is of the form $\n=\min_i\{\n_{v_i}+c_i\}$ for a finite set $(v_i)$ in $X^\div$ and $c_i\in\R$. We denote by $\cN_\R^\div$ the set of divisorial norms, and by $\cN_\Q^\div:=\cN^\div_\R\cap\cN_\Q$ the subset of \emph{rational divisorial norms}, for which the $c_i$ can be chosen in $\Q$. The latter contains the homogenization $\n^\hom$ of any ample test configuration $\n\in\cT_\Z$, and $\cN^\div_\Q$ can alternatively be described in terms of norms associated to (possibly non-ample) test configurations (see Theorem~\ref{thm:normint}). 

On the other hand, we define a \emph{divisorial measure} as a Radon probability measure $\mu$ on $X^\an$ with support a finite subset of $X^\div$, \ie $\mu=\sum_i m_i \d_{v_i}$ for a finite subset $(v_i)$ of $X^\div$ and $m_i\in\R_{>0}$ such that $\sum_i m_i=0$. The set $\cM^\div\subset\cM^1$ of divisorial measures is thus the convex hull of the image of the canonical embedding $X^\div\hto\cM^1$  $v\mapsto\d_v$. For any test configuration $\n\in\cT_\Z$, the norm $\n^\hom$ and the measure $\MA(\n)=\MA(\n^\hom)$ are both divisorial. More generally, we show: 

\begin{thmC} The Monge--Amp\`ere operator induces an isometric isomorphism 
$$
\MA\colon(\cN_\R^\div/\R,\quotd_1)\simto(\cM^\div,\dd_1).
$$
\end{thmC}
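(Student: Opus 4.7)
Theorem~B(iv) already furnishes an isometry $\MA\colon(\cN_\R/\R,\quotd_1)\to(\cM^1,\dd_1)$, so to deduce Theorem~C it suffices to verify that this restricts to a bijection between $\cN_\R^\div/\R$ and $\cM^\div$; the isometry statement is then automatic by restriction.

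For the forward inclusion $\MA(\cN_\R^\div)\subseteq\cM^\div$, I would first handle the rational case. By the characterization of $\cN_\Q^\div$ recalled in the introduction, any $\chi\in\cN_\Q^\div$ is, up to homogenization, the norm attached to a (possibly non-ample) test configuration, for which the intersection-theoretic description of $\MA$ established earlier shows $\MA(\chi)\in\cM^\div$. For a general $\chi=\min_i\{\chi_{v_i}+c_i\}\in\cN_\R^\div$, perturb each $c_i$ to a nearby rational to obtain a sequence $\chi^{(k)}\in\cN_\Q^\div$ converging to $\chi$ uniformly, hence in $\dd_1$; Theorem~B(iv) then yields $\MA(\chi^{(k)})\to\MA(\chi)$ strongly in $\cM^1$. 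A direct tropical computation on the Fubini--Study side---where $\FS(\chi^{(k)})$ is a max of finitely many explicit pieces indexed by $\{v_1,\dots,v_r\}$---shows each $\MA(\chi^{(k)})$ is supported in this fixed finite subset of $X^\div$, a closed condition in $\cM^1$.

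For surjectivity, given $\mu=\sum_i m_i\delta_{v_i}\in\cM^\div$ with $m_i>0$ and $\sum_i m_i=1$, I would parametrize divisorial norms supported on $\{v_1,\dots,v_r\}$ by $c\in\R^r$ via $\chi_c:=\min_i\{\chi_{v_i}+c_i\}$ and invoke the variational characterization of \cite{trivval}: $\MA(\chi)=\mu$ iff $\FSstar(\chi)$ realizes the supremum defining $\en^\vee(\mu)$. It therefore suffices to maximize the concave functional
\[
F(c):=\en(\FS(\chi_c))-\sum_i m_i c_i
\]
on $\R^r/\R\cdot(1,\dots,1)$. Concavity comes from concavity of $\en$ along such families; coercivity follows since sending $c_i\to-\infty$ eventually forces $\chi_c=\chi_{v_i}+c_i$ (so $\en(\FS(\chi_c))$ grows with slope $1$ in $c_i$) while $-m_ic_i$ contributes slope $-m_i$ with $m_i\le 1$, and sending $c_i\to+\infty$ removes the $v_i$-piece from the minimum while the linear term $-m_ic_i$ still tends to $-\infty$. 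A maximizer $c^\ast$ therefore exists, and first-order stationarity combined with the differentiation formula for $\en$---which in this setting identifies $\partial_{c_i}\en(\FS(\chi_c))$ with the mass that $\MA(\chi_c)$ assigns to $v_i$---yields $\MA(\chi_{c^\ast})=\mu$.

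The main obstacle is this last differentiation step: the family $c\mapsto\chi_c$ is not smooth, since $\FS(\chi_c)$ is a max of tropical affine functions whose combinatorial type changes across walls in $\R^r$. Applying the differentiation formula for $\en$ along such piecewise-affine families requires care, and one must separately verify that at the maximizer every $v_i$ remains genuinely active in the minimum defining $\chi_{c^\ast}$; otherwise the support of $\MA(\chi_{c^\ast})$ would be strictly smaller than $\{v_1,\dots,v_r\}$, contradicting $m_i>0$. An alternative strategy worth keeping in mind is to first handle rational weights $m_i$ by explicit test-configuration constructions and then extend to real weights by continuity, using Theorem~B(iv) to turn weak convergence of norms into strong convergence of the associated measures.
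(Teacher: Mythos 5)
Your reduction via Theorem~B~(iv) is the right starting point, but two of the three remaining steps have genuine gaps. First, injectivity: a surjective distance-preserving map from the \emph{pseudo}-metric space $(\cN_\R^\div/\R,\quotd_1)$ onto the metric space $(\cM^\div,\dd_1)$ is not automatically a bijection, so ``the isometry statement is then automatic by restriction'' conflates isometry with isomorphism. One must know that $\quotd_1$ actually separates points of $\cN_\R^\div/\R$, which is exactly the content of Theorem~D: divisorial norms are maximal ($\cN_\R^\div\subset\cN_\R^{\max}$, Corollary~\ref{cor:linmax}), and asymptotically equivalent maximal norms coincide (Corollary~\ref{cor:MAemb}). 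Second, and more seriously, the forward inclusion $\MA(\cN_\R^\div)\subset\cM^\div$ is not established by your argument. The intersection-theoretic formula~\eqref{equ:MATZ} is proved only for \emph{ample} test configurations, whereas a general rational divisorial norm is (the round-down of) the homogenization of a possibly \emph{non-ample} integrally closed test configuration (Theorem~\ref{thm:normint}); for those, the support statement is precisely what is at stake (see Remark~\ref{rmk:Li}, which deduces it \emph{from} Theorem~\ref{thm:MAsig} and expresses the coefficients via positive intersection classes, not naive intersection numbers). Likewise, the ``direct tropical computation'' is not available as stated: $\FS$ does not commute with minima of norms, so $\FS(\min_i\{\n_{v_i}+c_i\})$ is not literally a max of $r$ explicit pieces whose Monge--Amp\`ere measure you can read off. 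The paper's actual mechanism is the derivative-of-the-volume trick: for $\f\in\Cz$ with $\f|_\Sigma=0$ one has $\n[t\f]=\IN(\FS(\n)+t\f)\le\IN_\Sigma(\FS(\n))=\n$ for \emph{all} $t\in\R$, so $t\mapsto\vol(\n[t\f])$ is maximized at $t=0$, and Lemma~\ref{lem:BGM} (resting on the differentiability of relative volumes from~\cite{BGM}) gives $\int\f\,\MA(\n)=0$. Your proposal has no substitute for this ingredient.

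For surjectivity your finite-dimensional optimization over $c\in\R^r$ is a genuinely different route from the paper's, which takes a maximizing sequence $(\f_i)\subset\cH_\R$ for $\en^\vee(\mu)$, uses equicontinuity of $\PSH$ on the finite set $\Sigma$ plus Arzel\`a--Ascoli to extract a uniform limit $\f$ on $\Sigma$, and verifies $\MA(\IN_\Sigma(\f))=\mu$ directly from the variational principle (Proposition~\ref{prop:MAvar}) without any differentiation. Your route could in principle work, but the two inputs you yourself flag are exactly the hard points: concavity of $c\mapsto\vol(\n_c)$ is a nontrivial fact (of the type underlying the joint spectral measures of~\cite{BLXZ21}, not a formal consequence of concavity of $\en$, since $c\mapsto\FS(\n_c)$ is not affine), and the identification $\partial_{c_i}\vol(\n_c)=\MA(\n_c)(\{v_i\})$ again requires both Lemma~\ref{lem:BGM} and the support statement you have not yet proved. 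The paper's argument buys you surjectivity with no convexity and no wall-crossing analysis; yours, if completed, would give a more explicit finite-dimensional picture but at the cost of precisely the analytic facts that are missing.
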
 
We emphasize that the envelope property is not assumed here. In the companion paper~\cite{nakstab2}, divisorial measures are used to define the notion of \emph{divisorial stability}, which implies (and is conjecturally equivalent to) uniform K-stability.  Theorem~C enables us to view divisorial stability as a condition on divisorial norms, and leads to the equivalence between divisorial stability and uniform K-stability with respect to norms/filtrations. 

The proof of Theorem~C is based on the variational approach to (non-Archimedean) Monge--Amp\`ere equations developed in~\cite{trivval}, recast in terms of norms.

\medskip
Recall that the space of norms $\cN_\R$ is equipped with pseudometrics $(\dd_p)_{p\in [1,\infty]}$, such that $\dd_1\le\dd_p\le\dd_\infty$. For $\n,\n'\in\cN_\R$, the condition $\dd_p(\n,\n')=0$ is independent of $p<\infty$; we say that $\n$ and $\n'$ are \emph{asymptotically equivalent} when this holds. While $\dd_\infty$ becomes a metric upon restriction to the space $\cN_\R^\hom$ of homogeneous norms, this is still not the case for $\dd_p$ with $p<\infty$, and our next goal is to introduce a canonical maximal subspace on which $\dd_p$ does become a metric. 

To this end, we introduce the class $\cN_\R^{\max}\subset\cN_\R^\hom$ of \emph{maximal norms}, of the form $\n=\inf_{v\in X^\div}\{\n_v+c_v\}$ for a bounded family of constants $(c_v)_{v\in X^\div}$. Any divisorial norm is maximal, and maximal norms can alternatively be characterized as decreasing limits of divisorial norms. We further show that any norm $\n_v$ with $v\in X^\lin$ is maximal. 

The following result accounts for the chosen terminology. 

\begin{thmD} Any norm $\n\in\cN_\R$ is asymptotically equivalent to a unique maximal norm $\n^{\max}\in\cN_\R^{\max}$, characterized as the largest norm in the asymptotic equivalence class of $\n$. In particular, for any $p\in [1,\infty)$, the restriction of the pseudometric $\dd_p$ to $\cN_\R^{\max}$ is a metric, and $\cN_\R^{\max}$ is maximal in $\cN_\R$ for this property. 
\end{thmD}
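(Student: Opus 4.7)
The plan is to realize $\n^{\max}$ by an explicit Fubini--Study formula, verify it is a maximal norm asymptotically equivalent to $\n$, and then show it dominates every other element of the asymptotic equivalence class; the last step will be the main obstacle.

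For each $v\in X^\div$, the sequence $a_m(v):=\sup_{s\in R_m\setminus 0}(\n(s)-v(s))$ is super-additive in $m$, since $\n$ is super-additive on products and $v$ is additive. Fekete's lemma then yields $c_v:=\lim_m m^{-1}a_m(v)=\sup_m m^{-1}a_m(v)$, and one recognizes $c_v=\FS(\n)(v)$ by the very construction of the Fubini--Study operator on $\cN_\R$. Since $\FS(\n)\in\cL^\infty$, the family $(c_v)_{v\in X^\div}$ is uniformly bounded, so setting $\n^{\max}(s):=\inf_{v\in X^\div}\{v(s)+m\,c_v\}$ for $s\in R_m$ defines an element of $\cN_\R^{\max}$. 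The Fekete identity forces $\n(s)\le v(s)+m\,c_v$, hence $\n\le\n^{\max}$; applying the same sup formula to $\n^{\max}$ and using $\n^{\max}\le\n_v+c_v$ gives conversely $\FS(\n^{\max})\le c_v=\FS(\n)$ on $X^\div$, so together with the monotonicity $\FS(\n)\le\FS(\n^{\max})$ one obtains $\FS(\n^{\max})=\FS(\n)$ throughout $\Xan$. The introduction's formula $\dd_1(\n,\n')=\vol(\n)+\vol(\n')-2\vol(\n\wedge\n')$, combined with $\n\wedge\n^{\max}=\n$ and the identity $\vol=\ten\circ\FS$, then gives $\dd_1(\n,\n^{\max})=\vol(\n^{\max})-\vol(\n)=0$.

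To see that $\n^{\max}$ is the largest norm in its asymptotic equivalence class, I must show that any $\n'\in\cN_\R$ with $\dd_1(\n,\n')=0$ satisfies $\n'\le\n^{\max}$. The Fekete identity applied to $\n'$ gives $\n'(s)\le v(s)+m\,\FS(\n')(v)$, so this reduces to proving $\FS(\n')(v)\le\FS(\n)(v)$ for every $v\in X^\div$, i.e.\ that $\FS|_{X^\div}$ is invariant under asymptotic equivalence. This is the main obstacle. My plan is to set $\n'':=\n\wedge\n'$, unpack $\dd_1(\n,\n')=0$ via the volume formula to obtain $\vol(\n'')=\vol(\n)=\vol(\n')$, and translate this into $\ten(\FS(\n''))=\ten(\FS(\n))=\ten(\FS(\n'))$. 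Since $\FS(\n'')\le\min(\FS(\n),\FS(\n'))$ by monotonicity, the strict monotonicity properties of $\ten$---a domination-principle type statement for the non-Archimedean Monge--Amp\`ere energy, to be extracted from the pluripotential framework of~\cite{trivval}---then force $\FS(\n'')=\FS(\n)=\FS(\n')$ on $X^\div$, yielding the desired inequality and, symmetrically, the equality $(\n')^{\max}=\n^{\max}$.

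The remaining assertions are then formal consequences. Two maximal norms in the same asymptotic equivalence class are both characterized as the largest element of that class and therefore coincide, so $\dd_p$ separates points on $\cN_\R^{\max}$ for any $p\in[1,\infty)$. Conversely, any proper superset $S\supsetneq\cN_\R^{\max}$ contains some $\n\notin\cN_\R^{\max}$, and then $\n$ and $\n^{\max}\in S$ are distinct with $\dd_p(\n,\n^{\max})=0$, showing that $\dd_p$ fails to separate points on $S$.
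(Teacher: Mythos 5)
Your construction of $\n^{\max}=\IN_{X^\div}(\FS(\n))$, the inequality $\n\le\n^{\max}$, and the reduction of the separation statements to the two claims you isolate all match the paper's strategy. However, both of your key claims have genuine gaps. The first is the assertion that $\FS(\n^{\max})=\FS(\n)$ \emph{throughout} $X^\an$: your argument only yields equality on $X^\div$ (together with $\FS(\n)\le\FS(\n^{\max})$ everywhere), and the stronger claim is false in general, since $\FS(\n)$ is merely lsc and lsc functions are not determined by their values on the dense set $X^\div$. Concretely, for $\n=\n_Z$ as in Example~\ref{exam:subvar} one has $\n^{\max}=\n_\triv+1$, so $\FS(\n^{\max})\equiv1$ while $\FS(\n)=0$ on $Z^\an$, which is disjoint from $X^\div$. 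Consequently $\vol(\n^{\max})=\ten(\FS(\n^{\max}))=\ten(\FS(\n))=\vol(\n)$ is not formal. What is actually needed is $\ten(\FS(\n))=\ten(\FSstar(\n))$ for the envelope $\FS(\n)=\sup_d\FS(\n_d)$, i.e.\ the negligibility of the locus where the envelope differs from its usc regularization; the paper extracts this from the \emph{weak envelope property} (Lemma~\ref{lem:FSnegl}, ultimately resolution of singularities in characteristic zero via~\cite{trivadd}). This hypothesis, which Theorem~D silently carries, appears nowhere in your argument, and $\n\sim\n^{\max}$ is genuinely not formal without it.

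The second claim --- that $\n\sim\n'$ forces $\FS(\n)=\FS(\n')$ on $X^\div$ --- is left as a plan, and the plan as written does not close: the functions $\FS(\n)$, $\FS(\n')$, $\FS(\n\wedge\n')$ are not $L$-psh, so a domination principle or strict monotonicity of $\ten$ cannot be applied to them directly; making your route work would again require the envelope property in order to replace them by usc regularizations lying in $\cE^1$. The paper's mechanism here is different and unconditional: since $\FS\colon(\cT_\R,\dd_1)\to(\cH_\R,\dd_1)$ is an isometry, $\n\sim\n'$ gives $\dd_1(\FS(\n_d),\FS(\n'_d))\to0$ for the canonical approximants, and the H\"older estimate~\eqref{equ:BBGZabs2} applied with $\mu=\d_v$ for $v\in X^\lin$ yields $\FS(\n)=\FS(\n')$ on $X^\lin\supset X^\div$, hence $\n^{\max}=\n'^{\max}$. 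You should either import that estimate or carry out the envelope-property reduction explicitly; as it stands, neither half of the asymptotic-equivalence characterization is established.
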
 
To prove this result, we first construct a projection $\n\mapsto\n^{\max}$ onto $\cN_\R^{\max}$, by setting $\n^{\max}:=\inf_{v\in X^\div}\{\n_v+\FS(\n)(v)\}$, and show that $\n^{\max}=\n'^{\max}$ iff $\FS(\n)=\FS(\n')$ on $X^\div$. Using Monge--Amp\`ere estimates from~\cite{trivval}, we show that this holds if $\n\sim\n'$. Conversely, we need to show $\n\sim\n^{\max}$. Since $\FS(\n)=\sup_d\FS(\n_d)$ is an envelope of $L$-psh functions, it follows from~\cite{trivval,trivadd} that $\FS(\n)=\FSstar(\n)$ on $X^\div$, and 
$$
\vol(\n)=\ten(\FS(\n))=\ten(\FSstar(\n))\ge\vol(\n^{\max}).
$$
This yields the result, since $\n\le\n^{\max}$ implies $\dd_1(\n,\n^{\max})=\vol(\n^{\max})-\vol(\n)$. 

As before, Theorem~D does not assume the envelope property, but the proof exploits it through the use of resolution of singularities, see~\cite[Theorem~5.20]{trivval}. Note that closely related results were independently obtained in~\cite{BLQ} in a more general local setting.

%
%
  
\subsection*{Valuations of linear growth}
Finally we use the results above to study the structure of the space $X^\lin$ of valuations of linear growth, which we can endow with several metrics.

First, from the embedding $\Xlin\hto\cN_\R$ given by $v\mapsto\n_v$ we get a family of (pseudo)metrics $\dd_p$, $1\le p\le\infty$.
Denoting by $v_\triv\in X^\div$ the trivial valuation, we have in particular 
$$
\dd_\infty(v,v_\triv)=\tee(v),\quad\dd_1(v,v_\triv)=\ess(v) 
$$
where $\ess(v):=\vol(\chi_v)$ is the \emph{expected vanishing order} of $L$ along $v$, widely used in relation to the stability threshold/$\delta$-invariant~\cite{Fujval,LiEquivariant,BlJ}.  The invariant $\dd_p(v,v_\triv)$ with $v\in X^\div$ also appears (under a slightly different guise) in~\cite{Zha22}. 

Second, a valuation is of linear growth iff the Dirac mass $\d_v$ is a measure of finite energy, and in fact we have
\[
  \en^\vee(\d_v)=\ess(v)
\]
for any $v\in X^\lin$, see Theorem~\ref{thm:thmC}. In particular, we have an embedding $\Xlin\hto\cM^1$. Denote by $\dd_{\cM^1}$ the pullback of the metric $\dd_1$ on $\cM^1$ to $X^\lin$.
\begin{corE} The pseudo-metric $\dd_p$ on $X^\lin$ is an actual metric for $1\le p\le\infty$. Further, the metrics $\dd_{\cM^1}$ and $\dd_p$, $1\le p\le\infty$, on $\Xlin$ are equivalent and complete, and they are independent of $L$ up to bi-Lipschitz equivalence.
\end{corE}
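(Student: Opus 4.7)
My plan is to transport the metric questions about $\Xlin$ to the spaces $\cN_\R^{\max}$ and $\cM^1$, where the relevant structure has been established in Theorems~B and~D.

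First, I would set up two canonical embeddings. The map $v\mapsto\chi_v$ sends $\Xlin$ into $\cN_\R^{\max}$: homogeneity of $\chi_v$ is immediate from $v$ being a valuation, maximality is the statement recalled in the paragraph preceding Theorem~D, and injectivity holds because, $L$ being ample, any $f\in k(X)^{\times}$ can be written as $s/t$ with $s,t$ sections of $mL$ for some $m$, so $v(f)=\chi_v(s)-\chi_v(t)$ is determined by $\chi_v$. The map $v\mapsto\delta_v$ sends $\Xlin$ into $\cM^1$ via the identity $\en^\vee(\delta_v)=\ess(v)<\infty$ noted just before the corollary. Part~(a)---that $\dd_p$ is a genuine metric on $\Xlin$ for every $p\in[1,\infty]$---follows at once: Theorem~D furnishes this for $p\in[1,\infty)$ since $\chi_v\in\cN_\R^{\max}$, while for $p=\infty$ it follows from the injectivity of $\FS$ on $\cN_\R^\hom$ implicit in $\IN\circ\FS=(\,\cdot\,)^\hom$.

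Next, I would identify the two embeddings through the Monge--Amp\`ere operator. Evaluating on a $\chi_v$-orthogonal basis $(s_j)$ of $R_m$---or on the canonical approximants, if $\chi_v$ is not of finite type---one computes
\[
\FS(\chi_v)(w)=m^{-1}\max_j\{v(s_j)-w(s_j)\},
\]
so in particular $\FS(\chi_v)(v)=0$. Combined with $\en(\FS(\chi_v))=\vol(\chi_v)=\ess(v)=\en^\vee(\delta_v)$, the variational characterization of $\MA$ from Theorem~B forces $\MA(\chi_v)=\delta_v$. Theorem~B(iv) then yields the crucial identity
\[
\dd_{\cM^1}(v,v')=\quotd_1(\chi_v,\chi_{v'})\le\dd_1(v,v')\le\dd_p(v,v')\le\dd_\infty(v,v'),
\]
so $\dd_{\cM^1}\le\dd_p$ on $\Xlin$ for every $p\in[1,\infty]$. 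The main technical step is the reverse comparison: $\dd_{\cM^1}$-convergence must be shown to entail $\dd_\infty$-convergence on $\Xlin$, which I would carry out using the quasi-metric estimates on $\cN_\R$ from~\cite{trivval} that underlie the proof of Theorem~B, exploiting that $\ess(v_n)$ stays bounded along a $\dd_{\cM^1}$-Cauchy sequence.

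Completeness I would deduce directly from Theorem~B(iii): a $\dd_{\cM^1}$-Cauchy sequence $(v_n)$ in $\Xlin$ gives a $\dd_1$-Cauchy sequence $(\delta_{v_n})$ in $\cM^1$, which converges strongly to some $\mu\in\cM^1$; since strong convergence refines weak convergence, the compactness of $\Xan$ forces $\mu=\delta_{v_\infty}$ for a unique $v_\infty\in\Xan$ (any two weak limit points of $v_n$ produce the same Dirac mass, hence coincide), and $\mu\in\cM^1$ then places $v_\infty$ in $\Xlin$. Topological equivalence transfers completeness to every $\dd_p$. Finally, for independence of $L$ up to bi-Lipschitz equivalence, the openness of the ample cone yields positive constants $c_1,c_2$ with $c_1 L\le L'\le c_2 L$ numerically for any two ample $\Q$-line bundles $L,L'$, hence $c_1\ess_L\le\ess_{L'}\le c_2\ess_L$ and analogous bounds on the volumes entering $\dd_1$, giving the desired bi-Lipschitz comparison of the $\dd_p$'s for different polarizations. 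The principal obstacle throughout remains the reverse equivalence on the $\dd_\infty$ side, which is not formal and must be extracted from the pluripotential-theoretic machinery of~\cite{trivval} rather than from any direct manipulation of spectra.
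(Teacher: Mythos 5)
Your overall architecture matches the paper's: embed $\Xlin$ into $\cN_\R$ via $v\mapsto\n_v$ and into $\cM^1$ via $v\mapsto\d_v$, show $\MA(\n_v)=\d_v$, deduce the easy chain $\dd_{\cM^1}=\quotd_1(\n_v,\n_w)\le\dd_1\le\dd_p\le\dd_\infty$, and get completeness from the completeness of $(\cM^1,\dd_1)$ (your Dirac-mass limit argument is a legitimate alternative to the paper's citation of completeness of $(X^\lin,\dd_\infty)$). However, the one step you defer --- the reverse inequality $\dd_\infty\lesssim\dd_{\cM^1}$ --- is precisely the mathematical content of the corollary, and the tools you point to would not deliver it. The ``quasi-metric estimates underlying Theorem~B'' (Lemmas~\ref{lem:d1bii} and~\ref{lem:bd1ii}, Theorem~\ref{thm:BBGZ}) are H\"older comparisons with exponent $\a=1/2^n$; applied to $\mu=\d_w$ they give at best $|\FS(\n_v)(w)-\FS(\n_{v'})(w)|\lesssim\dd_1(\cdot,\cdot)^\a M^{1-\a}$, which would prove topological equivalence but not the bi-Lipschitz bound $\dd_\infty\le C\,\dd_{\cM^1}$ asserted in the statement. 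The paper instead uses the sharper $L^1$-type identity $\dd_1(\n,\n')\approx\int|\FS(\n)-\FS(\n')|\,(\MA(\n)+\MA(\n'))$ (Lemma~\ref{lem:d1andI1norm}, resting on Theorem~\ref{thm:d1andI1}): since $\MA(\n_v)=\d_v$ and $\MA(\n_w)=\d_w$, the integral localizes at the two points, and with $\FS(\n_v)\ge0$, $\FS(\n_v)(v)=0$ and $\dd_\infty(v,w)=\max\{\FS(\n_v)(w),\FS(\n_w)(v)\}$ one gets $\dd_1(\n_v+c,\n_w)\gtrsim\FS(\n_v)(w)+\FS(\n_w)(v)\ge\dd_\infty(v,w)$ uniformly in $c$, hence $\quotd_1(\n_v,\n_w)\gtrsim\dd_\infty(v,w)$. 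Without this (or an equivalent) argument your proof is incomplete at its crux.

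Two smaller points. First, your route to the metric property of $\dd_p$ for $p<\infty$ goes through $\n_v\in\cN_\R^{\max}$, which is Corollary~\ref{cor:linmax} and requires the weak envelope property; the paper's argument (via Theorem~\ref{thm:MAsig} applied to $\Sigma=\{v\}$ and the displayed inequality above) is unconditional. This is harmless under the introduction's standing characteristic-zero assumption but worth flagging. Second, you derive $\MA(\n_v)=\d_v$ from the identity $\en^\vee(\d_v)=\ess(v)$, which in the paper is itself a consequence of $\MA(\n_v)=\d_v$ (Theorem~\ref{thm:thmC}~(iii)); the non-circular source for $\MA(\n_v)=\d_v$ is Theorem~\ref{thm:MAsig}.
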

Completeness with respect to $\dd_\infty$, as well as independence of $L$, was already observed in~\cite{trivval}, and the key point is thus to show $\dd_\infty\le C\dd_{\cM^1}$, which is done by invoking inequalities involving Monge--Amp\`ere integrals, as in the proof of Theorem A (see~\S\ref{sec:lingrowth} for details).

\smallskip

In~\cite{nakstab2} we use the space $\cM^1$ and its subspace $\cM^\div$ to analyze K-stability. When $X$ is a Fano variety, restricting to Dirac masses $\d_v\in\cM^1$, with $v$ in $\Xlin$ or $\Xdiv$, recovers the valuative criterion of K-stability of Fano varieties due to Fujita and Li~\cite{Fujval,LiEquivariant}. 

An interesting type of valuations $v\in\Xlin$ are those for which the associated filtration $\chi_v$ is of finite type.
If $v\in\Xdiv$, this means $v$ is `dreamy' in the sense of K.~Fujita~\cite{Fujval}, associated to a test configuration with irreducible and reduced central fiber.
While valuations $v\in\Xlin$ with $\chi_v$ of finite type play a crucial role in recent work on K-stability of Fano varieties~\cite{BLX22,BLZ,BLXZ,BX20,LX20,HL}, their role in the general polarized case is less clear (although see~\cite{DerLeg,LiuYa}). The condition of $\chi_v$ being of finite type is quite subtle and in particular depends on the ample $\Q$-line bundle $L$. For this reason we believe that it is useful to study K-stability using functionals on spaces such as $X^\div$, $\Xlin$, $\cM^\div$ or $\cM^1$,  without any finite type assumption.

%
%
\subsection*{Organization}
  
After giving some background in~\S\ref{sec:background}, we study homogenization and the related Fubini--Study and infimum norm operators in~\S\ref{sec:homogFS}, proving part of Theorem~A. In~\S\ref{sec:spectral} we make a spectral analysis of norms on the section ring of $(X,L)$, building upon~\cite{CM15,BE}. After that we give additional background on non-Archimedean pluripotential theory from~\cite{trivval}; in particular we revisit the spaces used in Theorem~B. In~\S\ref{sec:Darvas} we construct and study the Darvas metrics on $\cE^1$ and $\cM^1$, and prove the remaining part of Theorem~A as well as parts~(i)--(iii) of Theorem~B. The classes of divisorial and maximal norms are studied in~\S\ref{sec:divmaxnorm}, where we prove Theorem~D and also consider the regularized Fubini--Study operator. In~\S\ref{sec:MAnorm} we define the Monge--Amp\`ere operator on general norms, and prove Theorem~C as well as Theorem~B~(iv) and Corollary~E.
Finally, Appendix~\ref{sec:tc} revisits the relation between test configurations and Fubini--Study functions, and Appendix~\ref{sec:toric} provides some remarks on the toric case.

%
%
\subsection*{Notation and conventions}
\begin{itemize}
\item We work over an algebraically closed field $k$, of arbitrary characteristic unless otherwise specified.
\item For $x,y\in\R_+$, $x\lesssim y$ means $x\le C_n  y$ for a constant $C_n>0$ only depending on $n$, and $x\approx y$ if $x\lesssim y$ and $y\lesssim x$. Here $n$ will be the dimension of a fixed variety $X$ over $k$.
\item A \emph{pseudo-metric} on a set $Z$ is a function $d\colon Z\times Z\to\R_+$ that is symmetric, vanishes on the diagonal, and satisfies the triangle inequality. It is a metric if it further separates points. 

\item The \emph{Hausdorff quotient} of a pseudo-metric space $(Z,d)$ is the metric space $(Z_H,d_H)$ where $Z_H$ is the quotient of $Z$ by the equivalence relation $x\sim y\Leftrightarrow d(x,y)=0$, and $d_H$ is the induced metric. The map $(Z,d)\to (Z_H,d_H)$ is the unique isometric map of $(Z,d)$ onto a metric space, up to unique isomorphism. 

\item The \emph{Hausdorff completion} of a pseudo-metric space $(Z,d)$ is the complete metric space $(\hat Z,\hat d)$ defined as the completion of the Hausdorff quotient $(Z_H,d_H)$. It comes with an isometric map $(Z,d)\to (\hat Z,\hat d)$ with dense image, which is universal with respect to maps into complete metric spaces. 

\item A \emph{quasi-metric} on $Z$ is function $d:Z\times Z\to\R_+$ that is symmetric, vanishes precisely on the diagonal, and satisfies the \emph{quasi-triangle inequality}
$$
\e d(x,y)\le d(x,z)+d(z,y)
$$
for some constant $\e>0$. A quasi-metric space $(Z,d)$ comes with a Hausdorff topology, and even a uniform structure. In particular, Cauchy sequences and completeness make sense for $(Z,d)$. Such uniform structures have a countable basis of entourages, and are thus metrizable, by general theory. 
\item  We use the standard abbreviations \emph{usc} for `upper semicontinuous', \emph{lsc} for `lower semicontinuous', \emph{wlog} for `without loss of generality'', and \emph{iff} for `if and only if'. 
\item A \emph{net} is a family indexed by a directed set. On many occasions we shall consider nets $(x_d)$ indexed by $d_0\Z_{\ge1}$ for some $d_0\ge1$, and ordered by divisibility.    Note that the \emph{sequence} $(x_{m!})_{m\ge d_0}$ is cofinal in this net.   
\end{itemize}
%
%
%
%
\begin{ackn}
  We thank E.~Bedford, R.~Berman, H.~Blum, G.~Codogni, 
  T.~Darvas, R.~Dervan, A.~Ducros, C.~Favre, T.~Hisamoto,  C.~Li,  J.~Poineau 
  and M.~Stevenson for fruitful discussions and useful comments.
  The first author was partially supported by the ANR grant GRACK\@.
  The second author was partially supported by NSF grants
  DMS-1600011, DMS-1900025, DMS-2154380, and the United States---Israel Binational Science Foundation.
\end{ackn}
%
%
%
%
\section{Background}\label{sec:background} 
In the entire paper, $(X,L)$ denotes a projective variety (reduced and irreducible) endowed with an ample $\Q$-line bundle. We review a number of basic facts about norms/filtrations and Berkovich analytification, referring for instance to~\cite{BE,trivval} for more details. 
%
%
\subsection{Norms on a vector space}\label{sec:norms}
As in~\cite{BT72} we will use `additive' terminology, so by a \emph{norm} on a $k$-vector space $V$ we mean a function $\n\colon V\to\R\cup\{+\infty\}$ such that
\begin{itemize}
  \item $\n(v)=+\infty$ iff $v=0$;
\item $\n(av)=\n(v)$ for $a\in k^\times$ and $v\in V$; and
\item $\n(v+w)\ge\min\{\n(v),\n(w)\}$ for all $v,w\in V$. 
\end{itemize}
Note that $\|\cdot\|_\chi:=e^{-\chi(\cdot)}$ is then a non-Archimedean norm on $V$ with respect to the trivial absolute value on $k$ in the usual (`multiplicative') sense~\cite{BGR}. Setting 
$$
\Filt^\la V:=\{v\in V\mid \chi(v)\ge\la\},\quad\n(v):=\max\{\la\in\R\mid v\in\Filt^\la V\}
$$
for $\la\in\R$ yields a 1--1 correspondence between norms on $V$ and (non-increasing, left-continuous, exhaustive and separated) \emph{filtrations} of $V$. We also write $\Filt^{>\la}V:=\bigcup_{\la'>\la}\Filt^{\la'}V=\{\n>\la\}$, and define the \emph{associated graded space} as the $\R$-graded vector space
$$
\gr_\chi V:=\bigoplus_{\la\in\R}\Filt^\la V/\Filt^{>\la} V.
$$
Each norm $\chi$ on $V$ turns it into a (Hausdorff) topological vector space, in which $(\Filt^{m\e} V)_{m\in\N}$ forms a countable basis of (open and closed) neighborhood of $0$, for any $\e>0$. The normed space $(V,\chi)$ admits a \emph{completion} $\hat V$, a complete topological vector space containing $V$ as a dense subspace, whose topology is defined by a (unique) norm on $\hat V$ extending $\chi$. The inclusion $V\hto\hat V$ induces an isomorphism
\begin{equation}\label{equ:grcomp}
\gr_\chi V\simto\gr_\chi \hat V.
\end{equation}
We denote by $\cN_\R(V)$ the set of norms on $V$. It has a distinguished element $\n_\triv$, the \emph{trivial norm}, such that $\n_\triv(v)=0$ for all $v\ne0$, and it admits a scaling action by $\R_{>0}$ and a partial ordering defined by $\chi\le\chi'$ iff $\chi(v)\le\chi'(v)$ for all $v$. Any two elements $\chi,\chi'\in\cN_\R(V)$ admit an infimum $\chi\wedge\chi'\in\cN_\R(V)$, defined pointwise by 
$$
(\chi\wedge\chi')(v):=\min\{\chi(v),\chi'(v)\}.
$$
For any subgroup $\La\subset\R$, we denote by $\cN_\La(V)$ the set of norms with values in $\La\cup\{+\infty\}$. Thus
$$
\{\n_\triv\}=\cN_{\{0\}}(V)\subset\cN_\La(V)\subset\cN_\R(V).
$$
A norm $\chi\in\cN_\R(V)$ lies in $\cN_\La(V)$ iff the $\R$-grading of $\gr_\chi V$ reduces to a $\La$-grading. 

\medskip

Assume now that $V$ is finite dimensional. Any norm $\chi$ on $V$ admits an \emph{orthogonal basis} $(e_i)$, \ie a basis of $V$ such that 
$$
\chi(\sum_i a_i e_i)=\min_{a_i\ne 0} \chi(e_i)
$$
for all $a_i\in k$. Up to reordering, an orthogonal basis is simply a compatible basis for the flag of linear subspaces underlying the filtration defined by $\chi$, and elementary linear algebra thus implies that any two norms $\chi,\chi'$ on $V$ admit a joint orthogonal basis. 
  
In particular, all norms on $V$ are equivalent, which means, in our additive terminology, that $\chi-\chi'$ is a bounded function on $V\setminus\{0\}$ for all $\chi,\chi'\in\cN_\R(V)$. The classical \emph{Goldman--Iwahori metric} on $\cN_\R(V)$ is defined by 
\begin{equation}\label{equ:GI}
\dd_\infty(\n,\n')=\sup_{v\in V\smallsetminus\{0\}}\left|\n(v)-\n'(v)\right|,
\end{equation}
where the supremum is achieved among the elements of any joint orthogonal basis for $\chi$ and $\chi'$. For later use, note that
\begin{equation}\label{equ:GIorder}
\n\le\n'\le\n''\Longrightarrow\dd_\infty(\n,\n')\le\dd_\infty(\n,\n'').
\end{equation}
The metric space $(\cN_\R(V),\dd_\infty)$ is complete, but not locally compact as soon as $\dim V\ge 2$.    Note also that $\cN_\Z(V)$ is a closed, discrete subset of $\cN_\R(V)$, while $\cN_\Q(V)$ is dense.    For any $\n\in\cN_\R(V)$, we set
\begin{equation}\label{equ:laminmax}
\la_{\min}(\n):=\min_{v\in V\smallsetminus\{0\}}\n(v),\quad\la_{\max}(\n):=\max_{v\in V\smallsetminus\{0\}}\n(v).
\end{equation}
Thus 
$$
\dd_\infty(\n,\n_\triv)=\max\{\la_{\max}(\n),-\la_{\min}(\n)\}.
$$
Any norm $\chi$ on $V$ induces a norm on the dual space and on all tensor powers, in such a way that the bases canonically induced by any given orthogonal basis of $V$ remain orthogonal. If $\pi\colon V\to V'$ is a surjective linear map, then $\chi$ also induces a quotient norm $\chi'$ on $V'$, such that $\chi'(v')=\max\{\chi(v)\mid \pi(v)=v'\}$ for all $v'\in V'$. 
%
\subsection{Norms on a graded algebra}\label{sec:normsalg}

Let now $R=\bigoplus_{m\in\N} R_m$ be a graded $k$-algebra. It comes with an action of $k^\times$ for which $a\cdot s=a^{m}s$ for $a\in k^\times$ and $s\in R_m$. We write $\cN_\R(R)$ for the set of vector space norms $\chi\colon R\to\R$ that are
\begin{itemize} 
\item \emph{superadditive}, \ie $\chi(f g)\ge\chi(f)+\chi(g)$ for $f,g\in R$; 
\item \emph{$k^\times$-invariant}, \ie $\chi(a\cdot f)=\chi(f)$ for $a\in k^\times$ and $f\in R$; this is equivalent to $\chi$ being compatible with the grading of $R$, that is, $\chi(\sum_ms_m)=\min_m\chi(s_m)$ where $s_m\in R_m$; 
\item \emph{linearly bounded}, \ie there exists $C>0$ such that $|\chi|\le Cm$ on $R_{m}\setminus\{0\}$ for all $m\ge1$. \end{itemize}
Norms in $\cN_\R(R)$ are in 1--1 correspondence with graded, linearly bounded filtrations of $R$ as in~\cite{BC,WN12}. Each $\chi\in\cN_\R(R)$ defines a graded algebra
$$
\gr_\chi R=\bigoplus_{m\in\N}\gr_\chi R_m=\bigoplus_{(m,\la)\in\N\times\R}\Filt^\la R_m/\Filt^{>\la} R_m.
$$
   
\begin{lem}\label{lem:normval} A norm $\n\in\cN_\R(R)$ is a valuation on $R$, \ie it satisfies $\chi(fg)=\chi(f)+\chi(g)$ for all $f,g\in R$, iff $\gr_\chi R$ is an integral domain. 
\end{lem}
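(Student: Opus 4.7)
The plan is to exploit the elementary identity $\overline{fg} = \bar{f}\cdot\bar{g}$ in the associated graded, where for any nonzero $h \in R$, $\bar{h}$ denotes the image of $h$ in $\Filt^{\chi(h)} R / \Filt^{>\chi(h)} R$. By the very definition of $\chi(h)$ as the largest $\lambda$ with $h \in \Filt^\lambda R$, the class $\bar{h}$ is nonzero for every nonzero $h$. Superadditivity guarantees $fg \in \Filt^{\chi(f)+\chi(g)} R$, so that $\overline{fg}$ is a well-defined class in $\gr_\chi^{\chi(f)+\chi(g)} R$, and it is nonzero precisely when $\chi(fg) = \chi(f)+\chi(g)$ rather than being strictly greater. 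Thus the valuation property is equivalent to the statement: for all nonzero $f,g \in R$, the product $\bar{f}\cdot\bar{g}$ is nonzero in $\gr_\chi R$.

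For the direction ``valuation $\Rightarrow$ integral domain'', the preceding reformulation immediately gives that the product of two nonzero \emph{homogeneous} (in the $\R$-grading) elements of $\gr_\chi R$ is nonzero: lift them to $f, g \in R$ with $\chi(f), \chi(g)$ equal to the corresponding $\R$-degrees, and apply the equivalence. To handle arbitrary nonzero $F, G \in \gr_\chi R$, I decompose $F = \sum_\lambda F_\lambda$, $G = \sum_\mu G_\mu$ as finite sums of $\R$-homogeneous components; using the total order on $\R$, I let $\lambda_0$ and $\mu_0$ be the minimum $\R$-degrees with $F_{\lambda_0}, G_{\mu_0}$ nonzero. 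Then the degree-$(\lambda_0+\mu_0)$ component of $FG$ reduces to the single term $F_{\lambda_0} G_{\mu_0}$ (any other contributing pair $(\lambda,\mu)$ with $\lambda \geq \lambda_0$, $\mu \geq \mu_0$ and $\lambda + \mu = \lambda_0 + \mu_0$ must coincide with $(\lambda_0,\mu_0)$), and this is nonzero by the homogeneous case. Hence $FG \neq 0$.

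For the reverse direction, given nonzero $f, g \in R$, superadditivity gives $\chi(fg) \geq \chi(f) + \chi(g)$. The classes $\bar{f}, \bar{g}$ are nonzero by the general remark, so by the integral domain assumption, $\bar{f}\cdot\bar{g} \neq 0$ in $\gr_\chi^{\chi(f) + \chi(g)} R$; since this product equals $\overline{fg}$, the nonvanishing rules out $\chi(fg) > \chi(f) + \chi(g)$, forcing equality.

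The only subtle point—hardly an obstacle—is in the forward direction, where nonzero elements of $\gr_\chi R$ need not themselves be $\R$-homogeneous; the fix, which leverages the total order on $\R$ to isolate the minimal-degree summand of the product, is standard in the theory of filtered rings. Note in particular that the $\N$-grading from $R$ plays no essential role here; only the $\R$-grading induced by $\chi$ is used.
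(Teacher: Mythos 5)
Your proof is correct. The paper states Lemma~\ref{lem:normval} without proof, treating it as a standard fact about filtered rings, and your argument is exactly the expected one: the symbol identity $\overline{fg}=\bar f\cdot\bar g$ reduces the valuation property to the nonvanishing of products of nonzero $\R$-homogeneous elements, and the reduction of the general zero-divisor question to the homogeneous case via the minimal-degree component is the standard device. You are also right that only the $\R$-grading induced by $\chi$ matters here; the $\N$-grading of $R$ is compatible with it (since $\chi(\sum_m s_m)=\min_m\chi(s_m)$) and plays no role in the argument.
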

   
The set $\cN_\R(R)\hto\prod_m\cN_\R(R_m)$ is stable under the scaling action of $\R_{>0}$ and infima; it further admits an additive action of $\R$, denoted by $(c,\chi)\mapsto \chi+c$, such that 
\begin{equation}\label{equ:gradedtrans}
(\chi+c)(s):=\chi(s)+cm\ \text{for $s\in R_m$}.
\end{equation}
For any subgroup $\Lambda\subset\R$, denote by $\cN_\Lambda(R)\subset\cN_\R(R)$ the set of norms with values in $\Lambda\cup\{+\infty\}$. Norms in $\cN_\Z(R)$ and $\cN_\Q(R)$ will be called \emph{integral} and \emph{rational}, respectively. Integral norms are in 1--1 correspondence with $\Z$-filtrations, as considered in~\cite{Sze}. 

For any norm $\chi\in\cN_\R(R)$, the \emph{round-down}  $\lfloor\chi\rfloor\in\cN_\Z(R)$, defined by 
\begin{equation}\label{equ:rounddown}
\lfloor\chi\rfloor(s):=\lfloor\chi(s)\rfloor,\quad s\in R_m\setminus\{0\}, 
\end{equation}
is an integral norm.

\begin{exam}\label{exam:pdisc}
  Consider the algebra $k[z]=k[z_1,\dots,z_n]$ of polynomials in $n$ variables, with the usual grading. For each    $\xi\in\R^n$, the monomial valuation  
  \begin{equation}\label{equ:pdisc}
    \n_\xi(\sum_{\a\in\N^n} c_\a z^\a)=\min_{c_\a\ne 0}\langle\a,\xi\rangle=\min_\a\{v_\triv(c_\a)+\langle\a,\xi\rangle\}
 \end{equation}
  defines a norm on the graded algebra $k[z]$. The completion of $(k[z],\n_\xi)$ is the algebra $k\{z;\xi\}$ of formal power series $\sum_\a c_\a z^\a\in k\cro{z}$ such that $\lim_\a (v_\triv(c_\a)+\langle\a,\xi\rangle)=+\infty$, whose norm is still defined by~\eqref{equ:pdisc}. In multiplicative notation, $k\{z;\xi\}$ is the polydisc algebra $k\{r^{-1}z\}$, with $r_j=e^{-\xi_j}$, a building block of Berkovich spaces~\cite{BerkBook,BerkIHES}.   
\end{exam}

From now on, we assume that $R$ is finitely generated, so that each graded piece $R_m$ is finite dimensional. 
\begin{defi} We say that a norm $\chi\in\cN_\R(R)$ is \emph{generated in degree $1$} if $R$ is generated in degree $1$ and, for any $m\ge1$, the restriction $\chi|_{R_m}$ is the quotient norm of $S^m(\chi|_{R_1})$ under the canonical surjective map $S^mR_1\to R_{m}$.
\end{defi}
Concretely, $\chi$ is generated in degree $1$ iff, given a $\chi$-orthogonal basis $(s_i)$ of $R_1$, any $s\in R_m$ can be written as $s=\sum_{|\a|=m} c_\a \prod_i s_i^{\a_i}$ with $c_\a\in k$ and $\chi(s)=\min_{c_\a\ne 0}\sum_i\a_i\chi(s_i)$. 

\begin{lem}\label{lem:nft} For any subgroup $\La\subset\R$ and $\chi\in\cN_\La(R)$, the following conditions are equivalent: 
\begin{itemize}
\item[(i)] $\chi$ is generated in degree $1$; 
\item[(ii)] $\gr_\chi R=\bigoplus_{m\in\N}\gr_\chi R_m$ is generated in degree $1$; 
\item[(iii)] there exists    $\xi\in\La^N$ and a surjective map of graded $k$-algebras $\pi\colon k[z_1,\dots,z_N]\to R$ with respect to which $\chi$ is the quotient norm of $\n_\xi$ as in Example~\ref{exam:pdisc}.   
\end{itemize}
When this holds, we further have $\chi\in\cN_{\La'}(R)$ for some finitely generated subgroup $\La'\subset\La$. 
\end{lem}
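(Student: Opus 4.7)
The plan is to prove the three conditions equivalent by the chain \textrm{(iii)}$\Rightarrow$\textrm{(i)}$\Rightarrow$\textrm{(ii)}$\Rightarrow$\textrm{(i)} plus the direct construction \textrm{(i)}$\Rightarrow$\textrm{(iii)}, and then read off the finite generation from \textrm{(iii)}. Two general facts underlie everything: (a) if $V\twoheadrightarrow V'$ is a surjection of finite-dimensional normed spaces and $\chi'$ is the quotient norm of $\chi$, then $\gr_{\chi'}V'=\gr_\chi V/\gr_\chi W$ with $W=\ker$, so in particular $\gr_\chi V\twoheadrightarrow\gr_{\chi'}V'$; and (b) the canonical isomorphism $\gr_{S^m\chi}S^mV\simeq S^m(\gr_\chi V)$, which follows immediately from any $\chi$-orthogonal basis. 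Both are standard and I would only quickly invoke them.

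The implication \textrm{(i)}$\Rightarrow$\textrm{(ii)} is then formal: applying (a) to $\pi_m\colon S^mR_1\twoheadrightarrow R_m$ (whose quotient norm is $\chi|_{R_m}$ by hypothesis) and then (b) yields a surjection $S^m(\gr_\chi R_1)\twoheadrightarrow\gr_\chi R_m$, which is exactly the statement that $\gr_\chi R$ is generated in degree $1$. For \textrm{(iii)}$\Rightarrow$\textrm{(i)}, the norm $\chi_\xi$ on $k[z]$ is visibly generated in degree $1$ (the $z_i$'s form a $\chi_\xi$-orthogonal basis and $\chi_\xi(z^\alpha)=\sum\alpha_i\xi_i$), and being generated in degree $1$ passes to quotient norms via the factorization $S^mR_1\to R_m$ of the surjection, using again that the quotient norm commutes with $S^m$. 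For \textrm{(i)}$\Rightarrow$\textrm{(iii)} one takes a $\chi$-orthogonal basis $(s_i)_{i=1}^N$ of $R_1$, sets $\xi_i:=\chi(s_i)\in\La$, and maps $\pi\colon k[z_1,\dots,z_N]\to R$ by $z_i\mapsto s_i$; that $\pi$ is surjective follows from $R$ being generated in degree $1$ (a consequence of (i)), and in each degree $m$ the identification $(S^mR_1,S^m\chi|_{R_1})\simeq(k[z]_m,\chi_\xi|_{k[z]_m})$ shows that $\chi|_{R_m}$ is the quotient of $\chi_\xi|_{k[z]_m}$.

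The one step that requires genuine work is \textrm{(ii)}$\Rightarrow$\textrm{(i)}. Fix a $\chi$-orthogonal basis $(s_i)$ of $R_1$ and set $s^\alpha:=\prod_i s_i^{\alpha_i}$. The quotient norm $\chi_{\mathrm{quot}}$ of $S^m\chi|_{R_1}$ on $R_m$ satisfies $\chi_{\mathrm{quot}}\le\chi|_{R_m}$ by superadditivity, and one must produce, for every $s\in R_m$, a decomposition $s=\sum c_\alpha s^\alpha$ realizing $\min_{c_\alpha\ne 0}\sum_i\alpha_i\chi(s_i)=\chi(s)$. The key point is that in $\gr^{\chi(s)}_\chi R_m$ the class $\bar s$ can, by (ii), be written as a linear combination of the images of the monomials $\bar s^\alpha$; inspecting the $\La$-grading shows that only those $\alpha$ with $\sum\alpha_i\chi(s_i)=\chi(s)$ (and a fortiori $\chi(s^\alpha)=\sum\alpha_i\chi(s_i)$) can contribute. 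Subtracting this combination strictly increases $\chi$ on the remainder, and since $\chi$ takes only finitely many values on $R_m\setminus\{0\}$ (the values $\chi(e_j)$ of any orthogonal basis), the process terminates after finitely many steps; this is the step I anticipate as the main obstacle, since one has to track the $\La$-grading carefully while iterating. Assembling the successive corrections yields the required decomposition.

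Finally, for the last assertion: under \textrm{(iii)} the values of $\chi_\xi$ on $k[z]\setminus\{0\}$ lie in the finitely generated subgroup $\La':=\sum_{i=1}^N\Z\xi_i\subset\La$, and since the quotient of $(k[z]_m,\chi_\xi)$ on $R_m$ is computed as a max over the (finitely many) values of $\chi_\xi$, one concludes $\chi\in\cN_{\La'}(R)$.
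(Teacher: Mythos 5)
Your proof is correct and follows essentially the same route as the paper: the same cycle of implications, the same iterative ``subtract the leading monomial combination and use that $\chi$ takes finitely many values on $R_m$'' argument for (ii)$\Rightarrow$(i), and the same orthogonal-basis construction for (i)$\Rightarrow$(iii). The only cosmetic difference is that you package (i)$\Rightarrow$(ii) and (iii)$\Rightarrow$(i) via the general compatibilities of $\gr$ with quotient norms and symmetric powers, where the paper argues directly with an orthogonal basis (and extracts the final finite-generation claim from that computation rather than from (iii), both of which are fine).
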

\begin{proof} Assume (i). Choose a $\chi$-orthogonal basis $(s_i)_{1\le i\le N}$ of $R_1$,    and set $\xi_i:=\n(s_i)$. As noted above, any $s\in R_m\smallsetminus\{0\}$ can be written as $s=\sum_{|\a|=m} c_\a \prod_i s_i^{\a_i}$ with $\chi(s)=\min_{c_\a\ne 0}\sum_i\a_i\xi_i$. This already yields the final assertion, with $\La':=\sum_i\Z\xi_i$. 

Define $A$ as the set of $\a$ achieving $\min_{c_\a\ne 0}\sum_i\a_i\xi_i=\n(s)$    and set $s':=\sum_{\a\in A} c_\a \prod_i s_i^{\a_i}$. Then $s-s'\in\Filt^{>\chi(s)} R_m$, so $s=s'$ in $\gr_\chi R_m$. This shows that $S^m\gr_\chi R_1\to\gr_\chi R_m$ is surjective, and hence (i)$\Rightarrow$(ii). If we define $\pi\colon k[z]\to R$ by $\pi(z_i)=s_i$, then it is clear that $\chi$ is the quotient norm of $\n_\xi$ with $\xi=(\xi_i)$, hence (i)$\Rightarrow$(iii). 

Conversely, any quotient of a norm generated in degree $1$ is plainly generated in degree $1$ as well; hence (iii)$\Rightarrow$(i). Assume now (ii), and pick again a $\chi$-orthogonal basis $(s_i)$ of $R_1$. Each $s\in R_m\smallsetminus\{0\}$ can then be written as $s=\sum_{|\a|=m} a_\a \prod_i s_i^{\a_i}+s'$ where $a_\a\in k^\times$, $\chi(s)=\sum_i\a_i\chi(s_i)$ for all $\a$ and $s'\in\Filt^{>\chi(s)} R_m$.
Repeating the procedure with $s'$ in place of $s$ and using the fact that $\la\mapsto\Filt^\la R_m$ jumps only finitely many times (by finite-dimensionality of $R_m$), we end up with a decomposition $s=\sum_{|\a|=m} c_\a\prod_i s_i^{\a_i}$ such that $\chi(s)=\min_{c_\a\ne 0}\sum_i\a_i\chi(s_i)$. This proves that $\chi$ is generated in degree $1$, thus (ii)$\Rightarrow$(i). 
\end{proof}

%
\subsection{Norms on section rings}\label{sec:normsec} 
Recall that $L$ is an ample $\Q$-line bundle on a projective variety $X$. For any $d\in\N$ such that $dL$ is an actual line bundle, we write $R_d:=\Hnot(X,dL)$, and denote by 
$$
R^{(d)}=R(X,dL)=\bigoplus_{m\in\N} R_{md} 
$$
the \emph{$d$-th Veronese algebra}, \ie the section ring of $dL$; it is generated in degree $1$ for all $d$  sufficiently divisible, since $L$ is ample. When $d$  divides $d'$, we have a restriction map $\cN_\R(R^{(d)})\to\cN_\R(R^{(d')})$, and we set
\begin{equation}\label{equ:infvero}
\cN_\R=\cN_\R(X,L):=\varinjlim_d\cN_\R(R^{(d)}). 
\end{equation}
The set $\cN_\R$ inherits a partial order with finite infima, and commuting actions of $\R_{>0}$ (by scaling) and $\R$ (by translation).

An element $\chi\in\cN_\R$ is represented by a norm on some $R^{(d)}$, two such norms being identified if they coincide on some further Veronese subalgebra; for convenience, we simply refer to $\chi$ as a norm. For all $m$ sufficiently divisible, we denote by $\chi|_{R_m}\in\cN_\R(R_m)$ the restriction of $\chi$ to $R_m$. 

\begin{rmk}\label{rmk:depnot} To define $\chi|_{R_m}$, one needs to choose a representative of $\chi$ as a norm on some $R^{(d)}$. But any other choice leads to the same norms $\chi|_{R_m}\in\cN_\R(R_m)$ for $m$ sufficiently divisible, and the choice of representative can thus safely be ignored. 
\end{rmk}

   For any subgroup $\La\subset\R$, we similarly introduce 
$$
\cN_\La:=\varinjlim_d\cN_\La(R^{(d)}). 
$$
It can be identified with the set of $\n\in\cN_\R$ such that $\chi(R_m\setminus\{0\})\subset\Lambda$ for $m$ sufficiently divisible. Note that $\cN_\La$ is invariant under the scaling action of $\{t\in\R_{>0}\mid t\La\subset\La\}$ and the translation action of the divisible group $\Q\La\subset\R$, by~\eqref{equ:gradedtrans}.

  \begin{exam}\label{exam:tc}
    Any (not necessarily ample) test configuration $(\cX,\cL)/\A^1$ defines a norm $\n_\cL\in\cN_\Z$ (see \S\ref{sec:tc1}). In this case, the 
    translation action by $c\in\Q$ corresponds to twisting $\cL$ by $c\cX_0$, while the scaling action by $d\in\Z_{>0}$ corresponds to the base change $\A^1\to\A^1$ given by $z\mapsto z^d$. 
\end{exam}

The Goldman--Iwahori metric~\eqref{equ:GI} induces a pseudo-metric $\dd_\infty$ on $\cN_\R$ by setting 
\begin{equation}\label{equ:GIas} 
\dd_\infty(\chi,\chi'):=\limsup_m m^{-1}\dd_\infty(\chi|_{R_m},\chi'|_{R_m})\in\R_{\ge 0}
\end{equation}
The limsup is taken with respect to the partial ordering on $\Z_{>0}$ by divisibility, and it is finite, by linear boundedness of $\chi,\chi'$. This pseudo-metric is not a metric (see however Proposition~\ref{prop:homcomp}):

\begin{exam}\label{exam:dinftyround} Pick any norm $\n\in\cN_\R$, with round-down $\lfloor\n\rfloor\in\cN_\Z$, see~\eqref{equ:rounddown}. For $m$ sufficiently divisible, we then have $\dd_\infty(\n|_{R_m},\lfloor\n\rfloor|_{R_m})\le 1$, and hence $\dd_\infty(\n,\lfloor\n\rfloor)=0$. In particular, $\cN_\Z$ is dense in $\cN_\R$ in the $\dd_\infty$-topology. 
\end{exam}

We also introduce
\begin{equation}\label{equ:lamax}
\la_{\max}(\n):=\lim_m m^{-1}\la_{\max}(\n|_{R_m}),
\end{equation}
where $\la_{\max}(\n|_{R_m})$ is defined by~\eqref{equ:laminmax} and the limit exists and is finite because $m^{-1}\la_{\max}(\n|_{R_m})$ is increasing with respect to divisibility, and bounded by linear boundedness of $\n$. Note that
\begin{equation}\label{equ:dtriv}
\n\ge\n_\triv\Longrightarrow\dd_\infty(\n,\n_\triv)=\la_{\max}(\n).
\end{equation}

%
\subsection{$\R$-test configurations}\label{sec:normft}

\begin{defi} We say that a norm $\chi\in\cN_\R$ is \emph{of finite type} if it is represented by a norm on some $R^{(d)}$ whose associated graded algebra $\gr_\chi R^{(d)}$ is of finite type. 
\end{defi} 
  
Equivalently, a norm $\chi\in\cN_\R$ is of finite type iff it is represented by a norm on some $R^{(d)}$ that is generated in degree $1$, by Lemma~\ref{lem:nft}. We denote by 
$$
\cT_\R\subset\cN_\R
$$
the set of such norms. In line with~\cite{DS,HL}, we interpret the elements of $\cT_\R$ as \emph{$\R$-test configurations}. This is justified by the Rees construction, which sets up a 1--1 correspondence between the subset 
$$
\cT_\Z:=\cN_\Z\cap\cT_\R
$$
of $\Z$-valued norms in $\cT_\R$ and the set of (usual) ample test configurations for $(X,L)$ (see Appendix~\ref{sec:tc}).    For any $\n\in\cN_\Z$, note further that
\begin{equation}\label{equ:Zft}
\n\in\cT_\Z\Longleftrightarrow\bigoplus_{\la\in\Z}\Filt^\la R^{(d)}\text{ of finite type over }k\text{ for }r\text{ sufficiently divisible}.
\end{equation}  
   More generally, for any subgroup $\La\subset\R$ we set
$$
\cT_\La:=\cN_\La\cap\cT_\R. 
$$
   As above, $\cT_\La$ is invariant under the scaling action of $\{t\in\R_{>0}\mid t\La\subset\La\}$ and the translation action of $\Q\La$. In particular, $\cT_\Z$  is invariant under translation by $\Q$. It is also easy to see that 
\begin{equation}\label{equ:lamaxft}
\n\in\cT_\La\Longrightarrow\la_{\max}(\n)\in\Q\La.
\end{equation}

By Lemma~\ref{lem:nft}, we have
$$
\cT_\R=\bigcup_{\La\subset\R\text{ finitely generated}} \cT_{\La}. 
$$
  
The \emph{central fiber} of an $\R$-test configuration $\n\in\cT_\R$ is defined as the polarized scheme 
\begin{equation}\label{equ:centralfiber}
(\cX_0,\cL_0):=\left(\Proj\left(\gr_\n R^{(d)}\right),d^{-1}\cO(1)\right),
\end{equation}
for $d\ge 1$ sufficiently divisible. If $\n\in\cT_\La$ with $\La\simeq\Z^r$ finitely generated, the $\La$-grading of $\gr_\n R^{(d)}$ provides a $\Gm^r$-action on $(\cX_0,\cL_0)$. 

The smallest value of $r$ is called the \emph{rank} of $\n$; it is equal to $1$ iff $\n$ is a usual test configuration, up to scaling. 

\begin{rmk}\label{rmk:rank} For an $\R$-test configuration $\n\in\cT_\R$, there does not generally exist a smallest subgroup $\La\subset\R$ such that $\n\in\cT_\La$, because the subgroup $\La_m\subset\R$ generated by the values of $\n|_{R_m}$ need not stabilize for $m$ sufficiently divisible. However, the associated $\Q$-vector space $\Q\La_m$ does stabilize, its dimension being the rank of $\n$. 
\end{rmk}

\begin{exam}\label{exam:torusaction} Extending Example~\ref{exam:pdisc}, suppose that $(X,L)$ is acted upon by a torus $T=\Gm^r$. Then each $\xi\in\R^r$ defines a norm $\n=\n_\xi\in\cN_\R$, given by 
$$
\n(s):=\min\left\{\langle\a,\xi\rangle\mid\a\in\Z^r,\,s_\a\ne 0\right\}
$$
for $s\in R_m$ with $m$ sufficiently divisible, where $s=\sum_{\a\in\Z^r} s_\a$ is the weight decomposition. This norm satisfies 
$$
\gr_\n R^{(d)}\simeq \bigoplus_{\la\in\R}\left(\bigoplus_{\a\in M,\,\langle\a,\xi\rangle=\la} R^{(d)}_\a\right)=R^{(d)},
$$
which shows that $\chi\in\cT_\R$ is of finite type, with central fiber isomorphic to $(X,L)$.    Further, $\n$ lies in $\cT_\La$ for the finitely generated subgroup $\La=\sum_i\Z\xi_i$.   
\end{exam}

\begin{exam}\label{exam:HL} Pick an embedding $X\hto\P^N$ in a projective space such that $\cO(1)|_X=dL$ for some $d\ge 1$, and suppose we are given an action of a torus $T=\Gm^r$ on $(\P^N,\cO(1))$. By Example~\ref{exam:torusaction}, each $\xi\in\R^r$ defines a norm on $R(\P^N,\cO(1))$, generated in degree $1$, which restricts to a norm in $\cT_\R$. By Lemma~\ref{lem:nft}, every element of $\cT_\R$ conversely arises in this way (compare~\cite[Lemma~2.10]{HL}). 
\end{exam}
 
Following~\cite{HL,Ino}, one can use Example~\ref{exam:HL} to provide a geometric realization of $\R$-test configurations as equivariant polarized families over a toric base (see~\S\ref{sec:Inoue} for a brief discussion).

\begin{defi} We define the \emph{canonical approximants} of a norm $\chi\in\cN_\R$ as the sequence $\n_d\in\cT_\R$ defined for $d\in\Z_{\ge 1}$ sufficiently divisible by letting $\n_d$ be the (class of the) norm on $R^{(d)}$ generated in degree $1$ by $\n_d$. 
\end{defi}
If $d$  divides $d'$ then $\n_d\le\chi_{d'}\le\chi$. As in Remark~\ref{rmk:depnot}, this construction is not entirely canonical, as it depends on the choice of a representative of $\chi$, but this can be ignored as any other choice leads to the same approximants $\n_d$ for $d$ sufficiently divisible. 

A norm $\chi\in\cN_\R$ is of finite type iff $\chi=\n_d$ for all sufficiently divisible $d$. Note also that 
$$
\chi\in\cN_\La\Longrightarrow\n_d\in\cT_\La
$$
for any subgroup $\La\subset\R$.

%
%
\subsection{The Berkovich analytification}\label{sec:berk}
By a \emph{valuation} on $X$ we mean a real-valued valuation $v\colon k(X)^\times\to\R$, trivial on $k$. We denote by $\Xval$ the space of valuations, endowed with the topology of pointwise convergence on $k(X)^\times$. The \emph{trivial valuation} $v_\triv\in \Xval$ is defined by $v_\triv(f)=0$ for all $f\in k(X)^\times$.

By~\cite{BerkBook}, the space $\Xval$ admits a natural compactification $\Xan$, which as a set equals $\Xan=\coprod Y^\val$ with $Y$ ranging over all (closed) subvarieties of $X$. We somewhat imprecisely refer to the points on $\Xan$ as \emph{semivaluations} on $X$. The \emph{support} of a semivaluation in $Y^\val\subset\Xan$ is the subvariety $Y$.

By the valuative criterion of properness, each valuation $v\in \Xval$ admits a \emph{center} $c_X(v)\in X$, characterized as the unique (scheme) point $\xi\in X$ such that $v\ge 0$ on the local ring $\cO_{X,\xi}$ and $v>0$ on its maximal ideal. This applies to semivaluations as well, replacing $X$ with a subvariety, and thus defines a map $c_X\colon \Xan\to X$ (which turns out to be anticontinuous, \ie the preimage of an open subset is closed). 

The space $\Xan$ comes with a natural action of $\R_{>0}$ by scaling $(t,v)\mapsto tv$. This induces an action $(t,\f)\mapsto t\cdot\f$ on functions $\f$ on $X^\an$ by setting
\begin{equation}\label{equ:actionfunc}
(t\cdot\f)(v):=t\f(t^{-1}v), 
\end{equation}
whose fixed points are functions that are \emph{homogeneous}, \ie $\f(tv)=t\f(v)$ for all $t\in\R_{>0}$ and $v\in X^\an$. 

The set $X^\an$ is also endowed with a partial order relation, for which $v\ge v'$ iff $c_X(v)$ is a specialization of $c_X(v')$ and $v\ge v'$ pointwise on the local ring at $c_X(v)$. The trivial valuation satisfies $v\ge v_{\triv}$ for all $v\in \Xan$. 

A (rational) \emph{divisorial valuation} $v$ on $X$ is a valuation of the form $v=t\ord_E$, where $E$ is a prime divisor on a normal, projective birational model $X'\to X$ and $t\in\Q_{>0}$. The center $c_X(v)$ is then the generic point of the image of $E$ in $X$.
For convenience, we also count the trivial valuation $v_\triv$ as divisorial, \ie we allow $t=0$ above. The set $\Xdiv$ of divisorial valuations  is dense in $\Xan$ (see for instance~\cite[Theorem~2.14]{trivval}).

%
%
\subsection{Semivaluations and line bundles}\label{sec:vallb}
A semivaluation $v\in \Xan$ can be naturally evaluated on a section $s\in \Hnot(X,M)$ of any line bundle $M$ on $X$, by defining $v(s)$ as the value of $v$ on the local function corresponding to $s$ in any local trivialization of $M$ at $c_X(v)$. Thus $v(s)\in[0,+\infty]$, $v(s)>0$ iff $s$ vanishes at $c_X(v)$, and $v(s)=\infty$ iff $s$ vanishes along the support of $v$. Further, $v\in \Xval$ iff $v(s)<+\infty$ for all $s\in \Hnot(X,M)\smallsetminus\{0\}$ and all line bundles $M$. We define a continuous function $|s|\colon \Xan\to[0,1]$ by setting 
\begin{equation}\label{equ:trivmetric}
|s|(v):=\exp(-v(s)). 
\end{equation}

Now suppose $L$ is an (ample) line bundle. The $\Z$-grading of $R=R(X,L)$ defines an action of $\Gm$ on the affine cone $Y:=\Spec R$, which comes with a natural surjective $\Gm$-invariant morphism $\pi\colon Y\smallsetminus\{o\}\to X$, where the vertex $o$ of $Y$ is the point defined by the maximal ideal $\bigoplus_{m>0}R_m$.
For any $\xi\in X$, the fiber $\pi^{-1}(\xi)$ contains a unique $\Gm$-invariant point defined by the homogeneous prime ideal generated by all sections $s\in R_m$, $m\ge 1$ that vanish at $\xi$.

By general properties of the analytification functor in~\cite{BerkBook}, the $\Gm$-action on $Y$ induces an action of $\Gm(k)=k^\times$ on $\Yan$, and $\pi$ induces a surjective $k^\times$-invariant map $\pian\colon\Yan\smallsetminus\{w_o\}\to\Xan$, where $w_o\in\Yan$ is the trivial semivaluation with support $o$, which satisfies $w_o=+\infty$ on $\bigoplus_{m>0}R_m$.
A semivaluation $w\in\Yan$ is $k^\times$-invariant iff $w(\sum_ms_m)=\min_mw(s_m)$, where $s_m\in R_m$.

It is easy to see~\cite[\S4.2]{LiEquivariant} that if $v\in\Xan$, then the set of $k^\times$-invariant points in $(\pian)^{-1}(v)$ is of the form $\{w_{v,c}\}_{c\in\R}$, where $w_{v,c}$ is defined by 
\begin{equation}\label{equ:valext}
  w_{v,c}(s)=\min_m\{v(s_m)+cm\}\quad\text{for any $s=\sum_ms_m\in R$},
\end{equation}
and where the value $v(s_m)$ is defined at the top of of this section.
Note that $w_{v,c}$ is centered at the vertex $o$ iff $\la>0$. 

%
%
\subsection{Valuations of linear growth and dreamy valuations}\label{sec:lingr}
Following~\cite{BKMS}, we define the \emph{maximal vanishing order} of (multisections of) $L$ at $v\in \Xan$ as 
\begin{equation}\label{equ:T}
  \tee(v):=\tee_L(v)=\sup m^{-1}v(s)\in[0,+\infty], 
\end{equation}
where the supremum is over $m$ sufficiently divisible and $s\in R_m\setminus\{0\}$.
We say that $v$ has \emph{linear growth} if $\tee(v)<+\infty$; this notion is independent of the ample $\Q$-line bundle $L$. The set $\Xlin\subset \Xan$ of valuations of linear growth satisfies 
$$
\Xdiv\subset\Xlin\subset\Xval.
$$
Further, setting 
\begin{equation}\label{equ:dinftylin}
\dd_\infty(v,w):=\sup m^{-1}|v(s)-w(s)|, 
\end{equation}
where the supremum is again over $m$ sufficiently divisible and $s\in R_m\setminus\{0\}$, defines a metric on $X^\lin$ such that $(X^\lin,\dd_\infty)$ is complete (see~\cite[\S 11.3]{trivval}). We refer to the $\dd_\infty$-topology of $X^\lin$ as the \emph{strong topology}. 

\begin{exam}\label{exam:Izumi} If $\pi\colon X'\to X$ is a proper birational morphism, with $X'$ normal, and $E\subset X'$ is a prime divisor which is $\Q$-Cartier, then $\tee_L(E)$ coincides with the pseudoeffective threshold $\sup\{t\ge 0\mid\pi^*L-tE\in\Psef(X)\}$ (see~\cite[Theorem 2.24]{BKMS}). 
\end{exam}

    Any $v\in\Xlin$ defines a norm $\chi_v\in\cN_\R$, given by $\chi_v(s):=v(s)$ for $s\in R_m$ with $m$ sufficiently divisible. It satisfies $\la_{\max}(\n_v)=\tee(v)$ (see~\eqref{equ:lamax}). Further, the map 
$$
\Xlin\to\cN_\R,\quad v\mapsto\n_v
$$
is injective, because the function field of $X$ coincides with the homogeneous fraction field of $R^{(d)}$ for any $d$ sufficiently divisible. 

For any $v\in X^\lin$ and $c\in\R$, the norm $\chi_v+c$ can be viewed as a valuation on the affine cone $\Spec R^{(d)}$ for $d$ sufficiently divisible; it coincides with $w_{v,c}$ in the notation of~\eqref{equ:valext}. By Lemma~\ref{lem:normval}, such norms are characterized as follows. 

\begin{lem}\label{lem:charac} A norm $\n\in\cN_\R$ is of the form $\n=\n_v+c$ with $v\in X^\lin$ and $c\in\R$ iff $\gr_\chi R^{(d)}$ is an integral domain for some (or any) sufficiently divisible $d$. 
\end{lem}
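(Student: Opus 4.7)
The plan is to reduce everything to Lemma~\ref{lem:normval}, together with the classification of $k^\times$-invariant semivaluations on the affine cone $Y=\Spec R^{(d)}$ recalled in~\S\ref{sec:vallb}.

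For the ``only if'' direction, suppose $\chi=\chi_v+c$ with $v\in X^\lin$ and $c\in\R$. As already pointed out just above the statement of the lemma, $\chi$ viewed as a norm on $R^{(d)}$ coincides with the $k^\times$-invariant semivaluation $w_{v,c}$ of~\eqref{equ:valext}, restricted to $R^{(d)}\subset R$ (up to the standard rescaling of the translation constant accounting for the different gradings of $R$ and $R^{(d)}$). In particular $\chi$ is multiplicative on $R^{(d)}$, so Lemma~\ref{lem:normval} applies and $\gr_\chi R^{(d)}$ is an integral domain.

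For the converse, assume $\gr_\chi R^{(d)}$ is an integral domain for some sufficiently divisible $d$. Lemma~\ref{lem:normval} then says that $\chi|_{R^{(d)}}$ is a valuation on the graded ring $R^{(d)}$. Since every element of $\cN_\R(R^{(d)})$ is by definition $k^\times$-invariant and linearly bounded (hence finite on all nonzero elements), this valuation defines a $k^\times$-invariant point $w\in Y^\an\smallsetminus\{w_o\}$. The classification recalled in~\S\ref{sec:vallb} then yields $w=w_{v,c_0}$ for some $v\in X^\an$ and $c_0\in\R$, i.e.\ $\chi(s)=v(s)+c_0m$ for every $s\in R_{md}\smallsetminus\{0\}$ and every sufficiently divisible $m$. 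The final step is to upgrade $v$ from an arbitrary point of $X^\an$ to an element of $X^\lin$: linear boundedness of $\chi$ immediately gives $\tee(v)\le\la_{\max}(\chi)<+\infty$, while finiteness of $\chi$ on $R^{(d)}\smallsetminus\{0\}$ rules out the possibility that $v$ is supported on a proper subvariety of $X$, since any section vanishing on such a subvariety but not identically zero would produce $v(s)=+\infty$. Hence $v\in X^\lin$, and after translating $c_0$ back into the scaling of the additive action on $\cN_\R$, we obtain $\chi=\chi_v+c$ in $\cN_\R$ for some $c\in\R$. The ``some or any'' equivalence in the statement is then automatic, by applying the forward direction to arbitrary sufficiently divisible $d$.

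The argument is essentially bookkeeping once Lemma~\ref{lem:normval} is in hand; the only mild subtlety, rather than a genuine obstacle, is keeping track of the relation between the grading of $R$ used in~\eqref{equ:valext} and that of $R^{(d)}$ used in the definition of $\cN_\R$, which only affects the value of the translation constant.
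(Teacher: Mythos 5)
Your argument is correct and is exactly the one the paper intends: the lemma is stated as an immediate consequence of Lemma~\ref{lem:normval} combined with the classification of $k^\times$-invariant points of $Y^{\mathrm{an}}\smallsetminus\{w_o\}$ as the semivaluations $w_{v,c}$ from~\S\ref{sec:vallb}, and you have simply made explicit the routine verifications (finiteness forcing $v\in X^{\mathrm{val}}$, linear boundedness forcing $v\in X^{\mathrm{lin}}$) that the paper leaves to the reader. No gaps.
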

When $\n\in\cT_\R$ is of finite type, the latter condition means that the corresponding central fiber $\cX_0$ is reduced and irreducible, see~\eqref{equ:centralfiber}. 

\begin{exam}\label{exam:torusval} Suppose that a torus $T$ acts on $(X,L)$. By Example~\ref{exam:torusaction}, each $\xi\in N_\R$ defines a norm $\chi_\xi\in\cT_\R$ whose associated central fiber $\cX_0\simeq X$ is integral. By Lemma~\ref{lem:charac}, $\chi_\xi$ thus determines a valuation $v_\xi\in\Xlin$, which only depends on the $T$-action on $X$, and can be obtained by the `action' of $\xi\in N_\R\subset T^\an$ on $v_\triv\in X^\an$ in the sense of `peaked points' (see~\cite[\S5.2]{BerkBook}). 
\end{exam} 

In the terminology of~\cite{Fujval}, a divisorial valuation $v\in X^\div$ such that $\n_v$ is of finite type is called \emph{dreamy} (with respect to $L$). 

\begin{exam} Assume $X$ is normal and $E\subset X$ is a $\Q$-Cartier prime divisor. If $v:=\ord_E$ is dreamy with respect to  $L$, then the pseudoeffective threshold
$$
\sup\{t\ge 0\mid L-tE\in\Psef(X)\}=\tee_L(v)=\la_{\max}(\n_v)
$$
is necessarily rational (\cf Example~\ref{exam:Izumi} and~\eqref{equ:lamaxft}). Examples with an irrational threshold are well-known (\eg when $X$ is an abelian surface of Picard number at least $2$), and therefore provide simple examples of non-dreamy valuations. 
\end{exam}

The next result generates examples of divisorial valuations that are not dreamy for any polarization of $X$. 

\begin{lem}\label{lem:dreamyloc} Pick a dreamy valuation $v\in X^\div$ (with respect to a given ample $\Q$-line bundle $L$), and assume that $v$ is centered at a closed point $p\in X(k)$, with valuation ideals 
$$
\fa_m:=\{f\in\cO_{X,p}\mid v(f)\ge m\}.
$$
Then the Rees algebra $\bigoplus_{m\in\N}\fa_m$ is of finite type over $\cO_{X,p}$. 
\end{lem}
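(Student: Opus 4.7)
The plan is to make $v$ look monomial in suitable local coordinates, and then invoke Gordan's lemma.

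First I will enlarge $d$ so that $dL$ is very ample and $\chi_v$ on $R^{(d)}$ is generated in degree $1$, via Lemma~\ref{lem:nft}. I will choose a $\chi_v$-orthogonal basis $(s_0, s_1, \ldots, s_N)$ of $R_d$ in which $s_0(p) \ne 0$; this is possible since $dL$ is basepoint free at $p$, so $\lambda_{\min}(\chi_v|_{R_d}) = 0$ is attained by any basis element not vanishing at $p$. Setting $\lambda_i := v(s_i) \in \Q_{\ge 0}$, we then have $\lambda_0 = 0$. The affine open $U := D_+(s_0)$ contains $p$ with coordinate ring $\cO(U) = (R^{(d)})_{(s_0)} = k[y_1, \ldots, y_N]/\bar{J}$, where $y_i := s_i/s_0$, and each $y_i$ lies in $\cO_{X,p}$ because $s_0$ is a unit there.

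The main step is to show that the restriction of $v$ to $\cO_{X,p}$ agrees with the quotient, through the above presentation, of the monomial valuation $v_{\mathrm{mon}}$ on $k[y_1, \ldots, y_N]$ defined by
\[
v_{\mathrm{mon}}\Bigl(\sum_{\a} c_\a y^\a\Bigr) := \min_{c_\a \ne 0} \langle \a, \lambda \rangle, \qquad \langle \a, \lambda \rangle := \sum_{i=1}^{N} \a_i \lambda_i.
\]
The inequality $v(h) \ge v_{\mathrm{mon}}(f)$ for any polynomial representative $f$ of $h \in \cO_{X,p}$ follows immediately from the non-Archimedean property together with $v(y^\a) = \langle \a, \lambda \rangle$. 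For the reverse inequality, I will write $h = s/s_0^m$ with $s \in R_{md}$, and use the degree-$1$-generation property supplied by Lemma~\ref{lem:nft} to decompose $s = \sum_{|\a|=m} c_\a \prod_i s_i^{\a_i}$ in such a way that $v(s) = \min_{c_\a \ne 0} \sum_i \a_i \lambda_i$; dehomogenizing this expression by $s_0^{-m}$ yields (using $\lambda_0 = 0$) a monomial representation of $h$ in $y_1, \ldots, y_N$ whose $v_{\mathrm{mon}}$-value equals $v(h)$. This is the only place where the dreaminess assumption enters essentially.

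Granted this monomial description, the valuation ideal $\fa_m \subset \cO_{X,p}$ is generated as an ideal by the monomials $y^\a$ with $\langle \a, \lambda \rangle \ge m$, and hence the Rees algebra $\bigoplus_{m \ge 0} \fa_m$ is generated as an $\cO_{X,p}$-algebra by the monomials $y^\a t^m$ indexed by the sub-monoid
\[
S := \{(\a, m) \in \N^N \times \N \mid \langle \a, \lambda \rangle \ge m\} \subset \N^{N+1}.
\]
Since the $\lambda_i$ are rational, $S$ is the intersection of the lattice $\Z^{N+1}$ with a rational polyhedral convex cone, and is therefore finitely generated as a monoid by Gordan's lemma; any finite generating set of $S$ supplies the desired finite collection of algebra generators. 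The principal obstacle in this plan is the monomial description of $v$ on $\cO_{X,p}$; once Lemma~\ref{lem:nft} is unpacked into the explicit degree-$1$-generation property of $\chi_v$, the remaining steps are essentially formal.
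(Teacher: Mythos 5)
Your proof is correct, but it takes a genuinely different route from the paper's. The paper stays global: it reduces to $v$ being $\Z$-valued, uses~\eqref{equ:Zft} to get finite generation of the diagonal subalgebra $\bigoplus_m\Filt^mR_{dm}$, and then invokes the uniform global generation of $\cO_X(mdL)\otimes\fa_m$ from~\cite[Lemma~2.17]{BKMS} to identify $\Filt^mR_{dm}$ with $\Hnot(X,\cO_X(mdL)\otimes\fa_m)$ and descend finite generation to the stalk at $p$. You instead work locally: the concrete reformulation of ``generated in degree~$1$'' following Lemma~\ref{lem:nft}, dehomogenized on the chart $D_+(s_0)$, exhibits $v|_{\cO(U)}$ as the quotient of a monomial valuation with rational weights, after which each $\fa_m$ is monomially generated and Gordan's lemma finishes the job. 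What your approach buys is self-containedness (no appeal to~\cite{BKMS}) and a strictly stronger structural output --- a dreamy valuation centered at a closed point is a quotient of a rational monomial valuation, which also makes the rationality of $\vol(v)$ mentioned after the lemma transparent; what the paper's approach buys is brevity and independence from any choice of embedding. One small point you should make explicit: your monomial description lives on $\cO(U)=(R^{(d)})_{(s_0)}$, whereas $\fa_m$ sits in the localization $\cO_{X,p}$. This is harmless --- any $h\in\cO_{X,p}$ is $g/u$ with $g,u\in\cO(U)$ and $u(p)\ne0$, so $v(u)=0$ and $v(h)=v(g)$, whence $\fa_m$ is the extension to $\cO_{X,p}$ of the corresponding monomial ideal of $\cO(U)$ --- but as written your phrase ``polynomial representative of $h\in\cO_{X,p}$'' papers over this step.
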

In particular, the (local) \emph{volume} of $v$
$$
\vol(v)=\lim_{m\to\infty}\frac{n!}{m^m}\dim(\cO_{X,p}/\fa_m)
$$ 
must be rational (see~\cite{ELS}). 

\begin{proof} After replacing $v$ with a multiple, we may assume that $v$ is $\Z$-valued, and hence that $\n_v$ is a $\Z$-filtration. By~\eqref{equ:Zft}, the bigraded $k$-algebra $\bigoplus_{(\la,m)\in\Z\times\N}\Filt^\la R_{dm}$ is finitely generated over $k$ for $d$ sufficiently divisible, and hence so is the graded subalgebra $\bigoplus_{m\in\N}\Filt^m R_{dm}$.

On the other hand, by~\cite[Lemma~2.17]{BKMS}, we can find $d\ge 1$ sufficiently divisible such that $\cO_X(mdL)\otimes\fa_m$ is globally generated for all $m\in\N$. Since $\Hnot(X,\cO_X(mdL)\otimes\fa_m)=\Filt^m R_{dm}$, we infer that $\bigoplus_{m\in\N}\fa_m$ is of finite type over $\cO_{X,p}$. 
\end{proof}

\begin{exam} Assume $k=\C$, $\dim X\ge 4$, and pick a smooth point $p\in X(k)$. By~\cite{Kur03}, we can find a divisorial valuation $v\in X^\div$ centered at $p$ such that $\vol(v)$ is irrational. By Lemma~\ref{lem:dreamyloc}, $v$ is not dreamy with respect to any ample $\Q$-line bundle $L$ on $X$. 
\end{exam}

%
%
 \subsection{Fubini--Study functions}\label{sec:FSfunc}
A \emph{Fubini--Study function} (for $L$) is a function $\f\in\Cz=\Cz(X^\an)$ of the form
\begin{equation}\label{equ:FSalt}
  \f=\tfrac1m\max_j\{\log|s_j|+\la_j\}, 
\end{equation}
with $m\ge 1$ such that $mL$ is a (globally generated) line bundle, $(s_j)$ a finite set of $R_m$ without common zeros, and $\la_j\in\R$. Recall that~\eqref{equ:FSalt} means $\f(v)=\tfrac1m\max_j\{-v(s_j)+\la_j\}$ for all $v\in X^\an$, see~\eqref{equ:trivmetric}. 

\begin{rmk} The function $\f$ defines a continuous metric $|\cdot|e^{-m\f}$ on the Berkovich analytification of $mL$. This metric is the pullback of a standard (non-Archimedean) Fubini--Study (or Weil) metric on $\cO(1)$ under the morphism $X\to\P^N$ defined by $(s_j)_{0\le j\le N}$, which explains the chosen terminology.
\end{rmk}

If the $\la_j$ in~\eqref{equ:FSalt} can be chosen in a subgroup $\La\subset\R$, we say that $\f$ is a \emph{$\La$-Fubini--Study function}, and write $\cH_\La=\cH_\La(L)\subset\Cz$ for the set of such functions. Thus
$$
\{0\}=\cH_{\{0\}}\subset\cH_\La\subset\cH_\R.
$$
Note that 
\begin{equation}\label{equ:HQ}
\cH_\La=\cH_{\Q\La}
\end{equation}
and $\cH_\La(dL)=d\cH_\La(L)$ for any $d\in\Q_{>0}$.  The set $\cH_\La$ is stable under finite max and under the action of $\Q\La$ by translation. 

Recall the action~\eqref{equ:actionfunc} of $\R_{>0}$ on functions on $X^\an$. If $\f$ is given by~\eqref{equ:FSalt} and $t\in\R_{>0}$, then 
$$
t\cdot\f=\tfrac 1m\max_j\{\log|s_j|+t\la_j\}.
$$
Thus $\cH_\R$ is stable under the action of $\R_{>0}$, while $\cH_\La$ is stable under the action of the stabilizer $\{t\in\R_{>0}\mid t\La\subset\La\}$. In particular, $\cH_\Q$ is stable under the action of $\Q_{>0}$. 

%
%
%
%
\section{Homogenization and the Fubini--Study operator}\label{sec:homogFS}
In this section we study the homogenization of a norm, and the related Fubini--Study and infimum norm operators. We show that homogenization preserves norms of finite type, establish a 1--1 correspondence between homogeneous norms of finite type and Fubini--Study functions, and we prove Theorem~A in the case $p=\infty$. 

In what follows, $\cL^\infty$ denotes the space of bounded functions $
\f\colon X^\an\to\R$, endowed with its usual supnorm metric $\dd_\infty(\f,\f'):=\sup_{X^\an}|\f-\f'|$. 
%
\subsection{Homogenization}
In this section, $R=\bigoplus_{m\in\N} R_m$ denotes any reduced graded $k$-algebra.  

\begin{defi} We say that a norm $\chi\in\cN_\R(R)$ is \emph{homogeneous} if $\chi(f^d)=d\n(f)$ for all $f\in R$ and $d\in\N$. 
\end{defi}
In multiplicative terminology, this means that $\|\cdot\|_\chi=e^{-\chi}$ is power-multiplicative, see~\cite{BGR}. It is easy to see that a norm $\chi\in\cN_\R(R)$ is homogeneous iff the associated graded algebra $\gr_\chi R$ is reduced. We denote by 
$$
\cN_\R^\hom(R)\subset\cN_\R(R)
$$ 
the set of homogeneous norms on $R$. For any Veronese subalgebra $R^{(d)}=\bigoplus_{m\in\N} R_{dm}$, $d\ge 1$, the restriction map $\cN_\R(R)\to\cN_\R(R^{(d)})$ induces a bijection
\begin{equation}\label{equ:Verohom}
\cN_\R^\hom(R)\simto\cN_\R^\hom(R^{(d)}).
\end{equation}
Any norm $\chi\in\cN_\R(R)$ is dominated by a minimal homogeneous norm, namely its \emph{homogenization} $\n^\hom$, defined by
\begin{equation}\label{equ:homog}
\n^\hom(f):=\sup_{d\ge 1} \tfrac 1d\chi(f^d)=\lim_{d\to\infty} \tfrac 1d\chi(f^d),
\end{equation}
where the second equality holds by superadditivity of $d\mapsto\chi(f^d)$ and Fekete's Lemma. It is indeed easy to check that~\eqref{equ:homog} defines a vector space norm on $R$ that is superadditive, $k^\times$-invariant, linearly bounded and homogeneous, \ie an element $\n^\hom\in\cN_\R^\hom(R)$. 

\begin{rmk}\label{rmk:specrad} Note that $\|\cdot\|_{\n^\hom}=e^{-\n^\hom(\cdot)}$ is the \emph{spectral radius (semi)norm} of $\|\cdot\|_\chi$ in the (multiplicative) terminology of~\cite{BerkBook}. 
\end{rmk}

Using standard but nontrivial results on $k$-affinoid algebras, we prove: 

\begin{thm}\label{thm:homogft}
Let $\chi\in\cN_\R(R)$ be a norm generated in degree $1$, with homogenization $\n^\hom$. Then: 
\begin{itemize}
\item[(i)] there exists $C>0$ such that $\chi(f)\le\n^\hom(f)\le\n(f)+C$ for all $f\in R$; 
\item[(ii)] the $k$-algebra $\gr_{\n^\hom} R$ is finitely generated; 
\item[(iii)] if $\chi\in\cN_\La(R)$ for a subgroup $\La\subset\R$, then $\n^\hom\in\cN_{\Q\La}(R)$. 
\end{itemize}
\end{thm}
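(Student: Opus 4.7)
The strategy I would take is to translate the problem into non-Archimedean analysis over the trivially valued field $k$, identifying $\chi^\hom$ with the spectral radius seminorm on a suitable $k$-affinoid algebra.

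First, using Lemma~\ref{lem:nft}, I would choose a $\chi$-orthogonal basis $(s_1,\dots,s_N)$ of $R_1$, set $\xi_i:=\chi(s_i)$, and consider the graded surjection $\pi\colon k[z]\twoheadrightarrow R$, $z_i\mapsto s_i$; then $\chi$ is the quotient of the monomial valuation $\chi_\xi$, with $\xi\in\La^N$ when $\chi\in\cN_\La(R)$. Completing with respect to $\chi$ (resp.\ $\chi_\xi$) yields a Banach surjection $\hat\pi\colon k\{z;\xi\}\twoheadrightarrow\hat R$, so $\hat R$ becomes a $k$-affinoid algebra (a quotient of the polydisc algebra of Example~\ref{exam:pdisc}). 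By Remark~\ref{rmk:specrad}, in multiplicative terms $\chi^\hom|_R$ is the spectral radius seminorm of $\hat R$, computable as $e^{-\chi^\hom(f)}=\max_{y\in\mathcal{M}(\hat R)}|f(y)|$.

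For part~(i), the bound $\chi\le\chi^\hom$ is immediate from the $d=1$ term in~\eqref{equ:homog}. For the reverse inequality, I would invoke the classical result that, on a reduced $k$-affinoid algebra, the spectral seminorm is equivalent to any defining Banach norm --- a consequence of the Banach open mapping theorem. This yields $\chi^\hom\le\chi+C$ provided $\hat R$ is reduced, and establishing the latter is the main technical obstacle. To handle it, I would use the reducedness of $R$ combined with: (a)~Noetherianness of $k\{z;\xi\}$, giving $\hat I=Ik\{z;\xi\}$ with $I:=\ker\pi$; (b)~homogeneity of $I$, which permits reduction to the integral case via graded primary decomposition; and (c)~flatness of the inclusion $k[z]\to k\{z;\xi\}$, which transfers radicality of $I$ to $\hat I$.

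For parts~(ii) and~(iii), the key observation is that $\chi_\xi$ is a genuine valuation and $\pi$ is graded, so for any monomial $s^\a=\pi(z^\a)\neq 0$ in $R$ one has $\chi(s^{d\a})=d\sum_i\a_i\xi_i$, whence $\chi^\hom(s^\a)=\sum_i\a_i\xi_i\in\La$. Combined with the norm equivalence of~(i), this should let one identify $\gr_{\chi^\hom}R$ as a graded $k$-algebra finitely generated in degree $1$ by the images of the $s_i$ (in the spirit of Lemma~\ref{lem:nft}), yielding~(ii). The resulting grading lies in $\sum_i\Z\xi_i\subset\La$, so by power-multiplicativity of $\chi^\hom$ its values on $R\setminus\{0\}$ lie in $\Q\La$, giving~(iii). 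The hardest point is thus the reducedness of $\hat R$; once that is settled, the rest reduces to direct monomial computation.
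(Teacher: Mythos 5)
Your setup and your treatment of part~(i) follow essentially the paper's route: realize $\hat R$ as a $k$-affinoid quotient of the polydisc algebra $k\{z;\xi\}$, identify $\chi^\hom$ with the spectral radius seminorm, and invoke the equivalence of the spectral seminorm with the Banach norm on a \emph{reduced} affinoid algebra. One caveat: your proposed proof of reducedness of $\hat R$ via ``flatness of $k[z]\to k\{z;\xi\}$ transfers radicality of $I$ to $\hat I$'' does not work as stated --- flat base change does not preserve radical ideals in general; what is needed is that the morphism is \emph{regular} (flat with geometrically regular fibers), i.e.\ an excellence statement. This is exactly what the paper outsources to Ducros's theorem (non-Archimedean GAGA: the analytification of a reduced variety is reduced), and I would not try to reprove it by elementary means.

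The genuine gap is in parts~(ii) and~(iii). Your key computation $\chi(s^{d\a})=d\sum_i\a_i\xi_i$, hence $\chi^\hom(s^\a)=\sum_i\a_i\xi_i$, is false: since $\chi$ is the \emph{quotient} norm of $\chi_\xi$, one only has $\chi(\pi(z^\a))=\sup\{\chi_\xi(g)\mid\pi(g)=\pi(z^\a)\}\ge\sum_i\a_i\xi_i$, and relations in $R$ can force strict inequality (e.g.\ if $s_1^2=s_2^2$ in $R_2$ with $\xi_1=0$, $\xi_2=1$, then $\chi(s_1^2)\ge2$ and $\chi^\hom(s_1)\ge1\ne\xi_1$). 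Consequently the deduction that $\gr_{\chi^\hom}R$ is generated in degree $1$ by the images of the $s_i$, and that its grading lies in $\La$, both fail; indeed the correct conclusion in~(iii) is only $\Q\La$, not $\La$ (homogenization of an integral norm is typically rational, not integral), which already signals that your argument proves too much. The paper's mechanism is different and cannot be bypassed by a monomial computation: one observes that $\gr_{\chi^\hom}\hat R$ is the \emph{graded reduction} of $\hat R$ in Temkin's sense, and that a surjection of $k$-affinoid algebras induces a \emph{finite} (not necessarily surjective-in-degree-one) morphism $\gr_{\chi_\xi}k\{z;\xi\}\to\gr_{\chi^\hom}\hat R$; finiteness over the polynomial ring $\gr_{\chi_\xi}k[z]$ gives~(ii), and Temkin's result on $\La$-strict affinoid algebras gives the $\Q\La$-valuedness in~(iii), the passage from $\La$ to $\Q\La$ being precisely the denominators that a finite extension can introduce. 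You would need to supply this finiteness input (or an equivalent) to repair your argument.
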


\begin{proof} Pick a surjective map of graded algebras $\pi\colon k[z]=k[z_1,\dots,z_N]\to R$ and $\xi\in\R^N$ such that $\chi$ is the quotient norm of $\n_\xi$  (see Lemma~\ref{lem:nft}). As in Example~\ref{exam:pdisc}, the completion of $k[z]$ with respect to $\n_\xi$ is the polydisc algebra $k\{z;\xi\}$, and $\pi$ induces a surjection $k\{z;\xi\}\to\hat R$ onto the completion of $R$, whose norm is the quotient of the norm $\n_\xi$ of $k\{z;\xi\}$. 

As a consequence, $\hat R$ is a $k$-affinoid algebra in the sense of~\cite{BerkBook}, corresponding geometrically to the affinoid domain $Y^\an\cap\DD(r)$ of the Berkovich analytification $Y^\an\hto\A^{N,\an}$ of the affine cone $Y:=\Spec R\hookrightarrow\A^N=\Spec k[z]$, where $\DD(r)\subset\A^{N,\an}$ is the closed polydisc of polyradius $r=(e^{-\xi_1},\dots,e^{-\xi_N})$. 

Since $R$ is assumed to be reduced, it follows from the non-Archimedean GAGA principle that $\hat R$ is reduced as well (see~\cite[Th\'eor\`eme 3.3]{Duc09}), and (i) is now a consequence of~\cite[Proposition~2.1.4~(ii)]{BerkBook}, which states (in multiplicative terminology) that the spectral radius (semi)norm of any reduced $k$-affinoid algebra is equivalent to the given norm. 

Next, note that $\gr_{\n^\hom} \hat R$ coincides, by definition, with the graded reduction of $\hat R$ in the sense of Temkin~\cite[\S3]{TemkinLocalII}. By~\cite[Proposition~3.1]{TemkinLocalII}, the surjection $k\{z;\xi\}\to\hat R$ therefore induces a finite morphism $\gr_{\n_\xi} k\{z;\xi\}\to\gr_{\n^\hom}\hat R$. Now we have $\gr_{\n_\xi} k\{z;\xi\}\simeq\gr_{\n_\xi} k[z]$ and $\gr_{\n^\hom} \hat R\simeq\gr_{\n^\hom} R$, see~\eqref{equ:grcomp}. Thus $\gr_{\n^\hom} R$ is finite over $\gr_{\n_\xi} k[z]\simeq k[T]$, which yields (ii). 

Finally suppose that $\chi\in\cN_\Lambda(R)$ for a subgroup $\La\subset\R$.  In this case, we can choose $\xi\in\Lambda^N$, so $k\{T;\xi\}$ and $\hat{R}$ are both $\Lambda$-strict $k$-affinoid algebras. By~\cite[3.1.2.1~(iv)]{TemkinSurvey}, we thus have $\n^\hom(\hat{R}\smallsetminus\{0\})\subset\Q\Lambda$, which proves (iii). 
\end{proof}
%
\subsection{Homogenization of norms on section rings} 

Returning to the setting of a polarized variety $(X,L)$ and its space of norms $\cN_\R$ (see \S\ref{sec:normsec}), we introduce:
\begin{defi} Consider a norm $\chi\in\cN_\R$. Then: 
\begin{itemize}
\item[(i)] we say that $\chi$ is \emph{homogeneous} if it admits a homogeneous representative on $R^{(d)}=R(X,dL)$ for some $d$ ; 
\item[(ii)] we define the \emph{homogenization} of $\chi$ as the norm $\n^\hom\in\cN_\R$ induced by the homogenization of any representative of $\chi$. 
\end{itemize}
\end{defi}
By~\eqref{equ:homog}, $\n^\hom$ is well-defined; it satisfies $\n^\hom\ge\n$,  and is characterized as the smallest homogeneous norm with this property.    
\begin{exam}\label{exam:homfloor} For any $\n\in\cN_\R$ we have $\n^\hom=(\lfloor\n\rfloor)^\hom$. This is indeed a direct consequence of~\eqref{equ:homog}.
\end{exam}
   
We denote by 
$$
\cN_\R^\hom\subset\cN_\R
$$
the subset of homogeneous norms. By~\eqref{equ:Verohom}, we have 
\begin{equation}\label{equ:homRr}
\cN_\R^\hom\simeq\cN_\R^\hom(R^{(d)})
\end{equation}
for any $d$ such that $dL$ is a line bundle. In other words, a homogeneous norm $\n\in\cN_\R$ is well-defined on $R_m=\Hnot(X,mL)$ for any $m$ such that $mL$ is a line bundle. 
  
\begin{rmk}\label{rmk:homnet} An element of $\n\in\cN_\R$ is a norm on $R^{(d)}$ for some sufficiently divisible $d$ that depends on $\n$, so in general it does not make sense to talk about pointwise convergence of sequences or nets in $\cN_\R$. By~\eqref{equ:homRr}, it does however make sense when the norms are homogeneous, as they are then defined on $R^{(d)}$ any fixed $d$ such that $dL$ is an honest line bundle. 
\end{rmk}
  
The subset $\cN_\R^\hom$ is stable under minima, and under the scaling action of $\R_{>0}$ and the additive action of $\R$. For any subgroup $\La\subset\R$, we set
$$
\cN_\La^\hom:=\cN_\La\cap\cN_\R^\hom.
$$
   
Recall that $\cN_\R$ is equipped with a pseudo-metric $\dd_\infty$, see~\eqref{equ:GIas}. Using~\eqref{equ:homog}, it is straightforward to check:

\begin{lem}\label{lem:homcomp} Homogenization $\n\mapsto\n^\hom$ defines a projection $\cN_\R\twoheadrightarrow\cN_\R^{\hom}$ which is equivariant for the actions of $\R_{>0}$ and $\R$, commutes with minima, and satisfies
$$
\dd_\infty(\n^\hom,\n'^{\hom})\le\dd_\infty(\n,\n'),\quad\la_{\max}(\n^\hom)=\la_{\max}(\n)
$$
for all $\n,\n'\in\cN_\R$. 
\end{lem}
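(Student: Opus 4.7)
The proof is essentially a sequence of direct verifications starting from the defining formula
\[
  \chi^\hom(f)=\sup_{d\ge1}\tfrac1d\chi(f^d)=\lim_{d\to\infty}\tfrac1d\chi(f^d),
\]
applied to any representative of $\chi$ on a Veronese subalgebra. First I would note that homogenization is indeed a projection: $(\chi^\hom)^\hom=\chi^\hom$ because $\chi^\hom$ is already homogeneous, and the image lands in $\cN_\R^\hom$ by construction.

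For the two equivariance statements, I would plug the formulas $(t\chi)(f)=t\chi(f)$ for $t\in\R_{>0}$ and $(\chi+c)(s)=\chi(s)+cm$ for $s\in R_m$ into the defining supremum, pull the constant out, and read off $(t\chi)^\hom=t\chi^\hom$ and $(\chi+c)^\hom=\chi^\hom+c$. For commutation with minima, the key observation is that the pointwise minimum of two homogeneous norms is again homogeneous (since $\min(d\chi(f),d\chi'(f))=d\min(\chi(f),\chi'(f))$). Thus $\chi^\hom\wedge\chi'^\hom\in\cN_\R^\hom$ and dominates $\chi\wedge\chi'$, so by minimality of the homogenization one has $(\chi\wedge\chi')^\hom\le\chi^\hom\wedge\chi'^\hom$; the reverse inequality is immediate from $\chi\wedge\chi'\le\chi,\chi'$ together with monotonicity of $\chi\mapsto\chi^\hom$ (which itself follows from the defining formula).

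For the inequality on $\dd_\infty$, the main point is that for any $f\in R_m\setminus\{0\}$ and any sufficiently divisible $d\ge1$,
\[
  \tfrac1d\bigl|\chi(f^d)-\chi'(f^d)\bigr|\le\tfrac1d\,\dd_\infty(\chi|_{R_{md}},\chi'|_{R_{md}})=m\cdot(md)^{-1}\dd_\infty(\chi|_{R_{md}},\chi'|_{R_{md}}).
\]
Passing to the limit in $d$ gives $|\chi^\hom(f)-\chi'^\hom(f)|\le m\,\dd_\infty(\chi,\chi')$, which upon dividing by $m$ and taking the limsup over $m$ yields the claim. The equality $\la_{\max}(\chi^\hom)=\la_{\max}(\chi)$ is similar but easier: one inequality comes from $\chi^\hom\ge\chi$, and the other from the bound $\tfrac1d\chi(f^d)\le\tfrac1d\la_{\max}(\chi|_{R_{md}})$ for $f\in R_m$, again passing to the limit in $d$ and then in $m$.

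I expect no real obstacle; every step is a direct manipulation of the defining formula. The only point requiring a small amount of care is the compatibility with the Veronese equivalence defining $\cN_\R$, but this is handled by Remark~\ref{rmk:depnot}, which guarantees that all of the above computations are independent of the chosen representative once we pass to sufficiently divisible $m$.
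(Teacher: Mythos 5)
Your overall approach is exactly the intended one: the paper offers no proof of this lemma, declaring it ``straightforward to check'' from the defining formula~\eqref{equ:homog}, and your verifications of the projection property, the two equivariances, the $\dd_\infty$-contraction, and the $\la_{\max}$ equality are all correct (including the slightly delicate point that the net $\{md\}_d$ is cofinal in the divisibility order, so the limsup along it is bounded by $\dd_\infty(\n,\n')$).

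There is, however, one flawed step: your justification of the reverse inequality in the commutation with minima. You claim that $(\n\wedge\n')^\hom\ge\n^\hom\wedge\n'^{\hom}$ follows from $\n\wedge\n'\le\n,\n'$ and monotonicity of homogenization — but monotonicity yields $(\n\wedge\n')^\hom\le\n^\hom$ and $(\n\wedge\n')^\hom\le\n'^{\hom}$, hence $(\n\wedge\n')^\hom\le\n^\hom\wedge\n'^{\hom}$, which is the \emph{same} direction you already established via minimality, not its reverse. The inequality you actually need does not follow from any monotonicity argument; it requires the limit form of~\eqref{equ:homog}: for $f\in R_m$ one has
$$
(\n\wedge\n')^\hom(f)=\lim_d\tfrac1d\min\{\n(f^d),\n'(f^d)\}=\min\Bigl\{\lim_d\tfrac1d\n(f^d),\ \lim_d\tfrac1d\n'(f^d)\Bigr\}=(\n^\hom\wedge\n'^{\hom})(f),
$$
since both limits exist and $\min$ is continuous. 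This single computation in fact gives the equality outright, making both of your one-sided arguments unnecessary. With that repair the proof is complete.
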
 

The restriction of $\dd_\infty$ to $\cN^\hom_\R$ is further well-behaved: 

\begin{prop}\label{prop:homcomp} The restriction of $\dd_\infty$ to $\cN_\R^\hom$ is a metric. Furthermore, the metric space $(\cN_\R^\hom,\dd_\infty)$ is complete, and contains $\cN_\Z^\hom$ as a closed subset. 
\end{prop}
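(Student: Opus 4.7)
My plan is to reduce all three assertions to a single inequality: for any $\n,\n'\in\cN_\R^\hom$ and any sufficiently divisible $m$,
\[
(\star)\qquad \dd_\infty(\n|_{R_m},\n'|_{R_m})\le m\,\dd_\infty(\n,\n').
\]
To establish $(\star)$ I would fix $s\in R_m\setminus\{0\}$ and exploit homogeneity to write
\[
|\n(s)-\n'(s)|=r^{-1}|\n(s^r)-\n'(s^r)|\le m\cdot (rm)^{-1}\dd_\infty(\n|_{R_{rm}},\n'|_{R_{rm}})
\]
for every $r\ge 1$ with $rm$ sufficiently divisible, and then take the limsup over $r$ (along multiples of whatever witness of divisibility is needed) and invoke the definition~\eqref{equ:GIas} of $\dd_\infty$ on $\cN_\R$. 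This is the one place where homogeneity is genuinely used, and I expect it to be the main conceptual point of the proof; everything else is routine bookkeeping in the finite-dimensional spaces $(\cN_\R(R_m),\dd_\infty)$.

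Granted $(\star)$, separation of points is immediate: $\dd_\infty(\n,\n')=0$ forces $\n|_{R_m}=\n'|_{R_m}$ for every sufficiently divisible $m$, hence $\n=\n'$ via the identification~\eqref{equ:homRr}. For completeness, a $\dd_\infty$-Cauchy sequence $(\n_k)\subset\cN_\R^\hom$ has, by $(\star)$, restrictions $(\n_k|_{R_m})$ that are Cauchy in the complete metric space $(\cN_\R(R_m),\dd_\infty)$, and I would let $\n^m\in\cN_\R(R_m)$ denote their limits. I would then check that the family $(\n^m)$ assembles into a single $\n\in\cN_\R^\hom$ via~\eqref{equ:homRr}, by passing the defining relations (superadditivity across graded pieces, $k^\times$-invariance, and homogeneity $\n^{em}(s^e)=e\,\n^m(s)$) to pointwise limits, with the linear bound on $\n$ coming from that of any fixed $\n_{k_0}$ combined with the uniform Cauchy estimate. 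Applying $(\star)$ to the pair $(\n_k,\n_l)$ and letting $l\to\infty$ then gives $\dd_\infty(\n_k|_{R_m},\n|_{R_m})\le m\cdot\limsup_l\dd_\infty(\n_k,\n_l)$, which yields $\dd_\infty(\n_k,\n)\to 0$.

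Finally, for the closedness of $\cN_\Z^\hom$ inside $(\cN_\R^\hom,\dd_\infty)$: if $\n_k\in\cN_\Z^\hom$ converges to $\n\in\cN_\R^\hom$, then $(\star)$ gives pointwise convergence $\n_k(s)\to\n(s)$ on each $R_m\setminus\{0\}$, and a convergent sequence of integers is eventually constant, so $\n(s)\in\Z$ and $\n\in\cN_\Z^\hom$.
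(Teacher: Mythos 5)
Your proposal is correct and follows essentially the same route as the paper: your key inequality $(\star)$ is precisely the content of the paper's Lemma~\ref{lem:hominfty} (homogeneity makes $m\mapsto m^{-1}\dd_\infty(\n|_{R_m},\n'|_{R_m})$ nondecreasing for divisibility, so the limsup is a supremum dominating each term), and the remaining steps reduce, as in the paper, to completeness of each $(\cN_\R(R_m),\dd_\infty)$ and closedness of $\cN_\Z(R_m)$ therein. No gaps.
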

Note that $\cN_\Z^\hom$ is always a strict subset of $\cN_\R^\hom$, thanks to the scaling action of $\R_{>0}$. In contrast, recall that $\cN_\Z$ is $\dd_\infty$-dense in $\cN_\R$ (see Example~\ref{exam:dinftyround}). 

\begin{lem}\label{lem:hominfty} Pick $d\ge 1$ such that $dL$ is an honest line bundle, and view $\dd_\infty$ as a pseudo-metric on $\cN^\hom(R^{(d)})$ via~\eqref{equ:homRr}. For all $\n,\n'\in\cN^\hom(R^{(d)})$ we then have
$$
\dd_\infty(\n,\n')=\sup_{m\ge 1}\tfrac 1{md}\dd_\infty(\n|_{R_{md}},\n'|_{R_{md}}). 
$$
\end{lem}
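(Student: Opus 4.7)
The plan is to exploit homogeneity to show that the function $g(m) := (md)^{-1}\dd_\infty(\n|_{R_{md}},\n'|_{R_{md}})$ is non-decreasing along divisibility, so that the $\limsup$ defining $\dd_\infty(\n,\n')$ collapses to the supremum asserted.

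First I would observe that since $\n,\n'\in\cN^\hom_\R$, by~\eqref{equ:homRr} both norms are unambiguously defined on $R_{md}$ for every $m\ge 1$, so the right-hand side of the asserted identity makes sense. The integers of the form $md$ with $m\ge 1$ form a cofinal subnet of the directed set of integers $m\ge 1$ such that $mL$ is a line bundle, ordered by divisibility; hence the limsup in~\eqref{equ:GIas} can be computed along this cofinal subnet, giving
$$
\dd_\infty(\n,\n')=\limsup_{m\ge 1}(md)^{-1}\dd_\infty(\n|_{R_{md}},\n'|_{R_{md}}).
$$

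The crux is then to prove that $g(m)\le g(km)$ for every $k\ge 1$. I would take a joint orthogonal basis $(s_i)$ of $\n|_{R_{md}}$ and $\n'|_{R_{md}}$, which exists by the joint-diagonalization fact recalled in~\S\ref{sec:norms}. By~\eqref{equ:GI}, the supremum in $\dd_\infty(\n|_{R_{md}},\n'|_{R_{md}})$ is attained at some $s=s_{i_0}$. Since $R$ is a domain (because $X$ is a variety), the power $s^k\in R_{kmd}$ is nonzero, and homogeneity of $\n$ and $\n'$ yields
$$
|\n(s^k)-\n'(s^k)|=k\,|\n(s)-\n'(s)|=k\,\dd_\infty(\n|_{R_{md}},\n'|_{R_{md}}).
$$
Combining this with the trivial lower bound $\dd_\infty(\n|_{R_{kmd}},\n'|_{R_{kmd}})\ge|\n(s^k)-\n'(s^k)|$ from~\eqref{equ:GI} and dividing by $kmd$ gives $g(km)\ge g(m)$, as required.

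Since $g$ is monotone non-decreasing along divisibility, its limsup over that directed set equals $\sup_{m\ge 1}g(m)$, which is exactly the right-hand side of the claim. I do not anticipate a real obstacle: the only points needing care are that the cofinality argument is valid (immediate), that the saturating element of the joint orthogonal basis is genuinely available (a direct consequence of~\eqref{equ:GI}), and that $s^k\ne 0$ (which uses integrality of $X$).
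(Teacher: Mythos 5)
Your proof is correct and follows essentially the same route as the paper's: homogeneity plus the power trick $s\mapsto s^k$ gives monotonicity of $m\mapsto (md)^{-1}\dd_\infty(\n|_{R_{md}},\n'|_{R_{md}})$ along divisibility, whence the limsup in~\eqref{equ:GIas} collapses to the supremum. The only cosmetic differences are that the paper bounds $|\n(s)-\n'(s)|$ for every $s$ by the supremum over $R_{lmd}$ rather than invoking attainment at a joint orthogonal basis element (which is unnecessary, though harmless), and that your cofinality reduction is cleanest if you note that the same power trick gives $f(m)\le f(md)$ for \emph{every} $m$ in the full net, not just for multiples of $d$.
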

\begin{proof} By homogeneity of $\n,\n'$, we have for all $m,l\ge 1$ 
\begin{align*}
\dd_\infty(\chi|_{R_{md}},\chi'|_{R_{md}}) & = \sup_{s\in R_{md}\smallsetminus\{0\}}|\chi(s)-\chi'(s)| =  l^{-1}\sup_{s\in R_{lmd}\smallsetminus\{0\}}|\chi(s^l)-\chi'(s^l)|\\
& \le l^{-1}\sup_{t\in R_{lmd}\smallsetminus\{0\}}|\chi(t)-\chi'(t)|= l^{-1}\dd_\infty(\n|_{R_{lmd}},\n'|_{R_{lmd}}). 
\end{align*}
Thus $m\mapsto \tfrac 1{md}\dd_\infty(\chi|_{R_{md}},\chi'|_{R_{md}})$ is increasing with respect to divisibility, and~\eqref{equ:GIas} yields the result (recall that the limsup in the latter formula is understood with respect to the divisibility order). 
\end{proof}

\begin{proof}[Proof of Proposition~\ref{prop:homcomp}] Pick $d$ as in Lemma~\ref{lem:hominfty}. For each $m\ge 1$, $(\cN_\R(R_{md}),\dd_\infty)$ is a complete metric space, in which $\cN_\Z(R_{md})$ sits as a closed subspace. This implies that $\cN_\R(R^{(d)})\hto\prod_{m\ge 1}\cN_\R(R_{md})$ is complete with respect to the metric 
$$
\widetilde\dd_\infty(\chi,\chi'):=\sup_{m\ge 1}\tfrac 1{md}\dd_\infty(\chi|_{R_{md}},\chi'|_{R_{md}}),
$$
and that $\cN_\Z(R^{(d)})\hto\prod_{m\ge 1}\cN_\Z(R_{md})$ is closed. It is also clear that $\cN_\R^\hom(R^{(d)})$ is closed in $\cN_\R(R^{(d)})$ with respect to $\widetilde\dd_\infty$, so the result now follows from Lemma~\ref{lem:hominfty}. 
\end{proof}

Note that Lemma~\ref{lem:hominfty} ensures compatibility of the $\dd_\infty$-metrics on $\cN_\R^\hom$ and $X^\lin$ (see~\eqref{equ:dinftylin}): 

\begin{cor}\label{cor:dinftylin} The map $v\mapsto\n_v$ defines an isometric embedding $(X^\lin,\dd_\infty)\hto(\cN_\R^\hom,\dd_\infty)$, \ie $\dd_\infty(v,w)=\dd_\infty(\n_v,\n_w)$ for all $v,w\in X^\lin$. 
\end{cor}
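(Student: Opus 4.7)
The proof is essentially a direct unpacking of definitions via Lemma~\ref{lem:hominfty}. First I would verify that $\chi_v\in\cN_\R^\hom$ for every $v\in X^\lin$: since $v$ is a valuation on the function field of $X$, we have
\[
\chi_v(s^d)=v(s^d)=d\,v(s)=d\,\chi_v(s)
\]
for every $s\in R_m\setminus\{0\}$ and $d\ge 1$, so $\chi_v$ is homogeneous on each Veronese subalgebra $R^{(d)}$. (Linear boundedness comes from $\tee(v)<+\infty$ as noted in the excerpt.) Thus $v\mapsto\chi_v$ lands in $\cN_\R^\hom$.

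Next, fix $d\ge 1$ large enough so that $dL$ is an honest line bundle and so that in~\eqref{equ:dinftylin} the condition "$m$ sufficiently divisible" amounts to $d\mid m$ (such $d$ exists by ampleness and the very definition of "sufficiently divisible"). Apply Lemma~\ref{lem:hominfty} to $\chi_v,\chi_w\in\cN^\hom_\R(R^{(d)})$ to obtain
\[
\dd_\infty(\chi_v,\chi_w)=\sup_{m\ge 1}\frac{1}{md}\,\dd_\infty\!\left(\chi_v|_{R_{md}},\chi_w|_{R_{md}}\right).
\]
By the defining formula~\eqref{equ:GI} for the Goldman--Iwahori metric on each $R_{md}$, the inner supremum equals $\sup_{s\in R_{md}\setminus\{0\}}|v(s)-w(s)|$, so that
\[
\dd_\infty(\chi_v,\chi_w)=\sup_{m\ge 1}\;\sup_{s\in R_{md}\setminus\{0\}}\frac{|v(s)-w(s)|}{md}.
\]

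Finally I would match this with~\eqref{equ:dinftylin}. The indexing set $\{md:m\ge 1\}$ is cofinal in the set of $m$ sufficiently divisible, and the function $m\mapsto m^{-1}\sup_{s\in R_m\setminus\{0\}}|v(s)-w(s)|$ is monotone nondecreasing with respect to divisibility (for $s\in R_m$ and $k\ge 1$, the element $s^k\in R_{km}$ satisfies $|v(s^k)-w(s^k)|/(km)=|v(s)-w(s)|/m$, so passing from $m$ to $km$ can only increase the supremum). It follows that the right-hand side above coincides with $\dd_\infty(v,w)$ as defined in~\eqref{equ:dinftylin}, giving the isometry. Injectivity of $v\mapsto\chi_v$ was already recorded in the text, so this completes the embedding statement. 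There is no real obstacle: the content is that the $\dd_\infty$-metric on $X^\lin$ is exactly the metric inherited from $\cN_\R^\hom$, which reduces to Lemma~\ref{lem:hominfty} once homogeneity of $\chi_v$ is checked.
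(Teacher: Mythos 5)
Your proof is correct and follows exactly the route the paper intends: the corollary is stated as a direct consequence of Lemma~\ref{lem:hominfty}, and you supply precisely the missing details (homogeneity of $\chi_v$ from the valuation property, the identification of the Goldman--Iwahori distance on $R_{md}$ with $\sup_s|v(s)-w(s)|$, and the cofinality/monotonicity matching with~\eqref{equ:dinftylin}).
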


For any subgroup $\La\subset\R$, we denote by 
$$
\cT_\La^\hom:=\cT_\La\cap\cN_\R^\hom=\cT_\R\cap\cN_\La^\hom. 
$$
the set of homogeneous $\La$-valued norms of finite type. As a straightforward consequence of Theorem~\ref{thm:homogft}, we get: 

\begin{lem}\label{lem:homogft}    For any $\n\in\cT_\R$, the following holds:
\begin{itemize}
\item[(i)] $\n^\hom\in\cT_\R^\hom$; 
\item[(ii)] for $m$ sufficiently divisible, $\dd_\infty(\n|_{R_m},\n^\hom|_{R_m})$ is bounded, and hence $\dd_\infty(\n,\n^\hom)=0$;
\item[(iii)] for any subgroup $\La\subset\R$, $\n\in\cT_\La\Longrightarrow\n^\hom\in\cT_{\Q\La}^\hom$.
\end{itemize}
\end{lem}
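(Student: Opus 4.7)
The plan is to reduce each claim to the corresponding statement in Theorem~\ref{thm:homogft} applied to a carefully chosen representative of $\chi$ as a norm on some Veronese subalgebra.

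Since $\chi\in\cT_\R$, by Lemma~\ref{lem:nft} we may choose $d\ge 1$ such that $\chi$ is represented by a norm on $R^{(d)}$ that is generated in degree $1$ (and such that $dL$ is an honest line bundle). Writing $\widetilde\chi\in\cN_\R(R^{(d)})$ for this representative and $\widetilde\chi^\hom$ for its homogenization in the sense of~\eqref{equ:homog}, the definition of $\n^\hom$ says that $\n^\hom$ is the class of $\widetilde\chi^\hom$, which belongs to $\cN_\R^\hom(R^{(d)})$ and hence corresponds to an element of $\cN_\R^\hom$ via~\eqref{equ:homRr}. Now Theorem~\ref{thm:homogft} applies to $\widetilde\chi$ and yields all three claims with only routine bookkeeping:

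For (i), Theorem~\ref{thm:homogft}(ii) gives that $\gr_{\widetilde\chi^\hom}R^{(d)}$ is finitely generated, so $\n^\hom$ is of finite type; combined with homogeneity this gives $\n^\hom\in\cT_\R^\hom$. For (iii), if further $\chi\in\cT_\La$, we may arrange that $\widetilde\chi\in\cN_\La(R^{(d)})$, whereupon Theorem~\ref{thm:homogft}(iii) yields $\widetilde\chi^\hom\in\cN_{\Q\La}(R^{(d)})$ and hence $\n^\hom\in\cT_{\Q\La}^\hom$.

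For (ii), Theorem~\ref{thm:homogft}(i) produces a constant $C>0$ with $\widetilde\chi(f)\le\widetilde\chi^\hom(f)\le\widetilde\chi(f)+C$ for every $f\in R^{(d)}\smallsetminus\{0\}$. Applied to nonzero sections $s\in R_{md}$ for each $m\ge 1$, this uniform bound yields
\[
\dd_\infty(\chi|_{R_{md}},\n^\hom|_{R_{md}})=\sup_{s\in R_{md}\smallsetminus\{0\}}|\widetilde\chi(s)-\widetilde\chi^\hom(s)|\le C,
\]
which is the boundedness assertion; dividing by $md$ and taking the limsup as in~\eqref{equ:GIas} gives $\dd_\infty(\chi,\n^\hom)=0$.

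The substantive content is entirely packaged into Theorem~\ref{thm:homogft}, so no further obstacle arises; the only point to watch is the mild compatibility issue of passing between the representative on $R^{(d)}$ and the ind-object $\chi\in\cN_\R$ (cf.\ Remark~\ref{rmk:depnot}), which is harmless since the homogenization of any further Veronese restriction of $\widetilde\chi$ is the corresponding restriction of $\widetilde\chi^\hom$, by~\eqref{equ:Verohom}.
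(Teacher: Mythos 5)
Your proof is correct and follows exactly the route the paper intends: the paper states this lemma as ``a straightforward consequence of Theorem~\ref{thm:homogft}'' without spelling out the details, and your argument supplies precisely those details (choosing a degree-one-generated representative on $R^{(d)}$, applying parts (i)--(iii) of Theorem~\ref{thm:homogft}, and translating back via~\eqref{equ:homRr} and~\eqref{equ:GIas}). No gaps.
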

As we shall see, homogenization in fact maps $\cT_\La$ onto $\cT^\hom_{\Q\La}$ (\cf Corollary~\ref{cor:FSbij}). For $\La=\Z$, the homogenization map $\cT_\Z\twoheadrightarrow\cT_\Q^\hom$ is closely related to integral closure (see Appendix~\ref{sec:tc} for a detailed discussion). 

%
%
\subsection{The Fubini--Study operator}\label{sec:FS}
Assume first that $L$ is a globally generated line bundle. To any norm $\chi$ on $R_1=\Hnot(X,L)$, we associate a function on $X^\an$ by setting
\begin{equation}\label{equ:FSL1}
\FS_L(\chi):=\sup_{s\in R_1\smallsetminus\{0\}}\{\log|s|+\chi(s)\}, 
\end{equation}
\ie $\FS_L(\chi)(v)=\sup_{s\in R_1\smallsetminus\{0\}}\{-v(s)+\chi(s)\}$ for $v\in X^\an$. Given a $\chi$-orthogonal basis $(s_i)$ of $R_1$, one easily checks that
\begin{equation}\label{equ:FSL}
\FS_L(\chi)=\max_i\{\log|s_i|+\chi(s_i)\}\in\cH_\R
\end{equation}
see~\cite[Lemma 7.17]{BE}. This implies 
\begin{equation}\label{equ:lamaxFS}
\la_{\max}(\n)=\sup_{X^\an}\FS_L(\n)=\FS_L(\n)(v_\triv),
\end{equation}
as well as
\begin{equation}\label{equ:FSFSL}
\chi\in\cN_\La(R_1)\Longrightarrow\FS_L(\chi)\in\cH_\La. 
\end{equation}
for any subgroup $\La\subset\R$. 

\begin{lem}\label{lem:FSft} Assume that $L$ is a line bundle, and let $\chi$ be a norm on $R=R(X,L)$. For each $m\ge 1$ we then have 
$$
\FS_{mL}(\chi|_{R_m})\ge m\FS_L(\chi|_{R_1}),
$$
and equality holds if $\chi$ is generated in degree $1$. 
\end{lem}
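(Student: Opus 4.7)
The plan is to prove the two assertions separately. The inequality $\FS_{mL}(\chi|_{R_m})\ge m\FS_L(\chi|_{R_1})$ will follow directly from the superadditivity of $\chi$ together with the fact that every point $v\in X^\an$ is a semivaluation (hence multiplicative on powers). The equality under the generation-in-degree-one hypothesis will be obtained by combining the explicit monomial description of $\chi|_{R_m}$ provided by Lemma~\ref{lem:nft} with the elementary inequality $\max_\alpha A_\alpha+\min_\alpha B_\alpha\le\max_\alpha(A_\alpha+B_\alpha)$.

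For the first inequality, I would fix $v\in X^\an$ and $s\in R_1\smallsetminus\{0\}$. Then $s^m\in R_m\smallsetminus\{0\}$, and in a local trivialization of $L$ at $c_X(v)$ we have $v(s^m)=mv(s)$, while superadditivity of $\chi$ gives $\chi(s^m)\ge m\chi(s)$. Hence
\[
\FS_{mL}(\chi|_{R_m})(v)\ge -v(s^m)+\chi(s^m)\ge m\bigl(-v(s)+\chi(s)\bigr).
\]
Taking the supremum over $s\in R_1\smallsetminus\{0\}$ and recalling the definition~\eqref{equ:FSL1} of $\FS_L(\chi|_{R_1})$ yields $\FS_{mL}(\chi|_{R_m})(v)\ge m\FS_L(\chi|_{R_1})(v)$.

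For the reverse inequality, assume $\chi$ is generated in degree~$1$, and pick a $\chi|_{R_1}$-orthogonal basis $(e_i)$ of $R_1$, with weights $\xi_i:=\chi(e_i)$. By Lemma~\ref{lem:nft} (or the definition of generation in degree~$1$), any $t\in R_m\smallsetminus\{0\}$ admits an expansion $t=\sum_{|\alpha|=m}c_\alpha e^\alpha$ with $\chi(t)=\min_{c_\alpha\ne 0}\langle\alpha,\xi\rangle$. Setting $w_i:=v(e_i)\in[0,+\infty]$ and using the ultrametric inequality for the semivaluation $v$, we get $v(t)\ge\min_{c_\alpha\ne 0}\langle\alpha,w\rangle$, and therefore
\[
-v(t)+\chi(t)\le\max_{c_\alpha\ne 0}\langle\alpha,-w\rangle+\min_{c_\alpha\ne 0}\langle\alpha,\xi\rangle.
\]
Picking an index $\alpha_0$ at which $\langle\alpha,-w\rangle$ is maximal, the right-hand side is bounded above by $\langle\alpha_0,-w\rangle+\langle\alpha_0,\xi\rangle=\langle\alpha_0,\xi-w\rangle\le m\max_i(\xi_i-w_i)$. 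By the orthogonal basis formula~\eqref{equ:FSL}, this last quantity equals $m\FS_L(\chi|_{R_1})(v)$. Taking the supremum over $t\in R_m\smallsetminus\{0\}$ gives the desired opposite inequality $\FS_{mL}(\chi|_{R_m})(v)\le m\FS_L(\chi|_{R_1})(v)$.

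The only mild subtlety is the trick $\max A+\min B\le\max(A+B)$, which is what allows the two separate extrema in the bound for $-v(t)+\chi(t)$ to be combined into a single expression over monomials, and then absorbed into $m\max_i(\xi_i-w_i)$. Apart from this, the argument is routine and proceeds entirely from the defining formula for $\FS_L$ together with standard properties of norms, valuations, and orthogonal bases.
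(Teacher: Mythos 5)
Your proof is correct and follows essentially the same route as the paper's: superadditivity of $\chi$ together with $v(s^m)=mv(s)$ for the inequality, and for the equality the monomial expansion $t=\sum_{|\alpha|=m}c_\alpha e^\alpha$ with $\chi(t)=\min_{c_\alpha\ne0}\langle\alpha,\xi\rangle$ guaranteed by generation in degree one, combined with the ultrametric bound $v(t)\ge\min_{c_\alpha\ne0}\langle\alpha,w\rangle$. The only cosmetic difference is that the paper bounds $v(s_i)\ge\chi(s_i)-\FS_L(\chi|_{R_1})(v)$ termwise inside the minimum, whereas you keep the $\max$ and $\min$ separate and merge them via $\max A+\min B\le\max(A+B)$; both manipulations are one-liners and equally valid.
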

\begin{proof} For each $s\in R_1\smallsetminus\{0\}$ we have $\chi(s^m)\ge m\chi(s)$, and the inequality follows, by~\eqref{equ:FSL1}. Assume that $\chi$ is generated in degree $1$. To get equality, we need to show
$$
v(s)\ge\chi(s)-m\FS_L(\chi_1)(v)
$$ 
for all $s\in R_m\smallsetminus\{0\}$ and $v\in X^\an$. To see this, pick an orthogonal basis $(s_i)$ of $R_1$, and write $s=\sum_{|\a|=m} c_\a \prod_i s_i^{\a_i}$ with $\chi(s)=\min_\a\sum_i\a_i\chi(s_i)$ for some $\a$ with $c_\a\ne 0$. Then $\FS_L(\chi)(v)=\max_i\{\chi(s_i)-v(s_i)\}$, and hence  
$$
v(s)\ge\min_{c_\a\ne 0}\sum_i\a_i v(s_i)\ge\min_{c_\a\ne 0}\sum_i\a_i\left(\chi(s_i)-\FS_L(\chi_1)(v)\right)=\chi(s)-m\FS_L(\chi_1)(v),
$$
which concludes the proof. 
\end{proof}

Returning to the general case of a $\Q$-line bundle, pick $\chi\in\cN_\R$, and set 
$$
\FS_m(\chi):=m^{-1}\FS_{mL}(\chi|_{R_m})\in\cH_\R
$$ 
for $m$ sufficiently divisible.  By Lemma~\ref{lem:FSft}, $\FS_m(\n)$ is an increasing function of $m$ with respect to divisibility, and is further uniformly bounded, by linear boundedness of $\chi$. We may thus introduce: 

\begin{defi}\label{defi:FSop} The \emph{Fubini--Study operator} $\FS\colon\cN_\R\to\cL^\infty$ takes a norm $\chi\in\cN_\R$ to the bounded function $\FS(\chi)\colon X^\an\to\R$ defined as the pointwise limit
$$
\FS(\chi):=\lim_m \FS_m(\chi)=\sup_m \FS_m(\chi). 
$$
\end{defi}
Recall that $\cL^\infty$ denotes the space of bounded functions on $X^\an$. The bounded function $\FS(\chi)$ is lsc (lower semicontinuous), being a supremum of continuous functions; it is however not continuous in general (see Theorem~\ref{thm:cont} below). By~\eqref{equ:lamaxFS} we have  
\begin{equation}\label{equ:lamaxFS2}
\la_{\max}(\n)=\FS(\n)(v_\triv)
\end{equation}
Note also that the canonical approximants $\n_d\in\cT_\R$ of any norm $\chi\in\cN_\R$ satisfy 
\begin{equation}\label{equ:FScanapp}
\FS(\n_d)=\FS_d(\chi)\in\cH_\R
\end{equation}
for all $d$ sufficiently divisible, by Lemma~\ref{lem:FSft}. In particular, if $\n\in\cT_\R$ is of finite type then the limit in Definition~\ref{defi:FSop} is stationary. 

The Fubini--Study operator $\FS\colon\cN_\R\to\cL^\infty$ is increasing with respect to the partial orderings on $\cN_\R$ and $\cL^\infty$, and equivariant under the actions of $\R$ and $\R_{>0}$. It is also easily seen to be $1$-Lipschitz with respect to the $\dd_\infty$-(pseudo)metrics, \ie 
\begin{equation}\label{equ:FScontr}
\dd_\infty(\FS(\chi),\FS(\chi'))\le\dd_\infty(\chi,\chi')\text{ for all }\chi,\chi'\in\cN_\R. 
\end{equation}
As we see below, equality holds when $\n,\n'$ are homogeneous (see Corollary~\ref{cor:FSisom}). 

The next result shows that $\FS$ is invariant under homogenization.

\begin{prop}\label{prop:FSH}
  For any $\chi\in\cN_\R$ we have $\FS(\n)=\FS(\n^\hom)$.
\end{prop}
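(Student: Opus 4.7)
My plan is to prove the inequality $\FS(\chi)\le\FS(\chi^\hom)$ as a trivial monotonicity statement, and then establish the reverse inequality by the key observation that a section $s\in R_m$ and all its powers $s^d\in R_{md}$ contribute to the Fubini--Study formula, which lets one absorb the spectral-radius limit $\chi^\hom(s)=\sup_d d^{-1}\chi(s^d)$ into the limit $\FS(\chi)=\sup_m\FS_m(\chi)$.

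For the easy direction, since $\chi\le\chi^\hom$ pointwise on $R_m$ for every sufficiently divisible $m$, the definition~\eqref{equ:FSL1} immediately gives $\FS_m(\chi)\le\FS_m(\chi^\hom)$, hence $\FS(\chi)\le\FS(\chi^\hom)$ after passing to the supremum in~$m$.

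For the reverse inequality, I fix $v\in X^\an$ and $m$ sufficiently divisible. For any $s\in R_m$ with $v(s)<+\infty$ and any $d\ge1$, I use $v(s^d)=d\,v(s)$ together with the defining inequality
\begin{equation*}
-v(s^d)+\chi(s^d)\le md\,\FS_{md}(\chi)(v)\le md\,\FS(\chi)(v),
\end{equation*}
which after dividing by $d$ yields $-v(s)+d^{-1}\chi(s^d)\le m\,\FS(\chi)(v)$. Taking the supremum over $d$ and invoking the Fekete-type formula~\eqref{equ:homog} for $\chi^\hom(s)=\sup_d d^{-1}\chi(s^d)$ gives
\begin{equation*}
-v(s)+\chi^\hom(s)\le m\,\FS(\chi)(v).
\end{equation*}
Taking the supremum over $s\in R_m\smallsetminus\{0\}$ (with $v(s)<+\infty$ being the only relevant case) produces $\FS_m(\chi^\hom)(v)\le\FS(\chi)(v)$, and then the supremum over sufficiently divisible $m$ yields $\FS(\chi^\hom)\le\FS(\chi)$, completing the proof.

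The argument is essentially a bookkeeping exercise once one notices that the two limits---the one defining $\chi^\hom$ via powers and the one defining $\FS$ via Veronese restrictions---are compatible in the most natural way, since the power map $R_m\to R_{md}$, $s\mapsto s^d$ rescales both $v$ and $\chi$ correctly. I therefore do not expect a serious obstacle; the one small point to be careful about is that $\FS_m$ is only defined for $m$ sufficiently divisible (so that $mL$ is a genuine line bundle), but this is harmless because for any such $m$ the product $md$ is again sufficiently divisible for every $d\ge1$.
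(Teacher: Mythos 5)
Your proposal is correct and follows essentially the same route as the paper's proof: both directions rest on the monotonicity $\chi\le\chi^\hom$ and on feeding the powers $s^d\in R_{md}$ into $\FS_{md}(\chi)$ together with $v(s^d)=d\,v(s)$ and the Fekete formula~\eqref{equ:homog}; the paper merely phrases the reverse inequality with successive $\e$-choices of $m$, $s$, $d$ instead of your chain of suprema. No gap.
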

\begin{proof} That $\FS(\chi)\le\FS(\n^\hom)$ is clear, since $\chi\le\n^\hom$. Now let $v\in\Xan$ and $\e>0$. Successively pick $m\ge1$ sufficiently divisible such that $\FS(\n^\hom)(v)\le\FS_m(\n^\hom)(v)+\e$, then $s\in R_m\smallsetminus\{0\}$ such that $m\FS_m(\n^\hom)(v)\le\n^\hom(s)-v(s)+m\e$, and finally $d\ge 1$ such that $\n^\hom(s)\le\frac1d\chi(s^d)+m\e$, see~\eqref{equ:homog}. Then
  \begin{equation*}
    \FS(\n^\hom)(v)
    \le\tfrac1{md}(\chi(s^d)-v(s^d))+3\e
    \le\FS_{md}(\n)(v)+3\e
    \le\FS(\n)(v)+3\e,
  \end{equation*}
  completing the proof.
\end{proof}
The Fubini--Study operator relates norms of finite type and Fubini--Study functions, as follows: 

\begin{prop}\label{prop:FSft} For any subgroup $\La\subset\R$, we have 
$$
\FS(\cT_\La)=\FS(\cT_{\Q\La})=\FS(\cT_{\Q\La}^{\hom})=\cH_\La=\cH_{\Q\La}. 
$$
\end{prop}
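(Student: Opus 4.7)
The identity $\cH_\La=\cH_{\Q\La}$ is already~\eqref{equ:HQ}, so the plan is to establish the chain
\[
\cH_\La\subset\FS(\cT_\La)\subset\FS(\cT_{\Q\La})\subset\FS(\cT^\hom_{\Q\La})\subset\cH_{\Q\La}.
\]

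I would dispatch the three rightmost inclusions first, since they are essentially formal. The first is immediate from $\cT_\La\subset\cT_{\Q\La}$. The second follows by combining Lemma~\ref{lem:homogft} (homogenization carries $\cT_{\Q\La}$ into $\cT^\hom_{\Q\La}$) with Proposition~\ref{prop:FSH} ($\FS$ is invariant under homogenization). For the third: any $\chi\in\cT^\hom_{\Q\La}$ is of finite type, so by Lemma~\ref{lem:FSft} one has $\FS(\chi)=m^{-1}\FS_{mL}(\chi|_{R_m})$ for $m$ sufficiently divisible, and~\eqref{equ:FSFSL} together with the scaling identity $\cH_\La(mL)=m\cH_\La(L)$ places the result in $\cH_{\Q\La}$.

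The real content is the remaining surjectivity $\cH_\La\subset\FS(\cT_\La)$. Given $\f=m^{-1}\max_{j\in J}\{\log|s_j|+\la_j\}\in\cH_\La$ with $(s_j)_{j\in J}$ having no common zero in $X$ and $\la_j\in\La$, my plan is to construct a preimage $\chi\in\cT_\La$ by specifying $\chi|_{R_m}$ explicitly and extending by the generated-in-degree-$1$ procedure. After passing to a sufficiently divisible multiple of $m$ so that $R^{(m)}$ is generated in degree~$1$ (replacing each $s_j$ by its corresponding power and each $\la_j$ by the matching scaling), I would define a $\La$-valued norm $\chi_m$ on $R_m$ via the decreasing left-continuous filtration
\[
\Filt^\la R_m:=\begin{cases}\mathrm{span}\{s_j\mid j\in J,\,\la_j\ge\la\}&\text{if }\la>\la_{\min},\\ R_m&\text{if }\la\le\la_{\min},\end{cases}
\]
where $\la_{\min}:=\min_j\la_j\in\La$, and let $\chi$ be the generated-in-degree-$1$ extension of $\chi_m$ to $R^{(m)}$. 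Then $\chi\in\cT_\La$ by Lemma~\ref{lem:nft} and the visible preservation of $\La$-valuedness under the symmetric-power quotient.

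The verification $\FS(\chi)=\f$ reduces via Lemma~\ref{lem:FSft} to $\FS_{mL}(\chi_m)=m\f$. The lower bound is immediate from $\chi_m(s_j)\ge\la_j$. For the upper bound I would fix $v\in\Xan$ and $s\in R_m\smallsetminus\{0\}$ and split on whether $\chi_m(s)>\la_{\min}$: in the first case, writing $s$ as a combination of $s_j$ with $\la_j\ge\chi_m(s)$ and applying the strong triangle inequality produces some $j^*$ with $c_{j^*}\ne 0$ and $v(s)\ge v(s_{j^*})$, whence $\chi_m(s)-v(s)\le\la_{j^*}-v(s_{j^*})\le m\f(v)$; in the second, the no-common-zero assumption yields some $j^*\in J$ with $s_{j^*}$ non-vanishing at $c_X(v)$, so $v(s_{j^*})=0$ and $\chi_m(s)-v(s)\le\la_{\min}\le\la_{j^*}\le m\f(v)$. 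The main subtlety I anticipate is accommodating representations of $\f$ in which the $s_j$ are linearly dependent, where $\chi_m(s_j)$ can strictly exceed $\la_j$; the above filtration is chosen precisely so that this extra weight is always exactly compensated by the improved lower bound on $v(s_j)$ coming from the defining linear relation, which is what makes the argument run uniformly without passing to a linearly independent subfamily.
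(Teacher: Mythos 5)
Your proposal is correct and follows essentially the same route as the paper: the forward inclusions via Lemma~\ref{lem:FSft}, \eqref{equ:FSFSL}, Proposition~\ref{prop:FSH} and Lemma~\ref{lem:homogft}, and the surjectivity $\cH_\La\subset\FS(\cT_\La)$ by producing a degree-$1$-generated $\La$-valued preimage. The only difference is cosmetic: your filtration is exactly the quotient norm the paper uses (with the spanning issue absorbed into the bottom level $\Filt^{\la}R_m=R_m$ for $\la\le\la_{\min}$ instead of enlarging the family with $\la_i\ll0$), and you verify the key identity $\FS_{mL}(\chi_m)=m\f$ by hand where the paper cites \cite[Lemma~7.17]{BE}.
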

As we shall see, the map $\FS\colon\cT_{\Q\La}^\hom\to\cH_\La$ is further 1--1 (see Corollary~\ref{cor:FSbij}). 

\begin{proof} If $\n\in\cT_\La$ then $\n=\n_m$ for $m$ sufficiently divisible, and hence $\FS(\n)=\FS(\n_m)=\FS_m(\n)\in\cH_\La$ by~\eqref{equ:FSFSL}. Conversely, pick $\f\in\cH_\La$, and write $\f=d^{-1}\max_i\{\log|s_i|+\la_i\}$ with $d\ge 1$, a finite family $(s_i)_{1\le i\le N}$ in $R_d$ without common zeros, and $\la_i\in\La$. After enlarging the family $(s_i)$ and choosing the corresponding $\la_i\ll 0$ in $\La$, we may further assume that $(s_i)$ spans $R_d$. Consider the surjective map $k^N\to R_d$ that takes the canonical basis $(e_i)$ to $(s_i)$. Denote by $\chi_0$ the norm on $k^N$ that is diagonal in $(e_i)$, with $\chi_0(e_i)=\la_i$, and let $\n_d\in\cN_\R(R_d)$ be the quotient norm. It is then easy to check (see~\cite[Lemma 7.17]{BE}) that 
$$
\f=d^{-1}\max_j\{\log|s_j|+\la_j\}=d^{-1}\FS_{dL}(\n_d)
$$
By Lemma~\ref{lem:FSft}, the norm $\chi\in\cN_\La(R^{(d)})$ generated in degree $1$ by $\n_d$ satisfies $\FS(\chi)=\f$, which proves $\FS(\cT_\La)=\cH_\La$. By Proposition~\ref{prop:FSH} and Lemma~\ref{lem:homogft}, we infer
$$
\cH_\La=\FS(\cT_\La)\subset\FS(\cT_{\Q\La}^\hom)\subset\cH_{\Q\La}, 
$$
which concludes the proof since $\cH_{\Q\La}=\cH_\La$, see~\eqref{equ:HQ}. 
\end{proof}

%
%
\subsection{The infimum norm and homogenization}\label{sec:infnorm}
Next we define an operator
\begin{equation*}
  \IN\colon\cL^\infty\to\cN_\R^\hom
\end{equation*}
that to a bounded function $\f$ on $\Xan$ attaches a homogeneous norm, the \emph{infimum norm} $\chi=\IN(\f)$. For any $m\in\N$ such that $mL$ is a line bundle, it is defined on $R_m$ by
\begin{equation}\label{equ:GN}
  \chi(s):=\inf_{v\in\Xan}\{v(s)+m\f(v)\}=\inf_{X^\an}\{m\f-\log|s|\}. 
\end{equation}
In `multiplicative' notation, this is simply the usual supremum norm 
$$
\|s\|_\f=\sup_{\Xan}|s|e^{-m\f},
$$ 
compare for instance~\cite[\S6]{BE}. The operator $\IN=\IN_L$ is increasing, and equivariant for the actions of $\R$ and $\R_{>0}$, \ie
\begin{equation}\label{equ:INequiv}
\IN(\f+c)=\IN(\f)+c,\quad\IN(t\cdot\f)=t\IN(\f)
\end{equation}
for $\f\in\cL^\infty$, $c\in\R$ and $t\in\R_{>0}$. For any $\f,\f'\in\cL^\infty$, it is also easy to see that
\begin{equation}\label{equ:GNmin}
  \IN(\f\wedge\f')=\IN(\f)\wedge \IN(\f'), 
\end{equation}
\begin{equation}\label{equ:INcontr}
\dd_\infty\left(\IN(\f),\IN(\f')\right)\le\dd_\infty(\f,\f')
\end{equation}    
\begin{equation}\label{equ:INlsc}
\IN(\f)=\IN(\f_\star), 
\end{equation}
where $\f_\star\in\cL^\infty$ denotes the \emph{lsc regularization} of $\f$, \ie the largest lsc function such that $\f_\star\le\f$.    The following key result relates homogenization and infimum norms: 

\begin{thm}\label{thm:INFS}
  For any $\chi\in\cN_\R$, we have $\IN(\FS(\chi))=\n^\hom$. 
\end{thm}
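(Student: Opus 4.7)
The plan is to first reduce to the case where $\chi$ is itself homogeneous, and then identify both sides of the identity via the Berkovich spectrum of the completion of $R^{(d)}$.

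First, since $\IN$ takes values in $\cN_\R^\hom$ by construction, and since $\FS(\chi)=\FS(\chi^\hom)$ by Proposition~\ref{prop:FSH}, it suffices to prove $\IN(\FS(\chi))=\chi$ when $\chi$ is homogeneous; assume this from now on. The ``easy'' inequality $\IN(\FS(\chi))\ge\chi$ is straightforward: for any $s\in R_m\smallsetminus\{0\}$ and $v\in X^\an$, from $\FS(\chi)(v)\ge\FS_m(\chi)(v)\ge m^{-1}(\chi(s)-v(s))$ we deduce $v(s)+m\FS(\chi)(v)\ge\chi(s)$, and infimizing over $v$ gives the claim.

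The reverse inequality is the substantive point, and the natural tool is the spectral radius formula in Berkovich theory. Pick $d$ with $dL$ a line bundle so that $\chi$ is realized as a norm on $R^{(d)}$, and let $\hat R$ be the completion of $R^{(d)}$ with respect to $\|\cdot\|_\chi=e^{-\chi}$; this is a $k$-Banach algebra. Since $\chi$ is homogeneous, $\|\cdot\|_\chi$ coincides with its spectral radius seminorm (Remark~\ref{rmk:specrad}), and Berkovich's theorem (\cite[Thm.~1.3.1]{BerkBook}) therefore yields, for every $s\in\hat R$,
\[
\chi(s)=\inf\bigl\{w(s)\,:\,w\text{ a bounded multiplicative seminorm on }\hat R\bigr\}.
\]
The key step is to identify this set of seminorms with pairs $(v,c)\in X^\an\times\R$ satisfying $c\ge\FS(\chi)(v)$: the triviality of $\chi$ on $k$ forces any such $w$ to be $k^\times$-invariant on $R^{(d)}$, hence equal to some $w_{v,c}$ as in~\S\ref{sec:vallb}; and the boundedness condition $w_{v,c}\ge\chi$ on each graded piece $R_{dm}$ translates, by taking $\chi$-orthogonal bases, exactly to $c\ge\sup_m\FS_m(\chi)(v)=\FS(\chi)(v)$. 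Since $w_{v,c}(s)=v(s)+cm$ for homogeneous $s\in R_m$, we then obtain
\[
\chi(s)=\inf_{v,\,c\ge\FS(\chi)(v)}\{v(s)+cm\}=\inf_v\{v(s)+m\FS(\chi)(v)\}=\IN(\FS(\chi))(s),
\]
as desired.

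The main obstacle is the precise identification of the Berkovich spectrum of $\hat R$ with $\{w_{v,c}:c\ge\FS(\chi)(v)\}$, since in general $\chi$ need not be of finite type and $\hat R$ need not be $k$-affinoid. An alternative route that sidesteps this entirely is to approximate $\chi$ by its canonical approximants $\chi_d\in\cT_\R$, for which $\hat R$ is $k$-affinoid by Theorem~\ref{thm:homogft} and the description of the spectrum is classical, and then pass to the limit using the monotonicity of $\IN$ and the fact that $\FS(\chi)=\sup_d\FS(\chi_d)$.
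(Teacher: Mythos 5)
Your overall strategy---the easy inequality by direct estimate, and the reverse inequality via Berkovich's spectral radius formula applied to the completion of the section ring, identifying the relevant points of the spectrum with the invariant semivaluations $w_{v,c}$, $c\ge\FS(\chi)(v)$---is exactly the paper's proof. The preliminary reduction to homogeneous $\chi$ is harmless but unnecessary: by Remark~\ref{rmk:specrad}, Theorem~1.3.1 of \cite{BerkBook} directly computes $\chi^\hom(f)$ as the infimum of $w(f)$ over the spectrum $M$ of bounded multiplicative seminorms, with no need to first assume $\chi=\chi^\hom$.

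There is, however, a genuine gap in your identification of $M$: the triviality of $\chi$ on $k$ does \emph{not} force a bounded multiplicative seminorm $w\ge\chi$ to be $k^\times$-invariant, i.e.\ to satisfy $w(\sum_m s_m)=\min_m w(s_m)$. Already for $R=k[t]$ with the trivial norm, the valuation $f\mapsto\ord_{t=1}(f)$ is bounded and multiplicative but not invariant, so $M$ is strictly larger than the set of points $w_{v,c}$, and your displayed chain of equalities does not follow as written. The paper repairs this with an extra (elementary but necessary) step: the retraction $p(w)(\sum_m s_m):=\min_m w(s_m)$ maps $M$ onto its invariant part and satisfies $p(w)\le w$ with $p(w)\ge\chi$, so the infimum defining $\chi^\hom(s)$ is unchanged upon restricting to invariant points, after which the description of \S\ref{sec:vallb} applies. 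By contrast, your worry about $\hat R$ failing to be $k$-affinoid is a red herring: the maximum modulus principle and the compactness of the spectrum hold for arbitrary commutative Banach rings, so no finite-type hypothesis is needed; and note that the proposed detour through the canonical approximants $\chi_d$ would not cure the actual gap, since the spectrum of an affinoid algebra also contains non-invariant points and the same projection argument would still be required.
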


\begin{cor}\label{cor:FSisom} For all $\n,\n'\in\cN_\R$ we have $\dd_\infty(\FS(\n),\FS(\n'))=\dd_\infty(\n^\hom,\n'^{\hom})$. 
In particular, the Fubini--Study operator defines an isometric embedding of complete metric spaces 
$$
\FS\colon (\cN_\R^\hom,\dd_\infty)\hto(\cL^\infty,\dd_\infty).
$$  
\end{cor}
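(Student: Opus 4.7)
The plan is to deduce the corollary from a simple two-sided Lipschitz sandwich, using Theorem~\ref{thm:INFS} (the identity $\IN\circ\FS=\mathrm{hom}$) together with Proposition~\ref{prop:FSH} (the fact that $\FS$ is invariant under homogenization).

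First I would establish the main equality $\dd_\infty(\FS(\n),\FS(\n'))=\dd_\infty(\n^\hom,\n'^\hom)$. For the inequality $\le$, I would apply Proposition~\ref{prop:FSH} to rewrite $\FS(\n)=\FS(\n^\hom)$ and $\FS(\n')=\FS(\n'^\hom)$, and then invoke the contraction property~\eqref{equ:FScontr} of the Fubini--Study operator to get
\begin{equation*}
\dd_\infty(\FS(\n),\FS(\n'))=\dd_\infty(\FS(\n^\hom),\FS(\n'^\hom))\le\dd_\infty(\n^\hom,\n'^\hom).
\end{equation*}
For the reverse inequality $\ge$, I would apply Theorem~\ref{thm:INFS} to rewrite $\n^\hom=\IN(\FS(\n))$ and $\n'^\hom=\IN(\FS(\n'))$, and then use the contraction property~\eqref{equ:INcontr} of the infimum-norm operator:
\begin{equation*}
\dd_\infty(\n^\hom,\n'^\hom)=\dd_\infty(\IN(\FS(\n)),\IN(\FS(\n')))\le\dd_\infty(\FS(\n),\FS(\n')).
\end{equation*}

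For the ``in particular'' part, note that when $\n,\n'\in\cN_\R^\hom$ one has $\n^\hom=\n$ and $\n'^\hom=\n'$, so the equality just proved becomes $\dd_\infty(\FS(\n),\FS(\n'))=\dd_\infty(\n,\n')$, i.e., $\FS$ is isometric on $\cN_\R^\hom$. In particular it is injective, so it is an isometric embedding into $(\cL^\infty,\dd_\infty)$. Finally, completeness of $(\cN_\R^\hom,\dd_\infty)$ is provided by Proposition~\ref{prop:homcomp}, and completeness of $(\cL^\infty,\dd_\infty)$ is the standard fact that bounded functions on a set form a complete metric space under the supremum distance.

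There is essentially no obstacle here; the nontrivial content has already been packaged into Theorem~\ref{thm:INFS} and Proposition~\ref{prop:FSH}, whose proofs rely on the affinoid/spectral-radius input behind homogenization. The corollary itself is a formal consequence.
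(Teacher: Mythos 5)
Your proof is correct and follows essentially the same route as the paper: both reduce to the homogeneous case via Proposition~\ref{prop:FSH} and then sandwich $\dd_\infty(\n^\hom,\n'^{\hom})$ between the two contraction estimates~\eqref{equ:INcontr} and~\eqref{equ:FScontr} using the identity $\IN\circ\FS=(\cdot)^\hom$ from Theorem~\ref{thm:INFS}. The only difference is presentational (two separate inequalities versus one chain), and your handling of the completeness statements matches the paper's implicit appeal to Proposition~\ref{prop:homcomp}.
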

Recall that $\dd_\infty$ also denotes the supnorm metric on the space $\cL^\infty$ of bounded functions on $X^\an$. 

The following result settles the case $p=\infty$ of Theorem~A in the introduction: 

\begin{cor}\label{cor:FSbij} For any subgroup $\La\subset\R$, the Fubini--Study operator defines a surjective isometry $\FS\colon(\cT_\La,\dd_\infty)\twoheadrightarrow(\cH_\La,\dd_\infty)$, which factors as 
\begin{itemize} 
\item a surjective isometry $(\cT_\La,\dd_\infty)\twoheadrightarrow(\cT_{\Q\La}^{\hom},\dd_\infty)$ defined by homogenization;
\item an isomorphism $\FS\colon(\cT_{\Q\La}^\hom,\dd_\infty)\simto(\cH_\La,\dd_\infty)$, with inverse $\IN\colon(\cH_\La,\dd_\infty)\simto(\cT_{\Q\La}^\hom,\dd_\infty)$. 
\end{itemize}
For any homogeneous norm $\n\in\cN_\R^\hom$, we further have $\FS(\n)\in\cH_\La\Longleftrightarrow\n\in\cT_{\Q\La}$. 
\end{cor}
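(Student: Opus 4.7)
The plan is to assemble the preceding results to exhibit $\FS\colon(\cT_\La,\dd_\infty)\to(\cH_\La,\dd_\infty)$ as the composition of two isometries: first, homogenization $\cT_\La\twoheadrightarrow\cT^\hom_{\Q\La}$, and second, the Fubini--Study operator restricted to homogeneous finite-type norms. All the substantive work is already in place; there is no real obstacle beyond checking the pieces fit.

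First, I would verify that homogenization is a surjective isometry $(\cT_\La,\dd_\infty)\twoheadrightarrow(\cT^\hom_{\Q\La},\dd_\infty)$. The map lands in $\cT^\hom_{\Q\La}$ by Lemma~\ref{lem:homogft}(i),(iii), and it is surjective because a homogeneous norm equals its own homogenization. The isometry property follows from Lemma~\ref{lem:homogft}(ii), which yields $\dd_\infty(\n,\n^\hom)=0$, combined with Lemma~\ref{lem:homcomp}: the triangle inequality gives $\dd_\infty(\n,\n')=\dd_\infty(\n^\hom,\n'^\hom)$ for all $\n,\n'\in\cT_\La$.

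Next, I would restrict $\FS$ to $\cT^\hom_{\Q\La}$. Corollary~\ref{cor:FSisom} already gives the isometric embedding $\FS\colon(\cN_\R^\hom,\dd_\infty)\hto(\cL^\infty,\dd_\infty)$, while Proposition~\ref{prop:FSft} gives $\FS(\cT^\hom_{\Q\La})=\cH_\La$. Hence $\FS$ induces an isometric isomorphism $(\cT^\hom_{\Q\La},\dd_\infty)\simto(\cH_\La,\dd_\infty)$. To see that its inverse is $\IN$, note that for $\n\in\cT^\hom_{\Q\La}$, Theorem~\ref{thm:INFS} yields $\IN(\FS(\n))=\n^\hom=\n$; conversely, for $\f\in\cH_\La$, pick $\n\in\cT_\La$ with $\FS(\n)=\f$ via Proposition~\ref{prop:FSft}, and compute
\begin{equation*}
\FS(\IN(\f))=\FS(\IN(\FS(\n)))=\FS(\n^\hom)=\FS(\n)=\f
\end{equation*}
using Theorem~\ref{thm:INFS} and Proposition~\ref{prop:FSH}. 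In particular $\IN(\cH_\La)=\cT^\hom_{\Q\La}$, and $\IN\colon(\cH_\La,\dd_\infty)\to(\cT^\hom_{\Q\La},\dd_\infty)$ is an isometry by~\eqref{equ:INcontr} together with the isometry property of $\FS$.

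Composing the two isometries yields the asserted surjective isometry $(\cT_\La,\dd_\infty)\twoheadrightarrow(\cH_\La,\dd_\infty)$ with the claimed factorization. For the final equivalence, suppose $\n\in\cN_\R^\hom$. If $\FS(\n)\in\cH_\La$, then $\n=\n^\hom=\IN(\FS(\n))\in\IN(\cH_\La)=\cT^\hom_{\Q\La}\subset\cT_{\Q\La}$. Conversely, if $\n\in\cT_{\Q\La}\cap\cN_\R^\hom=\cT^\hom_{\Q\La}$, then $\FS(\n)\in\cH_{\Q\La}=\cH_\La$ by Proposition~\ref{prop:FSft} and~\eqref{equ:HQ}. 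This completes the proof.
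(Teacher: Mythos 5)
Your argument is essentially the paper's, assembled in the opposite order: the paper first establishes the surjective isometry $\FS\colon(\cT_\La,\dd_\infty)\twoheadrightarrow(\cH_\La,\dd_\infty)$ directly from Corollary~\ref{cor:FSisom}, Lemma~\ref{lem:homogft} and Proposition~\ref{prop:FSft}, reads off the factorization through $(\cT_{\Q\La}^\hom,\dd_\infty)\simto(\cH_\La,\dd_\infty)$, and only then \emph{deduces} that homogenization is surjective onto $\cT^{\hom}_{\Q\La}$; you instead try to prove the two factors separately and compose them. All the lemmas you invoke are the right ones, and your second and third paragraphs are correct.

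The one misstep is your justification of surjectivity of homogenization $(\cT_\La,\dd_\infty)\to(\cT^{\hom}_{\Q\La},\dd_\infty)$. The observation that a homogeneous norm equals its own homogenization only yields surjectivity if $\cT^{\hom}_{\Q\La}\subset\cT_\La$, and in general one only has $\cT^{\hom}_{\Q\La}\subset\cT_{\Q\La}$, which is strictly larger: for $\La=\Z$, say, $\Q\La=\Q$ and a homogeneous finite-type norm with genuinely non-integral rational values lies in $\cT^{\hom}_{\Q}$ but not in $\cT_\Z$ (e.g.\ $\tfrac12\n$ for a suitable $\n\in\cT_\Z^\hom$). What you actually need is that every $\n'\in\cT^{\hom}_{\Q\La}$ is the homogenization of some element of $\cT_\La$, and this follows from material you only establish later: by Proposition~\ref{prop:FSft} there is $\n\in\cT_\La$ with $\FS(\n)=\FS(\n')$, whence $\n^{\hom}=\IN(\FS(\n))=\IN(\FS(\n'))=\n'^{\hom}=\n'$ by Theorem~\ref{thm:INFS}. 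So the gap is local and easily repaired by reordering your steps (prove the second bullet first, as the paper does), but as written the first paragraph does not stand on its own.
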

The last equivalence fails in general for non-homogeneous norms, see Example~\ref{exam:FSft}

The proof of Theorem~\ref{thm:INFS} uses the Berkovich maximum modulus principle as well as the remarks in~\S\ref{sec:vallb}. 

\begin{proof}[Proof of Theorem~\ref{thm:INFS}] After passing to a multiple, we may assume that $L$ is a line bundle and $\chi$ is a norm on $R=R(X,L)$. Let $M$ be the Berkovich spectrum of the normed ring $(R,\chi)$, \ie the set of semivaluations $w\colon R\to\R\cup\{+\infty\}$ such that $w\ge\chi$. Geometrically, $M$ sits as a compact subset of the analytification $Y^\an$ of the affine cone $Y=\Spec R$, and is obtained as the image of the unit disc bundle in the total space of $L^\vee$, \ie the blowup of $o\in Y$ (compare~\cite{Fan19}).

Since homogenization corresponds to the spectral radius seminorm construction (see Remark~\ref{rmk:specrad}), the Berkovich maximum modulus principle~\cite[Theorem~1.3.1]{BerkBook} (applied to the completion of $(R,\chi)$) yields $\n^\hom(f)=\min_{w\in M}w(f)$ for any $f\in R$, where the infimum is attained by compactness of $M$. In particular, for any $s\in R_m\smallsetminus\{0\}$, $m\ge 1$, we have $\n^\hom(s)=\min_{w\in M}w(s)$. 

  Let $M^{\mathrm{inv}}$ be the set of $k^\times$-invariant semivaluations in $M$. We have a projection $p\colon M\to M^{\mathrm{inv}}$ defined by
$$
p(w)(\sum_ms_m)=\min_mw(s_m)
$$
where $s_m\in R_m$. Thus $p(w)\le w$, so in the formula for $\n^\hom(s)$, it suffices to take the infimum over $w\in M^{\mathrm{inv}}$.  As in~\S\ref{sec:vallb} consider the projection $\pian\colon\Yan\smallsetminus\{w_o\}\to\Xan$, where $w_o$ is the trivial semivaluation at the vertex of the cone; this satisfies $w_o(f)=+\infty$ for $f\in\bigoplus_{m>0}R_m$ and $w_o(f)=0$ for $f\in R\setminus\bigoplus_{m>0}R_m$. For any $v\in\Xan$, the set of $k^\times$-invariant points in $(\pian)^{-1}(v)$ is of the form $\{w_{v,c}\}_{c\in\R}$, where $w_{v,c}$ is the unique $k^\times$-invariant point such that $w_{v,c}(s)=v(s)+c m$ for $s\in R_m$, $m\ge1$, see~\eqref{equ:valext}. 

It follows that $M^{\mathrm{inv}}\subset M\subset Y^\an$ is the set of semivaluations $w_{v,c}$, where $v\in\Xan$ and $v(s)+m c\ge\chi(s)$ for all $s\in R_m$, $m\ge 1$. Note that this condition on $c$ means precisely that $c\ge\FS(\chi)(v)$. Altogether, this means that if $s\in R_m$, $m\ge 1$, then 
  \begin{align*}
    \n^\hom(s)
    &=\inf\{v(s)+m c\mid v\in\Xan,\,c\ge\FS(\chi)(v)\}\\
    &=\inf\{v(s)+m\FS(\chi)(v)\mid v\in\Xan\}\\
    &=\IN(\FS(\chi))(s),
  \end{align*}
  which completes the proof.
\end{proof}

\begin{proof}[Proof of Corollary~\ref{cor:FSisom}] By Proposition~\ref{prop:FSH}, we may assume that $\n,\n'$ are homogeneous.  By Theorem~\ref{thm:INFS}, \eqref{equ:INcontr} and~\eqref{equ:FScontr}, we then have 
$$
\dd_\infty(\n,\n')=\dd_\infty(\IN(\FS(\n)),\IN(\FS(\n')))\le\dd_\infty(\FS(\n),\FS(\n'))\le\dd_\infty(\n,\n').
$$
Thus equality holds everywhere, and the result follows. 
\end{proof}

\begin{proof}[Proof of Corollary~\ref{cor:FSbij}] By Corollary~\ref{cor:FSisom} and Lemma~\ref{lem:homogft}, the Fubini--Study operator defines a surjective isometry $\FS\colon(\cT_\La,\dd_\infty)\twoheadrightarrow(\cH_\La,\dd_\infty)$, which factors as homogenization followed by $\FS\colon(\cT_{\Q\La}^\hom,\dd_\infty)\simto(\cH_\La,\dd_\infty)$, whose inverse is necessarily given by $\IN$, by Theorem~\ref{thm:INFS}. This implies that homogenization defines a surjective isometry $(\cT_\La,\dd_\infty)\twoheadrightarrow(\cT_{\Q\La}^\hom,\dd_\infty)$. 

The final assertion follows, by injectivity of $\FS$ on $\cN_\R^\hom$. 

\end{proof}

%
\subsection{Continuous norms}\label{sec:cont} 

Building on the previous results, we are now in a position to characterize the $\dd_\infty$-closure of the set $\cT_\Z$ of test configurations, as follows. 

\begin{thm}\label{thm:cont} For any norm $\n\in\cN_\R$, the following are equivalent:
\begin{itemize}
\item[(i)] $\n$ lies in the $\dd_\infty$-closure of $\cT_\Z$;
\item[(ii)] $\n$ lies in the $\dd_\infty$-closure of $\cT_\R$;
\item[(iii)] the canonical approximants $(\n_d)$ satisfy $\dd_\infty(\n_d,\n)\to 0$; 
\item[(iv)] $\FS(\n)$ is continuous;
\item[(v)] $\FS(\n_d)\to\FS(\n)$ uniformly on $X^\an$.
\end{itemize}
\end{thm}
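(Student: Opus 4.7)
The plan is to establish the cycle $(\text{iii})\Rightarrow(\text{ii})\Rightarrow(\text{i})\Rightarrow(\text{iv})\Rightarrow(\text{v})\Rightarrow(\text{iii})$. The implication $(\text{iii})\Rightarrow(\text{ii})$ is immediate since $\chi_d\in\cT_\R$ for each $d$, and $(\text{i})\Rightarrow(\text{iv})$ follows from the $1$-Lipschitz property of $\FS$ (equation~\eqref{equ:FScontr}): if $\psi_n\in\cT_\Z$ satisfies $\dd_\infty(\psi_n,\chi)\to 0$, then $\FS(\psi_n)\to\FS(\chi)$ uniformly, and the uniform limit of the continuous functions $\FS(\psi_n)\in\cH_\Z\subset\Cz$ is again continuous.

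For $(\text{ii})\Rightarrow(\text{i})$, the key observation is that $\cH_\Q$ is sup-norm dense in $\cH_\R$: perturbing the constants $\la_j\in\R$ in a presentation $\f=m^{-1}\max_j\{\log|s_j|+\la_j\}$ to nearby rationals yields a sup-norm approximation inside $\cH_\Q$. Given $\psi_n\in\cT_\R$ with $\psi_n\to\chi$ in $\dd_\infty$, the $1$-Lipschitz property gives $\FS(\psi_n)\to\FS(\chi)$ uniformly; I would then approximate each $\FS(\psi_n)$ to within $1/n$ by some $\f_n\in\cH_\Q=\FS(\cT_\Z)$ (Proposition~\ref{prop:FSft}) and write $\f_n=\FS(\tilde\psi_n)$ with $\tilde\psi_n\in\cT_\Z$. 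The surjective isometry $\FS\colon(\cT_\R,\dd_\infty)\to(\cH_\R,\dd_\infty)$ of Corollary~\ref{cor:FSbij} then forces $\dd_\infty(\tilde\psi_n,\psi_n)=\sup|\f_n-\FS(\psi_n)|<1/n$, so $\tilde\psi_n\to\chi$ in $\dd_\infty$. For $(\text{iv})\Rightarrow(\text{v})$ I would invoke Dini's theorem, extended to nets indexed by the (directed) divisibility order: by construction of $\FS$, the net $\{\FS(\chi_d)\}$ of continuous functions increases pointwise to $\FS(\chi)$ on the compact space $\Xan$, and when this limit is continuous the convergence is uniform.

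The principal step is $(\text{v})\Rightarrow(\text{iii})$, which must bridge the gap between the non-homogeneous norms $\chi_d,\chi$ and their homogenizations. Under (v), Theorem~\ref{thm:INFS} identifies $\chi_d^\hom=\IN(\FS(\chi_d))$ and $\chi^\hom=\IN(\FS(\chi))$, so the $1$-Lipschitz property of $\IN$ (equation~\eqref{equ:INcontr}) yields $\dd_\infty(\chi_d^\hom,\chi^\hom)\to 0$. Combined with $\dd_\infty(\chi_d,\chi_d^\hom)=0$ from Lemma~\ref{lem:homogft}(ii), the triangle inequality produces $\dd_\infty(\chi_d,\chi^\hom)\to 0$. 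The closing move is the sandwiching $\chi_d\le\chi\le\chi^\hom$ (valid since $\chi_d\le\chi$ by the quotient-norm description of the approximants and $\chi\le\chi^\hom$ by definition of homogenization) combined with the monotonicity~\eqref{equ:GIorder} of the Goldman--Iwahori distance, which passes to the asymptotic $\dd_\infty$ upon dividing by $m$ and taking $\limsup$; this yields $\dd_\infty(\chi_d,\chi)\le\dd_\infty(\chi_d,\chi^\hom)\to 0$, establishing~(iii). The main obstacle here is precisely that $\dd_\infty(\chi,\chi^\hom)$ need not vanish for a general $\chi\in\cN_\R$, in contrast to the finite-type case; a naive triangle-inequality detour through $\chi^\hom$ therefore fails, and it is the order-sandwiching that rescues the argument.
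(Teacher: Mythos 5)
Your proof is correct, and on the decisive step---deducing (iii) from uniform convergence of the $\FS(\n_d)$---it coincides with the paper's argument: both exploit the sandwich $\n_d\le\n\le\n^\hom$ together with~\eqref{equ:GIorder} and the fact that $\dd_\infty(\n_d,\n_d^\hom)=0$ for finite-type norms, so that $\dd_\infty(\n_d,\n)\le\dd_\infty(\n_d,\n^\hom)=\dd_\infty(\FS(\n_d),\FS(\n))$. (Your detour through $\IN$ and~\eqref{equ:INcontr} is just an unpacking of Corollary~\ref{cor:FSisom}, which the paper cites directly; the reverse inequality~\eqref{equ:FScontr} even gives equality, which the paper records as~\eqref{equ:FSapprox}.)

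Where you genuinely diverge is in the handling of (i)--(iii). The paper proves (ii)$\Rightarrow$(iii) directly by a hands-on estimate on the restrictions to $R_{md}$ (choosing $\n'\in\cT_\R$ with $\n'_d=\n'$ and squeezing $\n_d\le\n\le\n_d+2m d\e$), and then obtains (ii)$\Rightarrow$(i) by passing to the round-down $\lfloor\n\rfloor\in\cN_\Z$, whose canonical approximants lie in $\cT_\Z$. You instead route everything through the cycle, proving (ii)$\Rightarrow$(i) by sup-norm density of $\cH_\Q$ in $\cH_\R$ and pulling the approximants back through the isometry $\FS\colon(\cT_\R,\dd_\infty)\to(\cH_\R,\dd_\infty)$, and recovering (iii) only at the end via (v)$\Rightarrow$(iii). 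Both organizations are sound; the paper's direct (ii)$\Rightarrow$(iii) has the minor advantage of not relying on the full strength of Corollary~\ref{cor:FSbij} for that implication, while your version makes the equivalence with the function-theoretic conditions (iv)--(v) carry more of the logical weight, which is arguably cleaner given that Corollary~\ref{cor:FSbij} is already available at this point in the text.
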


\begin{defi} We say that $\n\in\cN_\R$ is \emph{continuous} when the equivalent properties of Theorem~\ref{thm:cont} holds. 
\end{defi}
The set $\cN_\R^{\cont}$ of continuous norms is thus the $\dd_\infty$-closure of $\cT_\Z$ (or $\cT_\R$); it is a strict subset of $\cN_\R$ as soon as $\dim X\ge 1$ (see Example~\ref{exam:subvar} below). 

\begin{proof}[Proof of Theorem~\ref{thm:cont}] We trivially have (i)$\Rightarrow$(ii). Assume (ii), and pick $\e>0$. In view of~\eqref{equ:GIas}, we can find $\n'\in\cT_\R$ and $d\ge 1$ such that 
\begin{equation}\label{equ:dinftymr}
\dd_\infty(\n|_{R_{md}},\n'|_{R_{md}})\le mr\e\text{ for all }m\ge 1.
\end{equation}
Replacing $d$ with a multiple, we can further assume $\n'_d=\n'$. For $m=1$, \eqref{equ:dinftymr} yields $\n'\le\n+r\e$ on $R_d$, and hence $\n'=\n'_d\le\n_d+mr\e$ on $R_{md}$ for all $m\ge 1$. On the other hand, \eqref{equ:dinftymr} yields $\n\le\n'+mr\e$ on $R_{md}$, which proves $\n_d\le\n\le\n_d+2mr\e$ on $R_{md}$ for all $m\ge 1$. This implies $\dd_\infty(\n_d,\n)\le\e$. This proves (ii)$\Rightarrow$(iii), the converse being obvious since $\n_d\in\cT_\R$. 

Assume (ii). To prove (i), we may replace $\n$ with its round-down and assume $\n\in\cN_\Z$ (see Example~\ref{exam:dinftyround}. Its canonical approximants then satisfy $\n_d\in\cT_\Z$, and hence (ii)$\Rightarrow$(i), thanks to (iii). 

Since $\FS(\n)$ is the pointwise limit of the increasing net of continuous functions $(\FS(\n_d))$, Dini's lemma yields (iv)$\Leftrightarrow$(v). Finally, we claim that 
\begin{equation}\label{equ:FSapprox}
\dd_\infty(\n_d,\n)=\dd_\infty(\FS(\n_d),\FS(\n))
\end{equation}
for $d$ sufficiently divisible, which will show (iii)$\Leftrightarrow$(v). Note that 
$$
\n_d\le\n\le\n^\hom\Longrightarrow\dd_\infty(\n_d,\n)\le\dd_\infty(\n_d,\n^\hom),
$$
see~\eqref{equ:GIorder}. Now $\dd_\infty(\n_d,(\n_d)^\hom))=0$ (see Lemma~\ref{lem:homogft}), and hence 
$$
\dd_\infty(\n_d,\n^\hom)=\dd_\infty((\n_d)^\hom,\n^\hom)=\dd_\infty(\FS(\n_d),\FS(\n)),
$$
by~Corollary~\ref{cor:FSisom}. This shows $\dd_\infty(\n_d,\n)\le\dd_\infty(\FS(\n_d),\FS(\n))$, while the converse holds by~\eqref{equ:FScontr}. This shows~\eqref{equ:FSapprox}, and concludes the proof. 
\end{proof}

\begin{exam}\label{exam:subvar} To each subvariety $Z\varsubsetneq X$ we associate a norm $\n=\n_Z\in\cN_\Z^\hom$ by setting, for each nonzero section $s\in R_m$ with $m$ sufficiently divisible
$$
\n(s)=\left\{
  \begin{array}{ll}
    m & \text{if } s|_Z\equiv 0 \\
    0 & \text{otherwise}. 
  \end{array}\right.
$$
We claim that $\n$ is not continuous. Indeed, using the description of $\n_d$ as a quotient norm, it is easy to check that any $s\in R_{dm}$ locally lies in $I_Z^p$ with $p:=\n_d(s)\in\N$. Choosing $s\in R_{dm}=\Hnot(X,rmL)$ that locally belongs to $I_Z$ but not $I_Z^2$ (which is possible for any $m$ large enough, since $L$ is ample), we get $\n_d(s)=1$, while $\n(s)=rm$. This shows $\dd_\infty(\n|_{R_{dm}},\n_d|_{R_{dm}})\ge rm-1$, and hence $\dd_\infty(\n,\n_d)\ge 1$, which proves the claim. 

Alternatively, one can show that $\FS(\n)$ is identically $1$ on $X^\an\setminus Z^\an$, and $0$ on $Z^\an$, and hence is not continuous. 
\end{exam}

\begin{exam}\label{exam:FSft} As a variant of Example~\ref{exam:subvar}, consider the norm $\n\in\cN_\R$ defined for $s\in R_m\setminus\{0\}$ by 
$$
\n(s)=\left\{
  \begin{array}{ll}
    m & \text{if } s|_Z\equiv 0 \\
    m-\sqrt m & \text{otherwise},
  \end{array}\right.
$$
which is indeed a norm, by subadditivity of $m\mapsto\sqrt m$. Then $\n^\hom=\n_\triv+1$, hence $\FS(\n)=\FS(\n^\hom)\equiv 1$. In particular, $\FS(\n)\in\cH_\R$; however, $\n$ is not of finite type. Indeed, 
$$
\dd_\infty(\n|_{R_m},\n^\hom|_{R_m})=\sqrt m
$$
is not bounded (see Lemma~\ref{lem:homogft}). 
\end{exam}

\begin{rmk} It follows from Example~\ref{exam:subvar} that the set $\cT_\Z$ of test configurations is never $\dd_\infty$-dense in $\cN_\R$ when $\dim X\ge 1$. In contrast, $\cT_\Z$ is dense with respect to any of the weaker pseudometrics $\dd_p$, $p\in[1,\infty)$ to be introduced in Section~\ref{sec:spectral} (see Corollary~\ref{cor:tcdense}). 
\end{rmk}

We next analyze the behavior of homogenization on continuous norms. 

\begin{prop}\label{prop:homcont} For each $\n\in\cN_\R$, we have 
$$
\n\in\cN_\R^\cont\Longleftrightarrow\n^\hom\in\cN_\R^\cont\Longrightarrow\dd_\infty(\n,\n^\hom)=0. 
$$
Further, homogenization induces a surjective isometry $(\cN_\R^\cont,\dd_\infty)\twoheadrightarrow(\cN_\R^{\cont,\hom},\dd_\infty)$. 
\end{prop}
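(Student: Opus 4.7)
The plan is to prove everything by combining the characterization of continuous norms in Theorem~\ref{thm:cont} (in particular item (iv)) with the invariance of the Fubini--Study operator under homogenization (Proposition~\ref{prop:FSH}).

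First, since $\FS(\n)=\FS(\n^\hom)$ by Proposition~\ref{prop:FSH}, item (iv) of Theorem~\ref{thm:cont} gives the equivalence
$$
\n\in\cN_\R^\cont\ \Longleftrightarrow\ \FS(\n)\in\Cz\ \Longleftrightarrow\ \FS(\n^\hom)\in\Cz\ \Longleftrightarrow\ \n^\hom\in\cN_\R^\cont
$$
with essentially no work.

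Next, assume $\n\in\cN_\R^\cont$; I will show $\dd_\infty(\n,\n^\hom)=0$. The key observation is the chain of inequalities $\n_d\le\n\le\n^\hom$ (the first by definition of canonical approximant, the second by definition of homogenization), which combined with~\eqref{equ:GIorder} yields
$$
\dd_\infty(\n,\n^\hom)\le\dd_\infty(\n_d,\n^\hom)
$$
for every sufficiently divisible $d$. To estimate the right-hand side, Lemma~\ref{lem:homogft}~(ii) gives $\dd_\infty(\n_d,(\n_d)^\hom)=0$, so by the triangle inequality $\dd_\infty(\n_d,\n^\hom)=\dd_\infty((\n_d)^\hom,\n^\hom)$; applying Corollary~\ref{cor:FSisom} to these two homogeneous norms and using Proposition~\ref{prop:FSH} transforms this into $\dd_\infty(\FS(\n_d),\FS(\n))$. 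By condition (v) of Theorem~\ref{thm:cont}, the latter tends to $0$, and the conclusion $\dd_\infty(\n,\n^\hom)=0$ follows.

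Finally, the isometry statement is a direct corollary. Given $\n,\n'\in\cN_\R^\cont$, the triangle inequality combined with $\dd_\infty(\n,\n^\hom)=\dd_\infty(\n',\n'^{\hom})=0$ yields
$$
\dd_\infty(\n^\hom,\n'^{\hom})=\dd_\infty(\n,\n'),
$$
so homogenization $\n\mapsto\n^\hom$ is an isometry from $(\cN_\R^\cont,\dd_\infty)$ onto its image in $(\cN_\R^{\cont,\hom},\dd_\infty)$; surjectivity is trivial since any $\n'\in\cN_\R^{\cont,\hom}$ satisfies $(\n')^\hom=\n'$. There is no serious obstacle here: all ingredients (the identity $\FS\circ\hom=\FS$, the monotonicity~\eqref{equ:GIorder} of $\dd_\infty$, the isometry $\FS$ on homogeneous norms, and the uniform convergence $\FS(\n_d)\to\FS(\n)$ in the continuous case) are already in place, and the argument is simply a matter of assembling them in the right order.
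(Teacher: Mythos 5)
Your proof is correct and follows essentially the same route as the paper: the first equivalence via Proposition~\ref{prop:FSH} and Theorem~\ref{thm:cont}~(iv), the vanishing of $\dd_\infty(\n,\n^\hom)$ by passing to the limit from the finite-type case (Lemma~\ref{lem:homogft}), and the isometry statement by the triangle inequality. The only cosmetic difference is that you run the limiting step explicitly through the canonical approximants $\n_d$ and Theorem~\ref{thm:cont}~(v), whereas the paper phrases it as a density-plus-Lipschitz-continuity argument using Lemma~\ref{lem:homcomp}; also note that the inequality $\dd_\infty(\n,\n^\hom)\le\dd_\infty(\n_d,\n^\hom)$ you want is the mirror image of the literal statement of~\eqref{equ:GIorder}, though it follows from the same one-line computation.
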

Here $\cN_\R^{\cont,\hom}:=\cN_\R^\cont\cap\cN_\R^\hom$ denotes the set of continuous homogeneous norms. 

\begin{proof} The first equivalence follows from Proposition~\ref{prop:FSH} and Theorem~\ref{thm:cont}~(iv). By Lemma~\ref{lem:homogft}, $\dd_\infty(\n,\n^\hom)=0$ holds on $\cT_\R$. By $\dd_\infty$-continuity of homogenization (see Lemma~\ref{lem:homcomp}), this extends to the $\dd_\infty$-closure $\cN_\R^\hom$. This proves the second implication, which in turn yields the last point, by the triangle inequality.
\end{proof}

When $\n\in\cN_\R$ is not continuous, the property $\dd_\infty(\n,\n^\hom)=0$ fails in general; in other words, homogenization is not a $\dd_\infty$-isometry on the whole space $\cN_\R$: 

\begin{exam}\label{exam:don} Pick a subvariety $Z\varsubsetneq X$, and set for $s\in R_m\setminus\{0\}$ 
$$
\n(s)=\left\{
  \begin{array}{ll}
    m & \text{if } s\in I_Z^2 \\
    m/2 & \text{if } s\in I_Z\setminus I_Z^2\\
    0 & \text{if } s\notin I_Z
  \end{array}\right.
$$
As one easily checks, this defines a norm $\n\in\cN_\Q$, such that $\n^\hom=\n_Z$ is the norm in Example~\ref{exam:subvar}. Further, $\dd_\infty(\n|_{R_m},\n^\hom|_{R_m})=m/2$ for $m$ sufficiently divisible, and hence $\dd_\infty(\n,\n^\hom)=1/2$. 
\end{exam}

Finally, recall from~\cite{trivval} that the space $\CPSH$ of \emph{continuous (bounded) $L$-psh functions} on $X^\an$ can be described as the closure of $\cH_\R$ (or, equivalently, $\cH_\Q=\cH_\Z$) with respect to uniform convergence (see also~\S\ref{sec:Lpsh} below). We show:  

\begin{thm}\label{thm:contisom} The Fubini--Study and infimum norm operators induce inverse isomorphic isometries
$\FS\colon(\cN_\R^{\cont,\hom},\dd_\infty)\simto(\CPSH,\dd_\infty)$, $\IN\colon(\CPSH,\dd_\infty)\simto(\cN_\R^{\cont,\hom},\dd_\infty)$. 
\end{thm}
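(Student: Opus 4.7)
The plan is to assemble the statement from three already-proven ingredients: the isometric embedding $\FS\colon(\cN_\R^\hom,\dd_\infty)\hto(\cL^\infty,\dd_\infty)$ from Corollary~\ref{cor:FSisom}, the identity $\IN\circ\FS=(\cdot)^\hom$ from Theorem~\ref{thm:INFS}, and the bijection $\FS\colon\cT_\R^\hom\simto\cH_\R$ with inverse $\IN$ from Corollary~\ref{cor:FSbij}. The remaining content is that both maps respect the topological closures: on one side, $\cN_\R^{\cont,\hom}$ is (via Proposition~\ref{prop:homcont}) the $\dd_\infty$-closure of $\cT_\R^\hom$ inside the complete metric space $(\cN_\R^\hom,\dd_\infty)$; on the other side, $\CPSH$ is by definition the supnorm-closure of $\cH_\R$ in $\Cz$.

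First I would check that $\FS$ sends $\cN_\R^{\cont,\hom}$ into $\CPSH$. If $\chi$ is continuous and homogeneous, Theorem~\ref{thm:cont}(v) yields $\FS(\chi_d)\to\FS(\chi)$ uniformly on $X^\an$, and since each $\FS(\chi_d)\in\cH_\R\subset\CPSH$, the limit lies in the closed subspace $\CPSH$. Conversely, for $\f\in\CPSH$ pick a sequence $\f_n\in\cH_\R$ with $\f_n\to\f$ uniformly; by~\eqref{equ:INcontr} the sequence $\IN(\f_n)=\FS^{-1}(\f_n)\in\cT_\R^\hom$ is $\dd_\infty$-Cauchy and converges to $\IN(\f)$, so $\IN(\f)$ lies in the $\dd_\infty$-closure of $\cT_\R^\hom$, which is $\cN_\R^{\cont,\hom}$.

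Next I would verify that the two maps are mutually inverse on these subspaces. On $\cN_\R^{\cont,\hom}\subset\cN_\R^\hom$, Theorem~\ref{thm:INFS} gives $\IN(\FS(\chi))=\chi^\hom=\chi$ directly. For the other composition, $\FS(\IN(\f))=\f$ holds on $\cH_\R$ by Corollary~\ref{cor:FSbij}; approximating any $\f\in\CPSH$ by $\f_n\in\cH_\R$ and using that $\FS$ is $1$-Lipschitz on $\cN_\R^\hom$ (Corollary~\ref{cor:FSisom}) together with the $1$-Lipschitz bound~\eqref{equ:INcontr} for $\IN$, both sides of $\FS(\IN(\f_n))=\f_n$ pass to the limit to give $\FS(\IN(\f))=\f$. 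The isometry property is then inherited from Corollary~\ref{cor:FSisom} on the source and is automatic on the target since $\dd_\infty$ on $\CPSH$ is just the supnorm metric, the restriction to $\cH_\R$ of which already agrees with the image metric from Corollary~\ref{cor:FSbij}.

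I do not expect a serious obstacle: the work has all been done in the preceding results, and this statement is essentially a continuity/density extension. The one point that must be handled with care is the identification of $\cN_\R^{\cont,\hom}$ as the $\dd_\infty$-closure of $\cT_\R^\hom$ inside $\cN_\R^\hom$ (needed to see that the limit $\IN(\f)$ does land there); this uses Proposition~\ref{prop:homcont}, which says that homogenization is a surjective $\dd_\infty$-isometry $\cN_\R^\cont\twoheadrightarrow\cN_\R^{\cont,\hom}$ and so transports the defining closure description of $\cN_\R^\cont$ (Theorem~\ref{thm:cont}) to the homogeneous setting, using that $(\cN_\R^\hom,\dd_\infty)$ is complete by Proposition~\ref{prop:homcomp}.
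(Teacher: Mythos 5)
Your proposal is correct and follows essentially the same route as the paper: both arguments identify $\cN_\R^{\cont,\hom}$ as the $\dd_\infty$-closure of $\cT_\R^\hom$ (via Proposition~\ref{prop:homcont}) and $\CPSH$ as the supnorm closure of $\cH_\R$, then transport one onto the other using the isometric embedding of Corollary~\ref{cor:FSisom} and conclude that the inverse is $\IN$ via Theorem~\ref{thm:INFS}. The only difference is cosmetic: the paper invokes in one stroke the fact that an isometric embedding of a complete metric space maps closures onto closures of images, whereas you carry out the two approximation arguments by hand.
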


\begin{proof} By Corollary~\ref{cor:FSisom}, the Fubini--Study operator defines an isometric embedding of complete metric spaces $\FS\colon(\cN_\R^\hom,\dd_\infty)\hto(\cL^\infty,\dd_\infty)$, which thus maps the closure of any subset onto the closure of its image. Now $\FS(\cT_\R^\hom)=\cH_\R$ (see Proposition~\ref{prop:FSft}), where the closure of $\cT_\R^\hom$ in $(\cN_\R^\hom,\dd_\infty)$ is $\cN_\R^{\cont,\hom}$ (by Proposition~\ref{prop:homcont}) and the closure of $\cH_\R$ in $(\cL^\infty,\dd_\infty)$ is $\CPSH$. It follows that $\FS\colon(\cN_\R^{\cont,\hom},\dd_\infty)\simto(\CPSH,\dd_\infty)$ is an isometric isomorphism, whose inverse is necessarily given by $\IN$, by Theorem~\ref{thm:INFS}. 
\end{proof}

%
%
\subsection{The Fubini--Study envelope}

As in~\cite[\S7.5]{BE} and~\cite[\S5.3]{trivval}, we define the \emph{Fubini--Study envelope} of a bounded function $\f\in\cL^\infty$ as the pointwise supremum
\begin{equation}\label{equ:FSenv}
\qq(\f)=\qq_L(\f):=\sup\{\p\in\cH_\R\mid \p\le\f\}. 
\end{equation} 
Since any $\p\in\cH_\R$ is a uniform limit of functions in $\cH_\Q$, one can replace $\cH_\R$ with $\cH_\Q$ in this definition. We note that   
\begin{equation}\label{equ:FSequiv}
\qq_{dL}(t\f)=d\qq_L(\f),\quad\qq(t\cdot\f)=\qq(\f),\quad\qq(\f+c)=\qq(\f)+c
\end{equation}
for all $d\in\Q_{>0}$, $t\in\R_{>0}$, $c\in\R$,    and refer to~\S\ref{sec:env} for more information. We view the next result as a `dual' version of Proposition~\ref{prop:FSH}.
\begin{prop}\label{prop:GNQ}
  For any $\f\in\cL^\infty$ we have $\IN(\f)=\IN(\qq(\f))$.
  \end{prop}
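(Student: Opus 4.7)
The plan is to establish the two inequalities separately. One direction is immediate: since $\qq(\f)\le\f$ by definition of the envelope, monotonicity of $\IN$ gives $\IN(\qq(\f))\le\IN(\f)$. The content lies in the reverse inequality $\IN(\f)\le\IN(\qq(\f))$, which I would prove section by section.

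Fix $m\ge1$ sufficiently divisible and a nonzero $s\in R_m$, and set $\la:=\IN(\f)(s)\in\R$. Unpacking~\eqref{equ:GN}, the condition $v(s)+m\f(v)\ge\la$ for all $v\in\Xan$ is exactly the pointwise bound $\tfrac{1}{m}(\log|s|+\la)\le\f$ on $\Xan$. If $\tfrac{1}{m}(\log|s|+\la)$ itself belonged to $\cH_\R$, I would dominate it by $\qq(\f)$ directly, rearrange to $v(s)+m\qq(\f)(v)\ge\la$, and be done. The main obstacle is that $\log|s|$ takes the value $-\infty$ along the zero locus of $s$, so this function is not itself a Fubini--Study function.

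The remedy is to truncate from below. Choose $d$ sufficiently divisible so that $R_d$ admits a basis $(t_k)$ with no common zeros, which yields $\max_k\log|t_k|\equiv0$ on $\Xan$. For any $M\ge\|\f\|_\infty$, set
\[
\psi_M:=\max\{\tfrac{1}{m}(\log|s|+\la),\,-M\}=\tfrac{1}{md}\max\{\log|s^d|+d\la,\ \max_k\log|t_k^m|-mdM\},
\]
where the second equality uses $\max_k\log|t_k^m|=m\max_k\log|t_k|=0$. The combined family $\{s^d\}\cup\{t_k^m\}_k\subset R_{md}$ has no common zeros (inherited from $(t_k^m)$), so $\psi_M\in\cH_\R$. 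Moreover $\psi_M\le\f$, the first clause by the bound above and the second since $-M\le\f$ by choice of $M$; hence $\psi_M\le\qq(\f)$.

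Combining this with $\psi_M\ge\tfrac{1}{m}(\log|s|+\la)$ pointwise (trivially where $s$ vanishes), we obtain $\tfrac{1}{m}(\log|s|+\la)\le\qq(\f)$ on $\Xan$. Rearranging gives $v(s)+m\qq(\f)(v)\ge\la$ for every $v$, whence $\IN(\qq(\f))(s)\ge\la=\IN(\f)(s)$, as required. The only non-trivial point throughout is exhibiting $\psi_M$ as a genuine Fubini--Study function that sits above $\tfrac{1}{m}(\log|s|+\la)$ while still being dominated by $\f$.
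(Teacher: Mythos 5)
Your proof is correct and follows essentially the same route as the paper: the easy inequality comes from $\qq(\f)\le\f$ and monotonicity of $\IN$, and the reverse inequality is obtained by truncating $\tfrac1m(\log|s|+\la)$ from below to produce a Fubini--Study function dominated by $\f$, hence by $\qq(\f)$. The paper packages this truncation as Lemma~\ref{lem:GNQ} (writing $\max\{\log|s|,-t\}\in\cH_\R$ without spelling out the auxiliary base-point-free family), whereas you make that step explicit; the substance is identical.
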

This is in fact a special case of~\cite[Lemma~7.23]{BE} but we repeat the simple argument for the convenience of the reader. 
\begin{lem}\label{lem:GNQ}
  If $\f\in\cL^\infty$ and $s\in R_m$ with $m$ sufficiently divisible, then $\log|s|\le m\f$ iff $\log|s|\le m\qq(\f)$.
\end{lem}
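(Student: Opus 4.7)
The ``if'' direction is immediate from the definition of $\qq(\f)$ as a supremum of functions below $\f$: since $\qq(\f)\le\f$ pointwise, the inequality $\log|s|\le m\qq(\f)$ forces $\log|s|\le m\f$.

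For the ``only if'' direction, assume $\log|s|\le m\f$. The plan is to construct a function $\p\in\cH_\R$ such that $\tfrac1m\log|s|\le\p\le\f$; this immediately gives $\tfrac1m\log|s|\le\qq(\f)$, which is what we want. Since $L$ is ample and $m$ is sufficiently divisible, $mL$ is globally generated, so we may choose a finite family $(t_1,\ldots,t_N)$ of sections in $R_m$ with no common zeros; then $\{s,t_1,\ldots,t_N\}$ also has no common zeros, and for any $\la\in\R$ the function
\begin{equation*}
\p:=\tfrac1m\max\bigl\{\log|s|,\,\log|t_1|+\la,\,\ldots,\,\log|t_N|+\la\bigr\}
\end{equation*}
lies in $\cH_\R$.

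It remains to choose $\la$ so that $\p\le\f$, \ie each term of the max is bounded by $m\f$. The term $\log|s|\le m\f$ holds by assumption. For the terms $\log|t_i|+\la\le m\f$, \ie $v(t_i)\ge\la-m\f(v)$ for every $v\in\Xan$, we exploit the fact that each $t_i$ is a global section of a line bundle, so $v(t_i)\ge 0$ for every $v$ (since the local function corresponding to $t_i$ at $c_X(v)$ lies in $\cO_{X,c_X(v)}$, on which $v$ is nonnegative). Thus it suffices to pick $\la\le m\inf_{\Xan}\f$, which is finite because $\f\in\cL^\infty$ is bounded below. With this choice, $\p\le\f$ while $\p\ge\tfrac1m\log|s|$ by construction, and we conclude as above.

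There is no real obstacle: the only point requiring care is producing a Fubini--Study function that majorizes $\tfrac1m\log|s|$ without exceeding $\f$, which is handled by the two observations that $mL$ admits a basepoint-free family (for $m$ sufficiently divisible) and that $v(t_i)\ge 0$ uniformly in $v$, allowing the auxiliary constant $\la$ to be pushed low enough.
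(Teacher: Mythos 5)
Your proof is correct and follows essentially the same route as the paper: both arguments produce the Fubini--Study function $\max\{\tfrac1m\log|s|,c\}$ for a sufficiently negative constant $c$ (the paper writes $\p_t=\max\{\log|s|,-t\}$ after reducing to $m=1$, while you realize the constant explicitly via a basepoint-free family $(t_i)$, using $\max_i\log|t_i|\equiv 0$) and then invoke the definition of $\qq(\f)$. Your extra care in exhibiting the constant as an honest element of $\cH_\R$ is a nice touch but not a different method.
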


\begin{proof} We may assume $m=1$. Since $\qq(\f)\le\f$, we only need to prove the direct implication. For $t\in\R$, set $\p_t=\max\{\log|s|,-t\}$. Then $\p_t\in\cH_\R$, and $\p_t\le\f$ for $t\gg0$ since $\f$ is bounded. Thus  $\p_t\le\qq(\f)$ by the definition of $\qq$, so $\log|s|\le\p_t\le\qq(\f)$.
\end{proof}
\begin{proof}[Proof of Proposition~\ref{prop:GNQ}]
  Pick $s\in R_m$ with $m$ sufficiently divisible. We must prove that $\la:=\inf_{\Xan}(m\f-\log|s|)$ equals $\la':=\inf_{X^\an}(m\qq(\f)-\log|s|)$. Since $\qq(\f)\le\f$ we have $\la'\le\la$. The reverse inequality follows from Lemma~\ref{lem:GNQ} applied to the bounded function $\f-m^{-1}\la$, together with~\eqref{equ:FSequiv}. 
\end{proof}

We similarly have a dual version of Theorem~\ref{thm:INFS}: 

\begin{prop}\label{prop:FSIN}
  For any $\f\in\cL^\infty$, we have $\FS(\IN(\f))=\qq(\f)$. 
\end{prop}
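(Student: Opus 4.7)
The plan is to establish the identity by proving the two inequalities separately, relying on the already-established duality results (Theorem~\ref{thm:INFS} and Proposition~\ref{prop:FSH}) and on the surjectivity $\FS(\cT_\R)=\cH_\R$ from Proposition~\ref{prop:FSft}. Both directions should follow by short monotonicity arguments, so I do not expect any serious obstacle; the work has already been done in the previous subsections.

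For the inequality $\FS(\IN(\f))\le\qq(\f)$, I first observe that each approximant $\FS_m(\IN(\f))=m^{-1}\FS_{mL}(\IN(\f)|_{R_m})$ lies in $\cH_\R$ by the explicit description~\eqref{equ:FSL} of Fubini--Study operators on finite-dimensional norms (applied to the restriction of $\IN(\f)$ to $R_m$). Next, I bound it pointwise by $\f$: for any $w\in X^\an$ and $s\in R_m\setminus\{0\}$, taking $v=w$ in the infimum that defines $\IN(\f)(s)$ in~\eqref{equ:GN} yields $\IN(\f)(s)\le w(s)+m\f(w)$, so $-w(s)+\IN(\f)(s)\le m\f(w)$, and hence $\FS_m(\IN(\f))(w)\le\f(w)$. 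Since $\FS_m(\IN(\f))\in\cH_\R$ and $\FS_m(\IN(\f))\le\f$, the definition~\eqref{equ:FSenv} of $\qq(\f)$ gives $\FS_m(\IN(\f))\le\qq(\f)$, and passing to the supremum over $m$ completes this half.

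For the reverse inequality $\qq(\f)\le\FS(\IN(\f))$, I argue that every $\p\in\cH_\R$ with $\p\le\f$ satisfies $\p\le\FS(\IN(\f))$, and then take the supremum. By Proposition~\ref{prop:FSft} I can write $\p=\FS(\chi)$ for some $\chi\in\cT_\R$. Monotonicity of $\IN$ gives $\IN(\p)\le\IN(\f)$, and then monotonicity of $\FS$ gives $\FS(\IN(\p))\le\FS(\IN(\f))$. Now Theorem~\ref{thm:INFS} yields $\IN(\p)=\IN(\FS(\chi))=\chi^\hom$, and Proposition~\ref{prop:FSH} yields $\FS(\chi^\hom)=\FS(\chi)=\p$; chaining these identities, we obtain $\p=\FS(\chi^\hom)=\FS(\IN(\p))\le\FS(\IN(\f))$, which is what we wanted.

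Combining the two inequalities gives $\FS(\IN(\f))=\qq(\f)$. No pluripotential-theoretic input beyond the surjectivity statement $\FS(\cT_\R)=\cH_\R$ is needed; the whole proof is essentially a formal consequence of the Galois-like adjoint relation between $\FS$ and $\IN$ encoded by Theorem~\ref{thm:INFS} and its companion Proposition~\ref{prop:GNQ}.
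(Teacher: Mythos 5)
Your proof is correct. The overall two-inequality structure matches the paper's, but both halves are argued somewhat differently, and the reverse inequality in particular takes a genuinely different route. For $\FS(\IN(\f))\le\qq(\f)$, the paper first upgrades the pointwise bound $\log|s|\le m\f-\IN(\f)(s)$ to $\log|s|\le m\qq(\f)-\IN(\f)(s)$ via Lemma~\ref{lem:GNQ}, whereas you observe directly that each approximant $\FS_m(\IN(\f))$ is itself a Fubini--Study competitor in the envelope~\eqref{equ:FSenv}; the two arguments carry the same elementary content, and yours bypasses Lemma~\ref{lem:GNQ}. For $\qq(\f)\le\FS(\IN(\f))$, the paper stays at the finite-dimensional level: it writes $m\p=\FS_{mL}(\chi')$ for a norm $\chi'$ on $R_m$, deduces $\chi'\le\IN(\f)|_{R_m}$ from $\p\le\f$, and concludes by monotonicity of $\FS_{mL}$. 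You instead derive the global identity $\FS\circ\IN\circ\FS=\FS$ from Theorem~\ref{thm:INFS} and Proposition~\ref{prop:FSH} (which is non-circular, since both precede this proposition, and is essentially Corollary~\ref{cor:3opsis1} obtained without using the present statement) and combine it with monotonicity of $\IN$ and $\FS$. Your route is cleaner and more conceptual, but it funnels the whole inequality through Theorem~\ref{thm:INFS}, i.e.\ through the Berkovich maximum modulus principle, whereas the paper's argument for this half is entirely elementary and independent of that theorem. Both are valid; the paper's version has the minor virtue of keeping the logical load on Theorem~\ref{thm:INFS} as light as possible.
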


\begin{proof} After passing to a multiple, we may assume that $L$ is a line bundle, so that $\chi:=\IN(\f)$ is a norm on $R=R(X,L)$. For all $m\ge 1$ and $s\in R_m\smallsetminus\{0\}$, we have $\log|s|\le m\f-\chi(s)$ on $X^\an$, by definition of the infimum norm. By Lemma~\ref{lem:GNQ} and~\eqref{equ:FSequiv}, this yields $\log|s|\le m\qq(\f)-\chi(s)$, and hence 
$$
\FS(\chi)=\sup\{m^{-1}(\log|s|+\chi(s))\mid m\ge 1,\,s\in R_m\smallsetminus\{0\}\}\le\qq(\f).
$$
For the reverse inequality, pick any $\p\in\cH_\R$ with $\p\le\f$. Since $\FS(\cT_\R)=\cH_\R$ (see Proposition~\ref{prop:FSft}), there exists $m\ge1$ and a norm $\chi'$ on $R_m$ such that 
$$
m\p=\FS_{mL}(\chi')=\sup_{s\in R_m\setminus\{0\}}\{\log|s|+\chi'(s)\}. 
$$
Since $\p\le\f$, this gives $\log|s|+\chi'(s)\le m\f$, \ie $\chi'\le\chi|_{R_m}$ on $R_m$. As a result, 
$$
\p=m^{-1}\FS_{mL}(\chi')\le m^{-1}\FS_{mL}(\chi|_{R_m})=\FS_m(\chi)\le\FS(\chi),
$$
which completes the proof.  
\end{proof}

Combining Theorem~\ref{thm:INFS} and Proposition~\ref{prop:FSIN} with Propositions~\ref{prop:FSH} and~\ref{prop:GNQ}, we also obtain
\begin{cor}\label{cor:3opsis1}
  We have $\FS\circ\IN\circ\FS=\FS$ on $\cN_\R$, and
  $\IN\circ\FS\circ\IN=\IN$ on $\cL^\infty$.
\end{cor}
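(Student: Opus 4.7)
The plan is to prove both identities by a direct two-step composition, chaining the four results immediately preceding the corollary, none of which requires any additional input beyond what has already been set up.

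For the first identity $\FS\circ\IN\circ\FS=\FS$ on $\cN_\R$, I would start with an arbitrary $\chi\in\cN_\R$ and first rewrite the inner composition using Theorem~\ref{thm:INFS}, which gives $\IN(\FS(\chi))=\chi^\hom$. Applying $\FS$ to both sides then reduces the identity to the statement $\FS(\chi^\hom)=\FS(\chi)$, which is exactly Proposition~\ref{prop:FSH}. So the first identity follows from concatenating these two displayed equalities.

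For the second identity $\IN\circ\FS\circ\IN=\IN$ on $\cL^\infty$, the argument is dual: starting from $\f\in\cL^\infty$, I would first apply Proposition~\ref{prop:FSIN} to rewrite $\FS(\IN(\f))=\qq(\f)$, reducing the identity to $\IN(\qq(\f))=\IN(\f)$, which is Proposition~\ref{prop:GNQ}. Again the result is just the concatenation of two previously established equalities.

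There is no real obstacle here; the corollary is a formal consequence of the four results cited. The only thing to flag is that the two identities reflect the same underlying picture viewed from opposite sides: the pair $(\FS,\IN)$ forms a Galois-type connection between $\cN_\R$ and $\cL^\infty$, with $\chi\mapsto\chi^\hom$ and $\f\mapsto\qq(\f)$ being the corresponding idempotent closures, and $\FS\circ\IN\circ\FS=\FS$, $\IN\circ\FS\circ\IN=\IN$ are the standard identities satisfied in any such adjunction. No further computation is needed.
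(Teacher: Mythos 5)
Your proof is correct and matches the paper exactly: the corollary is stated there as an immediate consequence of combining Theorem~\ref{thm:INFS} with Proposition~\ref{prop:FSH} (for the first identity) and Proposition~\ref{prop:FSIN} with Proposition~\ref{prop:GNQ} (for the second), which is precisely the chaining you carry out.
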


%
%
%
%
\section{Spectral analysis}\label{sec:spectral}
In this section we define a volume function $\vol\colon\cN_\R\to\R$ as well as pseudo-metrics $\dd_p$, $p\in [1,\infty)$, on the space $\cN_\R$ of  norms on section rings of multiples of $L$. 
Much of the material is studied for more general non-Archimedean ground fields in~\cite{CM15,BE}, but we present the details for the convenience of the reader.
%
%
\subsection{The finite-dimensional case}
We first describe the space $\cN_\R(V)$ of non-Archimedean norms on a $k$-vector space $V$ of dimension $N<\infty$, essentially following~\cite{GIT,BE}.

Pick a norm $\chi\in\cN_\R(V)$, and a $\chi$-orthogonal basis $(e_j)_{1\le j\le N}$ of $V$. After permutation, we may assume that the sequence $\la_j(\chi):=\chi(e_j)$, $j=1,\dots,N$ satisfies
 \begin{equation*}
\la_1(\n)\ge\dots\ge\la_N(\n). 
  \end{equation*}
It is then independent of the choice of orthogonal basis and is called the \emph{spectrum} of $\chi$ (\ie the `jumping values' of the associated filtration in the terminology of~\cite{BC}). In terms of~\eqref{equ:laminmax} we have 
$$
\la_1(\n)=\la_{\max}(\n),\quad\la_N(\n)=\la_{\min}(\n). 
$$
The \emph{volume} of $\n$ is defined as the mean value\footnote{Note that a different normalization is used in~\cite{BE,BGM}, where the volume is defined as the sum of the elements of the spectrum.} of its spectrum, \ie 
$$
\vol(\n):=\frac{1}{N}\sum_j\la_j(\chi).
$$
For any basis $(e_j)$ of $V$ we have $\vol(\chi)\ge N^{-1}\sum_j\chi(e_j)$ with equality iff $(e_j)$ is $\chi$-orthogonal.

More generally, any two norms $\chi,\chi'$ admit a common orthogonal basis $(e_j)$. The \emph{relative spectrum} of $\n$ with respect to $\n'$ is the sequence
$$
\la_1(\n,\n')\ge\dots\ge\la_N(\n,\n')
$$
obtained by reordering $\left(\chi(e_j)-\chi'(e_j)\right)_{1\le j\le N}$, and the 
\emph{spectral measure} of $\chi$ with respect to $\chi'$ is the corresponding probability measure
$$
\sigma(\n,\n'):=\frac{1}{N}\sum_j\d_{\la_j(\n,\n')}.
$$
Its barycenter satisfies
\begin{equation}\label{equ:barvol}
\int_\R\la\,d\sigma(\n,\n')=\frac 1N\sum_j\la_j(\n,\n')=\vol(\n)-\vol(\n'). 
\end{equation}
When $\n'=\chi_\triv$ is the trivial norm, we simply write 
$$
\sigma(\n)=\sigma(\n,\chi_\triv)=\frac{1}{N}\sum_j\d_{\la_i(\n)},
$$
and call it the \emph{spectral measure of $\n$}.    In terms of the associated $\R$-filtration $\Filt^\la V=\{\n\ge \la\}$, we have 
\begin{equation}\label{equ:sigmafilt}
\sigma(\n)=\frac 1N\sum_{\la\in\R}\dim(\Filt^\la V/\Filt^{>\la} V)\,\d_\la.
\end{equation}

\medskip
To any basis $\ba=(e_i)$ of $V$ is associated an \emph{apartment}  $\A_\ba\subset\cN_\R(V)$, defined as the set of norms diagonalized in this basis. We then have a canonical parametrization 
$$
\iota_\ba\colon\R^N\simto\A_\ba, 
$$ 
and a \emph{Gram--Schmidt retraction}
$$
\rho_\ba\colon\cN_\R(V)\to\A_\ba. 
$$
The map $\iota_\ba$ sends $(\la_j)\in\R^N$ to the unique $\n\in\A_\ba$ such that $\n(e_i)=\la_i$, while $\rho_\ba$ sends a norm $\n$ to the unique norm $\rho_\ba(\n)\in\A_\ba$ such that
$$
\rho_\ba(\n)(e_i)=\sup_{a\in k^N}\n(e_i+\sum_{j<i} a_j e_j). 
$$
\ie the norm induced, via the basis $\ba$, from the natural subquotient norm on the graded object of the complete flag defined by $\ba$. By additivity of the volume in exact sequences, we have 
\begin{equation}\label{equ:detrho}
\vol(\rho_\ba(\n))=\vol(\n),
\end{equation}
see~\cite[Lemma 2.12]{BE}. Each $\A_\ba$ is trivially preserved by the translation action of $\R$, the scaling action by $\R_{>0}$, and by the operation $(\chi,\chi')\mapsto\chi\wedge\chi'$. Moreover,
$$
\n\le\n'\Longrightarrow\rho_\ba(\n)\le\rho_\ba(\n').
$$
%
%
\subsection{Metrics on the space of norms}

Generalizing the classical construction of the Tits metric on the Euclidean building $\cN_\R(V)$ (see for instance~\cite[\S2.2]{GIT}), it is shown in~\cite[Theorem 3.1]{BE} that each $\mathfrak{S}_N$-invariant norm $\tau$ on $\R^N$ induces a unique metric $\dd_\tau$ on $\cN_\R(V)$ such that 
$\iota_\ba:(\R^N,\tau)\hto(\cN_\R(V),\dd_\tau)$ is an isometry for any basis $\ba$. 
It has the property that $\rho_\ba\colon\cN_\R(V)\to\A_\ba$ is a contraction. All metrics on $\cN_\R(V)$ obtained this way are equivalent. They turn $\cN_\R(V)$ into a metric space that is complete, but not locally compact. 

In particular, for each $p\in[1,\infty]$ we define a metric $\dd_p$ on $\cN_\R(V)$ by setting for any two norms $\n,\n'$ with relative spectrum $(\la_i)=(\la_i(\chi,\chi'))$
\begin{equation}\label{e418}
\dd_p(\n,\n'):=(N^{-1}\sum_i |\la_i|^p)^{1/p}
\end{equation}
for $p\in[1,\infty)$, and 
$$
\dd_\infty(\n,\n'):=\max_i|\la_i|.
$$
Thus $\dd_p(\chi,\chi')$ is the $L^p$-norm of the identity with respect to the spectral measure $\sigma(\chi,\chi')$.
Note that 
\begin{equation}\label{equ:dpcomp}
\dd_1\le\dd_p\le\dd_\infty^{1-\tfrac 1p}\dd_1^{\tfrac 1p}\le\dd_\infty
\end{equation}
on $\cN_\R(V)$ for $p\in(1,\infty)$. The metric $\dd_2$ is the Tits metric mentioned above, while $\dd_\infty$ coincides with the Goldman--Iwahori metric~\eqref{equ:GI}. Our main interest lies in the metric $\dd_1$, which is closely related to the volume: 
\begin{lem}\label{lem:d1vol} For all $\n,\n'\in\cN_\R(V)$ and $p\in [1,\infty)$ we have 
\begin{equation}\label{equ:dppyth}
\dd_p(\n,\n')^p=\dd_p(\n,\n\wedge\n')^p+\dd_p(\n\wedge\n',\n')^p.
\end{equation}
For $p=1$, we further have 
\begin{equation}\label{equ:d1vol}
d_1(\n,\n')=\vol(\n)+\vol(\n')-2\vol(\n\wedge\n').
\end{equation}
\end{lem}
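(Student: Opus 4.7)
The plan is to pick a joint orthogonal basis $(e_j)$ of $V$ for $\chi$ and $\chi'$ (which exists by the finite-dimensional theory recalled in \S\ref{sec:norms}) and set $a_j:=\chi(e_j)$, $b_j:=\chi'(e_j)$. The first observation, which is the linchpin of the argument, is that $(e_j)$ is automatically orthogonal for the pointwise infimum $\chi\wedge\chi'$, with $(\chi\wedge\chi')(e_j)=\min(a_j,b_j)$. Indeed, for any $v=\sum_j c_j e_j$,
\begin{equation*}
(\chi\wedge\chi')(v)=\min(\chi(v),\chi'(v))=\min\bigl(\min_{c_j\neq 0}a_j,\,\min_{c_j\neq 0}b_j\bigr)=\min_{c_j\neq 0}\min(a_j,b_j),
\end{equation*}
which is precisely the orthogonality identity for the values $\min(a_j,b_j)$.

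With this in hand, the relative spectra of the three pairs are all directly computable from the basis $(e_j)$: the relative spectrum of $\chi$ w.r.t.\ $\chi'$ is the reordering of $(a_j-b_j)_j$; that of $\chi$ w.r.t.\ $\chi\wedge\chi'$ is the reordering of $(a_j-\min(a_j,b_j))_j=((a_j-b_j)_+)_j$; and that of $\chi\wedge\chi'$ w.r.t.\ $\chi'$ is the reordering of $(\min(a_j,b_j)-b_j)_j$, whose absolute values are $((b_j-a_j)_+)_j$. The identity \eqref{equ:dppyth} then follows immediately from the pointwise identity $|t|^p=(t_+)^p+((-t)_+)^p$ applied term by term and summed.

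For the $p=1$ refinement, I combine \eqref{equ:dppyth} with the basic volume formula~\eqref{equ:barvol}: when $\chi\ge\chi''$, the relative spectrum $(\chi(e_j)-\chi''(e_j))_j$ is non-negative, so $\dd_1(\chi,\chi'')$ equals the barycenter of the spectral measure, which is $\vol(\chi)-\vol(\chi'')$. Applying this to the two comparisons $\chi\ge\chi\wedge\chi'$ and $\chi'\ge\chi\wedge\chi'$ yields
\begin{equation*}
\dd_1(\chi,\chi\wedge\chi')=\vol(\chi)-\vol(\chi\wedge\chi'),\qquad \dd_1(\chi\wedge\chi',\chi')=\vol(\chi')-\vol(\chi\wedge\chi'),
\end{equation*}
and summing via \eqref{equ:dppyth} (with $p=1$) gives \eqref{equ:d1vol}.

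There is no real obstacle here; the only nontrivial point is verifying that $(e_j)$ is a joint orthogonal basis for the three norms $\chi,\chi',\chi\wedge\chi'$ simultaneously, after which the formulas are a direct computation. One should just be a little careful that the reordering implicit in the definition of the relative spectrum does not affect the sums of $p$-th powers, but this is obvious from symmetry.
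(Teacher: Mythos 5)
Your proof is correct and follows essentially the same route as the paper: a joint orthogonal basis for $\n,\n'$ diagonalizes $\n\wedge\n'$ componentwise, the pointwise identity $|t|^p=(t_+)^p+((-t)_+)^p$ gives \eqref{equ:dppyth}, and the monotone case $\n\ge\n''\Rightarrow\dd_1(\n,\n'')=\vol(\n)-\vol(\n'')$ from \eqref{equ:barvol} gives \eqref{equ:d1vol}. The only difference is that you spell out the verification that the joint basis is orthogonal for $\n\wedge\n'$, which the paper simply quotes as the fact that minima in an apartment are computed componentwise.
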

\begin{proof} The first assertion follows from the fact that the minimum $\n\wedge\n'$ of two norms in an apartment $\A_\ba\simeq\R^N$ is computed component-wise, and the trivial identity
$$
\sum_i|\la_i-\la'_i|^p=\sum_i|\la_i-\min\{\la_i,\la'_i\}|^p+\sum_i |\min\{\la_i,\la'_i\}-\la'_i|^p.
$$
for all $\la,\la'\in\R^N$. On the other hand, it follows from~\eqref{equ:barvol} that
$\n\ge\n'\Longrightarrow\dd_1(\n,\n')=\vol(\n)-\vol(\n')$,
and~\eqref{equ:d1vol} follows. 
\end{proof}
The volume function is trivially $1$-Lipschitz with respect to $\dd_1$, \ie
\begin{equation}\label{equ:vollip}
|\vol(\n)-\vol(\n')|\le\dd_1(\n,\n')
\end{equation}
for all $\n,\n'\in\cN_\R(V)$. This is also the case for the min operator: 

\begin{lem}\label{lem:d1min}
  Let $\chi_i$, $\chi'_i$, $i=1,2$, be norms on $V$. Then 
  \begin{equation}\label{e421}
    \dd_1(\chi_1\wedge\chi_2,\chi'_1\wedge\chi'_2)
    \le\dd_1(\chi_1,\chi'_1)+\dd_1(\chi_2,\chi'_2).
  \end{equation}
\end{lem}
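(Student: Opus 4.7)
The strategy is to deduce~\eqref{e421} from the simpler \emph{contraction inequality}
\begin{equation*}
\dd_1(\chi\wedge\chi_0,\chi'\wedge\chi_0)\le\dd_1(\chi,\chi'),\qquad(\ast)
\end{equation*}
valid for all $\chi,\chi',\chi_0\in\cN_\R(V)$. Granting $(\ast)$, two applications (with $\chi_0=\chi_2$, resp.\ $\chi_0=\chi'_1$) combined with the triangle inequality for $\dd_1$ give
\begin{align*}
\dd_1(\chi_1\wedge\chi_2,\chi'_1\wedge\chi'_2)
&\le\dd_1(\chi_1\wedge\chi_2,\chi'_1\wedge\chi_2)+\dd_1(\chi'_1\wedge\chi_2,\chi'_1\wedge\chi'_2)\\
&\le\dd_1(\chi_1,\chi'_1)+\dd_1(\chi_2,\chi'_2),
\end{align*}
which is precisely~\eqref{e421}, so it suffices to prove $(\ast)$.

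I first record a basis-free integral representation of $\dd_1$. Choosing a joint orthogonal basis $(e_i)$ of $\chi$ and $\chi'$ and applying the elementary identity $|a-b|=\int_\R|\non_{a\ge\la}-\non_{b\ge\la}|\,d\la$ termwise in $\dd_1(\chi,\chi')=N^{-1}\sum_i|\chi(e_i)-\chi'(e_i)|$, one obtains, after reading filtration dimensions off the joint basis,
\begin{equation*}
\dd_1(\chi,\chi')=\frac{1}{N}\int_\R\bigl[\dim\Filt^\la_\chi V+\dim\Filt^\la_{\chi'}V-2\dim\Filt^\la_{\chi\wedge\chi'}V\bigr]\,d\la,
\end{equation*}
where $\Filt^\la_{\chi\wedge\chi'}V=\Filt^\la_\chi V\cap\Filt^\la_{\chi'}V$. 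The right-hand side is manifestly basis-independent. Substituting this formula into both sides of $(\ast)$ and writing $A:=\Filt^\la_\chi V$, $A':=\Filt^\la_{\chi'}V$, $E:=\Filt^\la_{\chi_0}V$, the claim $(\ast)$ reduces to the pointwise-in-$\la$ inequality
\begin{equation*}
\dim(A\cap E)+\dim(A'\cap E)-2\dim(A\cap A'\cap E)\le\dim A+\dim A'-2\dim(A\cap A').
\end{equation*}

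Setting $f(X):=\dim X-\dim(X\cap E)=\dim(X/(X\cap E))$, this rewrites as $f(A)+f(A')\ge 2f(A\cap A')$. The inclusion $A\cap A'\hookrightarrow A$ descends to an injection $(A\cap A')/(A\cap A'\cap E)\hookrightarrow A/(A\cap E)$ (the kernel being exactly $(A\cap A')\cap E=A\cap A'\cap E$), giving $f(A\cap A')\le f(A)$; by symmetry $f(A\cap A')\le f(A')$, and averaging these two bounds yields the desired estimate. The main technical step is the derivation of the integral formula for $\dd_1$, which relies on the existence of a joint orthogonal basis for the pair $(\chi,\chi')$; once this is in place, the reduction to $(\ast)$ and the pointwise linear-algebra argument are both elementary, and no further obstacle arises.
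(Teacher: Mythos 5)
Your proof is correct, and it takes a genuinely different route from the paper's. The paper first reduces to the ordered case $\chi_i\ge\chi'_i$ via the identity~\eqref{equ:dppyth}, then uses the Gram--Schmidt retraction $\rho_\ba$ onto an apartment containing $\chi'_1,\chi'_2$ (together with volume preservation~\eqref{equ:detrho} and order preservation of $\rho_\ba$) to place all four norms in a common apartment, where the inequality becomes a coordinate-wise computation. You instead isolate the one-variable contraction $(\ast)$, which is equivalent to the special case $\chi_2=\chi'_2$ of the lemma and of independent interest, and prove it via a layer-cake representation of $\dd_1$ --- essentially the identity $\dd_1(\chi,\chi')=\vol(\chi)+\vol(\chi')-2\vol(\chi\wedge\chi')$ of~\eqref{equ:d1vol} unwound into an integral over $\la$ of filtration dimensions --- reducing everything to the pointwise linear-algebra inequality $2f(A\cap A')\le f(A)+f(A')$ with $f(X)=\dim\bigl(X/(X\cap E)\bigr)$, which you correctly verify by the induced injection on quotients. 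Your argument bypasses the apartment and retraction machinery entirely, using only the existence of a joint orthogonal basis for each \emph{pair} of norms (needed to derive the integral formula, which is then basis-free and hence applicable to the pair $(\chi\wedge\chi_0,\chi'\wedge\chi_0)$); the paper's route, by contrast, reuses infrastructure ($\rho_\ba$, additivity of volume in exact sequences) already set up for other purposes in that section. Both arguments are specific to $p=1$, where $\dd_1$ linearizes.
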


\begin{proof} First assume $\chi_i\ge\chi'_i$, $i=1,2$. Pick a basis $\ba$ such that $\chi'_1,\chi'_2\in\A_\ba$. Lemma~\ref{lem:d1vol} together with~\eqref{equ:detrho} show that replacing $\chi_i$ by $\rho_\ba(\chi_i)$, $i=1,2$ does not change the right-hand side of~\eqref{e421}. As for the left-hand side, $\rho_\ba\colon\cN_\R(V)\to\A_\ba$ being order preserving implies 
 \begin{equation*}
    \rho_\ba(\chi_1)\wedge\rho_\ba(\chi_2)
    \ge\rho_\ba(\chi_1\wedge\chi_2)
    \ge\chi'_1\wedge\chi'_2,
  \end{equation*}
which shows that the left-hand side of~\eqref{e421}
can only increase upon replacing $\chi_i$ by $\rho_\ba(\chi_i)$, $i=1,2$, using again~\eqref{equ:detrho} and~\eqref{equ:d1vol}.
  
As a result, we may in fact assume that all four norms belong to $\A_\ba$. Write $\chi_i(e_j)=\la_{i,j}$ and $\chi'_i(e_j)=\la'_{i,j}$ for
  $1\le j\le N$ and $i=1,2$. Then $\la_{i,j}\ge \la'_{i,j}$ for all $i,j$, and
  we must prove that
  \begin{equation*}
    \sum_j\la_{1,j}\wedge\la_{2,j}-\sum_j\la'_{1,j}\wedge\la'_{2,j}
    \le
    \sum_j(\la_{1,j}-\la'_{1,j})+\sum_j(\la_{2,j}-\la'_{2,j});
  \end{equation*}
  this is straightforward.

   \smallskip
  Finally consider arbitrary norms. Set $\chi''_i=\chi_i\wedge\chi'_i$
  for $i=1,2$. By~\eqref{equ:dppyth} we have 
  \begin{equation*}
    \dd_1(\chi_1\wedge\chi_2,\chi'_1\wedge\chi'_2)
    =\dd_1(\chi_1\wedge\chi_2,\chi''_1\wedge\chi''_2)
    +\dd_1(\chi'_1\wedge\chi'_2,\chi''_1\wedge\chi''_2)
  \end{equation*}
  and $\chi_i,\chi'_i\ge\chi''_i$, for $i=1,2$,
  so~\eqref{e421} follows from what precedes, together with~\eqref{equ:dppyth}. 
\end{proof}
%
%
\subsection{Spectral measures and volume}
Now we return to the setting of a projective variety $X$ and an ample $\Q$-line bundle $L$ on $X$. The following equidistribution result is a special case of a result of Chen--Maclean~\cite{CM15}, which deals with general non-Archimedean fields. 

\begin{thm}\label{thm:CM}
For any two norms $\chi,\chi'\in\cN_\R$, the scaled spectral measures 
$$
(1/m)_\star \sigma(\chi|_{R_m},\chi'|_{R_m})
$$ 
have uniformly bounded support, and they admit a weak limit. 
\end{thm}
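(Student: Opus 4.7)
\emph{Uniform bounded support.} By linear boundedness of $\chi$ and $\chi'$, there exists $C>0$ such that $|\chi(s)|,|\chi'(s)|\le Cm$ for all $s\in R_m\smallsetminus\{0\}$ with $m$ sufficiently divisible. On a joint $\chi,\chi'$-orthogonal basis $(s_i)$ of $R_m$, each entry of the relative spectrum equals $\chi(s_i)-\chi'(s_i)\in[-2Cm,2Cm]$, so every scaled measure $(1/m)_\star\sigma(\chi|_{R_m},\chi'|_{R_m})$ is supported in $[-2C,2C]$, uniformly in $m$.

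\emph{Weak convergence strategy.} The plan is to follow the Okounkov body approach of~\cite{BC,CM15}. First, fix an admissible flag on $X$, producing a rank-$n$ valuation with associated Okounkov body $\Delta=\Delta(L)\subset\R^n$. To any $\chi\in\cN_\R$, one attaches a concave transform $G_\chi\colon\Delta\to\R$, and~\cite{BC} shows that the scaled spectral measure $(1/m)_\star\sigma(\chi|_{R_m})$ converges weakly to $(G_\chi)_\star\mu_\Delta$, where $\mu_\Delta$ is the normalized Lebesgue measure on $\Delta$. For the relative case, I would upgrade this to a two-dimensional statement: picking a joint orthogonal basis $(s_i)$ for $\chi|_{R_m},\chi'|_{R_m}$, the $\R^2$-valued scaled joint measure
\begin{equation*}
\tfrac{1}{\dim R_m}\sum_i\delta_{(\chi(s_i)/m,\,\chi'(s_i)/m)}
\end{equation*}
converges weakly on $\R^2$ to $(G_\chi,G_{\chi'})_\star\mu_\Delta$. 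Pushing forward under the continuous map $(\lambda,\mu)\mapsto\lambda-\mu$ then yields the desired weak limit $(G_\chi-G_{\chi'})_\star\mu_\Delta$ for $(1/m)_\star\sigma(\chi|_{R_m},\chi'|_{R_m})$.

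\emph{Main obstacle.} The nontrivial input is the joint two-dimensional convergence. By a Portmanteau-type argument and monotonicity in $(\lambda,\mu)$, it reduces to the asymptotic identity
\begin{equation*}
\lim_m\tfrac{1}{\dim R_m}\dim\bigl(\Filt^{\lceil\lambda m\rceil}_\chi R_m\cap\Filt^{\lceil\mu m\rceil}_{\chi'}R_m\bigr)=\vol(\{G_\chi\ge\lambda,\,G_{\chi'}\ge\mu\})/\vol(\Delta)
\end{equation*}
for Lebesgue-a.e.\ $(\lambda,\mu)\in\R^2$. This is established in~\cite{CM15} by enlarging the Okounkov semigroup in $\Z^n\times\N$ to a semigroup in $\Z^{n+2}\times\N$ that records the two filtration indices, then invoking the Khovanskii--Okounkov theorem on asymptotic counting of integer points in semigroups of finite type. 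This semigroup-theoretic input is the only substantive analytic step; the remainder of the argument is formal.
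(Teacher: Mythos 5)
Your proof of the uniform boundedness is fine, and the general strategy of deducing the weak limit from a two\--dimensional joint convergence statement is viable (it is essentially the route of~\cite[Theorem~3.3]{BLXZ21}, cited in the paper after Proposition~\ref{P401}). But the limit you identify is wrong, and the identity in your ``Main obstacle'' paragraph is false. The issue is that the Okounkov body of an intersection of two graded subalgebras is in general strictly smaller than the intersection of the two Okounkov bodies: one only has
$\nu\bigl((\Filt_\chi^{\lceil\la m\rceil}R_m\cap\Filt_{\chi'}^{\lceil\mu m\rceil}R_m)\smallsetminus\{0\}\bigr)\subset\nu(\Filt_\chi^{\lceil\la m\rceil}R_m\smallsetminus\{0\})\cap\nu(\Filt_{\chi'}^{\lceil\mu m\rceil}R_m\smallsetminus\{0\})$,
and the inclusion is typically strict, even asymptotically. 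Concretely, take $X=\P^1$, $L=\cO(1)$, the flag valuation $\nu=\ord_{[0:1]}$, and $\chi=\n_{\ord_p}$, $\chi'=\n_{\ord_q}$ for two distinct points $p,q\ne[0:1]$. Then $G_\chi(t)=G_{\chi'}(t)=1-t$ on $\Delta=[0,1]$, so your formula predicts $\sigma(\chi,\chi')=(G_\chi-G_{\chi'})_\star\mu_\Delta=\d_0$, \ie $\chi\sim\chi'$. But $\Filt_\chi^{\lceil\la m\rceil}R_m\cap\Filt_{\chi'}^{\lceil\mu m\rceil}R_m$ has dimension $\approx(1-\la-\mu)_+\,m$ rather than $(1-\max\{\la,\mu\})\,m$, the joint spectrum is asymptotically uniform on the antidiagonal $\{(t,1-t)\}$, and $\sigma(\chi,\chi')$ is the uniform measure on $[-1,1]$ (consistent with $\dd_1(\n_v,\n_w)>0$ for distinct $v,w\in X^\lin$, see Theorem~\ref{thm:thmC}). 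A correct version of the two\--dimensional statement must replace $\vol(\{G_\chi\ge\la,\,G_{\chi'}\ge\mu\})$ by the volume of the Okounkov body of the bigraded algebra $\bigoplus_m\bigl(\Filt_\chi^{\lceil\la m\rceil}R_m\cap\Filt_{\chi'}^{\lceil\mu m\rceil}R_m\bigr)$ itself; this still yields existence of the joint limit, but no formula in terms of $G_\chi$ and $G_{\chi'}$ separately, and it is not what~\cite{CM15} does.

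The paper's proof avoids the two\--dimensional analysis entirely, and the trick it borrows from~\cite{CM15} is worth recording: since the measures $\sigma_m$ have uniformly bounded support, weak convergence follows from convergence of $\int\max\{\la,c\}\,d\sigma_m$ for every $c\in\R$ (\cite[Proposition~5.1]{CM15}); and the truncated relative spectrum $(\max\{\la_{m,j},cm\})_j$ is itself the relative spectrum of $\chi|_{R_m}$ with respect to $\chi'|_{R_m}\wedge(\chi|_{R_m}-cm)$. One is thereby reduced to convergence of barycenters of relative spectral measures, which by~\eqref{equ:barvol} split as differences of barycenters of the absolute spectral measures of $\chi$ and of $\chi'\wedge(\chi-c)$; each of those converges by~\cite[Theorem~A]{BC}. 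Only the one\--dimensional Okounkov machinery is needed.
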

The limit is taken with respect to the partial order by divisibility. If $L$ is an actual line bundle and $\chi,\chi'$ are norms on $R(X,L)$, then the limit also exists as $m\to\infty$ in the usual total ordering.

\begin{defi}\label{defi:spectral} For any $\chi,\chi'\in\cN_\R$, the \emph{spectral measure of $\chi$ with respect to $\chi'$} is the compactly supported (Borel) probability measure on $\R$ defined as 
$$
\sigma(\chi,\chi'):=\lim_m(1/m)_\star \sigma(\chi|_{R_m},\chi'|_{R_m}). 
$$
The \emph{spectral measure} of $\chi$ is $\sigma(\chi):=\sigma(\chi,\chi_\triv)$, and the \emph{volume} of $\chi$ is the barycenter
$$
\vol(\chi)=\int_\R\la\,d\sigma(\chi).
$$
\end{defi}
By~\eqref{equ:barvol}, we have 
\begin{equation}\label{equ:vollim}
\vol(\chi)=\lim_m m^{-1}\vol(\chi|_{R_m}). 
\end{equation}

\begin{exam}\label{exam:ess} For any $v\in X^\lin$ with associated norm $\n_v\in\cN_\R^\hom$, the spectrum of $\n_v|_{R_m}$ is the \emph{vanishing sequence} of $R_m$ with respect to $v$ as defined in~\cite{BKMS}, \ie the (finite) set of values of $v$ on nonzero elements of $R_m$, counted with multiplicity, and 
\begin{equation}\label{equ:S}
 \ess(v)=\ess_L(v):=\vol(\n_v)
\end{equation}
coincides with the \emph{expected vanishing order} of~\cite{BlJ} (see also~\cite{MR15}). 
\end{exam}

The existence of the spectral measure $\sigma(\chi)$ (called the \emph{limit measure} of the corresponding filtration in~\cite[\S5.1]{BHJ1}) follows from~\cite[Theorem A]{BC}.    When $\chi\in\cT_\Z$, the limit measure coincides with the Duistermaat--Heckman measure of the corresponding test configuration, see~\cite[Proposition 3.12]{BHJ1}.
  
As we shall see, a simple trick borrowed from~\cite{CM15} reduces the proof of Theorem~\ref{thm:CM} to this special case $\chi'=\chi_\triv$.  

\begin{proof}[Proof of Theorem~\ref{thm:CM}] The uniform boundedness part is a direct consequence of the linear boundedness condition that we impose on norms in $\cN_\R(R)$. Set $N_m:=\dim R_m$, denote by $(\la_{m,j})_{1\le j\le N_m}$ the spectrum of $\chi|_{R_m}$ with respect to $\chi'|_{R_m}$, and set
$$
\sigma_m:=(1/m)_\star \sigma(\chi|_{R_m},\chi'|_{R_m})=\frac{1}{N_m}\sum_{j=1}^{N_m}\d_{m^{-1}\la_{m,j}}.
$$
As is well-known, in order to prove convergence of $\sigma_m$, it suffices to show that 
  \begin{equation*}
    \int_\R\min\{\la,c\}\,d\sigma_m
    =\frac1{mN_m}\sum_{j=1}^{N_m}\max\{\la_{m,j},mc\}
  \end{equation*}
  
  converges for all $c\in\R$, see~\cite[Proposition~5.1]{CM15}.
  But $(\min\{\la_{m,j},cm\})_j$ is the 
  spectrum of $\chi|_{R_m}\wedge(\chi'|_{R_m}+cm)$ with respect to $\n'|_{R_m}$.  
   Replacing $\chi$ with $\chi\wedge(\chi'+c)$ (where $\R$ acts by translation according to~\eqref{equ:gradedtrans}), we are reduced to proving that the barycenter
  $$
  (mN_m)^{-1}\sum_j\la_{m,j}
  $$
  of the measure $\sigma_m$ converges. Now, this barycenter is the difference of the barycenters of $(1/m)_\star \sigma(\chi|_{R_m})$ and $(1/m)_\star \sigma(\chi'|_{R_m})$, 
  each of which admits a limit by~\cite[Theorem A]{BC}, and we are done.   
 \end{proof}
 
The proof of Theorem~\ref{thm:CM} shows that 
\begin{equation}\label{e423}
  \int\min\{\la,c\}\,\sigma(\chi,\chi')(d\la)
  =\vol\left(\chi\wedge(\n'+c)\right)-\vol(\chi')
\end{equation}
for all $\chi,\chi'\in\cN_\R$ and $c\in\R$. Some further properties of the spectral measure $\sigma(\chi)$ are described by the following result.  

\begin{thm}\label{thm:volandsup} Pick $\n\in\cN_\R$, with associated filtration $\Filt^\la R_m=\{s\in R_m\mid\n(s)\ge\la\}$. Then: 
\begin{itemize}
\item[(i)] for each $\la\in\R$, $\dim\Filt^{m\la} R_m/\dim R_m$ admits a limit $\vol(\n\ge\la)\in [0,1]$ as $m\to\infty$; 
\item[(ii)] the function $\la\mapsto\vol(\n\ge\la)^{1/n}$ is positive and concave on $(-\infty,\la_{\max}(\n))$, and vanishes on $(\la_{\max}(\n),+\infty)$; 
\item[(iii)] $\sigma(\n)=-\frac{d}{d\la}\vol(\n\ge\la)$ in the sense of distributions;
\item[(iv)] $\supp\sigma(\n)=[\la_{\min}(\n),\la_{\max}(\n)]$ with 
\begin{equation}\label{equ:laminmes}
\la_{\min}(\n):=\inf\left\{\la\in\R\mid\vol(\n\ge\la)<1\right\};
\end{equation}
\item[(v)] for any $a\le\la_{\min}(\n)$, \ie such that $\vol(\n\ge a)=1$, we have 
$$
\vol(\n)=a+\int_a^{+\infty}\vol(\n\ge\la)\,d\la=a+\int_a^{\la_{\max}(\n)}\vol(\n\ge\la)\,d\la.
$$
\end{itemize}
\end{thm}
\begin{proof} Properties (i)--(iv) are direct consequence of~\cite{BC} (see~\cite[\S 3.1]{BHJ1}. To see (v), set $b:=\la_{\max}(\n)$, $f(\la):=\vol(\n\ge\la)$, and pick a cut-off function $\theta\in C^\infty_c(\R)$ such that $\theta\equiv 1$ on $[a,b]$. Since $\theta(\la)\la$ is smooth and compactly supported, (iii) and (iv) yield
$$
\vol(\n)=\int_\R\la\,d\sigma(\n)=-\int_\R\theta(\la)\la f'(\la)\,d\la=\int_\R(\theta(\la)\la)'f(\la)\,d\la=\int_\R(\theta'(\la)\la+\theta(\la))f(\la)\,d\la.
$$
Since $f(\la)=1$ for $\la\le a$, $f(\la)=0$ for $\la\ge b$ and $\theta(\la)=1$ for $\la\ge a$, this is equal to 
$$
\int_{-\infty}^a(\theta'(\la)\la+\theta(\la))\,d\la+\int_a^{+\infty} f(\la)\,d\la=a+\int_a^{+\infty} f(\la)\,d\la=a+\int_a^b f(\la)\,d\la,
$$
by integration by parts. This proves (v). 
\end{proof}

The next result shows how spectral measures behave under operations on norms. It follows from elementary computations of spectra in joint orthogonal bases; the details are left to the reader.
\begin{prop}\label{P401}
  Let $\chi, \chi'\in\cN_\R$, and pick $c\in\R$, $t\in\R_{>0}$. Then:
  \begin{itemize}
  \item[(i)]
    $\sigma(\chi',\chi)$ is the pushforward of 
    $\sigma(\chi,\chi')$ 
    under $\la\mapsto-\la$;
  \item[(ii)]
    $\sigma(\chi+c,\chi')$ is the pushforward of
    $\sigma(\chi,\chi')$ under $\la\mapsto\la+c$;
  \item[(iii)]
    $\sigma(\chi,\chi\wedge\chi')$ is the 
pushforward 
    of $\sigma(\chi,\chi')$ under $\la\mapsto\max\{\la,0\}$;
    \item[(iv)] $\sigma(t\chi,t\chi')$ is the pushforward of $\sigma(\chi,\chi')$ by $\la\mapsto t\la$. 
    \end{itemize}
\end{prop}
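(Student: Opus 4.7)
The plan is to reduce each of the four identities to an elementary computation at the finite-dimensional level, and then pass to the limit. For $m$ sufficiently divisible, choose a common orthogonal basis $(e_j)_{1\le j\le N_m}$ of $R_m$ for the pair of restricted norms $\chi|_{R_m}$ and $\chi'|_{R_m}$ (such a basis exists by joint diagonalization of norms on a finite-dimensional vector space, as discussed in \S\ref{sec:norms}). Writing $\la_j = \chi(e_j)$ and $\la'_j = \chi'(e_j)$, the finite-dimensional spectral measure is
\begin{equation*}
\sigma(\chi|_{R_m},\chi'|_{R_m}) = \frac{1}{N_m}\sum_{j=1}^{N_m}\d_{\la_j-\la'_j}.
\end{equation*}
By Definition~\ref{defi:spectral}, $\sigma(\chi,\chi')$ is the weak limit (along the divisibility order) of $(1/m)_\star\sigma(\chi|_{R_m},\chi'|_{R_m})$, and all these measures have uniformly bounded support by Theorem~\ref{thm:CM}.

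For (i) and (iv), the basis $(e_j)$ remains jointly orthogonal after swapping or scaling the pair of norms, and the relative spectrum transforms by $\la_j-\la'_j \mapsto -(\la_j-\la'_j)$ and $\la_j-\la'_j \mapsto t(\la_j-\la'_j)$ respectively. For (ii), recall from~\eqref{equ:gradedtrans} that $(\chi+c)(s)=\chi(s)+cm$ for $s\in R_m$, so the relative spectrum of $(\chi+c,\chi')$ at level $m$ becomes $(\la_j+cm-\la'_j)_j$; after rescaling by $1/m$, this is the image of $(1/m)_\star\sigma(\chi|_{R_m},\chi'|_{R_m})$ under $\la\mapsto\la+c$. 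For (iii), the key observation is that a joint orthogonal basis for $\chi|_{R_m}$ and $\chi'|_{R_m}$ is automatically orthogonal for $(\chi\wedge\chi')|_{R_m}=\chi|_{R_m}\wedge\chi'|_{R_m}$, with eigenvalue $\min\{\la_j,\la'_j\}$ on $e_j$; hence the $j$-th entry of the relative spectrum of $(\chi,\chi\wedge\chi')$ equals $\la_j-\min\{\la_j,\la'_j\}=\max\{\la_j-\la'_j,0\}$, which is exactly the image of $\la_j-\la'_j$ under $\la\mapsto\max\{\la,0\}$.

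Each of the four transformation maps $\la\mapsto-\la$, $\la\mapsto\la+c$, $\la\mapsto\max\{\la,0\}$, and $\la\mapsto t\la$ is continuous on $\R$, and the support of every measure involved is contained in a fixed compact interval (by Theorem~\ref{thm:CM}). Since pushforward under a continuous map is weakly continuous when restricted to probability measures supported in a common compact set, each finite-level identity passes to the weak limit, which yields the four assertions. The only thing requiring any thought is (iii), where one must check that min of norms on $R_m$ is compatible with the restriction to $R_m$ of the min of the global norms in $\cN_\R$; this is immediate from the pointwise definition of $\chi\wedge\chi'$. Beyond this, the proof is purely bookkeeping and contains no genuine obstacle.
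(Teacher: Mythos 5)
Your proof is correct and is exactly the argument the paper has in mind: the paper's "proof" consists of the single sentence that the result "follows from elementary computations of spectra in joint orthogonal bases; the details are left to the reader," and your write-up supplies precisely those details (joint diagonalization at each finite level, the observation that the joint basis remains orthogonal for $\chi\wedge\chi'$ with values $\min\{\la_j,\la'_j\}$, and passage to the weak limit using the uniform bound on supports from Theorem~\ref{thm:CM}).
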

\begin{rmk}
  In~\cite[Theorem 3.3]{BLXZ} (which appeared after the first version of this article was posted), the authors construct a natural \emph{joint spectral measure} $\rho(\chi,\chi')$ on $\R^2$ associated to any pair $\chi,\chi'\in\cN_\R$, that encodes the asymptotic behavior of the spectra of $\chi$ and $\chi'$ in jointly orthogonal bases. The spectral measure $\sigma(\chi,\chi')$ is the pushforward of $\rho(\chi,\chi')$ under the map $\R^2\to\R$ given by $(\la,\la')\mapsto\la-\la'$.
\end{rmk}
%
%
%
%
\subsection{The $\dd_p$-pseudometrics and asymptotic equivalence}\label{sec:asympequ}
Pick $p\in[1,\infty)$ and $\chi,\chi'\in\cN_\R$. By definition, we have 
$$
\dd_p(\chi|_{R_m},\chi'|_{R_m})^p=\int_\R|\la|^p\,d\sigma(\chi|_{R_m},\chi'|_{R_m}). 
$$
Theorem~\ref{thm:CM} thus shows that the limit 
$$
\dd_p(\chi,\chi'):=\lim_m m^{-1}\dd_p(\chi|_{R_m},\chi'|_{R_m})
$$
exists in $[0,+\infty)$, and coincides with the $L^p$-norm of the identity with respect to the spectral measure $\sigma(\chi,\chi')$, \ie 
\begin{equation}\label{equ:dpLp}
\dd_p(\chi,\chi')^p=\int_\R|\la|^p\sigma(\chi,\chi').
\end{equation}
It is clear that $(\dd_p)_{1\le p<\infty}$ is a non-decreasing family of pseudo-metrics on $\cN_\R$. For $p=1$,  \eqref{equ:d1vol} further yields
 \begin{equation}\label{equ:d1volgr}
  \dd_1(\chi,\chi')
  =\vol(\chi)+\vol(\chi')-2\vol(\chi\wedge\chi').
\end{equation}
For any $p\in[1,\infty]$, we have $\dd_1\le\dd_p\le\dd_\infty$.    One also easily checks (using for instance~\eqref{equ:dpLp} and Proposition~\ref{P401})
\begin{equation}\label{equ:equivdp} 
\dd_p(t\n,t\n')=t\dd_p(\n,\n'),\quad\dd_p(\n+c,\n'+c)=\dd_p(\n,\n'),\quad\dd_p(\n,\n+c)=|c|
\end{equation}
for all $\n,\n'\in\cN_\R$, $t\in\R_{>0}$, $c\in\R$.

The pseudo-metric $\dd_1$ defines a (non-Hausdorff) topology on $\cN_\R$, which is strictly coarser than the $\dd_p$-topology for any $p>1$. However, \eqref{equ:dpcomp} remains valid on $\cN_\R$, and shows that the $\dd_p$-topologies with $p<\infty$ all agree on $\dd_\infty$-bounded subsets of $\cN_\R$. In particular, they share the same pairs of non-separated points, which gives rise to: 

\begin{defi}\label{defi:asympequiv} We say that two norms $\chi,\chi'\in\cN_\R$ are \emph{asymptotically equivalent}, and write $\chi\sim\chi'$, if the following equivalent conditions hold:
\begin{itemize}
\item[(i)] $\dd_1(\chi,\chi')=0$; 
\item[(ii)] $\dd_p(\chi,\chi')=0$ for all $p\in[1,\infty)$; 
\item[(iii)] $\sigma(\chi,\chi')=\d_0$. 
\end{itemize}
\end{defi}
The equivalence between (i)---(iii) follows from~\eqref{equ:dpLp}. Since $\dd_1\le\dd_\infty$, we trivially have
$$
\dd_\infty(\chi,\chi')=0\Longrightarrow\chi\sim\chi'. 
$$
The converse fails in general  ---and thus so does the analogue of~\eqref{equ:dpLp} for the pseudo-metric $\dd_\infty$ (see, however, Corollary~\ref{cor:FSqft2} below for the case of continuous norms). 

\begin{exam}\label{exam:subvar2} Pick any subvariety $Z\varsubsetneq X$, and consider the norm $\n=\n_Z\in\cN_\Z$ as in Example~\ref{exam:subvar}. Using~\eqref{equ:sigmafilt}, it is easy to see that 
$\sigma(\n|_{R_m})=\e_m\d_0+(1-\e_m)\d_m$ with 
$$
\e_m:=\dim\Hnot(Z,mL)/\dim\Hnot(X,mL)=O(1/m).
$$
Thus $\sigma(\n)=\lim_m (1/m)_\star\sigma(\n|_{R_m})=\d_1$, and hence $\n\sim\n_\triv+1$ (see Proposition~\ref{P401}~(ii)). On the other hand, since $\n\ne\n_\triv+1$ are both homogeneous, we have $\dd_\infty(\n,\n_\triv+1)>0$ (see Proposition~\ref{prop:homcomp}). 
 \end{exam}

 By~\eqref{equ:d1volgr}, we have:
  
\begin{lem}\label{lem:inequiv} If $\n,\n'\in\cN_\R$ satisfy $\n\ge\n'$, then 
$\n\sim\n'\Longleftrightarrow\vol(\n)=\vol(\n')$. 
\end{lem}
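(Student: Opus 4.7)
The plan is to apply the volume formula for $\dd_1$ directly. By~\eqref{equ:d1volgr}, for any two norms we have
\[
\dd_1(\chi,\chi') = \vol(\chi) + \vol(\chi') - 2\vol(\chi\wedge\chi').
\]
The hypothesis $\chi\ge\chi'$ gives $\chi\wedge\chi'=\chi'$ pointwise, so the restrictions to each $R_m$ satisfy $\chi|_{R_m}\wedge\chi'|_{R_m} = \chi'|_{R_m}$, and passing to the asymptotic volume (which is the limit of $m^{-1}\vol$ on $R_m$, by~\eqref{equ:vollim}) yields $\vol(\chi\wedge\chi')=\vol(\chi')$. Substituting, we obtain
\[
\dd_1(\chi,\chi') = \vol(\chi) - \vol(\chi').
\]

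Now by Definition~\ref{defi:asympequiv}, $\chi\sim\chi'$ is equivalent to $\dd_1(\chi,\chi')=0$. Combined with the displayed identity, this immediately gives $\chi\sim\chi'\Longleftrightarrow\vol(\chi)=\vol(\chi')$, which is the desired conclusion. There is no real obstacle here: the statement is a direct corollary of~\eqref{equ:d1volgr} together with the basic observation that the min operation on norms is computed pointwise and is compatible with the asymptotic volume.
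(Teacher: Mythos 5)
Your proof is correct and is exactly the paper's argument: the paper derives the lemma directly from~\eqref{equ:d1volgr} together with the observation that $\n\ge\n'$ forces $\n\wedge\n'=\n'$, so that $\dd_1(\n,\n')=\vol(\n)-\vol(\n')$. The only cosmetic difference is that you pass through the restrictions to $R_m$ to see $\vol(\n\wedge\n')=\vol(\n')$, whereas this is immediate since $\n\wedge\n'=\n'$ already as elements of $\cN_\R$.
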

   As we next show, spectral measures are continuous with respect to the $\dd_1$-topology. 

\begin{thm}\label{thm:specmeas} Consider nets $(\n_i)$, $(\n'_i)$ in $\cN_\R$, converging respectively to $\n,\n'\in\cN_\R$ in the $\dd_1$-topology. Then $\sigma(\n_i,\n'_i)\to\sigma(\n,\n')$ weakly. 
\end{thm}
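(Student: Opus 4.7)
The plan is to leverage the explicit identity \eqref{e423},
\begin{equation*}
F_c(\chi,\chi') := \int\max\{\la,c\}\,d\sigma(\chi,\chi') = \vol(\chi) - \vol\bigl(\chi'\wedge(\chi-c)\bigr),
\end{equation*}
in order to reduce the weak convergence $\sigma(\chi_i,\chi'_i)\to\sigma(\chi,\chi')$ to $\dd_1$-continuity of the right-hand side for each fixed $c\in\R$.

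First I would establish that $F_c$ is jointly $\dd_1$-continuous on $\cN_\R\times\cN_\R$. For this I need three ingredients. The $\dd_1$-continuity of $\vol$ on $\cN_\R$ follows by taking limits over $m$ in \eqref{equ:vollip}, using \eqref{equ:vollim} and the definition of $\dd_1$ on $\cN_\R$, to obtain $|\vol(\chi)-\vol(\chi')|\le\dd_1(\chi,\chi')$. Translation by a real constant is a $\dd_1$-isometry by \eqref{equ:equivdp}. Finally, the min operation is $\dd_1$-Lipschitz: since $(\chi_1\wedge\chi_2)|_{R_m}=\chi_1|_{R_m}\wedge\chi_2|_{R_m}$, dividing the inequality of Lemma~\ref{lem:d1min} by $m$ and passing to the limit yields
\begin{equation*}
\dd_1(\chi_1\wedge\chi_2,\chi'_1\wedge\chi'_2)\le\dd_1(\chi_1,\chi'_1)+\dd_1(\chi_2,\chi'_2)
\end{equation*}
on $\cN_\R$. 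Combining these three facts produces $F_c(\chi_i,\chi'_i)\to F_c(\chi,\chi')$ for every $c\in\R$.

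Next I would establish tightness of $\{\sigma(\chi_i,\chi'_i)\}$. By \eqref{equ:dpLp} with $p=1$, its first absolute moment equals $\dd_1(\chi_i,\chi'_i)$, which by the triangle inequality for $\dd_1$ is eventually bounded by $\dd_1(\chi,\chi')+1$; since weak convergence depends only on a tail of the net, Markov's inequality applied on this tail provides the required tightness.

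To finish, I would observe that every continuous piecewise linear function $f:\R\to\R$ with compact support can be written as
\begin{equation*}
f(\la)=\sum_{j=1}^n a_j\max\{\la,c_j\}-\sum_{j=1}^n a_jc_j,\qquad\sum_j a_j=0,
\end{equation*}
where $c_j$ are the breakpoints of $f$ and $a_j$ the corresponding jumps in slope. Since all measures involved are probability measures, $\int f\,d\sigma(\chi,\chi')=\sum_j a_jF_{c_j}(\chi,\chi')-\sum_j a_jc_j$, and the first step gives $\int f\,d\sigma(\chi_i,\chi'_i)\to\int f\,d\sigma(\chi,\chi')$ for any such $f$. Such functions are dense in $C_c(\R)$ in the sup norm, so combined with tightness this extends in the standard way to convergence against any $f\in C_b(\R)$, which is the claimed weak convergence. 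The main technical checkpoint I anticipate is the extension of the min inequality from finite dimensions to $\cN_\R$, but this is a routine limit passage granted the compatibility of $\wedge$ with restriction to $R_m$.
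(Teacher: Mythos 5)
Your proposal is correct and follows essentially the same route as the paper: reduce weak convergence to convergence of $\int\max\{\la,c\}\,d\sigma(\chi_i,\chi'_i)$ via the identity~\eqref{e423}, then conclude from the $\dd_1$-Lipschitz properties of the volume and min operators (and translation invariance). The only difference is that where the paper cites~\cite[Proposition~5.1]{CM15} for the standard fact that convergence of these integrals implies weak convergence, you supply the argument yourself via tightness (first moments) and the decomposition of compactly supported piecewise linear functions into the $\max\{\la,c_j\}$; this is a correct and self-contained substitute.
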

\begin{proof} Set $\sigma_i:=\sigma(\n_i,\n'_i)$ and $\sigma:=\sigma(\n,\n')$.
  As in the proof of Theorem~\ref{thm:CM}, it suffices to prove that 
  $$
  \int\min\{\la,c\}\,\sigma_i(d\la)=\vol(\chi_i\wedge(\chi'_i+c))-\vol(\chi'_i)
  $$ 
  converges to
  $$
  \int\min\{\la,c\}\,\sigma(d\la)=\vol(\chi\wedge(\chi'+c))-\vol(\chi')
  $$ 
  for all $c\in\R$, see~\eqref{e423}. This follows immediately from the Lipschitz property of the volume and min operators, see~\eqref{equ:vollip} and Lemma~\ref{lem:d1min}.
\end{proof}

\begin{cor}\label{cor:quantequiv} For any $\n,\n'\in\cN_\R$, the quantities 
$$
\sigma(\n,\n'),\quad\la_{\max}(\n)\quad\text{and}\quad\vol(\n)
$$
only depend on the asymptotic equivalence classes of $\n,\n'$. Further, 
$$
\n_i\sim\n'_i,\,i=1,2\Longrightarrow\n_1\wedge\n_2\sim\n'_1\wedge\n'_2.
$$
\end{cor}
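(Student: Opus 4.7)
The plan is to deduce the invariance of $\sigma(\n,\n')$, $\la_{\max}(\n)$ and $\vol(\n)$ from Theorem~\ref{thm:specmeas} applied to \emph{constant} nets, then handle the minimum operation by passing the finite-dimensional Lipschitz estimate of Lemma~\ref{lem:d1min} to the limit. Nothing beyond these two inputs should be needed; the main point is simply to exploit that $(\cN_\R,\dd_1)$ is a pseudo-metric space, so in the $\dd_1$-topology a constant net can converge to any point in its asymptotic equivalence class.

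For the first assertion, assume $\n\sim\tilde\n$ and $\n'\sim\tilde\n'$, \ie $\dd_1(\n,\tilde\n)=\dd_1(\n',\tilde\n')=0$. Then the constant net with value $\n$ converges to $\tilde\n$ in the $\dd_1$-topology, and likewise for $\n'$. Applying Theorem~\ref{thm:specmeas} to these constant nets yields weak convergence $\sigma(\n,\n')\to\sigma(\tilde\n,\tilde\n')$; but the left-hand side is constant, so $\sigma(\n,\n')=\sigma(\tilde\n,\tilde\n')$. This already shows invariance of the spectral measure. The remaining two invariances follow formally: by Theorem~\ref{thm:volandsup}, $\la_{\max}(\n)$ coincides with the right endpoint of $\supp\sigma(\n)=\supp\sigma(\n,\chi_\triv)$, which only depends on the spectral measure and hence on the equivalence class; and by Definition~\ref{defi:spectral}, $\vol(\n)=\int\la\,d\sigma(\n)$ is likewise determined by $\sigma(\n)$.

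For the second assertion, I would first note that restriction to any graded piece commutes with pointwise minimum, so
\begin{equation*}
(\n_1\wedge\n_2)|_{R_m}=\n_1|_{R_m}\wedge\n_2|_{R_m}
\end{equation*}
for $m$ sufficiently divisible (and similarly for the primed norms). Applying Lemma~\ref{lem:d1min} on the finite-dimensional space $R_m$ gives
\begin{equation*}
\dd_1\bigl((\n_1\wedge\n_2)|_{R_m},(\n'_1\wedge\n'_2)|_{R_m}\bigr)\le\dd_1(\n_1|_{R_m},\n'_1|_{R_m})+\dd_1(\n_2|_{R_m},\n'_2|_{R_m}).
\end{equation*}
Dividing by $m$ and passing to the limit (with respect to divisibility) in the definition of $\dd_1$ on $\cN_\R$, I obtain
\begin{equation*}
\dd_1(\n_1\wedge\n_2,\n'_1\wedge\n'_2)\le\dd_1(\n_1,\n'_1)+\dd_1(\n_2,\n'_2)=0,
\end{equation*}
so $\n_1\wedge\n_2\sim\n'_1\wedge\n'_2$, as required.

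There is essentially no obstacle: all the analytic work is packaged in Theorems~\ref{thm:specmeas} and~\ref{thm:volandsup} and Lemma~\ref{lem:d1min}. The only mild subtlety is conceptual---remembering that in a pseudo-metric space the limit in the convergence statement of Theorem~\ref{thm:specmeas} is not unique, which is precisely what converts that theorem into an invariance statement when it is applied to constant nets.
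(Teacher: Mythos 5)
Your proof is correct and follows essentially the same route as the paper's: invariance of $\sigma(\n,\n')$ via Theorem~\ref{thm:specmeas}, recovery of $\la_{\max}$ from $\supp\sigma(\n)$ via Theorem~\ref{thm:volandsup}, and the min statement by transporting Lemma~\ref{lem:d1min} from the graded pieces $R_m$ to $\cN_\R$. You merely make explicit two steps the paper leaves implicit (the constant-net reading of Theorem~\ref{thm:specmeas} and the passage to the limit in the Lipschitz estimate), and you read off $\vol(\n)$ as the barycenter of $\sigma(\n)$ where the paper instead cites the Lipschitz bound~\eqref{equ:vollip}; both are immediate.
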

\begin{proof} The first claim follows directly from Theorem~\ref{thm:specmeas}. It implies the second one, as $\la_{\max}(\n)$ can be reconstructed from $\sigma(\n)=\sigma(\n,\n_\triv)$, by Theorem~\ref{thm:volandsup}. Finally, the Lipschitz properties of the volume and the min operator (see~\eqref{equ:vollip} and Lemma~\ref{lem:d1min}) carry over to $\cN_\R$, which takes care of the last two claims.  
\end{proof}

Following~\cite{Fujplt,Zha22}, we finally show: 

\begin{lem}\label{lem:dpcompare} Suppose $\n\in\cN_\R$ satisfies $\n\ge\n_\triv$. For any $p\in [1,\infty)$, we then have:  
\begin{itemize}
\item[(i)] $\dd_p(\n,\n_\triv)^p=p\int_0^{+\infty}\la^{p-1}\vol(\n\ge\la)\,d\la=p\int_0^{\la_{\max}(\n)}\la^{p-1}\vol(\n\ge\la)\,d\la$; 
\item[(ii)] $\la_{\max}(\n)=\dd_\infty(\n,\n_\triv)\le C_{n,p}\dd_p(\n,\n_\triv)$ with $C_{n,p}:={n+p\choose n}^{1/p}$.
\end{itemize}
\end{lem}
Note that $C_{n,p}\to 1$ as $p\to\infty$, and hence $\dd_p(\n,\n_\triv)\to\dd_\infty(\n,\n_\triv)$. 
\begin{proof} As in the proof of Theorem~\ref{thm:volandsup}, set $b:=\la_{\max}(\n)$, $f(\la):=\vol(\n\ge\la)$, and pick a smooth function $g\in C^\infty_c(\R)$ such that $g(\la)=\la^p$ for $\la\in [0,b]$. Then $f(\la)=1$ for $\la\le 0$, $f(\la)=0$ for $\la>b$, and hence 
$$
\dd_p(\n,\n_\triv)^p=\int_0^b\la^p\,d\sigma(\n)=-\int_\R g(\la) f'(\la)\,d\la=\int_\R g'(\la) f(\la)\,d\la
$$
$$
=\int_{-\infty}^0 g'(\la) d\la+\int_0^{+\infty} p\la^{p-1} f(\la)\,d\la,
$$
where $\int_{-\infty}^0 g'(\la)\,d\la=g(0)=0$. This proves (i). 

 The first equality in (ii) is~\eqref{equ:dtriv}. To prove the inequality, we argue as in~\cite[\S5]{Zha22}. By Theorem~\ref{thm:volandsup}, $f(\la)^{1/n}$ is concave on $(-\infty,b]$, and hence $f(\la)^{1/n}\ge 1-\la/b$ for $\la\in [0,b]$. Using (i), this yields 
$$
\dd_p(\n,\n_\triv)^p=p\int_0^{\la_{\max}} \la^{p-1} f(\la)\,d\la
$$
$$
\ge p\int_0^{b}\la^{p-1}\left(1-\frac{\la}{b}\right)^n\,d\la=b^p \,p\int_0^1 t^{p-1}(1-t)^n\,dt=\frac{p!n!}{(n+p)!} b^p,
$$
and the result follows. 
\end{proof}

%
\subsection{The space of norms modulo translation}
For any $p\in[1,\infty]$, the additive action of $\R$ on $\cN_\R$ preserves the pseudo-metric $\dd_p$, which thus induces a quotient pseudo-metric $\quotd_p$ on the space of norms modulo translation $\cN_\R/\R$, such that
$$
\quotd_p(\n,\n')=\inf_{c\in\R}\dd_p(\n,\n'+c)
$$
for $\n,\n'\in\cN_\R$. This supremum is actually achieved: 
\begin{lem}\label{lem:distach} For any $\n,\n'\in\cN_\R$, there exists $c\in\R$ such that $\quotd_p(\n,\n')=\dd_p(\n,\n'+c)$    and $|c|\le 2\dd_p(\n,\n')$.   
\end{lem}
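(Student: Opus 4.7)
The plan is to deduce both the attainment and the bound from the elementary translation-invariance $\dd_p(\chi'+c,\chi'+c')=|c-c'|$ (from \eqref{equ:equivdp}) together with the triangle inequality. No spectral or convexity argument is really needed; the argument is uniform in $p\in[1,\infty]$.

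First, I would observe that the function $F(c):=\dd_p(\chi,\chi'+c)$ is $1$-Lipschitz on $\R$. Indeed, by the triangle inequality and \eqref{equ:equivdp},
\[
|F(c)-F(c')|\le\dd_p(\chi'+c,\chi'+c')=|c-c'|.
\]
Next, the same ingredients give coercivity:
\[
F(c)=\dd_p(\chi,\chi'+c)\ge\dd_p(\chi'+c,\chi')-\dd_p(\chi',\chi)=|c|-\dd_p(\chi,\chi'),
\]
so $F(c)\to+\infty$ as $|c|\to\infty$. Since $F$ is continuous and coercive on $\R$, it attains its infimum at some $c\in\R$, which by definition of $\quotd_p$ means $\quotd_p(\chi,\chi')=\dd_p(\chi,\chi'+c)$.

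Finally, for the size estimate: taking $c=0$ in the definition of $\quotd_p$ gives $\quotd_p(\chi,\chi')\le\dd_p(\chi,\chi')$, and then
\[
|c|=\dd_p(\chi'+c,\chi')\le\dd_p(\chi'+c,\chi)+\dd_p(\chi,\chi')=\quotd_p(\chi,\chi')+\dd_p(\chi,\chi')\le 2\,\dd_p(\chi,\chi'),
\]
which is the claimed bound. There is no real obstacle here; the only subtlety is making sure the argument applies uniformly to $p=\infty$, but since both the Lipschitz estimate and the identity $\dd_p(\chi,\chi+c)=|c|$ are part of \eqref{equ:equivdp} for all $p\in[1,\infty]$, the same proof works without modification.
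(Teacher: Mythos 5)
Your proof is correct and follows essentially the same route as the paper: the key point in both is the inequality $|c|=\dd_p(\n',\n'+c)\le\dd_p(\n',\n)+\dd_p(\n,\n'+c)$, which confines near-minimizers to $|c|\le 2\dd_p(\n,\n')$, after which the infimum is attained by continuity of $c\mapsto\dd_p(\n,\n'+c)$ on a compact interval (you phrase this as coercivity plus the Lipschitz bound, the paper as restriction to a compact set). Your version is slightly more explicit about why that function is continuous, but the substance is identical.
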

In particular, $\quotd_p(\n,\n')=0$ iff $\n,\n'$ are \emph{asymptotically equivalent modulo translation}, in the sense that $\n\sim\n'+c$ for some $c\in\R$ (which is then uniquely determined by $c=\vol(\n)-\vol(\n')$). When $\n'=\n_\triv$ we say that $\n$ is \emph{asymptotically constant}. 

\begin{proof}    By~\eqref{equ:equivdp} , for all $c\in\R$ we have 
$$
|c|=\dd_p(\n',\n'+c)\le\dd_p(\n',\n)+\dd_p(\n,\n'+c). 
$$
Thus $\dd_p(\n,\n'+c)\le\dd_p(\n,\n')\Longrightarrow |c|\le 2\dd_p(\n,\n')$, and hence 
$$
\quotd_p(\n,\n')=\inf\{\dd_p(\n,\n'+c)\mid c\in\R,\,|c|\le 2\dd_p(\n,\n')\},
$$
which is achieved by compactness. 
  
\end{proof}

\begin{defi}\label{defi:Lpnorm}
 For each $p\in[1,\infty)$, we define the \emph{$L^p$-norm} of $\chi\in\cN_\R$ as 
$$
\|\n\|_p:=\dd_p(\n,\n_\triv+\vol(\n)).
$$ 
\end{defi}
This definition extends the notion in~\cite{Dontoric} of the $L^p$-norm of a test configuration, see~\cite[Remark~6.10]{BHJ1}. Indeed, \eqref{equ:dpLp} yields
$$
\|\chi\|^p_p=\int|\la-\overline\la|^p\,d\sigma(\chi), 
$$
the $p$-th central moment of the spectral measure $\sigma(\n)$, where $\overline\la=\int\la\,d\sigma(\chi)=\vol(\chi)$ is its barycenter. Note also that
$$
\|\n+c\|_p=\|\n\|_p,\quad\|t\n\|_p=t\|\n\|_p
$$
for $c\in\R$, $t\in\R_{>0}$.

\begin{prop}\label{prop:asympcst} Given a norm $\chi\in\cN_\R$, the following are equivalent:
\begin{itemize}
\item[(i)] $\chi$ is asymptotically constant;
\item[(ii)] $\|\chi\|_p=0$ for some $p\in[1,+\infty)$;
\item[(iii)] $\|\chi\|_p=0$ for all $p\in[1,+\infty)$.
\end{itemize}
\end{prop}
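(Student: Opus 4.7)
The plan is to unwind the definitions: by Definition~\ref{defi:Lpnorm}, the condition $\|\chi\|_p=0$ is literally $\dd_p(\chi,\chi_\triv+\vol(\chi))=0$, and by Definition~\ref{defi:asympequiv} the vanishing of $\dd_p$ (for any, equivalently all, $p\in[1,\infty)$) is the defining condition for asymptotic equivalence. So the only real work is identifying the translation constant.

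First, (iii)$\Rightarrow$(ii) is immediate by taking, say, $p=1$. For (ii)$\Rightarrow$(i), assume $\|\chi\|_p=0$ for some $p\in[1,\infty)$. Using the monotonicity $\dd_1\le\dd_p$ from~\eqref{equ:dpcomp}, we get $\dd_1(\chi,\chi_\triv+\vol(\chi))=0$, which by Definition~\ref{defi:asympequiv} means $\chi\sim\chi_\triv+\vol(\chi)$; so $\chi$ is asymptotically constant.

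For (i)$\Rightarrow$(iii), suppose $\chi\sim\chi_\triv+c$ for some $c\in\R$. The key step is to identify $c$ with $\vol(\chi)$. For this, first compute directly that $\vol(\chi_\triv+c)=c$: indeed $(\chi_\triv+c)|_{R_m}$ has constant value $cm$ on every nonzero vector, so $m^{-1}\vol((\chi_\triv+c)|_{R_m})=c$ and~\eqref{equ:vollim} gives the claim. Now by Corollary~\ref{cor:quantequiv}, the volume is invariant under asymptotic equivalence, so $\vol(\chi)=\vol(\chi_\triv+c)=c$. Hence $\chi\sim\chi_\triv+\vol(\chi)$, which by Definition~\ref{defi:asympequiv} gives $\dd_p(\chi,\chi_\triv+\vol(\chi))=0$ for every $p\in[1,\infty)$, i.e.\ $\|\chi\|_p=0$ for all such $p$.

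There is no real obstacle here; the statement is essentially a reformulation of what has already been established. The only substantive input beyond the definitions is the fact from Corollary~\ref{cor:quantequiv} that $\vol$ descends to the set of asymptotic equivalence classes, which pins down the translation constant uniquely.
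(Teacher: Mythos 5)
Your proof is correct and follows essentially the same route as the paper's: the whole content is that the translation constant $c$ in $\chi\sim\chi_\triv+c$ must equal $\vol(\chi)$, which the paper deduces from the Lipschitz bound~\eqref{equ:vollip} and you deduce from Corollary~\ref{cor:quantequiv} — two formulations of the same invariance of $\vol$ under asymptotic equivalence. The remaining implications are handled identically via $\dd_1\le\dd_p$ and Definition~\ref{defi:asympequiv}.
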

\begin{proof} If $\chi\sim\chi_\triv+c$ with $c\in\R$, then $c=\vol(\chi)$, by~\eqref{equ:vollip}. The rest is straightforward.  
\end{proof}

%
%
\subsection{Convergence of the canonical approximants}\label{sec:Fujita}
The next result strengthens the approximation result proved in~\cite[Theorem 1.14]{BC} (see also~\cite[Th\'eor\`eme 3.15]{Bou14} and~\cite{Cod19}). A version valid for arbitrary non-Archimedean fields is given in~\cite[Theorem~4.5.4]{Reb21} (which appeared after the first version of the current paper). 

Recall that the canonical approximants $\n_d\in\cT_\R$ of a norm $\chi\in\cN_\R$, which are defined for $d$ sufficiently divisible, satisfy $\n_d\le\chi$ and form an increasing net with respect to divisibility. 

\begin{thm}\label{thm:Fuj} For any $\chi\in\cN_\R$ and $p\in[1,\infty)$, we have $\dd_p(\n_d,\n)\to 0$.
\end{thm}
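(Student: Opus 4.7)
The plan is to reduce to $p=1$ via an interpolation bound, convert the $\dd_1$-convergence into convergence of volumes, and establish the latter by an Okounkov-body approximation argument.

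For the reduction, $\chi_d$ is generated in degree $1$ (Lemma~\ref{lem:nft}), so in a $\chi$-orthogonal basis $(s_i)$ of $R_d$ one has $\chi_d(\sum_{|\a|=m}c_\a\prod_i s_i^{\a_i})=\min_{c_\a\ne 0}\sum_i\a_i\chi(s_i)$ on $R_{md}$. Linear boundedness of $\chi$ gives $|\chi(s_i)|\le Cd$ for some $C>0$ independent of $d$, hence $|\chi_d|\le Cmd$ on $R_{md}$; combined with $\chi_d\le\chi$, this yields $\dd_\infty(\chi_d,\chi)\le 2C$ uniformly in $d$. The interpolation~\eqref{equ:dpcomp} then gives $\dd_p(\chi_d,\chi)^p\le(2C)^{p-1}\dd_1(\chi_d,\chi)$, reducing the theorem to $p=1$. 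For $p=1$, $\chi_d\le\chi$ implies $\chi_d\wedge\chi=\chi_d$, so~\eqref{equ:d1volgr} gives $\dd_1(\chi_d,\chi)=\vol(\chi)-\vol(\chi_d)$, and the task becomes to show $\vol(\chi_d)\nearrow\vol(\chi)$.

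Fix an admissible flag on a birational model of $X$ and let $\nu\colon k(X)^\times\to\Z^n$ be the associated rank-$n$ valuation. By the Boucksom--Chen concave-transform construction~\cite{BC}, every $\chi\in\cN_\R$ determines a concave function $G_\chi\colon\Delta\to\R$ on the Okounkov body $\Delta\subset\R^n$ of $(X,L)$ whose normalized Lebesgue integral recovers $\vol(\chi)$, and the same applies to each $\chi_d$ (using compatible Veronese scalings). Since $\chi_d\le\chi$ and is monotone in $d$, we obtain $G_{\chi_d}\le G_\chi$ with $(G_{\chi_d})$ increasing in $d$, so by monotone convergence it suffices to show pointwise convergence $G_{\chi_d}(x_0)\nearrow G_\chi(x_0)$ at every interior point $x_0\in\Int(\Delta)$. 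Given $\e>0$, pick $n+1$ rational points $x_1,\dots,x_{n+1}\in\Delta$ whose convex hull contains $x_0$ and with $G_\chi(x_i)\ge G_\chi(x_0)-\e$ (by continuity of $G_\chi$ on the interior), and for each $i$ a section $s_i\in R_{N_i}$ with $\nu(s_i)=N_i x_i$ and $\chi(s_i)/N_i\ge G_\chi(x_i)-\e$. Taking $d$ to be a common multiple of the $N_i$, the identity $\chi_d|_{R_d}=\chi|_{R_d}$ together with superadditivity force $G_{\chi_d}(x_i)\ge\chi(s_i)/N_i\ge G_\chi(x_0)-2\e$, and concavity of $G_{\chi_d}$ propagates this bound to $G_{\chi_d}(x_0)\ge G_\chi(x_0)-2\e$, completing the argument.

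The main obstacle is the Okounkov-body dictionary in the last step: identifying the concave transforms $G_{\chi_d}$ of the canonical approximants with concrete integral data coming from $\chi|_{R_d}$, and verifying that they jointly approximate $G_\chi$ on $\Int(\Delta)$. The Boucksom--Chen framework~\cite{BC} is tailored for this, and once combined with the invariance of Okounkov bodies under passage to Veronese subalgebras, the remaining verification is essentially routine.
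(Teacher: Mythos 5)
Your proposal is correct and follows essentially the same route as the paper: reduce to $p=1$ using the uniform bound on $\dd_\infty(\n_d,\n)$ and the interpolation inequality~\eqref{equ:dpcomp}, rewrite $\dd_1(\n_d,\n)=\vol(\n)-\vol(\n_d)$ via~\eqref{equ:d1volgr}, and prove $\vol(\n_d)\to\vol(\n)$ by an Okounkov-body approximation exploiting that $\n_d$ and $\n$ agree on $R_d$. The only (cosmetic) difference is that the paper packages the last step dually, via the distribution functions $g_d(\la)=(d^nV)^{-1}\vol(R^{\la,(d)})$ and a lemma on volumes of graded subalgebras sharing the same degree-one piece, rather than via pointwise convergence of the concave transforms $G_{\n_d}\nearrow G_\n$ on $\Int(\D)$; the lattice-point input you need for your sections $s_i$ (namely that $N_i^{-1}\nu(R_{N_i}^{\la}\smallsetminus\{0\})$ fills up the interior of the relevant Okounkov body for $N_i$ divisible enough, \cite[Lemme~1.13]{Bou14}) is exactly the ingredient the paper uses as well.
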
 
   Recall that the result holds for $p=\infty$ iff $\n$ is continuous (see Theorem~\ref{thm:cont}). 

\begin{cor}\label{cor:tcdense}
For any $p\in [1,\infty)$, the set $\cT_\Z$ of test configurations is dense in $\cN_\R$ in the $\dd_p$-topology. 
\end{cor}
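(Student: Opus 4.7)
The plan is to combine Theorem~\ref{thm:Fuj}, which already gives $\dd_p$-density of $\cT_\R$ in $\cN_\R$ via canonical approximants, with a separate spectral-rounding step producing an element of $\cT_\Z$ that is $\dd_\infty$-close (within $1/d$) to the canonical approximant $\n_d$ on $R^{(d)}$.

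Given $\chi\in\cN_\R$ and $\e>0$, I would first invoke Theorem~\ref{thm:Fuj} to find $d$ sufficiently divisible with $\dd_p(\chi,\chi_d)<\e$, where $\chi_d\in\cT_\R$ is the $d$-th canonical approximant. Next I would construct an integer-valued companion: pick a $\chi_d$-orthogonal basis $(s_i)_{1\le i\le N_d}$ of $R_d$ with spectrum $\la_i:=\chi_d(s_i)\in\R$, and let $\tilde\chi_d$ be the norm on $R^{(d)}$ generated in degree $1$ by the norm on $R_d$ diagonal in $(s_i)$ with values $\lfloor\la_i\rfloor\in\Z$. Being generated in degree $1$, $\tilde\chi_d$ is of finite type by Lemma~\ref{lem:nft}; and the quotient-norm formula shows that its values on $R_{dm}$ are min-max combinations of $\Z$-valued tensor spectra, hence integers. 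Thus $\tilde\chi_d\in\cT_\Z$.

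The key step is the estimate $\dd_\infty(\chi_d,\tilde\chi_d)\le 1/d$. For this, note that both $\chi_d|_{R_{dm}}$ and $\tilde\chi_d|_{R_{dm}}$ are the quotient norms under the surjection $S^mR_d\twoheadrightarrow R_{dm}$ of the symmetric-tensor norms diagonal in the monomial basis $(s^\a)_{|\a|=m}$, with spectra $(\sum_i\a_i\la_i)_\a$ and $(\sum_i\a_i\lfloor\la_i\rfloor)_\a$ respectively. These tensor-norm spectra differ entrywise by $\sum_i\a_i(\la_i-\lfloor\la_i\rfloor)\in[0,m]$, and the maximum over lifts $s^*\mapsto s$ preserves this inequality, so $0\le\chi_d(s)-\tilde\chi_d(s)\le m$ for every $s\in R_{dm}$. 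Hence $\dd_\infty(\chi_d|_{R_{dm}},\tilde\chi_d|_{R_{dm}})\le m$, which by~\eqref{equ:GIas} gives $\dd_\infty(\chi_d,\tilde\chi_d)\le 1/d$.

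Using $\dd_p\le\dd_\infty$ and the triangle inequality then yields
$$
\dd_p(\chi,\tilde\chi_d)\le\dd_p(\chi,\chi_d)+\dd_p(\chi_d,\tilde\chi_d)<\e+\tfrac{1}{d},
$$
which can be made arbitrarily small by choosing $d$ sufficiently divisible, establishing density of $\cT_\Z$. The only genuinely subtle point in the argument is the quotient-norm comparison in the third paragraph; everything else is routine bookkeeping, and the Okounkov-body technology alluded to in the introduction is effectively absorbed into the input Theorem~\ref{thm:Fuj}.
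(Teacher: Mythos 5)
Your argument is correct and follows essentially the same strategy as the paper: both proofs reduce to Theorem~\ref{thm:Fuj} for the $\dd_p$-density of $\cT_\R$ in $\cN_\R$, and then pass from $\cT_\R$ to $\cT_\Z$ by an integer-rounding step controlled in $\dd_\infty\ge\dd_p$. The paper does the rounding by citing Theorem~\ref{thm:cont} together with Example~\ref{exam:dinftyround}, whereas your $\tilde\chi_d$ is precisely the $d$-th canonical approximant of the round-down $\lfloor\chi\rfloor$, so your quotient-norm estimate $\dd_\infty(\chi_d,\tilde\chi_d)\le 1/d$ is a correct, self-contained quantitative re-derivation of the same fact.
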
  
\begin{proof} By Theorem~\ref{thm:cont}, $\cT_\Z$ is dense in $\cT_\R$ for $\dd_\infty$-topology, and hence also for the $\dd_p$-topology, since $\dd_p\le\dd_\infty$. It therefore suffices to show that $\cT_\R$ is $\dd_p$-dense in $\cN_\R$, which follows from Theorem~\ref{thm:Fuj} since the canonical approximants of any norm lie in $\cT_\R$.
\end{proof}

By $\dd_1$-continuity of spectral measures (see Theorem~\ref{thm:specmeas}) we also get: 
\begin{cor}\label{cor:Fuj} For all $\n,\n'\in\cN_\R$ we have $\lim_d\sigma(\n_d,\n'_d)=\sigma(\n,\n')$. 
\end{cor}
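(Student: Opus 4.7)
The proof is essentially a direct chaining of the two results quoted just before the corollary. The plan is as follows.

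First, I would make the indexing explicit: pick $d_0 \ge 1$ sufficiently divisible that both $\chi$ and $\chi'$ are represented by actual norms on $R^{(d_0)}$, and that the canonical approximants $\chi_d, \chi'_d \in \cT_\R$ are simultaneously well-defined for every $d \in d_0 \Z_{\ge 1}$ ordered by divisibility. This ensures $(\chi_d)$ and $(\chi'_d)$ are nets over the same directed set, so the limit $\lim_d \sigma(\chi_d, \chi'_d)$ makes sense.

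Second, I would invoke Theorem~\ref{thm:Fuj} with $p = 1$ applied to $\chi$ and to $\chi'$ separately, obtaining
\[
\lim_d \dd_1(\chi_d, \chi) = 0, \qquad \lim_d \dd_1(\chi'_d, \chi') = 0,
\]
\ie the nets $(\chi_d)$ and $(\chi'_d)$ converge respectively to $\chi$ and $\chi'$ in the $\dd_1$-topology on $\cN_\R$.

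Finally, I would apply Theorem~\ref{thm:specmeas} (continuity of spectral measures in the $\dd_1$-topology) to these two $\dd_1$-convergent nets, which immediately gives $\sigma(\chi_d, \chi'_d) \to \sigma(\chi, \chi')$ weakly, as desired.

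There is no real obstacle: the corollary is structured precisely so that Theorem~\ref{thm:Fuj} supplies the hypothesis needed by Theorem~\ref{thm:specmeas}. The only small bookkeeping point is the common directed set for the two nets of approximants, which is handled by choosing $d_0$ divisible enough for both norms.
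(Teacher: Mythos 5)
Your proposal is correct and matches the paper's argument exactly: the corollary is deduced by combining Theorem~\ref{thm:Fuj} (with $p=1$) with the $\dd_1$-continuity of spectral measures from Theorem~\ref{thm:specmeas}. The remark about choosing a common sufficiently divisible $d_0$ is sound bookkeeping but adds nothing essential.
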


\begin{proof}[Proof of Theorem~\ref{thm:Fuj}] Since $\n_d\le\n$ is an increasing net,  $\dd_\infty(\n_d,\n)$ is decreasing, and hence uniformly bounded. In view of~\eqref{equ:dpcomp}, it is thus enough to show the result for $p=1$, \ie
$$
\dd_1(\n_d,\n)=\vol(\n)-\vol(\n_d)
$$ 
tends to $0$. Since $(\n_d)$ is $\dd_\infty$-bounded, we can find $a<b$ such that $\sigma(\n)$ and each $\sigma(\n_d)$ is has support in $[a,b]$, and hence 
$$
\vol(\n)-\vol(\n_d)=\int_a^b\vol(\n\ge\la)\,d\la-\int_a^b\vol(\n_d\ge\la)\,d\la,
$$
see Theorem~\ref{thm:volandsup}. By dominated convergence, it will thus suffice to show $\lim_d\vol(\n_d\ge\la)=\vol(\n\ge\la)$ for each $\la\in\R$ fixed.  

To see this, we may assume, after replacing $L$ by a multiple, that $R=R(X,L)$ is generated in degree $1$ and that $\chi$ is defined on $R$. By definition (see Theorem~\ref{thm:volandsup}~(i)), we have 
$$
\vol(\n\ge\la)=\frac{\vol(S)}{\vol(R)}=V^{-1}\vol(S),
$$
where $S\subset R$ is the graded subalgebra with graded pieces $S_m:=\Filt^{m\la} R_m$ and 
$\vol(S)=\lim_{m\to\infty}\frac{n!}{m^n}\dim S_m$. Similarly, $\vol(\n_d\ge\la)=(d^n V)^{-1}\vol(T(d))$ where $T(d)\subset S^{(d)}$ is generated in degree $1$ by $T(d)_1=S^{(d)}_1=S_d$. The desired convergence now follows from Lemma~\ref{L412} below.
\end{proof}

\begin{lem}\label{L412}
  Let $S\subset R$ be a graded subalgebra, and suppose we are given, for each $d$  divisible enough, a graded subalgebra
  $T(d)\subset S^{(d)}$ such that $T(d)_1=S^{(d)}_1=S_d$. 
  Then $\lim_d d^{-n}\vol(T(d))=\vol(S)$.
\end{lem}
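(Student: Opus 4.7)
The plan is to sandwich $T(d)$ between two graded subalgebras whose volumes can be controlled, reducing the lemma to a standard Fujita-approximation statement that I would then handle via Okounkov bodies.

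First, let $S^{[d]}\subset S^{(d)}$ denote the graded subalgebra generated in degree $1$ by $T(d)_1=S^{(d)}_1=S_d$. Because $T(d)$ is a graded subalgebra of $S^{(d)}$ with the same degree-$1$ part, we obtain $S^{[d]}\subset T(d)\subset S^{(d)}$, hence
$$
d^{-n}\vol(S^{[d]})\le d^{-n}\vol(T(d))\le d^{-n}\vol(S^{(d)})=\vol(S),
$$
the last equality being immediate from $(S^{(d)})_m=S_{dm}$. It therefore suffices to prove that $d^{-n}\vol(S^{[d]})\to\vol(S)$ along divisibility.

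If $\vol(S)=0$ there is nothing to do, so I assume $\vol(S)>0$. Next I would invoke the Okounkov body formalism of Lazarsfeld--Musta\c{t}\u{a} and Kaveh--Khovanskii: fix a $\Z^n$-valued valuation $\nu$ of rank $n$ on $k(X)$ (via a flag of subvarieties of $X$), and form the Okounkov body
$$
\Delta(S):=\overline{\bigcup_m \tfrac1m\nu(S_m\smallsetminus\{0\})}\subset\R^n,
$$
a convex body with $\vol(S)=n!\cdot\mathrm{vol}_{\mathrm{Leb}}(\Delta(S))$. Since $S^{[d]}$ is generated in degree $1$ by $S_d$ and $\nu$ sends products to sums, the Okounkov body of $S^{[d]}$ taken in its own grading, rescaled by $1/d$, equals
$$
P_d:=\overline{\mathrm{conv}}\bigl(d^{-1}\nu(S_d\smallsetminus\{0\})\bigr)\subset\Delta(S),
$$
and one reads off $d^{-n}\vol(S^{[d]})=n!\cdot\mathrm{vol}_{\mathrm{Leb}}(P_d)$.

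To finish, I observe that $(P_d)$ is increasing with respect to divisibility: if $d\mid d'$ and $s\in S_d\smallsetminus\{0\}$, then $s^{d'/d}\in S_{d'}$ satisfies $(d')^{-1}\nu(s^{d'/d})=d^{-1}\nu(s)$. The convex union $\bigcup_d P_d$ is dense in $\Delta(S)$ by the very definition of $\Delta(S)$, so its complement in $\Delta(S)$ lies in $\partial\Delta(S)$ and is Lebesgue-null. Monotone continuity of Lebesgue measure then yields $\mathrm{vol}_{\mathrm{Leb}}(P_d)\to\mathrm{vol}_{\mathrm{Leb}}(\Delta(S))$, which completes the argument. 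The only delicate point is checking the Okounkov body description of $\vol(S^{[d]})$, i.e.\ verifying the Kaveh--Khovanskii axioms for the semigroup $\{(\nu(s),m)\mid s\in(S^{[d]})_m\smallsetminus\{0\}\}\subset\Z^n\times\N$; since $S^{[d]}\subset R^{(d)}$ with $R$ a section ring of an ample line bundle, this follows from the standard setup, making the argument routine modulo the cited literature.
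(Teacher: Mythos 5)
Your overall strategy is the same as the paper's: pass to Okounkov bodies via a rank-$n$ valuation $\nu$, use the inclusion into $S^{(d)}$ for the upper bound, and bound the relevant body from below by the convex hull of the degree-one valuation set $\Gamma_d=\nu(S_d\smallsetminus\{0\})$, which is all that the hypothesis $T(d)_1=S_d$ gives you. Where you genuinely diverge is in the final convergence step: the paper fixes compacts $A\Subset B\Subset\D(S)$ and invokes the lattice-point statement $d^{-1}\Z^n\cap B=d^{-1}\Gamma_d\cap B$ from~\cite[Lemme~1.13]{Bou14}, whereas you observe that the sets $P_d=\overline{\conv}(d^{-1}\Gamma_d)$ form an increasing net (via $s\mapsto s^{d'/d}$) whose union is a convex set dense in $\D(S)$, hence contains its interior, so $\vol(P_d)\to\vol(\D(S))$ by monotone convergence. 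That sub-argument is correct and pleasantly self-contained.

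The soft spot is the detour through $S^{[d]}$ and the claim $d^{-n}\vol(S^{[d]})=n!\vol(P_d)$. Two things go wrong. First, the Okounkov body of $S^{[d]}$ need not equal $\conv(\Gamma_d)$ (rescaled): valuations of linear combinations of products can jump outside the sumsets of $\Gamma_d$, so one only gets containment $\supset$; this is harmless since you only need $\ge$. Second, and more seriously, the identity $\vol(S^{[d]})=n!\vol(\D(S^{[d]}))$ is exactly where the lattice hypotheses of Kaveh--Khovanskii enter: if the value semigroup of $S^{[d]}$ generates a proper finite-index subgroup of $\Z^{n+1}$, the asymptotic count of $\dim(S^{[d]})_m$ picks up the index as a denominator and the inequality $d^{-n}\vol(S^{[d]})\ge n!\vol(P_d)$ can fail (already $S=k[x^2,y]\subset k[x,y]$ on $\P^1$ shows the volume formula failing for a degree-one-generated graded subalgebra). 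Ruling this out for $d$ sufficiently divisible is precisely the content of~\cite[Lemme~1.13]{Bou14}, so the ``delicate point'' you defer to the standard setup is in fact the key input, not a routine verification. The clean repair is to delete $S^{[d]}$ altogether: since $\Gamma(d)_1=\Gamma_d$, one has $d^{-1}\D(T(d))\supset P_d$ directly, and granting the volume formula $\vol(T(d))=n!\vol(\D(T(d)))$ for the algebras actually appearing in the lemma (which the paper's proof also assumes), your density argument then finishes the proof.
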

\begin{proof}
  We use Okounkov bodies, following~\cite{Bou14}.
  Set $K:=k(X)$, and pick a valuation $\nu\colon K^\times\to\Z^n$ of maximal rational rank, equal to $n$ (\eg associated to a flag of subvarieties as in~\cite{LM09}).
  Set $\Gamma_m:=\nu(S_m\smallsetminus\{0\})$ and 
  $\Gamma(d)_m:=\nu(T(d)_m\smallsetminus\{0\})$ for $m\ge1$. 
  Let $\D(S)$ and $\D(T(d))$ be the 
  closed convex hull inside $\R^n$ of $\bigcup_mm^{-1}\Gamma_m$
  and of $\bigcup_mm^{-1}\Gamma(d)_m$, respectively.
  Then $\vol(S)=n!\vol(\D(S))$ and 
  $\vol(T(d))=n!\vol(\D(T(d)))$,
  so it suffices to prove that 
  $\lim_d\vol(d^{-1}\D(T(d)))=\vol(\D(S))$.

  Since $T(d)_m\subset S_{dm}$, we get 
  $\Gamma(d)_m\subset\Gamma_{dm}$ for all $d,m$, and hence
  $d^{-1}\D(T(d))\subset\D(S)$ for all $d$.
  If $\vol(\D(S))=0$, we are done, so we may assume $\D(S)$
  has nonempty interior. Pick compact subsets $A$ and $B$ of $\R^n$
  with $A\Subset B\Subset\D(S)$. 
  It suffices to prove that $d^{-1}\D(T(d))\supset A$ for $d$ sufficiently divisible.
  Now $d^{-1}\Z^n\cap B=d^{-1}\Gamma_d\cap B$,
  see~\cite[Lemme~1.13]{Bou14}.
  If $\D_d$ is the convex hull of $d^{-1}\Gamma_d$, it follows that $\D_d\supset A$.
  But $T(d)_1=S_d$, so $\Gamma(d)_1=\Gamma_d$, and hence $d^{-1}\D(T(d))\supset\D_d\supset A$,  which completes the proof.
\end{proof}
%
%
%
%

\section{Non-Archimedean pluripotential theory}\label{sec:npp}
In this section we summarize results from~\cite{trivval} that are relevant to our later purposes. 
%
%
\subsection{$L$-psh functions}\label{sec:Lpsh}
An \emph{$L$-psh} function $\f\colon \Xan\to[-\infty,+\infty)$ is defined as the pointwise limit of any decreasing net in $\cH_\Q$ (or $\cH_\R$), excluding $\f\equiv-\infty$.
We denote by $\PSH=\PSH(L)$ the set of all $L$-psh functions. If $\f\in\PSH$, then $\f+c\in\PSH$ for all $c\in\R$. 
If $\f,\p\in\PSH$, then $\max\{\f,\p\}\in\PSH$. If $(\f_j)_j$ is a decreasing net in $\PSH$, and $\f$ is the pointwise limit of $(\f_j)$, then $\f\in\PSH$, or $\f\equiv-\infty$. We can thus describe $\PSH$ as the smallest class of functions which is invariant under max, translation by a constant, decreasing limits, and contains all functions of the form $m^{-1}\log|s|$ with $m$ sufficiently divisible and $s\in R_m\smallsetminus\{0\}$

By Dini's Lemma, the set 
\begin{equation*}
  \CPSH:=\PSH\cap\Cz
\end{equation*}
of (bounded) continuous $L$-psh functions is the closure of $\cH_\Q$ (or $\cH_\R$) in $\Cz$ (in line with the definition of a semipositive (continuous) metric in~\cite{Zha95,GublerLocal,CM18}). 

The set $\PSH$ is stable under convex combinations, and under the action $(t,\f)\mapsto t\cdot\f$ of $\R_{>0}$ on functions, see~\eqref{equ:actionfunc}. If $v,v'\in\Xan$ and $v\le v'$, then $\f(v)\ge\f(v')$ for all $\f\in\PSH$. In particular,
$$
\sup\f:=\sup_{\Xan}\f=\f(v_\triv)
$$ 
for all $\f\in\PSH$.

A subset $\Sigma\subset \Xan$ is \emph{pluripolar} if $\Sigma\subset\{\f=-\infty\}$ for some $L$-psh function $\f$. This condition is independent of the choice of ample $\Q$-line bundle $L$, and $\Sigma$ is pluripolar iff 
\begin{equation}\label{equ:AT}
\tee(\Sigma):=\sup_{\f\in\PSH}(\sup\f-\sup_\Sigma\f)\in[0,+\infty] 
\end{equation}
is finite. If $\Sigma=\{v\}$ with $v\in X^\an$, then $\tee(\{v\})=\tee(v)$ as defined in~\eqref{equ:T}, and the set $\Xlin\subset X^\an$ of valuations of linear growth thus coincides with the set of non-pluripolar points $v\in \Xan$, \ie such that every $\f\in\PSH$ is finite-valued on $v$. 

Since every divisorial valuation has linear growth, $L$-psh functions are finite-valued on $\Xdiv$. The restriction map $\PSH\to\R^{\Xdiv}$ is further injective~\cite[Corollary~4.23]{trivval}, and we endow $\PSH$ with the induced topology of pointwise convergence on $\Xdiv$. This is in fact equivalent to pointwise convergence on $\Xlin$~\cite[Theorem~11.4]{trivval}. 

Note that since $\cH_\R(dL)=r\cH_\R(L)$ we have $\PSH(dL)=d\PSH(L)$ for any $d\ge \in\Q_{>0}$. To study $\PSH(L)$ we may therefore in practice assume that $L$ is an ample line bundle and that $R(X,L)$ is generated in degree 1.

We refer to~\cite[Example~4.13]{trivval} for a concrete description of $L$-psh functions on curves. See also Appendix~\ref{sec:toric} below for the toric case.
%
\subsection{Monge--Amp\`ere operator and energy on $\cH_\Q$}\label{sec:MAen}
  
The \emph{mixed Monge--Amp\`ere operator} on $\cH_\Z=\cH_\Q$ associates to 
any tuple $(\f_1,\dots,\f_n)\in\cH_\Q^n$ a Radon probability measure
$\MA(\f_1,\dots,\f_n)$, defined as follows. Pick an integrally closed, semiample test configurations $(\cX,\cL_i)$ for $(X,L_i)$ (with the same $\cX$) such that $\f_i=\f_{\cL_i}$, see Appendix~\ref{sec:tc}. Denoting by $\cX_0=\sum_ib_iE_i$ the irreducible decomposition of the central fiber, we then have 
\begin{equation*}
  \MA(\f_1,\dots,\f_n)=\sum_ib_i(\cL_1|_{E_i}\cdot\ldots\cdot\cL_n|_{E_i})\d_{v_i},
\end{equation*}
where $v_i\in X^\div$ is the divisorial valuation defined by $E_i$.

Following the strategy by Chambert-Loir in~\cite{CL06}, the mixed Monge--Amp\`ere operator admits a unique continuous extension to the space $\CPSH$ of  \emph{continuous} $L$-psh functions (with respect to uniform convergence), and this extension is in turn a special case of the with the general theory developed in~\cite{CLD}.
\begin{lem}\label{lem:suppMA}
  For any $\f_1,\dots,\f_n\in\cH_\R$, the support of $\MA(\f_1,\dots,\f_n)$ is a finite subset of $X^\lin$. 
\end{lem}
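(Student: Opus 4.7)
My strategy is to approximate the $\f_i\in\cH_\R$ uniformly by $\f_i^{(k)}\in\cH_\Q$ whose defining sections do not change with $k$, and to exhibit a fixed finite set of divisorial valuations containing the supports of the approximating Monge--Amp\`ere measures.

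Concretely, I would fix a common $m\ge1$ such that $mL$ is a line bundle and write $\f_i=m^{-1}\max_j\{\log|s_{i,j}|+\la_{i,j}\}$ with $s_{i,j}\in R_m$ having no common zero (for each $i$) and $\la_{i,j}\in\R$. Approximating $\la_{i,j}$ by rationals $\la_{i,j}^{(k)}\to\la_{i,j}$ gives $\f_i^{(k)}:=m^{-1}\max_j\{\log|s_{i,j}|+\la_{i,j}^{(k)}\}\in\cH_\Q$ converging uniformly to $\f_i$ on $\Xan$. By continuity of the mixed Monge--Amp\`ere operator on $\CPSH$, the measures $\mu^{(k)}:=\MA(\f_1^{(k)},\dots,\f_n^{(k)})$ converge weakly to $\mu:=\MA(\f_1,\dots,\f_n)$, and each $\mu^{(k)}$ has finite support in $\Xdiv$ via the explicit central-fiber formula in the rational case.

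The crux is then to produce a single finite set $S\subset\Xdiv$ containing $\supp\mu^{(k)}$ for all $k$ large; the weak limit is then forced to satisfy $\supp\mu\subset S\subset\Xdiv\subset\Xlin$. For this I would invoke the torus-equivariant realization of Example~\ref{exam:HL}: the fixed sections $(s_{i,j})_j$ induce a morphism $\pi_i:X\to\P(V_i^*)$, $V_i:=\mathrm{span}_j s_{i,j}$, under which each homogeneous rational test configuration $\IN(\f_i^{(k)})\in\cT_\Q^\hom$ arises (on a fixed birational resolution of $X$) as the pullback of a toric $\Q$-test configuration of $\P(V_i^*)$ cut out by the $\Q$-1-parameter subgroup of $\Gm^{\dim V_i}/\Gm$ determined by $(\la_{i,j}^{(k)})_j$. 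Since the weights stay in a bounded region, only finitely many combinatorial types of flat limits $\lim_{t\to0}\xi^{(k)}(t)\cdot\P(V_i^*)$ can appear, and pulling their irreducible components back to a common birational model of $X$ yields the desired finite $S$.

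The main technical obstacle is precisely this uniform-finiteness statement: as the rational weights accumulate to their real targets, the central fibers of the associated toric test configurations should draw their irreducible components from a fixed finite set, and pulling them back via the $\pi_i$ (after resolution) should likewise produce only finitely many divisorial valuations on $X$. Morally this is the combinatorial fact that the moment polytopes of the $\P(V_i^*)$ admit only finitely many regular subdivisions induced by affine functions with bounded coefficients, but matching this carefully with the non-Archimedean test-configuration framework of Appendix~\ref{sec:tc} requires some care. Everything else---the uniform approximation, the continuity of $\MA$ on $\CPSH$, and the passage to the weak limit---is routine.
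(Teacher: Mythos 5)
Your approach is genuinely different from the paper's, and its pivotal step fails. The paper does not approximate at all: it quotes the finiteness of $\supp\MA(\f_1,\dots,\f_n)$ directly from \cite[Example~8.11]{BE} (resting on \cite[Proposition~6.9.2]{CLD} and the invariance of the Chambert-Loir--Ducros construction under ground field extension), and then places the support inside $X^\lin$ by noting that $\int\f\,d\mu>-\infty$ for every $\f\in\PSH$ (\cite[Proposition~7.21]{trivval}), so that every point of the support is non-pluripolar.

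The gap is the uniform-finiteness claim: there is in general \emph{no} fixed finite set $S\subset\Xdiv$ containing $\supp\mu^{(k)}$ for all large $k$, because the support of the limit measure need not lie in $\Xdiv$ at all. Already for $(X,L)=(\P^1,\cO(1))$ with its standard torus action, take $\f=\FS(\n_\xi)\in\cH_\R$ for an irrational $\xi\in N_\R\simeq\R$ (Example~\ref{exam:torusaction}); then $\MA(\f)=\d_{v_\xi}$, and $v_\xi$ is an irrational multiple of a divisorial valuation, hence lies in $\Xlin\setminus\Xdiv$ (cf.\ Examples~\ref{exam:ftnondiv} and~\ref{exam:linnondiv}, and Appendix~\ref{sec:toric}). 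In your scheme the rational approximants put their mass at points $v_{\xi^{(k)}}\in\Xdiv$ that drift with $k$ and converge to the non-divisorial point $v_\xi$. The finiteness of the set of regular subdivisions with bounded coefficients controls only the \emph{combinatorial type} of the central fiber; the actual divisorial valuations induced on $X$ vary continuously with the weights within each type, so finitely many types do not yield finitely many valuations, and the conclusion $\supp\mu\subset\Xdiv$ you would extract is simply false. What might salvage a version of your strategy is a uniform bound on the \emph{cardinality} of $\supp\mu^{(k)}$, after which a subnet of the support points converges in the compact space $\Xan$ and the weak limit has finite support; but even then you would owe an argument that the limit points lie in $\Xlin$ rather than merely in $\Xan$ (nets of divisorial valuations can converge to pluripolar points), which is precisely the part of the lemma your proposal does not address and which the paper handles via the integrability of psh functions against $\mu$.
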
 

\begin{proof} Set $\mu:=\MA(\f_1,\dots,\f_n)$ and $\Sigma:=\supp\mu$. The finiteness of $\Sigma$ is proved in~\cite[Example~8.11]{BE}, as a consequence 
of~\cite[Proposition~6.9.2]{CLD} and the invariance under ground field extension of the Chambert-Loir--Ducros construction. By~\cite[Proposition~7.21]{trivval}, we further have $\int\f\,\mu>-\infty$ for any $\f\in\PSH$. Since $\f$ is bounded above, this implies that $\f$ is finite at each $v\in\Sigma$; thus $v$ is nonpluripolar, and hence $v\in X^\lin$. 
\end{proof}

We will use notation such as $\MA(\f^{\langle j\rangle},\p^{\langle n-j\rangle})$, with $j$ copies of $\f$ and $n-j$ copies of $\p$, and write $\MA(\f)=\MA(\f^{\langle n\rangle})$. We then have $\MA(0)=\d_{v_\triv}$.

The~\emph{Monge--Amp\`ere energy} $\en\colon\CPSH\to\R$ is the primitive of the Monge--Amp\`ere operator in the sense that 
\begin{equation}\label{equ:Ederiv}
\frac{d}{dt}\bigg|_{t=0}\en((1-t)\f+t\p)=\int(\p-\f)\MA(\f)
\end{equation}
for $\f,\p\in\CPSH$, normalized by $\en(0)=0$. As such, $\en$ is monotone increasing, \ie $\f\ge\p\Longrightarrow \en(\f)\ge \en(\p)$. Integration along line segments yields
\begin{equation}\label{equ:cocycle} 
\en(\f)-\en(\p)=\frac{1}{n+1}\sum_{j=0}^n\int_{\Xan}(\f-\p)\,\MA(\f^{(j)},\p^{(n-j)}),
\end{equation}
for $\f,\p\in\CPSH$, and hence 
\begin{equation*}
\en(\f)=\frac{1}{n+1}\sum_{j=0}^n\int_{\Xan}\f\,\MA\left(\f^{\langle j\rangle},0^{\langle n-j\rangle}\right).
\end{equation*}

If $\f\in\cH_\Z=\cH_\Q$ is represented by a test configuration $(\cX,\cL)$, then
\begin{equation}\label{equ:testen}
\en(\f)=\frac{(\bar\cL^{n+1})}{(n+1)(L^n)},
\end{equation}
where $(\bar\cX,\bar\cL)\to\P^1$ is the canonical compactification of $(\cX,\cL)\to\A^1$.

The functional $\en$ is concave on $\CPSH$, which amounts to 
\begin{equation}\label{equ:enconc}
\en(\f)-\en(\p)\le\int(\f-\p)\MA(\p)
\end{equation}
for all $\f,\p\in\CPSH$, by~\eqref{equ:Ederiv}. Combined with~\eqref{equ:cocycle}, this implies
\begin{equation}\label{equ:Eapprox}
\f\ge\p\Longrightarrow\en(\f)-\en(\p)\approx\int(\f-\p)\MA(\p).
\end{equation}
In addition to $\en$, we introduce the translation invariant functional
$$
\ii(\f,\p):=\int(\f-\p)\left(\MA(\p)-\MA(\f)\right)\ge 0, 
$$
which satisfies the quasi-triangle inequality
\begin{equation}\label{equ:quasiI}
\ii(\f_1,\f_2)\lesssim\ii(\f_1,\f_3)+\ii(\f_3,\f_2). 
\end{equation}
We also set
$$
\ii(\f):=\ii(\f,0):=\sup\f-\int\f\MA(\f). 
$$
The Monge--Amp\`ere operator is homogeneous with respect to the action of $\R_{>0}$ on continuous $L$-psh functions $\f$, in the sense that 
$\MA(t\cdot\f)=t_\star\MA(\f)$ for all $t>0$. Similarly, we have 
$\en(t\cdot\f)=t\en(\f)$, $\ii(t\cdot\f)=t\ii(\f)$. 
%
%
\subsection{Functions and measures of finite energy}
The Monge--Amp\`ere energy admits a unique non-decreasing, usc extension $\en\colon\PSH\to\R\cup\{-\infty\}$, given for $\f\in\PSH$ by 
\begin{equation}\label{equ:endef}
\en(\f):=\inf\left\{\en(\p)\mid \f\le\p\in\CPSH\right\}.
\end{equation}
We denote by 
$$
\cE^1:=\left\{\f\in\PSH\mid \en(\f)>-\infty\right\}
$$
the set of $L$-psh functions \emph{of finite energy}. In other words, functions in $\cE^1$ are decreasing limits of nets $\f_i\in\cH_\Q$ with energy $\en(\f_i)$ uniformly bounded below. 

The \emph{weak topology} of $\cE^1$ is its subspace topology from $\PSH$, and the \emph{strong topology} on $\cE^1$ is the coarsest refinement of the weak topology for which $\en$ becomes continuous. 

For a decreasing or increasing net $(\f_j)$ in $\cE^1$, strong and weak convergence coincide, \ie $\f_j\to\f$ strongly in $\cE^1$ iff $\f_j\to\f$ pointwise on $\Xdiv$, see Example~12.2 and Theorem~12.5 in~\cite{trivval}, respectively.

Denote by $\cM$ the space of Radon probability measures on $\Xan$, endowed with the weak topology. The main point in introducing the strong topology is that the mixed Monge--Amp\`ere operator $\MA$, a priori only defined as a map $(\cC^0\cap\PSH)^n\to\cM$, admits a (unique) extension $(\cE^1)^n\to\cM$ that is continuous in the strong topology
  on both sides.
  
Further, 
$$
(\f_0,\f_1,\dots,\f_n)\mapsto\int\f_0\,\MA(\f_1,\dots,\f_n)
$$ 
is finite-valued and (strongly) continuous on tuples in $\cE^1$. In particular, the functional $\ii$ from~\S\ref{sec:MAen} extend continuously to $\cE^1$, and it induces a quasi-metric on $\cE^1/\R$ that defines the strong topology. 

The \emph{energy} of a probability measure $\mu\in\cM$ on $\Xan$ is defined by 
\begin{equation}\label{equ:enmeas}
  \en^\vee(\mu):=\sup_{\f\in\cE^1}\left\{\en(\f)-\int\f\,d\mu\right\},
\end{equation}
where the supremum can be restricted to functions in $\cH_\Q$, by approximation. This defines a convex, lsc function $\en^\vee\colon\cM\to[0,+\infty]$. We denote by 
$$
\cM^1:=\left\{\mu\in\cM\mid\en^\vee(\mu)<+\infty\right\}
$$ 
the set of measures of finite energy. It comes with a strong topology, defined as the coarsest refinement of the weak topology of measures in which $\en^\vee$ is continuous. The topological space $\cM^1$ does not depend on $L$. 

By~\eqref{equ:enconc}, for any $\f\in\cE^1$, the measure $\mu=\MA(\f)$ has finite energy, and $\f$ achieves the supremum in~\eqref{equ:enmeas}, \ie
  
\begin{equation}\label{equ:enMA}
  \en^\vee(\MA(\f))=\en(\f)-\int\f\,\MA(\f). 
\end{equation}
  
Conversely, a measure $\mu\in\cM^1$ satisfies $\mu=\MA(\f)$ with $\f\in\cE^1$ iff $\f$ achieves the supremum in~\eqref{equ:enmeas}. By a main result of~\cite{trivval}, the Monge--Amp\`ere operator induces a topological embedding with dense image
$$
\MA\colon\cE^1/\R\hto\cM^1,
$$
with respect to the strong topology on both sides.
%
\subsection{Envelopes}\label{sec:env}
   Consider a bounded-above family $(\f_i)$ of $L$-psh functions, and set $\f:=\sup_i\f_i$. By definition, the usc regularization $\f^\star\colon X^\an\to\R\cup\{-\infty\}$ is the smallest usc function such that $\f^\star\ge\f$. 

\begin{lem}\label{lem:envdiv} The restriction of $\f^\star$ to $X^\div$ coincides with $\f$, and $\f^\star$ is the smallest usc function on $X^\an$ with this property. 
\end{lem}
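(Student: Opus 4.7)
My plan is to prove the two assertions separately, both leaning on deeper properties of $L$-psh functions from~\cite{trivval}.

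For the first assertion, the inequality $\f^\star\ge\f$ is immediate from the definition of the usc regularization, so everything reduces to showing $\f^\star\le\f$ on $\Xdiv$. I would invoke the non-Archimedean analog of Choquet's lemma: for a bounded-above family $(\f_i)$ of $L$-psh functions, the usc regularization $\f^\star$ is itself $L$-psh, and the exceptional set $\{\f^\star>\f\}$ is pluripolar. Since each $v\in\Xdiv$ has linear growth (see~\S\ref{sec:lingr}), every $L$-psh function takes a finite value at $v$, so $v$ is non-pluripolar and cannot belong to any pluripolar set. This forces $\f^\star(v)=\f(v)$ for all $v\in\Xdiv$.

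For the minimality assertion, let $\psi\colon\Xan\to\R\cup\{-\infty\}$ be any usc function with $\psi|_{\Xdiv}=\f|_{\Xdiv}$; by what precedes, we also have $\psi|_{\Xdiv}=\f^\star|_{\Xdiv}$. To conclude $\psi\ge\f^\star$ on all of $\Xan$, I would fix $v\in\Xan$ and approximate $\f^\star(v)$ from within $\Xdiv$: namely, produce a net $(v_\a)$ in $\Xdiv$ with $v_\a\to v$ and $\f^\star(v_\a)\to\f^\star(v)$. This follows from the retraction-based description of $L$-psh functions in~\cite{trivval}: for every SNC model $\cX$ there is a retraction $p_\cX\colon\Xan\to\Sk(\cX)\subset\Xdiv$, and any $g\in\PSH$ is recovered pointwise as a suitably monotone limit of its composites $g\circ p_\cX$. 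Evaluating at $v$ produces the desired net with $p_\cX(v)\in\Xdiv$ and $p_\cX(v)\to v$. Combining with $\f^\star=\psi$ on $\Xdiv$ and usc of $\psi$, one gets
\begin{equation*}
\f^\star(v)\;=\;\lim_\cX \f^\star(p_\cX(v))\;=\;\lim_\cX \psi(p_\cX(v))\;\le\;\psi(v),
\end{equation*}
which is what we want.

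The main obstacle, or rather the main input from~\cite{trivval} that I am relying on, is this twofold statement: (i) the set $\{\f^\star>\f\}$ is pluripolar whenever $(\f_i)$ is a bounded-above family of $L$-psh functions, and (ii) every $L$-psh function is recovered pointwise as a monotone limit of its compositions with SNC-model retractions taking values in $\Xdiv$. Both are standard in the complex-analytic theory and, in the trivially valued non-Archimedean setting, are developed in~\cite{trivval}; granted them, the two assertions of the lemma follow exactly as above.
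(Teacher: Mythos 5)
Your argument for both halves leans on $\f^\star$ being $L$-psh (Choquet's lemma for the first half, the retraction description of psh functions applied to $\f^\star$ for the second), but that is precisely the \emph{envelope property}, which is introduced only after this lemma and is \emph{not} assumed here --- it genuinely fails, e.g., when $X$ is not unibranch. The lemma is meant to hold unconditionally, so any step that needs $\f^\star\in\PSH$, or the stronger statement that the exceptional set $\{\f^\star>\f\}$ is pluripolar (which in~\cite{trivval} is likewise established only under the envelope property), does not go through. The correct input for the first assertion is the weaker, unconditional fact that points of $X^\div$ are \emph{non-negligible} (\cite[Theorem~5.6]{trivval}), which is literally the claim $\f^\star=\f$ on $X^\div$; your detour through pluripolarity of the negligible set is both unavailable and unnecessary.

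For the minimality assertion the paper's route avoids $\f^\star$ altogether: since $\p=\f$ on $X^\div$, each \emph{individual} $\f_i$ (which genuinely is $L$-psh) satisfies $\f_i\le\p$ on $X^\div$, hence $\f_i\le\p$ on all of $X^\an$ by the comparison principle~\cite[Theorem~4.22]{trivval} (an $L$-psh function dominated on $X^\div$ by a usc function is dominated everywhere); taking the supremum over $i$ gives $\f\le\p$, and then $\f^\star\le\p$ because $\p$ is usc. Beyond the psh-ness issue, your retraction argument has two further problems: the skeleton $\Sk(\cX)$ of an SNC model is \emph{not} contained in $X^\div$ (it consists of quasi-monomial valuations, only a dense subset of which are divisorial), so the identity $\f^\star(p_\cX(v))=\p(p_\cX(v))$ is not justified as written; and SNC models require resolution of singularities, whereas the lemma is stated for an arbitrary projective variety in arbitrary characteristic. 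If you want to keep the retraction idea, apply it to each $\f_i$ rather than to $\f^\star$ --- that essentially reproves~\cite[Theorem~4.22]{trivval} in the smooth case --- but the clean unconditional argument is the one above.
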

\begin{proof} By~\cite[Theorem~5.6]{trivval}, points of $X^\div$ are non-negligible, which is a reformulation of the first assertion. Consider next a usc function $\p\colon X^\an\to\R\cup\{-\infty\}$ such that $\p=\f$ on $X^\div$. For each $i$, we then have $\f_i\le\p$ on $X^\div$, and hence on $X^\an$, by~\cite[Theorem~4.22]{trivval}. Taking the supremum over $i$ yields $\f\le\p$ on $X^\an$, and hence $\f^\star\le\p$, since $\p$ is usc. 
\end{proof}

We say that $(X,L)$ has the \emph{envelope property} if $\f^\star$ is $L$-psh for each bounded-above family of $L$-psh functions, using the above notation. It is proved in~\cite[Theorem~5.20]{trivval} that the envelope property holds if $X$ is smooth and $k$ has characteristic zero, or in any characteristic if $\dim X\le 2$~\cite{GJKM}. For later use, we record: 

\begin{lem}\label{lem:incrdiv} Let $(\f_i)$ be a bounded-above, increasing net in $\cE^1$. Set $\f:=\sup_i\f_i$, and assume that $\f^\star$ is $L$-psh (\eg $L$ has the envelope property). Then $\f^\star\in\cE^1$ and $\f_i\to\f^\star$ strongly in $\cE^1$.
\end{lem}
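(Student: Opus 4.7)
The plan is to prove the two assertions in turn, starting with membership in $\cE^1$ and then reducing the strong convergence to weak (pointwise on $\Xdiv$) convergence via the cited monotone-convergence principle. The key external facts are Lemma~\ref{lem:envdiv} (which pins down $\f^\star$ by its values on $\Xdiv$) and the result that strong and weak convergence coincide along monotone nets in $\cE^1$ (Example~12.2 and Theorem~12.5 of~\cite{trivval}).

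For the first step, I would use the monotonicity of the extended energy functional $\en\colon\PSH\to\R\cup\{-\infty\}$ recorded in~\eqref{equ:endef}. Since $(\f_i)$ is an increasing net with $\f_i\le\f\le\f^\star$ for every index $i$, and each $\f_i\in\cE^1$ satisfies $\en(\f_i)>-\infty$, monotonicity yields $\en(\f^\star)\ge\en(\f_i)>-\infty$. Combined with the hypothesis that $\f^\star$ is $L$-psh, this gives $\f^\star\in\cE^1$.

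For the second step, I would first verify the weak convergence $\f_i\to\f^\star$, that is, pointwise convergence on $\Xdiv$. By Lemma~\ref{lem:envdiv}, the restrictions of $\f^\star$ and $\f=\sup_i\f_i$ to $\Xdiv$ coincide; hence for each $v\in\Xdiv$ the monotonicity of the net gives
\[
\f_i(v)\nearrow\sup_i\f_i(v)=\f(v)=\f^\star(v).
\]
Since $(\f_i)$ is a monotone net in $\cE^1$ with limit $\f^\star\in\cE^1$ (by Step 1), invoking the cited equivalence of weak and strong convergence for monotone nets in $\cE^1$ upgrades this to the strong convergence $\f_i\to\f^\star$ in $\cE^1$.

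The only substantive input is the cited monotone-convergence result for $\cE^1$: without it, one would need to prove $\en(\f_i)\to\en(\f^\star)$ directly, and while uscness of $\en$ on $\PSH$ gives $\limsup_i\en(\f_i)\le\en(\f^\star)$ for free, the reverse inequality is exactly the nontrivial content of that principle. Every other ingredient (Lemma~\ref{lem:envdiv}, monotonicity of $\en$, and the definition of $\cE^1$) enters in a routine way.
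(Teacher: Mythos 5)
Your proof is correct and follows essentially the same route as the paper's: monotonicity of $\en$ gives $\f^\star\in\cE^1$, Lemma~\ref{lem:envdiv} gives $\f^\star=\f$ on $X^\div$ and hence weak convergence, and the monotone-net equivalence of weak and strong convergence from~\cite{trivval} finishes the argument. No issues.
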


\begin{proof} The first point holds because $\f^\star\ge\f_i$, and hence $\en(\f^\star)\ge\en(\f_i)>-\infty$. As recalled above, we have $\f^\star=\f$ on $X^\div$. Thus $\f_i\to\f^\star$ pointwise on $X^\div$, \ie weakly in $\cE^1$, and hence strongly as well, since $(\f_i)$ is an increasing net. 
\end{proof}

Given a function $\f\colon\Xan\to\R\cup\{\pm\infty\}$ we define the \emph{psh envelope} pointwise as
\begin{equation*}
  \pp(\f):=\sup\{\p\in\PSH\mid \p\le\f\}.
\end{equation*}
Note that the \emph{Fubini--Study envelope} in~\eqref{equ:FSenv} can be written
\begin{equation*}
  \qq(\f)=\sup\{\p\in\cH_\Q\mid \p\le\f\}=\sup\{\p\in\CPSH\mid \p\le\f\}.
\end{equation*}  
in both cases the convention $\sup\emptyset=-\infty$ applies.
Clearly $\qq(\f)\le\pp(\f)\le\f$, and either $\inf\f=-\infty\equiv\qq(\f)$, or $\qq(\f)$ is (finite-valued and) lsc. In the latter case, \begin{equation}\label{equ:PQ}
\qq(\f)=\pp(\f_\star)
\end{equation}
where $\f_\star$ is the lsc regularization of $\f$ (see~\cite[Lemma~5.19]{trivval}). In particular, $\qq(\f)=\pp(\f)$ when $\f$ is continuous.

\smallskip
The functions $\pp(\f)$ and $\qq(\f)$ are not psh in general. For any $\f\in\Cz$, we have 
$$
\pp(\f)=\qq(\f)\Longleftrightarrow\pp(\f)\in\Cz\Longleftrightarrow\pp(\f)\in\PSH,
$$
and these properties hold if (and only if) $L$ has the envelope property. For the next result, see~\cite[Corollary 5.18]{trivval}.

\begin{lem}\label{lem:envusc} Assume that $(X,L)$ has the envelope property, and consider a usc function $\f\colon\Xan\to\R\cup\{-\infty\}$. Then: 
\begin{itemize}
\item[(i)] either $\pp(\f)\in\PSH$ or $\pp(\f)\equiv-\infty$; 
\item[(ii)] if $\f$ is the pointwise limit of a decreasing net of usc functions $\f_i\colon X^\an\to\R\cup\{-\infty\}$, then $\pp(\f_i)\searrow\pp(\f)$ pointwise on $X^\an$. 
\end{itemize}
\end{lem}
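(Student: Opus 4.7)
The plan for part (i) is to show that $\pp(\f)$ already coincides with its usc regularization, which is $L$-psh by the envelope property. Concretely, I would set $S := \{\psi \in \PSH \mid \psi \le \f\}$, so that $\pp(\f) = \sup S$ with the convention $\sup \emptyset = -\infty$. If $S = \emptyset$, then $\pp(\f) \equiv -\infty$ and there is nothing to prove. Otherwise $S$ is bounded above (since $\f$, being usc on the compact space $\Xan$, is bounded above), and the envelope property makes $\pp(\f)^\star$ an $L$-psh function. The critical observation is that $\f = \f^\star$ since $\f$ is already usc, so applying usc regularization to the pointwise inequality $\pp(\f) \le \f$ yields $\pp(\f)^\star \le \f$. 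Thus $\pp(\f)^\star$ itself lies in $S$, forcing $\pp(\f)^\star \le \sup S = \pp(\f)$; combined with the trivial reverse inequality $\pp(\f) \le \pp(\f)^\star$, this gives $\pp(\f) = \pp(\f)^\star \in \PSH$.

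For part (ii), I would first note that $\f = \inf_i \f_i$ is usc, as the pointwise infimum of a decreasing net of usc functions, so part (i) applies to $\pp(\f)$. Monotonicity of $\pp$ in its argument shows that $\pp(\f_i)$ is a decreasing net with $\pp(\f_i) \ge \pp(\f)$ for every $i$. By part (i), each $\pp(\f_i)$ is either $L$-psh or identically $-\infty$; moreover, once some $\pp(\f_{i_0}) \equiv -\infty$, monotonicity forces $\pp(\f_j) \equiv -\infty$ for all $j \ge i_0$, so by passing to a cofinal subnet I may assume the same alternative holds uniformly. Setting $\psi := \inf_i \pp(\f_i)$, the inequality $\psi \ge \pp(\f)$ is automatic; for the reverse, $\psi \le \inf_i \f_i = \f$ pointwise. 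If every $\pp(\f_i) \in \PSH$, the recalled fact from \S\ref{sec:Lpsh} that a decreasing net in $\PSH$ has a pointwise limit which is either $L$-psh or $\equiv -\infty$ applies: in the $L$-psh case, $\psi \in S$ gives $\psi \le \pp(\f)$; in the $-\infty$ case, $\pp(\f) \le \psi \equiv -\infty$ forces $\pp(\f) \equiv -\infty = \psi$. The remaining alternative that all $\pp(\f_i) \equiv -\infty$ trivially gives $\pp(\f) \le \psi \equiv -\infty$, hence equality.

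The main obstacle is part (i), which genuinely requires the envelope property: without it one cannot upgrade $\pp(\f)^\star$ to an $L$-psh function, and the sandwich argument collapses. Part (ii) is then formal, modulo carefully tracking the dichotomy between $L$-psh and $-\infty$ values along the net; the cofinal subnet trick isolates the two cases cleanly, and the stability of $\PSH$ under decreasing limits (recalled in \S\ref{sec:Lpsh}) closes the $L$-psh case without further effort.
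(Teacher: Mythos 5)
Your proof is correct, and it follows the standard argument (the paper itself defers to~\cite[Corollary~5.18]{trivval}, whose proof runs along exactly these lines). For (i), the sandwich $\pp(\f)\le\pp(\f)^\star\le\f^\star=\f$ combined with the envelope property is the key point, and for (ii) the identification of $\inf_i\pp(\f_i)$ as an $L$-psh (or $\equiv-\infty$) competitor below $\f$ is precisely what is needed.
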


Denote by $\cE^\infty\subset\cE^1$ the space of bounded $L$-psh functions. A  function $\f\in\cE^\infty$ is \emph{regularizable from below} if there exists an increasing net $(\f_j)_j$ in $\CPSH$ that converges to $\f$ in $\PSH$ (\ie pointwise on $X^\div$). Such a net can then be chosen in $\cH_\Q$, and converges strongly to $\f$ in $\cE^1$. We write 
$$
\cE^\infty_\uparrow\subset\cE^\infty
$$ 
for the space of $L$-psh functions regularizable from below. If the envelope property holds, then a bounded function $\f\in\cE^\infty$ lies in $\cE^\infty_\uparrow$ iff its discontinuity locus $\{\f_\star<\f\}$ is pluripolar~\cite[Theorem~11.23]{trivval}. 

\begin{rmk}\label{rmk:pshex} Assuming the envelope property, the inclusion $\cE^\infty_\uparrow\subset\cE^\infty$ is strict if $n\ge 1$, whereas $\CPSH\subset\cE^\infty_\uparrow$ is strict as soon as $n\ge 2$. See Examples~13.23 and~13.25 in~\cite{trivval}. 
\end{rmk}

For any bounded function $\f\in\cL^\infty$, denote by $\qq^\star(\f):=\qq(\f)^\star$ the usc regularization of $\qq(\f)$.

\begin{lem}\label{lem:envreg} Assume that $(X,L)$ has the envelope property. Then:
\begin{itemize}
\item[(i)] $\qq^\star\colon\cL^\infty\to\cL^\infty$ is a projection operator onto $\cE^\infty_\uparrow$; 
\item[(ii)] for all $\f,\p\in\cE^\infty_\uparrow$ we have $\qq^\star(\f\wedge\p)=\pp(\f\wedge\p)$. 
\end{itemize}
 \end{lem}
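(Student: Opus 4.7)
The plan is to split (i) into the two assertions that $\qq^\star$ maps $\cL^\infty$ into $\cE^\infty_\uparrow$ and restricts to the identity on $\cE^\infty_\uparrow$, and then to reduce (ii) via (i) to the claim that $\pp(\f\wedge\p)$ lies in $\cE^\infty_\uparrow$.

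For the first half of (i), I would fix $\f\in\cL^\infty$ and observe that the family $\cF:=\{\psi\in\cH_\Q\mid\psi\le\f\}$ is bounded above by $\sup\f$ and closed under finite max (by definition of $\cH_\Q$), hence directed increasing with pointwise supremum $\qq(\f)$. The envelope property then makes $\qq^\star(\f)$ into a bounded $L$-psh function, and Lemma~\ref{lem:incrdiv} identifies it as the strong $\cE^1$-limit of the net $\cF\subset\cH_\Q\subset\CPSH$, so $\qq^\star(\f)\in\cE^\infty_\uparrow$. For the second half, given $\f\in\cE^\infty_\uparrow$ with an increasing net $(\f_i)$ in $\cH_\Q$ converging to $\f$ in $\PSH$, the result cited in the proof of Lemma~\ref{lem:envdiv} (from \cite[Theorem~4.22]{trivval}) extends $\f_i\le\f$ from $X^\div$ to $X^\an$, so each $\f_i$ competes in $\qq(\f)$ and $\f=\sup_i\f_i\le\qq^\star(\f)$ on $X^\div$. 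Combined with $\qq^\star(\f)\le\f^\star=\f$ globally (using that $\f$ is $L$-psh, hence usc), the injectivity of $\PSH\to\R^{X^\div}$ yields $\qq^\star(\f)=\f$.

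For (ii), set $u:=\pp(\f\wedge\p)$. Since $\f\wedge\p$ is usc and bounded, Lemma~\ref{lem:envusc}(i) makes $u\in\PSH\cap\cL^\infty$, hence usc; the inequality $\qq^\star(\f\wedge\p)\le u$ follows by passing to the usc regularization in $\qq(\f\wedge\p)\le u$. For the reverse, the plan is to use that if $u\in\cE^\infty_\uparrow$, then part~(i) gives $\qq^\star(u)=u$, while monotonicity of $\qq^\star$ applied to $u\le\f\wedge\p$ yields $u=\qq^\star(u)\le\qq^\star(\f\wedge\p)$.

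The main obstacle is therefore to prove $u\in\cE^\infty_\uparrow$. My strategy is to construct an explicit increasing regularization of $u$ from below: given increasing nets $\f_i\nearrow\f$ and $\p_j\nearrow\p$ in $\cH_\Q$, each $\pp(\f_i\wedge\p_j)$ lies in $\CPSH$ by the envelope property applied to the continuous function $\f_i\wedge\p_j$, and the resulting increasing directed net $(\pp(\f_i\wedge\p_j))_{i,j}$ is bounded above by $u$. Lemma~\ref{lem:incrdiv} identifies its strong $\cE^1$-limit as $w:=(\sup_{i,j}\pp(\f_i\wedge\p_j))^\star$, which lies in $\cE^\infty_\uparrow$ by construction, and $w\le u$ is immediate. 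The crux is the reverse inequality: by the $X^\div$-injectivity of $\PSH$ it suffices to check $u\le w$ on $X^\div$, which amounts to approximating, at each divisorial $v$ and each $\eta\in\PSH$ with $\eta\le\f\wedge\p$, the value $\eta(v)$ from below by $\pp(\f_i\wedge\p_j)(v)$ for suitable $i,j$. This is the technical heart of (ii), and I would expect it to require combining the downward approximability at divisorial points afforded by $\f,\p\in\cE^\infty_\uparrow$ (i.e.\ part~(i)(b) applied to $\f$ and to $\p$) with the characterization of $\cE^\infty_\uparrow$ via pluripolar discontinuity loci from \cite[Theorem~11.23]{trivval}, plus a domination/quasi-continuity principle for $L$-psh functions at divisorial valuations.
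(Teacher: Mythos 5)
Your proof of (i) is correct and is a reasonable direct argument (the paper simply cites \cite[Theorem~13.24]{trivval} for this); both halves — that $\qq^\star$ lands in $\cE^\infty_\uparrow$ via Lemma~\ref{lem:incrdiv} applied to the directed family $\{\psi\in\cH_\Q\mid\psi\le\f\}$, and that it is the identity on $\cE^\infty_\uparrow$ via \cite[Theorem~4.22]{trivval} and injectivity of $\PSH\to\R^{X^\div}$ — go through.

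For (ii), however, there is a genuine gap exactly at what you call the technical heart, and it is not a routine verification. Your reduction is sound: everything comes down to showing $u:=\pp(\f\wedge\p)\in\cE^\infty_\uparrow$, and you propose $w:=\bigl(\sup_{i,j}\pp(\f_i\wedge\p_j)\bigr)^\star$ as the candidate regularization from below. The inequality $w\le u$ is trivial, but $u\le w$ amounts to the assertion that the psh envelope $\pp$ commutes with increasing limits (up to usc regularization), and none of the tools you cite delivers this: Lemma~\ref{lem:envusc}~(ii) only treats \emph{decreasing} nets, and the natural attack — given a competitor $\eta\in\PSH$ with $\eta\le\f\wedge\p$, find $i,j$ with $\eta\le\f_i\wedge\p_j$ — fails, because $\eta\le\f$ never localizes to any finite stage of the net $(\f_i)$ (indeed $\eta$ could equal $\f$ itself). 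The best one can extract from $\f,\p\in\cE^\infty_\uparrow$ is that $\eta\le\sup_{i,j}(\f_i\wedge\p_j)$ holds outside a pluripolar set, and converting that into $\eta\le w$ requires precisely the machinery your sketch omits.

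The paper's proof takes a different and shorter route that supplies the missing ingredient. It applies the identity $\qq(h)=\pp(h_\star)$ (equation~\eqref{equ:PQ}) to $h=\f\wedge\p$, notes that $(\f\wedge\p)_\star=\f_\star\wedge\p_\star$ and that $\f_\star=\f$, $\p_\star=\p$ off a pluripolar set (since functions in $\cE^\infty_\uparrow$ have pluripolar discontinuity locus, \cite[Theorem~11.23]{trivval}), and then invokes \cite[Theorem~13.20]{trivval} — envelopes are insensitive, after usc regularization, to modifications on pluripolar sets — to conclude $\qq^\star(\f\wedge\p)=\pp(\f_\star\wedge\p_\star)^\star=\pp(\f\wedge\p)^\star=\pp(\f\wedge\p)$, the last equality because $\f\wedge\p$ is usc. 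If you want to salvage your double-net construction, you would need this same pair of facts (pluripolarity of the discontinuity loci plus \cite[Theorem~13.20]{trivval}) to pass from "$\eta\le\sup_{i,j}(\f_i\wedge\p_j)$ quasi-everywhere" to "$\eta\le w$"; as written, that passage is asserted rather than proved.
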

\begin{proof} (i) follows from~\cite[Theorem~13.24]{trivval}. Pick $\f,\p\in\cE^\infty_\uparrow$. By~\eqref{equ:PQ} we have $\qq(\f\wedge\p)=\pp((\f\wedge\p)_\star)=\pp(\f_\star\wedge\p_\star)$. Since $\f,\p$ are regularizable from below, their discontinuity locus is pluripolar, \ie $\f_\star=\f$, $\p_\star=\p$, and hence $\f_\star\wedge\p_\star=\f\wedge\p$, outside a pluripolar set. By~\cite[Theorem~13.20]{trivval}, it follows that $\pp(\f_\star\wedge\p_\star)^\star=\pp(\f\wedge\p)^\star$, which coincides with $\pp(\f\wedge\p)$ since $\f\wedge\p$ is usc (see Lemma~\ref{lem:envusc}). This proves (ii). 
\end{proof}

%
\subsection{The extended energy}\label{sec:exten}
Recall from~\cite[\S 8]{trivval}\footnote{In \loccit, the extended energy was simply denoted by $\en(\f)$.} that the \emph{extended Monge--Amp\`ere energy} of an arbitrary function $\f\colon X^\an\to\R\cup\{\pm\infty\}$ is defined as 
\begin{equation}\label{equ:engen}
\ten(\f):=\sup\{\en(\p)\mid \p\in\PSH, \p\le\f\}\in\R\cup\{\pm\infty\}. 
\end{equation}
Note that $\ten(\f)=\ten(\pp(\f))$, since any $\p\in\PSH$ satisfies $\p\le\f\Leftrightarrow\p\le\pp(\f)$. If $\f\colon X^\an\to\R\cup\{+\infty\}$ is lsc (and hence bounded below), then $\pp(\f)=\qq(\f)$, see~\eqref{equ:PQ}, and hence 
\begin{equation}\label{equ:enextQ}
\ten(\f)=\ten(\pp(\f))=\ten(\qq(\f))=\sup\{\en(\p)\mid \p\in\cH_\R,\, \p\le\f\}. 
\end{equation}
A Dini-type argument (see~\cite[Proposition~8.3]{trivval}) further yields: 

\begin{lem}\label{lem:exten} The functional $\f\mapsto\ten(\f)$ is continuous along increasing nets of bounded-below lsc functions. 
\end{lem}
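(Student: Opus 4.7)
The plan is to combine the characterization
$\ten(\f)=\sup\{\en(\p)\mid\p\in\cH_\R,\ \p\le\f\}$ from~\eqref{equ:enextQ}, valid since each $\f$ in sight is lsc and bounded below, with a compactness-based Dini-type argument that exploits the continuity of Fubini--Study functions against the lower semicontinuity of the $\f_i$. Monotonicity is immediate: since $\f_i\le\f$ we have $\ten(\f_i)\le\ten(\f)$, and since $\f_i\le\f_j$ for $i\le j$ the net $(\ten(\f_i))$ is increasing, so $L:=\sup_i\ten(\f_i)\le\ten(\f)$, and the task is the reverse inequality.

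First, I would pick, for an arbitrary $\a<\ten(\f)$, a Fubini--Study function $\p\in\cH_\R$ with $\p\le\f$ and $\en(\p)>\a$, which is possible by~\eqref{equ:enextQ}. Fix $\d>0$. For each $v\in X^\an$, we have $\p(v)-\d<\p(v)\le\f(v)=\sup_i\f_i(v)$, so some $\f_{i(v)}(v)>\p(v)-\d$. Since $\p$ is continuous and $\f_i$ is lsc, the function $\f_i-\p$ is lsc, so $U_i:=\{\f_i-\p>-\d\}$ is open in the compact space $X^\an$. The family $(U_i)$ is increasing (as $(\f_i)$ is increasing) and covers $X^\an$ by the pointwise argument above; compactness of $X^\an$ combined with the directedness of the index set then yields a single $i_0$ with $U_{i_0}=X^\an$, i.e.\ $\f_{i_0}\ge\p-\d$ pointwise.

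Since $\cH_\R$ is stable under translation by real constants we have $\p-\d\in\cH_\R$, and the energy satisfies $\en(\p-\d)=\en(\p)-\d$ (as $\tfrac{d}{dt}\en((1-t)\p+t(\p-\d))=-\d\int\MA(\cdot)=-\d$). Hence for every $i\ge i_0$,
\[
\ten(\f_i)\ge\en(\p-\d)=\en(\p)-\d>\a-\d,
\]
which gives $L\ge\a-\d$. Letting $\d\to 0$ and then $\a\nearrow\ten(\f)$ (allowing $\ten(\f)=+\infty$ as well, in which case the same argument shows $L\ge M$ for every $M$) yields $L\ge\ten(\f)$, completing the proof.

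I do not expect a serious obstacle: the genuine content is the classical Dini phenomenon (continuous function dominated by the sup of an lsc increasing net is dominated, up to $\d$, by one member of the net) applied to the compact Berkovich space $X^\an$. The only point requiring a bit of care is that Fubini--Study functions are continuous while the $\f_i$ are only lsc, which is precisely what makes the set $U_i$ open; if one tried to run the argument with a merely lsc $\p$ in place of a Fubini--Study function, the $U_i$ would fail to be open and the compactness argument would break down, which is why the characterization~\eqref{equ:enextQ} through elements of $\cH_\R$ is essential.
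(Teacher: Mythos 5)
Your proof is correct and is exactly the Dini-type argument the paper has in mind (it invokes this lemma by citing \cite[Proposition~8.3]{trivval} and describing it as "a Dini-type argument"): monotonicity of $\ten$ gives $\sup_i\ten(\f_i)\le\ten(\f)$, and the reverse inequality follows from~\eqref{equ:enextQ} together with compactness of $X^\an$ and the openness of $\{\f_i-\p>-\d\}$ for continuous $\p\in\cH_\R$, just as you argue. No gaps; the handling of the constant shift $\p-\d$ and of the case $\ten(\f)=+\infty$ is also correct.
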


Following~\cite{trivadd}, we say that $(X,L)$ has the \emph{weak envelope property} if there exists a birational model $\pi\colon X'\to X$ and an ample $\Q$-line bundle $L'$ on $X'$ such that $\pi^\star L\le L'$ and $(X',L')$ has the envelope property. This is for instance the case whenever $\charac k=0$, or if $\dim X\le 2$. 

\begin{lem}\label{lem:envdiv2} Assume $(X,L)$ has the weak envelope property, and pick any bounded-above family $(\f_i)$ of $L$-psh functions. Set $\f:=\sup_i\f_i$. Then:
\begin{itemize}
\item[(i)] $\f^\star=\f$ on $X^\lin$;
\item[(ii)] if $\f$ is further bounded below, then $\ten(\f^\star)=\ten(\f)$. 
\end{itemize}

\end{lem}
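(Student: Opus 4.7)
The strategy is to transfer both assertions to the model $(X',L')$, where the full envelope property holds, via the birational morphism $\pi\colon X'\to X$. Since $\pi^\star L\le L'$, each pullback $\f'_i:=\f_i\circ\pian$ is $L'$-psh on $\Xpan$; set $\f':=\sup_i\f'_i=\f\circ\pian$ with usc regularization $(\f')^\star$. The envelope property on $(X',L')$ guarantees that $\{(\f')^\star>\f'\}$ is pluripolar in $\Xpan$, so in particular $(\f')^\star=\f'$ on the non-pluripolar set $\Xplin$.

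For part~(i), introduce the pushforward
\begin{equation*}
\tilde\f(v):=\sup_{v'\in(\pian)^{-1}(v)}(\f')^\star(v')\quad\text{on }\Xan.
\end{equation*}
Since $\pian$ is continuous with compact source and Hausdorff target, the sublevel set $\{(\f')^\star\ge c\}$ is closed hence compact, and $\{\tilde\f\ge c\}=\pian(\{(\f')^\star\ge c\})$ is closed in $\Xan$; thus $\tilde\f$ is usc. Because $\pian$ is surjective and $(\f')^\star\ge\f'=\f\circ\pian$, we have $\tilde\f\ge\f$ pointwise, so $\f^\star\le\tilde\f$ by minimality of $\f^\star$. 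For $v\in\Xlin$, the birationality of $\pi$ forces the fiber $(\pian)^{-1}(v)$ to reduce to the unique lift $v'\in\Xpval$ (regarding $v$ as a valuation on $k(X')=k(X)$); since linear growth is birationally invariant, $v'\in\Xplin$, and hence $\tilde\f(v)=(\f')^\star(v')=\f'(v')=\f(v)$. Combining with $\f\le\f^\star\le\tilde\f$ gives $\f^\star(v)=\f(v)$, proving~(i).

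For part~(ii), $\ten(\f)\le\ten(\f^\star)$ is immediate from $\f\le\f^\star$. Conversely, any $\p\in\PSH$ with $\p\le\f^\star$ and $\en(\p)>-\infty$ satisfies $\p\le\f^\star=\f$ on $\Xlin\supset\Xdiv$ by part~(i). Approximating $\p$ from below by $\psi_j\in\cH_\R$ with $\en(\psi_j)\to\en(\p)$ (strong density of $\cH_\Q$ in $\cE^1$), we have $\psi_j\le\p\le\f$ on $\Xdiv$. Upgrading this to $\psi_j\le\f$ on all of $\Xan$, via continuity of $\psi_j$ together with the fact that each $\f_i$ is $L$-psh and hence determined by its $\Xdiv$-restriction (Corollary~4.23 of~\cite{trivval}), formula~\eqref{equ:enextQ} then yields $\en(\psi_j)\le\ten(\f)$. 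Passing to the limit gives $\en(\p)\le\ten(\f)$, hence $\ten(\f^\star)\le\ten(\f)$, completing~(ii).

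The main obstacle is the upgrade $\psi_j\le\f$ from $\Xdiv$ to $\Xan$ in part~(ii): because $\f$ is only lsc (not usc), this does not follow automatically from continuity of $\psi_j$ and density of $\Xdiv$ alone, and requires exploiting the directed supremum structure of the family $(\f_i)$, reducing a finite-subfamily comparison to the usc-regularization analysis carried out in part~(i). Once that technical step is in place, the rest of the argument is a routine combination of the pushforward construction, the envelope property on $(X',L')$, and the birational invariance of linear growth.
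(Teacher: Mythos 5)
Your part~(i) is essentially correct and takes the same route as the paper: transfer to the model $(X',L')$, use the envelope property there to get $(\f')^\star=\f'$ off a pluripolar set, and exploit that the fiber of $\pian$ over a valuation of linear growth is a single point of $\Xplin$. Your explicit usc pushforward $\tilde\f$ is a clean, self-contained substitute for the paper's appeal to~\cite[Lemma~5.4]{trivval}.

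Part~(ii), however, has a genuine gap --- in fact two. First, a general $\p\in\cE^1$, or even a general bounded $\p\in\cE^\infty$, cannot be approximated \emph{from below} by functions in $\cH_\R$ with convergent energies: $L$-psh functions are by definition decreasing limits of Fubini--Study functions, and the class $\cE^\infty_\uparrow$ of functions regularizable from below is a strict subset of $\cE^\infty$ as soon as $n\ge 1$ (Remark~\ref{rmk:pshex}). Strong density of $\cH_\Q$ in $\cE^1$ gives approximation from above, not from below, so the sequence $(\psi_j)$ you invoke need not exist. Second, and more seriously, the ``upgrade'' of $\psi\le\f$ from $X^\div$ to $X^\an$ is false in general: the function $\f^\star$ itself (psh under the envelope property) satisfies $\f^\star=\f$ on $X^\div$ by Lemma~\ref{lem:envdiv}, yet $\f^\star>\f$ wherever $\f$ fails to be usc. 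The comparison principle \cite[Theorem~4.22]{trivval} only allows one to check an inequality ``psh $\le$ usc'' on $X^\div$, and $\f=\sup_i\f_i$ is not usc. The assertion that the energies nevertheless agree --- that $\ten$ does not see the negligible set $\{\f<\f^\star\}$ --- is precisely the nontrivial content of~(ii); the paper's proof obtains it by citing \cite[Theorem~B]{trivadd}, after noting that $\pp(\f)=\f$ because $\f$ is a supremum of psh functions. Your closing paragraph acknowledges the obstacle, but the proposed fix (``exploiting the directed supremum structure \dots reducing to the usc-regularization analysis of part~(i)'') does not address it: part~(i) only controls $\f^\star-\f$ on $X^\lin$, whereas the discrepancy lives on the pluripolar set $X^\an\smallsetminus X^\lin$, which is exactly where the energy comparison must be carried out.
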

\begin{proof} Point (i) means that each $v\in X^\lin$ is non-negligible. Use the previous notation. Since $\pi$ is birational, we have $\pi^{-1}(\{v\})=\{v'\}$ with $v'\in X'^\lin$. By~\cite[Lemma~5.4]{trivval}, it suffices to show that $v'$ is non-negligible, and this follows from~\cite[Theorem~13.17]{trivval}, which applies because $(X',L')$ has the envelope property. Finally, (ii) follows from~\cite[Theorem~B]{trivadd}, as the assumption guarantee that $\pp(\f)=\f$. 
\end{proof}

%
%
%
%

\section{Darvas metrics}\label{sec:Darvas}
In this section, we study the metrics on the spaces $\cH_\R$, $\cE^1$ and $\cM^1$ induced by the $\dd_1$-pseudometric of $\cN_\R$, and prove the main part of Theorem~B. 

%
%

\subsection{Volume vs.~energy}\label{sec:volen}

The next result will be a key tool in what follows. 

\begin{thm}\label{thm:volen}
For any $\chi\in\cN_\R$ we have  $\vol(\chi)=\ten(\FS(\chi))$. 
\end{thm}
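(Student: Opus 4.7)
I propose to prove the identity by a three-step reduction, starting from the test configuration case and extending upward to all of $\cN_\R$.

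First, for a rational test configuration $\chi\in\cT_\Q$, the identity is essentially known. By Proposition~\ref{prop:FSft} we have $\FS(\chi)\in\cH_\Q$, so formula~\eqref{equ:testen} gives $\en(\FS(\chi))=(\bar\cL^{n+1})/((n+1)(L^n))$ for the canonical compactification $(\bar\cX,\bar\cL)$ of the associated test configuration; the Riemann--Roch computation from~\cite{BHJ1} recalled in the introduction identifies this intersection number with $\vol(\chi)$. Since $\FS(\chi)\in\CPSH\subset\PSH$, monotonicity of $\en$ on $\PSH$ (see~\eqref{equ:endef}) forces $\ten(\FS(\chi))=\en(\FS(\chi))$. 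The scaling equivariances $\vol(t\chi)=t\vol(\chi)$ (Proposition~\ref{P401}(iv)) and $\en(t\cdot\f)=t\en(\f)$ (end of~\S\ref{sec:MAen}) together with $\FS(t\chi)=t\cdot\FS(\chi)$ then propagate the identity from $\cT_\Z$ to $\cT_\Q$.

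Second, I would extend to $\chi\in\cT_\R$ by a real-parameter perturbation. By Lemma~\ref{lem:nft}, pick $d$ divisible enough so that $\chi$ is represented by a norm on $R^{(d)}$ generated in degree $1$, together with a $\chi$-orthogonal basis $(s_i)_{1\le i\le N}$ of $R^{(d)}_1$ with values $\xi_i:=\chi(s_i)\in\R$. For each $\eta\in\R^N$, let $\chi_\eta\in\cN_\R(R^{(d)})$ denote the quotient of the monomial norm of Example~\ref{exam:pdisc} under $k[z_1,\dots,z_N]\to R^{(d)}$, $z_i\mapsto s_i$; then $\chi_\xi=\chi$, while $\chi_\eta\in\cT_\Q$ whenever $\eta\in\Q^N$. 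The explicit quotient-norm formula yields $\dd_\infty(\chi_\eta|_{R_{md}},\chi_{\eta'}|_{R_{md}})\le m\|\eta-\eta'\|_\infty$ for all $m$, hence $\dd_\infty(\chi_\eta,\chi_{\eta'})\le d^{-1}\|\eta-\eta'\|_\infty$ by~\eqref{equ:GIas}; combining with $\dd_1\le\dd_\infty$ and~\eqref{equ:vollip} makes $\eta\mapsto\vol(\chi_\eta)$ continuous. On the other side, Lemma~\ref{lem:FSft} gives $\FS(\chi_\eta)=d^{-1}\max_i\{\log|s_i|+\eta_i\}\in\CPSH$, which is $d^{-1}$-Lipschitz in $\eta$ for the supnorm, so $\eta\mapsto\en(\FS(\chi_\eta))=\ten(\FS(\chi_\eta))$ is continuous as well. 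Since the two sides agree on the dense set $\Q^N$ by Step~1, they agree on all of $\R^N$, establishing the identity on $\cT_\R$.

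Third, for a general $\chi\in\cN_\R$, take the canonical approximants $\chi_d\in\cT_\R$. Theorem~\ref{thm:Fuj} gives $\dd_1(\chi_d,\chi)\to 0$, whence $\vol(\chi_d)\to\vol(\chi)$ by the $\dd_1$-Lipschitz property~\eqref{equ:vollip}. On the other hand, the functions $\FS(\chi_d)=\FS_d(\chi)\in\cH_\R\subset\CPSH$ form an increasing net of continuous (in particular lsc), uniformly bounded functions with pointwise supremum $\FS(\chi)$ (Definition~\ref{defi:FSop}); Lemma~\ref{lem:exten} then yields $\ten(\FS(\chi_d))\to\ten(\FS(\chi))$. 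Since $\vol(\chi_d)=\ten(\FS(\chi_d))$ for each $d$ by Step~2, passing to the limit completes the proof. The main obstacle is Step~2, where one must simultaneously ensure continuity in the real parameter $\eta$ of both the volume (via the quotient-norm estimate plus $\dd_1$-Lipschitz property of $\vol$) and the energy (via supnorm-continuity of $\en$ on $\CPSH$, from~\cite{trivval}), so that the identity can be transferred from $\Q^N$ to $\R^N$.
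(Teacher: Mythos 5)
Your proposal is correct, and its skeleton (base case on integral test configurations via the Duistermaat--Heckman/Riemann--Roch computation of~\cite{BHJ1}, then passage to a general norm through the canonical approximants using Theorem~\ref{thm:Fuj} on the volume side and Lemma~\ref{lem:exten} on the energy side) is the same as the paper's. The one place where you diverge is the treatment of real-valued norms: you prove the identity on all of $\cT_\R$ by embedding $\chi$ into the family $\eta\mapsto\chi_\eta$ of quotients of monomial norms, checking that both $\vol(\chi_\eta)$ and $\ten(\FS(\chi_\eta))=\en(\FS(\chi_\eta))$ are continuous in $\eta$, and transferring the identity from $\Q^N$ to $\R^N$. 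This works (the Lipschitz estimate for the quotient norms and the supnorm-continuity of $\en$ on $\CPSH$ are both sound), but the paper sidesteps this step entirely with the round-down trick: since $\dd_\infty(\chi,\lfloor\chi\rfloor)=0$ (Example~\ref{exam:dinftyround}), one has $\FS(\chi)=\FS(\lfloor\chi\rfloor)$ and $\vol(\chi)=\vol(\lfloor\chi\rfloor)$, so one may assume $\chi\in\cN_\Z$ from the outset, and then the canonical approximants automatically lie in $\cT_\Z$, so only the integral test configuration case is ever needed. Your perturbation argument is thus a legitimate, self-contained alternative for the $\cT_\R$ case --- and it has the minor virtue of proving that case directly rather than bypassing it --- but it costs an extra page of estimates that the round-down observation renders unnecessary.
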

Here $\FS(\chi)$ is bounded and lsc, but not $L$-psh in general, and $\ten(\FS(\n))$ is its extended energy (see~\S\ref{sec:exten}). 

\begin{proof}    Consider the round-down $\n':=\lfloor\chi\rfloor\in\cN_\Z$. Then $\dd_\infty(\n,\n')=0$ (see Example~\ref{exam:dinftyround}), and hence $\FS(\n)=\FS(\n')$, $\vol(\n)=\vol(\n')$. As a result, we may and do assume $\n\in\cN_\Z$.    By Theorem~\ref{thm:Fuj}, the canonical approximants $\n_d\in\cT_\Z$ satisfy $\vol(\n_d)\to\vol(\n)$. On the other hand, $\FS(\n_d)=\FS_d(\n)$ increases pointwise to $\FS(\n)$ (see~\eqref{equ:FScanapp}), and hence $\en(\FS(\n_d))=\ten(\FS(\n_d))\to\ten(\FS(\n))$, by Lemma~\ref{lem:exten}. 

We are thus reduced to the case $\n\in\cT_\Z$, which is a consequence of~\cite{BHJ1}. Indeed, $\chi$ corresponds to an ample test configuration $(\cX,\cL)$ for $(X,L)$ under the Rees correspondence (see Appendix~\ref{sec:tc}). By~\cite[Proposition 3.12]{BHJ1}, the spectral measure $\sigma(\chi)$ coincides with the Duistermaat--Heckman measure $\DH(\cX,\cL)$, and passing to the barycenters yields 
$$
\vol(\n)=\frac{(\bar\cL^{n+1})}{(n+1)(L^n)}, 
$$
by~\cite[Lemma~7.3]{BHJ1}. By~\eqref{equ:testen}, the right-hand side is also equal to $\en(\FS(\n))=\ten(\FS(\n))$, and the result follows. 
\end{proof}

\begin{cor}\label{cor:homequiv} Any norm $\n\in\cN_\R$ is asymptotically equivalent to its homogenization, \ie $\n\sim\n^\hom$. 
\end{cor}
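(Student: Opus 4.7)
The plan is essentially to chain together three results already established in the excerpt. First I would observe that, by construction, $\chi \le \chi^\hom$ (the homogenization dominates $\chi$), so Lemma~\ref{lem:inequiv} reduces the asymptotic equivalence $\chi \sim \chi^\hom$ to the single equality of volumes
\[
\vol(\chi) = \vol(\chi^\hom).
\]

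Next I would apply Theorem~\ref{thm:volen} to both sides, rewriting this as
\[
\ten(\FS(\chi)) = \ten(\FS(\chi^\hom)).
\]
But Proposition~\ref{prop:FSH} says precisely that the Fubini--Study operator is invariant under homogenization, i.e.\ $\FS(\chi) = \FS(\chi^\hom)$. Therefore both extended energies coincide trivially, the volume equality holds, and the conclusion follows from Lemma~\ref{lem:inequiv}.

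There is really no obstacle: the content of the corollary has been placed entirely in the three prior results (the comparison $\chi \le \chi^\hom$, the identity $\vol = \ten \circ \FS$, and the invariance $\FS(\chi)=\FS(\chi^\hom)$). The role of this corollary is to package them into the statement that will be needed downstream, namely that homogenization descends to a well-defined map on the Hausdorff quotient of $(\cN_\R, \dd_1)$ and, more generally, of $(\cN_\R, \dd_p)$ for any $p \in [1,\infty)$.
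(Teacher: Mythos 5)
Your proposal is correct and is exactly the paper's own argument: reduce via $\n\le\n^\hom$ and Lemma~\ref{lem:inequiv} to the volume equality, then conclude from Theorem~\ref{thm:volen} together with $\FS(\n)=\FS(\n^\hom)$ (Proposition~\ref{prop:FSH}). No gaps.
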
 
\begin{proof} Since $\n\le\n^\hom$, it suffices to show $\vol(\n)=\vol(\n^\hom)$ (see Lemma~\ref{lem:inequiv}). This follows from Theorem~\ref{thm:volen}, since $\FS(\n)=\FS(\n^\hom)$ by Proposition~\ref{prop:FSH}. 
\end{proof}

\begin{cor}\label{cor:volIN} For any $\f\in\cL^\infty$ we have $\vol(\IN(\f))=\ten(\qq(\f))=\ten(\f_\star)$. 
\end{cor}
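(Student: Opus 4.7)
The corollary is a direct consequence of results already established in the excerpt, so the plan is essentially to chain them together.

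First I would apply Theorem~\ref{thm:volen} to the homogeneous norm $\chi := \IN(\f) \in \cN_\R^\hom$, which gives
\[
\vol(\IN(\f)) = \ten(\FS(\IN(\f))).
\]
Next, Proposition~\ref{prop:FSIN} identifies $\FS(\IN(\f)) = \qq(\f)$, so
\[
\vol(\IN(\f)) = \ten(\qq(\f)),
\]
which establishes the first equality.

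For the second equality $\ten(\qq(\f)) = \ten(\f_\star)$, I would invoke~\eqref{equ:PQ}, which says $\qq(\f) = \pp(\f_\star)$ (this applies since $\f\in\cL^\infty$ has $\inf\f>-\infty$, so $\qq(\f)$ is finite-valued and lsc, not $\equiv-\infty$). Then by the basic identity $\ten(g) = \ten(\pp(g))$ recorded just after the definition~\eqref{equ:engen} of the extended energy (valid because any $\p\in\PSH$ satisfies $\p\le g \Leftrightarrow \p \le \pp(g)$), we conclude
\[
\ten(\qq(\f)) = \ten(\pp(\f_\star)) = \ten(\f_\star).
\]

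There is no real obstacle here: all three ingredients (Theorem~\ref{thm:volen}, Proposition~\ref{prop:FSIN}, and the envelope identity~\eqref{equ:PQ}) are already available, and the argument is a three-line chain of equalities. The only minor point to verify is that $\qq(\f)$ is not identically $-\infty$, so that~\eqref{equ:PQ} applies — but this is automatic because $\f$ is bounded, hence $\qq(\f)\ge -C$ for some constant $C$.
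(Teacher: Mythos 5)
Your proof is correct and follows essentially the same route as the paper: both rest on Theorem~\ref{thm:volen} applied to $\IN(\f)$, the identity $\FS(\IN(\f))=\qq(\f)$ from Proposition~\ref{prop:FSIN}, and the envelope identities around~\eqref{equ:PQ}. The only (immaterial) organizational difference is that the paper first replaces $\f$ by $\f_\star$ via $\IN(\f)=\IN(\f_\star)$ and then quotes~\eqref{equ:enextQ}, whereas you handle $\f_\star$ directly through $\qq(\f)=\pp(\f_\star)$ and $\ten=\ten\circ\pp$.
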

\begin{proof} Since $\IN(\f_\star)=\IN(\f)$ (see~\eqref{equ:INlsc}), we may assume that $\f$ is lsc, and hence $\ten(\f)=\ten(\qq(\f))$, see~\eqref{equ:enextQ}. Now $\FS(\IN(\f))=\qq(\f)$, by Proposition~\ref{prop:FSIN}, and we conclude by Theorem~\ref{thm:volen}. 
\end{proof}
%
%
\subsection{The Darvas metric on $\cH_\R$}\label{sec:DarvasHR}
Recall from Corollary~\ref{cor:FSbij} that the operators 
$$
\FS\colon(\cT_\R,\dd_\infty)\twoheadrightarrow(\cH_\R,\dd_\infty),\quad\IN\colon(\cH_\R,\dd_\infty)\hto(\cT_\R,\dd_\infty)
$$
are isometries such that $\FS\circ\IN=\id$.  

For any $p\in[1,\infty]$, the pseudo-metric $\dd_p$ on $\cT_\R\subset\cN_\R$ satisfies $\dd_p\le\dd_\infty$; it is thus constant along the fibers of $\FS$, and hence descends to a pseudo-metric $\dd_p$ on $\cH_\R$, such that 
$$
\FS\colon(\cT_\R,\dd_p)\twoheadrightarrow(\cH_\R,\dd_p),\quad\IN\colon(\cH_\R,\dd_p)\hto(\cT_\R,\dd_p)
$$
are isometries.  Theorem~A asserts that $\dd_p$ is a metric on $\cH_\R$. Since $\dd_p\ge\dd_1$, this follows from the following more precise result. 

\begin{thm}\label{thm:DarvasH} The pseudo-metric $\dd_1$ on $\cH_\R$ is a metric, uniquely characterized by 
\begin{equation}\label{equ:d1en}
\f\ge\p\Longrightarrow\dd_1(\f,\p)=\en(\f)-\en(\p);
\end{equation} 
\begin{equation}\label{equ:d1quasi}
\dd_1(\f,\p)=\inf\{\dd_1(\f,\tau)+\dd_1(\tau,\p)\mid \tau\in\cH_\R, \tau\le\f\wedge\p\}, 
\end{equation}
for all $\f,\p\in\cH_\R$. 
\end{thm}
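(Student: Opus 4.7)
The plan is to reduce the whole statement to the explicit formula
\begin{equation*}
  \dd_1(\f,\p)=\en(\f)+\en(\p)-2\ten(\f\wedge\p)\qquad(\dagger)
\end{equation*}
for $\f,\p\in\cH_\R$, from which the two characterizing properties and the separation property will both follow. To establish $(\dagger)$, I would lift $\f,\p$ to their preferred homogeneous representatives $\chi:=\IN(\f)$, $\chi':=\IN(\p)$ in $\cT_\R^\hom$ via Corollary~\ref{cor:FSbij}. Lemma~\ref{lem:d1vol} gives $\dd_1(\chi,\chi')=\vol(\chi)+\vol(\chi')-2\vol(\chi\wedge\chi')$, and Theorem~\ref{thm:volen} combined with the equality $\en=\ten$ on $\CPSH$ identifies $\vol(\chi)=\en(\f)$ and $\vol(\chi')=\en(\p)$. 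The remaining ingredient is the identity $\vol(\chi\wedge\chi')=\ten(\f\wedge\p)$.

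One inequality is easy: $\chi\wedge\chi'\le\chi,\chi'$ implies $\FS(\chi\wedge\chi')\le\f\wedge\p$, so Theorem~\ref{thm:volen} yields $\vol(\chi\wedge\chi')=\ten(\FS(\chi\wedge\chi'))\le\ten(\f\wedge\p)$. For the reverse inequality, I would pick any $\tau\in\cH_\R$ with $\tau\le\f\wedge\p$, form $\nu:=\IN(\tau)$, and use monotonicity of $\IN$ to get $\nu\le\IN(\f)\wedge\IN(\p)=\chi\wedge\chi'$; Corollary~\ref{cor:volIN} then gives $\en(\tau)=\ten(\qq(\tau))=\vol(\nu)\le\vol(\chi\wedge\chi')$ by monotonicity of $\vol$. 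Since $\f\wedge\p$ is continuous, hence lsc, \eqref{equ:enextQ} lets me take the supremum over such $\tau$ to conclude $\ten(\f\wedge\p)\le\vol(\chi\wedge\chi')$, completing $(\dagger)$.

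Once $(\dagger)$ is in hand, property~\eqref{equ:d1en} is immediate: for $\f\ge\p$ we have $\f\wedge\p=\p\in\CPSH$, so $\ten(\p)=\en(\p)$. Property~\eqref{equ:d1quasi} follows by computing $\dd_1(\f,\tau)+\dd_1(\tau,\p)=\en(\f)+\en(\p)-2\en(\tau)$ for $\tau\in\cH_\R$ with $\tau\le\f\wedge\p$ (using~\eqref{equ:d1en} twice), and then infimizing using~\eqref{equ:enextQ}. Uniqueness of a pseudo-metric satisfying both properties is then routine: any such metric is determined on ordered pairs by~\eqref{equ:d1en}, and then on arbitrary pairs by~\eqref{equ:d1quasi} combined with the same formula.

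The main obstacle is the separation property. From $(\dagger)$ together with $\ten(\f\wedge\p)\le\min\{\en(\f),\en(\p)\}$, $\dd_1(\f,\p)=0$ forces $\en(\f)=\en(\p)=\ten(\f\wedge\p)$. I would then invoke the envelope-theoretic machinery of~\cite{trivval}: produce $\tau\in\cE^1$ with $\tau\le\f\wedge\p$ and $\en(\tau)=\ten(\f\wedge\p)=\en(\f)$---morally the psh envelope of $\f\wedge\p$, possibly after usc regularization---and apply strict monotonicity of $\en$ on $\cE^1$ along ordered pairs to conclude $\tau=\f$ on $\Xdiv$ and symmetrically $\tau=\p$ on $\Xdiv$, hence $\f=\p$ everywhere (since $L$-psh functions are determined by their restriction to $\Xdiv$). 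I expect the delicate point to be executing this step without assuming the envelope property on $(X,L)$, since then the candidate envelope need not be $L$-psh and one must appeal to the more general strict-monotonicity or domination principles available in~\cite{trivval}.
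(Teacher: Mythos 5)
Your derivation of the formula $\dd_1(\f,\p)=\en(\f)+\en(\p)-2\ten(\f\wedge\p)$ is correct and complete: the identification $\vol(\IN(\f))=\en(\f)$ via Theorem~\ref{thm:volen}, and the two-sided comparison $\vol(\chi\wedge\chi')=\ten(\f\wedge\p)$ using $\IN(\f\wedge\p)=\IN(\f)\wedge\IN(\p)$, monotonicity of $\vol$, Corollary~\ref{cor:volIN} and~\eqref{equ:enextQ}, all go through. This is essentially the paper's route (lift by $\IN$, apply~\eqref{equ:d1volgr} and Theorem~\ref{thm:volen}), except that the paper obtains~\eqref{equ:d1quasi} by applying Theorem~\ref{thm:Fuj} to the canonical approximants of $\chi\wedge\chi'$, and only later (Theorem~\ref{thm:Darvas}~(i)) deduces the explicit formula from~\eqref{equ:d1en} and~\eqref{equ:d1quasi}; you invert the order and thereby avoid Theorem~\ref{thm:Fuj} here, which is a perfectly good, arguably cleaner, variant. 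Your deduction of~\eqref{equ:d1en}, \eqref{equ:d1quasi} and of uniqueness from the formula is also fine.

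The genuine gap is in the separation step, and it sits exactly where you suspected. From $\dd_1(\f,\p)=0$ you correctly get $\en(\f)=\en(\p)=\ten(\f\wedge\p)$, i.e.\ a sequence $(\tau_i)$ in $\cH_\R$ with $\tau_i\le\f\wedge\p$ and $\en(\tau_i)\to\en(\f)=\en(\p)$. But your plan then passes to a limit object $\tau\in\cE^1$ realizing the envelope, ``morally $\pp(\f\wedge\p)$, possibly after usc regularization''; without the envelope property this object need not be $L$-psh (that is precisely the content of the envelope property for the continuous function $\f\wedge\p$), so the strict-monotonicity argument has nothing to apply to. The fix is to not take the limit at all: the relevant result in~\cite{trivval} (Proposition~12.6 there, which is what the paper invokes) is a quantitative statement asserting that a sequence $(\tau_i)$ in $\cE^1$ with $\tau_i\le\f$ and $\en(\tau_i)\to\en(\f)$ converges strongly to $\f$ in $\cE^1$. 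Applied to your maximizing sequence, which is dominated by both $\f$ and $\p$, it gives $\tau_i\to\f$ and $\tau_i\to\p$ strongly, whence $\f=\p$ by Hausdorffness of the strong topology. Alternatively, separation follows directly from the H\"older estimate $\bii(\f,\p)\lesssim\dd_1(\f,\p)^{\a}\max\{\dd_1(\f),\dd_1(\p)\}^{1-\a}$ of Lemma~\ref{lem:d1bii}, since $\bii$ separates points. Either route closes the argument without any envelope hypothesis; as written, yours does not.
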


\begin{proof} We first prove that $\dd_1$ satisfies~\eqref{equ:d1en} and~\eqref{equ:d1quasi}, which will take care of uniqueness. Pick $\f,\p\in\cH_\R$, and set $\n:=\IN(\f)$, $\n':=\IN(\p)$. Then $\FS(\n)=\f$, $\FS(\n')=\p$, and Theorem~\ref{thm:volen} implies $\vol(\n)=\en(\f)$, $\vol(\n')=\en(\p)$. If $\f\ge\p$, then $\n\ge\n'$, and~\eqref{equ:d1volgr} yields
$$
\dd_1(\f,\p)=\dd_1(\n,\n')=\vol(\n)-\vol(\n')=\en(\f)-\en(\p), 
$$
which proves~\eqref{equ:d1en}. Next, pick $\e>0$. Applying Theorem~\ref{thm:Fuj} to $\n\wedge\n'$ yields $\n''\in\cT_\R$ such that $\n''\le\n\wedge\n'$ and $\dd_1(\chi'',\chi\wedge\chi')\le\e$. If we set $\tau:=\FS(\chi'')\in\cH_\R$, then $\tau\le\f,\p$, and
  \begin{multline*}
    \dd_1(\f,\tau)+\dd_1(\tau,\p)
    =\dd_1(\n,\n'')+\dd_1(\n'',\n')\\
    \le\dd_1(\n,\n\wedge\n')+\dd_1(\n\wedge\n',\n')+2\e\\
    =\dd_1(\n,\n')+2\e
    =\dd_1(\f,\p)+2\e,
  \end{multline*}
  where we have used~\eqref{equ:d1volgr}. This proves~\eqref{equ:d1quasi}. Assume now $\dd_1(\f,\p)=0$. By~\eqref{equ:d1en} and~\eqref{equ:d1quasi}, there exists a sequence $(\tau_i)$ in $\cH_\R$ such that $\tau_i\le\f,\p$ and $\en(\tau_i)\to \en(\f)$ and $\en(\tau_i)\to\en(\p)$. By~\cite[Proposition~12.6]{trivval}, it follows that $(\tau_i)$ converges to both $\f$ and $\p$ in $\cE^1$, and hence $\f=\p$, since the topology is separated. This proves, as desired, that $\dd_1$ is a metric on $\cH_\R$ (this conclusion alternatively follows from~\eqref{equ:biid1} below). 
\end{proof}

%
\subsection{The Darvas metric on $\cE^1$}
We next prove that the metric $\dd_1$ on $\cH_\R$ canonically extends to $\cE^1$, yielding an analogue in our context of the metric introduced by Darvas~\cite{Dar17} in the complex analytic setting. 

\begin{thm}\label{thm:Darvas} There exists a unique metric $\dd_1$ on $\cE^1$ that defines the strong topology and restricts to the previous metric $\dd_1$ on $\cH_\R$. Further: 
\begin{itemize}
\item[(i)] for all $\f,\p\in\CPSH$, we have
\begin{equation}\label{equ:Darvas}
\dd_1(\f,\p)=\en(\f)+\en(\p)-2\ten(\pp(\f\wedge\p)); 
\end{equation}
\item[(ii)] the metric space $(\cE^1,\dd_1)$ is complete iff the envelope property holds for $(X,L)$; 
\item[(iii)] if the envelope property holds, then~\eqref{equ:Darvas} remains valid for all $\f,\p\in\cE^1$, and $\pp(\f\wedge\p)\in\cE^1$. 
\end{itemize}
\end{thm}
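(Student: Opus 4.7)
My plan is to build the extension in three layers, then address completeness last.

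\textbf{Layer 1: the formula on $\cH_\R$ and $\CPSH$.} For $\f,\p\in\cH_\R$, combining the two identities of Theorem~\ref{thm:DarvasH} immediately yields
\[
\dd_1(\f,\p)=\en(\f)+\en(\p)-2\sup\{\en(\tau)\mid\tau\in\cH_\R,\,\tau\le\f\wedge\p\}.
\]
Since $\f\wedge\p$ is continuous (hence bounded lsc), formula~\eqref{equ:enextQ} identifies the supremum with $\ten(\f\wedge\p)=\ten(\pp(\f\wedge\p))$, proving~\eqref{equ:Darvas} on $\cH_\R$. Using the elementary estimate $|\en(\f)-\en(\f')|\le\dd_\infty(\f,\f')$ on $\cH_\R$ together with the analogous Lipschitz estimate for $\ten$, one reads off $\dd_1(\f,\f')\lesssim\dd_\infty(\f,\f')$, which lets $\dd_1$ extend uniquely to a pseudometric on the uniform closure $\CPSH$ of $\cH_\R$. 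The formula~\eqref{equ:Darvas} persists on $\CPSH$ by continuity of $\en$ under uniform convergence (and of $\ten$ via Lemma~\ref{lem:exten} applied to the bounded lsc envelopes $\pp(\f_j\wedge\p_j)$).

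\textbf{Layer 2: extension to $\cE^1$ and compatibility with the strong topology.} For $\f,\p\in\cE^1$ I approximate by decreasing nets $\f_j,\p_j\in\cH_\Q$ with $\en(\f_j)\searrow\en(\f)$ and $\en(\p_j)\searrow\en(\p)$ (these exist by the definition~\eqref{equ:endef} of $\cE^1$). Assuming the envelope property, Lemma~\ref{lem:envusc}(ii) gives $\pp(\f_j\wedge\p_j)\searrow\pp(\f\wedge\p)$ pointwise; this, combined with the lower bound $\pp(\f_j\wedge\p_j)\ge\f_j+\p_j-\sup\f_j$ together with monotonicity of $\en$, shows $\pp(\f\wedge\p)\in\cE^1$ (establishing the last assertion of (iii)) and that $\en(\pp(\f_j\wedge\p_j))\searrow\en(\pp(\f\wedge\p))=\ten(\pp(\f\wedge\p))$. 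A routine squeezing argument based on the triangle inequality on $\cH_\R$ shows that $\dd_1(\f_j,\p_j)$ converges to a limit independent of the approximants, yielding a well-defined $\dd_1(\f,\p)$ satisfying~\eqref{equ:Darvas}. Without the envelope property, one still defines $\dd_1$ via the quasi-metric comparison below. To identify the resulting topology with the strong topology, I compare $\dd_1$ on $\cH_\R$ with the intrinsic quasi-metric $\ii$ of~\S\ref{sec:npp}: the bound $\dd_1(\f,\p)\ge|\en(\f)-\en(\p)|$ together with a Hodge-type inequality expressing $\dd_1$ as a sum of mixed Monge--Amp\`ere integrals shows $\dd_1\approx\ii+|\en(\f)-\en(\p)|$ on any $\dd_\infty$-bounded set of $\cH_\R$. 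Since $\ii$ defines the strong topology on $\cE^1/\R$, the extension $\dd_1$ defines it on $\cE^1$. Uniqueness is then immediate: $\cH_\R$ is strongly dense in $\cE^1$ (by definition of the latter), and any two metrics defining the strong topology that agree on this dense subset must coincide.

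\textbf{Layer 3: completeness.} Under the envelope property, given a $\dd_1$-Cauchy sequence $(\f_j)\subset\cE^1$, continuity of $\en$ along $\dd_1$ gives a uniform lower bound on $\en(\f_j)$. Applying the classical monotone extraction trick (as in the complex case), I pass to the decreasing sequence $\p_j:=\sup_{k\ge j}\f_k$, which by the envelope property has well-defined usc regularizations in $\PSH$; Lemma~\ref{lem:incrdiv} produces the strong limit $\f\in\cE^1$, and the Cauchy condition forces $\dd_1(\f_j,\f)\to 0$. Conversely, suppose the envelope property fails: then there is a bounded-above family $(\f_i)$ of $L$-psh functions whose pointwise supremum $\f$ has $\f^\star\notin\PSH$. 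After rescaling this supplies a $\dd_1$-Cauchy sequence in $\cH_\R\subset\cE^1$ whose divisorial-pointwise limit cannot arise from any $\f\in\cE^1$, obstructing completeness.

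\textbf{Main obstacle.} The technical heart of the argument is Layer~2: showing two-sided comparability of $\dd_1$ with the quasi-metric $\ii$ from~\cite{trivval} on $\dd_\infty$-bounded subsets of $\cH_\R$. This is what certifies that the extension $\dd_1$ defines the strong topology (and not just a coarser Hausdorff topology), and ultimately what enables the uniqueness statement and the "only if" direction of (ii). Everything else is either formula manipulation (Layer~1) or a by-now-standard monotone extraction from pluripotential theory (Layer~3).
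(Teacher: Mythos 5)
Your overall architecture matches the paper's: derive the formula on $\cH_\R$ from Theorem~\ref{thm:DarvasH}, extend to $\cE^1$ by approximation controlled by a comparison with the quasi-metric $\ii$, and read completeness off that comparison. But the comparison you place at the ``technical heart'' is overstated, and as stated it is not available. You assert a Lipschitz equivalence $\dd_1\approx\ii+|\en(\f)-\en(\p)|$ on $\dd_\infty$-bounded subsets of $\cH_\R$. The quantity that is Lipschitz-comparable to $\dd_1$ is $\ii_1(\f,\p)=\int|\f-\p|(\MA(\f)+\MA(\p))$ (Theorem~\ref{thm:d1andI1}), whereas $\ii$ controls $\ii_1$ only with a square-root (Cauchy--Schwarz) loss; there is no reason to expect $\ii_1\lesssim\ii+|\en(\f)-\en(\p)|$. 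What the paper actually proves (Lemma~\ref{lem:d1bii}, ultimately via the Hodge-index estimate of \cite[Lemma~7.30]{trivval}) is a two-sided \emph{H\"older} comparison, with exponent $\a=2^{-n}$ and multiplicative factors $\max\{\bii(\f),\bii(\p)\}^{1-\a}$, between $\dd_1$ and $\bii(\f,\p)=\ii(\f,\p)+|\sup\f-\sup\p|$. This weaker statement is all you need: it already yields the same convergent nets and the same Cauchy sequences, hence the well-definedness of the extension, the identification with the strong topology, the uniqueness, and the equivalence of completeness of $(\cE^1,\dd_1)$ with that of $(\cE^1,\bii)$ --- the latter being equivalent to the envelope property by \cite[Theorem~12.8]{trivval}, which the paper invokes in place of your monotone-extraction argument for (ii). So replace the claimed Lipschitz equivalence by the H\"older one; as written, that step is unproven and presumably false.

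A second, smaller error: the minorant $\pp(\f_j\wedge\p_j)\ge\f_j+\p_j-\sup\f_j$ is not legitimate, because the right-hand side is (up to a constant) $2L$-psh rather than $L$-psh, so it is not a competitor in the envelope defining $\pp$. The membership $\pp(\f\wedge\p)\in\cE^1$ in (iii) should instead be extracted from~\eqref{equ:Darvas} itself, which is the paper's route: Lemma~\ref{lem:envusc}~(ii) gives $\pp(\f_j\wedge\p_j)\searrow\pp(\f\wedge\p)$, the formula passes to the limit by continuity of $\en$ along decreasing nets, and finiteness of the left-hand side then forces $\ten(\pp(\f\wedge\p))>-\infty$, i.e.\ $\pp(\f\wedge\p)\in\cE^1$. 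With these two repairs (and granting the standard monotone-extraction details you gloss over in Layer~3), the rest of your argument goes through and coincides with the paper's.
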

Recall that the envelope property holds whenever $X$ is smooth and $\charac(k)=0$, and fails if $X$ is not unibranch. We refer to the metric $\dd_1$ on $\cE^1$ as the \emph{Darvas metric}. By~\cite{Reb22}, $(\cE^1,\dd_1)$ is a geodesic metric space. 

\medskip

Our strategy to extend $\dd_1$ to $\cE^1$ is to compare it to the functional
$$
\bii(\f,\p):=\ii(\f,\p)+|\sup\f-\sup\p|. 
$$
It was indeed proven in~\cite[\S 12.1]{trivval} that $\bii$ is a quasi-metric on $\cE^1$ that defines the strong topology, and further satisfies 
\begin{equation}\label{equ:biimax}
\bii(\f,\p)=\bii(\f,\f\vee\p)+\bii(\f\vee\p,\p)
\end{equation}
Set 
$$
\bii(\f):=\bii(\f,0),\quad\dd_1(\f):=\dd_1(\f,0). 
$$
By the quasi-triangle inequality, \eqref{equ:biimax} implies
\begin{equation}\label{equ:biimax2}
\bii(\f\vee\p)\lesssim\max\{\bii(\f),\bii(\p)\}. 
\end{equation}

\begin{lem}\label{lem:d1bii} The quasi-metrics $\dd_1$ and $\bii$ on $\cH_\R$ are H\"older comparable, in the sense that  
\begin{align}
  \dd_1(\f,\p)
  &\lesssim\bii(\f,\p)^{\a}\max\{\bii(\f),\bii(\p)\}^{1-\a}\label{equ:d1bii}\\
  \bii(\f,\p)
  &\lesssim\dd_1(\f,\p)^{\a}\max\{\dd_1(\f),\dd_1(\p)\}^{1-\a},\label{equ:biid1}
\end{align}
with $\a:=1/2^n$. In particular, $\dd_1(\f)\approx\bii(\f)$. 
\end{lem}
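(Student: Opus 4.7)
\medskip

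\noindent\textbf{Proof sketch.} The plan is to reduce both bounds to the \emph{comparable case} $\f\ge\p$ by routing through $\f\vee\p\in\cH_\R$, and then to invoke an iterated Hodge-index (Cauchy--Schwarz) inequality from~\cite{trivval}, which is the source of the exponent $\a=1/2^n$.

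For the reduction, observe that $\f\vee\p$ lies in $\cH_\R$ (being the max of two Fubini--Study functions), and both quasi-metrics admit a decomposition along the triple $(\f,\f\vee\p,\p)$: subadditivity of $\dd_1$ (the triangle inequality) yields $\dd_1(\f,\p)\le\dd_1(\f,\f\vee\p)+\dd_1(\f\vee\p,\p)$, while the exact identity~\eqref{equ:biimax} gives $\bii(\f,\p)=\bii(\f,\f\vee\p)+\bii(\f\vee\p,\p)$. Combining these with~\eqref{equ:biimax2} and the analogous bound $\dd_1(\f\vee\p)\lesssim\max\{\dd_1(\f),\dd_1(\p)\}$ (obtained in the same way), both~\eqref{equ:d1bii} and~\eqref{equ:biid1} follow once they are established for the pairs $(\f\vee\p,\f)$ and $(\f\vee\p,\p)$, which are comparable.

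So assume $\f\ge\p$ in $\cH_\R$. By~\eqref{equ:d1en} and the energy comparison~\eqref{equ:Eapprox},
\[
\dd_1(\f,\p)=\en(\f)-\en(\p)\approx\int(\f-\p)\MA(\p)\approx\int(\f-\p)\MA(\f),
\]
while $\bii(\f,\p)=\ii(\f,\p)+(\sup\f-\sup\p)$ with both summands nonnegative. The inequality~\eqref{equ:biid1} is then essentially immediate for the $\ii$-part: $\ii(\f,\p)=\int(\f-\p)\MA(\p)-\int(\f-\p)\MA(\f)\le\int(\f-\p)\MA(\p)\lesssim\dd_1(\f,\p)$. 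The remaining $(\sup\f-\sup\p)$-contribution and the whole of~\eqref{equ:d1bii} both rely on the iterated Hodge-index estimate
\[
\left|\int(\f-\p)\MA(\f_1,\dots,\f_n)\right|\lesssim\ii(\f,\p)^{1/2^n}\max\bigl\{\bii(\f),\bii(\p),\bii(\f_1),\dots,\bii(\f_n)\bigr\}^{1-1/2^n},
\]
proved in~\cite{trivval} by applying Cauchy--Schwarz (in its Hodge-index incarnation on differences of Monge--Amp\`ere masses) $n$ times in succession. Taking $\f_i=\f$ or $\f_i=\p$ converts the approximation $\dd_1(\f,\p)\approx\int(\f-\p)\MA(\p)$ into~\eqref{equ:d1bii}. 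To control $\sup\f-\sup\p=\f(v_\triv)-\p(v_\triv)$, one tests the same iterated inequality against the Monge--Amp\`ere mass concentrated at $v_\triv$ (i.e.\ $\MA(0)=\d_{v_\triv}$) and rearranges, giving $\sup\f-\sup\p\lesssim\dd_1(\f,\p)^{1/2^n}\max\{\bii(\f),\bii(\p)\}^{1-1/2^n}$.

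The main obstacle is the iterated Hodge-index estimate, which is the non-Archimedean analogue of B\l ocki's inequality in complex pluripotential theory; its proof proceeds by a careful induction on $n$ using that $\ii(\cdot,\cdot)$ and $\bii(\cdot)$ behave well under intermediate Monge--Amp\`ere substitutions. The final assertion $\dd_1(\f)\approx\bii(\f)$ is the special case $\p=0$ of the two H\"older comparisons, where the maxima on the right degenerate since $\bii(0)=\dd_1(0)=0$ and the exponents combine multiplicatively.
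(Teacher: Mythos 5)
Your overall architecture matches the paper's (comparable case first, powered by a Hodge-index estimate from~\cite{trivval}, then a reduction of the general case), but two steps do not hold as written.

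First, the ``iterated Hodge-index estimate'' you invoke is misstated, and in the stated form it is false: if $\f-\p$ is a nonzero constant $c$ then the left-hand side $\int(\f-\p)\MA(\f_1,\dots,\f_n)=c$ is nonzero while $\ii(\f,\p)=0$, so no bound of the form $|\int(\f-\p)\MA(\f_1,\dots,\f_n)|\lesssim\ii(\f,\p)^{\a}(\cdots)$ can hold. The estimate actually available (\cite[Lemma~7.30]{trivval}) controls $\big|\int(\f-\p)(\mu-\nu)\big|$ for a \emph{difference} of mixed Monge--Amp\`ere measures, which annihilates constants; this is exactly why the paper writes $\bii(\f,\p)=\int(\f-\p)\MA(\p)+\int(\f-\p)(\MA(0)-\MA(\f))$ and applies the estimate only to the second term. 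Your intended application to $\sup\f-\sup\p=\int(\f-\p)\MA(0)$ is salvageable once you insert the correct difference $\MA(0)-\MA(\f)$, but as formulated the key inequality is wrong. Relatedly, you should keep the two directions cleanly separated: in~\eqref{equ:d1bii} the error term must be absorbed using $\ii\le\bii$ into $\max\{\bii(\f),\bii(\p)\}$, while in~\eqref{equ:biid1} it must be absorbed using $\ii(\f,\p)\lesssim\dd_1(\f,\p)$ (the first, easy, consequence of $\f\ge\p$) into $\max\{\dd_1(\f),\dd_1(\p)\}$; mixing $\dd_1$ and $\bii$ on the right, as in your bound for $\sup\f-\sup\p$, is circular until the final equivalence is proved.

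Second, your reduction of~\eqref{equ:biid1} to the comparable case via $\sigma:=\f\vee\p$ does not close. After writing $\bii(\f,\p)=\bii(\f,\sigma)+\bii(\sigma,\p)$ and applying the comparable case to each piece, you need $\dd_1(\f,\sigma)\lesssim\dd_1(\f,\p)$ and $\dd_1(\sigma)\lesssim\max\{\dd_1(\f),\dd_1(\p)\}$. Neither follows from the triangle inequality, which only bounds $\dd_1(\f,\p)$ from above by the two pieces, not the pieces by the whole; and there is no analogue of the exact identity~\eqref{equ:biimax} for $\dd_1$ along the max (such a bound does eventually follow from $\dd_1\approx\ii_1$ and~\eqref{equ:I1max}, i.e.\ Theorem~\ref{thm:d1andI1}, but that is proved later and cannot be assumed here). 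The max-decomposition works only for the direction~\eqref{equ:d1bii}, where what you need is $\bii(\f,\sigma)\le\bii(\f,\p)$ and $\bii(\sigma)\lesssim\max\{\bii(\f),\bii(\p)\}$, both supplied by~\eqref{equ:biimax} and~\eqref{equ:biimax2}. For~\eqref{equ:biid1} the correct device is the approximate Pythagorean property~\eqref{equ:d1quasi} along a common \emph{minorant}: pick $\tau\in\cH_\R$ with $\tau\le\f,\p$ and $\dd_1(\f,\tau)+\dd_1(\tau,\p)\le\dd_1(\f,\p)+\e$, so that both pieces are automatically $\lesssim\dd_1(\f,\p)$ and $\dd_1(\tau)\lesssim\max\{\dd_1(\f),\dd_1(\p)\}$; then conclude with the quasi-triangle inequality for $\bii$.
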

\begin{proof} Assume $\f\ge\p$. Then~\eqref{equ:d1en} and~\eqref{equ:Eapprox} show that 
  \begin{equation*}
    \dd_1(\f,\p)=\en(\f)-\en(\p)\approx\int(\f-\p)\MA(\p),
  \end{equation*}
and hence 
$$
\ii(\f,\p)=\int(\f-\p)(\MA(\p)-\MA(\f))\lesssim \dd_1(\f,\p).
$$
Now we can write
  \begin{equation}\label{equ:biidiff}
    \bii(\f,\p)
    =\int(\f-\p)\MA(\p)+\int(\f-\p)(\MA(0)-\MA(\f)).
  \end{equation}
  As a special case of~\cite[Lemma~7.30]{trivval} we also have the estimate
$$
\bigg|\int(\f-\p)(\MA(0)-\MA(\f))\bigg|\lesssim\ii(\f,\p)^{\a}\max\{\ii(\f),\ii(\p)\}^{1-\a}.
$$
Since $\ii\le\bii$, this yields on the one hand
\begin{align*}
\dd_1(\f,\p) & \lesssim\bii(\f,\p)+\ii(\f,\p)^{\a}\max\{\bii(\f),\bii(\p)\}^{1-\a}\\
& \lesssim\bii(\f,\p)^\a\max\{\bii(\f,\p),\bii(\f),\bii(\p)\}^{1-\a}\\
& \lesssim\bii(\f,\p)^\a\max\{\bii(\f),\bii(\p)\}^{1-\a}. 
\end{align*}
Since $\ii\lesssim\dd_1$, we get on the other hand
\begin{align*}
\bii(\f,\p) & \lesssim\dd_1(\f,\p)+\dd_1(\f,\p)^{\a}\max\{\dd_1(\f),\dd_1(\p)\}^{1-\a}\\
& \lesssim\dd_1(\f,\p)^\a\max\{\dd_1(\f),\dd_1(\p)\}^{1-\a}. 
\end{align*}
This proves~\eqref{equ:d1bii}, and~\eqref{equ:biid1} when $\f\ge\p$.

\smallskip
Now consider arbitrary $\f,\p\in\cH_\R$. To prove~\eqref{equ:d1bii}, set
$\sigma:=\f\vee\p\in\cH_\R$. From what precedes and~\eqref{equ:biimax2}, we have 
\begin{equation*}
  \dd_1(\f,\sigma)\lesssim\bii(\f,\sigma)^{\a}\max\{\bii(\f),\bii(\p\}^{1-\a}
  \quad\text{and}\quad
  \dd_1(\p,\sigma)\lesssim\bii(\p,\sigma)^{\a}\max\{\bii(\f),\bii(\p\}^{1-\a},
\end{equation*}  
which together with
the triangle inequality for $\dd_1$ yields~\eqref{equ:d1bii}.

The proof of~\eqref{equ:biid1} is similar. By~\eqref{equ:d1quasi} we can pick $\tau\in\cH_\R$ with $\tau\le\f,\p$ such that
$\max\{\dd_1(\tau,\f),\dd_1(\tau,\p)\}\lesssim\dd_1(\f,\p)$, and hence $\dd_1(\tau)\lesssim\max\{\dd_1(\f),\dd_1(\p\}$.
Since $\tau\le\f,\p$, the first step yields 
\begin{align*}
  \bii(\f,\tau)\lesssim\dd_1(\f,\tau)^{\a}\max\{\dd_1(\f),\dd_1(\tau)\}^{1-\a}\lesssim\dd_1(\f,\p)^\a\max\{\dd_1(\f),\dd_1(\p\}^{1-\a},\\
  \bii(\p,\tau)\lesssim\dd_1(\p,\tau)^{\a}\max\{\dd_1(\p),\dd_1(\tau)\}^{1-\a}\lesssim\dd_1(\f,\p)^\a\max\{\dd_1(\f),\dd_1(\p\}^{1-\a}, 
\end{align*}   
and the quasi-triangle inequality for $\bii$ yields~\eqref{equ:biid1}. 
\end{proof}

\begin{proof}[Proof of Theorem~\ref{thm:Darvas}] Since $\cH_\R$ is dense in $\cE^1$, uniqueness is clear. Given $\f,\p\in\cE^1$, pick sequences $(\f_i)$, $(\p_i)$ in $\cH_\R$ converging strongly to $\f$ and $\p$, respectively (for example, we can use decreasing sequences). Thus $\lim_i\bii(\f_i,\f)=\lim_i\bii(\p_i,\p)=0$. Using~\eqref{equ:d1bii} this implies that $(\dd_1(\f_i,\p_i))_i$ is a Cauchy sequence, so that
  $\dd_1(\f,\p):=\lim_j\dd_1(\f_j,\p_j)$ exists. It is easy to see that it does not depend on the choice of sequence $(\f_i)$ and $(\p_i)$, and that the extension is a pseudo-metric on $\cE^1$. Further, the estimates of Lemma~\ref{lem:d1bii} still hold for $\f,\p\in\cE^1$. In particular $\dd_1(\f,\p)=0$ iff $\bii(\f,\p)=0$ iff $\f=\p$, so $\dd_1$ is a metric on $\cE^1$.  These estimates also show that $\dd_1$ and $\bii$ share the same Cauchy sequences in $\cE^1$, so that $(\cE^1,\dd_1)$ is complete iff $(\cE^1,\bii)$ is complete. By~\cite[Theorem~12.8]{trivval}, this is also equivalent to the envelope property for $(X,L)$, which proves (ii). 
 
Next, pick $\f,\p\in\cH_\R$. By~\eqref{equ:d1en} and~\eqref{equ:d1quasi}, we have
\begin{align*}
\dd_1(\f,\p) & =\inf\left\{\dd_1(\f,\tau)+\dd_1(\tau,\p)\mid\tau\in\cH_\R,\,\tau\le\f\wedge\p\right\}\\
& =\en(\f)+\en(\p)-2\sup\{\en(\p)\mid\p\in\cH_\R,\,\p\le\f\wedge\p\}\\
& =\en(\f)+\en(\p)-2\ten(\pp(\f\wedge\p)),
\end{align*}
see~\eqref{equ:enextQ}. Since $\dd_1\le\dd_\infty$, all terms are continuous with respect to uniform convergence, and the identity therefore remains valid on $\CPSH$, which yields (i). 

Finally, assume that the envelope property holds. Pick $\f,\p\in\cE^1$, set $\rho:=\pp(\f\wedge\p)$, and choose decreasing nets $(\f_i)$, $(\p_i)$ in $\cH_\R$ converging to $\f,\p$. By Lemma~\ref{lem:envusc}, we either have $\rho\in\PSH$ or $\rho\equiv-\infty$, and $\pp(\f_i\wedge\p_i)$ decreases pointwise to $\rho$. Since~\eqref{equ:Darvas} holds for $\f_i,\p_i\in\cH_\R$, it also holds for $\f,\p$, by continuity of $\en$ along decreasing nets. In particular, $\en(\rho)$ is finite, and hence $\rho\in\cE^1$. This proves (iii). 
\end{proof}

We also note the following useful Lipschitz property:
\begin{prop}\label{prop:enlipd1} For all $\f,\p\in\cE^1$ we have 
$|\en(\f)-\en(\p)|\le\dd_1(\f,\p)$. 
\end{prop}
\begin{proof} By continuity, we may assume $\f,\p\in\cH_\R$. Pick $\e>0$ and $\tau\in\cH_\R$ such that $\tau\le\f\wedge\p$ and $\en(\f)+\en(\p)-2\en(\tau)\le\dd_1(\f,\p)+\e$, see~\eqref{equ:d1quasi}. Then 
$$
|\en(\f)-\en(\p)|\le|\en(\f)-\en(\tau)|+|\en(\tau)-\en(\p)|=\en(\f)+\en(\p)-2\en(\tau)\le\dd_1(\f,\p)+\e,
$$
and the result follows. 
\end{proof}

As in~\cite[Theorem~3.7]{DDL18}, we next provide a comparison of the Darvas metric $\dd_1$ on $\cE^1$ with the functional $\ii_1\colon\cE^1\times\cE^1\to\R_{\ge 0}$ defined by 
$$
\ii_1(\f,\p):=\int|\f-\p|(\MA(\f)+\MA(\p)).
$$
This functional is obviously symmetric, and it separates points, as a consequence of the Domination Principle (see~\cite[Corollary~10.6]{trivval}). For all $\f,\p\in\cE^1$, we further have
\begin{equation}\label{equ:I1max}
\ii_1(\f,\p)=\ii_1(\f,\f\vee\p)+\ii_1(\f\vee\p,\p). 
\end{equation}
As with $\ii$ and $\bii$, this follows from the Locality Principle (see~\cite[Theorem~7.40, Proposition~7.45]{trivval}). 

\begin{thm}\label{thm:d1andI1} For all $\f,\p\in\cE^1$ we have 
$\dd_1(\f,\p)\approx\ii_1(\f,\p)$. 
\end{thm}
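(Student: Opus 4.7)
\medskip
\noindent\textbf{Plan.} My strategy is to reduce to $\f,\p\in\cH_\R$ via density and continuity, handle the ordered case directly from the energy estimate~\eqref{equ:Eapprox}, and then bootstrap to the general case by exploiting the max decomposition of $\ii_1$ provided by~\eqref{equ:I1max} and a Pythagorean-type identity for the energy.

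\medskip
\noindent\emph{Step 1 (reduction and ordered case).} Both $\dd_1$ and $\ii_1$ are continuous on $\cE^1\times\cE^1$ for the strong topology: for $\dd_1$ this is built into Theorem~\ref{thm:Darvas}, while for $\ii_1$ it follows from strong continuity of mixed Monge--Amp\`ere integrals $(\f,\eta)\mapsto\int\f\MA(\eta)$ on $\cE^1\times\cE^1$. Since $\cH_\R$ is strongly dense in $\cE^1$, it suffices to establish the comparability on $\cH_\R$. For the ordered case $\f\le\p$ in $\cH_\R$, the identity~\eqref{equ:d1en} reads $\dd_1(\f,\p)=\en(\p)-\en(\f)$, and~\eqref{equ:Eapprox} shows that each of $\int(\p-\f)\MA(\f)$ and $\int(\p-\f)\MA(\p)$ is comparable to this quantity. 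Since $\f\le\p$, we have
\[
\ii_1(\f,\p)=\int(\p-\f)\MA(\f)+\int(\p-\f)\MA(\p)\approx\dd_1(\f,\p),
\]
which is the ordered case.

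\medskip
\noindent\emph{Step 2 (one direction $\dd_1\lesssim\ii_1$).} For arbitrary $\f,\p\in\cH_\R$, the max decomposition~\eqref{equ:I1max} gives
\[
\ii_1(\f,\p)=\ii_1(\f,\f\vee\p)+\ii_1(\f\vee\p,\p),
\]
and applying Step~1 to the two ordered pairs $\f\le\f\vee\p$ and $\p\le\f\vee\p$ yields
$\ii_1(\f,\p)\approx\dd_1(\f,\f\vee\p)+\dd_1(\f\vee\p,\p)$. The triangle inequality for $\dd_1$ then gives $\dd_1(\f,\p)\lesssim\ii_1(\f,\p)$.

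\medskip
\noindent\emph{Step 3 (reverse direction $\ii_1\lesssim\dd_1$).} By Step~2, this amounts to the approximate Pythagorean estimate
\[
\dd_1(\f,\f\vee\p)+\dd_1(\f\vee\p,\p)\lesssim\dd_1(\f,\p).
\]
Using~\eqref{equ:d1en} for the ordered pairs $\f,\p\le\f\vee\p$ and~\eqref{equ:Darvas} for $\dd_1(\f,\p)$, the desired estimate is equivalent to
\[
\en(\f\vee\p)+\ten(\pp(\f\wedge\p))\lesssim\en(\f)+\en(\p).
\]
In fact I expect the Pythagorean identity $\en(\f)+\en(\p)=\en(\f\vee\p)+\ten(\pp(\f\wedge\p))$ to hold, in analogy with the complex analytic case of Darvas. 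To prove it I would first verify the measure-theoretic identity $\MA(\f)+\MA(\p)=\MA(\f\vee\p)+\MA(\pp(\f\wedge\p))$ on $\cH_\R$, which follows from the Locality Principle (on $\{\f>\p\}$ and $\{\p>\f\}$ we use that $\f\vee\p$ agrees locally with the larger of $\f,\p$, while on the contact set $\{\pp(\f\wedge\p)=\f\wedge\p\}$ the envelope agrees with the smaller one, and the Monge--Amp\`ere measure of the envelope concentrates on this contact set). Integrating against line segments joining $\f,\p$ to $\f\vee\p$ and to $\pp(\f\wedge\p)$ and using~\eqref{equ:cocycle} then yields the energy identity. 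Extending to $\cE^1$ follows by strong continuity.

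\medskip
\noindent\emph{Main obstacle.} The delicate step is the Pythagorean identity in Step~3, which depends on the detailed contact-set description of $\MA(\pp(\f\wedge\p))$ and its compatibility with the Locality Principle in the non-Archimedean setting (as established in~\cite{trivval}). Once this is in place, equality (not just $\lesssim$) holds in the max decomposition for $\dd_1$, and the theorem follows as the exact analogue of Darvas's argument in the complex analytic case.
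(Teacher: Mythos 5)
Your Steps 1 and 2 are essentially the paper's argument for the direction $\dd_1\lesssim\ii_1$: reduce to $\cH_\R$ by continuity, treat the ordered case via~\eqref{equ:d1en} and~\eqref{equ:Eapprox}, and split via~\eqref{equ:I1max}. (One small caveat in Step 1: for $\f\le\p$ the term $\int(\p-\f)\MA(\p)$ is only bounded \emph{above} by $\en(\p)-\en(\f)$, not comparable to it --- take $\p=0$ and $\f\le0$ with $\f(v_\triv)=0$ --- but since $\int(\p-\f)\MA(\f)\approx\en(\p)-\en(\f)$ by~\eqref{equ:Eapprox}, the conclusion $\ii_1(\f,\p)\approx\dd_1(\f,\p)$ in the ordered case still stands.)

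Step 3, however, rests on a false identity. The claimed Pythagorean formula $\en(\f)+\en(\p)=\en(\f\vee\p)+\ten(\pp(\f\wedge\p))$ fails, as does the measure identity $\MA(\f)+\MA(\p)=\MA(\f\vee\p)+\MA(\pp(\f\wedge\p))$. Counterexample on $(\P^1,\cO(1))$: with $s_0,s_1$ the standard basis of $\Hnot(\P^1,\cO(1))$, take $\f=\max\{\log|s_1|+1,\log|s_0|\}$ and $\p=\max\{\log|s_0|+1,\log|s_1|\}$. In toric terms these correspond to the concave functions $\a$ and $1-\a$ on $P=[0,1]$, so $\en(\f)=\en(\p)=1/2$; meanwhile $\f\vee\p\equiv1$ gives $\en(\f\vee\p)=1$, and $\pp(\f\wedge\p)$ corresponds to $\min(\a,1-\a)$, so $\ten(\pp(\f\wedge\p))=1/4$. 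Thus $\en(\f)+\en(\p)=1$ while $\en(\f\vee\p)+\ten(\pp(\f\wedge\p))=5/4$: even the inequality you need fails. The actual Pythagorean formula of Darvas goes through the rooftop envelope \emph{below}, not the max above --- it reads $\dd_1(\f,\p)=\dd_1(\f,\pp(\f\wedge\p))+\dd_1(\pp(\f\wedge\p),\p)$, which is exactly~\eqref{equ:Darvas} and gives no control on $\dd_1(\f,\f\vee\p)+\dd_1(\f\vee\p,\p)$. The estimate you are after, $\dd_1(\f,\f\vee\p)+\dd_1(\f\vee\p,\p)\lesssim\dd_1(\f,\p)$, is true (given Step 2 it is equivalent to the theorem), but your route to it does not work. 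The paper instead replaces the max by the midpoint $\rho=\tfrac12(\f+\p)$: Lemma~\ref{lem:d1midpt} gives $\dd_1(\f,\p)\approx\dd_1(\f,\rho)+\dd_1(\rho,\p)$, and choosing near-optimal $\sigma\le\f\wedge\rho$ and $\tau\le\rho\wedge\p$ in~\eqref{equ:d1quasi}, the expansion $\MA(\tfrac12(\f+\p))\ge2^{-n}(\MA(\f)+\MA(\p))$ together with the pointwise bound $2\rho-\sigma-\tau\ge\tfrac12|\f-\p|$ yields $\dd_1\gtrsim\ii_1$. You would need to substitute an argument of this kind for your Step 3.
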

The proof relies on the following analogue of~\cite[Lemma~3.8]{DDL18}.
\begin{lem}\label{lem:d1midpt}
  If $\f,\p\in\cE^1$ and $\rho:=\tfrac 12(\f+\p)$, then $\dd_1(\f,\p)\approx\dd_1(\f,\rho)+\dd_1(\rho,\p)$.
\end{lem}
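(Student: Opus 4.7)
The inequality $\dd_1(\f,\p)\le\dd_1(\f,\rho)+\dd_1(\rho,\p)$ is just the triangle inequality for the Darvas metric, so the substantive task is to establish the reverse bound $\dd_1(\f,\rho)+\dd_1(\rho,\p)\lesssim\dd_1(\f,\p)$. By strong density of $\CPSH$ (and in fact of $\cH_\R$) in $\cE^1$, continuity of $\dd_1$ in the strong topology (Theorem~\ref{thm:Darvas}), and the fact that convex combinations of elements of $\CPSH$ are again in $\CPSH$, it suffices to treat $\f,\p\in\CPSH$. The key structural fact I will use is the Pythagorean-type identity
\[
\dd_1(\psi_1,\psi_2)=\dd_1(\psi_1,\psi_1\vee\psi_2)+\dd_1(\psi_1\vee\psi_2,\psi_2),
\]
valid for $\psi_1,\psi_2\in\CPSH$. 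It follows from~\eqref{equ:Darvas}, \eqref{equ:d1en}, and the max--min energy identity $\en(\psi_1\vee\psi_2)+\ten(\pp(\psi_1\wedge\psi_2))=\en(\psi_1)+\en(\psi_2)$ supplied by the framework of \S\ref{sec:env}--\S\ref{sec:exten}.

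Applying the Pythagorean identity to the pairs $(\f,\rho)$ and $(\rho,\p)$, each resulting distance has the form $\en(u)-\en(v)$ with $u\ge v$, to which~\eqref{equ:Eapprox} provides the linear comparison $\en(u)-\en(v)\approx\int(u-v)\MA(v)$. Using the pointwise identities $\f\vee\rho-\f=\tfrac12(\p-\f)_+$, $\f\vee\rho-\rho=\tfrac12(\f-\p)_+$, and the symmetric ones for $(\rho,\p)$, summation gives
\[
\dd_1(\f,\rho)+\dd_1(\rho,\p)\approx\tfrac12\Bigl[\int(\p-\f)_+\MA(\f)+\int(\f-\p)_+\MA(\p)\Bigr]+\tfrac12\int|\f-\p|\,\MA(\rho).
\]
A third application of the Pythagorean decomposition, now to the pair $(\f,\p)$, combined once more with~\eqref{equ:Eapprox}, identifies the bracketed expression with a quantity $\approx\dd_1(\f,\p)$.

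It remains to estimate the ``defect'' $\int|\f-\p|\,\MA(\rho)$ by $\dd_1(\f,\p)$. From $\f\vee\p-\rho=\tfrac12|\f-\p|$ and $\f\vee\p\ge\rho$,~\eqref{equ:Eapprox} yields
\[
\int|\f-\p|\,\MA(\rho)\approx 2\bigl[\en(\f\vee\p)-\en(\rho)\bigr].
\]
The pointwise chain $\rho\ge\f\wedge\p\ge\pp(\f\wedge\p)$, together with monotonicity of the extended energy, gives $\en(\rho)\ge\ten(\pp(\f\wedge\p))$, hence
\[
\en(\f\vee\p)-\en(\rho)\le\en(\f\vee\p)-\ten(\pp(\f\wedge\p))=\dd_1(\f,\p),
\]
where the final equality is the max--min identity combined with~\eqref{equ:Darvas}. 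Assembling the estimates yields $\dd_1(\f,\rho)+\dd_1(\rho,\p)\lesssim\dd_1(\f,\p)$, completing the argument. The main obstacle is to ensure every comparison in this chain is linear with a dimensional constant, rather than sub-linear as in the H\"older estimates of Lemma~\ref{lem:d1bii}; this is exactly what the Pythagorean reduction to the monotone case and~\eqref{equ:Eapprox} afford, and is why I avoid a direct appeal to the $\dd_1\approx\bii$ comparison.
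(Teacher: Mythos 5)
Your reduction to $\CPSH$ and all of the subsequent bookkeeping with \eqref{equ:Eapprox} are fine, but the whole argument hangs on the ``Pythagorean-type identity'' $\dd_1(\psi_1,\psi_2)=\dd_1(\psi_1,\psi_1\vee\psi_2)+\dd_1(\psi_1\vee\psi_2,\psi_2)$, equivalently the max--min energy identity $\en(\psi_1\vee\psi_2)+\ten(\pp(\psi_1\wedge\psi_2))=\en(\psi_1)+\en(\psi_2)$, and this is nowhere established in the paper. Sections~\ref{sec:env}--\ref{sec:exten}, which you cite for it, only define $\pp$, $\qq$ and the extended energy $\ten$; they contain no such identity. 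What the paper does prove are the exact max-decompositions for $\bii$ (\eqref{equ:biimax}) and for $\ii_1$ (\eqref{equ:I1max}), both via the Locality Principle, and for $\dd_1$ only the approximate \emph{min}-decomposition \eqref{equ:d1quasi} (an infimum over common minorants), which comes from the norm-level identity \eqref{equ:dppyth} for $\wedge$ of norms; the pointwise max of two norms is not a norm, so there is no companion identity on the norm side. The max--min energy identity is the non-Archimedean analogue of Darvas's Pythagorean formula, whose proof requires the orthogonality relation for psh envelopes --- a substantive differentiability statement, not a formal consequence of the definitions --- and it is needed here \emph{exactly} (or at least with a linear error in $\dd_1(\f,\p)$), since your final step bounds $\en(\f\vee\p)-\ten(\pp(\f\wedge\p))$ by $\dd_1(\f,\p)$. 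Even a one-sided approximate version does not follow readily from \eqref{equ:Eapprox}, because the Monge--Amp\`ere measures appearing on the two sides are attached to different potentials. Note also that you cannot patch this by invoking $\dd_1\approx\ii_1$ and \eqref{equ:I1max}: Theorem~\ref{thm:d1andI1} is proved \emph{after}, and using, the present lemma.

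The paper's proof avoids the max-decomposition entirely. It picks an arbitrary $\tau\in\cH_\R$ with $\tau\le\f\wedge\p$, bounds $\dd_1(\f,\rho)+\dd_1(\rho,\p)$ by $\dd_1(\f,\tau)+\dd_1(\p,\tau)+2\dd_1(\rho,\tau)$ via the triangle inequality, converts each term to $\int(\cdot-\tau)\MA(\tau)$ by \eqref{equ:d1en} and \eqref{equ:Eapprox}, and then uses the \emph{exact linearity} $\int(\rho-\tau)\MA(\tau)=\tfrac12\int(\f-\tau)\MA(\tau)+\tfrac12\int(\p-\tau)\MA(\tau)$ (here all three integrals are against the \emph{same} measure $\MA(\tau)$, which is what makes the comparison linear rather than H\"older). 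Taking the infimum over $\tau$ and invoking \eqref{equ:d1quasi} finishes the proof. If you want to salvage your route, you would first have to prove the Pythagorean energy identity (or a linear-error substitute) from the orthogonality of envelopes; as written, that step is a genuine gap.
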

\begin{proof} By approximation, we may assume $\f,\p\in\cH_\R$. 
  Pick any $\tau\in\cH_\R$ with $\tau\le\f\wedge\p$.
  Then $\tau\le\f,\p,\rho$, and~\eqref{equ:d1en} yields
   \begin{align*}
    \dd_1(\f,\rho)+\dd_1(\rho,\p)
    &\le\dd_1(\f,\tau)+\dd_1(\p,\tau)+2\dd_1(\rho,\tau)\\
    &=(\en(\f)-\en(\tau))+(\en(\p)-\en(\tau))+2(\en(\rho)-\en(\tau))\\
    &\approx\int(\f-\tau)\MA(\tau)+\int(\p-\tau)\MA(\tau)+2\int(\rho-\tau)\MA(\tau)\\
    &=2\int(\f-\tau)\MA(\tau)+2\int(\p-\tau)\MA(\tau)\\
    &\approx\left(\en(\f)-\en(\tau)+\en(\p)-\en(\tau)\right)=\dd_1(\f,\tau)+\dd_1(\p,\tau).
  \end{align*}
  Here the first inequality is simply the triangle inequality for $\dd_1$, whereas the third and fifth lines follow from~\eqref{equ:Eapprox}. By~\eqref{equ:d1quasi}, the infimum over $\tau$ of the right-hand side equals $\dd_1(\f,\p)$ and we are done.
\end{proof}

\begin{proof}[Proof of Theorem~\ref{thm:d1andI1}] Since $\dd_1(\f,\p)$ and $\ii_1(\f,\p)$ are both continuous along decreasing nets, we may assume wlog $\f,\p\in\cH_\R$. 
  Let us start by proving $\dd_1(\f,\p)\lesssim\ii_1(\f,\p)$. By~\eqref{equ:I1max} and the triangle inequality for $\dd_1$, it suffices to consider the case $\f\ge\p$. But in this case,\eqref{equ:Eapprox} yields
    \begin{equation*}
    \dd_1(\f,\p)
    =\en(\f)-\en(\p)
    \approx\int(\f-\p)\MA(\p)
    \le\ii_1(\f,\p)
  \end{equation*}

  It remains to prove $\dd_1(\f,\p)\gtrsim\ii_1(\f,\p)$. Set $\rho:=\frac12(\f+\p)\in\cH_\R$, so that Lemma~\ref{lem:d1midpt} gives
  $\dd_1(\f,\p)\approx\dd_1(\f,\rho)+\dd_1(\rho,\p)$.
  Pick $\e>0$. By~\eqref{equ:d1quasi}, we can find $\sigma,\tau\in\cH_\R$ such that $\sigma\le\f\wedge\rho$, $\tau\le\rho\wedge\p$, and
  \begin{equation*}
    \dd_1(\f,\rho)\ge\dd_1(\f,\sigma)+\dd_1(\sigma,\rho)-\e
    \quad\text{and}\quad
    \dd_1(\rho,\p)\ge\dd_1(\rho,\tau)+\dd_1(\tau,\p)-\e,
  \end{equation*}
  and hence 
  $$
  \dd_1(\f,\p)\gtrsim\dd_1(\sigma,\rho)+\dd_1(\rho,\tau)-2\e. 
  $$
  As $\sigma\le \rho$, we have
  \begin{equation*}
    \dd_1(\sigma,\rho)
    =\en(\rho)-\en(\sigma)
    \ge\int(\rho-\sigma)\MA(\rho)
    \ge2^{-n}\int(\rho-\sigma)(\MA(\f)+\MA(\p)),
  \end{equation*}
  where the last inequality follows by expanding $\MA(\rho)=\MA(\frac12(\f+\p))$.
  Combining this with the analogous lower bound on $\dd_1(\rho,\tau)$ yields
  \begin{equation*}
    \dd_1(\f,\p)\gtrsim\int(2\rho-\sigma-\tau)(\MA(\f)+\MA(\p))-2\e.
  \end{equation*}
  We conclude by noting that $2\rho-\sigma-\tau\ge\frac12|\f-\p|$ and letting $\e\to0$.
\end{proof}

%
%
\subsection{The Darvas metric on $\cM^1$} 
By~\cite[Proposition~12.7]{trivval}, the Monge--Amp\`ere operator induces a topological embedding with dense image
$$
\MA\colon\cE^1/\R\hto\cM^1,
$$
where $\cE^1$ and $\cM^1$ are both equipped with the strong topology. In particular, the quotient topology of $\cE^1/\R$ is Hausdorff. Since the action of $\R$ on $\cE^1$ by translation preserves $\dd_1$, the topology of $\cE^1/\R$ is defined by the quotient metric 
\begin{equation}\label{equ:quotd1}
  \quotd_1(\f,\p)=\inf_{c\in\R}\dd_1(\f+c,\p).
\end{equation}
Note also that the isometric surjection $\FS\colon(\cT_\R,\dd_1)\twoheadrightarrow(\cH_\R,\dd_1)$, being equivariant with respect to the action of $\R$, induces an isometric surjection
\begin{equation}\label{equ:FSquot}
\FS\colon(\cT_\R/\R,\quotd_1)\twoheadrightarrow(\cH_\R/\R,\quotd_1),
\end{equation}
where $\quotd_1$ respectively denotes the restriction of the quotient metric on $\cN_\R/\R$ and $\cE^1/\R$. 

    As in the proof of Lemma~\ref{lem:distach}, we have: 

\begin{lem}\label{lem:quotdbd} For all $\f,\p\in\cE^1$, there exists $c\in\R$ such that 
$\quotd_1(\f,\p)=\dd_1(\f+c,\p)$ and $|c|\le 2\dd_1(\f,\p)\lesssim\max\{\bii(\f),\bii(\p)\}$. 
\end{lem}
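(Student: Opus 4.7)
The proof breaks into two independent pieces: (a) achieving the infimum in~\eqref{equ:quotd1} with $|c|\le 2\dd_1(\f,\p)$, and (b) bounding $\dd_1(\f,\p)$ by $\max\{\bii(\f),\bii(\p)\}$. Both are straightforward extensions of facts already available on $\cH_\R$ or $\cT_\R$, and the argument essentially replays Lemma~\ref{lem:distach}.

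For (a), the key observation is that $\dd_1(\f,\f+c)=|c|$ for every $\f\in\cE^1$ and $c\in\R$. On $\cT_\R$ this is the translation identity~\eqref{equ:equivdp}; via the surjective isometry $\FS\colon(\cT_\R,\dd_1)\twoheadrightarrow(\cH_\R,\dd_1)$ (which is $\R$-equivariant) it passes to $\cH_\R$, and then extends to $\cE^1$ by continuity of $\dd_1$ along decreasing approximation by elements of $\cH_\R$ (Theorem~\ref{thm:Darvas}). Granted this, the triangle inequality gives
\begin{equation*}
|c|=\dd_1(\f,\f+c)\le \dd_1(\f,\p)+\dd_1(\p,\f+c),
\end{equation*}
so that any $c$ with $\dd_1(\f+c,\p)\le\dd_1(\f,\p)$ automatically satisfies $|c|\le 2\dd_1(\f,\p)$. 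Therefore
\begin{equation*}
\quotd_1(\f,\p)=\inf\{\dd_1(\f+c,\p)\mid c\in[-2\dd_1(\f,\p),2\dd_1(\f,\p)]\}.
\end{equation*}
The function $c\mapsto\dd_1(\f+c,\p)$ is $1$-Lipschitz (again by the triangle inequality and $\dd_1(\f+c,\f+c')=|c-c'|$), hence continuous, and the infimum is achieved on this compact interval.

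For (b), I would simply combine the H\"older comparison~\eqref{equ:d1bii} of Lemma~\ref{lem:d1bii},
\begin{equation*}
\dd_1(\f,\p)\lesssim \bii(\f,\p)^{\a}\max\{\bii(\f),\bii(\p)\}^{1-\a},
\end{equation*}
with the quasi-triangle inequality $\bii(\f,\p)\lesssim\bii(\f)+\bii(\p)\lesssim\max\{\bii(\f),\bii(\p)\}$ to obtain $\dd_1(\f,\p)\lesssim\max\{\bii(\f),\bii(\p)\}$. Both estimates were extended from $\cH_\R$ to $\cE^1$ in the course of proving Theorem~\ref{thm:Darvas}, so nothing further is needed.

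There is no real obstacle here: the only subtle point is that the translation identity $\dd_1(\f,\f+c)=|c|$ must be promoted from $\cH_\R$ to $\cE^1$, but this is immediate from the defining approximation of the Darvas metric. Everything else is a direct application of the triangle inequality and Lemma~\ref{lem:d1bii}.
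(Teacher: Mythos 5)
Your proof is correct and follows the same route as the paper, which simply cites the compactness argument of Lemma~\ref{lem:distach} (translation identity $\dd_1(\f,\f+c)=|c|$, triangle inequality to confine $c$ to a compact interval, continuity of $c\mapsto\dd_1(\f+c,\p)$) and leaves the final bound $\dd_1(\f,\p)\lesssim\max\{\bii(\f),\bii(\p)\}$ implicit as a consequence of~\eqref{equ:d1bii} and the quasi-triangle inequality for $\bii$. Your explicit verification that the translation identity survives the passage from $\cH_\R$ to $\cE^1$ via the approximating sequences is exactly the needed justification.
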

This provides another reason why~\eqref{equ:quotd1} defines a metric on $\cE^1/\R$.

\begin{thm}\label{thm:d1M1} There exists a unique metric $\dd_1$ on $\cM^1$ that defines the strong topology and restricts to the quotient metric~\eqref{equ:quotd1} on $\cE^1/\R\hto\cM^1$. Furthermore, the metric space $(\cM^1,\dd_1)$ is complete.
\end{thm}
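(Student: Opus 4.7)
My plan is to realize $(\cM^1,\mathrm{strong})$ as the metric completion of $(\cE^1/\R,\quotd_1)$, yielding existence, uniqueness, and completeness simultaneously.

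\emph{Uniqueness} will be immediate: a metric defining the strong topology is strongly continuous on $\cM^1\times\cM^1$, so two such metrics agreeing on the strongly dense subset $\MA(\cE^1/\R)\times\MA(\cE^1/\R)$ agree everywhere. For \emph{existence}, I will first show that $\quotd_1$ defines the strong topology on $\cE^1/\R$. The observation is that minimizing $\bii(\f+c,\p)=\ii(\f,\p)+|\sup(\f+c)-\sup\p|$ over $c\in\R$ (take $c=\sup\p-\sup\f$) returns $\ii(\f,\p)$; combined with the H\"older comparison $\dd_1\approx\bii$ of Lemma~\ref{lem:d1bii}, this will give a uniform equivalence between $\quotd_1$ and a suitable power of $\ii$ on strongly bounded subsets of $\cE^1/\R$. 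Since $\ii$ is translation-invariant and defines the strong topology on $\cE^1/\R$, so does $\quotd_1$.

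Next I will show that a sequence $(\f_j)$ in $\cE^1/\R$ is $\quotd_1$-Cauchy iff $(\MA(\f_j))$ is strongly Cauchy in $\cM^1$: both conditions force bounded energy, after which the uniform equivalence above applies. Given $\mu,\nu\in\cM^1$, density provides sequences $(\f_j),(\p_j)\subset\cE^1$ with $\MA(\f_j)\to\mu$ and $\MA(\p_j)\to\nu$ strongly; then $(\quotd_1(\f_j,\p_j))_j$ is a Cauchy sequence in $\R$, and I will set
\[
  \dd_1(\mu,\nu):=\lim_j\quotd_1(\f_j,\p_j).
\]
Independence of the choice of sequences, the metric axioms, and the fact that this $\dd_1$ defines the strong topology on $\cM^1$ will all follow from the analogous facts on $\cE^1/\R$ by density and continuity.

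\emph{Completeness} of $(\cM^1,\dd_1)$ is then automatic: any $\dd_1$-Cauchy sequence is, by construction, strongly Cauchy and of bounded energy, and weak compactness of probability measures together with the convexity and lower semicontinuity of $\en^\vee$ will produce a strong limit, which is also the $\dd_1$-limit since $\dd_1$ defines the strong topology. The main obstacle throughout will be making the comparison in step one genuinely uniform on bounded-energy subsets: the H\"older exponent $\alpha=2^{-n}$ in Lemma~\ref{lem:d1bii} means the constants degrade with the energy bound, so one must check that strong (respectively $\quotd_1$) Cauchy sequences stay within such a bounded set.
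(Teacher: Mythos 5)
Your route to existence and uniqueness is essentially the paper's: compare $\quotd_1$ with the translation-invariant functional $\ii$ via H\"older estimates (your minimization of $\bii(\f+c,\p)$ over $c$ is exactly the content of Lemmas~\ref{lem:quotdbd} and~\ref{lem:bd1ii}), then extend $\dd_1$ to $\cM^1$ by Cauchy limits along approximating sequences from the dense image of $\MA$. One caveat already here: to speak of ``strongly Cauchy'' sequences in $\cM^1$ you need a uniform structure, not merely a topology. The paper gets one from the quasi-metric $\ii^\vee$ of~\cite[Theorem~10.12]{trivval}, which defines the strong topology on \emph{all} of $\cM^1$ and satisfies $\ii^\vee(\MA(\f),\MA(\p))\approx\ii(\f,\p)$; without some such quasi-metric, your final step of transferring ``defines the strong topology'' from the dense image to all of $\cM^1$ ``by density and continuity'' has nothing to transfer along.

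The genuine gap is completeness, which is not automatic. A $\dd_1$-Cauchy sequence $(\mu_j)$ does have bounded energy, and weak compactness of $\cM$ together with lower semicontinuity of $\en^\vee$ do produce a weak cluster point $\mu$ with $\en^\vee(\mu)\le\liminf_j\en^\vee(\mu_j)$, hence $\mu\in\cM^1$. But strong convergence $\mu_j\to\mu$ requires in addition $\en^\vee(\mu_j)\to\en^\vee(\mu)$, i.e.\ the reverse inequality along the sequence, and lower semicontinuity gives no control in that direction: a weak cluster point of a strongly Cauchy sequence need not be a strong cluster point. That it nevertheless is one here is precisely~\cite[Theorem~10.14]{trivval} (completeness of $(\cM^1,\ii^\vee)$), a nontrivial result which the paper simply cites and whose proof uses the variational structure of the Monge--Amp\`ere problem rather than soft compactness. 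A warning sign that no soft argument can succeed: the exactly analogous completeness statement for $(\cE^1,\dd_1)$ is \emph{false} in the absence of the envelope property (Theorem~\ref{thm:Darvas}~(ii)), so completeness of $\cM^1$ for an arbitrary polarized variety cannot follow from general principles about Cauchy sequences, weak compactness and lsc functionals. You should either invoke~\cite[Theorem~10.14]{trivval} directly or supply the missing argument that $\en^\vee(\mu_j)\to\en^\vee(\mu)$.
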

Note that completeness this time holds with or without the envelope property, in contrast with Theorem~\ref{thm:Darvas}. As with the latter, the proof is based on a comparison of $\quotd_1$ with the translation invariant functional $\ii\colon\cE^1\times\cE^1\to\R_{\ge 0}$.

\begin{lem}\label{lem:bd1ii} The quasi-metrics $\quotd_1$ and $\ii$ on $\cE^1/\R$ are H\"older comparable, \ie
\begin{align}
 \quotd_1(\f,\p)&\lesssim\ii(\f,\p)^{\a}\max\{\ii(\f),\ii(\p)\}^{1-\a},\label{equ:d1ii}\\
\ii(\f,\p)&\lesssim\quotd_1(\f,\p)^{\a}\max\{\quotd_1(\f),\quotd_1(\p)\}^{1-\a}\label{equ:iid1}
\end{align}
for all $\f,\p\in\cE^1$, with $\a:=1/2^n$. In particular, $\quotd_1(\f)\approx\ii(\f)$. 
\end{lem}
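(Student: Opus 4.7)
My plan is to reduce to Lemma~\ref{lem:d1bii} via a normalization of the supremum. The key observation is that $\ii$ and $\quotd_1$ are both invariant under adding constants to either argument, so I can freely replace $\f,\p$ by $\f - \sup\f$ and $\p - \sup\p$ and thereby assume $\sup\f = \sup\p = 0$. Under this normalization, the sup-correction in $\bii$ vanishes, so $\bii(\f,\p) = \ii(\f,\p)$, $\bii(\f) = \ii(\f)$, $\bii(\p) = \ii(\p)$, and Lemma~\ref{lem:d1bii} will then directly translate into the desired comparison---modulo the gap between $\dd_1$ and $\quotd_1$.

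For~\eqref{equ:d1ii} I will simply use $\quotd_1(\f,\p) \le \dd_1(\f,\p)$ together with~\eqref{equ:d1bii}. Specializing to $\p=0$ yields one direction of the ``in particular'' claim $\ii(\f) \approx \quotd_1(\f)$. For the reverse direction I will pick $c$ realizing $\quotd_1(\f) = \dd_1(\f+c, 0)$ (existence guaranteed by Lemma~\ref{lem:quotdbd}), apply~\eqref{equ:biid1} to the pair $(\f+c, 0)$, and note that the max collapses because $\dd_1(0) = 0$. The resulting estimate $\bii(\f+c, 0) \lesssim \dd_1(\f+c, 0)$ combines with the identity $\bii(\f+c, 0) = \ii(\f) + |c|$ (valid since $\sup\f = 0$) to yield $\ii(\f) \lesssim \quotd_1(\f)$. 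At this point I will record the chain of equivalences $\dd_1(\f) \approx \bii(\f) = \ii(\f) \approx \quotd_1(\f)$ on normalized $\f$, to be used as input below.

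For~\eqref{equ:iid1} I will again pick $c$ realizing $\quotd_1(\f,\p) = \dd_1(\f+c, \p)$, apply~\eqref{equ:biid1} to $(\f+c, \p)$, and note that $\ii(\f,\p) \le \bii(\f+c,\p) = \ii(\f,\p) + |c|$ gives $\ii(\f,\p) \lesssim \quotd_1(\f,\p)^{\a}\max\{\dd_1(\f+c),\dd_1(\p)\}^{1-\a}$. What remains is to replace the $\dd_1$-factors by $\quotd_1$-factors. The bound $|c| \lesssim \max\{\bii(\f),\bii(\p)\} = \max\{\ii(\f),\ii(\p)\}$ from Lemma~\ref{lem:quotdbd}, combined with the triangle inequality $\dd_1(\f+c,0) \le |c| + \dd_1(\f,0)$ and the previously established equivalences, will give $\dd_1(\f+c) \lesssim \max\{\quotd_1(\f),\quotd_1(\p)\}$, and similarly $\dd_1(\p) \approx \quotd_1(\p)$; together these close~\eqref{equ:iid1}.

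The trickiest part, as always in arguments of this kind, will be avoiding circularity: the bound on the optimal $c$ from Lemma~\ref{lem:quotdbd} is phrased in terms of $\bii$, not $\ii$, so the normalization must come first; and the passage from $\dd_1(\f+c)$ to $\quotd_1(\f)$ relies on already having $\ii \approx \quotd_1$ in hand. Ordering the arguments as (i) normalize, (ii) prove~\eqref{equ:d1ii} and deduce the ``in particular'' claim, (iii) close~\eqref{equ:iid1} using the output of (ii), keeps the logic linear.
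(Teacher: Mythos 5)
Your proposal is correct and follows essentially the same route as the paper: normalize so that $\sup\f=\sup\p=0$ (making $\bii=\ii$), deduce~\eqref{equ:d1ii} from $\quotd_1\le\dd_1$ and~\eqref{equ:d1bii}, and obtain~\eqref{equ:iid1} by applying~\eqref{equ:biid1} to $(\f+c,\p)$ with $c$ chosen via Lemma~\ref{lem:quotdbd}. The only (harmless) difference is that you establish $\ii(\f)\approx\quotd_1(\f)$ by a separate application of~\eqref{equ:biid1} to $(\f+c,0)$, whereas the paper extracts it from the self-improving inequality $\ii(\f)\lesssim\quotd_1(\f)^{\a}\ii(\f)^{1-\a}$; both yield the same conclusion.
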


\begin{proof} By translation invariance of $\ii$ and $\quotd_1$, we may assume $\sup\f=\sup\p=0$. Then~\eqref{equ:d1ii} follows directly from~\eqref{equ:d1bii}, since $\quotd_1(\f,\p)\le\dd_1(\f,\p)$. By Lemma~\ref{lem:quotdbd}, we can find $c\in\R$ such that $\quotd_1(\f,\p)=\dd_1(\f+c,\p)$ and $|c|\lesssim\max\{\ii(\f),\ii(\p)\}$. By~\eqref{equ:biid1} we infer 
\begin{equation}\label{equ:dc}
\ii(\f,\p)\le\bii(\f+c,\p)\lesssim \quotd_1(\f,\p)^{\a}\max\{\ii(\f),\ii(\p)\}^{1-\a}. 
\end{equation}
In particular, $\ii(\f)\lesssim\quotd_1(\f)^\a\ii(\f)^{1-\a}$, hence $\ii(\f)\lesssim\quotd_1(\f)$, and~\eqref{equ:iid1} follows. 
\end{proof}

\begin{proof}[Proof of Theorem~\ref{thm:d1M1}] Since $\cE^1/\R\hto\cM^1$ has dense image, uniqueness is clear. By~\cite[Theorem 10.12]{trivval}, the strong topology of $\cM^1$ is defined by a certain quasi-metric $\ii^\vee$, that further satisfies $\ii^\vee(\MA(\f),\MA(\p))\approx\ii(\f,\p)$. Using the estimates of Lemma~\ref{lem:bd1ii} and arguing just as in the proof of Theorem~\ref{thm:Darvas}, we infer the existence of an extension of $\quotd_1$ to a pseudo-metric $\dd_1$ on $\cM^1$ such that 
\begin{align}
\dd_1(\mu,\d_{v_\triv})\approx & \ii^\vee(\mu,\d_{v_\triv})\approx\en^\vee(\mu)\label{equ:normmu} \\
\dd_1(\mu,\mu')\lesssim &\ii^\vee(\mu,\mu')^{\a}\max\{\en^\vee(\mu),\en^\vee(\mu')\}^{1-\a}\label{equ:quotvee}\\
\ii^\vee(\mu,\mu')\lesssim & \dd_1(\mu,\mu')^{\a}\max\{\en^\vee(\mu)),\en^\vee(\mu')\}^{1-\a}\label{equ:veequot}
\end{align}
for all $\mu,\nu\in\cM^1$ (recalling that $\MA(0)=\d_{v_\triv}$). This shows that $\dd_1$ separates points, and hence is a metric on $\cM^1$, which further shares the same convergent and Cauchy sequences as $\ii^\vee$. It thus defines the strong topology of $\cM^1$, and $(\cM^1,\dd_1)$ is complete, because $(\cM^1,\ii^\vee)$ is complete by~\cite[Theorem~10.14]{trivval}.  
\end{proof}

Combining the above estimates with a key estimate for Monge--Amp\`ere integrals from~\cite{trivval}, we get the following H\"older continuity property: 

\begin{thm}\label{thm:BBGZ} There exist $\a_1,\a_2,\a_3\in\R_{>0}$, only depending on $n$, such that $\sum_i \a_i=1$ and 
\begin{equation}\label{equ:BBGZabs}
\left|\int\left|\f-\f'\right|(\mu-\mu')\right|\lesssim\dd_1(\f,\f')^{\a_1}\dd_1(\mu,\mu')^{\a_2} M^{\a_3}
\end{equation}
for all $\f,\f'\in\cE^1$ and $\mu,\mu'\in\cM^1$, where $M:=\max\{\ii(\f),\ii(\f'),\en^\vee(\mu),\en^\vee(\mu')\}$.    Further, there exists $\a\in(0,1)$ only depending on $n$ such that 
\begin{equation}\label{equ:BBGZabs2}
\|\f-\f'\|_{L^1(\mu)}\lesssim\dd_1(\f,\f')^\a\max\{\ii(\f),\ii(\f'),\en^\vee(\mu)\}^{1-\a}.
\end{equation}
\end{thm}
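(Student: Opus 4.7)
My plan is to reduce both statements to the case $\f, \f' \in \cH_\R$ (and, for (1), to measures of the form $\MA(u), \MA(u')$ when the envelope property is available) by density and joint continuity. The fundamental technical input is the Hodge-index–type bilinear estimate from~\cite{trivval} underlying Lemma~\ref{lem:bd1ii}, which I will use in the form: for $w \in \cE^1$ and $\nu,\nu' \in \cM^1$,
\[
\left|\int w\,(\nu-\nu')\right| \lesssim \dd_1(\nu,\nu')^{\a'} \max\{\ii(w),\en^\vee(\nu),\en^\vee(\nu')\}^{1-\a'}
\]
for some $\a'=\a'(n)\in(0,1)$. Once this is in hand, the rest is bookkeeping.

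For (2), the starting point is the exact identity $\int|\f-\f'|(\MA(\f)+\MA(\f'))=\ii_1(\f,\f')\approx\dd_1(\f,\f')$ from Theorem~\ref{thm:d1andI1}. I would then split
\[
\int|\f-\f'|\mu
= \int|\f-\f'|\MA(\f) + \int|\f-\f'|(\mu-\MA(\f)),
\]
bound the first term by $\ii_1(\f,\f')\lesssim\dd_1(\f,\f')$, and for the second write $|\f-\f'|=2(\f\vee\f')-\f-\f'$ with $\f\vee\f'\in\cE^1$ and $\ii(\f\vee\f')\lesssim\max\{\ii(\f),\ii(\f')\}$ by~\eqref{equ:biimax2}. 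Applying the bilinear estimate above with $\nu=\mu$, $\nu'=\MA(\f)$, together with $\en^\vee(\MA(\f))\approx\ii(\f)\lesssim M$ and $\dd_1(\mu,\MA(\f))\lesssim M$, yields a bound of the form $\dd_1(\f,\f')+M$, which after interpolating with the trivial bound $\|\f-\f'\|_{L^1(\mu)}\lesssim M$ gives the H\"older estimate $\dd_1(\f,\f')^{\a}M^{1-\a}$.

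For (1), the analogous decomposition gives
\[
\int|\f-\f'|(\mu-\mu') = 2\int(\f\vee\f')(\mu-\mu') - \int\f(\mu-\mu') - \int\f'(\mu-\mu'),
\]
and each of the three integrals is controlled by the bilinear estimate applied with $w\in\{\f,\f',\f\vee\f'\}$ and $(\nu,\nu')=(\mu,\mu')$, producing
\[
\left|\int|\f-\f'|(\mu-\mu')\right|\lesssim\dd_1(\mu,\mu')^{\a'} M^{1-\a'}.
\]
This is a two-factor bound. To interpolate in a factor of $\dd_1(\f,\f')$, I would combine it via geometric mean with the elementary bound
\[
\left|\int|\f-\f'|(\mu-\mu')\right|\le \|\f-\f'\|_{L^1(\mu)} + \|\f-\f'\|_{L^1(\mu')}\lesssim\dd_1(\f,\f')^{\a}M^{1-\a}
\]
coming from (2). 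Any convex combination of the two exponent triples $(0,\a',1-\a')$ and $(\a,0,1-\a)$ produces a triple $(\a_1,\a_2,\a_3)$ of strictly positive exponents with $\sum\a_i=1$, giving the claimed estimate.

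The main obstacle will be extracting the key bilinear $\dd_1$-H\"older estimate on $\cM^1$ in the precise form stated above. In~\cite{trivval} the analogous inequality is formulated in terms of $\ii^\vee$ (or $\ii$ on the potential side), so the work consists of translating through the comparison $\dd_1(\mu,\mu')\lesssim\ii^\vee(\mu,\mu')^{\b}M^{1-\b}$ inherent in Theorem~\ref{thm:d1M1} (see~\eqref{equ:quotvee}–\eqref{equ:veequot}) while keeping track of exponents to ensure that after all interpolations $\a_1,\a_2,\a_3$ are strictly positive. Doing this systematically with Cauchy--Schwarz on mixed Monge--Amp\`ere integrals (the Hodge-index input) is where the nontrivial estimates enter.
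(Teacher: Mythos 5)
There is a genuine gap, and it lies in the precise form of your ``fundamental technical input.'' The estimate you isolate,
$\left|\int w\,(\nu-\nu')\right|\lesssim\dd_1(\nu,\nu')^{\a'}\max\{\ii(w),\en^\vee(\nu),\en^\vee(\nu')\}^{1-\a'}$,
is true (it follows from~\cite[Theorem~10.3]{trivval} applied to the pair $(w,0)$, together with~\eqref{equ:veequot}), but it is strictly weaker than what is needed: its only small factor is $\dd_1(\nu,\nu')$, and it carries no factor recording the closeness of the \emph{potentials}. This breaks your proof of~\eqref{equ:BBGZabs2}: applying it with $\nu=\mu$, $\nu'=\MA(\f)$ gives only $\left|\int|\f-\f'|(\mu-\MA(\f))\right|\lesssim M$, with no smallness in $\dd_1(\f,\f')$, so your total bound is of the form $\dd_1(\f,\f')+M$. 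Interpolating the two upper bounds $\dd_1(\f,\f')+M$ and $M$ can only produce $(\dd_1(\f,\f')+M)^{\theta}M^{1-\theta}\ge M$, which does not tend to $0$ as $\dd_1(\f,\f')\to0$ with $M$ fixed; it cannot yield $\dd_1(\f,\f')^{\a}M^{1-\a}$. And since your proof of~\eqref{equ:BBGZabs} obtains its $\dd_1(\f,\f')^{\a_1}$ factor exclusively by taking a geometric mean with~\eqref{equ:BBGZabs2} (your direct argument only gives the exponent triple $(0,\a',1-\a')$, whereas the theorem requires $\a_1>0$), both halves of the statement are left unproved.

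The repair is to keep the bilinear estimate of~\cite[Theorem~10.3]{trivval} in its full strength,
$\left|\int(\f-\f')(\mu-\mu')\right|\lesssim\ii(\f,\f')^{\a}\,\ii^\vee(\mu,\mu')^{1/2}M^{1/2-\a}$,
whose first factor is exactly the missing dependence on the potentials; converting $\ii$ and $\ii^\vee$ into $\dd_1$ via~\eqref{equ:iid1} and~\eqref{equ:veequot} already gives the signed version of~\eqref{equ:BBGZabs}. Your idea of passing to the absolute value through $\tau:=\f\vee\f'$ is the right one, but it should be implemented by applying the \emph{signed} trilinear estimate to the pairs $(\tau,\f')$ and $(\f',\f)$ against the same $(\mu,\mu')$, using $\dd_1(\tau,\f')\approx\ii_1(\tau,\f')\le\ii_1(\f,\f')\approx\dd_1(\f,\f')$ (Theorem~\ref{thm:d1andI1} together with~\eqref{equ:I1max}) and $\ii(\tau)\lesssim M$. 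The logical order is then the reverse of yours: \eqref{equ:BBGZabs} comes first, and~\eqref{equ:BBGZabs2} follows by specializing $\mu':=\MA(\f)$, for which $\en^\vee(\mu')\approx\ii(\f)$ and $\dd_1(\mu,\mu')\lesssim M$, and then adding the term $\int|\f-\f'|\,\MA(\f)\lesssim\dd_1(\f,\f')$.
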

\begin{proof} By~\cite[Theorem~10.3]{trivval}, we have 
$$
\left|\int(\f-\f')(\mu-\mu')\right|\lesssim\ii(\f,\f')^\a\ii^\vee(\mu,\mu')^{\frac 12} M^{\frac 12-\a}. 
$$
Injecting~\eqref{equ:iid1} and~\eqref{equ:veequot} yields
\begin{equation}\label{equ:BBGZ}
\left|\int(\f-\f')(\mu-\mu')\right|\lesssim\dd_1(\f,\f')^{\a_1}\dd_1(\mu,\mu')^{\a_2} M^{\a_3}
\end{equation}
with $\a_1,\a_2,\a_3$ as above. 
Next, write
$$
|\f-\f'|=2(\tau-\f')+(\f'-\f)
$$
with $\tau:=\f\vee\f'\in\cE^1$. On the one hand, we have $\ii(\tau)\lesssim M$. On the other hand, Theorem~\ref{thm:d1andI1} and~\eqref{equ:I1max} yield
$$
\dd_1(\tau,\f')\approx\ii_1(\tau,\f')\le\ii_1(\f,\f')\approx\dd_1(\f,\f'). 
$$
Applying~\eqref{equ:BBGZ} to $\tau,\f'$ and $\f',\f$ now yields~\eqref{equ:BBGZabs}.    Assume now $\mu'=\MA(\f)$. Then $\en^\vee(\mu)\approx\ii(\f)$, and hence 
$$
\left|\int\left|\f-\f'\right|(\mu-\MA(\f))\right|\lesssim\dd_1(\f,\f')^\a\max\{\ii(\f),\ii(\f'),\en^\vee(\mu)\}^{1-\a}.
$$
By Theorem~\ref{thm:d1andI1}, we have on the other hand $\int|\f-\f'|\MA(\f)\lesssim\dd_1(\f,\f')$, and summing up these estimates yields~\eqref{equ:BBGZabs2}.  
\end{proof}

%
%
%
%
  
\section{Divisorial and maximal norms}\label{sec:divmaxnorm}
The restriction of the pseudometric $\dd_1$ to the subspace $\cN^\hom_\R\subset\cN_\R$ of homogeneous norms is still not a metric unless $\dim X=0$, see Example~\ref{exam:subvar2}. Here we study further subspaces on which $\dd_1$ does induce a metric.

One such subspace consists of divisorial norms, defined by finitely many divisorial valuations. These play an important role in the notion of divisorial stability introduced and studied in~\cite{nakstab2}. We then show that, at least in characteristic zero, there is a canonical maximal subspace of $\cN^\hom_\R$ on which $\dd_1$ is a metric. In particular, we prove Theorem~D.
%
%
\subsection{General infimum norms}\label{sec:signorm}
The following construction generalizes the one in~\S\ref{sec:infnorm}.

\begin{defi}
  For any non-pluripolar set $\Sigma\subset X^\an$, and any bounded function $\f\colon\Sigma\to\R$, let $\IN_\Sigma(\f)\in\cN_\R^\hom$ denote the homogeneous norm defined for $s\in R_m$ by
  \begin{equation}\label{equ:INsig}
    \IN_\Sigma(\f)(s)=\inf_{v\in\Sigma}\{v(s)+m\f(v)\}. 
  \end{equation}
\end{defi}
Note that $\exp(-\IN_\Sigma(\f)(s))$ coincides with the more usual supnorm $\sup_\Sigma|s|e^{-m\f}$. The filtration corresponding to $\IN_\Sigma(\f)$ is given by 
\begin{equation*}
\Filt^\la R_m=\left\{s\in R_m\mid v(s)+m\f(v)\ge\la\ \text{for all $v\in\Sigma$}\right\},\quad\la\in\R.
\end{equation*}
The condition that $\Sigma$ is non-pluripolar, which is equivalent to $\tee(\Sigma)<\infty$ (see~\eqref{equ:AT}) and holds as soon as $\Sigma\cap X^\lin\ne\emptyset$, guarantees that $\IN_\Sigma(\f)$ is indeed a (linearly bounded) norm. More precisely: 

\begin{lem}\label{lem:Sigpp} For any subset $\Sigma\subset X^\an$ and any bounded function $\f\colon\Sigma\to\R$, \eqref{equ:INsig} defines a (linearly bounded) norm iff the closure $\overline{\Sigma}\subset X^\an$ is non-pluripolar. Further, we then have $\tee(\overline{\Sigma})=\la_{\max}(\IN_\Sigma(0))$. 
\end{lem}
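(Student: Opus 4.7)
Proof plan.

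The plan is to establish the identity $\tee(\overline\Sigma) = \la_{\max}(\IN_\Sigma(0))$ (with $+\infty$ allowed on either side), from which both assertions of the lemma follow. The algebraic properties of $\IN_\Sigma(\f)$ --- compatibility with the grading, $k^\times$-invariance, non-Archimedean triangle and superadditivity --- are routine consequences of the definition and the corresponding properties of each $v\in\Sigma$; only finiteness and linear boundedness require work.

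For the lower bound $\tee(\overline\Sigma)\ge\la_{\max}(\IN_\Sigma(0))$, I test the envelope~\eqref{equ:AT} against $\f_s:=m^{-1}\log|s|$ for any nonzero $s\in R_m$ with $m$ sufficiently divisible. The function $\f_s$ lies in $\PSH$ (being a decreasing limit of the truncations $m^{-1}\max\{\log|s|,-c\}\in\cH_\Q$ as $c\to\infty$) and is continuous as an $[-\infty,0]$-valued function on $X^\an$, so $\sup_\Sigma\f_s=\sup_{\overline\Sigma}\f_s$. A direct computation gives $\sup\f_s=\f_s(v_\triv)=0$ and $\sup_\Sigma\f_s=-m^{-1}\IN_\Sigma(0)(s)$, hence $\tee(\overline\Sigma)\ge m^{-1}\IN_\Sigma(0)(s)$; the bound follows by taking the supremum over $s$ and $m$.

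For the reverse bound I first consider $\f=m^{-1}\max_j\{\log|s_j|+\la_j\}\in\cH_\Q$: here $\sup\f=m^{-1}\max_j\la_j$ is attained at $v_\triv$, and picking an index $j^*$ realizing this maximum, the estimate $\sup_\Sigma\f\ge m^{-1}(\la_{j^*}-\inf_\Sigma v(s_{j^*}))$ yields $\sup\f-\sup_\Sigma\f\le m^{-1}\IN_\Sigma(0)(s_{j^*})\le\la_{\max}(\IN_\Sigma(0))$. To extend this to arbitrary $\f\in\PSH$, I approximate $\f$ from above by a decreasing net $\f_k\in\cH_\Q$. The convergence $\sup\f_k=\f_k(v_\triv)\searrow\f(v_\triv)=\sup\f$ is automatic, and a Dini-type argument --- picking $w_k\in\overline\Sigma$ with $\f_k(w_k)=\sup_{\overline\Sigma}\f_k$, extracting a limit point $w\in\overline\Sigma$ by compactness, and using $\limsup_k\f_k(w_k)\le\f_{k_0}(w)$ for each fixed $k_0$ (by continuity of $\f_{k_0}$ together with the decreasing hypothesis), then letting $k_0\to\infty$ so that $\f_{k_0}(w)\searrow\f(w)\le\sup_{\overline\Sigma}\f$ --- shows $\sup_{\overline\Sigma}\f_k\searrow\sup_{\overline\Sigma}\f$. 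Since each $\f_k$ is continuous, $\sup_\Sigma\f_k=\sup_{\overline\Sigma}\f_k$, so passing to the limit in the bound above yields $\sup\f-\sup_{\overline\Sigma}\f\le\la_{\max}(\IN_\Sigma(0))$, hence the desired inequality upon taking the supremum over $\f\in\PSH$.

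With the identity in hand, both claims of the lemma follow. The simple bounds $-m\|\f\|_\infty\le\IN_\Sigma(\f)(s)\le\IN_\Sigma(0)(s)+m\|\f\|_\infty$ --- the upper one obtained by selecting an $\e$-near minimizer $v_\e\in\Sigma$ of $v(s)$ and letting $\e\to0$ --- together with $\IN_\Sigma(0)(s)\le m\la_{\max}(\IN_\Sigma(0))$ show that $\IN_\Sigma(\f)$ is a linearly bounded (and in particular finite-valued) norm as soon as $\la_{\max}(\IN_\Sigma(0))=\tee(\overline\Sigma)<\infty$. Conversely, from $\IN_\Sigma(\f)(s)\ge\IN_\Sigma(0)(s)-m\|\f\|_\infty$ we get $\la_{\max}(\IN_\Sigma(\f))=+\infty$ whenever $\overline\Sigma$ is pluripolar, so $\IN_\Sigma(\f)$ then fails to be linearly bounded. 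The main subtlety will be the Dini-type convergence step, where continuous approximants are needed to bridge the gap between $\sup_\Sigma$ and $\sup_{\overline\Sigma}$ for the limit usc function.
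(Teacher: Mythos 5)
Your proof is correct and takes essentially the same route as the paper's: your $\la_{\max}(\IN_\Sigma(0))$ is exactly the auxiliary threshold $\tee'(\Sigma)$ the paper introduces, the lower bound is obtained by testing the envelope against $m^{-1}\log|s|$, and the upper bound by the Fubini--Study representation followed by the same decreasing-net/compactness argument. The only cosmetic difference is that you carry $\overline{\Sigma}$ along via continuity of the approximants instead of first reducing to the case where $\Sigma$ is closed.
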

\begin{proof} Since $\f$ is bounded, it is clear that $\IN_\Sigma(\f)$ is a norm iff 
$$
\tee'(\Sigma):=\sup\left\{m^{-1}\inf_{v\in\Sigma} v(s)\mid s\in R_m\setminus\{0\}\text{ with }m\text{ sufficiently divisible}\right\}
$$
is finite. By continuity of $v\mapsto v(s)$ for any section $s$, we have $\tee'(\Sigma)=\tee'(\overline{\Sigma})$, and we may thus further assume that $\Sigma$ is closed. It will then be enough to show that $\tee(\Sigma)=\tee'(\Sigma)$ (the case of a single point being~\cite[Lemma~4.46]{trivval}). Note that $\tee'(\Sigma)=\sup_\f(-\sup_\Sigma\f)$ where $\f$ runs over $L$-psh functions of the form $\f=m^{-1}\log|s|$ with $s\in R_m\setminus\{0\}$. Since $\sup_{X^\an}\f=\f(v_\triv)=0$, we infer $\tee(\Sigma)\ge\tee'(\Sigma)$. Conversely, pick $\f\in\PSH$. If $\f\in\cH_\R$, then writing $\f$ as in~\eqref{equ:FSalt} yields $\sup_{X^\an}\f\le\sup_\Sigma\f+\tee'(\Sigma)$. In the general case, write $\f$ as the limit of a decreasing net in $(\f_i)$ in $\cH_\R$. Since $\sup_{X^\an}\f_i=\f_i(v_\triv)$ converges to $\sup_{X^\an}\f=\f(v_\triv)$, it suffices to show $\sup_\Sigma\f_i\to\sup_{\Sigma}\f$. Since $X^\an$, and hence $\Sigma$, are compact, we can find $v_i\in\Sigma$ such that $\f_i(v_i)=\sup_\Sigma\f_i$, for each $i$. After passing to a subnet, we may further assume $v_i\to v\in\Sigma$. If $i\le j$ then $\f_i(v_j)\ge\f_j(v_j)=\sup_\Sigma\f_j$, and letting $j\to\infty$ yields $\f_i(v)\ge\lim_j\sup_\Sigma\f_j$. Since $\lim_i\f_i(v)=\f(v)$, we infer $\sup_\Sigma\f\ge\f(v)\ge\lim_j\sup_\Sigma\f_j$, and the result follows. 
\end{proof}

\begin{rmk} Except in the trivial case $\dim X=0$, we can always find a pluripolar subset $\Sigma\subset X^\an$ such that $\overline{\Sigma}$ is non-pluripolar.  Indeed, the trivial valuation $v_\triv$, which is non-pluripolar, lies in the closure of $X(k)\subset X^\an$. By~\cite{Poi}, $v_\triv$ thus lies in the closure of a countable subset $\Sigma\subset X(k)$, which is necessarily pluripolar (see~\cite[Lemma~4.37]{trivval}). 
\end{rmk}

For a fixed non-pluripolar subset $\Sigma\subset X^\an$, we write 
$$
\cN^\Sigma_\R\subset\cN_\R^\hom
$$
for the set of norms $\IN_\Sigma(\f)$, with $\f$ ranging over bounded functions on $\Sigma$.

\begin{exam}\label{exam:Sighom} If $\f\colon X^\an\to\R$ is bounded, then $\IN_{X^\an}(\f)=\IN(\f)$, and Theorem~\ref{thm:INFS} thus yields $\cN^{X^\an}_\R=\cN_\R^\hom$. 
\end{exam} 

A simple check shows that
$$
\IN_\Sigma(\f\wedge\f')=\IN_\Sigma(\f)\wedge\IN_\Sigma(\f'),\quad\IN_\Sigma(\f+c)=\IN_\Sigma(\f)+c,\quad\IN_{t\Sigma}(t\cdot\f)=t\IN_\Sigma(\f)
$$
and
\begin{equation}\label{equ:INsiglip}
\dd_\infty(\IN_\Sigma(\f),\IN_\Sigma(\f'))\le\sup_\Sigma|\f-\f'|
\end{equation}
for all bounded functions $\f,\f'\colon\Sigma\to\R$, $c\in\R$ and $t\in\R_{>0}$. Thus $\cN^\Sigma_\R$ is invariant under the translation action of $\R$ and under minima, and it is invariant under the scaling action of $\R_{>0}$ whenever $\Sigma$ is. 

\begin{prop}\label{prop:signorm}
Pick a non-pluripolar subset $\Sigma\subset X^\an$. Then: 
  \begin{itemize}
  \item[(i)]
 each $\n\in\cN_\R^\Sigma$ satisfies $\n=\IN_\Sigma(\f)$ with $\f:=\FS(\n)|_\Sigma$, and $\f$ is the smallest bounded function on $\Sigma$ with this property; 
   \item[(ii)] if $\Sigma\subset\Sigma'\subset X^\an$ then $\cN^{\Sigma}_\R\subset\cN^{\Sigma'}_\R$.
  \item[(iii)] if $\Sigma$ is further dense in $\Sigma'$, then $\IN_\Sigma(\f)=\IN_{\Sigma'}(\f)$ for each bounded, usc function $\f\colon\Sigma'\to\R$. 
 \end{itemize}
\end{prop}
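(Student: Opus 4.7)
The plan is to handle the three items in order, with~(i) supplying the main tool for~(ii) and upper semicontinuity handling~(iii).

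For~(i), the key is the defining inequality of the Fubini--Study operator: for every $v \in X^\an$ and every $s \in R_m$ with $m$ sufficiently divisible,
\[
v(s) + m\,\FS(\chi)(v) \ge \chi(s),
\]
with equality achieved after taking the supremum over $s$ and $m$. Restricting to $v \in \Sigma$ and taking the infimum yields $\IN_\Sigma(\FS(\chi)|_\Sigma) \ge \chi$. Conversely, if $\chi = \IN_\Sigma(\psi)$ for some bounded $\psi\colon\Sigma\to\R$, the defining inequality $v(s) + m\psi(v) \ge \chi(s)$ rearranges to $\psi(v) \ge m^{-1}(\chi(s) - v(s))$ for all admissible $s, m$, and taking the supremum over $s$ and $m$ gives $\psi \ge \FS(\chi)|_\Sigma$ pointwise on $\Sigma$. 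Monotonicity of $\IN_\Sigma$ then forces $\chi = \IN_\Sigma(\psi) \ge \IN_\Sigma(\FS(\chi)|_\Sigma)$, so equality holds. The same rearrangement applied to any bounded $\f'$ with $\IN_\Sigma(\f') = \chi$ shows $\f' \ge \FS(\chi)|_\Sigma$, establishing minimality.

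For~(ii), given $\chi \in \cN_\R^\Sigma$, I take $\tilde\f := \FS(\chi)|_{\Sigma'}$ as the candidate witnessing $\chi \in \cN_\R^{\Sigma'}$. The inequality from~(i) applied now on $\Sigma'$ gives $\IN_{\Sigma'}(\tilde\f) \ge \chi$. On the other hand, $\IN_{\Sigma'}(\tilde\f) \le \IN_\Sigma(\tilde\f|_\Sigma) = \IN_\Sigma(\FS(\chi)|_\Sigma) = \chi$ by~(i) and the elementary fact that enlarging the indexing set only decreases the infimum. Hence $\chi = \IN_{\Sigma'}(\tilde\f) \in \cN_\R^{\Sigma'}$.

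For~(iii), the inequality $\IN_{\Sigma'}(\f) \le \IN_\Sigma(\f|_\Sigma)$ is again immediate from $\Sigma \subset \Sigma'$. For the reverse, I would exploit that $v \mapsto v(s) = -\log|s|(v)$ is continuous on $X^\an$ with values in $[0,+\infty]$ by~\eqref{equ:trivmetric}; combined with the upper semicontinuity of $\f$ on $\Sigma'$, the function $g(v) := v(s) + m\f(v)$ is usc on $\Sigma'$. For each $v \in \Sigma'$, density of $\Sigma$ in $\Sigma'$ provides a net $(v_i)$ in $\Sigma$ converging to $v$; then
\[
\IN_\Sigma(\f|_\Sigma)(s) \le \inf_i g(v_i) \le \limsup_i g(v_i) \le g(v),
\]
and taking the infimum over $v \in \Sigma'$ yields $\IN_\Sigma(\f|_\Sigma)(s) \le \IN_{\Sigma'}(\f)(s)$. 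The main point requiring care is the usc argument at points $v_0 \in \Sigma'$ where $v_0(s) = +\infty$, but this is handled cleanly: there $g(v_0) = +\infty$, and the one-point compactification topology on $[0,+\infty]$ guarantees that $g(v) \to +\infty$ as $v \to v_0$, so the semicontinuity inequality holds trivially.
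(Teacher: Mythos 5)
Your proof is correct and follows essentially the same route as the paper's: parts (i) and (iii) are the paper's arguments verbatim (rearranging the defining inequality of $\FS$ to identify $\FS(\n)|_\Sigma$ as the minimal witness, and propagating the lower bound $v(s)+m\f(v)\ge\IN_\Sigma(\f)(s)$ from $\Sigma$ to $\Sigma'$ via upper semicontinuity and density). The only divergence is in (ii), where you take $\FS(\chi)|_{\Sigma'}$ as the bounded function on the larger set, whereas the paper extends $\f$ by a sufficiently large constant $C$ on $\Sigma'\setminus\Sigma$; both work, and your choice has the mild advantage of reusing (i) directly.
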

\begin{proof} Pick a bounded function $\p\colon\Sigma\to\R$ such that $\n=\IN_\Sigma(\p)$. For any $v\in\Sigma$ and any $s\in R_m\setminus\{0\}$ we have $\n(s)\le m^{-1}v(s)+\p(v)$. On the one hand, this implies $\f(v)=\sup_s\{\n(s)-m^{-1}v(s)\}\le\p(v)$ for any $v\in\Sigma$, and hence $\IN_\Sigma(\f)\le\n$. On the other hand, for any $s\in R_m\setminus\{0\}$ and any $v\in\Sigma$, we have $m^{-1}v(s)+\f(v)\ge\n(s)$, so $\IN_\Sigma(\f)\ge\n$. This proves (i). 
  
To see (ii), pick $\n\in\cN_\R^{\Sigma}$, \ie $\n=\IN_\Sigma(\f)$ with $\f\colon\Sigma\to\R$ bounded. Pick $C>0$ such that $\n(s)\le mC$ for $s\in R_m\setminus\{0\}$. We claim that $\n$ coincides with $\n':=\IN_\Sigma(\f')\in\cN_\R^{\Sigma'}$, where $\f'\colon\Sigma'\to\R$ is the extension of $\f$ such that $\f'\equiv C$ on $\Sigma'\setminus\Sigma$. To see this, pick  $s\in R_m\setminus\{0\}$ For each $v'\in\Sigma'\setminus\Sigma$ we have 
$$
v'(s)+m\f(v')\ge mC\ge\n(s)=\inf_{v\in\Sigma}\{v(s)+m\f(v)\},
$$
which yields, as desired, $\n'(s)=\inf_{v\in\Sigma'}\{v(s)+m\f(v)\}=\inf_{v\in\Sigma}\{v(s)+m\f(v)\}=\n(s)$. 

Finally, the inequality $\IN_\Sigma(\f)\ge\IN_{\Sigma'}(\f)$ in~(iii) is trivial. Conversely, pick any $s\in R_m\setminus\{0\}$. Then $m^{-1}v(s)+\f(v)\ge\IN_{\Sigma'}(\f)(s)$ for all $v\in\Sigma$, and this inequality extends to $\Sigma'$ as $v\mapsto m^{-1}v(s)+\f(v)$ is usc on $\Sigma'$ and $\Sigma\subset\Sigma'$ is dense. Thus $\IN_\Sigma(\f)(s)\le\IN_{\Sigma'}(\f)(s)$, which proves (iii). 
\end{proof}

\begin{cor}\label{cor:decrgin1}
  Suppose $\Sigma\subset X^\an$ is non-pluripolar. If $(\n_i)$ is a decreasing net in $\cN^\Sigma_\R$ converging pointwise to $\n\in\cN^\hom_\R$ (see Remark~\ref{rmk:homnet}), then $\n\in\cN^\Sigma_\R$.
\end{cor}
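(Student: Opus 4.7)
\medskip
\noindent\textbf{Proof plan.} The strategy is to use Proposition~\ref{prop:signorm}~(i) to canonically represent each $\chi_i$ by a bounded function on $\Sigma$, pass to the pointwise limit, and show that the resulting function represents $\chi$.

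By Proposition~\ref{prop:signorm}~(i), setting $\f_i:=\FS(\chi_i)|_\Sigma$ gives a bounded function on $\Sigma$ with $\chi_i=\IN_\Sigma(\f_i)$. Since the Fubini--Study operator is order-preserving and $(\chi_i)$ is decreasing, the net $(\f_i)$ is decreasing, so we may define $\f:=\inf_i\f_i=\lim_i\f_i$ pointwise on $\Sigma$. I would first verify that $\f$ is bounded on $\Sigma$: the upper bound $\f\le\f_{i_0}$ is immediate from any fixed index $i_0$, while the lower bound follows from $\chi_i\ge\chi$ and monotonicity of $\FS$, which yield $\f_i\ge\FS(\chi)|_\Sigma\ge\inf_{X^\an}\FS(\chi)>-\infty$, using that $\FS(\chi)\in\cL^\infty$ since $\chi\in\cN_\R^\hom$.

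Next I would show the two inequalities defining $\chi=\IN_\Sigma(\f)$. Since $\f\le\f_i$ on $\Sigma$, monotonicity of $\IN_\Sigma$ yields $\IN_\Sigma(\f)\le\IN_\Sigma(\f_i)=\chi_i$; passing to the limit (which makes sense for homogeneous norms by Remark~\ref{rmk:homnet}) gives $\IN_\Sigma(\f)\le\chi$. For the reverse inequality, pick $s\in R_m$ with $m$ sufficiently divisible and $v\in\Sigma$. For each $i$ we have
\begin{equation*}
\chi_i(s)\le v(s)+m\f_i(v),
\end{equation*}
and letting $i\to\infty$ (using $\chi_i(s)\to\chi(s)$ and $\f_i(v)\searrow\f(v)$) yields $\chi(s)\le v(s)+m\f(v)$. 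Taking the infimum over $v\in\Sigma$ gives $\chi(s)\le\IN_\Sigma(\f)(s)$, and hence $\chi=\IN_\Sigma(\f)\in\cN_\R^\Sigma$.

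The argument is essentially a routine double-limit exchange, and no serious obstacle arises: the only non-trivial input is the representation formula of Proposition~\ref{prop:signorm}~(i), which ensures that the natural candidate for $\f$ already lives on $\Sigma$. The non-pluripolarity of $\Sigma$ is only used to ensure (via Lemma~\ref{lem:Sigpp}) that $\IN_\Sigma(\f)$ is a genuine element of $\cN_\R^\hom$.
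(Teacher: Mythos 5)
Your proof is correct and follows essentially the same route as the paper: both arguments hinge on Proposition~\ref{prop:signorm}~(i) to write each $\chi_i$ as $\IN_\Sigma$ of a bounded function and then sandwich $\chi$. The only (harmless) difference is that you take the limit function $\f=\lim_i\FS(\chi_i)|_\Sigma$ as the representative and verify both inequalities by a direct limit exchange, whereas the paper takes $\FS(\chi)|_\Sigma$ directly and concludes in one line from the monotonicity $\IN_\Sigma(\FS(\chi))\ge\IN(\FS(\chi))=\chi$ (Theorem~\ref{thm:INFS}); your version thus avoids invoking Theorem~\ref{thm:INFS} at the cost of a slightly longer computation.
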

\begin{proof}
  Set $\f_i:=\FS(\n_i)$. Then $\f_i$ is a decreasing net of functions on $X^\an$  bounded below by $\f:=\FS(\n)$. For any $i$, Proposition~\ref{prop:signorm}~(i) and Example~\ref{exam:Sighom} imply
 $$
  \n_i=\IN_\Sigma(\f_i)\ge\IN_\Sigma(\f)\ge\IN(\f)=\n. 
  $$
Taking the infimum over $i$ yields $\n=\IN_\Sigma(\f)\in\cN_\R^\Sigma$.
\end{proof}

Next we generalize the homogenization operator.
\begin{defi}\label{defi:sigproj}
  For any non-pluripolar subset $\Sigma\subset X^\an$, we define a projection operator $\pp_\Sigma\colon\cN_\R\to\cN_\R^\Sigma$ by setting $\pp_\Sigma(\n):=\IN_\Sigma(\FS(\n))$ for $\n\in\cN_\R$. 
\end{defi}
The map $\pp_\Sigma$ is indeed a projection, by Proposition~\ref{prop:signorm}~(i); it is further $1$-Lipschitz with respect to the $\dd_\infty$-pseudometric, by~\eqref{equ:FScontr} and~\eqref{equ:INsiglip}. 

The map $\pp_{X^\an}\colon\cN_\R\to\cN_\R^{X^\an}=\cN_\R^\hom$ coincides with homogenization (see Theorem~\ref{thm:INFS}). Further $\Sigma\subset\Sigma'\Longrightarrow\pp_{\Sigma'}(\n)\le\pp_\Sigma(\n)$. In particular, $\n\le\n^\hom\le\pp_\Sigma(\n)$, and $\pp_\Sigma(\n)$ can be characterized as the smallest norm in $\cN_\R^\Sigma$ such that $\n\le\pp_\Sigma(\n)$. A direct check further yields: 

\begin{lem}\label{lem:decrgin2} Let $(\Sigma_i)$ be an increasing net of non-pluripolar subsets of $X^\an$, and set $\Sigma:=\bigcup_i\Sigma_i$. Then $\pp_{\Sigma_i}$ decreases pointwise to $\pp_\Sigma$ on $\cN_\R$. 
\end{lem}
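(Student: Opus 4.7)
The plan is to observe that this lemma is essentially a direct unwinding of the definition of $\pp_\Sigma$, reduced to the trivial fact that an infimum over an increasing union is the infimum of the infima. I will fix $\n\in\cN_\R$ and set $\f:=\FS(\n)\in\cL^\infty$; then by Definition~\ref{defi:sigproj}, $\pp_{\Sigma_i}(\n)=\IN_{\Sigma_i}(\f)$ and $\pp_\Sigma(\n)=\IN_\Sigma(\f)$. All of these are homogeneous norms, hence comparable pointwise on a fixed $R_m$ with $mL$ an honest line bundle (see Remark~\ref{rmk:homnet} and~\eqref{equ:homRr}). The set $\Sigma$ is automatically non-pluripolar, since it contains each $\Sigma_i$, so $\pp_\Sigma$ is well-defined by Lemma~\ref{lem:Sigpp}.

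The monotonicity $\pp_{\Sigma_j}(\n)\le\pp_{\Sigma_i}(\n)$ for $i\le j$ is immediate: the infimum defining $\IN_{\Sigma_j}(\f)(s)$ is taken over a larger set than that defining $\IN_{\Sigma_i}(\f)(s)$, so can only be smaller. For the pointwise convergence, I will apply the elementary identity $\inf_{v\in\Sigma}g(v)=\inf_i\inf_{v\in\Sigma_i}g(v)$ (valid for any function $g$ on the increasing union $\Sigma=\bigcup_i\Sigma_i$) to the function $g(v):=v(s)+m\f(v)$ for each fixed $s\in R_m$; this yields
$$
\pp_\Sigma(\n)(s)=\inf_{v\in\Sigma}\{v(s)+m\f(v)\}=\inf_i\inf_{v\in\Sigma_i}\{v(s)+m\f(v)\}=\inf_i\pp_{\Sigma_i}(\n)(s),
$$
which is precisely the asserted pointwise decreasing limit.

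There is no genuine obstacle: the statement amounts to commuting two infima, and every potential subtlety (linear boundedness of the norms involved, well-definedness of $\f=\FS(\n)$ as a bounded function on all of $X^\an$, and non-pluripolarity of the union $\Sigma$) has already been handled earlier in the section. I view the lemma as a bookkeeping setup for downstream use rather than a result whose proof requires any new ideas.
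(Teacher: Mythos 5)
Your proof is correct and matches the paper's intent exactly: the paper states this lemma with the remark ``A direct check further yields,'' and the direct check is precisely your observation that $\pp_{\Sigma_i}(\n)(s)=\inf_{v\in\Sigma_i}\{v(s)+m\FS(\n)(v)\}$ and that an infimum over the increasing union $\Sigma=\bigcup_i\Sigma_i$ equals the (decreasing) infimum of the infima over the $\Sigma_i$. The preliminary points you flag (non-pluripolarity of $\Sigma$, well-definedness on a fixed $R_m$ via homogeneity) are handled correctly.
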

For later use, we also note: 

\begin{lem}\label{lem:FSenv} For any non-pluripolar subset $\Sigma\subset X^\an$ and $\n\in\cN_\R$, we have $\FS(\pp_\Sigma(\n))=\FS(\n)$ on $\Sigma$. 
\end{lem}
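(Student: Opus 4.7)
The plan is to show both inequalities $\FS(\pp_\Sigma(\n)) \geq \FS(\n)$ on all of $X^\an$ and $\FS(\pp_\Sigma(\n)) \leq \FS(\n)$ on $\Sigma$, each of which follows essentially by unwinding the definitions.

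For the first inequality, I would invoke the observation recorded right after Definition~\ref{defi:sigproj}, namely $\n \le \n^\hom \le \pp_\Sigma(\n)$ (which itself comes from Theorem~\ref{thm:INFS}, giving $\IN(\f)=\n^\hom$, combined with the trivial $\IN_\Sigma(\f)\ge \IN_{X^\an}(\f)=\IN(\f)$ since restricting the infimum to a smaller set can only increase it). Monotonicity of the Fubini--Study operator then yields $\FS(\pp_\Sigma(\n)) \geq \FS(\n)$ pointwise on $X^\an$.

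For the reverse inequality on $\Sigma$, set $\f := \FS(\n)$. By the very definition of the infimum norm, for any $v\in\Sigma$, any $m$ sufficiently divisible, and any $s\in R_m\smallsetminus\{0\}$, we have
\begin{equation*}
\IN_\Sigma(\f)(s)=\inf_{w\in\Sigma}\{w(s)+m\f(w)\}\le v(s)+m\f(v),
\end{equation*}
so that $m^{-1}(\IN_\Sigma(\f)(s)-v(s))\le\f(v)$. Taking the supremum over $m$ and $s\in R_m\smallsetminus\{0\}$ of the left-hand side gives, by Definition~\ref{defi:FSop} (in the form $\FS(\chi)(v)=\sup_{m,s}m^{-1}(\chi(s)-v(s))$), exactly $\FS(\pp_\Sigma(\n))(v)\le\f(v)=\FS(\n)(v)$, as desired.

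There is no genuine obstacle here: the argument is a direct unwinding of the definitions of $\pp_\Sigma$, $\IN_\Sigma$, and $\FS$, using only the already-established identity $\IN\circ\FS=(\cdot)^\hom$ from Theorem~\ref{thm:INFS} and monotonicity of $\FS$. The result should fit in a few lines.
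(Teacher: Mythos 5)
Your proof is correct and follows essentially the same route as the paper: the inequality $\FS(\n)\le\FS(\pp_\Sigma(\n))$ from $\n\le\pp_\Sigma(\n)$ and monotonicity of $\FS$, and the reverse inequality on $\Sigma$ by bounding $\IN_\Sigma(\f)(s)\le v(s)+m\f(v)$ for $v\in\Sigma$ and taking the supremum over $m$ and $s$. The only cosmetic difference is that you route the first inequality through the chain $\n\le\n^\hom\le\pp_\Sigma(\n)$, which is exactly the observation the paper records after Definition~\ref{defi:sigproj}.
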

\begin{proof} Set $\f:=\FS(\n)$. Since $\n\le\pp_\Sigma(\n)$, we have $\f\le\FS(\pp_\Sigma(\n))$. Conversely, pick $v\in\Sigma$. For any $s\in R_m\setminus\{0\}$, we then have 
$\pp_\Sigma(\n)(s)=\IN_\Sigma(\f)\le v(s)+m\f(v)$, and hence $\FS(\pp_\Sigma(v))=\sup_s\frac1m(\pp_\Sigma(\n)(s)-v(s))\le\f(v)$, which proves the result. 
\end{proof}

%
%
\subsection{Divisorial norms and PL functions}\label{sec:divnorm}
In the next two subsections we consider two important cases of the construction above.  

\begin{defi}\label{defi:divnorm} We define the set $\cN_\R^\div\subset\cN_\R^\hom$ of \emph{divisorial norms} as the (increasing) union of $\cN^\Sigma_\R$ over all finite subsets $\Sigma\subset X^\div$. The set of \emph{rational divisorial norms} is 
$$
\cN^\div_\Q:=\cN^\div_\R\cap\cN_\Q.
$$
\end{defi}
That the union is increasing follows from Proposition~\ref{prop:signorm}~(ii). Also note that $\cN^\div_\R$ (resp.\ $\cN^\div_\Q$) is invariant under finite minima, under the scaling action by $\Q_{>0}$ and under the translation action by $\R$ (resp. $\Q$). 

Concretely, a norm $\n$ is divisorial iff it can be written as 
\begin{equation}\label{equ:divnorm}
\n=\max_i\{\n_{v_i}+c_i\}
\end{equation}
for a finite set of divisorial valuations $(v_i)$ and $c_i\in\R$, and $\n$ is rational iff the $c_i$ can be chosen in $\Q$. Indeed:

\begin{lem}\label{lem:ratdivnorm} For any finite subset $\Sigma\subset X^\div$, a norm $\n$ lies in $\cN^\Sigma_\Q$ iff it can be written $\n=\IN_\Sigma(\f)$ for some function $\f\colon\Sigma\to\Q$.
\end{lem}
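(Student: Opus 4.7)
The ``if'' direction is immediate: any divisorial valuation $v = t\ord_E$ takes rational values on sections (once $mL$ is a line bundle, $\ord_E$ is integer-valued), so if $\f\colon\Sigma\to\Q$ and $m$ is sufficiently divisible, each $v(s)+m\f(v)$ lies in $\Q$ for $s\in R_m\setminus\{0\}$, and hence so does the finite minimum $\IN_\Sigma(\f)(s)$.

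For the ``only if'' direction, the plan is to replace $\Sigma$ by an irredundant sub-family and use the resulting rigidity of the minimum to force rationality. So suppose $\n\in\cN^\Sigma_\Q$. By Proposition~\ref{prop:signorm}~(i) we have $\n=\IN_\Sigma(\f_0)$ with $\f_0:=\FS(\n)|_\Sigma$. First, choose a subset $\Sigma'\subset\Sigma$ that is minimal among subsets still satisfying $\n=\IN_{\Sigma'}(\f_0|_{\Sigma'})$; such a $\Sigma'$ exists because $\Sigma$ is finite.

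The key step is to show $\f_0(v)\in\Q$ for every $v\in\Sigma'$. Pick $v_i\in\Sigma'$. By minimality, $\IN_{\Sigma'\smallsetminus\{v_i\}}(\f_0|_{\Sigma'\smallsetminus\{v_i\}})\neq\n$, and since this norm is pointwise $\ge\n$, there exist $m$ sufficiently divisible and $s\in R_m\smallsetminus\{0\}$ such that
\begin{equation*}
v_i(s)+m\f_0(v_i)<v_j(s)+m\f_0(v_j)\quad\text{for all }v_j\in\Sigma'\smallsetminus\{v_i\}.
\end{equation*}
In particular $\n(s)=v_i(s)+m\f_0(v_i)$, so
\begin{equation*}
\f_0(v_i)=\tfrac{1}{m}(\n(s)-v_i(s)).
\end{equation*}
Since $\n\in\cN_\Q$ and $v_i\in X^\div$, the right-hand side lies in $\Q$.

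It remains to extend $\f_0|_{\Sigma'}$ to a rational-valued function on all of $\Sigma$. Define $\f\colon\Sigma\to\Q$ by $\f(v):=\f_0(v)$ for $v\in\Sigma'$ and by choosing, for each $v\in\Sigma\smallsetminus\Sigma'$, any rational number $\f(v)\ge\f_0(v)$ (possible because $\f_0$ is bounded on the finite set $\Sigma$). Then $\IN_\Sigma(\f)\le\IN_{\Sigma'}(\f_0|_{\Sigma'})=\n$ since we are taking the infimum over a larger set, while for $v\in\Sigma\smallsetminus\Sigma'$ and $s\in R_m\smallsetminus\{0\}$ the definition of $\f_0=\FS(\n)|_\Sigma$ gives $v(s)+m\f(v)\ge v(s)+m\f_0(v)\ge\n(s)$, proving the reverse inequality. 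The main (minor) obstacle is verifying that such a minimal $\Sigma'$ gives rationality at every remaining valuation; this is the content of the displayed strict inequality above, which follows directly from minimality since removing a single point from a finite infimum strictly changes the value only at those arguments where that point is the unique minimizer.
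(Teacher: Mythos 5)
Your proof is correct and follows essentially the same strategy as the paper: both identify a subset of $\Sigma$ on which the function must take rational values (the paper observes directly that the minimum defining $\n(s)\in\Q$ can never be attained at a $v$ with $\f(v)\notin\Q$, since $v(s)\in\Q$; you reach the same conclusion via an irredundancy argument producing, for each essential $v_i$, a section where $v_i$ is the unique minimizer), and both then conclude by enlarging the function to arbitrary rational values on the remaining points without changing the infimum norm.
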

\begin{proof}
  The `if' part is clear. Conversely, assume $\n\in\cN_\Q^\div$, and write $\n=\IN_\Sigma(\f)$ for some function $\f\colon\Sigma\to\R$ on a finite subset $\Sigma\subset X^\div$. Let $\Sigma':=\{v\in\Sigma\mid \f(v)\in\Q\}$ and let $\f'\colon\Sigma\to\Q$ be any function such that $\f'\ge\f$ with equality on $\Sigma'$. Then $\n':=\IN_\Sigma(\f')$ equals $\n$. Indeed, $\n'\ge\n$, and if $s\in R_m\setminus\{0\}$, then $\n(s)=\min_{v\in\Sigma}(m\f(v)-v(s))$. As $\n(s)\in\Q$ and $v(s)\in\Q$ for every $v\in\Sigma$, the minimum cannot be attained on $\Sigma'\setminus\Sigma$, so  $\n(s)=\min_{v\in\Sigma'}\{m\f(v)-v(s)\}=\n'(s)$.
\end{proof}

Recall from~\cite{trivval} that the space $\PL(X)\subset\Cz(X)$ of  \emph{piecewise linear} functions $\f\colon X^\an\to\R$ is defined as the $\Q$-vector space spanned by $\cH_\Q$. It is independent of the choice of $L$, stable under max and min, and is dense in $\Cz(X)$ with respect to uniform convergence. 

As we next show, rational divisorial norms arise precisely as infimum norms of PL functions:
\begin{thm}\label{thm:divPL} A norm $\n\in\cN_\R$ lies in $\cN^\div_\Q$ iff $\n=\IN(\f)$ with $\f\in\PL(X)$. 
\end{thm}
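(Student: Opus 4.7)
The plan is to establish both implications of the equivalence, making use of Lemma~\ref{lem:ratdivnorm}, the duality between $\FS$ and $\IN$, and the structural fact that PL functions on $X^\an$ are pulled back from a finite simplicial complex of divisorial valuations coming from a common birational model.

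For the implication $\IN(\f)\in\cN_\Q^\div$ when $\f\in\PL(X)$, I would first use the definition of $\PL(X)$ as the $\Q$-vector space spanned by $\cH_\Q$ to write $\f=\f_1-\f_2$ with $\f_1,\f_2\in\cH_\Q$. Expanding each $\f_i$ as $m^{-1}\max_j\{\log|s_{i,j}|+\la_{i,j}\}$ and passing to a log-resolution of the base ideals of the $s_{i,j}$, one obtains a birational model $Y\to X$ whose exceptional and strict-transform components yield a finite set $\Sigma\subset X^\div$. Along the dual simplicial complex of this model (embedded in $X^\an$), the function $\f$ is piecewise $\Q$-affine with rational values on $\Sigma$. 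For any $s\in R_m$, the map $v\mapsto v(s)+m\f(v)$ is concave and piecewise $\Q$-affine on each simplex, so its infimum is attained at a vertex. This yields $\IN(\f)=\IN_\Sigma(\f|_\Sigma)$, and Lemma~\ref{lem:ratdivnorm} then gives $\IN(\f)\in\cN_\Q^\div$.

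For the converse, start with $\n\in\cN_\Q^\div$. By Lemma~\ref{lem:ratdivnorm}, write $\n=\IN_\Sigma(\f)$ for a finite $\Sigma=\{v_1,\dots,v_N\}\subset X^\div$ and $\f\colon\Sigma\to\Q$. Since $\n$ is homogeneous, Theorem~\ref{thm:INFS} gives $\n=\IN(\FS(\n))$, but $\FS(\n)$ is only lsc in general. The plan is to replace it by a PL function $\tilde\f$ with the same infimum norm. Choose a birational model on which each $v_i$ arises as $\ord_{E_i}$ for a prime divisor $E_i$, and pick $m$ sufficiently divisible so that $mL$ is very ample and admits, for each $i$, enough sections $s_{i,j}\in R_m$ with prescribed values $v_k(s_{i,j})\in\N$. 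A judicious max of the corresponding FS pieces $m^{-1}(\log|s_{i,j}|+\la_{i,j})$ with rational $\la_{i,j}$ gives $\tilde\f\in\cH_\Q\subset\PL(X)$ satisfying $\tilde\f|_\Sigma=\f$ and $\tilde\f\ge\FS(\n)$ on $X^\an$. Then $\IN(\tilde\f)\le\IN(\FS(\n))=\n$, while $\IN(\tilde\f)\ge\IN_\Sigma(\tilde\f|_\Sigma)=\IN_\Sigma(\f)=\n$, so $\IN(\tilde\f)=\n$.

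The main obstacle will be the reduction of $\IN(\f)$ to $\IN_\Sigma(\f|_\Sigma)$ in the first direction: this rests on PL functions genuinely factoring through a finite-dimensional simplicial structure on $X^\an$ (the dual complex of a common snc model), which is standard but non-trivial in the trivially-valued Berkovich setting. The forward construction of $\tilde\f$ with $\tilde\f|_\Sigma=\f$ and $\tilde\f\ge\FS(\n)$ likewise requires producing sections realizing prescribed divisorial behavior simultaneously at all $v_i$, for which the ampleness of $L$ and the choice of $m$ sufficiently divisible are essential. Once these points are settled, the final equalities follow from Theorem~\ref{thm:INFS} together with the obvious monotonicity of $\IN$ and $\IN_\Sigma$ under inequalities of the underlying functions.
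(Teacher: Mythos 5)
Your converse direction contains a fatal gap: you cannot take $\tilde\f\in\cH_\Q$. If $\n=\IN(\tilde\f)$ with $\tilde\f\in\cH_\Q$, then $\qq(\tilde\f)=\tilde\f$ (a Fubini--Study function is its own Fubini--Study envelope), so Proposition~\ref{prop:FSIN} gives $\FS(\n)=\tilde\f\in\cH_\Q$; since divisorial norms are homogeneous, the last assertion of Corollary~\ref{cor:FSbij} then forces $\n\in\cT_\Q^\hom$, i.e.\ $\n$ is of finite type. But rational divisorial norms need not be of finite type: for a non-dreamy rational divisorial valuation $v$ (which exists, see the examples following Lemma~\ref{lem:dreamyloc}), the norm $\n_v=\IN_{\{v\}}(0)$ lies in $\cN_\Q^\div$ but not in $\cT_\Q$. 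So the function you are trying to build simply does not exist in general. The whole point of the statement is that one must allow $\f$ in $\PL(X)$, i.e.\ a genuine \emph{difference} of Fubini--Study functions; the paper constructs it as $\f=r\f_\fa-\rho$ with $\fa$ a flag ideal whose Rees valuations contain $\Sigma$ and $\rho\in\PL^+(X)$, using~\cite[Lemmas~2.12 and~2.28]{trivval} both to interpolate the prescribed rational values on $\Sigma$ and to guarantee that $\sup_{X^\an}(\p-\f)$ is computed on $\Sigma$ for every $\p\in\PSH$ — that last property is what replaces your (unattainable) global inequality $\tilde\f\ge\FS(\n)$. A separate, repairable slip: your two final inequalities have their justifications interchanged ($\tilde\f\ge\FS(\n)$ gives $\IN(\tilde\f)\ge\IN(\FS(\n))$ by monotonicity of $\IN$, while restricting the infimum to $\Sigma$ gives $\IN(\tilde\f)\le\IN_\Sigma(\tilde\f|_\Sigma)$).

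The forward direction is essentially the paper's argument (reduce $\IN(\f)$ to $\IN_\Sigma(\f)$ for a finite $\Sigma\subset X^\div$ attached to a model of $\f$, then invoke Lemma~\ref{lem:ratdivnorm} and rationality of PL functions on $X^\div$), but two of your intermediate claims are off. First, $\f=\f_1-\f_2$ with $\f_i\in\cH_\Q$ is a difference of convex (max-of-affine) functions on each face of the dual complex, hence neither concave nor affine there; you need to subdivide until $\f$ is affine on each cell before the ``concave plus affine, so the minimum of $v\mapsto v(s)+m\f(v)$ sits at a vertex'' argument applies, and you must check the vertices of the subdivision are divisorial (they are, being rational points). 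Second, and more substantively, the reduction of the infimum from all of $X^\an$ to the dual complex of a single model is precisely the nontrivial input here; the paper does not reprove it but cites~\cite[Lemma~4.26]{trivval}, which produces a finite $\Sigma\subset X^\div$ with $\sup_{X^\an}(\p-\f)=\max_\Sigma(\p-\f)$ for all $\p\in\PSH$. Your proposal flags this as ``standard but non-trivial'' without supplying it, so as written the forward direction is incomplete, though on the right track.
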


\begin{cor}\label{cor:divtc} Any rational homogeneous norm of finite type is divisorial, \ie $\cT^\hom_\Q\subset\cN^\div_\Q$. In particular, the homogenization of any test configuration $\n\in\cT_\Z$ is a rational divisorial norm. 
\end{cor}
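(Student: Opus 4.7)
The plan is to derive Corollary~\ref{cor:divtc} as a direct consequence of Theorem~\ref{thm:divPL}, together with the Fubini--Study/infimum norm machinery developed in \S\ref{sec:homogFS}. The main point is that for a homogeneous norm of finite type, the associated Fubini--Study function is piecewise linear, and the infimum norm of a PL function is a rational divisorial norm.

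First I would take $\chi\in\cT_\Q^\hom$. By Proposition~\ref{prop:FSft} (applied with $\La=\Q$), we have $\FS(\chi)\in\cH_\Q$. Since $\cH_\Q\subset\PL(X)$ by the very definition of $\PL(X)$ recalled above Theorem~\ref{thm:divPL}, we obtain $\f:=\FS(\chi)\in\PL(X)$. Next, because $\chi$ is homogeneous, Theorem~\ref{thm:INFS} gives $\IN(\FS(\chi))=\chi^\hom=\chi$, i.e.\ $\chi=\IN(\f)$. Applying Theorem~\ref{thm:divPL} to this expression yields $\chi\in\cN_\Q^\div$, establishing the inclusion $\cT_\Q^\hom\subset\cN_\Q^\div$.

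For the final assertion, let $\chi\in\cT_\Z$ be any (integral) test configuration. By Lemma~\ref{lem:homogft}~(iii) applied with $\La=\Z$, the homogenization $\chi^\hom$ lies in $\cT_{\Q\Z}^\hom=\cT_\Q^\hom$, so the first part of the corollary gives $\chi^\hom\in\cN_\Q^\div$, as required.

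No serious obstacle is expected: the entire argument is a bookkeeping exercise chaining Proposition~\ref{prop:FSft}, Theorem~\ref{thm:INFS}, Lemma~\ref{lem:homogft}, and Theorem~\ref{thm:divPL}. The substantive content lies in Theorem~\ref{thm:divPL} itself, which is assumed here.
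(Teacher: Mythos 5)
Your argument is correct and is essentially identical to the paper's own proof: both write $\chi=\IN(\FS(\chi))$ via Theorem~\ref{thm:INFS}, note $\FS(\chi)\in\cH_\Q\subset\PL(X)$ by Proposition~\ref{prop:FSft}, invoke Theorem~\ref{thm:divPL}, and deduce the final assertion from Lemma~\ref{lem:homogft}. Nothing to add.
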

In contrast, $\cT_\R^\hom$ is generally not contained in $\cN_\R^\div$, see Example~\ref{exam:ftnondiv}. We refer to Appendix~A (and especially Theorem~\ref{thm:normint}) for a more detailed discussion of the relation between test configurations and rational divisorial norms. 

\begin{cor}\label{cor:divppty} The envelope property holds for $(X,L)$ iff $\cN_\R^\div\subset\cN_\R^\cont$.
\end{cor}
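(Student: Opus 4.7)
We prove both directions using Theorem~\ref{thm:cont}, Theorem~\ref{thm:divPL}, the identity $\FS(\IN(\f))=\qq(\f)$ from Proposition~\ref{prop:FSIN}, and the characterization of the envelope property recalled in~\S\ref{sec:env}.

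For the direct implication, assume $(X,L)$ has the envelope property. Pick $\chi\in\cN_\Q^\div$ and write $\chi=\IN(\f)$ with $\f\in\PL(X)\subset\Cz$ via Theorem~\ref{thm:divPL}. Then $\FS(\chi)=\qq(\f)=\pp(\f)\in\CPSH$ by the envelope property, so $\FS(\chi)$ is continuous and hence $\chi\in\cN_\R^\cont$ by Theorem~\ref{thm:cont}. A general $\chi=\max_i\{\chi_{v_i}+c_i\}\in\cN_\R^\div$ with $c_i\in\R$ is a $\dd_\infty$-limit of rational divisorial norms obtained by rationally approximating each $c_i$, and since $\cN_\R^\cont$ is $\dd_\infty$-closed (again by Theorem~\ref{thm:cont}), $\chi\in\cN_\R^\cont$. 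This yields $\cN_\R^\div\subset\cN_\R^\cont$.

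For the converse, assume $\cN_\R^\div\subset\cN_\R^\cont$. Reversing the chain above, $\qq(\f)=\FS(\IN(\f))\in\CPSH$ for every $\f\in\PL(X)$. The operator $\qq\colon\cL^\infty\to\cL^\infty$ is $1$-Lipschitz for uniform convergence, as follows from its monotonicity combined with the translation identity $\qq(\f+c)=\qq(\f)+c$ in~\eqref{equ:FSequiv}. Since $\PL(X)$ is uniformly dense in $\Cz$ and $\CPSH$ is uniformly closed in $\Cz$, we deduce $\qq(\f)\in\CPSH$ for every $\f\in\Cz$. The equivalences recalled in~\S\ref{sec:env} (cf.~\cite{trivval}) then identify this condition with the envelope property, completing the proof.

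The most delicate step is the final invocation of the equivalences of~\S\ref{sec:env} in the converse direction: one must appeal to the non-trivial fact from~\cite{trivval} that, for $\f\in\Cz$, the three conditions $\pp(\f)=\qq(\f)$, $\pp(\f)\in\Cz$, and $\pp(\f)\in\PSH$ are equivalent, and that their simultaneous validity as $\f$ ranges over $\Cz$ characterizes the envelope property. Everything else in the proof is a direct unwinding of the definitions together with the density and closure properties of $\PL(X)\subset\Cz$ and $\cN_\Q^\div\subset\cN_\R^\div$.
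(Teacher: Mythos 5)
Your proof is correct and follows essentially the same route as the paper's: Theorem~\ref{thm:divPL} plus Proposition~\ref{prop:FSIN} to reduce to the continuity of $\qq(\f)=\pp(\f)$ for $\f\in\PL(X)$, then density of $\PL(X)$ in $\Cz$ together with the Lipschitz property of the envelope operator, and finally the characterization of the envelope property recalled in~\S\ref{sec:env}. The only difference is that you make explicit the $\dd_\infty$-approximation of irrational divisorial norms by rational ones (using that $\cN_\R^\cont$ is $\dd_\infty$-closed), a step the paper leaves implicit.
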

See~\S\ref{sec:cont} for the space $\cN_\R^\cont$ of continuous norms. 

\begin{exam}\label{exam:nodalcont} If $X$ is a nodal curve, then the envelope property fails, and $\n_v\in\cN_\R^\div$ is indeed not a continuous norm if $v$ is a divisorial valuation with center at the node.
\end{exam}

\begin{proof}[Proof of Theorem~\ref{thm:divPL}] Assume first $\n=\IN(\f)$ with $\f\in\PL(X)$. By~\cite[Lemma~4.26]{trivval}, there exists a finite subset $\Sigma\subset X^\div$ such that $\sup_{X^\an}(\p-\f)=\max_\Sigma(\p-\f)$ for all $\p\in\PSH(L)$. In particular, for any $s\in R_m\setminus\{0\}$ we have 
$$
\sup_{X^\an}(m^{-1}\log|s|-\f)=\max_\Sigma(m^{-1}\log|s|-\f),
$$
\ie $\n(s)=\inf_{v\in X^\an}\{v(s)+m\f(v)\}=\min_{v\in\Sigma}\{v(s)+m\f(v)\}$. This proves $\IN(\f)=\IN_\Sigma(\f)$, which lies in $\cN^\div_\Q$ since PL functions take rational values on $X^\div$.

Conversely, assume $\n\in\cN^\div_\Q$, \ie $\n\in\cN^\Sigma_\Q$ for a finite subset $\Sigma\subset X^\div$. By~\cite[Lemma~2.12]{trivval}, $\Sigma$ is contained in the set $\Sigma_\fa$ of Rees valuations of some flag ideal $\fa$ of $X$; we may thus assume $\Sigma=\Sigma_\fa$, see Proposition~\ref{prop:signorm}~(ii). By Lemma~\ref{lem:ratdivnorm}, we can write $\n=\IN_\Sigma(\tilde\f)$ for some function $\tilde\f\colon\Sigma\to\Q$. By~\cite[Lemma~2.28]{trivval}, there exists $\rho\in\PL^+(X)$ and $r \gg 1$ such that $\f:=r\f_\fa-\rho\in\PL(X)$ satisfies $\f=\tilde\f$ on $\Sigma$, while~\cite[Lemma~2.12]{trivval} shows that
$$
\sup_{X^\an}(\p-\f)=\max_\Sigma(\p-\f)
$$
for all $\p\in\PL^+(X)$, and hence also for all $\p\in\PSH(L)$ (as $\p$ can then be written as a decreasing limit of functions in $\cH_\Q\subset\PL^+(X)$). As above, this implies $\IN(\f)=\n$, which concludes the proof. 
\end{proof}

\begin{proof}[Proof of Corollary~\ref{cor:divtc}] Any norm $\n\in\cT^\hom_\Q$ satisfies $\n=\IN(\f)$ with $\f:=\FS(\n)\in\cH_\Q\subset\PL(X)$ (see Theorem~\ref{thm:INFS} and Proposition~\ref{prop:FSft}). By Theorem~\ref{thm:divPL}, we thus have $\cT^\hom_\Q\subset\cN^\div_\Q$. The last point follows from Lemma~\ref{lem:homogft}. 
\end{proof}

\begin{proof}[Proof of Corollary~\ref{cor:divppty}] By Theorem~\ref{thm:divPL}, $\cN_\R^\div$ is contained in $\cN_\R^\cont$ iff $\IN(\f)$ is continuous for any $\f\in\PL(X)$, \ie $\FS(\IN(\f))\in\Cz(X)$  (see Theorem~\ref{thm:cont}). Since $\f$ is continuous, we have $\FS(\IN(\f))=\qq(\f)=\pp(\f)$ (see Proposition~\ref{prop:FSIN}). Thus $\cN_\R^\div\subset\cN_\R^\cont$ holds iff $\pp(\f)$ is continuous for each $\f\in\PL(X)$. By density of $\PL(X)$ in $\Cz(X)$ and the Lipschitz property of $\pp$ wrt the supnorm, this is also equivalent to the continuity of $\pp(\f)$ for each $\f\in\Cz(X)$, which holds in turn iff $(X,L)$ has the envelope property (see~\cite[Lemma~5.17]{trivval}). 
\end{proof}

%
%
\subsection{Maximal norms and the regularized Fubini--Study operator}\label{sec:maxnorm} 
Specializing now the definitions of~\S\ref{sec:signorm} to $\Sigma:=X^\div$, we introduce:

\begin{defi}\label{defi:Ntilde} We say that a norm $\n\in\cN_\R$ is \emph{maximal} if it lies in $\cN_\R^{\max}:=\cN^{X^\div}_\R$. 
\end{defi}
Explicitly, a norm is maximal iff it can be written as 
$$
\n=\inf_{v\in X^\div}\{\n_v+c_v\}
$$
for a bounded set of constants $(c_v)_{v\in X^\div}$. The (slightly abusive) terminology will be justified by Corollary~\ref{cor:max} below. By Proposition~\ref{prop:signorm}~(ii), we have 
$$
\cN_\R^\div\subset\cN_\R^{\max}\subset\cN_\R^\hom,
$$
both inclusions being strict (except in the trivial case $\dim X=0$), \cf Example~\ref{exam:subvar3} below.  

For each $\n\in\cN_\R$ we set  
$$
\n^{\max}:=\pp_{X^\div}(\n)=\IN_{X^\div}(\FS(\n)). 
$$ 
Then $\n\le\n^\hom\le\n^{\max}$, and $\n^{\max}$ is the smallest norm in $\cN_\R^{\max}$ such that $\n\le\n^{\max}$.

Before going further, recall from~\S\ref{sec:FS} that the Fubini--Study operator associates to any norm $\n\in\cN_\R$ with canonical approximants $\n_d\in\cT_\R$ a bounded, lsc function $\FS(\n)\colon X^\an\to\R$, such that $\FS(\n)=\sup_d\FS(\n_d)$ with $\FS(\n_d)\in\cH_\R$. We denote by $\FSstar(\n):=\FS(\n)^\star$ its usc regularization, which is thus a bounded usc function on $X^\an$. The next result will be instrumental for what follows: 

\begin{lem}\label{lem:FSnegl} For any norm $\n\in\cN_\R$, the following holds:
\begin{itemize}
\item[(i)] $\FSstar(\n)=\FS(\n)$ on $X^\div$; 
\item[(ii)] if $(X,L)$ has the weak envelope property, then $\FSstar(\n)=\FS(\n)$ on $X^\lin$, and $\ten(\FSstar(\n))=\ten(\FS(\n))$; 
\item[(iii)] if $\FSstar(\n)$ is $L$-psh (e.g., if $(X,L)$ has the envelope property), then $\FSstar(\n)$ lies in $\cE^\infty_\uparrow\subset\cE^1$, and $\FS(\n_d)\to\FSstar(\n)$ strongly in $\cE^1$. 
\end{itemize}
\end{lem}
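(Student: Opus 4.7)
The plan is to exploit the fact that $\FS(\chi)$ is by construction the pointwise supremum of the increasing net $(\FS(\chi_d))$ of functions in $\cH_\R\subset\PSH$, indexed by divisibility. This places $\FS(\chi)$ inside the framework of envelopes of bounded-above families of $L$-psh functions developed in~\S\ref{sec:env}, so each of the three assertions will follow by invoking the appropriate structural result from that subsection. In particular, no new estimate specific to norms is needed; the work has already been done at the level of pluripotential theory in~\cite{trivval,trivadd}.

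For (i), I would simply apply Lemma~\ref{lem:envdiv} to the family $(\FS(\chi_d))_d\subset\PSH$. Since $\FS(\chi_d)\nearrow\FS(\chi)$ pointwise and $\FSstar(\chi)=\FS(\chi)^\star$ by definition, the lemma asserts precisely that $\FSstar(\chi)=\FS(\chi)$ on $X^\div$, which is what is required. (The hypothesis that $\FS(\chi)$ be bounded above is immediate, as $\FS(\chi)\in\cL^\infty$.)

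For (ii), I would appeal to Lemma~\ref{lem:envdiv2}, again applied to the same family $(\FS(\chi_d))$. Part~(i) of that lemma extends the equality $\FSstar(\chi)=\FS(\chi)$ from $X^\div$ to all of $X^\lin$ once the weak envelope property is assumed, and part~(ii) then upgrades this pointwise equality to the equality of extended energies $\ten(\FSstar(\chi))=\ten(\FS(\chi))$ (the required boundedness below of $\FS(\chi)$ is automatic, and both $\ten$-values are taken with respect to $L$).

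For (iii), the plan is to apply Lemma~\ref{lem:incrdiv} to the increasing net $(\FS(\chi_d))\subset\cH_\R\subset\cE^1$, which is bounded above by $\sup_d\FS(\chi_d)=\FS(\chi)$. By hypothesis, the usc regularization $\FSstar(\chi)$ is $L$-psh, so Lemma~\ref{lem:incrdiv} yields $\FSstar(\chi)\in\cE^1$ and $\FS(\chi_d)\to\FSstar(\chi)$ strongly in $\cE^1$. Since the approximants $\FS(\chi_d)$ lie in $\cH_\R\subset\CPSH$, the defining property at the end of~\S\ref{sec:env} gives $\FSstar(\chi)\in\cE^\infty_\uparrow$; boundedness, needed to place $\FSstar(\chi)$ in $\cE^\infty$, follows from $\FSstar(\chi)\le\sup_{X^\an}\FS(\chi)<+\infty$. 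No step looks genuinely difficult; the only subtlety is to remember to interpret the divisibility-ordered net $(\FS(\chi_d))$ as an increasing net in $\PSH$, so that the results of~\S\ref{sec:env} apply verbatim.
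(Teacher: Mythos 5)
Your proposal is correct and follows exactly the paper's own argument: (i) and (ii) are obtained by applying Lemmas~\ref{lem:envdiv} and~\ref{lem:envdiv2} to the increasing net $(\FS(\n_d))$ of $L$-psh functions with supremum $\FS(\n)$, and (iii) by applying Lemma~\ref{lem:incrdiv}, with membership in $\cE^\infty_\uparrow$ coming from the approximants lying in $\cH_\R\subset\CPSH$.
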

We refer to~\S\ref{sec:env} for the (weak) envelope property and the space $\cE^\infty_\uparrow$ of psh functions approximable from below. Recall that the weak envelope property holds as soon as $\charac k=0$, and that the envelope property then holds if $X$ is further smooth. 

\begin{proof} Since $\FS(\n)=\sup_d\FS(\n_d)$ with $\FS(\n_d)$ $L$-psh, (i) and (ii) respectively follow from Lemmas~\ref{lem:envdiv} and~\ref{lem:envdiv2} (see~\S\ref{sec:env}). If $(X,L)$ has the envelope property, then $\FSstar(\n)$ is $L$-psh, and the rest of (iii) follows from Lemma~\ref{lem:incrdiv}. 
\end{proof}

\begin{prop}\label{prop:maxchar}
For any $\n\in\cN_\R$, we have 
  \begin{itemize}
  \item[(i)] $\n^{\max}=\IN(\FSstar(\n))$; 
  \item[(ii)] $\n$ is maximal iff $\n=\IN(\f)$ for some bounded usc function $\f$ on $X^\an$;
  \end{itemize}
\end{prop}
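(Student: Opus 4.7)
The plan is to prove both parts by combining Lemma~\ref{lem:FSnegl}(i), which guarantees $\FSstar(\chi)=\FS(\chi)$ on $X^{\div}$, with the density statement in Proposition~\ref{prop:signorm}(iii) applied to $\Sigma=X^{\div}\subset\Sigma'=X^\an$. Observe first that $\FSstar(\chi)$ is bounded (being the usc regularization of the bounded lsc function $\FS(\chi)$) and usc on $X^\an$ by construction, so it is a legitimate input to both operators $\IN$ and $\IN_{X^{\div}}$.

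For (i), I would unwind the definition $\chi^{\max}=\IN_{X^{\div}}(\FS(\chi))$. Since $\IN_{X^{\div}}$ only depends on the values of its argument on $X^{\div}$, and since $\FS(\chi)=\FSstar(\chi)$ there by Lemma~\ref{lem:FSnegl}(i), we get
\[
\chi^{\max}=\IN_{X^{\div}}(\FS(\chi))=\IN_{X^{\div}}(\FSstar(\chi)).
\]
Now $X^{\div}$ is dense in $X^\an$ and $\FSstar(\chi)$ is bounded usc, so Proposition~\ref{prop:signorm}(iii) yields $\IN_{X^{\div}}(\FSstar(\chi))=\IN_{X^\an}(\FSstar(\chi))=\IN(\FSstar(\chi))$, see also Example~\ref{exam:Sighom}. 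Chaining these equalities gives $\chi^{\max}=\IN(\FSstar(\chi))$.

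For (ii), the forward direction is immediate: if $\chi\in\cN_\R^{\max}$ then $\chi^{\max}=\chi$ (since $\pp_{X^{\div}}$ is a projection onto $\cN_\R^{\max}$), so by (i) we can write $\chi=\IN(\f)$ with $\f:=\FSstar(\chi)$ bounded usc. For the converse, suppose $\chi=\IN(\f)$ for some bounded usc $\f\colon X^\an\to\R$. Applying Proposition~\ref{prop:signorm}(iii) with $\Sigma=X^{\div}$ and $\Sigma'=X^\an$, we obtain
\[
\chi=\IN(\f)=\IN_{X^\an}(\f)=\IN_{X^{\div}}(\f|_{X^{\div}}),
\]
which exhibits $\chi$ as an element of $\cN_\R^{X^{\div}}=\cN_\R^{\max}$.

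The proof requires no hard estimates; the main point is simply to recognize that Lemma~\ref{lem:FSnegl}(i) plus the density of $X^{\div}$ in $X^\an$ together upgrade the identity $\chi^{\max}=\IN_{X^{\div}}(\FS(\chi))$, which a priori only uses divisorial valuations, into one expressed via the global infimum operator $\IN$ applied to a bounded usc function on all of $X^\an$. No further obstacle appears; characteristic-zero hypotheses or the envelope property are not needed here (only Lemma~\ref{lem:FSnegl}(i), which holds unconditionally).
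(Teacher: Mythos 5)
Your proof is correct and follows essentially the same route as the paper's: both parts rest on Lemma~\ref{lem:FSnegl}~(i) to replace $\FS(\n)$ by $\FSstar(\n)$ on $X^\div$, and on Proposition~\ref{prop:signorm}~(iii) together with the density of $X^\div$ in $X^\an$ to pass between $\IN_{X^\div}$ and $\IN$. No gaps.
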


\begin{cor}\label{cor:maxchar} The space $\cN_\R^{\max}$ is invariant under the scaling action by $\R_{>0}$, the translation action by $\R$, under finite minima, and under decreasing limits. Further, any $\n\in\cN_\R^{\max}$ can be written as the pointwise limit of a decreasing net in $\cN_\R^\div$. 
\end{cor}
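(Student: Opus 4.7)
My plan is to deduce the corollary directly from the general theory of the spaces $\cN_\R^\Sigma$ developed in~\S\ref{sec:signorm}, specialized to $\Sigma=X^\div$, together with the exhaustion of $X^\div$ by its finite subsets. For the scaling, translation, and minima invariance, I will simply invoke the corresponding properties of $\cN_\R^\Sigma$ recorded just after~\eqref{equ:INsiglip}: $\cN_\R^\Sigma$ is stable under translation by $\R$ and finite minima for any non-pluripolar $\Sigma$, and stable under scaling by $\R_{>0}$ whenever $\Sigma$ is. The only point to check is that $X^\div$ is $\R_{>0}$-stable, which is clear since $s\cdot(t\ord_E)=(st)\ord_E$ remains divisorial for all $s,t\in\R_{>0}$.

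For invariance under decreasing limits, let $(\n_i)$ be a decreasing net in $\cN_\R^{\max}$ whose pointwise infimum $\n$ lies in $\cN_\R$. Since each $\n_i$ is homogeneous,
\begin{equation*}
  \n(f^d)=\inf_i\n_i(f^d)=d\inf_i\n_i(f)=d\n(f)
\end{equation*}
for all $f$ and $d\ge 1$, so $\n\in\cN_\R^\hom$. Corollary~\ref{cor:decrgin1}, applied with $\Sigma=X^\div$, then places $\n$ in $\cN_\R^{X^\div}=\cN_\R^{\max}$.

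Finally, to approximate an arbitrary $\n\in\cN_\R^{\max}$ by divisorial norms, I will let $I$ be the directed set of finite nonempty subsets of $X^\div$ ordered by inclusion. Each such $\Sigma\in I$ is non-pluripolar, since divisorial valuations lie in $X^\lin$. By the monotonicity $\Sigma\subset\Sigma'\Rightarrow\pp_{\Sigma'}(\n)\le\pp_\Sigma(\n)$ noted after Definition~\ref{defi:sigproj}, the family $(\pp_\Sigma(\n))_{\Sigma\in I}$ is a decreasing net, each of whose members lies in $\cN_\R^\Sigma\subset\cN_\R^\div$ by Definition~\ref{defi:divnorm}. Since $\bigcup_{\Sigma\in I}\Sigma=X^\div$, Lemma~\ref{lem:decrgin2} yields $\pp_\Sigma(\n)\to\pp_{X^\div}(\n)=\n^{\max}=\n$ pointwise, which is the required approximation. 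I do not foresee a genuine obstacle here: all the needed tools --- the general framework of~\S\ref{sec:signorm}, the pointwise convergence of projections under an exhausting net, and the identification $\n^{\max}=\pp_{X^\div}(\n)$ --- are already available, and the proof essentially assembles them.
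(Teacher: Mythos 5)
Your treatment of translation invariance, finite minima, decreasing limits, and the approximation by divisorial norms is correct and follows the same route as the paper (Corollary~\ref{cor:decrgin1} and Lemma~\ref{lem:decrgin2} applied to the exhaustion of $X^\div$ by finite subsets); your extra verification that a decreasing limit of homogeneous norms is homogeneous is a harmless addition.

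However, your argument for invariance under the scaling action of $\R_{>0}$ has a genuine gap. You claim that $X^\div$ is stable under scaling by $\R_{>0}$ because $s\cdot(t\ord_E)=(st)\ord_E$ ``remains divisorial''. But in this paper a divisorial valuation is by definition of the form $t\ord_E$ with $t\in\Q_{>0}$ (see~\S\ref{sec:berk}), so $X^\div$ is only invariant under the scaling action of $\Q_{>0}$: for irrational $t$, the valuation $t\ord_E$ lies in $X^\lin$ but not in $X^\div$ (compare Example~\ref{exam:ftnondiv}). Hence the general principle ``$\cN_\R^\Sigma$ is $\R_{>0}$-stable whenever $\Sigma$ is'' does not apply with $\Sigma=X^\div$, and your argument only yields $\Q_{>0}$-invariance of $\cN_\R^{\max}$. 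The paper circumvents this precisely by using the characterization of Proposition~\ref{prop:maxchar}~(ii): a norm is maximal iff it equals $\IN(\f)$ for some bounded usc function $\f$ on $X^\an$; since $X^\an$ itself is $\R_{>0}$-invariant and $t\cdot\f$ is again bounded and usc, the equivariance $\IN(t\cdot\f)=t\,\IN(\f)$ from~\eqref{equ:INequiv} gives $t\,\IN(\f)\in\cN_\R^{\max}$ for all $t\in\R_{>0}$. You should replace your scaling argument by this one (or otherwise justify why $\cN_\R^{X^\div}$ is preserved under irrational scalings, which your current argument does not do).
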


\begin{cor}\label{cor:qftmax} Each continuous norm $\n\in\cN_\R^\cont$ satisfies $\n^\hom=\n^{\max}$. In particular, every continuous homogeneous norm is maximal, \ie $\cN_\R^{\cont,\hom}\subset\cN_\R^{\max}$. 
\end{cor}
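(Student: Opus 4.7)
The plan is to derive the identity $\chi^{\max}=\chi^{\hom}$ directly from the two formulas
$\chi^{\hom}=\IN(\FS(\chi))$ and $\chi^{\max}=\IN(\FSstar(\chi))$
and the characterization of continuous norms via continuity of $\FS(\chi)$.

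First, I would recall that for any $\chi\in\cN_\R$, Theorem~\ref{thm:INFS} gives $\chi^{\hom}=\IN(\FS(\chi))$, while Proposition~\ref{prop:maxchar}(i) gives $\chi^{\max}=\IN(\FSstar(\chi))$, where $\FSstar(\chi)$ denotes the usc regularization of the lsc function $\FS(\chi)$. Therefore $\chi^{\hom}=\chi^{\max}$ as soon as $\FS(\chi)=\FSstar(\chi)$, i.e.\ as soon as $\FS(\chi)$ is upper semicontinuous. Since $\FS(\chi)$ is always lsc (it is the supremum of the continuous functions $\FS_m(\chi)$), this is in turn equivalent to $\FS(\chi)$ being continuous.

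Next I would invoke Theorem~\ref{thm:cont}, according to which a norm $\chi\in\cN_\R$ is continuous if and only if $\FS(\chi)$ is continuous on $X^{\an}$. Combining the previous paragraph with this characterization yields the first assertion: $\chi\in\cN_\R^{\cont}$ implies $\FS(\chi)=\FSstar(\chi)$, hence $\chi^{\hom}=\chi^{\max}$.

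For the ``in particular'' statement, suppose $\chi\in\cN_\R^{\cont,\hom}$. Homogeneity gives $\chi=\chi^{\hom}$, and by what was just shown $\chi^{\hom}=\chi^{\max}$, so $\chi=\chi^{\max}\in\cN_\R^{\max}$. Thus $\cN_\R^{\cont,\hom}\subset\cN_\R^{\max}$, completing the proof. There is no genuine obstacle here: the statement is essentially a bookkeeping consequence of Theorem~\ref{thm:INFS}, Proposition~\ref{prop:maxchar}(i), and the equivalence (iv) in Theorem~\ref{thm:cont}. The only small point to check is that $\FS(\chi)$ is always lsc, which follows from its definition as a pointwise supremum of continuous functions (Definition~\ref{defi:FSop}).
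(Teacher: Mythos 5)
Your proposal is correct and follows essentially the same route as the paper: Theorem~\ref{thm:cont} gives continuity of $\FS(\n)$, and then $\n^{\hom}=\IN(\FS(\n))=\IN(\FSstar(\n))=\n^{\max}$ by Theorem~\ref{thm:INFS} and Proposition~\ref{prop:maxchar}~(i). The only difference is that you spell out the intermediate observation $\FS(\n)=\FSstar(\n)$ explicitly, which the paper leaves implicit.
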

The first equality fails in general when $\n$ is not continuous (see Example~\ref{exam:subvar3}), and the last inclusion is strict in general (see Corollary~\ref{cor:maxcont} below).  

\begin{cor}\label{cor:linmax} If $(X,L)$ has the weak envelope property, then $\cN_\R^{\max}=\cN_\R^{X^\lin}$; in particular, $\n_v$ is then maximal for any $v\in X^\lin$. 
\end{cor}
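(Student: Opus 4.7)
The inclusion $\cN_\R^{\max}\subset\cN_\R^{X^\lin}$ is immediate from Proposition~\ref{prop:signorm}~(ii), since $X^\div\subset X^\lin$. The content of the corollary is thus the reverse inclusion, together with the fact that $\n_v\in\cN_\R^{X^\lin}$ (with $\Sigma=\{v\}$ and $\f\equiv 0$) implies the ``in particular'' statement by the inclusion just obtained.

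To prove $\cN_\R^{X^\lin}\subset\cN_\R^{\max}$, fix $\n\in\cN_\R^{X^\lin}$ and set $\f:=\FS(\n)$ and $\f^\star:=\FSstar(\n)$. By Proposition~\ref{prop:signorm}~(i) applied to $\Sigma=X^\lin$, we have $\n=\IN_{X^\lin}(\f|_{X^\lin})$. On the other hand, Proposition~\ref{prop:maxchar}~(i) gives $\n^{\max}=\IN(\f^\star)$, and we always have $\n\le\n^{\max}$. The whole point is thus to check $\n^{\max}\le\n$.

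Here the weak envelope property is used through Lemma~\ref{lem:FSnegl}~(ii), which asserts that $\f^\star=\f$ on $X^\lin$. Combined with the trivial inequality $\IN(\cdot)\le\IN_{X^\lin}(\cdot)$ coming from $X^\lin\subset X^\an$ (infimum over a larger set is smaller), this yields
\begin{equation*}
\n^{\max}=\IN(\f^\star)\le\IN_{X^\lin}(\f^\star|_{X^\lin})=\IN_{X^\lin}(\f|_{X^\lin})=\n,
\end{equation*}
so $\n=\n^{\max}\in\cN_\R^{\max}$, as desired. No further obstacle arises: the essential input---the non-negligibility of points of linear growth under the weak envelope property---has already been packaged into Lemma~\ref{lem:FSnegl}~(ii), and the argument is then just a formal comparison of infimum norms.
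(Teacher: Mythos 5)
Your proof is correct and follows essentially the same route as the paper: both arguments hinge on Lemma~\ref{lem:FSnegl}~(ii) (non-negligibility of $X^\lin$ under the weak envelope property) together with Proposition~\ref{prop:maxchar}~(i) to identify $\n^{\max}$ with $\IN(\FSstar(\n))$. The only (harmless) difference is that where the paper invokes Proposition~\ref{prop:signorm}~(iii) to get the two-sided equality $\IN_{X^\lin}(\FSstar(\n))=\IN(\FSstar(\n))$ by density and upper semicontinuity, you get by with the trivial one-sided monotonicity of $\IN_\Sigma$ in $\Sigma$ combined with $\n\le\n^{\max}$, which is a slight simplification.
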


\begin{proof}[Proof of Proposition~\ref{prop:maxchar}] Lemma~\ref{lem:FSnegl}~(i) implies that $\n^{\max}=\pp_{X^\div}(\FS\n))$ coincides with $\IN_{X^\div}(\FSstar(\n))$, which is also equal to $\IN(\FSstar(\n))$, since $\FSstar(\n)$ is usc on $X^\an$ and $X^\div$ is dense (see Proposition~\ref{prop:signorm}~(iii)). This proves (i). 

To see (ii), assume $\n$ is maximal. By (i), we then have $\n=\n^{\max}=\IN(\FSstar(\n))$ where $\FSstar(\n)$ is bounded and usc. Conversely, assume $\n=\IN(\f)$ with $\f$ bounded and usc on $X^\an$. By density of $X^\div$, Proposition~\ref{prop:signorm}~(iii) yields $\n=\IN_{X^\div}(\f)$, and hence $\n\in\cN_\R^{\max}$. This proves (ii). 
\end{proof}

\begin{proof}[Proof of Corollary~\ref{cor:maxchar}] For any non-pluripolar $\Sigma\subset X^\an$, the space $\cN^\Sigma_\R$ is invariant under the translation action by $\R$, under finite minima, and under decreasing limits, see Corollary~\ref{cor:decrgin1}. By Proposition~\ref{prop:maxchar}~(ii) and~\eqref{equ:INequiv}, $\cN_\R^{\max}$ is further invariant under the scaling action of $\R_{>0}$ (even though $X^\div$ is only invariant under the scaling action of $\Q_{>0}$). The final assertion is an immediate consequence of Corollary~\ref{cor:decrgin1} and Lemma~\ref{lem:decrgin2}. 
\end{proof}

\begin{proof}[Proof of Corollary~\ref{cor:qftmax}] If $\n\in\cN_\R^{\cont}$ then $\FS(\n)$ is continuous (see Theorem~\ref{thm:cont}), and hence $\n^{\hom}=\IN(\FS(\n))=\n^{\max}$, by Theorem~\ref{thm:INFS} and Proposition~\ref{prop:maxchar}~(i). 
\end{proof}

\begin{proof}[Proof of Corollary~\ref{cor:linmax}] Lemma~\ref{lem:FSnegl}~(ii) implies that $\pp_{X^\lin}(\n)=\IN_{X^\lin}(\FS(\n))$ coincides with $\IN_{X^\lin}(\FSstar(\n))$. By Proposition~\ref{prop:signorm}~(iii), this is also equal to $\IN(\FSstar(\n))$, which is in turn equal to $\n^{\max}$, by Proposition~\ref{prop:maxchar}~(i). Thus $\pp_{X^\lin}(\n)=\n^{\max}$, and hence $\n\in\cN_\R^{X^\lin}\Leftrightarrow\n\in\cN_\R^{\max}$. 
\end{proof}

We can now state the main result of this section: 

\begin{thm}\label{thm:max} For all norms $\n,\n'\in\cN_\R$, we have 
$\n\sim\n'\Longrightarrow\n^{\max}=\n'^{\max}$. If $(X,L)$ has the weak envelope property (e.g., if $\charac k=0$), the converse implication holds. 
\end{thm}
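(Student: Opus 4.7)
The plan is to first reduce the theorem to an equivalent statement about Fubini--Study functions restricted to divisorial valuations, and then handle the two directions separately. Using Proposition~\ref{prop:maxchar}~(i), Lemma~\ref{lem:FSenv}, and Proposition~\ref{prop:signorm}~(i), I will observe that $\n^{\max}=\n'^{\max}$ is equivalent to $\FS(\n)=\FS(\n')$ on $X^\div$: indeed, $\FS(\n^{\max})|_{X^\div}=\FS(\n)|_{X^\div}$ is the smallest function realizing $\n^{\max}$ as an infimum norm over $X^\div$, so equality of $\n^{\max}$ and $\n'^{\max}$ is read off from their common representative on $X^\div$.

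For the forward implication, assuming $\n\sim\n'$, my strategy is to push the problem to the finite type approximants $\n_d,\n'_d\in\cT_\R$. By Theorem~\ref{thm:Fuj} applied to $\n$ and $\n'$, together with the triangle inequality, one has $\dd_1(\n_d,\n'_d)\to0$. Since the pseudometric $\dd_1$ on $\cT_\R$ descends through $\FS$ to the metric $\dd_1$ on $\cH_\R$ (Theorem~\ref{thm:DarvasH}), this will give $\dd_1(\FS(\n_d),\FS(\n'_d))\to0$ with $\FS(\n_d),\FS(\n'_d)\in\cH_\R\subset\cE^1$. For any $v\in X^\div\subset X^\lin$, $\d_v\in\cM^1$ with $\en^\vee(\d_v)\le\tee(v)<\infty$, and $\ii(\FS(\n_d))$ is uniformly bounded by $\dd_\infty(\n,\n_\triv)<\infty$. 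Applying the H\"older estimate~\eqref{equ:BBGZabs2} of Theorem~\ref{thm:BBGZ} with $\mu=\d_v$, I will obtain
\begin{equation*}
|\FS(\n_d)(v)-\FS(\n'_d)(v)|\lesssim\dd_1(\FS(\n_d),\FS(\n'_d))^{\a}\,M^{1-\a}\longrightarrow0,
\end{equation*}
and then passing to the pointwise limits $\FS(\n_d)\uparrow\FS(\n)$ and $\FS(\n'_d)\uparrow\FS(\n')$ (Definition~\ref{defi:FSop}) will yield $\FS(\n)(v)=\FS(\n')(v)$.

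For the converse under the weak envelope property, by transitivity of $\sim$ it suffices to prove $\n\sim\n^{\max}$ for every $\n$; the implication $\n^{\max}=\n'^{\max}\Rightarrow\n\sim\n'$ then follows from $\n\sim\n^{\max}=\n'^{\max}\sim\n'$. Since $\n\le\n^{\max}$, Lemma~\ref{lem:inequiv} reduces this to $\vol(\n)=\vol(\n^{\max})$. Monotonicity of $\vol$ gives $\vol(\n)\le\vol(\n^{\max})$, while the reverse inequality comes from chaining Proposition~\ref{prop:maxchar}~(i), Proposition~\ref{prop:FSIN}, Theorem~\ref{thm:volen}, and Lemma~\ref{lem:FSnegl}~(ii):
\begin{equation*}
\vol(\n^{\max})=\ten(\FS(\n^{\max}))=\ten(\qq(\FSstar(\n)))\le\ten(\FSstar(\n))=\ten(\FS(\n))=\vol(\n),
\end{equation*}
where the penultimate equality $\ten(\FSstar(\n))=\ten(\FS(\n))$ is precisely where the weak envelope property is used. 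The main obstacle is the forward direction: quantitative pointwise control of $\FS$ at divisorial valuations by the $\dd_1$-pseudometric is not visible from the bare definition of $\dd_1$, and the whole argument hinges on the BBGZ-type inequality~\eqref{equ:BBGZabs2}, which in turn ultimately rests on the Hodge-type estimates of~\cite{trivval}.
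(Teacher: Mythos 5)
Your proposal is correct and follows essentially the same route as the paper: the forward implication is exactly the content of Lemma~\ref{lem:asymlin} (via Theorem~\ref{thm:Fuj}, the isometry $\FS\colon(\cT_\R,\dd_1)\to(\cH_\R,\dd_1)$, and the H\"older estimate~\eqref{equ:BBGZabs2} applied to $\d_v$) combined with Lemma~\ref{lem:max}, which you re-derive inline. The converse is the paper's same chain $\vol(\n^{\max})=\ten(\qq(\FSstar(\n)))\le\ten(\FSstar(\n))=\ten(\FS(\n))=\vol(\n)$, with the weak envelope property entering precisely through Lemma~\ref{lem:FSnegl}~(ii).
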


\begin{exam}\label{exam:subvar3} For any subvariety $Z\varsubsetneq X$, the norm $\n=\n_Z\in\cN_\Z^\hom$ of Example~\ref{exam:subvar} is not maximal. Indeed, $\n$ is asymptotically equivalent to the maximal norm $\n_\triv+1$ (see Example~\ref{exam:subvar2}), and hence $\n^{\max}=\n_\triv+1\ne\n$. 
\end{exam}

\begin{cor}\label{cor:d1metr} The restriction of $\dd_1$ to $\cN_\R^{\max}$ a metric. If $(X,L)$ has the weak envelope property, then $\cN_\R^{\max}$ is further maximal for this property. 
\end{cor}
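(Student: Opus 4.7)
The plan is to derive this corollary as a formal consequence of Theorem~\ref{thm:max}, combined with the fact that $\chi\mapsto\chi^{\max}$ is a projection of $\cN_\R$ onto $\cN_\R^{\max}$. Recall that $\chi^{\max}=\pp_{X^\div}(\chi)$ by Definition~\ref{defi:sigproj}, and Proposition~\ref{prop:signorm}~(i) applied to $\Sigma=X^\div$ asserts that $\pp_{X^\div}$ is the identity on $\cN_\R^{X^\div}=\cN_\R^{\max}$. In particular, a norm $\chi$ is maximal iff $\chi^{\max}=\chi$, and $(\chi^{\max})^{\max}=\chi^{\max}$ for every $\chi\in\cN_\R$.

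For the first assertion, I would check that $\dd_1$ separates points on $\cN_\R^{\max}$. Take $\chi,\chi'\in\cN_\R^{\max}$ with $\dd_1(\chi,\chi')=0$. By Definition~\ref{defi:asympequiv}, this means $\chi\sim\chi'$, and the implication that holds unconditionally in Theorem~\ref{thm:max} gives $\chi^{\max}=(\chi')^{\max}$. Since both norms are maximal, the projection property identifies $\chi=\chi^{\max}=(\chi')^{\max}=\chi'$. As $\dd_1$ is already a pseudometric on all of $\cN_\R$, this shows it is a genuine metric on $\cN_\R^{\max}$.

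For the maximality assertion, assume the weak envelope property, and let $S\subset\cN_\R$ strictly contain $\cN_\R^{\max}$. Pick any $\chi\in S\smallsetminus\cN_\R^{\max}$ and set $\chi':=\chi^{\max}\in\cN_\R^{\max}\subset S$; since $\chi$ is not maximal, $\chi\ne\chi'$. Applying the converse direction of Theorem~\ref{thm:max}---the direction that requires the weak envelope property---to $\chi$ and $\chi^{\max}$, and using $(\chi^{\max})^{\max}=\chi^{\max}$, yields $\chi\sim\chi^{\max}=\chi'$, so $\dd_1(\chi,\chi')=0$ with $\chi\ne\chi'$. Hence $\dd_1$ fails to separate points on $S$, proving that $\cN_\R^{\max}$ is maximal among subsets of $\cN_\R$ on which $\dd_1$ is a metric.

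There is really no obstacle here: the corollary is a bookkeeping translation of the two implications in Theorem~\ref{thm:max} into the language of metrics, via the equivalence $\dd_1(\chi,\chi')=0\Leftrightarrow\chi\sim\chi'$ and the elementary projection property of $\chi\mapsto\chi^{\max}$; all of the genuine work has been absorbed into Theorem~\ref{thm:max}.
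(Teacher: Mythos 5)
Your proposal is correct and follows essentially the same route as the paper: both parts are read off from the two implications of Theorem~\ref{thm:max} together with the fact that $\chi\mapsto\chi^{\max}$ is a projection onto $\cN_\R^{\max}$ (so maximal norms are exactly its fixed points), the maximality assertion being the paper's argument stated in contrapositive form. No gaps.
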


\begin{cor}\label{cor:max} Assume $(X,L)$ has the weak envelope property, and pick any norm $\n\in\cN_\R$. Then $\n$ is maximal iff it is the largest norm in its asymptotic equivalence class. 
\end{cor}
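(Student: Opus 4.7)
The plan is to deduce this as a direct consequence of Theorem~\ref{thm:max} together with the fact that the assignment $\chi\mapsto\chi^{\max}$ is a projection onto $\cN_\R^{\max}$, combined with the pointwise inequality $\chi\le\chi^{\max}$.

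For the forward implication, I assume $\chi$ is maximal, \ie $\chi=\chi^{\max}$. Given any $\chi'\in\cN_\R$ with $\chi'\sim\chi$, Theorem~\ref{thm:max} yields $\chi'^{\max}=\chi^{\max}=\chi$. Since $\chi'\le\chi'^{\max}$ in general (the maximal hull dominates the original norm, by the discussion preceding Proposition~\ref{prop:maxchar}), we conclude $\chi'\le\chi$, so $\chi$ is indeed the largest norm in its asymptotic equivalence class.

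For the reverse implication, the key point is that $\chi\sim\chi^{\max}$ holds automatically under the weak envelope property. Indeed, $\pp_{X^\div}$ is a projection onto $\cN_\R^{\max}$ (Definition~\ref{defi:sigproj} and Proposition~\ref{prop:signorm}~(i)), so $(\chi^{\max})^{\max}=\chi^{\max}$; applying the converse direction of Theorem~\ref{thm:max}, which is where the weak envelope assumption is used, gives $\chi\sim\chi^{\max}$. Now if $\chi$ is the largest norm in its asymptotic equivalence class, then $\chi\ge\chi^{\max}$; combined with the reverse inequality $\chi\le\chi^{\max}$, this forces $\chi=\chi^{\max}$, so $\chi$ is maximal.

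There is no real obstacle here: both implications reduce to routine manipulation once Theorem~\ref{thm:max} and the basic properties of the projection operator $\pp_{X^\div}$ are in hand. The one subtlety worth emphasizing is that the weak envelope property enters only through the nontrivial direction of Theorem~\ref{thm:max}, namely $\chi^{\max}=\chi'^{\max}\Rightarrow\chi\sim\chi'$, which is what gives the asymptotic equivalence $\chi\sim\chi^{\max}$ needed in the second half of the argument.
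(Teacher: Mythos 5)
Your proof is correct and follows essentially the same route as the paper: both directions reduce to Theorem~\ref{thm:max} together with the inequality $\n\le\n^{\max}$, and your extraction of $\n\sim\n^{\max}$ from the converse implication of Theorem~\ref{thm:max} via idempotence of the projection is just a slight repackaging of what the paper cites directly (its proof of Theorem~\ref{thm:max} establishes $\n\sim\n^{\max}$ as the content of that converse).
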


\begin{cor}\label{cor:FSqft2} If $\n,\n'\in\cN_\R^\cont$ are continuous, then $\n\sim\n\Longleftrightarrow\dd_\infty(\n,\n')=0$. 
\end{cor}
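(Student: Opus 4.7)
The implication $\dd_\infty(\n,\n')=0\Rightarrow\n\sim\n'$ is immediate from $\dd_1\le\dd_\infty$ and Definition~\ref{defi:asympequiv}, so the content of the statement is the converse, which we plan to obtain by combining three earlier results on continuous norms.

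The strategy is to route the comparison through homogenizations. Assume $\n\sim\n'$ with $\n,\n'\in\cN_\R^\cont$. First, by the first implication of Theorem~\ref{thm:max} (which, crucially, does \emph{not} require the weak envelope property), asymptotic equivalence gives
\begin{equation*}
\n^{\max}=\n'^{\max}.
\end{equation*}
Second, since $\n$ and $\n'$ are continuous, Corollary~\ref{cor:qftmax} tells us that $\n^\hom=\n^{\max}$ and $\n'^\hom=\n'^{\max}$, so the previous identity upgrades to
\begin{equation*}
\n^\hom=\n'^\hom.
\end{equation*}
Third, Proposition~\ref{prop:homcont} yields $\dd_\infty(\n,\n^\hom)=0$ and $\dd_\infty(\n',\n'^\hom)=0$, again using the continuity hypothesis on both norms. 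The triangle inequality for the pseudo-metric $\dd_\infty$ on $\cN_\R$ then gives
\begin{equation*}
\dd_\infty(\n,\n')\le\dd_\infty(\n,\n^\hom)+\dd_\infty(\n^\hom,\n'^\hom)+\dd_\infty(\n'^\hom,\n')=0,
\end{equation*}
which is the desired conclusion.

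There is no genuine obstacle here: the proof is essentially the concatenation of Theorem~\ref{thm:max}, Corollary~\ref{cor:qftmax}, and Proposition~\ref{prop:homcont}. The only point worth emphasizing in the write-up is that we are invoking only the \emph{easy} direction of Theorem~\ref{thm:max}, so no hypothesis on $(X,L)$ beyond the standing ones (in particular, no envelope property) is needed for Corollary~\ref{cor:FSqft2}.
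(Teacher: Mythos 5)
Your argument is correct, and it is essentially the paper's proof in a slightly repackaged form: the paper deduces $\FS(\n)=\FS(\n')$ on $X^\div$ from $\n\sim\n'$ and then uses continuity of the Fubini--Study functions plus density of $X^\div$ to conclude $\dd_\infty(\n,\n')=0$, whereas you reach the same point by chaining the unconditional implication of Theorem~\ref{thm:max} with Corollary~\ref{cor:qftmax} and Proposition~\ref{prop:homcont}, which encapsulate exactly that continuity-and-density step. Your remark that only the easy direction of Theorem~\ref{thm:max} is needed, so no envelope hypothesis enters, is accurate and matches the paper.
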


As a first step towards Theorem~\ref{thm:max}, we show: 

\begin{lem}\label{lem:max} For all $\n,\n'\in\cN_\R$, the following are equivalent: 
\begin{itemize}
\item[(i)] $\n^{\max}=\n'^{\max}$; 
\item[(ii)] $\FS(\n)=\FS(\n')$ on $X^\div$; 
\item[(iii)] $\FSstar(\n)=\FSstar(\n')$ on $X^\an$.
\end{itemize}
\end{lem}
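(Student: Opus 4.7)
The plan is to establish the cycle of implications (iii) $\Rightarrow$ (i) $\Rightarrow$ (ii) $\Rightarrow$ (iii).

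For (iii) $\Rightarrow$ (i), I would simply invoke Proposition~\ref{prop:maxchar}~(i), which gives the formula $\n^{\max}=\IN(\FSstar(\n))$. So if the usc regularizations coincide on $X^\an$, then the maximal norms are equal.

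For (i) $\Rightarrow$ (ii), the idea is to read off the values of $\FS(\n)$ on $X^\div$ directly from $\n^{\max}=\pp_{X^\div}(\n)$. Applying Lemma~\ref{lem:FSenv} with $\Sigma=X^\div$ to both $\n$ and $\n'$ yields $\FS(\n^{\max})=\FS(\n)$ and $\FS(\n'^{\max})=\FS(\n')$ pointwise on $X^\div$. Under the hypothesis $\n^{\max}=\n'^{\max}$, the left-hand sides agree, so $\FS(\n)=\FS(\n')$ on $X^\div$.

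For (ii) $\Rightarrow$ (iii), recall that $\FS(\n)=\sup_d\FS(\n_d)$ is the pointwise supremum of the canonical approximants $\FS(\n_d)\in\cH_\R\subset\PSH$, so it is of the form considered in Lemma~\ref{lem:envdiv}. That lemma characterizes $\FSstar(\n)$ as the smallest usc function on $X^\an$ whose restriction to $X^\div$ coincides with $\FS(\n)|_{X^\div}$, and analogously for $\n'$. Since (ii) says $\FS(\n)=\FS(\n')$ on $X^\div$, the two usc functions $\FSstar(\n)$ and $\FSstar(\n')$ satisfy the same minimality property, so $\FSstar(\n)\le\FSstar(\n')$ and $\FSstar(\n')\le\FSstar(\n')$, hence equality holds on all of $X^\an$.

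The most delicate step is (ii) $\Rightarrow$ (iii), where one must pass from pointwise equality on the dense subset $X^\div$ to pointwise equality on the whole compact space $X^\an$; this would fail for arbitrary usc functions, and the key input making it work is the non-negligibility of divisorial valuations established in~\cite{trivval} (via Lemma~\ref{lem:envdiv}). The remaining implications are essentially bookkeeping from Proposition~\ref{prop:maxchar}~(i) and Lemma~\ref{lem:FSenv}.
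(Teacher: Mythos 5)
Your proof is correct and follows essentially the same route as the paper's: (iii)$\Rightarrow$(i) via Proposition~\ref{prop:maxchar}~(i), (i)$\Rightarrow$(ii) via Lemma~\ref{lem:FSenv} applied with $\Sigma=X^\div$, and (ii)$\Rightarrow$(iii) via the minimality characterization of the usc regularization in Lemma~\ref{lem:envdiv} (which the paper packages as Lemma~\ref{lem:FSnegl}~(i)). One typo to fix: in the last step the second inequality should read $\FSstar(\n')\le\FSstar(\n)$, not $\FSstar(\n')\le\FSstar(\n')$.
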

\begin{proof} Since $\n^{\max}=\pp_{X^\div}(\n)$, Lemma~\ref{lem:FSenv} yields $\FS(\n^{\max})=\FS(\n)$ on $X^\div$, and similarly for $\n'$. This implies (i)$\Rightarrow$(ii), while Lemma~\ref{lem:FSnegl}~(i) yields (ii)$\Leftrightarrow$(iii). Finally, (iii)$\Rightarrow$(i) follows from Proposition~\ref{prop:maxchar}~(i). 
\end{proof}

\begin{lem}\label{lem:asymlin} For all $\n,\n'\in\cN_\R$, we have 
$$
\n\sim\n'\Longleftrightarrow\lim_d\dd_1(\FS(\n_d),\FS(\n'_d))=0\Longrightarrow\FS(\n)=\FS(\n')\text{ on }X^\lin.
$$
\end{lem}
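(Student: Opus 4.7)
The plan is to prove the two assertions separately: the biconditional via the $\dd_1$-isometry of the Fubini--Study operator on $\cT_\R$ combined with the approximation Theorem~\ref{thm:Fuj}, and the final implication via the H\"older-type estimate~\eqref{equ:BBGZabs2} applied to Dirac masses at points of $X^\lin$.

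For the biconditional, I would invoke the construction at the start of \S\ref{sec:DarvasHR}, according to which $\FS\colon(\cT_\R,\dd_1)\twoheadrightarrow(\cH_\R,\dd_1)$ is an isometric surjection; in particular $\dd_1(\FS(\n_d),\FS(\n'_d))=\dd_1(\n_d,\n'_d)$ for all sufficiently divisible $d$. By Theorem~\ref{thm:Fuj} applied to $\n$ and $\n'$, we have $\dd_1(\n_d,\n)\to 0$ and $\dd_1(\n'_d,\n')\to 0$, and two applications of the triangle inequality yield $\lim_d\dd_1(\n_d,\n'_d)=\dd_1(\n,\n')$. Since $\n\sim\n'$ is by definition equivalent to $\dd_1(\n,\n')=0$, this establishes the biconditional.

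For the final implication, fix $v\in X^\lin$. I would apply~\eqref{equ:BBGZabs2} with $\f=\FS(\n_d)$, $\f'=\FS(\n'_d)\in\cH_\R\subset\cE^1$ and $\mu=\d_v$. By Theorem~\ref{thm:thmC}, $\en^\vee(\d_v)=\ess(v)<+\infty$, so $\d_v\in\cM^1$, and the estimate gives
\begin{equation*}
|\FS(\n_d)(v)-\FS(\n'_d)(v)|=\int|\FS(\n_d)-\FS(\n'_d)|\,d\d_v\lesssim\dd_1(\FS(\n_d),\FS(\n'_d))^{\a}\,M_d^{1-\a},
\end{equation*}
where $M_d:=\max\{\ii(\FS(\n_d)),\ii(\FS(\n'_d)),\ess(v)\}$. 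The crux is then to bound $M_d$ uniformly in $d$: since $(\FS(\n_d))_d$ is an increasing net with pointwise supremum $\FS(\n)\in\cL^\infty$, we have $\FS(\n_{d_0})\le\FS(\n_d)\le\FS(\n)$ for any fixed $d_0$ and $d\ge d_0$, giving uniform two-sided $\cL^\infty$-bounds on $\FS(\n_d)$; the same holds for $\FS(\n'_d)$. Combined with the crude estimate $\ii(\f)\le 2\|\f\|_\infty$ valid for $\f\in\cH_\R$ (using $\int\f\MA(\f)\ge\inf\f$), this yields a uniform bound on $M_d$. Letting $d\to\infty$, the hypothesis forces the left-hand side to $0$, and since by construction $\FS(\n_d)(v)\to\FS(\n)(v)$ and $\FS(\n'_d)(v)\to\FS(\n')(v)$, one concludes $\FS(\n)(v)=\FS(\n')(v)$.

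The main obstacle will be justifying the uniform bound on $\ii(\FS(\n_d))$ and $\ii(\FS(\n'_d))$; the essential input is the monotonicity of the canonical approximants, which combined with the boundedness of the limit $\FS(\n)$ provides uniform $\cL^\infty$-control. Everything else is a direct assembly of results already available in the paper.
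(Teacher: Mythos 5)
Your proposal is correct and follows essentially the same route as the paper: the biconditional via the $\dd_1$-isometry of $\FS$ on $\cT_\R$ plus Theorem~\ref{thm:Fuj}, and the last implication via~\eqref{equ:BBGZabs2} applied to $\d_v\in\cM^1$ for $v\in X^\lin$. Your explicit uniform bound on $M_d$ (via the monotone sandwich $\FS(\n_{d_0})\le\FS(\n_d)\le\FS(\n)$ and $\ii(\f)\le\sup\f-\inf\f$) fills in a step the paper leaves implicit; just note that for $\d_v\in\cM^1$ you need not invoke the later Theorem~\ref{thm:thmC} --- the elementary estimate $\en^\vee(\d_v)\le\tee(v)<\infty$ suffices and avoids any appearance of a forward reference.
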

\begin{proof} By construction of the $\dd_1$-metric on $\cH_\R$, $\FS\colon(\cT_\R,\dd_1)\to(\cH_\R,\dd_1)$ is an isometry, and hence $\dd_1(\n_d,\n'_d)=\dd_1(\FS(\n_d),\FS(\n'_d))$ for all $d$ sufficiently divisible. By Theorem~\ref{thm:Fuj}, we have, on the other hand, $\dd_1(\n_d,\n'_d)\to\dd_1(\n,\n')$. This implies the first equivalence. For any $v\in X^\lin$, the measure $\d_v$ lies in $\cM^1$. By~\eqref{equ:BBGZabs2}, we thus have 
$$
\dd_1(\FS(\n_d),\FS(\n'_d))\to 0\Longrightarrow\FS(\n_d)(v)-\FS(\n'_d)(v)\to 0,
$$
which yields the right-hand implication, since $\FS(\n_d)\to\FS(\n)$ and $\FS(\n'_d)\to\FS(\n')$ pointwise on $X^\an$. 
\end{proof}

\begin{proof}[Proof of Theorem~\ref{thm:max}] If $\n\sim\n'$, then $\FS(\n)=\FS(\n')$ on $X^\lin\supset X^\div$, by Lemma~\ref{lem:asymlin}, and hence $\n^{\max}=\n'^{\max}$, by Lemma~\ref{lem:max}. Now assume the weak envelope property. To prove the converse implication, it suffices to show $\n\sim\n^{\max}$ for any $\n\in\cN_\R$. Since $\n\le\n^{\max}$, this amounts to $\vol(\n)=\vol(\n^{\max})$ (see Lemma~\ref{lem:inequiv}). By Theorem~\ref{thm:volen}, we have $\vol(\n)=\ten(\f)$ with $\f:=\FS(\n)$. On the other hand, we have $\n^{\max}=\IN(\f^\star)$ (see Proposition~\ref{prop:maxchar}~(i)), and hence $\vol(\n^{\max})=\ten((\f^\star)_\star)$, by Corollary~\ref{cor:volIN}. Since $\f$ is lsc, we have $\f\le(\f^\star)_\star\le\f^\star$, so by monotonicity of the energy, it suffices to prove that $\ten(\f)=\ten(\f^\star)$, which follows from Lemma~\ref{lem:FSnegl}. 
\end{proof}  

\begin{proof}[Proof of Corollary~\ref{cor:d1metr}] That $\dd_1$ restricts to a metric on $\cN_\R^{\max}$ is a direct consequence of Theorem~\ref{thm:max}. If $\dd_1$ is also a metric on a subset $\cN'\subset\cN_\R$ that contains $\cN_\R^{\max}$, then any $\n\in\cN'$ satisfies $\dd_1(\n,\n^{\max})=0$, by Theorem~\ref{thm:max}, and hence $\n=\n^{\max}\in\cN_\R^{\max}$. Thus $\cN'=\cN_\R^{\max}$. 
\end{proof}

\begin{proof}[Proof of Corollary~\ref{cor:max}] Assume $\n$ is maximal, and pick $\n'\in\cN_\R$ with $\n\sim\n'$. Then $\n'\le\n'^{\max}=\n^{\max}=\n$, by Theorem~\ref{thm:max}, which proves that $\n$ is the largest norm in its equivalence class. Conversely, this last property implies $\n^{\max}\le\n$, since $\n\sim\n^{\max}$ by Theorem~\ref{thm:max}, and hence $\n=\n^{\max}$, \ie $\n\in\cN_\R^{\max}$. 
\end{proof}

 \begin{proof}[Proof of Corollary~\ref{cor:FSqft2}] By Theorem~\ref{thm:cont}, $\FS(\n)$ and $\FS(\n')$ are continuous, and are equal iff $\dd_\infty(\n,\n')=0$. By Lemma~\ref{lem:max}, we thus have $\n\sim\n'\Rightarrow\dd_\infty(\n,\n')=0$, while the converse trivially holds. 
\end{proof}

Assuming now the envelope property (\eg $X$ is smooth and $\charac k=0$), we finally state: 

\begin{thm}\label{thm:FSstar} If $(X,L)$ has the envelope property, then the regularized Fubini--Study operator defines a surjective isometry $\FSstar\colon(\cN_\R,\dd_1)\twoheadrightarrow(\cE^\infty_\uparrow,\dd_1)$, which restricts to an isometric isomorphism $\FSstar\colon(\cN_\R^{\max},\dd_1)\simto(\cE^\infty_\uparrow,\dd_1)$ with inverse $\IN\colon(\cE^\infty_\uparrow,\dd_1)\simto(\cN_\R^{\max},\dd_1)$. 
\end{thm}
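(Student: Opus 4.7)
The strategy is to establish the claimed isometry via the two inverse identities $\FSstar\circ\IN=\id_{\cE^\infty_\uparrow}$ and $\IN\circ\FSstar=\id_{\cN_\R^{\max}}$, then upgrade to an isometry on all of $\cN_\R$ by approximation. The second identity is essentially Proposition~\ref{prop:maxchar}(i): for $\n\in\cN_\R$ we always have $\n^{\max}=\IN(\FSstar(\n))$, so if $\n\in\cN_\R^{\max}$ then $\n=\IN(\FSstar(\n))$. This also shows that $\FSstar\colon\cN_\R^{\max}\to\cE^\infty_\uparrow$ is injective once we know the target is correct; for the target, Lemma~\ref{lem:FSnegl}(iii) supplies $\FSstar(\n)\in\cE^\infty_\uparrow$ under the envelope property assumption, together with the bonus that $\FS(\n_d)\to\FSstar(\n)$ \emph{strongly} in $\cE^1$.

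For surjectivity (and simultaneously $\FSstar\circ\IN=\id$ on $\cE^\infty_\uparrow$), given $\f\in\cE^\infty_\uparrow$, the candidate preimage is $\IN(\f)$; note that $\IN(\f)\in\cN_\R^{\max}$ by Proposition~\ref{prop:maxchar}(ii), since $\f$ is bounded and usc. By Proposition~\ref{prop:FSIN} we have $\FS(\IN(\f))=\qq(\f)$, so I must verify $\qq(\f)^\star=\f$. Since $\f\in\cE^\infty_\uparrow$ implies the discontinuity locus $\{\f_\star<\f\}$ is pluripolar (\cite[Theorem~11.23]{trivval}, as cited in \S\ref{sec:env}), and since $\qq(\f)=\pp(\f_\star)$ by~\eqref{equ:PQ}, the identity $\pp(\f_\star)^\star=\pp(\f)^\star$ provided by~\cite[Theorem~13.20]{trivval} (invoked exactly as in the proof of Lemma~\ref{lem:envreg}) reduces the claim to $\pp(\f)^\star=\f$. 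But $\f$ is $L$-psh (hence usc) and $\le\f$, so $\pp(\f)=\f$ and thus $\pp(\f)^\star=\f$. This closes the identity $\FSstar\circ\IN=\id$ on $\cE^\infty_\uparrow$ and completes the bijection part.

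For the isometry assertion, fix $\n,\n'\in\cN_\R$. By Theorem~\ref{thm:Fuj}, the canonical approximants satisfy $\dd_1(\n_d,\n)\to 0$ and $\dd_1(\n'_d,\n')\to 0$, so by the triangle inequality $\dd_1(\n_d,\n'_d)\to\dd_1(\n,\n')$. On the Fubini--Study side, $\FS$ restricts to an isometry $(\cT_\R,\dd_1)\twoheadrightarrow(\cH_\R,\dd_1)$ by the construction of the Darvas metric in \S\ref{sec:DarvasHR}, hence $\dd_1(\n_d,\n'_d)=\dd_1(\FS(\n_d),\FS(\n'_d))$. Meanwhile Lemma~\ref{lem:FSnegl}(iii) gives strong $\cE^1$-convergence $\FS(\n_d)\to\FSstar(\n)$ and $\FS(\n'_d)\to\FSstar(\n')$, and the Darvas metric is continuous in the strong topology (Theorem~\ref{thm:Darvas}), so $\dd_1(\FS(\n_d),\FS(\n'_d))\to\dd_1(\FSstar(\n),\FSstar(\n'))$. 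Passing to the limit yields $\dd_1(\n,\n')=\dd_1(\FSstar(\n),\FSstar(\n'))$, which is the surjective isometry on $\cN_\R$, and restricts to the isomorphism on $\cN_\R^{\max}$.

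The genuinely delicate step is the verification $\qq(\f)^\star=\f$ for $\f\in\cE^\infty_\uparrow$: it is exactly here that both the envelope property (ensuring $\pp$ behaves well and that the discontinuity locus of regularizable-from-below functions is pluripolar) and the non-trivial negligibility result~\cite[Theorem~13.20]{trivval} are essential. Everything else is a packaging of Proposition~\ref{prop:maxchar}, Proposition~\ref{prop:FSIN}, Lemma~\ref{lem:FSnegl}(iii) and the strong continuity of $\dd_1$ on $\cE^1$.
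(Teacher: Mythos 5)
Your proof is correct and follows essentially the same route as the paper: the isometry on $\cN_\R$ via canonical approximants, Theorem~\ref{thm:Fuj}, the isometry $\FS\colon(\cT_\R,\dd_1)\to(\cH_\R,\dd_1)$, and strong convergence from Lemma~\ref{lem:FSnegl}(iii); and the bijection on $\cN_\R^{\max}$ via Propositions~\ref{prop:maxchar} and~\ref{prop:FSIN}. The only cosmetic differences are that you re-derive $\qq^\star(\f)=\f$ for $\f\in\cE^\infty_\uparrow$ from first principles where the paper simply cites Lemma~\ref{lem:envreg}(i), and you obtain injectivity on $\cN_\R^{\max}$ from the identity $\IN\circ\FSstar=\id$ rather than from Theorem~\ref{thm:max}; both are valid.
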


\begin{proof} Since $(X,L)$ has the envelope property, $\FSstar(\n)$ lies in $\cE^\infty_\uparrow$ for each $\n\in\cN_\R$ (see Lemma~\ref{lem:FSnegl}~(iii)). Since $\FS\colon(\cT_\R,\dd_1)\to(\cH_\R,\dd_1)$ is an isometry, the canonical approximants $\n_d,\n'_d\in\cT_\R$ satisfy $\dd_1(\n_d,\n'_d)=\dd_1(\FS(\n_d),\FS(\n'_d))$ for all $d$ sufficiently divisible. Now, Theorem~\ref{thm:Fuj} implies on the one hand $\dd_1(\n_d,\n'_d)\to\dd_1(\n,\n')$. On the other hand, Lemma~\ref{lem:FSnegl}~(iii) implies $\dd_1(\FS(\n_d),\FS(\n'_d))\to\dd_1(\FSstar(\n),\FSstar(\n'))$, since $\dd_1$ defines the strong topology of $\cE^1$ (see Theorem~\ref{thm:Darvas}). This proves that $\FSstar\colon(\cN_\R,\dd_1)\to(\cE^\infty_\uparrow,\dd_1)$ is an isometry, whose restriction to $\cN_\R^{\max}$ is necessarily injective, by Theorem~\ref{thm:max}. Conversely, pick $\f\in\cE^\infty_\uparrow$. Then $\IN(\f)\in\cN^{\max}_\R$ (see Proposition~\ref{prop:maxchar}~(ii)). By Proposition~\ref{prop:FSIN} and Lemma~\ref{lem:envreg}~(i), we further have $\FSstar(\IN(\f))=\qq^\star(\f)=\f$. This shows that $\FSstar\colon(\cN_\R^{\max},\dd_1)\simto(\cE^\infty_\uparrow,\dd_1)$ is an isometric isomorphism, and the rest follows. 

\end{proof}

\begin{cor}\label{cor:maxcont} Assume $(X,L)$ has the envelope property. Then 
$$
\cN_\R^{\max}\subset\cN_\R^\cont\Longleftrightarrow\cE^\infty_\uparrow=\CPSH\Longleftrightarrow\dim X\le 1.
$$   
\end{cor}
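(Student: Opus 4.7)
Under the envelope property, the plan is to translate the first equivalence across the isometric correspondence $\FSstar\colon(\cN_\R^{\max},\dd_1)\simto(\cE^\infty_\uparrow,\dd_1)$ with inverse $\IN$ supplied by Theorem~\ref{thm:FSstar}, and to obtain the second equivalence directly from Remark~\ref{rmk:pshex}.

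For the direction $\cE^\infty_\uparrow=\CPSH\Rightarrow\cN_\R^{\max}\subset\cN_\R^\cont$, I would start with $\n\in\cN_\R^{\max}$. By Proposition~\ref{prop:maxchar}(i), $\n=\IN(\FSstar(\n))$. By assumption $\FSstar(\n)\in\CPSH$, and then Proposition~\ref{prop:FSIN} yields $\FS(\n)=\qq(\FSstar(\n))$. Since $\FSstar(\n)$ itself is continuous and $L$-psh, it lies in the set of continuous $L$-psh functions that defines its own Fubini--Study envelope, so $\qq(\FSstar(\n))=\FSstar(\n)$. Hence $\FS(\n)=\FSstar(\n)$ is continuous, and Theorem~\ref{thm:cont} gives $\n\in\cN_\R^\cont$. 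Conversely, assuming $\cN_\R^{\max}\subset\cN_\R^\cont$ and noting that $\CPSH\subset\cE^\infty_\uparrow$ always holds (a continuous $L$-psh function is a constant net approximating itself), I would pick $\f\in\cE^\infty_\uparrow$ and set $\n:=\IN(\f)\in\cN_\R^{\max}$, using that $\IN$ is inverse to $\FSstar$. Then $\n\in\cN_\R^\cont$ forces $\FS(\n)$ to be continuous, hence usc; since $\FSstar(\n)$ is the usc regularization of $\FS(\n)$, this gives $\f=\FSstar(\n)=\FS(\n)\in\Cz\cap\PSH=\CPSH$.

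The second equivalence is essentially recorded in Remark~\ref{rmk:pshex}, which notes that under the envelope property the inclusion $\CPSH\subset\cE^\infty_\uparrow$ is strict precisely when $n\ge 2$. The case $\dim X=0$ is trivial; the equality $\CPSH=\cE^\infty_\uparrow$ on curves and the existence, for $\dim X\ge 2$, of a bounded $L$-psh function regularizable from below but not continuous are both established in \cite[Examples~13.23 and~13.25]{trivval}. I do not foresee a serious obstacle for the first equivalence given the machinery already in place; the only real subtlety is the last equivalence $\cE^\infty_\uparrow=\CPSH\Leftrightarrow\dim X\le 1$, which is not self-contained here and must be imported from~\cite{trivval}, where it relies on delicate pluripotential-theoretic constructions (in particular, an explicit bounded $L$-psh function regularizable from below but discontinuous on higher-dimensional $X$).
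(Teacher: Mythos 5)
Your proof is correct and follows essentially the same route as the paper, which deduces the first equivalence from the isometric correspondences of Theorems~\ref{thm:contisom} and~\ref{thm:FSstar} and the second from~\cite[Example~13.25]{trivval}; you simply unpack the translation between $\cN_\R^{\max}$ and $\cE^\infty_\uparrow$ explicitly via Propositions~\ref{prop:maxchar} and~\ref{prop:FSIN} and Theorem~\ref{thm:cont}. No gaps.
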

\begin{proof} The first equivalence follows from Theorem~\ref{thm:contisom} and Theorem~\ref{thm:FSstar}, and the second one from~\cite[Example~13.25]{trivval}). 
\end{proof}

%
%
\section{The Monge--Amp\`ere measure of a norm}\label{sec:MAnorm}
Any ample test configuration for $(X,L)$ defines a measure on $X^\an$ with finite support in $X^\div$. This defines a Monge--Amp\`ere operator $\MA\colon\cT_\Z\to\cM^1$. Here we extend this operator to a map $\cN_\R\to\cM^1$ whose fibers consist of asymptotic equivalence classes modulo translation, thus completing the proof of Theorem~B. The construction, which works even when in the absence of the envelope property, restricts to a homeomorphism between divisorial norms modulo translations and probability measures with finite support in $X^\div$, thus proving Theorem~C.
We also extend Dervan's minimum norm functional from $\cT_\Z$ to $\cN_\R$. 
%
%
\subsection{Monge--Amp\`ere measures of $\R$-test configurations}\label{sec:MARtc}
We define the \emph{Monge--Amp\`ere measure} $\MA(\n)\in\cM^1$ of an $\R$-test configuration $\n\in\cT_\R$ as the Monge--Amp\`ere measure of the associated Fubini--Study function $\FS(\n)\in\cH_\R$ (see Proposition~\ref{prop:FSft}), \ie 
\begin{equation*}
\MA(\n):=\MA(\FS(\n)). 
\end{equation*}
The invariance/equivariance properties of the operators $\MA\colon\cH_\R\to\cM^1$ and $\FS\colon\cT_\R\to\cH_\R$ imply that if $\n\in\cT_\R$, $c\in\R$ and $t\in\R_{>0}$, then 
\begin{equation*}
  \MA(\n+c)=\MA(\n)\quad\text{and}\quad\MA(t\n)=t_\star\MA(\n).
\end{equation*}

When $\n\in\cT_\Z$, the Monge--Amp\`ere measure can be computed geometrically as follows. By the Rees correspondence~\eqref{equ:Rees}, $\n$ is associated to an ample test configuration. Let $(\cX,\cL)$ be its integral closure, with central fiber $\cX_0=\sum_ib_iE_i$. By Lemma~\ref{lem:FStest}, $\f:=\FS(\n)\in\cH_\Q$ satisfies~\eqref{equ:FStc}, so by~\cite[Proposition~7.19~(ii)]{trivval} we have 
\begin{equation}\label{equ:MATZ}
  \MA(\n)=\sum_ib_i(\cL|_{E_i}^n)\d_{v_i},
\end{equation}
where $v_i\in X^\div$ is the divisorial valuation associated to $E_i$.

For general $\n\in\cT_\R$, the support of $\MA(\n)$ is a finite subset of $X^\lin$, see 
Lemma~\ref{lem:suppMA}.
\begin{lem}\label{lem:MATR}
  The Monge--Amp\`ere operator above defines an isometry
  \begin{equation}\label{equ:MATR}
    \MA\colon(\cT_\R/\R,\quotd_1)\to(\cM^1,\dd_1)
  \end{equation}
  with dense image.
\end{lem}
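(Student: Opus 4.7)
The plan is to factor $\MA\colon\cT_\R\to\cM^1$ through $\cH_\R$ via the Fubini--Study operator and then compose two isometric identifications already at our disposal. Recall from the beginning of~\S\ref{sec:DarvasHR} that $\FS\colon(\cT_\R,\dd_1)\twoheadrightarrow(\cH_\R,\dd_1)$ is a surjective isometry, defined precisely so that it becomes one; being $\R$-equivariant (since $\FS(\n+c)=\FS(\n)+c$), it descends to a surjective isometry $\FS\colon(\cT_\R/\R,\quotd_1)\twoheadrightarrow(\cH_\R/\R,\quotd_1)$. On the other hand, Theorem~\ref{thm:d1M1} provides an isometric embedding $\MA\colon(\cE^1/\R,\quotd_1)\hookrightarrow(\cM^1,\dd_1)$ with dense image, which in particular restricts to an isometric embedding of $\cH_\R/\R\subset\cE^1/\R$ into $(\cM^1,\dd_1)$.

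First I would verify that~\eqref{equ:MATR} is well-defined and translation invariant: this follows from $\MA(\n):=\MA(\FS(\n))\in\cM^1$ for $\n\in\cT_\R$ (as $\FS(\n)\in\cH_\R\subset\cE^1$) together with the translation invariance of $\MA$ on $\cE^1$. The isometry statement is then immediate from the chain of identities, valid for all $\n,\n'\in\cT_\R$,
\begin{equation*}
\dd_1(\MA(\n),\MA(\n'))
= \quotd_1(\FS(\n),\FS(\n'))
= \quotd_1(\n,\n'),
\end{equation*}
using the isometric property of $\MA|_{\cH_\R/\R}$ for the first equality and the isometric descent of $\FS$ for the second.

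For the density of the image, I would use surjectivity of $\FS\colon\cT_\R\to\cH_\R$ (Proposition~\ref{prop:FSft}) to reduce to showing that $\MA(\cH_\R/\R)$ is dense in $(\cM^1,\dd_1)$. By Theorem~\ref{thm:d1M1} it is enough to show that $\MA(\cH_\R/\R)$ is dense in $\MA(\cE^1/\R)$, which follows from the strong continuity of $\MA$ on $\cE^1$ together with the strong density of $\cH_\Q\subset\cH_\R$ in $\cE^1$ (every $\f\in\cE^1$ is by definition a decreasing limit of functions in $\cH_\Q$, and such decreasing nets converge strongly). The lemma thus amounts to a formal manipulation of two isometric factorizations already established, and I do not anticipate any step to present a serious obstacle.
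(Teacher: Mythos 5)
Your proposal is correct and follows essentially the same route as the paper: factor through the isometric surjection $\FS\colon(\cT_\R/\R,\quotd_1)\twoheadrightarrow(\cH_\R/\R,\quotd_1)$ from~\eqref{equ:FSquot}, then compose with the isometric embedding $\MA\colon(\cE^1/\R,\quotd_1)\hto(\cM^1,\dd_1)$ of Theorem~\ref{thm:d1M1}, using the strong density of $\cH_\R$ in $\cE^1$. The paper's proof is exactly this, stated slightly more tersely.
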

\begin{proof}
  By~\eqref{equ:FSquot}, the Fubini--Study operator defines an isometric surjection $\FS\colon(\cT_\R/\R,\quotd_1)\to(\cH_\R/\R,\quotd_1)$. Now $\cH_\R/\R$ is a dense subspace of $(\cE^1/\R,\quotd_1)$, and by definition, the metric $\dd_1$ on $\cM^1$ has the property that $\MA\colon(\cE^1/\R,\quotd_1)\to(\cM^1,\dd_1)$ is an injective isometry with dense image, see Theorem~\ref{thm:d1M1}. It therefore follows that~\eqref{equ:MATR} is an isometry with dense image.
\end{proof}
%
%
\subsection{Monge--Amp\`ere measures of general norms}\label{sec:MAgennorm}
We now define the Monge--Amp\`ere operator on general norms.
\begin{thm}\label{thm:Hauscomp}
  The Monge--Amp\`ere operator above extends uniquely to an isometry
  \begin{equation}\label{equ:MANR}
    \MA\colon(\cN_\R/\R,\quotd_1)\to(\cM^1,\dd_1),
  \end{equation}
 with dense image.
\end{thm}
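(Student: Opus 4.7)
The strategy is to extend $\MA$ from $\cT_\R$ to $\cN_\R$ by uniform continuity, exploiting three ingredients already established: that $\cT_\R$ is $\dd_1$-dense in $\cN_\R$ (Corollary~\ref{cor:tcdense}), that $\MA\colon(\cT_\R/\R,\quotd_1)\to(\cM^1,\dd_1)$ is an isometry with dense image (Lemma~\ref{lem:MATR}), and that $(\cM^1,\dd_1)$ is complete (Theorem~\ref{thm:d1M1}). Uniqueness is immediate: since $\quotd_1\le\dd_1$, $\cT_\R$ is also $\quotd_1$-dense in $\cN_\R$, and any isometric extension into the Hausdorff space $(\cM^1,\dd_1)$ is determined by its restriction to $\cT_\R/\R$.

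For the construction, given $\n\in\cN_\R$, take its canonical approximants $\n_d\in\cT_\R$. By Theorem~\ref{thm:Fuj}, $\dd_1(\n_d,\n)\to 0$, so $(\n_d)$ is $\dd_1$-Cauchy, hence $\quotd_1$-Cauchy since $\quotd_1\le\dd_1$. By Lemma~\ref{lem:MATR}, $(\MA(\n_d))$ is then $\dd_1$-Cauchy in $\cM^1$, and completeness yields a limit $\mu\in\cM^1$; we set $\MA(\n):=\mu$. Well-definedness follows by a standard interleaving argument: if $(\n'_j)$ is any other $\dd_1$-convergent net in $\cT_\R$ with limit $\n$, then
\[
\dd_1(\MA(\n_d),\MA(\n'_j))=\quotd_1(\n_d,\n'_j)\le\dd_1(\n_d,\n'_j)\to 0.
\]
Since the original $\MA$ on $\cT_\R$ satisfies $\MA(\n+c)=\MA(\n)$ for all $c\in\R$, this property is preserved under limits, so the extension descends to a map on $\cN_\R/\R$.

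For the isometry property, given $\n,\n'\in\cN_\R$ with canonical approximants $\n_d,\n'_d$, the triangle inequality combined with $\quotd_1\le\dd_1$ gives
\[
|\quotd_1(\n_d,\n'_d)-\quotd_1(\n,\n')|\le\dd_1(\n_d,\n)+\dd_1(\n'_d,\n')\to 0,
\]
while $\dd_1(\MA(\n_d),\MA(\n'_d))\to\dd_1(\MA(\n),\MA(\n'))$ by the definition of the extension. Since $\dd_1(\MA(\n_d),\MA(\n'_d))=\quotd_1(\n_d,\n'_d)$ for each $d$ by Lemma~\ref{lem:MATR}, passing to the limit yields $\dd_1(\MA(\n),\MA(\n'))=\quotd_1(\n,\n')$, as required. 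Finally, density of the image is inherited for free: $\MA(\cT_\R)\subset\MA(\cN_\R)$, and the former is already dense in $\cM^1$ by Lemma~\ref{lem:MATR}.

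No real obstacle arises here---the proof is a textbook extension-by-uniform-continuity argument, and every needed input (density of $\cT_\R$, the isometry on $\cT_\R/\R$, completeness of $\cM^1$, and $\dd_1$-convergence of canonical approximants) has been assembled in preceding sections. The genuine content of the theorem lies in those prior results, particularly Theorem~\ref{thm:Fuj} and the estimates of Lemma~\ref{lem:bd1ii} underpinning Theorem~\ref{thm:d1M1}.
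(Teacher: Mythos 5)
Your proposal is correct and follows exactly the paper's route: the paper also deduces the theorem from the $\dd_1$-density of $\cT_\R$ in $\cN_\R$ (Theorem~\ref{thm:Fuj}), the isometry with dense image of Lemma~\ref{lem:MATR}, and the completeness of $(\cM^1,\dd_1)$, leaving the extension-by-uniform-continuity details implicit where you spell them out. No discrepancies to report.
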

As $(\cM^1,\dd_1)$ is complete, the Monge--Amp\`ere operator thus realizes $(\cM^1,\dd_1)$ as the Hausdorff completion of the pseudo-metric spaces $(\cT_\R/\R,\quotd_1)$ and $(\cN_\R/\R,\quotd_1)$.
\begin{proof}
  By Theorem~\ref{thm:Fuj}, $\cT_\R$ is dense in $(\cN_\R,\dd_1)$. As a consequence, $\cT_\R/\R$ sits as a dense subspace of $(\cN_\R/\R,\quotd_1)$, and we conclude using Lemma~\ref{lem:MATR}.
\end{proof}

Combining Theorem~\ref{thm:Hauscomp} with Lemma~\ref{lem:distach}, we get: 
\begin{cor}\label{cor:asymptrans} The induced map $\MA\colon(\cN_\R,\dd_1)\to(\cM^1,\dd_1)$ is $1$-Lipschitz, and its nonempty fibers consist precisely of asymptotic equivalence classes of norms modulo translation.
\end{cor}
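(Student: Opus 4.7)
The plan is to derive the corollary as an essentially formal consequence of Theorem~\ref{thm:Hauscomp} together with Lemma~\ref{lem:distach} and the definition of asymptotic equivalence. The key observation is that the map in question factors as the composition $\cN_\R\to\cN_\R/\R\xrightarrow{\MA}\cM^1$, where the first map is the quotient by translations and the second is the isometry furnished by Theorem~\ref{thm:Hauscomp}. So everything reduces to analyzing the quotient map.

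For the Lipschitz property, I would simply observe that by the very definition of the quotient pseudo-metric,
\begin{equation*}
\quotd_1(\chi,\chi')=\inf_{c\in\R}\dd_1(\chi,\chi'+c)\le\dd_1(\chi,\chi'),
\end{equation*}
so the quotient map $(\cN_\R,\dd_1)\to(\cN_\R/\R,\quotd_1)$ is $1$-Lipschitz; composing with the isometry of Theorem~\ref{thm:Hauscomp} gives the claim.

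For the characterization of fibers, the isometry part of Theorem~\ref{thm:Hauscomp} tells us that $\MA(\chi)=\MA(\chi')$ if and only if $\quotd_1(\chi,\chi')=0$, i.e., $\chi$ and $\chi'$ represent the same class in the Hausdorff quotient of $(\cN_\R/\R,\quotd_1)$. The main step is then to unwind this condition: by Lemma~\ref{lem:distach} the infimum defining $\quotd_1(\chi,\chi')$ is attained at some $c\in\R$, so $\quotd_1(\chi,\chi')=0$ is equivalent to the existence of $c\in\R$ with $\dd_1(\chi,\chi'+c)=0$, which by Definition~\ref{defi:asympequiv} means precisely $\chi\sim\chi'+c$. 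Thus the nonempty fibers of $\MA\colon\cN_\R\to\cM^1$ are exactly the asymptotic equivalence classes modulo translation. There is no real obstacle here; the only substantive ingredient is that the infimum in the definition of $\quotd_1$ is achieved, which is what Lemma~\ref{lem:distach} provides, and this is already in hand.
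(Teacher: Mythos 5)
Your proposal is correct and follows exactly the route the paper intends: the text introduces the corollary with ``Combining Theorem~\ref{thm:Hauscomp} with Lemma~\ref{lem:distach}, we get,'' and your argument is precisely that combination (factoring through the quotient for the Lipschitz bound, and using the attainment of the infimum in $\quotd_1$ to identify the fibers with asymptotic equivalence classes modulo translation).
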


The induced map $\MA\colon\cN_\R\to\cM^1$ satisfies the following properties, the first of which gives a more concrete description.
\begin{prop}\label{prop:MAnorm}
  For any norm $\n\in\cN_\R$ we have:
  \begin{itemize}
  \item[(i)]
    the canonical approximants $(\n_d)$ satisfy $\lim_d\MA(\n_d)=\MA(\n)$ strongly in $\cM^1$;
    \item[(ii)]
      for $c\in\R$ and $t\in\R_{>0}$, we have 
      $\MA(\n+c)=\MA(\n)$ and $\MA(t\n)=t_\star\MA(\n)$;
    \item[(iii)]
      $\MA(\n)=\MA(\n^\hom)$,
      where $\n^\hom$ is the homogenization of $\n$;
    \item[(iv)]
      If $(X,L)$ has the weak envelope property, then 
      $\MA(\n)=\MA(\n^{\max})$. 
    \item[(v)] if $\FSstar(\n)$ is $L$-psh, then $\MA(\n)=\MA(\FSstar(\n))$. 
    \end{itemize}
 \end{prop}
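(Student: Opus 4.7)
My plan is to take assertion (i) as the foundation, since it identifies the newly-defined $\MA(\n)$ with the strong limit of the explicit Monge--Amp\`ere measures $\MA(\n_d)$ of the canonical approximants. All of (ii)--(v) will then be reduced to passing to the limit from already-known identities on $\cT_\R$, $\PSH$ or under asymptotic equivalence.

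For (i), I would start from Theorem~\ref{thm:Fuj}, which gives $\dd_1(\n_d,\n)\to 0$. Combined with Corollary~\ref{cor:asymptrans}, which asserts that $\MA\colon(\cN_\R,\dd_1)\to(\cM^1,\dd_1)$ is $1$-Lipschitz, this immediately yields $\dd_1(\MA(\n_d),\MA(\n))\to 0$. Since by Theorem~\ref{thm:d1M1} the metric $\dd_1$ defines the strong topology on $\cM^1$, this is exactly the claimed strong convergence. For (ii), I first note that the canonical approximants transform in the obvious way under translation and scaling: $(\n+c)_d=\n_d+c$ and $(t\n)_d=t\n_d$ for $d$ sufficiently divisible (this follows since $\FS_m$ is equivariant and $\n_d$ is generated in degree $1$ from $\n|_{R_d}$). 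On $\cT_\R$, the equivariance $\MA(\n_d+c)=\MA(\n_d)$ and $\MA(t\n_d)=t_\star\MA(\n_d)$ follows from the analogous equivariance of the Fubini--Study operator and of the Monge--Amp\`ere operator on $\cH_\R$, as was recorded in \S\ref{sec:MARtc}. Passing to the strong limit using (i) (and strong continuity of $t_\star$) concludes (ii).

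Assertion (iii) is immediate from Corollary~\ref{cor:homequiv}, which states $\n\sim\n^\hom$, and Corollary~\ref{cor:asymptrans}, which identifies the nonempty fibers of $\MA$ on $\cN_\R$ with asymptotic equivalence classes modulo translation (and $\n$ and $\n^\hom$ are asymptotically equivalent without any translation, since $\vol(\n)=\vol(\n^\hom)$ forces the unique translation constant to vanish). Assertion (iv) follows identically from Theorem~\ref{thm:max}, which gives $\n\sim\n^{\max}$ under the weak envelope property, once we observe that $\vol(\n)=\vol(\n^{\max})$ forces the translation constant to be zero as well.

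The remaining assertion (v) is the one that genuinely uses non-Archimedean pluripotential theory rather than the formal setup. Assume $\FSstar(\n)$ is $L$-psh. By Lemma~\ref{lem:FSnegl}~(iii), $\FSstar(\n)$ then lies in $\cE^\infty_\uparrow\subset\cE^1$ and the increasing net $\FS(\n_d)\in\cH_\R$ converges to $\FSstar(\n)$ strongly in $\cE^1$. Strong continuity of $\MA\colon\cE^1\to\cM^1$ thus yields $\MA(\FS(\n_d))\to\MA(\FSstar(\n))$ strongly in $\cM^1$. But by the very definition of $\MA$ on $\cT_\R$ we have $\MA(\n_d)=\MA(\FS(\n_d))$, and by (i) the left-hand side converges strongly to $\MA(\n)$, so $\MA(\n)=\MA(\FSstar(\n))$. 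The main (small) obstacle is ensuring that the two convergence statements---one on the norm side via Theorem~\ref{thm:Fuj}, the other on the function side via Lemma~\ref{lem:FSnegl}---are compatible; this is guaranteed because both are phrased as strong convergence and $\MA$ is simultaneously Lipschitz on $(\cN_\R,\dd_1)$ and continuous on $(\cE^1,\mathrm{strong})$, which lets us cross-identify the two limits.
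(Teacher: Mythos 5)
Your proposal is correct and follows essentially the same route as the paper: (i) from Theorem~\ref{thm:Fuj} plus the Lipschitz property of $\MA$ established in Theorem~\ref{thm:Hauscomp}/Corollary~\ref{cor:asymptrans}; (ii) from equivariance of the canonical approximants; (iii)--(iv) from $\n\sim\n^\hom$ and $\n\sim\n^{\max}$; and (v) from Lemma~\ref{lem:FSnegl}~(iii) together with strong continuity of $\MA$ on $\cE^1$ and part~(i). The extra care you take in (iii)--(iv) about the translation constant is harmless but not needed, since $\MA$ is translation-invariant by (ii) in any case.
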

Recall that (v) applies if $(X,L)$ has the envelope property, or for any continuous norm $\n\in\cN_\R^\cont$. 
  
  \begin{proof}
    Theorem~\ref{thm:Fuj} shows that $\n_d\in\cT_\R$ satisfy $\lim_d\dd_1(\n_d,\n)=0$, which implies~(i). The equalities in~(ii) follow, since $(\n+c)_d=\n_d+c$ and $(t\n)_d=t\n_d$, whereas~(iii) and~(iv) follow since $\n^\hom\sim\n$ and $\n^{\max}\sim\n$, respectively, see Corollary~\ref{cor:homequiv} and Theorem~\ref{thm:max}. If $\FSstar(\n)$ is $L$-psh, then $\FS(\n_d)\to\FSstar(\n)$ strongly in $\cE^1$ (see Lemma~\ref{lem:FSnegl}), and hence $\MA(\n_d)=\MA(\FS(\n_d))\to\MA(\FSstar(\n))$ in $\cM^1$. This proves (v), in view of (i). 
  \end{proof}

Recall the space $\cN_\R^{\max}=\cN^{X^\div}_\R$ from~\S\ref{sec:maxnorm}. By Corollary~\ref{cor:d1metr}, the pseudometric $\dd_1$ restricts to a metric on $\cN_\R^{\max}$.  Lemma~\ref{lem:quotdbd} thus implies that $\quotd_1$  restricts to a metric on $\cN_\R^{\max}/\R$.
  
  \begin{cor}\label{cor:MAemb}
    The Monge--Amp\`ere operator $\MA\colon\cN_\R\to\cM^1$ induces an isometric embedding 
\begin{equation}\label{equ:MAmax}
\MA\colon(\cN_\R^{\max}/\R,\quotd_1)\hto(\cM^1,\dd_1)
\end{equation}
with dense image. If $(X,L)$ has the weak envelope property, then the image equals $\MA(\cN_\R)$. 
\end{cor}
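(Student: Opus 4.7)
The plan is to verify the four separate assertions of the corollary---injectivity, the isometry property, density of the image, and the final equality under the weak envelope property---by restricting the isometry of Theorem~\ref{thm:Hauscomp} and exploiting the characterization of the fibers of $\MA$ given in Corollary~\ref{cor:asymptrans}.

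First I would establish injectivity of~\eqref{equ:MAmax}. Given $\n,\n'\in\cN_\R^{\max}$ with $\MA(\n)=\MA(\n')$, Corollary~\ref{cor:asymptrans} produces $c\in\R$ with $\n\sim\n'+c$. Since $\cN_\R^{\max}$ is translation-invariant (Corollary~\ref{cor:maxchar}), $\n'+c\in\cN_\R^{\max}$, and the unconditional direction of Theorem~\ref{thm:max} yields $\n^{\max}=(\n'+c)^{\max}$. Since $\n=\n^{\max}$ and $\n'+c=(\n'+c)^{\max}$ (as both lie in $\cN_\R^{\max}$), we conclude $\n=\n'+c$, i.e., $\n$ and $\n'$ agree in $\cN_\R^{\max}/\R$. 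Combined with the isometry statement of Theorem~\ref{thm:Hauscomp}, whose restriction to the translation-invariant subspace $\cN_\R^{\max}/\R\hto\cN_\R/\R$ is automatically isometric for $\quotd_1$, this gives the isometric embedding~\eqref{equ:MAmax}.

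Next I would prove density of the image in $\cM^1$. The key observation is that $\cT_\R\subset\cN_\R^{\mathrm{cont}}$ (immediate from Theorem~\ref{thm:cont}), and hence, by Corollary~\ref{cor:qftmax},
\begin{equation*}
\cT_\R^{\hom}\subset\cN_\R^{\mathrm{cont},\hom}\subset\cN_\R^{\max}.
\end{equation*}
For any $\n\in\cN_\R$, Proposition~\ref{prop:MAnorm}~(i) gives $\MA(\n_d)\to\MA(\n)$ strongly in $\cM^1$, and Proposition~\ref{prop:MAnorm}~(iii) rewrites each $\MA(\n_d)$ as $\MA(\n_d^\hom)$ with $\n_d^\hom\in\cT_\R^{\hom}\subset\cN_\R^{\max}$. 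Thus $\MA(\cN_\R^{\max})$ is dense in $\MA(\cN_\R)$, which is itself dense in $\cM^1$ by Theorem~\ref{thm:Hauscomp}.

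Finally, the last statement under the weak envelope property is immediate from Proposition~\ref{prop:MAnorm}~(iv), which gives $\MA(\n)=\MA(\n^{\max})$ for every $\n\in\cN_\R$, so $\MA(\cN_\R)\subset\MA(\cN_\R^{\max})$ and equality follows. The only subtle point worth emphasizing is that injectivity relies exclusively on the unconditional half of Theorem~\ref{thm:max}, so the isometric embedding statement holds in full generality, and only the surjectivity of the image onto $\MA(\cN_\R)$ requires the weak envelope hypothesis.
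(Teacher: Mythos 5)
Your proof is correct and follows essentially the same route as the paper, whose own argument is the one-liner ``everything except the last statement is clear by what precedes, and that statement is an immediate consequence of Theorem~\ref{thm:max}'': your injectivity step just unwinds Corollary~\ref{cor:d1metr} back to the unconditional direction of Theorem~\ref{thm:max}, and the final claim is Proposition~\ref{prop:MAnorm}~(iv) as you say. The one detail you usefully make explicit, which the paper leaves implicit, is why the image is dense \emph{without} the weak envelope property, namely via $\cT_\R^{\hom}\subset\cN_\R^{\cont,\hom}\subset\cN_\R^{\max}$ together with $\MA(\n_d)=\MA(\n_d^{\hom})$ and the density of $\MA(\cT_\R/\R)$.
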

Recall that the weak envelope property holds when $\charac k=0$ or $\dim X\le 2$.
\begin{proof}
  Everything except for the last statement is clear by what precedes, and that statement is an immediate consequence of Theorem~\ref{thm:max}.
\end{proof}

\begin{rmk}
  Even if $X$ is smooth and $\charac k=0$, $\MA(\cN_\R)$ is a strict subspace of $\cM^1$, which is not so easy to describe. See~\cite{nagreen} for related questions.
\end{rmk}
For later use, we also show the following version of Theorem~\ref{thm:d1andI1}. 

\begin{lem}\label{lem:d1andI1norm}
For all $\n,\n'\in\cN_\R$ we have 
$$
\dd_1(\n,\n')\approx\int\left|\FS(\n)-\FS(\n')\right|\left(\MA(\n)+\MA(\n')\right). 
$$
\end{lem}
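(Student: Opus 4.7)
The plan is to reduce to Theorem~\ref{thm:d1andI1} by approximating $\n,\n'$ with their canonical approximants $\n_d,\n'_d\in\cT_\R$ and passing to the limit. For $d$ sufficiently divisible, $\FS(\n_d),\FS(\n'_d)\in\cH_\R\subset\cE^1$, so Theorem~\ref{thm:d1andI1} applies and yields
\begin{equation*}
\dd_1(\FS(\n_d),\FS(\n'_d))\approx\int\bigl|\FS(\n_d)-\FS(\n'_d)\bigr|\left(\MA(\n_d)+\MA(\n'_d)\right).
\end{equation*}
By construction of $\dd_1$ on $\cH_\R$ (see \S\ref{sec:DarvasHR}), $\FS\colon(\cT_\R,\dd_1)\twoheadrightarrow(\cH_\R,\dd_1)$ is an isometry, so the left-hand side equals $\dd_1(\n_d,\n'_d)$, which converges to $\dd_1(\n,\n')$ by Theorem~\ref{thm:Fuj} and the triangle inequality.

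It thus remains to show that the right-hand side converges to $\int|\FS(\n)-\FS(\n')|(\MA(\n)+\MA(\n'))$. Observe first that as $d$ increases, $\FS(\n_d)\nearrow\FS(\n)$ and $\FS(\n'_d)\nearrow\FS(\n')$ pointwise on $X^\an$, and that all these functions take values in a single bounded interval, by linear boundedness of $\n,\n'$. Next, Proposition~\ref{prop:MAnorm}~(i) gives $\MA(\n_d)\to\MA(\n)$ and $\MA(\n'_d)\to\MA(\n')$ strongly in $(\cM^1,\dd_1)$, and hence the energies $\en^\vee(\MA(\n_d)),\en^\vee(\MA(\n'_d))$ stay bounded in $d$; the functionals $\ii(\FS(\n_d)),\ii(\FS(\n'_d))$ are likewise uniformly bounded.

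Applying~\eqref{equ:BBGZabs} of Theorem~\ref{thm:BBGZ} to $\FS(\n_d),\FS(\n'_d)\in\cE^1$ and $\MA(\n_d),\MA(\n)\in\cM^1$ yields
\begin{equation*}
\left|\int\bigl|\FS(\n_d)-\FS(\n'_d)\bigr|\,(\MA(\n_d)-\MA(\n))\right|\lesssim\dd_1(\MA(\n_d),\MA(\n))^{\a_2}\longrightarrow 0,
\end{equation*}
and analogously with $\MA(\n'_d),\MA(\n')$. Since $|\FS(\n_d)-\FS(\n'_d)|$ is uniformly bounded and converges pointwise on $X^\an$ to $|\FS(\n)-\FS(\n')|$, dominated convergence against the finite measure $\MA(\n)$ gives
\begin{equation*}
\int\bigl|\FS(\n_d)-\FS(\n'_d)\bigr|\,d\MA(\n)\longrightarrow\int\bigl|\FS(\n)-\FS(\n')\bigr|\,d\MA(\n),
\end{equation*}
and similarly for $\MA(\n')$. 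Combining these three convergences concludes the proof.

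The main technical obstacle is that $\FS(\n)$ need not be $L$-psh—it is only bounded and lsc, and in particular may fail to lie in $\cE^1$ when the envelope property for $(X,L)$ is not available—so Theorem~\ref{thm:d1andI1} cannot be applied directly to the limit functions. The approximation via $\n_d$ circumvents this by keeping all quantities inside $\cH_\R\times\cM^1$ at each finite stage, and then coupling strong convergence in $\cM^1$ (through the H\"older estimate~\eqref{equ:BBGZabs}) with pointwise monotone convergence of Fubini--Study functions (through dominated convergence) to transfer the equivalence to the limit.
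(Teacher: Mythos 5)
Your proposal is correct and follows essentially the same route as the paper: apply Theorem~\ref{thm:d1andI1} to the canonical approximants, use the isometry property of $\FS$ together with Theorem~\ref{thm:Fuj} on the left-hand side, and on the right-hand side swap the measures via the H\"older estimate~\eqref{equ:BBGZabs} and the integrands via dominated convergence (which the paper applies to the cofinal sequence $(g_{m!})_m$ of the net, a technicality worth noting). No gaps.
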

\begin{proof} Set $\f_d:=\FS(\n_d)$, $\f'_d:=\FS(\n'_d)$ and 
$$
\mu_d:=\MA(\f_d)=\MA(\n_d),\quad\mu'_d:=\MA(\f'_d)=\MA(\n'_d),
$$
with $(\n_d)$, $(\n'_d)$ the canonical approximants of $\n,\n'$. Since $\FS\colon(\cT_\R,\dd_1)\to(\cH_\R,\dd_1)$ is an isometry, Theorem~\ref{thm:d1andI1} yields
$$
\dd_1(\n_d,\n'_d)=\dd_1(\f_d,\f'_d)\approx\int g_d\,(\mu_d+\mu'_d). 
$$
with $g_d:=|\f_d-\f'_d|$. Since $(\f_d)$, $(\f'_d)$ are uniformly bounded and $\mu_d\to\mu:=\MA(\n)$, $\mu'_d\to\mu':=\MA(\n')$ strongly in $\cM^1$ (see Proposition~\ref{prop:MAnorm}~(i)), \eqref{equ:BBGZabs} yields 
$\int g_d\,(\mu_d+\mu'_d)=\int g_d\,(\mu+\mu')+o(1)$. Since $(g_d)$ is uniformly bounded and converges pointwise to $g:=|\FS(\n)-\FS(\n')|$, dominated convergence applied to the cofinal sequence $(g_{m!})_m$ further yields $\int g_{m!}(\mu+\mu')\to\int g\,(\mu+\mu')$. Combining this with Theorem~\ref{thm:Fuj}, we conclude
$$
\dd_1(\n,\n')=\lim_m \dd_1(\n_{m!},\n'_{m!})\approx\lim_m\int g_{m!}\,(\mu+\mu')=\int g\,(\mu+\mu'),
$$
which proves the result. 
\end{proof}

%
%
\subsection{Variational principle}\label{sec:varprinc}
As we next show, the Monge--Amp\`ere equation $\MA(\n)=\mu$ with $\n\in\cN_\R$ and $\mu\in\cM^1$ admits a variational characterization, that will be deduced from its counterpart for $L$-psh functions. 

\begin{prop}\label{prop:MAvar} For any $\mu\in\cM^1$, we have 
$$
\en^\vee(\mu)=\sup_{\n\in\cN_\R}\left(\vol(\n)-\int\FS(\n)\,\mu\right). 
$$
Further, the supremum is achieved by $\n\in\cN_\R$ iff $\MA(\n)=\mu$. 
\end{prop}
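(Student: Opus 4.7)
The strategy is to transfer the variational characterization of $\en^\vee$ on $\cE^1$ (recalled before the statement, see the discussion around~\eqref{equ:enMA}) to the space $\cN_\R$ via the interplay between the Fubini--Study and infimum-norm operators. The two crucial identities are $\vol(\n)=\ten(\FS(\n))$ (Theorem~\ref{thm:volen}) and $\FS(\IN(\p))=\qq(\p)=\p$ for $\p\in\cH_\R$ (Proposition~\ref{prop:FSIN}), together with $\vol(\IN(\f))=\ten(\qq(\f))$ (Corollary~\ref{cor:volIN}). Set $F_\mu(\n):=\vol(\n)-\int\FS(\n)\,d\mu$.

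For the upper bound $F_\mu(\n)\le\en^\vee(\mu)$: since $\FS(\n)$ is bounded and lsc, \eqref{equ:enextQ} gives $\vol(\n)=\ten(\FS(\n))=\sup\{\en(\p)\mid\p\in\cH_\R,\,\p\le\FS(\n)\}$, and for any such $\p$ the chain
\[
\en(\p)-\int\FS(\n)\,d\mu\le\en(\p)-\int\p\,d\mu\le\en^\vee(\mu)
\]
yields $F_\mu(\n)\le\en^\vee(\mu)$ after taking the supremum. For the reverse inequality, given $\p\in\cH_\R$, take $\n:=\IN(\p)$; the identities above give $\FS(\n)=\p$ and $\vol(\n)=\en(\p)$, hence $F_\mu(\n)=\en(\p)-\int\p\,d\mu$. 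Taking the supremum over $\p\in\cH_\R$, which is dense in $\cE^1$ in the variational formula for $\en^\vee(\mu)$, concludes the first claim.

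For the \emph{if} part of the characterization, assume $\MA(\n)=\mu$ and use the canonical approximants $\n_d$, which satisfy $\dd_1(\n_d,\n)\to 0$ (Theorem~\ref{thm:Fuj}) and $\MA(\n_d)\to\mu$ strongly in $\cM^1$ (Proposition~\ref{prop:MAnorm}). Each $\n_d\in\cT_\R$ corresponds to $\FS(\n_d)\in\cH_\R$ with $\en(\FS(\n_d))=\vol(\n_d)$ and $\MA(\FS(\n_d))=\MA(\n_d)$, so by~\eqref{equ:enMA},
\[
F_{\MA(\n_d)}(\n_d)=\vol(\n_d)-\int\FS(\n_d)\,d\MA(\n_d)=\en^\vee(\MA(\n_d)).
\]
The right-hand side tends to $\en^\vee(\mu)$ by strong continuity of $\en^\vee$, while $\vol(\n_d)\to\vol(\n)$ and $\int\FS(\n_d)\,d\MA(\n_d)\to\int\FS(\n)\,d\mu$. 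The latter is handled by the splitting
\[
\int\FS(\n_d)\,d\MA(\n_d)=\int\FS(\n)\,d\MA(\n_d)-\int(\FS(\n)-\FS(\n_d))\,d\MA(\n_d);
\]
Lemma~\ref{lem:d1andI1norm} (applied to $\n$ and $\n_d$) controls the second term, while the first is handled via weak convergence $\MA(\n_d)\to\mu$ against the bounded lsc function $\FS(\n)$ (Portmanteau, upgraded via the same Lemma to replace the liminf inequality by equality). This gives $F_\mu(\n)=\en^\vee(\mu)$.

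For the \emph{only if} direction, assume $F_\mu(\n)=\en^\vee(\mu)$. Equality in the upper-bound chain forces a sequence $\p_k\in\cH_\R$ with $\p_k\le\FS(\n)$ such that $\en(\p_k)\to\vol(\n)$, $\int(\FS(\n)-\p_k)\,d\mu\to 0$, and $\en(\p_k)-\int\p_k\,d\mu\to\en^\vee(\mu)$---that is, a maximizing sequence for $\en^\vee(\mu)$. Set $\n_k:=\IN(\p_k)$, so that $\FS(\n_k)=\p_k$, $\vol(\n_k)=\en(\p_k)$, and $\n_k\le\IN(\FS(\n))=\n^\hom$ by Theorem~\ref{thm:INFS}. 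Since $\vol(\n_k)\to\vol(\n)=\vol(\n^\hom)$ (using $\n\sim\n^\hom$, Corollary~\ref{cor:homequiv}), formula~\eqref{equ:d1volgr} gives $\dd_1(\n_k,\n^\hom)=\vol(\n^\hom)-\vol(\n_k)\to 0$; the $\dd_1$-Lipschitz property of $\MA$ (Corollary~\ref{cor:asymptrans}) then yields $\MA(\n_k)\to\MA(\n^\hom)=\MA(\n)$ strongly in $\cM^1$ (Proposition~\ref{prop:MAnorm}(iii)). On the other hand, $\MA(\n_k)=\MA(\p_k)$ and the analogous variational characterization on $\cE^1/\R\hookrightarrow\cM^1$ implies that the Monge--Amp\`ere images of a maximizing sequence $\p_k$ for $\en^\vee(\mu)$ converge to $\mu$ in $\cM^1$. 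Combining these two convergences gives $\MA(\n)=\mu$. The main obstacle is precisely this last input: in the absence of the envelope property the sup defining $\en^\vee(\mu)$ need not be attained in $\cE^1$, so the strong convergence $\MA(\p_k)\to\mu$ has to be extracted directly from the maximizing-sequence compactness built into the variational machinery of~\cite{trivval} (together with the isometry $\cE^1/\R\hookrightarrow\cM^1$), rather than deduced from existence of a maximizer.
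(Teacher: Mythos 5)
Your proof is correct, and it runs on the same engine as the paper's: the identities $\vol(\n)=\ten(\FS(\n))$ and $\FS(\IN(\p))=\p$ for $\p\in\cH_\R$, the canonical approximants, and---crucially---the characterization from~\cite{trivval} of maximizing nets for $\en^\vee(\mu)$ as those whose Monge--Amp\`ere measures converge strongly to $\mu$. The organizational difference lies in how the `iff' is closed. The paper observes that $F_\mu(\n)=\lim_d\bigl(\en(\f_d)-\int\f_d\,d\mu\bigr)$ for $\f_d=\FS(\n_d)$, using monotone convergence against the \emph{fixed} measure $\mu$; hence the supremum is attained at $\n$ iff the single net $(\f_d)$ is maximizing, which by~\cite[Corollary~10.13]{trivval} is equivalent to $\MA(\f_d)\to\mu$ strongly, i.e.\ to $\MA(\n)=\mu$ --- both directions of the equivalence fall out at once. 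You instead route the `if' direction through $\en^\vee(\MA(\n_d))$ via~\eqref{equ:enMA}, which forces you to control $\int\FS(\n_d)\,d\MA(\n_d)$ against \emph{varying} measures; this is doable via Lemma~\ref{lem:d1andI1norm} together with a double-limit argument through the continuous functions $\FS(\n_e)$ for fixed $e$, but it is more delicate than your one-line appeal to Portmanteau suggests, and it is precisely what the paper's fixed-measure bookkeeping avoids. Your `only if' direction---extracting a maximizing sequence $\p_k\le\FS(\n)$ from the equality in the upper-bound chain and comparing $\MA(\IN(\p_k))$ with $\MA(\n^\hom)$ via~\eqref{equ:d1volgr}---is a sound alternative to the paper's use of the canonical approximants, and your closing remark is exactly right: no attainment of $\en^\vee(\mu)$ in $\cE^1$ is needed, only the maximizing-net criterion of~\cite{trivval}.
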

\begin{proof} We have $\en^\vee(\mu)=\sup_{\f\in\cH_\Q}\left(\en(\f)-\int\f\,\mu\right)$, and any $\f\in\cH_\Q$ can be written as $\f=\FS(\n)$ with $\n:=\IN(\f)\in\cN_\R$. Since $\en(\f)=\vol(\n)$, this yields 
$$
\en^\vee(\mu)\le\sup_{\n\in\cN_\R}(\vol(\n)-\int\FS(\n)\,\mu).
$$
For the reverse inequality, pick any $\n\in\cN_\R$, and consider the increasing net $(\f_d)$ in $\cH_\R$ defined by $\f_d:=\FS(\n_d)$, with $(\n_d)$ the canonical approximants of $\n$. Then $\en(\f_d)-\int\f_d\,\mu\le\en^\vee(\mu)$. By Theorem~\ref{thm:Fuj} and Theorem~\ref{thm:volen}, $\en(\f_d)=\vol(\n_d)\to\vol(\n)$, while $\int\f_d\,\mu\to\int\FS(\n)\,\mu$, by monotone convergence (applied to the cofinal sequence $\f_{d!}$). Thus $\vol(\n)-\int\FS(\n)\,\mu\le\en^\vee(\mu)$, and equality holds iff $\en(\f_d)-\int\f_d\,\mu\to\en^\vee(\mu)$, \ie $(\f_d)$ is a maximizing net for $\mu$. By~\cite[Corollary~10.13]{trivval}, the latter is also equivalent to $\MA(\f_d)\to\mu$ strongly in $\cM^1$, and hence to $\MA(\n)=\mu$, since $\MA(\f_d)=\MA(\n_d)\to\MA(\n)$ strongly in $\cM^1$, by Corollary~\ref{cor:asymptrans}.
\end{proof} 

\begin{rmk} With a little bit of extra effort, one can show as in~\cite[Corollary~10.13]{trivval} that a net $(\n_i)$ in $\cN_\R$ computes the supremum, that is $\lim_i\left(\vol(\n_i)-\int\FS(\n_i)\,\mu\right)=\en^\vee(\mu)$, iff $\MA(\n_i)\to\mu$ strongly in $\cM^1$.
\end{rmk}

%
%
\subsection{Divisorial norms and divisorial measures}\label{sec:MAdiv}
The image $\MA(\cN_\R)$ of the Monge--Amp\`ere operator is a strict subset of $\cM^1$ and not so easy to describe, but we now exhibit an important class of measures contained in the image.

Given any compact subset $\Sigma\subset X^\an$, denote by $\cM^\Sigma$ the set of Radon probability measures $\mu$ on $X^\an$ with support in $\Sigma$. 

\begin{exam} When $\Sigma\subset X^\an$ is finite, each $\mu\in\cM^\Sigma$ is of the form $\mu=\sum_{v\in\Sigma}m_v\d_v$, where $m_v:=\mu(\{v\})$, and it is easy to see that $\mu\mapsto m=(m_v)$ defines a homeomorphism of $\cM^\Sigma$ (equipped with the weak topology) with the simplex $\left\{m\in\R_{\ge 0}^\Sigma\mid\sum_v m_v=1\right\}$. \end{exam}

Recall that the strong topology of $X^\lin$ is defined by the metric $\dd_\infty$ (see~\eqref{equ:dinftylin}); the weak topology refers to the subset topology from $X^\an$. For all $v,w\in X^\lin$ we have  
\begin{equation}\label{equ:dinftypsh}
\dd_\infty(v,w)=\sup_{\f\in\PSH}|\f(v)-\f(w)|,
\end{equation}
which shows that $\dd_\infty$ is the smallest metric on $X^\lin$ such that the restriction to $X^\lin$ of any $L$-psh function is $1$-Lipschitz. By~\eqref{equ:dinftypsh}, the weak and strong topologies coincide on a given subset $\Sigma\subset X^\lin$ iff $\PSH|_\Sigma$ is equicontinuous for the weak topology of $\Sigma$. This is in particular the case when $\Sigma$ is strongly compact (as the identity map $(\Sigma,\text{strong})\to(\Sigma,\text{weak})$ is then a homeomorphism, being continuous and bijective on a compact Hausdorff space).

\begin{exam} Every finite subset $\Sigma\subset X^\lin$ is of course strongly compact. If $X$ is smooth and $\charac k=0$, then the dual complex $\D_\cX$ of any snc test configurations $\cX$ also forms a strongly compact subset of $X^\lin$, \cf~\cite[Theorem~A.4]{trivval}. 
\end{exam}

\begin{lem} For any strongly compact subset $\Sigma\subset X^\lin$, we have $\cM^\Sigma\subset\cM^1$, and the induced weak and strong topologies on $\cM^\Sigma$ coincide. 
\end{lem}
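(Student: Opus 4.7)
The plan is to prove both assertions using the fact that the inclusion $\PSH \hookrightarrow C(X^\lin,\dd_\infty)$ is $1$-Lipschitz (immediate from~\eqref{equ:dinftypsh}), combined with Arzelà--Ascoli and a standard weak-convergence argument.

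For the first assertion, $\cM^\Sigma\subset\cM^1$: by strong compactness of $\Sigma$, the function $v\mapsto\dd_\infty(v,v_\triv)=\tee(v)$ is bounded on $\Sigma$ by some constant $C_\Sigma<+\infty$. Since every $\f\in\PSH$ is $\dd_\infty$-$1$-Lipschitz on $X^\lin$, normalizing by $\sup\f=\f(v_\triv)=0$ yields $|\f|\le C_\Sigma$ on $\Sigma$. Hence for any $\mu\in\cM^\Sigma$ and any such $\f\in\cE^1$, one has $\en(\f)-\int\f\,d\mu\le 0+C_\Sigma=C_\Sigma$. Taking the supremum over $\f$ (invariant under adding a constant) gives $\en^\vee(\mu)\le C_\Sigma$, so $\mu\in\cM^1$.

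For the topological statement, since the strong topology refines the weak one on $\cM^1$, it suffices to prove that weak convergence in $\cM^\Sigma$ implies strong convergence, i.e., that $\en^\vee$ is weakly continuous on $\cM^\Sigma$. The crucial point is that the weak and strong topologies agree on $\Sigma$ itself (as recalled just before the lemma), so the family
\begin{equation*}
F:=\{\f|_\Sigma\mid \f\in\PSH,\,\sup\f=0\}\subset C(\Sigma)
\end{equation*}
is uniformly bounded by $C_\Sigma$ and equicontinuous on the compact Hausdorff space $(\Sigma,\text{weak})$, by the $\dd_\infty$-Lipschitz property. Arzelà--Ascoli then yields that $\overline{F}$ is compact in $C(\Sigma)$ with the uniform topology. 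The standard fact that weak convergence of probability measures becomes uniform on norm-compact subsets of $C(K)$ (via an $\e$-net argument) gives, for $\mu_i\to\mu$ weakly in $\cM^\Sigma$,
\begin{equation*}
\sup_{\f\in\PSH,\,\sup\f=0}\Bigl|\textstyle\int\f\,d\mu_i-\int\f\,d\mu\Bigr|\longrightarrow 0.
\end{equation*}
Restricting this supremum to $\f\in\cE^1$ with $\sup\f=0$ and using $|\sup_\f A(\f)-\sup_\f B(\f)|\le\sup_\f|A(\f)-B(\f)|$, we conclude $|\en^\vee(\mu_i)-\en^\vee(\mu)|\to 0$.

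The only delicate point is justifying the equicontinuity of $F$ on $(\Sigma,\text{weak})$; this reduces to the observation that on the compact set $\Sigma$, the continuous bijection $(\Sigma,\text{strong})\to(\Sigma,\text{weak})$ is a homeomorphism, so equicontinuity in the $\dd_\infty$-metric (which is immediate from $1$-Lipschitz continuity) transfers to equicontinuity in the weak topology. Everything else is an application of results already recorded in the excerpt.
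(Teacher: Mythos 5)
Your proof is correct and follows essentially the same route as the paper's: bound $\en^\vee(\mu)$ by $\sup_{v\in\Sigma}\tee(v)$ using $\en(\f)\le\sup\f$, then deduce continuity of $\en^\vee$ from the equicontinuity of $\PSH|_\Sigma$ (which, as you note, follows from strong compactness of $\Sigma$), giving uniform convergence of $\int\f\,d\mu_i$ over normalized $\f$. The only difference is that you spell out the Arzel\`a--Ascoli and $\e$-net details that the paper leaves implicit; note only that the argument should be phrased for nets rather than sequences, which it accommodates without change.
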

\begin{proof} Since $\Sigma$ is strongly compact, $C:=\sup_{v\in\Sigma}\tee(v)$ is finite, and satisfies $\sup\f-\f(v)\le C$ for each $\f\in\PSH$ and $v\in\Sigma$. For each $\mu\in\cM^\Sigma$ we thus have 
$$
\en^\vee(\mu)=\sup_{\f\in\cE^1}\{\en(\f)-\int\f\,\mu\}\le\sup_{\f\in\cE^1}\{\sup\f-\int\f\,\mu\}\le C,
$$
and hence $\mu\in\cM^1$. Now pick a weakly convergent net $\mu_i\to\mu$ in $\cM^\Sigma$. Since $\PSH|_\Sigma$ is equicontinuous, we have $\int\f\,\mu_i\to\int\f\,\mu$ uniformly for $\f\in\PSH$. Thus 
$$
\en^\vee(\mu_i)=\sup_{\f\in\cE^1}\{\en(\f)-\int\f\,\mu\}\to\sup_{\f\in\cE^1}\{\en(\f)-\int\f\,\mu\}=\en^\vee(\mu)
$$
and hence $\mu_i\to\mu$ strongly in $\cM^1$.
\end{proof}

\begin{thm}\label{thm:MAsig} For any strongly compact subset $\Sigma\subset X^\lin$, the Monge--Amp\`ere operator induces a surjective isometry
$$
\MA\colon(\cN^\Sigma_\R/\R,\quotd_1)\twoheadrightarrow(\cM^\Sigma,\dd_1).
$$
If $\Sigma\subset X^\div$, or if the weak envelope property holds (e.g., if $\charac k=0$), then this map is an isometric isomorphism. 
\end{thm}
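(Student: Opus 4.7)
The proof will combine three existing tools: the isometry $\MA\colon(\cN_\R/\R,\quotd_1)\to(\cM^1,\dd_1)$ with dense image (Theorem~\ref{thm:Hauscomp}); the variational characterization $\MA(\n)=\mu\iff\n$ maximizes $F_\mu(\n):=\vol(\n)-\int\FS(\n)\,\mu$ (Proposition~\ref{prop:MAvar}); and the projection $\pp_\Sigma\colon\cN_\R\to\cN_\R^\Sigma$ from Definition~\ref{defi:sigproj}, with the key identity $\FS(\pp_\Sigma(\n))|_\Sigma=\FS(\n)|_\Sigma$ from Lemma~\ref{lem:FSenv}. The isometric property of the restricted map $\cN^\Sigma_\R/\R\to\cM^1$ is immediate from Theorem~\ref{thm:Hauscomp}, so the real work lies in identifying the image as $\cM^\Sigma$ and, in the second statement, in proving injectivity modulo translation.

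The pivotal observation is a monotonicity principle: for any $\mu\in\cM^\Sigma$ and $\n\in\cN_\R$, $F_\mu(\pp_\Sigma(\n))\ge F_\mu(\n)$. Indeed, $\pp_\Sigma(\n)\ge\n$ gives $\vol(\pp_\Sigma(\n))\ge\vol(\n)$, while $\FS(\pp_\Sigma(\n))=\FS(\n)$ on $\supp\mu=\Sigma$ ensures the integrals against $\mu$ agree. Thus applying $\pp_\Sigma$ to any maximizing sequence for $F_\mu$ yields a maximizing sequence inside $\cN^\Sigma_\R$.

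For surjectivity, given $\mu\in\cM^\Sigma$, use density of $\MA(\cT_\R/\R)$ in $\cM^1$ (from Theorem~\ref{thm:Fuj} combined with Theorem~\ref{thm:Hauscomp}) to pick $\n_i\in\cT_\R$ with $\MA(\n_i)\to\mu$ strongly in $\cM^1$; one then checks $F_\mu(\n_i)\to\en^\vee(\mu)$ by continuity of $\en^\vee$ together with the H\"older estimate~\eqref{equ:BBGZabs} applied to the error term $\int\FS(\n_i)(\MA(\n_i)-\mu)$, after normalizing $\sup\FS(\n_i)=0$. By the monotonicity, $\widetilde\n_i:=\pp_\Sigma(\n_i)\in\cN^\Sigma_\R$ remains maximizing, and $g_i:=\FS(\widetilde\n_i)|_\Sigma=\FS(\n_i)|_\Sigma$ is $1$-Lipschitz on $(\Sigma,\dd_\infty)$ (as any function in $\PSH$ is $1$-Lipschitz on $X^\lin\supset\Sigma$, by~\eqref{equ:dinftypsh}) and uniformly bounded. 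Arzel\`a-Ascoli on the strongly compact $\Sigma$ yields a uniform limit $g$ along a subsequence; setting $\n_\infty:=\IN_\Sigma(g)\in\cN^\Sigma_\R$, one verifies $\FS(\n_\infty)|_\Sigma=g$ and, via the formula $\vol(\IN_\Sigma(g))=\ten(\qq(\tilde g))$ (Corollary~\ref{cor:volIN} applied to a bounded extension $\tilde g$), that $F_\mu(\n_\infty)=\lim_iF_\mu(\widetilde\n_i)=\en^\vee(\mu)$. Hence $\n_\infty$ maximizes $F_\mu$, so $\MA(\n_\infty)=\mu$. For the reverse containment $\MA(\cN^\Sigma_\R)\subset\cM^\Sigma$, given $\n\in\cN^\Sigma_\R$ approximate by $\pp_\Sigma(\n_d)$ where $\n_d\in\cT_\R$ are canonical approximants: Dini applied to the increasing net $\FS(\n_d)|_\Sigma\nearrow\FS(\n)|_\Sigma$ on the compact set $\Sigma$ gives uniform convergence, hence $\pp_\Sigma(\n_d)\to\n$ in $\dd_\infty$, so $\MA(\pp_\Sigma(\n_d))\to\MA(\n)$ in $(\cM^1,\dd_1)$; since $\cM^\Sigma$ is closed in $(\cM^1,\dd_1)$ (it is weakly closed and weak=strong on $\cM^\Sigma$ by hypothesis), it suffices to verify $\MA(\pp_\Sigma(\n_d))\in\cM^\Sigma$, which after further reduction to finite $\Sigma\subset X^\div$ via Lemma~\ref{lem:decrgin2} follows from Theorem~\ref{thm:divPL} (identifying $\pp_\Sigma(\n_d)$ as a rational divisorial norm on a common model carrying $\Sigma$) combined with the intersection-theoretic formula~\eqref{equ:MATZ}.

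For the final assertion, injectivity amounts to showing that $\n,\n'\in\cN^\Sigma_\R$ with $\MA(\n)=\MA(\n')$ coincide modulo translation. When $\Sigma\subset X^\div$, the variational principle pins down the maximizer $g=\FS(\n)|_\Sigma$ on the finite set $\Sigma$ uniquely modulo constants from the measure $\mu$, so $\n=\n'$ modulo $\R$. When the weak envelope property holds, $\Sigma\subset X^\lin$ combined with Corollary~\ref{cor:linmax} gives $\cN^\Sigma_\R\subset\cN_\R^{X^\lin}=\cN_\R^{\max}$, and on $\cN_\R^{\max}/\R$ the pseudometric $\quotd_1$ is a genuine metric by Corollary~\ref{cor:d1metr}, forcing the desired injectivity. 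The main obstacle is the containment $\MA(\cN^\Sigma_\R)\subset\cM^\Sigma$: while the approximation scheme cleanly reduces to the case of $\pp_\Sigma(\n')$ with $\n'\in\cT_\R$ and $\Sigma\subset X^\div$ finite, matching the abstract projection to an explicit geometric model via Theorem~\ref{thm:divPL} and~\eqref{equ:MATZ} requires careful bookkeeping to ensure that the intersection-theoretic support is contained in $\Sigma$.
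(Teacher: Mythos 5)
Your overall architecture (isometry from Theorem~\ref{thm:Hauscomp}, surjectivity via Arzel\`a--Ascoli on the strongly compact $\Sigma$ plus the variational principle, injectivity via maximal norms) matches the paper's, but the step you yourself flag as the ``main obstacle''---the containment $\MA(\cN^\Sigma_\R)\subset\cM^\Sigma$---is a genuine gap, and the route you sketch for it would not close. First, the reduction ``to finite $\Sigma\subset X^\div$ via Lemma~\ref{lem:decrgin2}'' is unavailable for a general strongly compact $\Sigma\subset X^\lin$: such a $\Sigma$ need not contain a single divisorial point (e.g.\ $\Sigma=\{v\}$ with $v\in X^\lin\setminus X^\div$), so it is not an increasing union of finite subsets of $X^\div$. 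Second, even when $\Sigma\subset X^\div$ is finite, Theorem~\ref{thm:divPL} realizes $\pp_\Sigma(\n_d)$ as $\IN(\f)$ for a PL function $\f$ attached to a model whose set of Rees valuations merely \emph{contains} $\Sigma$; formula~\eqref{equ:MATZ} then only gives support in that larger set, and showing the extra components carry no mass is precisely the point at issue, not bookkeeping. The paper's proof of this containment is a short first-variation argument that you never invoke: for $\f\in\Cz$ with $\f|_\Sigma=0$ one has $\n[t\f]:=\IN(\FS(\n)+t\f)\le\IN_\Sigma(\FS(\n)+t\f)=\IN_\Sigma(\FS(\n))=\n$ for \emph{all} $t\in\R$, so $t\mapsto\vol(\n[t\f])$ is maximized at $t=0$, and the differentiability of the volume (Lemma~\ref{lem:BGM}, resting on~\cite{BGM}) forces $\int\f\,\MA(\n)=0$. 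Without this (or an equivalent substitute) the image is not identified.

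Two further points. (1) In the surjectivity step, the identity $\vol(\IN_\Sigma(g))=\ten(\qq(\tilde g))$ is false: Corollary~\ref{cor:volIN} applies to $\IN=\IN_{X^\an}$, and $\IN_\Sigma(g)\ge\IN(\tilde g)$ is in general strict; likewise $\FS(\IN_\Sigma(g))|_\Sigma=g$ should be $\le g$ (Proposition~\ref{prop:signorm}~(i)), which is fortunately the inequality actually needed in the variational estimate. The paper avoids both issues by taking a maximizing sequence $(\f_i)\subset\cH_\R$ for $\en^\vee(\mu)$ directly and using $\vol(\IN_\Sigma(\f_i))\ge\vol(\IN(\f_i))=\en(\f_i)$ together with the Lipschitz bound~\eqref{equ:INsiglip}. (2) For the final assertion with $\Sigma\subset X^\div$, your appeal to uniqueness of the maximizer ``on the finite set $\Sigma$'' assumes $\Sigma$ finite, which is not given; the uniform argument is simply that $\cN^\Sigma_\R\subset\cN^{X^\div}_\R=\cN^{\max}_\R$ by Proposition~\ref{prop:signorm}~(ii), after which Corollary~\ref{cor:MAemb} applies exactly as in your weak-envelope case.
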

Recall that $\cN^\Sigma_\R$ denotes the set of norms of the form $\n=\IN_\Sigma(\f)$ for a bounded function $\f\colon\Sigma\to\R$, see~\S\ref{sec:signorm}. We emphasize that Theorem~\ref{thm:MAsig} is true for an \emph{arbitrary} polarized variety, whether or not the envelope property holds. The following important special case illustrates this.

\begin{exam}\label{exam:valnorm}
For each $v\in X^\lin$ we have $\MA(\n_v)=\d_v$. If the envelope property holds for $(X,L)$, then the function $\f_v=\FS(\n_v)$ belongs to $\CPSH$, $\f_v(v)=0$ and $\MA(\f_v)=\d_v$. However, in general the equation $\MA(\f)=\d_v$ may not have any solution in $\cE^1$. This is the case, for example, when $X$ is a nodal curve and $v$ is a divisorial valuation with center at the node (compare Example~\ref{exam:nodalcont}). 
\end{exam}

Another important special case is when $\Sigma\subset X^\div$ is finite. Recall that the set  $\cN^\div_\R$ of divisorial norms is the union of $\cN^\Sigma_\R$ over all nonempty finite subset $\Sigma\subset X^\div$. We similarly introduce: 
\begin{defi}\label{defi:divmeas}
  The set $\cM^\div$ of \emph{divisorial measures} on $X^\an$ is defined by
\begin{equation}\label{equ:Mdivcup}
\cM^\div=\bigcup_{\Sigma\subset X^\div\text{ finite}}\cM^\Sigma.
\end{equation}
\end{defi}
The set $\cM^\div$ is used in~\cite{nakstab2} to define the notion of divisorial stability.

\begin{cor}\label{cor:MAdiv} The Monge--Amp\`ere operator induces an isometric isomorphism  
\begin{equation*}
\MA\colon(\cN^\div_\R/\R,\quotd_1)\simto(\cM^\div,\dd_1). 
\end{equation*}
Further, for any $\n\in\cN_\R^\div$, $\Sigma:=\supp\MA(\n)$ is the smallest finite subset of $X^\div$ such that $\n\in\cN_\R^\Sigma$. 
\end{cor}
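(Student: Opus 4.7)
The plan is to deduce Corollary~\ref{cor:MAdiv} as a formal consequence of Theorem~\ref{thm:MAsig} by passing to unions over all finite subsets of $X^\div$. Recall that $\cN^\div_\R = \bigcup_{\Sigma} \cN_\R^\Sigma$ by Definition~\ref{defi:divnorm}, and $\cM^\div = \bigcup_\Sigma \cM^\Sigma$ by Definition~\ref{defi:divmeas}, where $\Sigma$ ranges over finite subsets of $X^\div$. Since every finite subset $\Sigma \subset X^\div$ is strongly compact in $X^\lin$, Theorem~\ref{thm:MAsig} applies and gives, for each such $\Sigma$, an isometric isomorphism $\MA_\Sigma\colon(\cN_\R^\Sigma/\R,\quotd_1)\simto(\cM^\Sigma,\dd_1)$. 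If $\Sigma\subset\Sigma'$, then $\cN_\R^\Sigma\subset\cN_\R^{\Sigma'}$ (Proposition~\ref{prop:signorm}~(ii)) and obviously $\cM^\Sigma\subset\cM^{\Sigma'}$, and the maps $\MA_\Sigma$ are compatible with these inclusions since they are all induced from the global operator $\MA\colon\cN_\R\to\cM^1$. Passing to the colimit over the directed set of finite subsets $\Sigma\subset X^\div$ yields an isometric isomorphism $(\cN_\R^\div/\R,\quotd_1)\simto(\cM^\div,\dd_1)$, which by construction coincides with the one induced by $\MA$.

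For the second assertion, fix $\n\in\cN_\R^\div$ and set $\Sigma:=\supp\MA(\n)$. By definition of $\cN_\R^\div$, we have $\n\in\cN_\R^{\Sigma_0}$ for some finite $\Sigma_0\subset X^\div$, and then Theorem~\ref{thm:MAsig} applied to $\Sigma_0$ shows $\MA(\n)\in\cM^{\Sigma_0}$, hence $\Sigma\subset\Sigma_0$. In particular $\Sigma$ is a finite subset of $X^\div$, so it makes sense to consider $\cN_\R^\Sigma$. Minimality is then immediate: if $\Sigma'\subset X^\div$ is any finite subset with $\n\in\cN_\R^{\Sigma'}$, Theorem~\ref{thm:MAsig} again yields $\MA(\n)\in\cM^{\Sigma'}$, and hence $\Sigma=\supp\MA(\n)\subset\Sigma'$.

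The remaining (and really only substantive) step is to verify that $\n$ itself belongs to $\cN_\R^\Sigma$. By Theorem~\ref{thm:MAsig} applied to $\Sigma$, surjectivity of $\MA_\Sigma$ produces some $\n'\in\cN_\R^\Sigma$ with $\MA(\n')=\MA(\n)$. Since $\Sigma\subset\Sigma_0$, both $\n$ and $\n'$ lie in $\cN_\R^{\Sigma_0}$, and injectivity of $\MA_{\Sigma_0}$ on $\cN_\R^{\Sigma_0}/\R$ forces $\n=\n'+c$ for some $c\in\R$. Because $\cN_\R^\Sigma$ is invariant under the translation action of $\R$ (as noted after Definition~\ref{defi:Ntilde} and in the discussion of $\IN_\Sigma$), this gives $\n\in\cN_\R^\Sigma$, as required.

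The main (mild) obstacle is the last point: the abstract isomorphism from the first part only determines $\n$ up to translation inside $\cN_\R^\div$, so some care is needed to upgrade this to $\n\in\cN_\R^\Sigma$ on the nose. This is handled by comparing the two potential representatives $\n,\n'$ inside the larger space $\cN_\R^{\Sigma_0}$ and invoking translation invariance of $\cN_\R^\Sigma$. Everything else is a direct bookkeeping consequence of Theorem~\ref{thm:MAsig} and the compatible union structure of $\cN_\R^\div$ and $\cM^\div$.
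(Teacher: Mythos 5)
Your proof is correct and follows exactly the route the paper intends: the corollary is stated without a separate proof precisely because it is the direct consequence of Theorem~\ref{thm:MAsig} for finite $\Sigma\subset X^\div$ that you spell out, using the union structure of $\cN_\R^\div$ and $\cM^\div$ and the compatibility of the maps with the global $\MA$. Your handling of the one non-formal point—upgrading $\n\sim\n'$ modulo translation to $\n\in\cN_\R^\Sigma$ via injectivity of $\MA_{\Sigma_0}$ on $\cN_\R^{\Sigma_0}/\R$ and translation-invariance of $\cN_\R^\Sigma$—is exactly right.
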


\begin{exam}\label{exam:linnondiv} If $v\in X^\lin$, then $\n_v$ is divisorial iff $v$ is divisorial. Indeed, $\n_v\in\cN_\R^\div\Longrightarrow\MA(\n_v)=\d_v\in\cM^\div\Longrightarrow v\in X^\div$. 
\end{exam}

We now turn to the proof of Theorem~\ref{thm:MAsig}. For any $\n\in\cN_\R$ and $\f\in\Cz(X)$, we set
$$
\n[\f]:=\IN(\FS(\n)+\f)\in\cN_\R. 
$$
Thus $\n[0]=\n^\hom$ (see Theorem~\ref{thm:INFS}). The main ingredient in the proof is now the following version of~\cite[Theorem~8.5]{trivval} (itself a consequence of~\cite[Theorem A]{BGM}). 

\begin{lem}\label{lem:BGM} For any $\n\in\cN_\R$ and $\f\in\Cz(X)$, we have 
$$
\frac{d}{dt}\bigg|_{t=0}\vol\left(\n[t\f]\right)=\int \f\,\MA(\n).
$$
\end{lem}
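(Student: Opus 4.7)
The plan is to recast the left-hand side in terms of the extended energy $\ten$ and then apply a version of~\cite[Theorem~8.5]{trivval}. Since $\f\in\Cz(X)$ is continuous and $\FS(\n)$ is bounded and lower semicontinuous (being the increasing supremum of the Fubini--Study functions $\FS(\n_d)\in\cH_\R$ for the canonical approximants $\n_d$), the sum $\FS(\n)+t\f$ is bounded and lsc for every $t\in\R$. Corollary~\ref{cor:volIN} then gives
\begin{equation*}
\vol(\n[t\f])=\vol(\IN(\FS(\n)+t\f))=\ten(\FS(\n)+t\f),
\end{equation*}
so the claim reduces to showing that $t\mapsto\ten(\FS(\n)+t\f)$ is differentiable at $t=0$ with derivative $\int\f\,\MA(\n)$.

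For $\n\in\cT_\R$ of finite type, $\FS(\n)\in\cH_\R\subset\cE^1$ and the formula is just the defining property~\eqref{equ:Ederiv} of $\MA(\FS(\n))=\MA(\n)$, extended from $\cH_\Q$ to $\cH_\R$ by continuity in $\dd_\infty$; this is~\cite[Theorem~8.5]{trivval} in the continuous $L$-psh setting. For general $\n\in\cN_\R$, set $h_d(t):=\ten(\FS(\n_d)+t\f)$ and $h(t):=\ten(\FS(\n)+t\f)$. Both are concave in $t$, as one sees from the variational definition $\ten(\phi)=\sup\{\en(\p)\mid\p\in\PSH,\,\p\le\phi\}$ together with the concavity of $\en$ on $\PSH$. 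Lemma~\ref{lem:exten} gives $h_d(t)\nearrow h(t)$ pointwise, and by the finite-type case $h_d'(0)=\int\f\,\MA(\n_d)$. By Proposition~\ref{prop:MAnorm}~(i), $\MA(\n_d)\to\MA(\n)$ strongly in $\cM^1$; combined with $\f\in\Cz(X)$ this yields $h_d'(0)\to L:=\int\f\,\MA(\n)$.

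The main obstacle will be the final passage to the limit in the derivative itself: pointwise monotone convergence of concave functions whose derivatives converge at a single point does not by itself imply differentiability of the limit at that point. Passing $d\to\infty$ in the concavity bound $h_d(t)\le h_d(0)+h_d'(0)t$ easily gives the upper estimate $h(t)\le h(0)+Lt$, so $L$ is a supergradient of $h$ at $0$; the matching lower estimate $h(t)\ge h(0)+Lt-o(t)$ needed for actual differentiability must be produced by exhibiting, for each small $t$, an explicit $\psi\in\PSH$ with $\psi\le\FS(\n)+t\f$ and $\en(\psi)\ge h(0)+Lt-o(t)$. A natural candidate is $\psi:=\FS(\n_d)+t\f-c_{d,t}$ with corrections $c_{d,t}\to 0$ tuned so that the inequality $\psi\le\FS(\n)+t\f$ is preserved, with errors controlled uniformly in $d$ through the strong $\cM^1$-convergence $\MA(\n_d)\to\MA(\n)$ and the H\"older-type estimate of Theorem~\ref{thm:BBGZ}.
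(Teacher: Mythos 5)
Your reduction to $h(t)=\ten(\FS(\n)+t\f)$ via Corollary~\ref{cor:volIN}, the concavity of $h$ and $h_d$, the monotone convergence $h_d\nearrow h$ from Lemma~\ref{lem:exten}, and the convergence $h_d'(0)\to L=\int\f\,\MA(\n)$ are all correct, and you have accurately diagnosed the difficulty: these facts only show that $L$ is a supergradient of the concave function $h$ at $t=0$, not that $h$ is differentiable there. But the proposal does not close this gap. The candidate competitor $\psi=\FS(\n_d)+t\f-c_{d,t}$ cannot work as written: since $\f$ is merely continuous, $\FS(\n_d)+t\f$ is not $L$-psh, so $\psi$ is not an admissible competitor in the supremum defining $\ten$; one is forced back to the envelope of $\FS(\n_d)+t\f$, whose energy is precisely the quantity $h_d(t)$ you are trying to control. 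Moreover, Theorem~\ref{thm:BBGZ} bounds differences of Monge--Amp\`ere integrals and gives no handle on the second-order behavior of $t\mapsto\ten(\cdot+t\f)$.

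The missing ingredient is the \emph{uniformity in $d$} of the second-order remainder in~\cite[Theorem~8.5]{trivval}: for $\f\in\PL(X)$ one has $\ten(\p_d+t\f)=\en(\p_d)+t\int\f\,\MA(\p_d)+O(t^2)$ with the implicit constant independent of $d$ (depending only on $\f$). This two-sided quantitative bound survives the limit $d\to\infty$ (using $h_d\to h$ and $\MA(\n_d)\to\MA(\n)$) and yields $h(t)=h(0)+tL+O(t^2)$, hence differentiability at $0$, for PL perturbations. The general continuous case is then handled not by working with $\cE^1$-estimates but by uniform approximation of $\f$ by $\f_i\in\PL(X)$: with $\d_i:=\sup_{X^\an}|\f-\f_i|$ one has the elementary sandwich $\vol(\n[t\f_i])-t\d_i\le\vol(\n[t\f])\le\vol(\n[t\f_i])+t\d_i$ for $t>0$, which squeezes the upper and lower Dini derivatives of $h$ at $0$ between $\int\f_i\,\MA(\n)\pm\d_i$, and letting $i\to\infty$ concludes. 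Without the uniform $O(t^2)$ for PL test functions (or some equivalent quantitative input), the passage from $h_d'(0)\to L$ to $h'(0)=L$ genuinely fails in general for monotone limits of concave functions.
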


\begin{proof}
  For $d$ sufficiently divisible, set $\p_d:=\FS(\n_d)\in\cH_\R$. By Theorem~\ref{thm:volen} and Corollary~\ref{cor:volIN}, we have $\vol(\n)=\ten(\FS(\n))=\vol(\n[0])$, and 
\begin{equation}\label{equ:voltw} 
\vol\left(\n_d[\f]\right)=\ten(\p_d+\f)\to\ten(\FS(\n)+\f)=\vol(\n[\f]). 
\end{equation}
Assume first $\f\in\PL(X)$. By~\cite[Theorem 8.5]{trivval}, we then have 
$$
\ten(\p_d+t\f)=\en(\p_d)+t\int\f\,\MA(\p_d)+O(t^2)
$$
as $t\to 0$, where the implicit contant in $O$ is uniform with respect to $d$ (but does depend on $\f$). Now $\MA(\p_d)=\MA(\n_d)\to\MA(\n)$ strongly in $\cM^1$ (see Proposition~\ref{prop:MAnorm}~(i)); combined with~\eqref{equ:voltw}, this yields
$$
\vol\left(\n[t\f]\right)=\vol(\n)+t\int\f\,\MA(\n)+O(t^2),
$$
which proves the result for $\f\in\PL(X)$. Consider now an arbitrary $\f\in\Cz(X)$. Since $\PL(X)$ is dense in $\Cz(X)$ with respect to uniform convergence, we can find a sequence $(\f_i)$ in $\PL(X)$ such that $\d_i:=\sup_{X^\an}|\f_i-\f|\to 0$. Then $\f_i-\d_i\le\f\le \f_i+\d_i$, and hence 
$$
\vol\left(\n[t \f_i]\right)-t\d_i\le\vol\left(\n[t \f]\right)\le\vol\left(\n[t\f_i]\right)+t\d_i. 
$$
By the first part of the proof, this yields 
\begin{multline*}
  \int \f_i\,\MA(\n)-\d_i\le\liminf_{t\to 0_+}t^{-1}\left(\vol\left(\n[t \f]\right)-\vol(\n)\right)\\
  \le \limsup_{t\to 0_+}t^{-1}\left(\vol\left(\n[t \f]\right)-\vol(\n)\right)\le\int \f_i\,\MA(\n),
\end{multline*}
and letting $i\to\infty$ yields, as desired, $\lim_{t\to 0}t^{-1}\left(\vol\left(\n[t \f]\right)-\vol(\n)\right)=\int \f\,\MA(\n)$. 
\end{proof}

\begin{proof}[Proof of Theorem~\ref{thm:MAsig}] By Theorem~\ref{thm:Hauscomp}, $\MA\colon(\cN_\R/\R,\quotd_1)\to(\cM^1,\dd_1)$ is an isometry. Let us first show that it maps $\cN_\R^\Sigma/\R$ into $\cM^\Sigma$. Pick $\n\in\cN_\R^\Sigma$. We need to show that $\int\f\MA(\n)=0$ for any $\f\in\Cz$ such that $\f|_\Sigma=0$. Now, for any $t\in\R$, we have
\begin{equation*}
  \n[t\f]
  =\IN(\FS(\n)+t\f)
  \le\IN_\Sigma(\FS(\n)+t\f)
  =\IN_\Sigma(\FS(\n))
  =\n.
\end{equation*}
This implies $\vol(\n[t\f])\le\vol(\n)$ for all $t\in\R$, and hence $\int\f\MA(\n)=0$, thanks to Lemma~\ref{lem:BGM}. 

We next show that $\MA\colon\cN_\R^\Sigma\to\cM^\Sigma$ is onto. Pick $\mu\in\cM^\Sigma$, and choose a maximizing sequence $(\f_i)$ in $\cH_\R$ for $\mu$, \ie $\en(\f_i)-\int\f_i\,\mu\to\en^\vee(\mu)$, normalized by $\sup\f_i=0$. Since $\Sigma$ is strongly compact, the restriction of $\PSH_{\sup}=\{\f\in\PSH\mid\sup\f=0\}$ to $\Sigma$ is equicontinuous and bounded, since $0\le\sup_{v\in\Sigma}(-\f(v))\le\sup_{v\in\Sigma}\tee(v)<\infty$ for $f\in\PSH_{\sup}$. 
By the Arzel\`a--Ascoli theorem, we may assume, after passing to a subsequence, that $\f_i|_\Sigma$ converges uniformly to some $\f\in\Cz(\Sigma)$. We claim that $\n:=\IN_\Sigma(\f)\in\cN_\R^\Sigma$ satisfies $\MA(\n)=\mu$, which will conclude the proof. By Proposition~\ref{prop:MAvar}, it suffices to show $\vol(\n)-\int\FS(\n)\,\mu\ge\en^\vee(\mu)$. 

Set $\n_i:=\IN_\Sigma(\f_i)$. As $\f_i\to\f$ uniformly on $\Sigma$, \eqref{equ:INsiglip} implies $\dd_1(\n_i,\n)\to 0$, and hence $\vol(\n_i)\to\vol(\n)$. Further, $\n_i\ge\IN(\f_i)$, and hence $\vol(\n_i)\ge\vol(\IN(\f_i))=\en(\f_i)$ (see Corollary~\ref{cor:volIN}). By Proposition~\ref{prop:signorm}~(i), we also have $\FS(\n)|_\Sigma\le\f$. This yields, as desired, 
\begin{multline*}
 \vol(\n)-\int\FS(\n)\,\mu\ge\vol(\n)-\int\f\,\mu\\
 =\lim_i\left(\vol(\n_i)-\int\f_i\,\mu\right)\ge \lim_i\left(\en(\f_i)-\int\f_i\,\mu\right)=\en^\vee(\mu). 
\end{multline*}
Finally, if $\Sigma\subset X^\div$, or if the weak envelope property holds, then $\cN^\Sigma_\R$ is contained in $\cN_\R^{\max}$ (see Corollary~\ref{cor:linmax}), and the last point thus follows from Corollary~\ref{cor:MAemb}. 
\end{proof}
%
%
\subsection{Dervan's minimum norm}\label{sec:minnorm}
In~\cite{Der}, Dervan introduced the notion of the \emph{minimum norm} of a test configuration. Here we extend his notion to arbitrary norms.

\begin{defi}\label{defi:minnorm}
  We define the \emph{minimum norm} $\|\n\|$ of $\n\in\cN_\R$ by
  \[
    \|\n\|:=\en^\vee(\MA(\n))\in\R_{\ge0}.
  \]
\end{defi}
By Corollary~\ref{cor:asymptrans}, the minimum norm is a continuous function on  $(\cN_\R,\dd_1)$.
\begin{prop}\label{prop:minnorm}
  For any $\n\in\cN_\R$, we have:  
  \begin{itemize}
  \item[(i)]
    if $\n\in\cT_\Z$ is associated to an ample test configuration, then $\|\n\|$ coincides, up to normalization, with the minimum norm defined in~\cite{Der};
  \item[(ii)]
    the canonical approximants $(\n_d)$ satisfy $\|\n\|=\lim_d\|\n_d\|$;
  \item[(iii)]
    for any $c\in\R$ and $t\in\R_{>0}$ we have
    $\|\n+c\|=\|\n\|,$ and $\|t\n\|=t\|\n\|$;
  \item[(iv)]
    $\|\n\|\approx\quotd_1(\n,\n_\triv)\approx\|\n\|_1$; in particular, 
    $\|\n\|=0$ iff $\n\sim\n_\triv+c$ for some $c\in\R$;
  \item[(v)]
    if $\n\in\cT_\R$, then $\|\n\|=\en(\f)-\int\f\,\MA(\f)=\ii(\f)-\jj(\f)$ with $\f:=\FS(\n)$;
  \item[(vi)]
    if $\n\in\cT_\Q$, then $\|\n\|\in\Q$;
  \item[(vii)]
    $\|\n\|=\|\n^\hom\|$;
  \item[(viii)]
    if $(X,L)$ has the weak envelope property, then $\|\n\|=\|\n^{\max}\|$.
  \end{itemize}
\end{prop}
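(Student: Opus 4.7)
The guiding observation is that $\|\chi\| = \en^\vee(\MA(\chi))$ depends on $\chi$ only through $\MA(\chi)\in\cM^1$. The plan is therefore to reduce each item to a property already established for the operator $\MA\colon\cN_\R\to\cM^1$ in Proposition~\ref{prop:MAnorm} and for the energy functional $\en^\vee\colon\cM^1\to[0,+\infty)$ from~\S\ref{sec:npp}, using in addition the isometry $\MA\colon(\cN_\R/\R,\quotd_1)\simto(\cM^1,\dd_1)$ of Theorem~\ref{thm:Hauscomp}.

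For the immediate consequences: (ii) follows from the strong convergence $\MA(\chi_d)\to\MA(\chi)$ in $\cM^1$ (Proposition~\ref{prop:MAnorm}(i)) combined with the strong continuity of $\en^\vee$. Both equalities in (iii) are just the translation invariance of $\en^\vee$ and the scaling identity $\en^\vee(t_\star\mu)=t\,\en^\vee(\mu)$, once combined with $\MA(\chi+c)=\MA(\chi)$ and $\MA(t\chi)=t_\star\MA(\chi)$. Parts~(vii) and~(viii) are immediate from $\MA(\chi)=\MA(\chi^\hom)$ and (assuming the weak envelope property) $\MA(\chi)=\MA(\chi^{\max})$ in Proposition~\ref{prop:MAnorm}(iii)--(iv).

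Part~(v) is just~\eqref{equ:enMA} applied to $\f=\FS(\chi)\in\cE^1$ (which is lawful for $\chi\in\cT_\R$ since $\FS(\chi)\in\cH_\R$), giving $\|\chi\|=\en^\vee(\MA(\f))=\en(\f)-\int\f\,\MA(\f)$; the alternative form $\ii(\f)-\jj(\f)$ comes from writing $\jj(\f):=\sup\f-\en(\f)$ and $\ii(\f)=\sup\f-\int\f\,\MA(\f)$. Part~(i) then follows from~(v): for an ample test configuration $(\cX,\cL)$, Dervan's original normalization of the minimum norm agrees, up to the constant $(L^n)$ built into our energy convention, with $\en(\f)-\int\f\,\MA(\f)$, via the geometric formula~\eqref{equ:testen} for $\en(\f)$ and formula~\eqref{equ:MATZ} for $\MA(\chi)$ on the integral closure of the test configuration. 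Checking this normalization is the only step requiring the direct dictionary of Appendix~\ref{sec:tc}, and is the step one has to be careful with. For~(vi), pick $d\ge1$ with $d\chi\in\cT_\Z$; by~(iii), $\|\chi\|=d^{-1}\|d\chi\|$, so we may assume $\chi\in\cT_\Z$. Then $\FS(\chi)\in\cH_\Q$ takes rational values on $X^\div$, while~\eqref{equ:MATZ} shows $\MA(\chi)$ is supported on $X^\div$ with rational weights, and $\en(\f)\in\Q$ by~\eqref{equ:testen}; part~(v) then gives $\|\chi\|\in\Q$.

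The most substantive item is~(iv). For the first equivalence, note that $\MA(\chi_\triv)=\MA(0)=\d_{v_\triv}$, so the isometry property of Theorem~\ref{thm:Hauscomp} yields $\quotd_1(\chi,\chi_\triv)=\dd_1(\MA(\chi),\d_{v_\triv})$, and then the estimate $\dd_1(\mu,\d_{v_\triv})\approx\en^\vee(\mu)$ established in~\eqref{equ:normmu} during the proof of Theorem~\ref{thm:d1M1} finishes this step. For the comparison $\quotd_1(\chi,\chi_\triv)\approx\|\chi\|_1$, one inequality is trivial since $\|\chi\|_1=\dd_1(\chi,\chi_\triv+\vol(\chi))\ge\quotd_1(\chi,\chi_\triv)$; the reverse inequality uses~\eqref{equ:vollip} in the form $|\vol(\chi)-c|\le\dd_1(\chi,\chi_\triv+c)$ and the triangle inequality to conclude $\|\chi\|_1\le 2\,\dd_1(\chi,\chi_\triv+c)$ for any $c\in\R$, hence $\|\chi\|_1\le 2\,\quotd_1(\chi,\chi_\triv)$. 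Taking these together, $\|\chi\|=0$ iff $\quotd_1(\chi,\chi_\triv)=0$ iff $\chi$ is asymptotically equivalent modulo translation to $\chi_\triv$, settling the last assertion of~(iv).
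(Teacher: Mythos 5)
Your proof is correct and, for the most part, follows the same route as the paper: reduce everything to properties of $\MA\colon\cN_\R\to\cM^1$ (Proposition~\ref{prop:MAnorm}), the isometry of Theorem~\ref{thm:Hauscomp}, and the identity~\eqref{equ:enMA} for $\n\in\cT_\R$. Two of your steps genuinely diverge from the paper's, both harmlessly and arguably to your advantage. For~(vi), the paper simply observes that $\f=\FS(\n)\in\cH_\Q$ and reads rationality off $\ii(\f)-\jj(\f)$, whereas you first rescale into $\cT_\Z$ (legitimate: a finite-type $\Q$-norm takes values in a finitely generated, hence cyclic, subgroup of $\Q$) and then invoke~\eqref{equ:MATZ} and~\eqref{equ:testen}; both arguments work. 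More notably, for the comparison $\quotd_1(\n,\n_\triv)\approx\|\n\|_1$ in~(iv) the paper quotes~\cite[Theorem~7.9]{BHJ1} on $\cT_\R$ and extends by density, while you give a short self-contained argument: $\|\n\|_1\ge\quotd_1(\n,\n_\triv)$ trivially, and conversely $\|\n\|_1\le\dd_1(\n,\n_\triv+c)+|\vol(\n)-c|\le 2\,\dd_1(\n,\n_\triv+c)$ for every $c$, using~\eqref{equ:vollip} and $\dd_1(\n_\triv+c,\n_\triv+c')=|c-c'|$. This yields the two-sided bound with absolute constants and avoids the external reference entirely. The only slightly soft spot is~(i), where you defer the actual matching with Dervan's normalization to a computation you do not carry out; since the paper itself disposes of this by citing~\cite[Remark~7.12]{BHJ1}, this is acceptable, but be aware that~(i) is ultimately a citation, not a consequence of~(v) alone.
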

Here $\|\n\|_1=\dd_1(\n,\n_\triv+\vol(\n))$ is the $L^1$-norm of $\n$, see Definition~\ref{defi:Lpnorm}.
\begin{proof}
  If $\n\in\cT_\R$, then $\MA(\n)=\MA(\f)$ with $\f:=\FS(\n)\in\cH_\R$, so~\eqref{equ:enMA} yields $\|\n\|=\en^\vee(\mu)=\ii(\f)-\jj(\f)$, proving~(v), and also~(vi), since $\f\in\cH_\Q$ when $\n\in\cT_\Q$. If, further, $\n\in\cT_\Z$ is a test configuration, then~\cite[Remark~7.12]{BHJ1} shows that $\ii(\f)-\jj(\f)$ coincides
  (up to normalization by $V_L$) with the minimum norm of $\n$ as defined in~\cite[Definition~2.5]{Der}. Thus~(i) holds. Now~(ii),~(iii),~(vii) and~(viii) are immediate consequence of the corresponding properties in Proposition~\ref{prop:MAnorm}.

  It remains to prove~(iv). That $\|\n\|\approx\quotd_1(\n,\n_\triv)$ follows from~\eqref{equ:normmu} and Theorem~\ref{thm:Hauscomp}, whereas $\|\n\|_1\approx\|\n\|$ follows from (iv) and~\cite[Theorem~7.9]{BHJ1} when $\n\in\cT_\R$, and hence in general, by density. 
 \end{proof}
By $\dd_1$-density of $\cT_\Z$ in $\cN_\R$ (see Corollary~\ref{cor:tcdense}), we infer: 

\begin{cor}\label{cor:minnormext}
  The minimum norm functional $\cN_\R\to\R_{\ge0}$ is the unique $\dd_1$-continuous extension of Dervan's minimum norm from $\cT_\Z$ to $\cN_\R$.
\end{cor}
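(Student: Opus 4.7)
The plan is to deduce the corollary directly from Proposition~\ref{prop:minnorm} and the $\dd_1$-density of $\cT_\Z$ in $\cN_\R$ established in Corollary~\ref{cor:tcdense}. The argument naturally splits into verifying that $\n\mapsto\|\n\|$ is indeed a $\dd_1$-continuous extension of Dervan's minimum norm, and then showing uniqueness.

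For the extension property, I would invoke Proposition~\ref{prop:minnorm}~(i), which identifies $\|\n\|$ on $\cT_\Z$ with Dervan's minimum norm up to the normalization constant $V_L$. For $\dd_1$-continuity, the cleanest route is to factor the minimum norm through the chain
\begin{equation*}
(\cN_\R,\dd_1)\xrightarrow{\ \MA\ }(\cM^1,\dd_1)\xrightarrow{\ \en^\vee\ }\R_{\ge 0}.
\end{equation*}
The first arrow is $1$-Lipschitz by Corollary~\ref{cor:asymptrans}, and $\en^\vee$ is continuous on $(\cM^1,\dd_1)$ since by Theorem~\ref{thm:d1M1} the metric $\dd_1$ defines the strong topology, with respect to which $\en^\vee$ is continuous by construction.

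For uniqueness, suppose $F\colon\cN_\R\to\R$ is another $\dd_1$-continuous functional agreeing with Dervan's minimum norm on $\cT_\Z$. Given any $\chi\in\cN_\R$, Corollary~\ref{cor:tcdense} yields a sequence $\chi_i\in\cT_\Z$ with $\dd_1(\chi_i,\chi)\to 0$. Continuity of both $F$ and $\|\cdot\|$ then forces
\begin{equation*}
F(\chi)=\lim_i F(\chi_i)=\lim_i\|\chi_i\|=\|\chi\|,
\end{equation*}
completing the proof. Every ingredient is already in place, so there is no real obstacle: the corollary is essentially a formal consequence of the continuity-plus-density package, with the only point of substance being the density statement, which was the content of Corollary~\ref{cor:tcdense} (proved via Okounkov bodies in Theorem~\ref{thm:Fuj}).
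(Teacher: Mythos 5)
Your proposal is correct and follows exactly the paper's route: continuity of $\|\cdot\|=\en^\vee\circ\MA$ via Corollary~\ref{cor:asymptrans} and Theorem~\ref{thm:d1M1}, agreement with Dervan's minimum norm on $\cT_\Z$ via Proposition~\ref{prop:minnorm}~(i), and uniqueness from the $\dd_1$-density of $\cT_\Z$ in Corollary~\ref{cor:tcdense}. No gaps.
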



%
%
%
%
\subsection{Valuations of linear growth}\label{sec:lingrowth}

In this final section, we specialize the above results to prove:
\begin{thm}\label{thm:thmC} For all $v,w\in X^\lin$ with associated norms $\n_v,\n_w\in\cN_\R$ and measures $\d_v,\d_w\in\cM^1$, we have: 
\begin{itemize}
\item[(i)] $\dd_\infty(v,w)=\dd_\infty(\n_v,\n_w)\approx\quotd_1(\n_v,\n_w)=\dd_1(\d_v,\d_w)$; 
\item[(ii)] $\dd_\infty(v,v_\triv)=\tee(v)=\la_{\max}(\n_v)$; 
\item[(iii)] $\ess(v)=\vol(\n_v)=\|\n_v\|=\en^\vee(\d_v)$. 
\end{itemize}
\end{thm}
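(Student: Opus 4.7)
The plan is to prove (ii) and (iii) first, as they follow directly from unwinding definitions and applying the variational principle, and then to prove (i) by an explicit computation of the quotient metric on pairs of Dirac masses via Lemma~\ref{lem:d1andI1norm}.

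For (ii), both equalities $\dd_\infty(v,v_\triv)=\tee(v)=\la_{\max}(\n_v)$ will reduce to a direct comparison of the definitions in~\eqref{equ:dinftylin}, \eqref{equ:T} and~\eqref{equ:lamax}, using that $v(s)\ge 0$ on sections. For (iii), the identity $\vol(\n_v)=\ess(v)$ is Example~\ref{exam:ess}, while $\|\n_v\|=\en^\vee(\d_v)$ is the definition of the minimum norm combined with $\MA(\n_v)=\d_v$ (a special case of Theorem~\ref{thm:MAsig} with $\Sigma=\{v\}$, or Example~\ref{exam:valnorm}). It then remains to show $\en^\vee(\d_v)=\vol(\n_v)$, which I will deduce from the variational principle (Proposition~\ref{prop:MAvar}): since $\MA(\n_v)=\d_v$, the norm $\n_v$ achieves the defining supremum, giving
\[
\en^\vee(\d_v)=\vol(\n_v)-\int\FS(\n_v)\,d\d_v=\vol(\n_v)-\FS(\n_v)(v),
\]
and a direct computation yields $\FS(\n_v)(v)=\sup_{s,m}m^{-1}(v(s)-v(s))=0$.

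For (i), the outer equalities reduce the problem to the bi-Lipschitz estimate $\dd_\infty(v,w)\approx\quotd_1(\n_v,\n_w)$: indeed, $\dd_\infty(v,w)=\dd_\infty(\n_v,\n_w)$ is Corollary~\ref{cor:dinftylin}, while $\dd_1(\d_v,\d_w)=\quotd_1(\n_v,\n_w)$ follows from the isometric embedding $\MA\colon(\cN_\R/\R,\quotd_1)\hto(\cM^1,\dd_1)$ of Theorem~\ref{thm:Hauscomp}. To compute $\quotd_1(\n_v,\n_w)=\inf_c\dd_1(\n_v+c,\n_w)$, I will apply Lemma~\ref{lem:d1andI1norm}, whose right-hand side collapses under the Dirac-mass structure: using the $\R$-equivariance $\FS(\n+c)=\FS(\n)+c$, translation invariance of $\MA$, and the vanishings $\FS(\n_v)(v)=\FS(\n_w)(w)=0$ observed in (iii),
\[
\dd_1(\n_v+c,\n_w)\approx |c-\a|+|\b+c|,
\]
where $\a:=\FS(\n_w)(v)$ and $\b:=\FS(\n_v)(w)$. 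One checks $\a,\b\ge 0$ by producing sections $s\in R_m$ that do not vanish at the relevant center (so that $v(s)=0$ or $w(s)=0$). The piecewise linear function $c\mapsto|c-\a|+|c+\b|$ attains its infimum $\a+\b$ on the interval $[-\b,\a]$, giving $\quotd_1(\n_v,\n_w)\approx\a+\b$. Finally, a direct unwinding yields $\dd_\infty(v,w)=\max(\a,\b)$, which is comparable to $\a+\b$, completing the proof.

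The main potential obstacle is the bi-Lipschitz comparison in (i): a priori one might expect $\dd_\infty$ to be much larger than $\quotd_1$, since $\dd_1\le\dd_\infty$ is strict in general and passing to the quotient by translation can collapse things further. What saves the day is the very special Dirac-mass structure, which allows the infimum over translations to be computed in closed form and then matched, up to constants depending only on $n$, with the $\dd_\infty$-distance.
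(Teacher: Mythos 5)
Your proposal is correct and follows essentially the same route as the paper's proof: the same reduction via Corollary~\ref{cor:dinftylin} and Theorem~\ref{thm:Hauscomp}, the same use of Lemma~\ref{lem:d1andI1norm} to evaluate $\dd_1(\n_v+c,\n_w)$ on the two Dirac masses, and the same appeal to Proposition~\ref{prop:MAvar} together with $\FS(\n_v)(v)=0$ for part~(iii). The only (immaterial) difference is that you compute the infimum over $c$ exactly as $\a+\b$, whereas the paper only bounds it below by $\a+\b\ge\max\{\a,\b\}=\dd_\infty(v,w)$.
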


Since $\quotd_1\le\dd_1\le\dd_p\le\dd_\infty$ on $\cN_\R$ for $1\le p\le\infty$, this implies: 

\begin{cor}\label{cor:thmC} For any $p\in[1,\infty]$, the embeddings 
$$
(X^\lin,\dd_\infty)\hto(\cM^1,\dd_1),\quad (X^\lin,\dd_\infty)\hto(\cN_\R,\dd_p)
$$
respectively defined by $v\mapsto\d_v$ and $v\mapsto\n_v$ are bi-Lipschitz. 
\end{cor}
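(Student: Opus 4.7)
The plan is to deduce Corollary~\ref{cor:thmC} as a purely formal consequence of Theorem~\ref{thm:thmC}, combined with the comparison of pseudometrics
\[
\quotd_1 \le \dd_1 \le \dd_p \le \dd_\infty
\]
on $\cN_\R$ valid for all $p \in [1,\infty]$, where $\quotd_1$ denotes the quotient pseudometric of $\dd_1$ under the translation action of $\R$. No new estimates are required beyond what is already packaged in Theorem~\ref{thm:thmC}~(i).

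For the first embedding $v \mapsto \d_v$ into $(\cM^1, \dd_1)$, part~(i) of Theorem~\ref{thm:thmC} directly asserts the chain $\dd_1(\d_v, \d_w) = \quotd_1(\n_v, \n_w) \approx \dd_\infty(v,w)$, which is exactly the bi-Lipschitz statement; there is nothing further to do. For the second embedding $v \mapsto \n_v$ into $(\cN_\R, \dd_p)$, I would combine the equality $\dd_\infty(\n_v, \n_w) = \dd_\infty(v,w)$ and the equivalence $\quotd_1(\n_v, \n_w) \approx \dd_\infty(v,w)$ (both again from Theorem~\ref{thm:thmC}~(i)) with the sandwich
\[
\quotd_1(\n_v,\n_w) \le \dd_p(\n_v,\n_w) \le \dd_\infty(\n_v,\n_w).
\]
The outer terms are each bi-Lipschitz equivalent to $\dd_\infty(v,w)$ with constants depending only on $n$, so the middle term $\dd_p(\n_v, \n_w)$ is squeezed to be bi-Lipschitz equivalent to $\dd_\infty(v,w)$ as well, with constants that are uniform in $p \in [1,\infty]$.

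All the real work is contained in Theorem~\ref{thm:thmC}, and in particular in the nontrivial lower bound $\quotd_1(\n_v, \n_w) \gtrsim \dd_\infty(v,w)$, which ultimately comes from Lemma~\ref{lem:d1andI1norm} and the H\"older comparison of Lemma~\ref{lem:bd1ii} between $\dd_1$ and the functional $\ii$. Once Theorem~\ref{thm:thmC} is in place, the corollary presents no further obstacle: no delicate case analysis or additional pluripotential input is needed, and the argument is just the three-line sandwich above.
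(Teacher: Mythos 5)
Your argument is correct and is essentially identical to the paper's: the corollary is stated there as an immediate consequence of Theorem~\ref{thm:thmC} together with the chain $\quotd_1\le\dd_1\le\dd_p\le\dd_\infty$ on $\cN_\R$, which is exactly your sandwich. The observation that the implied constants depend only on $n$ (hence are uniform in $p$) is also consistent with the paper's conventions.
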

Note that this implies Corollary~E in the introduction.

\begin{proof}[Proof of Theorem~\ref{thm:thmC}] By Theorem~\ref{thm:MAsig}, we have $\MA(\n_v)=\d_v$, $\MA(\n_w)=\d_v$, and hence
$$
\dd_1(\d_v,\d_v)=\quotd_1(\n_v,\n_w)\le\dd_\infty(\n_v,\n_w)=\dd_\infty(v,w), 
$$
by Theorem~\ref{thm:Hauscomp} and Corollary~\ref{cor:dinftylin}. Next, note that 
\begin{equation}\label{equ:FSnv}
\FS(\n_v)(w)=\sup\{m^{-1}(v(s)-w(s))\},
\end{equation}
where $s$ runs over nonzero sections of $mL$ with $m$ sufficiently divisible. In particular, $\FS(\n_v)\ge 0$, and $\FS(\n_v)(v)=0$. Comparing~\eqref{equ:FSnv} with~\eqref{equ:dinftylin} yields 
\begin{equation}\label{equ:dinftymax}
\dd_\infty(v,w)=\max\{\FS(\n_v)(w),\FS(\n_w)(v)\}.
\end{equation}
On the other hand, for each $c\in\R$, Lemma~\ref{lem:d1andI1norm}  yields 
\begin{multline*}
\dd_1(\n_v+c,\n_w)\approx\int|\FS(\n_v)+c-\FS(\n_w)|(\d_v+\d_w)\\
=|\FS(\n_v)(w)+c|+|c-\FS(\n_w)(v)|\ge \FS(\n_v)(w)+\FS(\n_w)(v)\ge\dd_\infty(v,w). 
\end{multline*}
Thus
$$
\quotd_1(\n_v,\n_w)=\inf_{c\in\R}\dd_1(\n_v,\n_w+c)\gtrsim\dd_\infty(v,w). 
$$
This proves (i), and (ii) follows. Finally, $\MA(\n_v)=\d_v$ implies $\|\n_v\|=\en^\vee(\d_v)$, by definition of the minimum norm. Since $\FS(\n_v)$ vanishes at $v$, Proposition~\ref{prop:MAvar} further yields $\en^\vee(\d_v)=\vol(\n_v)$, which coincides with $\ess(v)$ (see Example~\ref{exam:ess}). This proves (iii). 
\end{proof}

%
%
%
%

\appendix

%
%
%
%
\section{Test configurations, integral closure, and homogenization}\label{sec:tc}
   In this appendix we revisit the correspondence between test configurations and integral norms~\cite{WN12,BHJ1}, and provide a description of homogenization in terms of integral closure. We also provide a geometric description of $\R$-test configurations, following~\cite{HL,Ino}.

%
%

\subsection{The norm associated to a test configuration}\label{sec:tc1}

A    \emph{test configuration} $(\cX,\cL)$ for $(X,L)$ consists of: a flat projective morphism $\pi\colon\cX\to\A^1$; a $\Q$-line bundle $\cL$ on $\cX$; a $\Gm$-action on $(\cX,\cL)$ that makes $\pi$ equivariant; and a $\Gm$-equivariant isomorphism 
\begin{equation}\label{equ:tctriv}
(\cX,\cL)|_{\Gm}\simeq (X,L)\times\Gm. 
\end{equation}
We denote by $z$ the coordinate on $\A^1=\Spec k[z]$ and $\Gm=\Spec k[z^\pm]$. 
\begin{exam} The \emph{trivial test configuration} $(\cX_\triv,\cL_\triv)$ is defined by $\cX_\triv=X\times\A^1$, $\cL_\triv=p_1^\star L$. 
\end{exam}
As originally pointed out in~\cite{WN12}, to any test configuration $(\cX,\cL)$ is associated an integral norm $\n_\cL\in\cN_\Z$, defined on $R_m=\Hnot(X,mL)$ for any $m\in\N$ such that $m\cL$ is a line bundle, as follows. Consider the embedding $\Hnot(\cX,m\cL)\hto R_m\otimes k[z^{\pm}]$ induced by~\eqref{equ:tctriv}. This yields a decomposition
\begin{equation}\label{equ:Reesexp}
\Hnot(\cX,m\cL)=\bigoplus_{\la\in\Z}z^{-\la}\Filt^\la R_m, 
\end{equation}
corresponding to the weight decomposition with respect to the $\Gm$-action, where 
\begin{equation}\label{equ:Reesfilt}
\Filt^\la R_m=\{s\in R_m\mid z^{-\la} s\in\Hnot(\cX,m\cL)\}
\end{equation}
is a $\Z$-filtration of $R_m$, and we define $\n_\cL$ as the associated norm.
  It is clear that $\n_{\cL+c\cX_0}=\n_\cL+c$ for any $c\in\Q$, and using flat base change, one easily checks:  

\begin{lem}\label{lem:Reesbase} If $(\cX_d,\cL_d)$ denotes the base change of $(\cX,\cL)$ with respect to $z\mapsto z^d$, $d\ge 1$, then $\n_{\cL_d}=d\n_\cL$. 
\end{lem}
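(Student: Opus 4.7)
The plan is to compute $\n_{\cL_d}$ directly by identifying the Rees decomposition of $H^0(\cX_d,m\cL_d)$ inside $R_m\otimes_k k[y,y^{-1}]$, where I use $y$ for the coordinate on the source $\A^1$ so that $\phi_d^*z=y^d$. Since $k[y]$ is flat (in fact free) over $k[z]$ via $z\mapsto y^d$, flat base change identifies
$$H^0(\cX_d,m\cL_d)=H^0(\cX,m\cL)\otimes_{k[z]}k[y]$$
as a $k[y]$-submodule of $R_m\otimes_kk[y,y^{-1}]$, for $m$ such that $m\cL$ is a line bundle.

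The main computation will use the basis $1,y,\dots,y^{d-1}$ of $k[y]$ as a free $k[z]$-module. Substituting $z=y^d$ in the Rees expansion~\eqref{equ:Reesexp} for $(\cX,\cL)$ yields the image
$$\bigoplus_\la y^{-\la d}\,\Filt^\la R_m\subset R_m[y,y^{-1}],$$
and multiplying by $y^i$, $0\le i\le d-1$, gives
$$H^0(\cX_d,m\cL_d)=\bigoplus_{i=0}^{d-1}\bigoplus_{\la\in\Z}y^{i-\la d}\,\Filt^\la R_m.$$
The key (easy) observation is that the exponents $i-\la d$ are pairwise distinct as $(i,\la)$ varies over $\{0,\dots,d-1\}\times\Z$: the equation $\mu=\la d-i$ with $0\le i\le d-1$ has a unique solution in each $\mu\in\Z$, namely $\la=\lceil\mu/d\rceil$ and $i=\la d-\mu$. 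Consequently the sum above is an internal direct sum inside $R_m[y,y^{-1}]$, and grouping by powers of $y^{-\mu}$ gives
$$H^0(\cX_d,m\cL_d)=\bigoplus_{\mu\in\Z}y^{-\mu}\,\Filt^{\lceil\mu/d\rceil}R_m.$$

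Reading off the filtration from~\eqref{equ:Reesfilt} applied to $(\cX_d,\cL_d)$, this identification means the Rees filtration for $\cL_d$ satisfies $\Filt^\mu_{\cL_d}R_m=\Filt^{\lceil\mu/d\rceil}_{\cL}R_m$ for every $\mu\in\Z$. For any $s\in R_m\setminus\{0\}$ with $\n_\cL(s)=\la_0$, the condition $s\in\Filt^{\lceil\mu/d\rceil}_\cL R_m$ is equivalent to $\lceil\mu/d\rceil\le\la_0$, i.e.\ to $\mu\le d\la_0$; taking the maximum over such $\mu$ gives $\n_{\cL_d}(s)=d\la_0=d\n_\cL(s)$, which yields the claim. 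The only step that requires any genuine care is ensuring that the direct sum above is preserved under the passage from $H^0(\cX,m\cL)\otimes_{k[z]}k[y]$ to its image in $R_m[y,y^{-1}]$, but this is automatic from the flatness of $k[z]\to k[y]$ together with the uniqueness of the exponent pairs $(i,\la)$ just noted.
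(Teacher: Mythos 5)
Your proof is correct and follows exactly the route the paper intends: the paper states the lemma with only the hint ``using flat base change, one easily checks,'' and your computation---identifying $\Hnot(\cX_d,m\cL_d)$ with $\Hnot(\cX,m\cL)\otimes_{k[z]}k[y]$ via the free basis $1,y,\dots,y^{d-1}$ and regrouping the exponents $i-\la d$ to read off $\Filt^\mu_{\cL_d}R_m=\Filt^{\lceil\mu/d\rceil}_{\cL}R_m$---is precisely the omitted verification.
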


In order to further analyze the norm $\n_\cL$,    recall from~\cite[\S 1.4]{trivval} that a test configuration $\cX$ is \emph{integrally closed} if $\cX$ is integrally closed in the generic fiber of $\pi$;  when $X$ is normal, this is equivalent to $\cX$ being normal. If $\cX_0$ is reduced, then $\cX$ is integrally closed. 

If $\cX$ is integrally closed, the local ring of $\cX$ at the generic point of any irreducible component $E$ of $\cX_0$ is a DVR, which defines a divisorial valuation $\ord_E$ on $\cX$; we denote by 
\begin{equation}\label{equ:bE}
b_E:=\ord_E(\cX_0)=\ord_E(z)
\end{equation}
the multiplicity of $\cX_0$ along $E$. By~\eqref{equ:tctriv} we have a function field extension $k(X)\hto k(\cX)$, and the restriction of $b_E^{-1}\ord_E$ to $k(X)$ is a divisorial valuation $v_E\in X^\div$, with values in $b_E^{-1}\Z$. Conversely, any divisorial valuation can be geometrically realized in this way. 

Recall also from~\cite[\S 2.7]{trivval} that any test configuration $(\cX,\cL)$ for $(X,L)$ determines a PL function $\f_{\cL}\in\PL(X)$, whose restriction to the dense subset $\Xdiv\subset\Xan$ is given as follows. Pick $v\in\Xdiv$, and choose an integrally closed  test configuration $\cX'$ for $X$ such that $v=v_E$ is associated to an irreducible component $E\subset\cX'_0$ and such that the canonical $\Gm$-equivariant birational maps $\mu\colon\cX'\to\cX$ and $\rho\colon\cX'\to \cX_\triv$ are morphisms. Then $\mu^\star\cL-\rho^\star\cL_\triv=D$ for a $\Q$-Cartier divisor $D$ supported on $\cX'_0$, and
\begin{equation}\label{equ:FStc}
 \f_\cL(v_E)=b_E^{-1}\ord_E(D). 
\end{equation}
  
Conversely, any PL function on $X^\an$ can be realized in this way (see~\cite[Theorem~2.31]{trivval}).

\begin{prop}\label{prop:normtc} Pick an integrally closed test configuration $(\cX,\cL)$, set $\f:=\f_\cL$, and denote by $\Sigma\subset X^\div$ the (finite) set of valuations attached to the irreducible components of $\cX_0$. Then:
\begin{itemize}
\item[(i)] $\n_\cL=\lfloor\IN_\Sigma(\f)\rfloor=\lfloor\IN(\f)\rfloor$; 
\item[(ii)] $\n_\cL^\hom=\IN_\Sigma(\f)=\IN(\f)$;
\item[(iii)] if $\cX_0$ is reduced, then $\n_\cL$ is homogeneous. 
\end{itemize}
\end{prop}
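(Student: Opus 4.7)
The proof proceeds in three stages: unpack the Rees construction to express $\n_\cL$ as the floor of $\IN_\Sigma(\f)$, compute the homogenization, and finally identify $\IN_\Sigma(\f)$ with $\IN(\f)$. Part~(iii) drops out as a by-product of the first two stages.

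\emph{Stage 1 (Rees unpacking).} Fix $m$ with $m\cL$ a line bundle, and $s\in R_m\setminus\{0\}$. By~\eqref{equ:Reesfilt}, $\n_\cL(s)=\max\{\la\in\Z\mid z^{-\la}s\in\Hnot(\cX,m\cL)\}$. Since $\cX$ is integrally closed in its generic fiber, the local ring at the generic point of each component $E$ of $\cX_0$ is a DVR, and a rational section of $m\cL$ regular on $\cX\smallsetminus\cX_0$ (where it agrees with $z^{-\la}s$ via~\eqref{equ:tctriv}) extends globally iff it has non-negative order along every $E$. Using $\ord_E(z)=b_E$, $\ord_E(s)=b_Ev_E(s)$ (by the very definition $v_E=b_E^{-1}\ord_E|_{k(X)}$), and $\ord_E(D)=b_E\f(v_E)$ from~\eqref{equ:FStc} applied to $\cX'=\cX$ and $D=\cL-\rho^\star\cL_\triv$, this gives
\[
\ord_E(z^{-\la}s)=b_E\bigl(v_E(s)+m\f(v_E)-\la\bigr),
\]
so the extension condition reads $\la\le v_E(s)+m\f(v_E)$ for all $E\in\Sigma$, whence
$\n_\cL(s)=\lfloor\min_{v\in\Sigma}\{v(s)+m\f(v)\}\rfloor=\lfloor\IN_\Sigma(\f)(s)\rfloor$. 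If moreover $\cX_0$ is reduced, then all $b_E=1$, so $v_E(s)\in\Z$ and $m\f(v_E)=m\ord_E(D)\in\Z$ (since $m\cL$, hence $mD$, is Cartier); thus $\IN_\Sigma(\f)(s)\in\Z$, the floor is redundant, and $\n_\cL=\IN_\Sigma(\f)$ is already homogeneous, proving~(iii).

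\emph{Stage 2 (homogenization).} For any $r\ge1$, multiplicativity $v(s^r)=rv(s)$ gives $\IN_\Sigma(\f)(s^r)=r\IN_\Sigma(\f)(s)$. Combined with Stage~1 and~\eqref{equ:homog}, this yields
\[
\n_\cL^\hom(s)=\lim_{r\to\infty}r^{-1}\n_\cL(s^r)=\lim_{r\to\infty}r^{-1}\lfloor r\IN_\Sigma(\f)(s)\rfloor=\IN_\Sigma(\f)(s),
\]
which is the first equality in~(ii).

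\emph{Stage 3 (envelope equality $\IN_\Sigma(\f)=\IN(\f)$).} The inequality $\IN(\f)\le\IN_\Sigma(\f)$ is automatic from $\Sigma\subset\Xan$. For the reverse, I would invoke the canonical retraction $p_\cX\colon\Xan\to\Delta_\cX$ onto the dual complex of $\cX$, whose vertices are exactly $\Sigma$, together with the standard properties $\f=\f\circ p_\cX$ (by~\eqref{equ:FStc} and $\Q$-linear extension across simplices) and $v(s)\ge p_\cX(v)(s)$ for every $v\in\Xan$ and every section $s$. These reduce the infimum defining $\IN(\f)(s)$ to one over $\Delta_\cX$. On each simplex of $\Delta_\cX$, $\f$ is affine, while $v\mapsto v(s)$ is a minimum of finitely many affine functions in the barycentric coordinates (standard local analysis of quasi-monomial valuations), hence concave. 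The sum is therefore concave on the simplex, and its minimum is attained at a vertex $v_E\in\Sigma$. This yields $\IN(\f)(s)=\IN_\Sigma(\f)(s)$, completing~(ii); part~(i) then follows by taking floors and comparing with Stage~1.

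The main obstacle is Stage~3: rigorously setting up the retraction $p_\cX$ and the factorization $\f_\cL=\f_\cL\circ p_\cX$ for an \emph{arbitrary} integrally closed (not necessarily snc) test configuration, and checking the concavity of $v\mapsto v(s)$ on each face of $\Delta_\cX$. A safe workaround is to pass to an snc log resolution $\mu\colon\cX'\to\cX$, on which $p_{\cX'}$ is classical; since $\mu^\star\cL$ still induces the same PL function $\f$ on $\Xdiv$, one obtains $\IN(\f)=\IN_{\Sigma'}(\f)$ with $\Sigma'\supset\Sigma$, and then verifies that the minimum over $\Sigma'$ is always attained on $\Sigma$ by expressing each $v\in\Sigma'\smallsetminus\Sigma$ via the quasi-monomial structure coming from $\mu$ and reusing the concavity argument above.
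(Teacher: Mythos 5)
Your Stages 1 and 2 coincide with the paper's argument: the identity $\ord_E(z^{-\la}s)=b_E(v_E(s)+m\f(v_E)-\la)$ together with the extension criterion across the components of $\cX_0$ (valid because $\cX$ is integrally closed) gives $\n_\cL=\lfloor\IN_\Sigma(\f)\rfloor$, part~(iii) follows from integrality of the values when $b_E=1$, and passing to powers $s^r$ recovers $\n_\cL^\hom=\IN_\Sigma(\f)$ exactly as via Example~\ref{exam:homfloor}.

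Stage 3 is where you diverge, and where there is a genuine gap. The paper does not use dual complexes at all: for each $v\in X^\div$ it picks an integrally closed model $\mu\colon\cX'\to\cX$ whose central fiber realizes $v$, invokes Lemma~\ref{lem:normtc} ($\n_\cL=\n_{\mu^\star\cL}$, via Zariski's main theorem and the projection formula), and applies your Stage~1 on $\cX'$ to get $\n_\cL(s)=\lfloor\IN_{\Sigma'}(\f)(s)\rfloor\le v(s)+m\f(v)$; density of $X^\div$ and continuity of $\f$ then give $\n_\cL\le\IN(\f)\le\IN_\Sigma(\f)$, and combining with $\n_\cL=\lfloor\IN_\Sigma(\f)\rfloor$ and homogenizing yields $\IN(\f)=\IN_\Sigma(\f)$. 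Your route, even granting the retraction $p_{\cX'}$ and the concavity of $v\mapsto v(s)$ on faces of $\Delta_{\cX'}$ (which forces an snc resolution, hence characteristic zero, whereas the statement and the paper's proof are characteristic-free), only reduces the infimum to $\min_{\Sigma'}$, and your proposed final step --- ``expressing each $v\in\Sigma'\smallsetminus\Sigma$ via the quasi-monomial structure coming from $\mu$ and reusing the concavity argument'' --- does not work: an exceptional vertex of $\Delta_{\cX'}$ is not a quasi-monomial or convex combination of the valuations in $\Sigma$. The trivial test configuration already shows this: there $\Sigma=\{v_\triv\}$, while blowups of $X\times\A^1$ produce arbitrary divisorial valuations, none of which are ``quasi-monomial in $v_\triv$''; the inequality $\min_{\Sigma'}g\ge\min_\Sigma g$ for $g(v)=v(s)+m\f(v)$ genuinely requires an input such as the regularity of $\mu^\star(z^{-\la}s)$ on $\cX'$ for $\la\le\min_\Sigma g$, i.e.\ precisely Lemma~\ref{lem:normtc} combined with Stage~1 on $\cX'$. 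If you insert that lemma, your argument closes (and the dual-complex machinery becomes unnecessary); as written, the last step of Stage~3 is unjustified.
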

Here $\IN_\Sigma(\f)$ is defined in~\S\ref{sec:signorm}. 

\begin{lem}\label{lem:normtc} Under the above assumptions we have 
$$
\n_\cL=\n_{\mu^\star\cL},\quad\f_\cL=\f_{\mu^\star\cL}
$$
for any morphism of test configurations $\mu\colon\cX'\to\cX$.  
\end{lem}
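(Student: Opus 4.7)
The plan is to prove the two identities separately by working directly from the definitions of $\n_{\cL}$ and $\f_{\cL}$. Note that by the very definition of a morphism of test configurations, $\mu$ restricts to the identity over $\Gm$ (via the specified trivializations), so $\mu$ is a proper birational morphism which is surjective (its image is closed by properness and contains the dense open set $X\times\Gm$).

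For $\f_{\cL}=\f_{\mu^\star\cL}$, I would argue pointwise on $X^\div$, which suffices by density and the fact that both sides are PL. Given $v=v_E\in X^\div$, pick an integrally closed test configuration $\cX''$ dominating both $\cX'$ and $\cX_\triv$ via morphisms $\tilde\mu\colon\cX''\to\cX'$ and $\rho\colon\cX''\to\cX_\triv$ such that $v$ is realized by a component $E\subset\cX''_0$. The composition $\mu''=\mu\circ\tilde\mu\colon\cX''\to\cX$ then realizes the same valuation via this larger model, and since $\mu''^\star\cL=\tilde\mu^\star(\mu^\star\cL)$, the $\Q$-Cartier divisor $D=\mu''^\star\cL-\rho^\star\cL_\triv=\tilde\mu^\star(\mu^\star\cL)-\rho^\star\cL_\triv$ is the same whether computed with respect to $\cL$ on $\cX$ or $\mu^\star\cL$ on $\cX'$, so the formula~\eqref{equ:FStc} yields $\f_{\cL}(v)=b_E^{-1}\ord_E(D)=\f_{\mu^\star\cL}(v)$.

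For $\n_{\cL}=\n_{\mu^\star\cL}$, the inequality $\n_{\cL}\le\n_{\mu^\star\cL}$ is immediate: if $z^{-\la}s\in\Hnot(\cX,m\cL)$, then $\mu^\star(z^{-\la}s)=z^{-\la}s\in\Hnot(\cX',m\mu^\star\cL)$, so the filtration~\eqref{equ:Reesfilt} on $R_m$ computed from $\cX$ is contained in that computed from $\cX'$. For the reverse inequality, I would use that $\cX$ is integrally closed together with $\mu$ proper birational to obtain $\mu_\star\cO_{\cX'}=\cO_\cX$: indeed, $\mu_\star\cO_{\cX'}$ is a coherent sheaf of $\cO_\cX$-algebras, hence locally a finitely generated $\cO_\cX$-module; viewed inside the common function field $k(\cX')=k(\cX)$, its local sections are therefore integral over $\cO_\cX$, and integral closedness forces them to lie in $\cO_\cX$. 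The projection formula then gives
\[
  \mu_\star(m\mu^\star\cL)=m\cL\otimes\mu_\star\cO_{\cX'}=m\cL,
\]
so $\Hnot(\cX',m\mu^\star\cL)=\Hnot(\cX,m\cL)$ as subspaces of $R_m\otimes k[z^\pm]$, yielding the reverse inequality $\n_{\mu^\star\cL}\le\n_{\cL}$.

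The main delicate point is the identification $\mu_\star\cO_{\cX'}=\cO_\cX$, since ``integrally closed in the generic fiber'' is in principle weaker than normality. If one prefers not to rely on this identity, one can instead argue directly at each generic point of a component $E\subset\cX_0$: by surjectivity of $\mu$, there exists $\eta'\in\cX'$ mapping to $\eta_E$, giving a local inclusion $A=\cO_{\cX,\eta_E}\hto B=\cO_{\cX',\eta'}$ of local rings with $A$ a DVR (using integral closedness of $\cX$ at codimension-one points of $\cX_0$); then regularity of $\mu^\star(z^{-\la}s)$ at $\eta'$ forces regularity of $z^{-\la}s$ at $\eta_E$, which combined with Hartogs-type extension across codimension one allows $z^{-\la}s$ to extend globally on $\cX$.
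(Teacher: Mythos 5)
Your proposal is correct and follows essentially the same route as the paper: the key point in both is that $\mu_\star\cO_{\cX'}=\cO_\cX$ (which the paper cites from \cite[Lemma~1.12]{trivval} via Zariski's main theorem, and which you justify directly from coherence of $\mu_\star\cO_{\cX'}$ and integral closedness of $\cX$ in its generic fiber), after which the projection formula gives $\Hnot(\cX,m\cL)=\Hnot(\cX',m\mu^\star\cL)$ and hence equality of the Rees filtrations, while the identity $\f_\cL=\f_{\mu^\star\cL}$ follows from~\eqref{equ:FStc} exactly as you compute on a common dominating model. Your fallback argument at the generic points of $\cX_0$ is also consistent with the paper's toolbox (the Hartogs-type extension you invoke is \cite[Lemma~1.23]{trivval}, used in the proof of Proposition~\ref{prop:normtc}), but it is not needed given the first argument.
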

\begin{proof} By Zariski's main theorem, we have $\mu_\star\cO_{\cX'}=\cO_\cX$ (see~\cite[Lemma~1.12]{trivval}). Combined with the projection formula, this shows $\Hnot(\cX,m\cL)=\Hnot(\cX',m\mu^\star\cL)$ for $m$ sufficiently divisible. The first point follows, while the second one holds by~\eqref{equ:FStc}. 
\end{proof}

\begin{proof}[Proof of Proposition~\ref{prop:normtc}] Pick $s\in R_m$ with $m$ sufficiently divisible and $\la\in\Z$. Then $z^{-\la} s$ determines a rational section $\sigma$ of $m\cL$ which is regular outside $\cX_0$, and hence is regular on $\cX$ iff $\ord_E(\sigma)\ge 0$ for each irreducible component $E$ of $\cX$ (see~\cite[Lemma~1.23]{trivval}). Now~\eqref{equ:FStc} implies 
$$
b_E^{-1}\ord_E(\sigma)=-\la+v_E(s)+\f(v_E),
$$
and we infer
$$
\n_\cL(s)=\max\left\{\la\in\Z\mid\la\le\min_E\{v_E(s)+\f(v_E)\}\right\}=\lfloor\IN_\Sigma(\f)(s)\rfloor.
$$
Next, pick $v\in X^\div$, and choose an integrally closed test configuration $\cX'$ that dominates $\cX$ via $\mu\colon\cX'\to\cX$ and such that $v$ lies in the corresponding set $\Sigma'\subset X^\div$. Lemma~\ref{lem:normtc} and the first step of the proof yield 
$$
\n_\cL(s)=\n_{\mu^\star\cL}(s)=\lfloor\IN_{\Sigma'}(\f)(s)\rfloor\le v(s)+\f(v). 
$$
By density of $X^\div$ and continuity of $\f$, we infer 
$$
\n_\cL(s)\le\inf_{v\in X^\div}\{v(s)+\f(v)\}=\IN(\f)(s)\le\IN_\Sigma(\f)(s). 
$$
Since $\n_\cL=\lfloor\IN_\Sigma(\f)\rfloor$, this proves (i), and (ii) follows, \cf Example~\ref{exam:homfloor}. Finally, if $\cX_0$ is reduced, then each $v\in\Sigma$ is integer valued on $k(X)^\times$. Since $\f(v)$ is rational (see~\eqref{equ:FStc}), we get 
$$
\IN_\Sigma(\f)(s)=\min_{v\in\Sigma}\{v(s)+m\f(v)\}\in\Z
$$
for $s\in R_m$ with $m$ sufficiently divisible, and (iii) now follows from (i) and (ii). 
\end{proof}

\begin{rmk}\label{rmk:Li} Proposition~\ref{prop:normtc}~(ii) implies $\n_\cL^\hom\in\cN_\Q^\Sigma$, and hence $\MA(\n_\cL)=\MA(\n^\hom_\cL)\in\cM^\Sigma$ (see Theorem~\ref{thm:MAsig}). The coefficients of this measure admit an explicit description in terms of positive intersection classes on the canonical compactification $\bar\cX\to\P^1$, see~\cite[Theorem~1.1]{Li23}.
\end{rmk}

Consider now an arbitrary test configuration $(\cX,\cL)$, and denote by $(\tcX,\tcL)$ its integral closure, \ie $\tcX\to\cX$ is the integral closure of $\cX$ in the generic fiber of $\cX\to\A^1$, and $\tcL$ is the pullback of $\cL$.

\begin{thm}\label{thm:homint} For any test configuration $(\cX,\cL)$ for $(X,L)$, we have 
$$
\n_{\cL}^\hom=\n_{\tcL}^\hom=\IN(\f_\cL)\quad\text{and}\quad\n_{\tcL}=\lfloor\n_\cL^\hom\rfloor. 
$$
\end{thm}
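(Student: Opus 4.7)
My plan is to reduce all three identities to the single equality $\n_\cL^\hom=\n_{\tcL}^\hom$, and then establish this via finiteness of the integral closure morphism $\nu\colon\tcX\to\cX$. For the reduction, I would first verify $\f_\cL=\f_{\tcL}$. Both sides are PL functions, so it suffices to check equality on the dense subset $X^\div$. Given $v=v_E\in X^\div$, choose an integrally closed model $\cX'$ dominating both $\tcX$ via $\mu'\colon\cX'\to\tcX$ and $\cX_\triv$ via $\rho\colon\cX'\to\cX_\triv$; then the composition $\mu:=\nu\circ\mu'\colon\cX'\to\cX$ is also a morphism, and $\mu^\star\cL=\mu'^\star\nu^\star\cL=\mu'^\star\tcL$, so the divisor $D$ entering~\eqref{equ:FStc} is identical on either side. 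Applying Proposition~\ref{prop:normtc} to the integrally closed $(\tcX,\tcL)$ yields
\begin{equation*}
\n_{\tcL}^\hom=\IN(\f_{\tcL})=\IN(\f_\cL)\quad\text{and}\quad\n_{\tcL}=\lfloor\n_{\tcL}^\hom\rfloor,
\end{equation*}
so both assertions of the theorem will follow from $\n_\cL^\hom=\n_{\tcL}^\hom$.

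The easy direction $\n_\cL^\hom\le\n_{\tcL}^\hom$ is immediate: the pullback injection $\Hnot(\cX,m\cL)\hookrightarrow\Hnot(\tcX,m\tcL)$ (injective since $\nu$ is birational) translates to $\n_\cL\le\n_{\tcL}$, and homogenization is monotone. For the reverse inequality, I would exploit the fact that the Rees algebra $\tilde\cR=\bigoplus_{m,\la}z^{-\la}\{\n_{\tcL}\ge\la\}$ is the integral closure of $\cR=\bigoplus_{m,\la}z^{-\la}\{\n_\cL\ge\la\}$ inside $R\otimes_k k[z,z^{-1}]$, and finiteness of $\nu$ makes $\tilde\cR$ a finite $\cR$-module, generated by finitely many homogeneous elements. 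By Cayley--Hamilton there exists a uniform $d\ge 1$ such that for any $s\in R_m$ and $\la\in\Z$ with $\n_{\tcL}(s)\ge\la$, the integral relation satisfied by $z^{-\la}s\in\tilde\cR$ over $\cR$ unpacks into an identity
\begin{equation*}
s^d=-\sum_{i=1}^d b_i s^{d-i}\quad\text{in }R_{dm},\qquad b_i\in R_{im},\quad\n_\cL(b_i)\ge i\la.
\end{equation*}

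The main obstacle is converting this single algebraic relation into the asymptotic estimate $\n_\cL^\hom(s)\ge\n_{\tcL}(s)$; applying the relation only to $s$ itself is too weak, and one must instead apply it uniformly to every power $s^r$ and pass to the limit. I would argue by contradiction: suppose $\n_\cL^\hom(s)<\la\le\n_{\tcL}(s)$. Since $r^{-1}\n_\cL(s^r)$ increases to $\n_\cL^\hom(s)$ by Fekete's lemma, we have $\n_\cL(s^r)<r\la$ for every $r$. Applying the previous construction to $s^r$ with $r\la$ in place of $\la$ (legitimate since $\n_{\tcL}(s^r)\ge r\n_{\tcL}(s)\ge r\la$ by superadditivity) and combining with superadditivity of $\n_\cL$ yields
\begin{equation*}
\n_\cL(s^{rd})\ge\min_{1\le i\le d}\{ir\la+(d-i)\n_\cL(s^r)\}=r\la+(d-1)\n_\cL(s^r),
\end{equation*}
the minimum being attained at $i=1$ precisely because $\n_\cL(s^r)<r\la$. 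Dividing by $rd$ and letting $r\to\infty$ gives $\n_\cL^\hom(s)\ge\la/d+(d-1)\n_\cL^\hom(s)/d$, i.e.\ $\n_\cL^\hom(s)\ge\la$, contradicting the assumption. This shows $\n_\cL^\hom\ge\n_{\tcL}$; substituting $s^r$ for $s$, using $\n_\cL^\hom(s^r)=r\n_\cL^\hom(s)$, and dividing by $r$ as $r\to\infty$ then upgrades this to $\n_\cL^\hom\ge\n_{\tcL}^\hom$, completing the proof.
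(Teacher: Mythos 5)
Your reduction is exactly the paper's: everything comes down to the sandwich $\n_\cL\le\n_{\tcL}\le\n_\cL^\hom$, the first inequality from the pullback injection and $\f_\cL=\f_{\tcL}$ handling the rest via Proposition~\ref{prop:normtc}. The easy direction is fine. The problem is in your proof of $\n_\cL^\hom\ge\n_{\tcL}$, which hinges on a \emph{uniform} degree $d$ valid for the integral relation of every element $z^{-r\la}s^r$, $r\ge1$. You justify this by asserting that finiteness of $\nu\colon\tcX\to\cX$ makes $\tilde\cR$ a finite $\cR$-module. Finiteness of $\nu$ only gives that $\nu_\star\cO_{\tcX}$ is a coherent $\cO_\cX$-algebra; it does not give module-finiteness of the graded ring of global sections $\tilde\cR=\bigoplus_m\Hnot(\cX,m\cL\otimes\nu_\star\cO_{\tcX})$ over $\cR=\bigoplus_m\Hnot(\cX,m\cL)$. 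For an \emph{ample} test configuration this can be salvaged (after passing to a Veronese, by standard finite generation of section modules of coherent sheaves), but the theorem is stated for arbitrary test configurations, where $\cR$ need not even be a finitely generated $k[z]$-algebra, and your uniformity claim is unsupported. Without a uniform $d$ your limit $r\to\infty$ collapses: with $d=d_r$ the estimate reads $\n_\cL^\hom(s)\ge\la/d_r+(1-1/d_r)\n_\cL^\hom(s)$, which gives nothing if $d_r\to\infty$.

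The detour through powers is also unnecessary, and removing it removes the need for uniformity. Apply the integral relation once, to $\sigma=z^{-\mu}s$ itself with $\mu:=\n_{\tcL}(s)$ and $d$ depending on $s$; extracting the coefficient of $z^{-d\mu}$ gives $s^d=-\sum_{i=1}^d b_i s^{d-i}$ with $\n_\cL(b_i)\ge i\mu$. Now evaluate with the \emph{homogeneous} norm: superadditivity gives
\begin{equation*}
d\,\n_\cL^\hom(s)=\n_\cL^\hom(s^d)\ge\min_{1\le i\le d}\bigl\{i\mu+(d-i)\n_\cL^\hom(s)\bigr\},
\end{equation*}
and if $\n_\cL^\hom(s)<\mu$ then every term on the right strictly exceeds $d\,\n_\cL^\hom(s)$, a contradiction. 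This is the paper's argument (Lemma~\ref{lem:homint}); the homogeneity identity $\n_\cL^\hom(s^d)=d\,\n_\cL^\hom(s)$ does all the asymptotic work that your limit over $r$ was meant to do, with no uniformity required.
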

In other words, integral closure is the round-down of homogenization. 

\begin{lem}\label{lem:homint} We have $\n_\cL\le\n_{\tcL}\le\n^\hom_\cL$.
\end{lem}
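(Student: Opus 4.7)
The first inequality $\n_\cL\le\n_{\tcL}$ is essentially formal: the finite morphism $\mu\colon\tcX\to\cX$ and the identification $\tcL=\mu^\star\cL$ give pullback inclusions $H^0(\cX,m\cL)\subset H^0(\tcX,m\tcL)$ for each $m$ sufficiently divisible. Reading off the filtrations via~\eqref{equ:Reesfilt}, one immediately obtains $\Filt^\la_\cL R_m\subset\Filt^\la_{\tcL} R_m$ for every $\la\in\Z$, and hence $\n_\cL(s)\le\n_{\tcL}(s)$ for all $s$.

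For the second inequality $\n_{\tcL}\le\n^\hom_\cL$, the plan is to exploit that $\mu$ is an isomorphism over $\A^1\smallsetminus\{0\}$, by construction of the integral closure in the generic fiber. Consequently, the coherent cokernel $\cQ:=\mu_\star\cO_{\tcX}/\cO_\cX$ is set-theoretically supported on $\cX_0=\{z=0\}$, so by coherence there exists $N\ge1$ such that $z^N\cdot\cQ=0$, \ie $z^N\mu_\star\cO_{\tcX}\subset\cO_\cX$ as subsheaves of $\mu_\star\cO_{\tcX}$. Tensoring with the locally free sheaf $m\cL$ and taking global sections yields
\[
z^N\cdot H^0(\tcX,m\tcL)\subset H^0(\cX,m\cL)
\]
for every $m$ sufficiently divisible. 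Unwinding this via~\eqref{equ:Reesfilt}: if $s\in R_m$ and $\mu\le\n_{\tcL}(s)$, then $z^{N-\mu}s$ is a regular section of $m\cL$ on $\cX$, so $\n_\cL(s)\ge\n_{\tcL}(s)-N$.

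The crucial point is that $N$ is independent of $m$ and $s$: it depends only on the fixed annihilator ideal sheaf of $\cQ$ in $\cO_\cX$, and tensoring by the line bundle $m\cL$ preserves this annihilator. Applying the previous inequality to $s^d$ and using superadditivity $\n_{\tcL}(s^d)\ge d\,\n_{\tcL}(s)$, we get
\[
d^{-1}\n_\cL(s^d)\ge d^{-1}\bigl(\n_{\tcL}(s^d)-N\bigr)\ge\n_{\tcL}(s)-N/d,
\]
and letting $d\to\infty$ yields $\n^\hom_\cL(s)\ge\n_{\tcL}(s)$ by~\eqref{equ:homog}, completing the proof. The only nontrivial point is the uniformity of $N$ across all twists, but this is immediate from the sheaf-theoretic description of the conductor.
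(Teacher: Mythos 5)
Your proof is correct, and for the second inequality it takes a genuinely different route from the paper's. The paper argues section by section: writing $\mu_0:=\n_{\tcL}(s)$, it uses the fact that $\sigma=z^{-\mu_0}s$ is \emph{integral} over $\cO_\cX$ to produce a monic relation $\sigma^d+\sum_i\sigma_i\sigma^{d-i}=0$ with $\sigma_i\in\Hnot(\cX,im\cL)$, extracts the coefficient of $z^{-d\mu_0}$ to get an algebraic relation $s^d+\sum_i\sigma_{i,i\mu_0}s^{d-i}=0$ in $R$, and then applies the ultrametric inequality for the homogeneous norm $\n_\cL^\hom$ to conclude $\n_\cL^\hom(s)\ge\mu_0$ directly. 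You instead globalize: since $\mu\colon\tcX\to\cX$ is finite and an isomorphism over $\Gm$ (the restriction $\cX|_{\Gm}\simeq X\times\Gm$ has reduced fibers, hence is already integrally closed in the generic fiber), the cokernel $\cQ=\mu_\star\cO_{\tcX}/\cO_\cX$ is coherent and supported on $\cX_0$, so $z^N\cQ=0$ for a single $N$, and the projection formula gives $z^N\Hnot(\tcX,m\tcL)\subset\Hnot(\cX,m\cL)$ uniformly in $m$; Fekete then finishes. Your argument actually yields more than the lemma asks for, namely the uniform bound $\n_\cL\le\n_{\tcL}\le\n_\cL+N$ on each $R_m$, hence $\dd_\infty(\n_\cL,\n_{\tcL})=0$ (a statement in the spirit of Theorem~\ref{thm:homogft}~(i)), whereas the paper's argument is more elementary in that it stays entirely at the level of the graded ring and never invokes coherence of $\mu_\star\cO_{\tcX}$. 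The only cosmetic issue is the clash of notation between the morphism $\mu$ and the integer $\mu\le\n_{\tcL}(s)$; otherwise the steps (finiteness of the integral closure, the annihilator argument, superadditivity of $\n_{\tcL}$, and the passage to the limit via~\eqref{equ:homog}) are all sound.
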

\begin{proof} Pick $s\in R_m\smallsetminus\{0\}$ with $m$ sufficiently divisible. Since $\tcL$ is the pullback of $\cL$ to $\tcX$, $\n_\cL(s)\le\n_{\tcL}(s)=:\mu$ follows directly from~\eqref{equ:Reesfilt}. Since $\sigma:=z^{-\mu} s\in\Hnot(\tcX,m\tcL)$ is integral over $\cO_\cX$, it satisfies $\sigma^d+\sum_{i=1}^d\sigma_i\sigma^{d-i}=0$ for some $d\ge 1$ and $\sigma_i\in\Hnot(\cX,im\cL)$. By~\eqref{equ:Reesexp}, we have a Laurent expansion $\sigma_i=\sum_{\la\in\Z} \sigma_{i,\la}z^{-\la}$ with $\sigma_{i,\la}\in\Hnot(X,mL)$ such that $\n_\cL(\sigma_{i,\la})\ge\la$, and tracing the coefficient of $z^{-d\mu}$ yields $s^d+\sum_{i=1}^d \sigma_{i,i\mu} s^{d-i}=0$. Since $\n^\hom_{\cL}(\sigma_{i,i\mu})\ge\n_{\cL}(\sigma_{i,i\mu})\ge i\mu$, we infer 
$$
d\n^\hom_{\cL}(s)=\n^\hom_{\cL}(s^d)\ge\min_{1\le i\le d}\{i\mu+(d-i)\n^\hom_{\cL}(s)\}; 
$$
hence $\n^\hom_{\cL}(s)\ge\mu=\n_{\tcL}(s)$, and we are done.
\end{proof}

\begin{proof}[Proof of Theorem~\ref{thm:homint}] Lemma~\ref{lem:homint} implies 
$\n_\cL^\hom=\n_{\tcL}^\hom$, which is equal to $\IN(\f_{\tcL})=\IN(\f_\cL)$, by Proposition~\ref{prop:normtc}~(ii) and pullback invariance of $\f_\cL$. The final identity follows from Proposition~\ref{prop:normtc}. 
\end{proof}

As a consequence, we get the following geometric description of homogenization:  

\begin{cor}\label{cor:normtc} For any test configuration $(\cX,\cL)$,    $\n_\cL^\hom$ lies in $\cN_\Q^\div$,   and is equal to $d^{-1}\n_{\tcL_d}$ for any sufficiently divisible $d\in\Z_{\ge 1}$. 
\end{cor}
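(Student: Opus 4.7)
\medskip

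The plan is to chain together Theorem~\ref{thm:homint}, Theorem~\ref{thm:divPL}, and Lemma~\ref{lem:Reesbase}, using rationality to upgrade the round-down identity $\n_{\tcL}=\lfloor\n_\cL^\hom\rfloor$ to an exact equality after a suitable base change.

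First, I would establish that $\n_\cL^\hom\in\cN_\Q^\div$. By Theorem~\ref{thm:homint}, $\n_\cL^\hom=\IN(\f_\cL)$. Since $\f_\cL\in\PL(X)$ (this is recalled just after~\eqref{equ:FStc}), Theorem~\ref{thm:divPL} immediately gives $\n_\cL^\hom\in\cN_\Q^\div$.

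Next, I would exploit rationality to locate a denominator. By Lemma~\ref{lem:ratdivnorm}, we can write $\n_\cL^\hom=\IN_\Sigma(\f)$ for a finite subset $\Sigma\subset X^\div$ and some function $\f\colon\Sigma\to\Q$. For any $s\in R_m\smallsetminus\{0\}$ with $m$ sufficiently divisible we have $\n_\cL^\hom(s)=\min_{v\in\Sigma}\{v(s)+m\f(v)\}$, so the values of $\n_\cL^\hom$ lie in $\tfrac1{d_0}\Z$ for a single $d_0\ge 1$ clearing the denominators of the (finitely many) values of $\f$ on $\Sigma$ together with those of $v$ on any fixed common denominator of $R$. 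In other words, $d_0\n_\cL^\hom$ is $\Z$-valued.

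Finally, for any $d\in\Z_{\ge 1}$ divisible by $d_0$ (and sufficiently divisible so that all objects involved are well defined), Lemma~\ref{lem:Reesbase} gives $\n_{\cL_d}=d\n_\cL$, and homogenization is equivariant under the scaling action of $\R_{>0}$ (Lemma~\ref{lem:homcomp}), so $\n_{\cL_d}^\hom=d\n_\cL^\hom$. Applying Theorem~\ref{thm:homint} to $(\cX_d,\cL_d)$ yields
\[
\n_{\tcL_d}=\lfloor\n_{\cL_d}^\hom\rfloor=\lfloor d\n_\cL^\hom\rfloor=d\n_\cL^\hom,
\]
where the last equality holds because $d\n_\cL^\hom$ is already integer-valued, by the choice of $d$. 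Dividing by $d$ gives the desired identity $\n_\cL^\hom=d^{-1}\n_{\tcL_d}$. The only subtle point is the bookkeeping of denominators in the second step; once that is in place, the identity is a direct consequence of Theorem~\ref{thm:homint} combined with the scaling behavior of $\n_\cL$ under base change.
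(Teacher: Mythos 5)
Your proof is correct, but it takes a genuinely different route from the paper's. The paper's argument for the identity $\n_\cL^\hom=d^{-1}\n_{\tcL_d}$ goes through geometry: it invokes the fact (\cite[Corollary~2.35]{trivval}, a form of semistable reduction) that the central fiber of the integral closure $(\tcX_d,\tcL_d)$ is \emph{reduced} for $d$ sufficiently divisible, whence $\n_{\tcL_d}$ is homogeneous by Proposition~\ref{prop:normtc}~(iii), so that $\n_{\tcL_d}=\n_{\tcL_d}^\hom=\n_{\cL_d}^\hom=d\n_\cL^\hom$; membership in $\cN_\Q^\div$ then falls out of Proposition~\ref{prop:normtc}~(ii). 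You instead bypass that geometric input entirely: you first get $\n_\cL^\hom=\IN(\f_\cL)\in\cN_\Q^\div$ from Theorems~\ref{thm:homint} and~\ref{thm:divPL}, then observe that a rational divisorial norm $\IN_\Sigma(\f)$ with $\Sigma$ finite takes values in $\tfrac1{d_0}\Z$ for a fixed denominator $d_0$ (clearing the denominators of the finitely many $\f(v)$ and of the scaling factors of the $v\in\Sigma$), so that for $d_0\mid d$ the round-down in $\n_{\tcL_d}=\lfloor\n_{\cL_d}^\hom\rfloor=\lfloor d\n_\cL^\hom\rfloor$ is vacuous. This is a clean, purely norm-theoretic substitute for the reducedness statement; what it buys is self-containedness (no appeal to \cite[Corollary~2.35]{trivval}), at the cost of routing through Theorem~\ref{thm:divPL}, whose proof uses flag ideals --- you could make the argument even lighter by extracting the finite set $\Sigma$ and the rationality of $\f_\cL$ on it directly from Proposition~\ref{prop:normtc}~(ii) applied to the integral closure $\tcX$, rather than from Theorem~\ref{thm:divPL}. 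Note also that your argument, read backwards through Proposition~\ref{prop:normtc}~(iii) and the Rees correspondence, recovers (in the ample case) the reducedness of $\tcX_{d,0}$ that the paper takes as input, so the two proofs are in a precise sense equivalent in content but opposite in direction.
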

As above, $(\cX_d,\cL_d)$ denotes the base change of $(\cX,\cL)$ with respect to $z\mapsto z^d$, and $(\tcX_d,\tcL_d)$ is its integral closure. 

\begin{proof} By~\cite[Corollary~2.35]{trivval}, the central fiber of $(\tcX_d,\tcL_d)$ is reduced for $d$  sufficiently divisible. Then $\n^\hom_{\cL_d}=\n^\hom_{\tcL_d}=\n_{\tcL_d}$, by Theorem~\ref{thm:homint} and Proposition~\ref{prop:normtc}~(iii). By Lemma~\ref{lem:Reesbase}, we have, on the other hand, $\n_{\cL_d}=d\n_\cL$, and hence $\n^\hom_{\cL_d}=d\n^\hom_\cL$. Thus $\n_\cL^\hom=d^{-1}\n_{\tcL_d}^\hom$, which lies in $\cN_\Q^\div$, by Proposition~\ref{prop:normtc}~(ii). 
\end{proof}

\begin{rmk} Corollary~\ref{cor:normtc} can be used to provide a more elementary proof of Theorem~\ref{thm:homogft} in the case of rational norms. 
\end{rmk}

Finally, we relate (integrally closed) test configurations and (rational) divisorial norms, as follows: 

\begin{thm}\label{thm:normint} For any $\n\in\cN_\Z$, the following are equivalent:
\begin{itemize}
\item[(i)] $\n=\n_\cL$ is associated to some integrally closed test configuration $(\cX,\cL)$ for $(X,L)$; 
\item[(ii)] $\n=\lfloor\n'\rfloor$ for some $\n'\in\cN_\Q^\div$, which is then uniquely determined as $\n'=\n^\hom$.
\end{itemize}
\end{thm}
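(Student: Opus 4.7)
\medskip

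\noindent\emph{Proof plan.} The implication (i)$\Rightarrow$(ii) follows almost directly from the results already proved in the appendix. Given $\chi = \chi_\cL$ with $(\cX,\cL)$ integrally closed, Theorem~\ref{thm:homint} (applied with $\tcL = \cL$) yields $\chi_\cL = \lfloor \chi_\cL^\hom \rfloor$, and Proposition~\ref{prop:normtc}~(ii) gives $\chi_\cL^\hom = \IN(\f_\cL)$. Since $\f_\cL \in \PL(X)$, Theorem~\ref{thm:divPL} shows that $\IN(\f_\cL) \in \cN_\Q^\div$. So taking $\chi' := \chi_\cL^\hom$ works.

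\medskip

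\noindent For the uniqueness assertion in (ii), suppose $\chi = \lfloor \chi' \rfloor$ with $\chi' \in \cN_\Q^\div \subset \cN_\R^\hom$. By Example~\ref{exam:homfloor}, homogenization is insensitive to round-down, so $\chi^\hom = (\lfloor \chi' \rfloor)^\hom = \chi'^\hom$; since $\chi'$ is already homogeneous, $\chi'^\hom = \chi'$, and hence $\chi' = \chi^\hom$ is forced.

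\medskip

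\noindent For (ii)$\Rightarrow$(i), the strategy is to realize the PL function attached to $\chi^\hom$ geometrically. By Theorem~\ref{thm:divPL}, $\chi^\hom \in \cN_\Q^\div$ can be written as $\chi^\hom = \IN(\f)$ for some $\f \in \PL(X)$. By~\cite[Theorem~2.31]{trivval} (invoked in the correspondence between PL functions and test configurations recalled in~\S\ref{sec:tc1}), there exists a test configuration $(\cX,\cL)$ for $(X,L)$ such that $\f_\cL = \f$. Passing to its integral closure $(\tcX,\tcL)$ and using pullback invariance of the PL function (\cf Lemma~\ref{lem:normtc}), we may assume $(\cX,\cL)$ is integrally closed with $\f_\cL = \f$. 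Then Proposition~\ref{prop:normtc}~(ii) yields $\chi_\cL^\hom = \IN(\f_\cL) = \IN(\f) = \chi^\hom$, and Proposition~\ref{prop:normtc}~(i) (or equivalently Theorem~\ref{thm:homint} in the integrally closed case) gives $\chi_\cL = \lfloor \chi_\cL^\hom \rfloor = \lfloor \chi^\hom \rfloor = \chi$, as required.

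\medskip

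\noindent The only mildly delicate point is the existence of an integrally closed test configuration realizing a prescribed $\f \in \PL(X)$; this is handled by the appeal to~\cite[Theorem~2.31]{trivval} together with the pullback invariance $\f_{\tcL} = \f_\cL$. Everything else is a direct assembly of Theorem~\ref{thm:homint}, Proposition~\ref{prop:normtc}, Theorem~\ref{thm:divPL}, and Example~\ref{exam:homfloor}.
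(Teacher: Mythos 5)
Your proposal is correct and follows essentially the same route as the paper: both directions are assembled from Proposition~\ref{prop:normtc}, Theorem~\ref{thm:divPL}, Example~\ref{exam:homfloor}, and the realization of PL functions by integrally closed test configurations. The only difference is that you spell out the passage to the integral closure and the pullback invariance $\f_{\tcL}=\f_\cL$, which the paper leaves implicit in the phrase ``can in turn be written $\f=\f_\cL$ for some integrally closed test configuration.''
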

\begin{proof} That (i) implies (ii) follows from Proposition~\ref{prop:normtc}. Conversely, pick $\n'\in\cN_\Q^\div$, and set $\n:=\lfloor\n'\rfloor$, so that $\n'=\n^\hom$ (see Example~\ref{exam:homfloor}). By Theorem~\ref{thm:divPL}, we have $\n'=\IN(\f)$ for some $\f\in\PL(X)$, which can in turn be written $\f=\f_\cL$ for some integrally closed test configuration $(\cX,\cL)$. By Proposition~\ref{prop:normtc}, we then have $\n_\cL=\lfloor\n'\rfloor=\n$, which shows (ii)$\Rightarrow$(i). 
\end{proof}

%
%
\subsection{The Rees correspondence}

A test configuration $(\cX,\cL)$ is \emph{ample} if $\cL$ is ample. For $d$ sufficiently divisible, the graded $k[z]$-algebra 
$R(\cX,d\cL)=\bigoplus_{m\in\N}\Hnot(\cX,md\cL)$ is then generated in degree $1$, which shows that $\n=\n_\cL\in\cT_\Z$ is of finite type. Note further that 
\begin{equation}\label{equ:centralproj}
R(\cX_0,d\cL)\simeq\gr_\n R^{(d)}
\end{equation}
for $d$ sufficiently divisible. Thus $(\cX_0,\cL_0)$ can be identified with the central fiber of $\n$ (see~\eqref{equ:centralfiber}). 

Denoting by $\cT$ the set of ample test configurations, $(\cX,\cL)\mapsto\f_\cL$ yields a map $\cT\to\cT_\Z$. A map in the reverse direction is indeed provided by the \emph{Rees construction}. Given $\n\in\cT_\Z$, pick $d\ge \ge1$ such that $\n$ is represented by an integral norm on $R^{(d)}=R(X,dL)$ generated in degree $1$, with associated filtration $(\Filt^\la R^{(d)})_{\la\in\Z}$. The \emph{Rees algebra}
$$ 
\cR:=\bigoplus_{\la\in\Z}z^{-\la}\Filt^{\la}R^{(d)}
$$
is a graded $k[z]$-algebra (with respect to the $\N$-grading inherited from that of $R^{(d)}$), generated in degree $1$, and we set $\cX:=\Proj_{k[z]}\cR$ and $\cL=d^{-1}\cO_{\cX}(1)$. This yields a map $\cT_\Z\to\cT$ which is an inverse of the previous one (see~\cite[Proposition 2.15]{BHJ1}). We shall refer to the 1--1 map 
\begin{equation}\label{equ:Rees}
\cT\simeq\cT_\Z
\end{equation}
so defined as the \emph{Rees correspondence}.  

By~\eqref{equ:centralproj}, the central fiber $\cX_0$ of an ample test configuration $(\cX,\cL)$ is reduced iff $\gr_{\n_\cL} R^{(d)}$ is reduced for $d$ sufficiently divisible, which holds iff $\n_\cL$ is homogeneous    (\ie the converse of Proposition~\ref{prop:normtc}~(iii) holds for ample test configurations).    The Rees correspondence thus induces a bijection between $\cT_\Z^\hom$ and the set of ample test configurations with reduced central fiber. 

Denote by $\cT^\inte\subset\cT$ the set of ample integrally closed test configurations, and by $\cT^\inte_\Z\subset\cT_\Z$ its image under the Rees correspondence. Any test configuration with reduced central fiber is integrally closed, and hence 
$$
\cT_\Z^\hom\subset\cT_\Z^\inte\subset\cT_\Z.
$$

\begin{thm}\label{thm:inthom} Homogenization induces a bijection $\cT_\Z^\inte\simto\cT_\Q^\hom$, with inverse provided by round-down. 
\end{thm}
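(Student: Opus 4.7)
The plan is to verify that the two maps $F \colon \chi \mapsto \chi^\hom$ and $G \colon \chi' \mapsto \lfloor\chi'\rfloor$ give mutually inverse bijections between $\cT_\Z^\inte$ and $\cT_\Q^\hom$. First I would check that $F$ is well-defined: any $\chi \in \cT_\Z^\inte \subset \cT_\Z$ satisfies $\chi \in \cT_\La$ for $\La = \Z$, so Lemma~\ref{lem:homogft}(iii) gives $\chi^\hom \in \cT_{\Q\La}^\hom = \cT_\Q^\hom$.

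The main obstacle will be showing that $G$ is well-defined, i.e., that $\lfloor\chi'\rfloor$ actually arises from an \emph{ample} integrally closed test configuration (merely invoking Theorem~\ref{thm:normint} only produces an integrally closed, but not necessarily ample, model). To upgrade to amplitude I would argue as follows. Given $\chi' \in \cT_\Q^\hom$, Proposition~\ref{prop:FSft} ensures that $\f := \FS(\chi') \in \cH_\Q$ and also provides a lift $\chi \in \cT_\Z$ with $\FS(\chi) = \f$. By the Rees correspondence~\eqref{equ:Rees}, $\chi = \chi_\cL$ for some ample test configuration $(\cX,\cL)$, whose integral closure $(\tcX, \tcL)$ is still ample since $\tcX \to \cX$ is finite and the pullback of an ample bundle under a finite morphism is ample. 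Now Theorem~\ref{thm:INFS} together with the homogeneity of $\chi'$ yields $\chi' = \IN(\FS(\chi')) = \IN(\f)$, which by Theorem~\ref{thm:homint} coincides with $\chi_\cL^\hom$. Applying the second statement in Theorem~\ref{thm:homint}, I obtain $\chi_{\tcL} = \lfloor\chi_\cL^\hom\rfloor = \lfloor\chi'\rfloor$; thus $\lfloor\chi'\rfloor \in \cT_\Z^\inte$.

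Finally I would establish that $F$ and $G$ are mutually inverse. The composition $F \circ G$ is immediate from Example~\ref{exam:homfloor} and homogeneity of $\chi'$: $(\lfloor\chi'\rfloor)^\hom = \chi'^\hom = \chi'$. For $G \circ F$, given $\chi = \chi_\cL \in \cT_\Z^\inte$ with $(\cX,\cL)$ already integrally closed (hence $\tcL = \cL$), Theorem~\ref{thm:homint} directly gives $\lfloor\chi^\hom\rfloor = \lfloor\chi_\cL^\hom\rfloor = \chi_{\tcL} = \chi_\cL = \chi$. Putting these three steps together completes the proof.
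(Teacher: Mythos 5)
Your proposal is correct and follows essentially the same route as the paper: homogenization lands in $\cT_\Q^\hom$ by Lemma~\ref{lem:homogft}, the surjectivity is obtained by lifting $\f=\FS(\chi')$ to an ample test configuration via the Rees correspondence, passing to the (still ample) integral closure, and invoking Theorem~\ref{thm:homint}, and the two maps are checked to be mutually inverse via $\n_{\tcL}=\lfloor\n_\cL^\hom\rfloor$ and Example~\ref{exam:homfloor}. The only cosmetic point is that the identification $\IN(\f)=\chi_\cL^\hom$ is most directly Theorem~\ref{thm:INFS} applied to $\chi_\cL$ (or requires Lemma~\ref{lem:FStest} to see $\f=\f_\cL$ before citing Theorem~\ref{thm:homint}), but this does not affect the argument.
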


\begin{proof} Pick $\n\in\cT_\Z^\inte$. Then $\n^\hom\in\cT_\Q^\hom$ (see Lemma~\ref{lem:homogft}, or Corollary~\ref{cor:normtc}), and $\n=\lfloor\n^\hom\rfloor$ (see Proposition~\ref{prop:normtc}). Conversely, pick $\n'\in\cT_\Q^\hom$. By Corollary~\ref{cor:FSbij}, $\n'=\n^\hom$ for some $\n\in\cT_\Z$, \ie $\n=\n_\cL$ for some ample test configuration $(\cX,\cL)$. After passing to the integral closure $(\tcX,\tcL)$ (which remains ample, since $\tcX\to\cX$ is finite), we may further assume that $(\cX,\cL)$ is integrally closed (see Theorem~\ref{thm:homint}), and hence $\n\in\cT_\Z^\inte$. Then $\n=\lfloor\n'\rfloor$, by Proposition~\ref{prop:normtc} again, and $\n'=\n^\hom$, which completes the proof. 
\end{proof}

Finally, we note:

\begin{lem}\label{lem:FStest}
 For any ample test configuration $(\cX,\cL)$ for $(X,L)$ we have 
 $$
 \f_\cL=\FS(\n_\cL)=\FS(\n^\hom_\cL). 
 $$
\end{lem}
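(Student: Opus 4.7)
\smallskip

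The second identity $\FS(\n_\cL)=\FS(\n_\cL^\hom)$ is immediate from Proposition~\ref{prop:FSH}, so the real content is the first equality $\f_\cL=\FS(\n_\cL)$. The plan is to first reduce to an envelope identity via the general operator machinery, and then show directly that $\f_\cL$ is already a Fubini--Study function when $\cL$ is ample, so no regularization takes place. Concretely, Theorem~\ref{thm:homint} gives $\n_\cL^\hom=\IN(\f_\cL)$, and combining this with Proposition~\ref{prop:FSIN} (i.e.~$\FS\circ\IN=\qq$) and Proposition~\ref{prop:FSH} yields
\[
  \FS(\n_\cL)=\FS(\n_\cL^\hom)=\FS(\IN(\f_\cL))=\qq(\f_\cL),
\]
so it suffices to prove $\qq(\f_\cL)=\f_\cL$, which will follow once we exhibit $\f_\cL\in\cH_\Q$.

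To that end, I pick $m$ sufficiently divisible that $m\cL$ is a very ample line bundle on $\cX$ and $\n_\cL$ is represented by an integral norm on $R_m$ (possible since $\n_\cL\in\cT_\Z$ by the Rees correspondence). A $\n_\cL$-orthogonal basis $(s_j)$ of $R_m$ then gives, via the Rees decomposition~\eqref{equ:Reesexp}, a $k[z]$-basis $(\sigma_j:=z^{-\la_j}s_j)$ of $\Hnot(\cX,m\cL)$ with $\la_j:=\n_\cL(s_j)\in\Z$. Ampleness of $m\cL$ means the $\sigma_j$ globally generate $m\cL$ on $\cX$; restricting to the fiber over $z=1$, the $s_j$ have no common zeroes on $X$, so $\p:=m^{-1}\max_j\{\log|s_j|+\la_j\}$ is a well-defined element of $\cH_\Q$. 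By Lemma~\ref{lem:FSft}, the same formula computes $\FS(\n_\cL)$ (the canonical approximants of $\n_\cL$ stabilize at $\n_\cL$ itself since $\n_\cL\in\cT_\Z$), so $\p=\FS(\n_\cL)$.

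It remains to identify $\p$ with $\f_\cL$; by continuity and density of $X^\div$, it is enough to check this on divisorial valuations. Pick $v=v_E\in X^\div$ coming from a prime divisor $E\subset\cX'_0$ on an integrally closed model $\cX'$ dominating both $\cX$ and $\cX_\triv$ via $\mu\colon\cX'\to\cX$ and $\rho\colon\cX'\to\cX_\triv$, and write $D:=\mu^\star\cL-\rho^\star\cL_\triv$, so that $\f_\cL(v_E)=b_E^{-1}\ord_E(D)$ by~\eqref{equ:FStc}. Under the canonical isomorphism $\mu^\star(m\cL)\simeq\rho^\star p_1^\star(mL)\otimes\cO(mD)$, the section $\mu^\star\sigma_j=\mu^\star(z^{-\la_j}s_j)$ decomposes as $z^{-\la_j}\cdot\rho^\star p_1^\star s_j\cdot s_{mD}$, and using $\ord_E(z)=b_E$ together with $\ord_E(\rho^\star p_1^\star s_j)=b_E\,v_E(s_j)$ yields
\[
  \ord_E(\mu^\star\sigma_j)=b_E\bigl(v_E(s_j)-\la_j\bigr)+m\ord_E(D).
\]
Since the $\mu^\star\sigma_j$ globally generate $\mu^\star(m\cL)$, their minimum vanishing order along $E$ is $0$; rearranging gives $m\f_\cL(v_E)=\max_j\{\la_j-v_E(s_j)\}=m\p(v_E)$, which completes the argument.

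The only real obstacle is the local bookkeeping in the last paragraph: getting the sign of the $z^{-\la_j}$ contribution correct and checking that the identification $\ord_E(\rho^\star p_1^\star s_j)=b_E\,v_E(s_j)$ really matches the normalization of $v_E$ as a valuation on $k(X)$ (as opposed to on $k(\cX)$). Everything else is a straightforward assembly of Theorem~\ref{thm:homint}, Propositions~\ref{prop:FSH} and~\ref{prop:FSIN}, and Lemma~\ref{lem:FSft}.
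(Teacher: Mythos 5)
Your proof is correct, and its first half is exactly the paper's argument: the second equality from Proposition~\ref{prop:FSH}, then $\n_\cL^\hom=\IN(\f_\cL)$ from Theorem~\ref{thm:homint} and $\FS(\IN(\f_\cL))=\qq(\f_\cL)$ from Proposition~\ref{prop:FSIN}, reducing everything to $\qq(\f_\cL)=\f_\cL$. Where you diverge is in how this last point is handled. The paper simply invokes the known fact that $\f_\cL\in\cH_\Q$ for an \emph{ample} test configuration (a standard consequence of the Rees correspondence, taken from~\cite{BHJ1,trivval}), so that $\qq$ acts as the identity. You instead prove this from scratch: you identify $\FS(\n_\cL)$ with the explicit Fubini--Study function $\p=m^{-1}\max_j\{\log|s_j|+\la_j\}$ built from an orthogonal basis, and then verify $\p=\f_\cL$ on $X^\div$ by the order-of-vanishing computation $\ord_E(\mu^\star\sigma_j)=b_E(v_E(s_j)-\la_j)+m\ord_E(D)$ together with global generation of $m\cL$. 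That computation is correct (the signs and the normalization $\ord_E|_{k(X)}=b_E v_E$ both check out), and it is essentially the classical proof that $\f_\cL\in\cH_\Q$ in the ample case. One structural remark: once your last paragraph establishes $\p=\f_\cL$ directly and $\p=\FS(\n_\cL)$ via Lemma~\ref{lem:FSft}, the first equality of the lemma is already proved, so the detour through $\IN$, $\qq$ and Theorem~\ref{thm:homint} becomes logically superfluous --- you could delete it and shorten the argument, or conversely keep the paper's route and replace your computation by a citation.
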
 

\begin{proof}    The second equality follows from Proposition~\ref{prop:FSH}. Since $\n_\cL^\hom=\IN(\f_\cL)$ (see Theorem~\ref{thm:homint}) and $\f_\cL\in\cH_\Q$, Proposition~\ref{prop:FSIN} further yields $\FS(\n_\cL^\hom)=\qq(\f_\cL)=\f_\cL$, which completes the proof.   
\end{proof}

Combining Theorem~\ref{thm:inthom} with the bijection $\FS\colon\cT_\Q^\hom\simto\cH_\Q$, we thus recover~\cite[Corollary~2.32]{trivval}: 

\begin{cor} The map $(\cX,\cL)\mapsto\f_\cL$ restricts to a bijection $\cT^\inte\simto\cH_\Q$. 
\end{cor}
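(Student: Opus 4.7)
\smallskip

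The plan is to obtain this as an immediate chain of the bijections already established earlier in the appendix, with Lemma~\ref{lem:FStest} identifying the composition.

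First, I would note that the Rees correspondence~\eqref{equ:Rees} $\cT \simeq \cT_\Z$, $(\cX,\cL)\mapsto\n_\cL$, restricts to a bijection $\cT^\inte \simeq \cT_\Z^\inte$ between the set of ample integrally closed test configurations and its image (which is the definition of $\cT_\Z^\inte$). Next, Theorem~\ref{thm:inthom} furnishes a bijection $\cT_\Z^\inte \simto \cT_\Q^\hom$ given by homogenization $\n \mapsto \n^\hom$ (with inverse $\n' \mapsto \lfloor \n' \rfloor$). Finally, Corollary~\ref{cor:FSbij} provides the isomorphism $\FS\colon \cT_\Q^\hom \simto \cH_\Q$ (with inverse $\IN$). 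Composing these three bijections yields a bijection $\cT^\inte \simto \cH_\Q$ sending $(\cX,\cL)$ to $\FS(\n_\cL^\hom)$.

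It remains to observe that this composition coincides with $(\cX,\cL)\mapsto\f_\cL$. But this is exactly the content of Lemma~\ref{lem:FStest}, which asserts $\f_\cL = \FS(\n_\cL) = \FS(\n_\cL^\hom)$ for any ample test configuration. Thus the composed bijection agrees with $(\cX,\cL)\mapsto\f_\cL$, proving the corollary.

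There is no substantive obstacle here since all the ingredients are already in place; the only verification needed is that the three bijections compose to the map $(\cX,\cL)\mapsto\f_\cL$, and this is immediate from Lemma~\ref{lem:FStest}. The proof is essentially a one-line diagram chase.
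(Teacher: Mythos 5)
Your proposal is correct and matches the paper's own argument: the corollary is stated immediately after Theorem~\ref{thm:inthom} and Lemma~\ref{lem:FStest} precisely as the composition of the Rees correspondence, the homogenization bijection $\cT_\Z^\inte\simto\cT_\Q^\hom$, and $\FS\colon\cT_\Q^\hom\simto\cH_\Q$, with Lemma~\ref{lem:FStest} identifying the composite with $(\cX,\cL)\mapsto\f_\cL$. Nothing is missing.
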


%
%
\subsection{The case of higher rank}\label{sec:Inoue}
Following~\cite[\S 2.2]{HL} and~\cite[\S2.2]{Ino}, we briefly discuss a version of the Rees correspondence for $\R$-test configurations. 

\begin{defi} For any $r\in\N$, we define a \emph{rank $r$ test configuration $(\cX,\cL,\xi)$ for $(X,L)$} as the following data:
\begin{itemize}
\item a flat projective scheme morphism $\pi\colon\cX\to\A^r$; 
\item a $\Q$-line bundle $\cL$ on $\cX$;
\item a $\Gm^r$-action on $(\cX,\cL)$ that makes $\pi$ equivariant (with respect to the standard action on $\A^r$); 
\item a $\Gm^r$-equivariant isomorphism $(\cX,\cL)|_{\Gm^r}\simeq (X,L)\times\Gm^r$; 
\item a vector $\xi\in\R_+^r$ with $\Q$-linearly independent components.
\end{itemize}
\end{defi}
A usual test configuration as in~\S\ref{sec:tc1} is thus a rank $1$ test configuration, up to the scaling factor $\xi\in\R_{>0}$. 

Denote by $z_1,\dots,z_r$ the coordinates on $\A^r=\Spec k[z_1,\dots,z_r]$ and $\Gm^r=\Spec k[z_1^\pm,\dots,z_r^\pm]$. The above data yields an embedding 
$$
\Hnot(\cX,m\cL)\hto R_m[z_1^\pm,\dots,z_r^\pm]
$$
for $m$ sufficiently divisible, and we define a norm $\n_{\cL,\xi}\in\cN_\R$ by setting 
$$
\n_{\cL,\xi}(s)=\max\left\{\langle\xi,\a\rangle\mid \a\in\Z^r,\,z^{-\a} s\in\Hnot(\cX,m\cL)\right\}
$$
for $s\in R_m$, where $z^\a:=\prod_i z_i^{\a_i}$. Note that $\n_{\cL,\xi}\in\cN_\La$ with $\La:=\sum_i\Z\xi_i\simeq\Z^r$. 

\begin{prop} For any rank $r$ test configuration $(\cX,\cL,\xi)$ with $\cL$ relatively ample, the associated norm $\n_{\cL,\xi}$ is of finite type; this norm is further of rank $r$, and its central fiber can be identified with the fiber $(\cX_0,\cL_0)$ of $\pi$ over $0\in \A^r$. Conversely, any $\R$-test configuration $\n\in\cT_\R$ arises in this way. 
\end{prop}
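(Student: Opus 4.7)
The strategy is to extend the rank-one Rees correspondence of Appendix~\ref{sec:tc} (in particular Theorem~\ref{thm:inthom} and the discussion around the Rees map~\eqref{equ:Rees}) to higher rank, following~\cite{HL,Ino}.

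For the forward direction, the $\Gm^r$-action on $(\cX,\cL)$ yields a $\Z^r$-weight decomposition
$$
\Hnot(\cX,m\cL)=\bigoplus_{\a\in\Z^r}z^{-\a}\Filt^\a R_m,\qquad \Filt^\a R_m:=\{s\in R_m\mid z^{-\a}s\in\Hnot(\cX,m\cL)\},
$$
and by construction $\chi_{\cL,\xi}$ is the pushforward of the $\Z^r$-filtration $(\Filt^\a R_m)$ under the $\R$-linear map $\a\mapsto\langle\xi,\a\rangle$. Relative ampleness of $\cL$, combined with Serre vanishing along $\pi$, implies that for $d$ sufficiently divisible the $k[z_1,\dots,z_r]$-algebra $\bigoplus_m\Hnot(\cX,md\cL)$ is generated in degree one; base change to the origin $0\in\A^r$ identifies $\gr_{\chi_{\cL,\xi}}R^{(d)}$ with the multigraded section ring $R(\cX_0,d\cL_0)$, which is finitely generated since $\cL_0$ is ample. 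Hence $\chi_{\cL,\xi}\in\cT_\R$ with central fiber $(\cX_0,\cL_0)$ in the sense of~\eqref{equ:centralfiber}, and the $\Q$-linear independence of the $\xi_i$ ensures the rank is exactly $r$ in the sense of Remark~\ref{rmk:rank}.

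For the converse, let $\chi\in\cT_\R$ have rank $r$. By Remark~\ref{rmk:rank} and Lemma~\ref{lem:nft}, there is a free subgroup $\La\subset\R$ of rank $r$ with $\chi\in\cT_\La$, represented by a norm on $R^{(d)}$ generated in degree one for $d$ sufficiently divisible. Picking a $\Z$-basis $(\xi_i)$ of $\La$ (automatically $\Q$-linearly independent in $\R$) and a $\chi$-orthogonal basis $(s_j)_{1\le j\le N}$ of $R_d$, we have $\chi(s_j)=\sum_ia_{ji}\xi_i$ with $a_{ji}\in\Z$. Equip $\P^{N-1}$ with the $\Gm^r$-action of weight $(a_{j1},\dots,a_{jr})$ on the $j$-th homogeneous coordinate and embed $X\hookrightarrow\P^{N-1}$ via $(s_j)$ as in Example~\ref{exam:HL}. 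Let $\cX\subset\A^r\times\P^{N-1}$ be the scheme-theoretic closure of the orbit $\Gm^r\cdot X$ under the diagonal action (standard on $\A^r$, as above on $\P^{N-1}$) and set $\cL:=d^{-1}\cO_{\P^{N-1}}(1)|_\cX$; this yields the candidate rank $r$ test configuration $(\cX,\cL,\xi)$, with $\cL$ relatively ample by construction.

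The main obstacle is to verify that this geometric construction recovers $\chi$ (that is, $\chi_{\cL,\xi}=\chi$) and that $\pi\colon\cX\to\A^r$ is flat with central fiber $\Proj\gr_\chi R^{(d)}$. The key step is to identify the multihomogeneous coordinate ring of $\cX$ with the multi-Rees algebra
$$
\cR:=\bigoplus_{(m,\a)\in\N\times\Z^r}z^{-\a}\Filt^\a R_{md},\qquad \Filt^\a R_{md}:=\{s\in R_{md}\mid\chi(s)\ge\langle\xi,\a\rangle\}.
$$
Finite generation of $\cR$ over $k[z_1,\dots,z_r]$ follows from Lemma~\ref{lem:nft}~(ii) applied to $\chi$; flatness of $\pi$ reduces to torsion-freeness of $\cR$ over $k[z_1,\dots,z_r]$, which holds by the exhaustive and separated nature of the $\La$-filtration; and the identification of the central fiber with $\Proj\gr_\chi R^{(d)}$ (and thus of $\chi_{\cL,\xi}$ with $\chi$) is obtained by specializing $z_1=\dots=z_r=0$ coordinate by coordinate, mirroring the rank-one computation~\eqref{equ:centralproj}. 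The delicate combinatorial point is matching the $\Z^r$-grading on $\cR$ coming from the orbit closure description with the one coming from the $\La$-filtration via $\La\simeq\Z^r$, which ultimately reduces to the equality $\chi(s_j)=\sum_ia_{ji}\xi_i$ built into the construction.
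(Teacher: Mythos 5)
Your forward direction is essentially the paper's argument (surjectivity of $R(\cX,d\cL)\to R(\cX_0,d\cL_0)$ for $d$ sufficiently divisible, identification of $\gr_{\n}R^{(d)}$ with $R(\cX_0,d\cL_0)$ as $\N\times\Z^r$-graded algebras, with the $\Q$-linear independence of the $\xi_i$ guaranteeing that the rank is exactly $r$), and it is fine. The converse, however, has two genuine gaps, both concentrated in the sentence where you dispose of flatness and of the central fiber.

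First, for $r\ge 2$ flatness over $k[z_1,\dots,z_r]$ does \emph{not} reduce to torsion-freeness: torsion-free implies flat only over a Dedekind base (this is exactly why the rank-one Rees construction is so clean), and already the ideal $(z_1,z_2)\subset k[z_1,z_2]$ is torsion-free but not flat. Concretely, the scheme-theoretic closure of the orbit $\Gm^r\cdot X$ in $\A^r\times\P^{N-1}$ need not be flat over $\A^r$: the limit of $X$ along a one-parameter subgroup through $0$ depends on the direction of approach, so the fiber of the closure over $0$ can be too large and the Hilbert polynomial can jump. Second, the fiber over $0$ of your multi-Rees algebra $\cR$ is, in degree $(m,\a)$, the quotient $\Filt^\a R_{md}/\sum_i\Filt^{\a+e_i}R_{md}$, and $\sum_i\Filt^{\a+e_i}R_{md}=\Filt^{\langle\xi,\a\rangle+\min_i\xi_i}R_{md}$ is in general strictly smaller than $\Filt^{>\langle\xi,\a\rangle}R_{md}$: since $\La=\sum_i\Z\xi_i$ is dense in $\R$ for $r\ge2$, the filtration can jump at values $\langle\xi,\b\rangle$ with $\b\in\Z^r$ lying strictly between $\langle\xi,\a\rangle$ and $\langle\xi,\a\rangle+\min_i\xi_i$ without $\b$ being any $\a+e_i$ (\eg $\xi=(1,\sqrt2)$, $\a=0$, $\b=(3,-2)$). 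So ``specializing coordinate by coordinate'' does not produce $\Proj\gr_\n R^{(d)}$, and $\n_{\cL,\xi}=\n$ does not follow. The paper's proof avoids both problems by a different route: it uses the embedding of Example~\ref{exam:HL} only to produce a $T$-equivariant map $T\to\mathrm{Hilb}$, takes a regular top-dimensional cone $\sigma\subset N_\R$ containing $\xi$ in its interior, and---after subdividing $\sigma$ by toric resolution so that the rational map $B\dashrightarrow\mathrm{Hilb}$ from the associated affine toric variety $B\simeq\A^r$ becomes a morphism---pulls back the universal family. Flatness is then automatic from the Hilbert scheme, and the freedom to refine the cone (equivalently, to change the identification $B\simeq\A^r$) is precisely what your fixed choice of $\Z$-basis of $\La$ forecloses. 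To repair your argument you would have to build this cone subdivision into the choice of coordinates on $\A^r$, at which point you are reproducing the paper's proof.
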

\begin{proof} We sketch the argument, and refer to~\cite[Proposition~2.20]{Ino} for details. Assume $\cL$ is relatively ample, and set $\n=\n_{\cL,\xi}$. The restriction map $R(\cX,d\cL)\to R(\cX_0,d\cL_0)$ is surjective for $d$ sufficiently divisible, and one checks that it induces an isomorphism $\gr_\n R^{(d)}\simeq R(\cX_0,d\cL_0)$ as $\N\times\Z^r$-graded algebras. The rest easily follows. 

Conversely, pick $\n\in\cT_\R$, of rank $r$. As in Example~\ref{exam:HL}, one can find an embedding $X\hto\P^N$ such that $\cO(1)|_X=dL$, an action of a torus $T\simeq\Gm^r$ on $(\P^N,\cO(1))$ and $\xi\in N_\R\simeq\R^r$, such that the induced norm on $R(\P^N,\cO(1))$ restricts to $\n$. Acting on $X$ defines a $T$-equivariant morphism $T\to\mathrm{Hilb}$ to the Hilbert scheme of $\P^N$. Pick a regular top-dimensional cone $\sigma\subset N_\R$ that contains $\xi$ in its interior, and denote by $B\simeq\A^r$ the corresponding toric affine variety. After passing to a finer cone, one may assume, by toric resolution of singularities, that the corresponding $T$-equivariant rational map $B\dashrightarrow\mathrm{Hilb}$ is a morphism, and pulling back the universal family yields the desired polarized family $(\cX,\cL)$. 
\end{proof}

%
%
%
\section{The toric case}\label{sec:toric}

We give a brief account of how some of the main results in the paper specialize to the toric setting~\cite{FultonToric,BPS}. See also Appendix~B in~\cite{trivval}.

Consider an algebraic torus $T\simeq\Gm^n$, with associated dual lattices $M:=\Hom(T,\Gm)$ and $N:=\Hom(\Gm,T)$. We have a canonical embedding $M\hookrightarrow k(T)^\times$ given by $\a\mapsto z^\a$ onto the set $T$-invariant functions, and a dual canonical embedding $N_\R\hookrightarrow T^\val$ given by $\xi\mapsto v_\xi$ onto the set of $T(k)$-invariant valuations, such that  $v_\xi(z^\a)=\langle \xi,\a\rangle$ for all $\xi\in N_\R$ and $\a\in M\hookrightarrow k(T)^\times$.

A polarized toric variety $(X,L)$ is determined by a rational polytope $P\subset M_\R$, such that, for each $m$ sufficiently divisible, the set of weights $\a\in M$ of the $T(k)$-module $R_m=\Hnot(X,mL)$ coincides with $mP\cap M$, each with multiplicity $1$. Denoting by $\hat P:=\R_{\ge 0}(\{1\}\times P)\subset\R\times M_\R$ the (rational polyhedral) cone over $P$, this yields, for $d$ sufficiently divisible, a 1--1 correspondence between:
\begin{itemize}
\item[(a)] the set of toric (\ie $T(k)$-invariant) norms $\n$ on $R^{(d)}=R(X,dL)$ and superadditive functions $h\colon\Ga^{(d)}\to\R$ on the semigroup $\Ga^{(d)}:=(d\N\times M)\cap\hat P$ such that $h(m,\a)=O(m)$; 
\item[(b)] the subset of toric homogeneous norms $\n$ and concave, bounded functions $g\colon P\to\R$, the corresponding superadditive function on $\Ga^{(d)}$ being $h(m,\a)= m g(m^{-1}\a)$.
\end{itemize}
\begin{exam} Each $\xi\in N_\R$ determines a toric homogeneous norm $\n_{v_\xi}$, with associated function $g(\a)=\langle\xi,\a\rangle+c$ for $c\in\R$ such that $\inf_P g=0$, \ie $c=-\inf_{\a\in P}\langle\xi,\a\rangle$. 
\end{exam}
A function $g$ as in (b) above is automatically lsc on $P$ (see~\cite{GKR}), but might be discontinuous at some boundary points. Denote by $g^\vee\colon N_\R\to\R$ its (convex) Legendre transform, defined by 
$$
g^\vee(\xi)=\sup_{\a\in P}\{\langle\a,\xi\rangle+g(\a)\},
$$
and let also $\la_P$ be the Lebesgue measure of $P$, normalized to mass $1$. Then: 
\begin{itemize}
\item[(i)] $\FS(\n)|_{N_\R}=g^\vee-0^\vee$, where $0^\vee$ coincides with the support function of $P$; 
\item[(ii)] $\vol(\n)=\int g\,\la_P$; 
\item[(iii)] $\dd_\infty(\n,\n')=\sup_P|g-g'|$, and $\dd_p(\n,\n')=\|g-g'\|_{L^p(\la_P)}$ for $p\in [1,\infty)$; 
\item[(iv)] $\n\in\cT_\R$ (resp.~$\cT_\Q$) iff $g^\vee(\xi)=\max_i\{\langle\xi,\a_i\rangle+\la_i\}$ for a finite set $\a_i\in P\cap M_\Q$ and $\la_i\in\R$ (resp.~$\Q$); 
\item[(v)] $\n\in\cN_\R^\cont\Longleftrightarrow g\in\Cz(P)\Longleftrightarrow g\text{ usc }\Longleftrightarrow\n\in\cN_\R^{\max}$; 
\item[(vi)] $\n\in\cN_\R^\div$ (resp.~$\cN_\Q^\div$) iff $g(\a)=\min_j\{\langle \xi_j,\a\rangle+c_j\}$ for a finite set $\xi_j\in N_\Q$ and $c_j\in\R$ (resp.~$\Q$); 
\item[(vii)] $\MA(\n)=\MA_\R(g^\vee)=(\nabla g)_\star\la_P$, where $\MA_\R$ is the real Monge--Amp\`ere operator and $\nabla g$ is the ($\la_P$-a.e.~defined) gradient of $g$. 
\end{itemize}
By (iv), (vi) and basic convex geometry, it follows that any toric homogeneous norm $\n$ satisfies 
$$
\n\in\cN_\Q^\div\Longleftrightarrow\n\in\cT_\Q.
$$
However, both implications fail when $\Q$ is replaced with $\R$. 
\begin{exam}\label{exam:divnonft} Assume $(X,L)=(\P^1,\cO(1))$, and consider the toric divisorial norm 
$$
\n:=\min\{\n_v,\n_\triv+c\}\in\cN_\R^\div, 
$$
where $v=\ord_0$ with $0$ is the origin in $\A^1\subset\P^1$, and $c\in [0,1]$. The corresponding concave function is $g(\a)=\min\{\a,c\}$ with $\a\in P=[0,1]\subset M_\R=\R$, and a simple computation yields 
$$
g^\vee(\xi)=\max\{\xi+c,c\xi+c,0\}
$$
for $v\in N_\R=\R$. Using (iv), this shows $\n\in\cT_\R\Longleftrightarrow c\in\Q$. 
\end{exam}

\begin{exam}\label{exam:ftnondiv} For any $\xi\in N_\R\subset X^\lin$, $\n_\xi$ is of finite type (see Example~\ref{exam:torusaction}). However, $\n_\xi$ is divisorial iff $\xi\in N_\Q$ (see Example~\ref{exam:linnondiv}). 
\end{exam}

%
%
%
%

%
%
%
%


\begin{thebibliography}{BGJKM20}

\bibitem[Berk90]{BerkBook}
V.~G.~Berkovich.
\newblock \emph{Spectral theory and analytic
geometry over non-Archi\-medean fields.}
\newblock  Mathematical Surveys and Monographs, 33. 
\newblock American Mathematical Society, Providence, RI, 1990.

 \bibitem[Berk93]{BerkIHES}
 V.~G.~Berkovich.
 \newblock \emph{\'Etale cohomology for non-Archimedean analytic spaces}.
 \newblock Publ. Math. Inst. Hautes \'Etudes Sci. \textbf{78} (1993), 5--161. 

 \bibitem[BBGZ13]{BBGZ}
 R.~J.~Berman, S.~Boucksom, V.~Guedj and A.~Zeriahi.
 \newblock \emph{A variational approach to complex Monge--Amp\`ere equations}.
 \newblock Publ. Math. Inst. Hautes {\'E}tudes Sci. 117 (2013), 179--245.

 \bibitem[BBJ21]{YTD}
 R.~J.~Berman, S.~Boucksom and M.~Jonsson.
 \newblock \emph{A variational approach to the Yau--Tian--Donaldson conjecture}.
 \newblock J. Amer. Math. Soc. \textbf{34} (2021),  605--652.

\bibitem[BlJ20]{BlJ}
H.~Blum, M.~Jonsson. 
\newblock \emph{Thresholds, valuations, and K-stability}.
\newblock Adv. Math. \textbf{365} (2020), 107062.

\bibitem[BLQ22]{BLQ} 
H.~Blum, Y.~Liu, L.~Qi. 
\newblock \emph{Convexity of multiplicities of filtrations on local rings}. 
\newblock \texttt{arXiv:2208.04902}. 

\bibitem[BLX22]{BLX22} 
H.~Blum, Y.~Liu, C.~Xu. 
\newblock\emph{Openness of K-semistability for Fano varieties}. 
\newblock Duke Math. J. \textbf{171} (2022), 2753--2797.

\bibitem[BLXZ23]{BLXZ} 
H.~Blum, Y.~Liu, C.~Xu, Z.~Zhuang. 
\newblock\emph{The existence of the K\"ahler--Ricci soliton degeneration}.
\newblock Forum Math. Pi \textbf{11} (2023), Paper No.\ e9, 28 pp.


\bibitem[BLZ22]{BLZ} 
H.~Blum, Y.~Liu, C.~Zhou 
\newblock\emph{Optimal destabilization of K-unstable Fano varieties via stability thresholds}. 
\newblock Geom. Topol. \textbf{26} (2022), 2507--2564.


\bibitem[BX20]{BX20} 
H.~Blum, C.~Xu.  
\newblock\emph{Uniqueness of K-polystable degenerations of Fano varieties}. 
\newblock Ann.\ of Math. (2) {\bf 190} (2020), 609--656.

\bibitem[BGR]{BGR}
S.~Bosch, U.~G\"untzer and R.~Remmert.
\newblock\emph{Non-Archimedean Analysis}.
\newblock Springer-Verlag, Berlin, Heidelberg, 1994.

\bibitem[Bou14]{Bou14} 
S.~Boucksom.
\newblock\emph{Corps d'Okounkov}.
\newblock  Exp. No. 1059.
\newblock Ast{\'e}risque \textbf{361} (2014), 1--41.

\bibitem[BC11]{BC} 
S.~Boucksom and H.~Chen.
\newblock\emph{Okounkov bodies of filtered linear series}.
\newblock Compos. Math. \textbf{147}  (2011), 1205--1229.

\bibitem[BE21]{BE} 
S.~Boucksom, D.~Eriksson.
\newblock \emph{Spaces of norms, determinant of cohomology and Fekete points in 
  non-Archimedean geometry}.
\newblock Adv. Math. \textbf{378} (2021), 107501.

\bibitem[BFJ16]{siminag}
S.~Boucksom, C.~Favre and M.~Jonsson.
\newblock \emph{Singular semipositive metrics in non-Archimedean geometry}.
\newblock J. Algebraic Geom. \textbf{25} (2016), 77--139. 

\bibitem[BGM22]{BGM}
S.~Boucksom, W.~Gubler, F.~Martin.
\newblock\emph{Differentiability of relative volumes over an arbitrary non-Archimedean field}.
\newblock Int. Math. Res. Not., Volume 2022, Issue 8, 6214--6242.

\bibitem[BHJ17]{BHJ1}
S. Boucksom, T.~Hisamoto and M. Jonsson.
\newblock\emph{Uniform K-stability, Duistermaat--Heckman measures 
  and singularities of pairs}.
\newblock Ann. Inst. Fourier \textbf{67} (2017), 743--841.

\bibitem[BoJ22a]{trivval}
S.~Boucksom, M.~Jonsson. 
\newblock \emph{Global pluripotential theory over a trivially valued field}.
\newblock Ann. Fac. Sci. Toulouse. Math. \textbf{31} (2022), 647--836.

\bibitem[BoJ22b]{trivadd}
S.~Boucksom and M.~Jonsson. 
\newblock \emph{Addendum to: Global pluripotential theory over a trivially valued field}.
\newblock To appear in Ann. Fac. Sci. Toulouse. Math.
\newblock \texttt{arXiv:2206.07183}. 

\bibitem[BoJ22c]{nakstab2}
S.~Boucksom and M.~Jonsson. 
\newblock\emph{A non-Archimedean approach to K-stability, II: divisorial stability and openness}. 
\newblock To appear in J. Reine Angew. Math.
\newblock \texttt{arXiv:2206.09492}.

\bibitem[BoJ23]{nagreen}
S.~Boucksom and M.~Jonsson. 
\newblock\emph{Non-Archimedean Green's functions and Zariski
  decompositions}.
\newblock \texttt{arXiv:2304.00144}.

\bibitem[BKMS15]{BKMS}
S.~Boucksom, A.~K\"uronya, C.~Maclean and T.~Sz\'emberg.
\newblock\emph{Vanishing sequences and Okounkov bodies}.
\newblock Math. Ann. \textbf{361} (2015), 811--834.

\bibitem[BT72]{BT72}
F.~Bruhat and J.~Tits.
\newblock\emph{Groupes r\'eductifs sur un corps local}.
\newblock Inst. Hautes \'Etudes Sci. Publ. Math. \textbf{41} (1972), 5--251. 

\bibitem[BPS11]{BPS} 
J.~I.~Burgos Gil, P.~Philippon and M.~Sombra.
\newblock \emph{Arithmetic geometry of toric varieties. Metrics, measures, and heights}.
\newblock  Ast\'erisque No. 360 (2014).

\bibitem[Cha06]{CL06} 
A.~Chambert-Loir.
\newblock \emph{Mesures et {\'e}quidistribution
 sur les espaces de Berkovich}.
\newblock  J. Reine Angew. Math. \textbf{595} (2006), 215--235.

\bibitem[CD12]{CLD} 
 A.~Chambert-Loir and A.~Ducros.
\newblock \emph{Formes diff{\'e}rentielles r{\'e}elles et courants 
  sur les espaces de Berkovich}.
\newblock \texttt{arXiv:1204.6277}.

\bibitem[CM15]{CM15} 
H.~Chen and C.~Maclean.
\newblock\emph{Distribution of logarithmic spectra of the equilibrium energy}.
\newblock Manuscripta Math. \textbf{146}  (2015), 365--394.

\bibitem[CM18]{CM18} 
H.~Chen and A.~Moriwaki.
\newblock\emph{Extension property of semipositive invertible sheaves over a non-archimedean field}.
\newblock  Ann. Sc. Norm. Super. Pisa Cl. Sci. (5) \textbf{18} (2018), 241--282. 

\bibitem[CSW18]{CSW}
X.X.~Chen, S.~Sun and B.~Wang.
\newblock\emph{K\"ahler-Ricci flow, K\"ahler--Einstein metric, and K-stability}.
\newblock  Geom. Topol. \textbf{22} (2018), 3145--3173.

\bibitem[Cod19]{Cod19}
G.~Codogni.
\newblock\emph{Tits buildings and K-stability}.
\newblock  Proc. Edinb. Math. Soc., \textbf{62} (2019), 799--815.

\bibitem[Dar15]{Dar15} 
T.~Darvas.
\newblock\emph{The Mabuchi completion of the space of K\"ahler potentials}.
\newblock Adv. Math. \textbf{285} (2015), 182--219.

\bibitem[Dar17]{Dar17} 
T.~Darvas.
\newblock\emph{The Mabuchi geometry of finite energy classes}.
\newblock Amer. J. Math. \textbf{139} (2017), 1275--1313.

\bibitem[DDL18]{DDL18}  
T.~Darvas, E.~Di Nezza and C.~H.~Lu.
\newblock \emph{$L^1$ metric geometry of big cohomology classes}.
\newblock Ann. Inst. Fourier \textbf{68} (2018), 3053--3086.

\bibitem[DLR20]{DLR20}  
T.~Darvas, C.~H.~Lu and Y.~A.~Rubinstein.
\newblock \emph{Quantization in geometric pluripotential theory}.
\newblock Comm. Pure Appl. Math. \textbf{73} (2020), 1100--1138.

\bibitem[DX22]{DX}
T.~Darvas, M.~Xia. 
\newblock \emph{The closures of test configurations and algebraic singularity types}. 
\newblock Adv. Math. \textbf{397} (2022), Paper No. 108198, 56 pp.

\bibitem[Der16]{Der} 
R.~Dervan. 
\newblock\emph{Uniform stability of twisted constant scalar curvature K\"ahler functions}. 
\newblock Int. Math. Res. Not. IMRN (2016) \textbf{15}, 4728--4783.

\bibitem[DeL23]{DerLeg}
R.~Dervan, E.~Legendre.
\newblock\emph{Valuative stability of polarised varieties}.
\newblock Math. Ann. \textbf{385} (2023), 357--391.

\bibitem[DeSz20]{DS}
R.~Dervan, G.~Sz\'ekelyhidi. 
\newblock\emph{The K\"ahler-Ricci flow and optimal degenerations}.
\newblock J. Differential Geom. {\bf 116} (2020), 187--203. 

\bibitem[Don02]{Dontoric}
S. K. Donaldson. 
\newblock \emph{Scalar curvature and stability of toric varieties. }
\newblock  J. Differential Geom. \textbf{62} (2002), no. 2, 289--349.  

\bibitem[Duc09]{Duc09}
A.~Ducros. 
\newblock\emph{Les espaces de Berkovich sont excellents}. 
\newblock Ann. Inst. Fourier {\bf 59} (2009), 1443--1552.

 \bibitem[ELS03]{ELS}
 L.~Ein, R.~Lazarsfeld and K.~Smith.
 \newblock \emph{Uniform approximation of Abhyankar valuation ideals in smooth
 function fields}.
 \newblock  Amer. J. Math. \textbf{125} (2003), 409--440.

\bibitem[Fan19]{Fan19}
Y.~Fang.
\newblock\emph{Study of positively metrized line bundles over a non-Archimedean field via holomorphic convexity}.
\newblock Ph.D Thesis, University of Paris, 2019.

\bibitem[Fuj19a]{Fujval}
K.~Fujita.
\newblock\emph{A valuative criterion for uniform K-stability of $\Q$-Fano varieties}.
\newblock J. Reine Angew. Math. \textbf{751} (2019), 309--338.

 \bibitem[Fuj19b]{Fujplt}
 K.~Fujita.
 \newblock\emph{Uniform K-stability and plt blowups of log Fano pairs}.
 \newblock Kyoto J. Math. \textbf{59} (2019), 399--418.

\bibitem[Ful93]{FultonToric}
W.~Fulton.
\newblock \emph{Introduction to toric varieties}.
\newblock Annals of Mathematics Studies, 131. 
\newblock Princeton University Press, Princeton, NJ, 1993.

\bibitem[GKR68]{GKR} 
D.~Gale,V.~Klee, R.T.~Rockafellar. 
\newblock\emph{Convex functions on convex polytopes}. 
\newblock Proc. Amer. Math. Soc. \textbf{19} (1968), 867--873.

\bibitem[Gub98]{GublerLocal}
W.~Gubler.
\newblock \emph{Local heights of subvarieties over non-Archimedean fields}.
\newblock J. Reine Angew. Math. \textbf{498} (1998), 61--113.


\bibitem[GJKM19]{GJKM}
W.~Gubler, P.~Jell, K.~K\"unnemann and M. Sombra. 
\newblock \emph{Continuity of plurisubharmonic envelopes in non-Archimedean geometry and test ideals (with an appendix by Jos\'e Ignacio Burgos Gil and Martin Sombra).}
\newblock Ann. Inst. Fourier \textbf{69} (2019), 2331--2376.

\bibitem[HL20]{HL} 
J.~Han and C.~Li. 
\newblock{Algebraic uniqueness of K\"ahler--Ricci flow limits and optimal degenerations of Fano varieties}.
\texttt{arXiv:2009.01010}.

\bibitem[Ino22]{Ino}
E.~Inoue. 
\newblock{Entropies in $\mu$-framework of canonical metrics and K-stability, II -- Non-archimedean aspect: non-archimedean $\mu$-entropy and $\mu$K-semistability}. 
\newblock \texttt{arXiv:2202.12168}. 

\bibitem[KK12]{KK} K.~Kaveh, A.G.~Khovanskii.
\newblock\emph{Newton--Okounkov bodies, semigroups of integral points, graded algebras and intersection theory}.
\newblock Ann.\ of Math. (2) {\bf 176} (2012), no. 2, 925--978.

 \bibitem[K\"ur03]{Kur03}
 A.~K\"uronya.
 \newblock\emph{A divisorial valuation with irrational volume}.
 \newblock J.~Algebra \textbf{262} (2003), 413--423.

\bibitem[LM09]{LM09}
R.~Lazarsfeld and M.~Musta\c{t}\v{a}. 
\newblock\emph{Convex bodies associated to linear series}. 
\newblock Ann. Sci. {\'E}c. Norm. Sup{\'e}r. (4), \textbf{42} (2009), 783--835.

\bibitem[Li17]{LiEquivariant}
C.~Li.
\newblock\emph{K-semistability is equivariant volume minimization}.
\newblock Duke Math. J. \textbf{166} (2017), 3147--3218.

\bibitem[Li22a]{Li22a} 
 C.~Li. 
 \newblock\emph{G-uniform stability and K\"ahler-Einstein metrics on singular Fano varieties}.
\newblock Invent. Math. \textbf{227} (2022), 661--744.

 \bibitem[Li22b]{Li22b} 
 C.~Li. 
 \newblock\emph{Geodesic rays and stability in the cscK problem}.
 \newblock \newblock Ann. Sci. {\'E}c. Norm. Sup{\'e}r. \textbf{55} (2022), 1529--1574.
 
 \bibitem[Li23]{Li23} 
 C.~Li. 
 \newblock\emph{K-stability and Fujita approximation}.
 \newblock In \emph{Birational geometry, K\"ahler--Einstein metrics and degenerations}, 545--566.
\newblock Springer Proc. Math. Stat., \textbf{409}. Springer, 2023.
\newblock \texttt{arXiv:2102.09457}.
 
\bibitem[LX20]{LX20}
C.~Li and C.~Xu.
\newblock\emph{Stability of valuations and Koll\'ar components}.
\newblock J. Eur. Math. Soc. \textbf{22} (2020), 2573--2627.

\bibitem[Liu23]{LiuYa}
Yaxiong Liu. 
\newblock\emph{Openness of uniformly valuative stability on the K\"ahler cone of projective manifolds}. 
\newblock Math. Z. \textbf{303}(2023), no.2, Paper No. 52, 26 pp.




\bibitem[MR15]{MR15} 
D.~McKinnon and M.~Roth. 
\newblock \emph{Seshadri constants, diophantine approximation, and Roth's theorem for arbitrary varieties}.
\newblock Invent. Math. \textbf{200} (2015), 513--583.

\bibitem[MFK]{GIT}
D.~Mumford, J.~Fogarty and F.~Kiwan. 
\newblock \emph{Geometric invariant theory}. 
\newblock Third edition. Ergebnisse der Mathematik und ihrer Grenzgebiete (2), vol 34.
\newblock Springer-Verlag, Berlin, 1994.

\bibitem[Oda15]{Oda15} 
Y.~Odaka.
\newblock\emph{On parametrization, optimization and triviality of test configurations}. 
\newblock Proc. Amer. Math. Soc. \textbf{143}, 25--33. 

 \bibitem[Poi13]{Poi}
 J.~Poineau.
 \newblock\emph{Les espaces de Berkovich sont ang\'eliques}.
 \newblock Bull. Soc.\ Math. France {\bf 141}, 2013, 267--297.

\bibitem[Reb21]{Reb21}
R.~Reboulet.
\newblock\emph{The asymptotic Fubini--Study operator over general non-Archimedean fields}.
\newblock Math. Z. \textbf{299} (2021), 2341--2378.

\bibitem[Reb22]{Reb22}
R.~Reboulet.
\newblock\emph{Plurisubharmonic geodesics in spaces of non-Archimedean metrics of finite energy}.
\newblock J. Reine Angew. Math. \textbf{793} (2022), 59--103.

\bibitem[Sz\'e15]{Sze}
G.~Sz\'ekelyhidi. 
\newblock \emph{Filtrations and test-configurations}. 
\newblock With an appendix by S.~Boucksom.
\newblock Math. Ann. \textbf{362} (2015), 451--484. 

 \bibitem[Tem04]{TemkinLocalII}
 M.~Temkin.
 \newblock\emph{On local properties of non-Archimedean analytic spaces II}.
 \newblock Israel J. Math. \textbf{140} (2004), 1--27.

 \bibitem[Tem15]{TemkinSurvey}
 M.~Temkin.
 \newblock \emph{Introduction to Berkovich analytic spaces}.
 \newblock Berkovich spaces and applications, 3--66.
 \newblock Lecture Notes in Mathematics, 2119. 
 \newblock Springer, 2015.

\bibitem[WN12]{WN12}
D.~Witt Nystr\"om.
\newblock\emph{Test configurations and Okounkov bodies}.
\newblock Compos. Math. \textbf{148} (2012), 1736--1756.

\bibitem[Zha22]{Zha22}
K. Zhang.
\newblock\emph{Valuative invariants with higher moments}.
\newblock J. Geom. Anal. \textbf{32} (2022), no.1, Paper No. 10, 33 pp.

\bibitem[Zha95]{Zha95}
S.-W Zhang.
\newblock\emph{Positive line bundles on arithmetic varieties}.
\newblock  J. Amer. Math. Soc. \textbf{8} (1995), 187--221.

\end{thebibliography}
\end{document}